\numberwithin{equation}{section}
\theoremstyle{plain}
\newtheorem{theorem}{Theorem}[section]				
\newtheorem{proposition}[theorem]{Proposition}		
\newtheorem{corollary}[theorem]{Corollary}
\newtheorem{lemma}[theorem]{Lemma}
\newtheorem{question}[theorem]{Question}
\newtheorem{notation}[theorem]{Notation}
\theoremstyle{definition}
\newtheorem{definition}[theorem]{Definition}
\newtheorem{remark}[theorem]{Remark}
\newtheorem{example}[theorem]{Example}
\newcommand\ep{\epsilon}
\newcommand\frg{\mathfrak{g}}
\newcommand\frh{\mathfrak{h}}
\newcommand{\CBbb}{\mathbb C}
\newcommand{\NBbb}{\mathbb N}
\newcommand{\PBbb}{\mathbb P}
\newcommand{\QBbb}{\mathbb Q}
\newcommand{\RBbb}{\mathbb R}
\newcommand{\ZBbb}{\mathbb Z}
\newcommand{\Acal}{\mathcal A}
\newcommand{\Bcal}{\mathcal B}
\newcommand{\Ccal}{\mathcal C}
\newcommand{\Gcal}{\mathcal G}
\newcommand{\Kcal}{\mathcal K}
\newcommand{\Lcal}{\mathcal L}
\newcommand{\Mcal}{\mathcal M}
\newcommand{\Ocal}{\mathcal O}
\newcommand{\Rcal}{\mathcal R}
\newcommand{\Tcal}{\mathcal T}
\newcommand{\Vcal}{\mathcal V}
\newcommand{\Xcal}{\mathcal X}
\newcommand{\Ycal}{\mathcal Y}
\newcommand{\Tscr}{\mathscr T}
\newcommand{\GO}{\mathring{\mathfrak{g}}}
\newcommand{\GA}{\mathfrak{g}(X_N^{(m)})}
\DeclareMathOperator{\Hom}{Hom}
\DeclareMathOperator{\Rep}{Rep}
\DeclareMathOperator{\Aut}{Aut}
\DeclareMathOperator{\id}{id}
\DeclareMathOperator{\rank}{rank}
\DeclareMathOperator{\tr}{tr}
\newcommand*\bigcdot{\mathpalette\bigcdot@{.5}}
\newcommand*\bigcdot@[2]{\mathbin{\vcenter{\hbox{\scalebox{#2}{$\m@th#1\bullet$}}}}}
\newcommand{\thickcolon}{\mathpalette\thick@colon\relax}
\newcommand{\thick@colon}[2]{%
	\mspace{1mu}%
	\vbox{%
		\hbox{$\m@th#1\bigcdot$}
		\nointerlineskip
		\kern.15ex
		\hbox{$\m@th#1\bigcdot$}
		\kern-.55ex
	}%
	\mspace{1mu}%
}
\newcommand\reg{\operatorname{reg}}
\newcommand\rk{\operatorname{rk}}
\newcommand{\op}{\operatorname{op}}
\newcommand{\rar}[1]{\stackrel{#1}{\longrightarrow}}
\newcommand{\xoto}{\xrightarrow}
\newcommand\tildeC{\widetilde{C}}
\newcommand\tildeL{\widetilde{\Lcal}}
\newcommand\tildeD{\widetilde{D}}
\newcommand\tildeq{\widetilde{q}}
\newcommand\wtilde{\widetilde}
\newcommand\Qab{\mathbb{Q}^{\operatorname{ab}}}
\newcommand\KQab{K_{\Qab}}
\newcommand{\KC}{K_{\CBbb}}
\newcommand\unit{\mathbbm{1}}
\newcommand\Irr{\operatorname{Irr}}
\newcommand\linfun{\nu}
\newcommand\barM{\overline{\Mcal}}
\newcommand\tildeM{\widetilde{\Mcal}}
\newcommand\hatM{\widehat{\Mcal}}
\DeclareMathOperator{\Mod}{Mod}
\DeclareMathOperator{\coev}{coev}
\DeclareMathOperator{\gen}{gen}
\DeclareMathOperator{\Sp}{Sp}
\DeclareMathOperator*{\bigboxtimes}{\text{\Large $\boxtimes$}}
\DeclareMathOperator{\ev}{ev}
\newcommand{\GMod}{\overline{\mathcal{M}}{}^{\Gamma}_{g,n}}
\newcommand{\GModhat}{\widehat{\overline{\mathcal{M}}}{}^{\Gamma}_{g,n}}
\newcommand{\GModtilde}{\widetilde{\overline{\mathcal{M}}}{}^{\Gamma}_{g,n}}
\newcommand{\GModtildeA}{\widetilde{\overline{\mathcal{M}}}{}^{\Gamma}_{g,A}}
\newcommand{\Spec}{\operatorname{Spec}}
\newcommand{\hatbarM}{\widehat{\overline{\Mcal}}}
\newcommand{\tildebarM}{\widetilde{\overline{\Mcal}}}
\renewcommand{\subset}{\subseteq}
\newcommand{\onto}{\twoheadrightarrow}
\newcommand{\Gm}{\mathbb{G}_{\operatorname{m}}}
\newcommand{\confv}{\mathcal{V}_{\vec{\lambda}}(\mathbb{P}^1, \vec{z})}
\newcommand{\wfrg}{\widehat{\mathfrak{g}}}
\newcommand{\Kor}{\mathcal{K}_{\widetilde{C}, \widetilde{p}_i}}
\newcommand{\COV}{\mathbb{V}_{\vec{\lambda},\Gamma}(\widetilde{C},C,\widetilde{\bf{p}},{\bf{p}})}
\newcommand{\COVx}{\mathbb{V}_{\vec{\lambda},\Gamma}}
\newcommand{\VOC}{\mathbb{V}^{\dagger}_{\vec{\lambda},\Gamma}(\widetilde{C},C,\widetilde{\bf{p}},{\bf{p}})}
\newcommand{\sign}{\operatorname{sign}}
\newcommand{\dotimes}{\dot{\otimes}}
\newcommand{\Surf}{\mathfrak{Surf}}
\renewcommand{\H}{\pi_0\partial}
\renewcommand{\Vec}{\operatorname{Vec}}
	\def\MR#1{}
\begin{document}


\title[Verlinde formula for twisted conformal blocks]{Crossed modular categories and the Verlinde formula for twisted conformal blocks}
\author{Tanmay Deshpande}

\address{Tata Institute of Fundamental Research,
	Homi Bhabha Road, 
	Mumbai 400005, INDIA}
\email{tanmay@math.tifr.res.in}
%

\author{Swarnava Mukhopadhyay }
\address{Tata Institute of Fundamental Research,
	Homi Bhabha Road, 
	Mumbai 400005, INDIA}
\email{swarnava@math.tifr.res.in}

\subjclass[2010]{Primary  14H60, Secondary 17B67,  32G34, 81T40}

\begin{abstract}
In this paper we give a Verlinde formula for computing the ranks of the bundles of twisted conformal blocks associated with a simple Lie algebra equipped with an action of a finite group $\Gamma$ and a positive integral level $\ell$ under the assumption  ``$\Gamma$ preserves a Borel".  For $\Gamma=\mathbb{Z}/2$ and double covers of $\mathbb{P}^1$, this formula was conjectured by Birke-Fuchs-Schweigert \cite{BFS}. As a motivation for this Verlinde formula, we prove a categorical Verlinde formula  which computes the fusion coefficients for any $\Gamma$-crossed modular fusion category as defined by Turaev. 

We relate these two versions of the Verlinde formula, by formulating the notion of a $\Gamma$-crossed modular functor and show that it is very closely related to the notion of a $\Gamma$-crossed modular fusion category. We compute the Atiyah algebra and prove (with same assumptions) that the bundles of $\Gamma$-twisted conformal blocks associated with a twisted affine Lie algebra define a $\Gamma$-crossed modular functor. 

We also prove a useful criterion for rigidity of weakly fusion categories to deduce that the level $\ell$ $\Gamma$-twisted conformal blocks define a $\Gamma$-crossed modular fusion category. Along the way, we prove the equivalence between a $\Gamma$-crossed modular functor and its topological analogue. We then apply these results to derive the  Verlinde formula for twisted conformal blocks. We also  describe the S-matrices of the $\Gamma$-crossed modular fusion categories associated with twisted conformal blocks.

\end{abstract}

\maketitle
\renewcommand\contentsname{}
\setcounter{tocdepth}{1}
\tableofcontents




\section{Introduction}The Wess-Zumino-Witten (WZW) model is a two dimensional rational conformal theory and conformal blocks associated to these models were explicitly constructed in the phenomenal works of Tsuchiya, Ueno, and Yamada \cite{TUY:89}. To a simple Lie algebra $\frg$, a positive integer $\ell$, an $n$-tuple $\vec{\lambda}$ of dominant integral weights of level $\ell$  (see Section \ref{sec:repofrep}) of an untwisted affine Kac-Moody Lie algebra, the WZW model in \cite{TUY:89} associates a vector bundle $\mathbb{V}^{\dagger}_{\vec{\lambda}}(\frg,\ell)$ of finite rank on the Deligne-Mumford-Knudsen moduli stack $\overline{\mathcal{M}}_{g,n}$ of stable $n$-pointed curves of genus $g$.  These vector bundles $\mathbb{V}^{\dagger}_{\vec{\lambda}}(\frg,\ell)$  are known as the {\em bundles of conformal blocks} and their duals $\mathbb{V}_{\vec{\lambda}}(\frg,\ell)$  are referred to as the {\em sheaf of covacua}. 

Moreover, these bundles are endowed with a flat projective connection with logarithmic singularities along the boundary divisor of $\overline{\mathcal{M}}_{g,n}$ and satisfy \cite{TUY:89} the axioms of conformal field theory like {\em factorization} and {\em propagation of vacua} (see Section \ref{sec:deftwistconf}). The spaces of conformal blocks can be identified with the global sections of line bundles on moduli of $G$-bundles on a curve \cite{BF,BL,Faltings:94,KNR:94,LaszloSorger:97} as well as refinements of invariants of representations of a Lie algebra $\frg$ \cite{Fakhruddin:12,TUY:89}.

In 1987, E. Verlinde conjectured \cite{Verlinde} an explicit formula to compute the rank of the bundle $\mathbb{V}_{\vec{\lambda}}(\frg,\ell)$ which became well known as the Verlinde formula.  
Verlinde's conjectural rank formula was proved for $\operatorname{SL}(2)$ independently by the works of Bertram, Bertram-Szenes, Daskalopoulos-Wentworth and M. Thaddeus  \cite{Bert,Bert-Sz,DW,Thaddeus:92}. 
The Verlinde formula for classical groups and $\operatorname{G}_2$  was proved  by  G. Faltings \cite{Faltings:94} and for all groups by  C. Teleman  \cite{Teleman}.  Later,  it was also proved by Alekseev-Meinrenken-Woodward, Jeffrey-Kirwan \cite{AWM,JK}. 
The  Verlinde conjecture in full generality has been proved by Huang \cite{Huang:08b,Huang:08a} in a general set-up of vertex algebras. 
Verlinde's formula is a central and a distinguishing feature in this subject and we
refer to \cite{Abe:08,Belkale:08,Mukhopadhyay:12,MukhopadhyayWentworth:16,NaculichSchnitzer:90} for some applications to representation theory and algebraic geometry. 
We now discuss the twisted set-up.

Twisted WZW models associated to order two diagram automorphisms of a simple Lie algebra were constructed and studied by Shen-Wang \cite{WangShen} and also by Birke-Fuchs-Schweigert \cite{BFS}. Starting with a vertex algebra along with an action of a finite group $\Gamma$, Frenkel-Szczesny \cite{FS} constructed orbifold conformal blocks. More precisely, given a pointed  $\Gamma$-cover $(\widetilde{C},C,\widetilde{\bf{p}}, {\bf{p}})$ of a smooth curve $C$ with $n$ marked points ${\bf p}=(p_1, \dots, p_n)$ along with a lift ${\widetilde{\bf p}}$, they attached a module for a twisted vertex algebra. This construction can be  (Szczesny \cite{szcz}) be carried out in families of stable curves to get a quasi-coherent sheaf $\mathbb{V}_{\Gamma}$ on the stack $\GMod$ of pointed $\Gamma$-cover introduced in Jarvis-Kimura-Kaufmann \cite{JKK} along with a flat projective connection on the open part $\mathcal{M}{}^{\Gamma}_{g,n}$. Recently, extensions to the boundary of $\GMod$ has been considered by several authors.   

In her thesis, C. Damiolini \cite{Dam17} considered twisted conformal blocks associated to $\Gamma$-covers of curves where the marked points \'etale. She proved factorization, propagation of vacua, existence of projective connections and local freeness under the assumption that $|\Gamma|$ is of prime order.


In \cite{KH}, Hong-Kumar studied  $\Gamma$-twisted WZW-models associated to an arbitrary $\Gamma$-twisted affine Kac-Moody Lie algebra under the assumption that ``$\Gamma$ preserves a Borel subalgebra of $\frg$". They construct a locally free sheaf over $\GMod$ that satisfies ``factorization" and  ``propagation of vacua". They further show that on $\mathcal{M}^{\Gamma}_{g,n}$ twisted conformal blocks admit a flat projective connection. Similar results for diagram automorphisms of $\frg$ were also obtained by the second coauthor independently.

We now briefly recall the notion of twisted conformal blocks  \cite{Dam17,KH} and refer the reader to Section \ref{sec:deftwistconf} for a coordinate free construction. Let $\frg$ be a simple Lie algebra equipped with a $\Gamma$-action $\Gamma \rightarrow \operatorname{Aut}(\frg)$.  
For any $\gamma \in \Gamma$, we consider the twisted affine Kac-Moody Lie algebra (see Section \ref{sec:affineLiealgtwisted}) $\widehat{L}(\frg,\gamma)$.  The set of irreducible, integrable, highest weight representations of $\widehat{L}(\frg,\gamma)$ of level $\ell\in \ZBbb_{\geq 1}$ is denoted by $P^{\ell}(\frg,\gamma)$ (see Section \ref{sec:affineLiealgtwisted}). If $\gamma$ is trivial, then $P^{\ell}(\frg, \id)$ will often be denoted by $P_{\ell}(\frg)$. 

Now consider a stable nodal curve $\widetilde{C}$ with $n$ marked points $\widetilde{\bf{p}}$ with a $\Gamma$ action. Assume that $\widetilde{C}\setminus \Gamma\cdot\widetilde{\bf{p}}$ is affine on which $\Gamma$ acts freely and  each component has at least one marked point. Consider the corresponding pointed $\Gamma$-cover $(\pi:\wtilde{C}\to C,\widetilde{\bf{p}},{\bf{p}})$. Note that for each point $\wtilde{c}\in \wtilde{C}$, its stabilizer subgroup $\Gamma_{\wtilde{c}}\leq \Gamma$ is cyclic. This is because we can find a small disk $D\subset C$ in the base curve containing $\pi(\wtilde{c})$ such that over the punctured disk $D\setminus\wtilde{c}$, $\pi$ is an unramified $\Gamma$-cover and noting that the finite connected covers of the punctured disk are necessarily cyclic.   

Let $(\gamma_1, \dots ,\gamma_n)$ be generators of stabilizers in $\Gamma$ of the points $(\widetilde{p}_1,\dots, \widetilde{p}_n)$ determined by using the orientation on the complex curve $\tildeC$. The element $\gamma_i$ will be called the monodromy around the point $\wtilde{p}_i$. Suppose for 
each point $\wtilde{p}_i$ we have attached a highest weight integrable module $\mathcal{H}_{\lambda_i}(\frg,\gamma_i)$ of weight $\lambda_{i} \in P^{\ell}(\frg,\gamma_i)$. Let $$\mathcal{H}_{\vec{\lambda}}:=\mathcal{H}_{\lambda_1}(\frg, \gamma_1)\otimes \dots \otimes \mathcal{H}_{\lambda_n}(\frg, \gamma_n).$$ Then the  {\em space of twisted covacua}  can be  defined as 
$$\mathcal{V}_{\vec{\lambda},\Gamma}(\widetilde{C}, C, \widetilde{\bf p}, {\bf p},{\bf{\widetilde{z}}}):=\mathcal{H}_{\vec{\lambda}}/\big(\frg\otimes H^0(\widetilde{C},\mathcal{O}_{\widetilde{C}}(\ast \Gamma.{\widetilde{\bf p}})\big)^{\Gamma}\mathcal{H}_{\vec{\lambda}},$$ where ${\bf{\widetilde{z}}}$ denote a choice of a formal parameter along the points ${\bf{\widetilde{p}}}$. The corresponding vector bundles on $\GMod$ will be denoted by $\mathbb{V}_{\vec{\lambda},\Gamma}(\widetilde{C}, C, \widetilde{\bf p}, {\bf p})$.
Starting with the work of Pappas-Rapoport \cite{PR2,PR1}, the moduli stack of Bruhat-Tits torsor $\mathscr{G}$ associated to a pair $(G, \Gamma\subset \operatorname{Aut}(G))$ has been studied by several authors \cite{BalajiSesh,Hein}. Results of Hong-Kumar \cite{KH} (with some restrictions on level) and Zelaci \cite{Zel} (order two automorphisms of $\operatorname{SL}(r)$) connect twisted conformal blocks with global sections of line bundles on $\operatorname{Bun}\mathscr{G}$. Pappas-Rapoport \cite{PR2,PR1} ask the following: 
\begin{question}
	Is there is a Verlinde formula for these spaces of non-abelian theta functions or twisted conformal blocks?
	
\end{question}

Birke-Fuchs-Schweigert \cite{BFS} conjectured a twisted Verlinde formula for $\Gamma=\mathbb{Z}/2$ and a double cover of $\mathbb{P}^1$ ramified at two points. 
In this article, we prove a twisted Verlinde formula that computes the rank of the $\Gamma$-twisted  conformal blocks for an arbitrary finite group $\Gamma$ with the assumption that ``$\Gamma$ preserves a Borel subalgebra of $\frg$". We now state one of the main results in this paper--a general twisted Verlinde formula.

Let $(C,\mathbf{p})$ be a smooth genus $g$ curve with $n$marked points $\mathbf{p}$. The fundamental group of $C\setminus\mathbf{p}$  has a presentation of the form 
$$\pi_1(C\setminus\mathbf{p},\star)=\langle\alpha_1,\beta_1,\cdots,\alpha_g,\beta_g,\gamma_1,\cdots,\gamma_n|[\alpha_1,\beta_1]\cdots[\alpha_g,\beta_g]\gamma_1\cdots\gamma_n=1\rangle.$$ 
\noindent {\it Assumption on $\widetilde{C}$.}
Let us fix a group homomorphism 
\begin{equation}\label{eq:grouphomochi}\chi:\pi_1(C\setminus\mathbf{p},\star)\onto \Gamma^{\circ}\subset \Gamma\end{equation} 
with image $\Gamma^\circ$. Let $m_i\in \Gamma$ be the image of the loop $\gamma_i \in \pi_1(C\setminus\mathbf{p},\star)$. This determines (see Section \ref{sec:arbgenusdescription} and also \cite[\S2.3]{JKK}) an $n$-pointed admissible $\Gamma$-cover $(\tildeC\to C,\wtilde{\mathbf{p}},\mathbf{p})$ such that all the lifts $\wtilde{\mathbf{p}}$ lie in the {\em same connected component of $\tildeC$} and the monodromy around the points $\wtilde{\mathbf{p}}$ is given by $(m_1,\cdots,m_n)$. As before, let us assume that we have an action of $\Gamma$ on the simple lie algebra $\frg$ and we fix a level $\ell\in \ZBbb_{\geq 1}$. Let $\vec{\lambda}=(\lambda_1,\cdots\lambda_n)$ with $\lambda_i\in P^{\ell}(\frg,m_i)$. We state the following Verlinde formula: 
\begin{theorem}\label{conj:main1}Assume that ``$\Gamma$ preserves a Borel subalgebra of $\frg$". Then the  rank of the twisted conformal blocks bundle $\mathbb{V}_{\vec{\lambda},\Gamma}(\widetilde{C},C, \widetilde{{\bf{p}}}, {\bf p})$ at level $\ell$ is given by the following formula:
	\begin{equation}\label{eq:twistedverlindemain}\operatorname{rank}\mathbb{V}_{\vec{\lambda},\Gamma}(\widetilde{C},C, \widetilde{{\bf{p}}}, {\bf p})=\sum_{\mu \in P_{\ell}(\frg)^{\Gamma^\circ}}\frac{{S}_{\lambda_1,\mu}^{{m_1}}\cdots {S}_{\lambda_n,\mu}^{m_n}}{\big({S_{0,\mu}}\big)^{n+2g-2}},\end{equation}
	where the summation is over the set of the untwisted dominant integral level-$\ell$ weights of $\frg$ fixed by the subgroup $\Gamma^\circ\subset \Gamma$,  the matrices ${S}^{m_i}$  are the $m_i$-crossed S-matrices  at level $\ell$ corresponding to the diagram automorphism class of $m_i\in \Gamma$ and which are described (see also Chapter 13 in \cite{Kac}) explicitly in Proposition \ref{prop:charactertableviaSmatrix} (see also Corollary \ref{cor:signinverlinde}) and where $0$ is the trivial weight in $P_\ell(\frg)$.
\end{theorem}

\begin{remark}\label{rem:gammapreservesborel}
	We conjecture that the same Verlinde formula holds without the assumption that ``$\Gamma$ preserves a Borel subalgebra of $\frg$", however some 2-cocycles may appear in the general case. This is motivated by the equivariantization$-$de-equivariantization correspondence \cite{DGNO} between  $\Gamma$-crossed modular tensor categories and modular tensor categories containing $\Rep(\Gamma)$. The corresponding modular functor associated with rational vertex algebras arising from orbifold constructions has been considered by Nagatomo-Tsuchiya \cite{NagTsu} and others. Hence we expect the twisted conformal blocks to satisfy the factorization and propagation of vacua axioms without the assumption of $\Gamma$ preserving a Borel. This assumption appears here since it appears in the paper of Hong-Kumar \cite{KH}, where the proofs of the {\em factorization theorem} and {\em propagation of vacua} use this assumption. All our results will generalize if this assumption is dropped in the work \cite{KH}.
\end{remark}

\begin{remark}
	It may appear a priori from the Verlinde formula (\ref{eq:twistedverlindemain}) that the ranks of the twisted conformal blocks do not depend upon the connected component of the $\GMod(\mathbf{m})$. But the formula is dependent on the choice a group homomorphism $\chi:\pi_1(C\backslash  {\bf p},\star)\rightarrow \Gamma$ from (\ref{eq:grouphomochi}), namely it depends on the image $\Gamma^\circ$ of $\chi$ (along with the monodromy data $\mathbf{m}$). The fact that the ranks only depend on the image $\Gamma^\circ$ is a curious consequence of the triviality of the 2-cocycles (mentioned in Remark \ref{rem:gammapreservesborel}) in the case when $\Gamma$ preserves a Borel subalgebra. We also refer to the general categorical analog (Corollary \ref{cor:highergenus}) where we see that the right hand side will in general depend on the homomorphism $\chi$ and not just its image.  We refer the reader to Section \ref{sec:arbgenusdescription} and \cite[\S2.3]{JKK} for the relations between covers and holonomies $\chi$. 
\end{remark}

\begin{remark}
	It is well known that the untwisted S-matrix $S$ of type $\frg$ and level $\ell$ is a $P_{\ell}(\frg)\times P_{\ell}(\frg)$ symmetric unitary matrix. On the other hand,  for each $m\in \Gamma$, the $m$-crossed S-matrix $S^m$ is a $P^{\ell}(\frg,m)\times P_{\ell}(\frg)^m$ matrix. It is not immediately clear, but nevertheless true that $S^m$ is a square matrix. Moreover, $S^m$ is also unitary.
\end{remark}

Theorem \ref{conj:main1} is proved in Section \ref{sec:modfuntwistedconformal} (see Corollary \ref{cor:main1}). We also refer the reader to Section \ref{sec:dimensioncrosscheck1} where we give a more concrete form of the formula (\ref{eq:twistedverlindemain}) in some special cases.  We now discuss the remaining results of 
this article which are motivated by Theorem \ref{conj:main1} and also the main steps in the proof of Theorem \ref{conj:main1}.  We have divided our paper into three parts.\\

\noindent{\bf Part I.}
There has been a comprehensive study by several authors \cite{MS1,Segal,TUY:89,Witten1} trying to understand the relations between modular tensor categories, 3-dimensional topological quantum field theory and 2-dimensional modular functors. A key bridge between these three topics has been conformal blocks associated to untwisted affine Lie algebras. Moore-Seiberg \cite{MS1} has proved a Verlinde formula for any modular fusion category. In the twisted setup, $\Gamma$-crossed modular fusion categories have been introduced by V. Turaev \cite{TuraevHQFT}.\\

\noindent {\it Rigidity of $\Gamma$-graded categories.}
We first prove a rigidity result in the categorical set-up of weakly rigid categories. Weakly rigid categories are sometimes also referred to as Grothendieck-Verdier categories or $\ast$-autonomous categories (see also \cite{BoDr:13}). We refer the reader to Section \ref{sec:drinfeldsquestions} for relevant definitions and a proof (Corollary \ref{cor:rigidity of graded categories}) of the following: 
\begin{theorem}\label{thm:catrigid}
	Let $\Gamma$ be a finite group and $\mathcal{C}=\oplus_{\gamma \in \Gamma}\Ccal_{\gamma}$ be a $\Gamma$-graded weakly fusion category such that the identity component of $\mathcal{C}_1$ is rigid. Then $\mathcal{C}$ is rigid. 
\end{theorem}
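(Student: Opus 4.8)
The plan is to construct, for every object of $\mathcal{C}$, both a left and a right dual. Since the tensor product is biexact and $\mathcal{C}$ is semisimple with finitely many simple objects, it suffices to treat simple objects, duals of a direct sum being assembled componentwise. Moreover it is enough to produce \emph{right} duals: if every object has a right dual, then the contravariant dual functor $X\mapsto X^{*}$ is fully faithful, hence injective on the finite set of isomorphism classes of simples and therefore a bijection on that set; thus it is an equivalence, and its quasi-inverse supplies left duals. So I would fix a simple object $X\in\mathcal{C}_{\gamma}$ and write $\mathcal{A}$ for the rigid identity component of $\mathcal{C}_{1}$, a fusion category containing the unit $\mathbf{1}$.

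First I would locate the candidate dual. Because $\otimes$ is biexact and all $\Hom$-spaces are finite dimensional, the exact additive functor $X\otimes(-)\colon\mathcal{C}\to\mathcal{C}$ admits a two-sided adjoint; since $\mathcal{C}_{\delta}\otimes\mathcal{C}_{\epsilon}\subseteq\mathcal{C}_{\delta\epsilon}$, its right adjoint carries $\mathcal{C}_{1}$ into $\mathcal{C}_{\gamma^{-1}}$. I define $X^{*}\in\mathcal{C}_{\gamma^{-1}}$ to be the value of this adjoint at $\mathbf{1}$, so that $\Hom_{\mathcal{C}}(X\otimes Y,\mathbf{1})\cong\Hom_{\mathcal{C}}(Y,X^{*})$ naturally in $Y\in\mathcal{C}_{\gamma^{-1}}$. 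The counit of this adjunction furnishes an evaluation $\mathrm{ev}\colon X\otimes X^{*}\to\mathbf{1}$, and the parallel two-sided adjoint of $(-)\otimes X$ produces a coevaluation $\mathrm{coev}\colon\mathbf{1}\to X^{*}\otimes X$. At this stage the placement of $X^{*}$ in the component $\mathcal{C}_{\gamma^{-1}}$ and the existence of the structure maps are entirely formal.

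The heart of the argument is to promote this adjunction datum to a genuine duality, i.e.\ to verify the two triangle identities $(\mathrm{id}_{X}\otimes\mathrm{ev})\circ(\mathrm{coev}\otimes\mathrm{id}_{X})=\mathrm{id}_{X}$ and its mirror. Here I would exploit the $\mathcal{A}$-bimodule structure on the graded pieces: the inclusions $\mathcal{A}\otimes\mathcal{C}_{\gamma}\subseteq\mathcal{C}_{\gamma}$ and $\mathcal{C}_{\gamma}\otimes\mathcal{A}\subseteq\mathcal{C}_{\gamma}$ make each $\mathcal{C}_{\gamma}$ a semisimple $\mathcal{A}$-bimodule category. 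Since $\mathcal{A}$ is rigid, the internal Homs of these module categories are well behaved and the maps $\mathrm{ev},\mathrm{coev}$ are precisely the unit and counit of the internal-Hom adjunctions; the criterion for rigidity then reduces to showing that the canonical morphism $X^{*}\otimes(-)\to[X,-]$, identifying the adjoint with tensoring by $X^{*}$, is an isomorphism. Concretely, one transports the duality morphisms of $\mathbf{1}$ living inside the fusion category $\mathcal{A}$ across the module structure to establish the triangle identities. I expect this to be the main obstacle. The delicate point is \emph{nondegeneracy}: that $\mathbf{1}$ splits off $X\otimes X^{*}$ with the multiplicity making the pairing perfect. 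This cannot follow from biexactness alone — an exact tensor functor may still kill nonzero objects — so it must come from combining rigidity of $\mathcal{A}$ with the specific axioms of a weakly fusion category, the subtlety being that rigidity of the identity component controls only the degree-$1$ behaviour and must be propagated across the $\Gamma$-grading.

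Finally, I would package the simple-object construction into the assertion that every object of $\mathcal{C}$ admits a right dual, and conclude full rigidity via the finiteness reduction of the first paragraph. In summary, existence of adjoints and the grading bookkeeping are routine; the genuine content is verifying that the adjunction counit is a nondegenerate evaluation, and this is exactly where rigidity of $\mathcal{A}$, transported along the grading through the $\mathcal{A}$-bimodule structure, is indispensable.
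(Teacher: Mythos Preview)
Your setup is reasonable --- reducing to simple objects, locating the candidate dual $X^{*}$ via adjoints, and recognising that the crux is verifying the triangle identities --- but the proposal has a genuine gap at exactly the point you yourself flag as ``the main obstacle.'' You write that ``one transports the duality morphisms of $\mathbf{1}$ living inside the fusion category $\mathcal{A}$ across the module structure to establish the triangle identities'' and that the nondegeneracy ``must come from combining rigidity of $\mathcal{A}$ with the specific axioms of a weakly fusion category,'' but you never actually carry out this combination. Saying that the criterion reduces to showing $X^{*}\otimes(-)\to[X,-]$ is an isomorphism is just a restatement of rigidity; the internal-Hom and bimodule language reorganises the problem without solving it. As written, the proposal is a diagnosis of where the difficulty lies rather than an argument that overcomes it.

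The paper's approach is more concrete and bypasses the module-category machinery entirely. The key observation is that for a simple $M\in\mathcal{C}_{\gamma}$, the object $M\otimes{}^{*}M$ lies in $\mathcal{C}_{1}$ and therefore has a rigid left dual by hypothesis. The paper then proves a general criterion (Proposition~\ref{prop:rigidity in weakly fusion categories}): in any weakly fusion category, if $M$ is simple and $M\otimes{}^{*}M$ has a rigid left dual, then $M$ itself does. The proof is a short diagrammatic argument: one starts from the nonzero morphism $e_{{}^{*}M}\otimes\mathrm{id}_{{}^{*}M}\colon{}^{*}M\otimes M\otimes{}^{*}M\to{}^{*}M$, rewrites it using the rigid duality of $M\otimes{}^{*}M$ to straighten a strand, and thereby exhibits the ``snake'' morphism ${}^{*}M\to M^{*}$ as a factor of a nonzero map --- hence nonzero, hence (by simplicity) invertible, which is equivalent to rigidity of $M$ via Lemma~\ref{lemma:rigidity in weakly fusion categories}. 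This is the missing mechanism: rigidity of $\mathcal{C}_{1}$ enters precisely through the rigidity of the single object $M\otimes{}^{*}M$, and the propagation across the grading is this one diagrammatic rewriting, not a general module-category transport.
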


\noindent {\it Categorical Verlinde formula.}
Motivated by the result of Moore-Seiberg \cite{MS1} and Verlinde \cite{Verlinde}, next we prove a Verlinde formula to compute the fusion rules for $\Gamma$-crossed modular fusion categories introduced by Turaev \cite{TuraevHQFT}. This problem of determining fusion rules of $\Gamma$-extensions of fusion categories and of $\Gamma$-crossed modular categories is also of independent interest and has also been considered in the contemporaneous work of several authors (see Bischoff-Jones \cite{BisJon}).

In \cite{Desh:17,Desh:18a}, the first coauthor introduced for each $\gamma \in \Gamma$, the notion of a categorical $\gamma$-crossed S-matrix denoted by $S^{\gamma}$ and twisted characters for any $\Gamma$-graded Frobenius $\ast$-algebra arising from a Grothendieck group $K(\Ccal)$ of a braided $\Gamma$-crossed fusion category $\Ccal$. We prove a general twisted Verlinde formula  in the setting of $\Gamma$-crossed braided fusion categories which computes fusion coefficients in terms of the crossed S-matrices. This generalizes the work of the first coauthor \cite{Desh:18}.  We refer to Sections \ref{sec:braidedcrossed}, \ref{sec:catverlinde} and Theorem \ref{thm:twisted verlinde} for more details and also to Corollary \ref{cor:highergenus} for a higher genus analog of the following 3-point genus zero version.
\begin{theorem}\label{thm:verlindeforbraided}
	Let $\Ccal=\bigoplus\limits_{\gamma\in \Gamma}\Ccal_\gamma$ be a $\Gamma$-crossed modular fusion category. Let $\gamma_1,\gamma_2\in \Gamma$ and let $A\in \Ccal_{\gamma_1}, B\in \Ccal_{\gamma_2}$ and $C\in \Ccal_{\gamma_1\gamma_2}$ be simple objects. Then the multiplicity $\nu_{A,B}^C$ of $C$ in the tensor product $A\otimes B$ is given by 
\begin{equation}
\label{eqn:verlindecateqnintro}
\nu_{A,B}^C=\sum\limits_{D\in P_1^{\langle\gamma_1,\gamma_2\rangle}}\frac{\overline{S^{\gamma_1}_{A,D}}\cdot\overline{S^{\gamma_2}_{B,D}}\cdot S^{\gamma_1\gamma_2}_{C,D}}{S_{\unit,D}}=\sum\limits_{D\in P_1^{\langle\gamma_1,\gamma_2\rangle}}\frac{{S^{\gamma_1}_{A,D}}\cdot{S^{\gamma_2}_{B,D}}\cdot\overline{S^{\gamma_1\gamma_2}_{C,D}}}{S_{\unit,D}},
\end{equation}
	where the summation is over the simple objects $D\in \Ccal_1$ fixed by both $\gamma_1,\gamma_2$ and where the crossed S-matrices are chosen in a compatible way.
\end{theorem}

\begin{remark}
	In general, a $\gamma$-crossed S-matrix depends on certain choices and is only well defined up to rescaling rows by roots of unity. In the formula given by \eqref{eqn:verlindecateqnintro}, we have assumed that the crossed S-matrices are chosen in a compatible way. In general some cocycles appear in the Verlinde formula Corollary \ref{cor:highergenus}. However, in the set up of $\Gamma$-twisted conformal blocks with $\Gamma$ preserving a Borel, the cocycles are trivial  (see also Remark \ref{rem:cocyleszero}).
\end{remark}

We also study (see Section \ref{sec:twistedfusion}) a categorical twisted fusion ring $K_{\mathbb{C}}(\Ccal_1, \gamma)$ for every $\gamma \in \Gamma$. We further show that for each $\gamma$, the crossed $S$-matrix $S^{\gamma}$ determines the character table (Theorem \ref{thm:chartableoftwistedfusion}) of the ring $K_{\mathbb{C}}(\Ccal_1,\gamma)$. The following remark is important:
\begin{remark}
	The non-negative integer $\nu_{A,B}^{C}$ is a fusion coefficient in the Grothendieck ring $K(\Ccal)$ of the $\Gamma$-crossed category and not in the categorical twisted fusion ring $K_{\mathbb{C}}(\Ccal_1,\gamma)$ where the fusion coefficients may be non-integral in general. 
\end{remark}

\noindent{\bf Part II.}
In Section \ref{sec:deftwistconf}, we give a coordinate free construction of the twisted conformal blocks and discuss the associated descent data coming form {\em propagation of vacua}. We show that like the untwisted case \cite{Fakhruddin:12}, twisted conformal blocks with at least one trivial weight are pull backs (see Proposition \ref{prop:importantprop}) along along forgetful-stabilization morphism  $\overline{\mathcal{M}}{}^{\Gamma}_{g,n+1}({\bf m},1)\rightarrow \GMod({\bf m})$. We prove the following: 
\begin{theorem}\label{thm:baby}
	Suppose that $\Gamma$ preserves a Borel subalgebra of $\frg$.	Twisted conformal blocks give a $\Gamma$-crossed modular fusion category $\Ccal(\frg,\Gamma,\ell)=\oplus_{\gamma \in \Gamma} \Ccal_{\gamma}$ such that the simple objects of each $\Ccal_\gamma$ are parameterized by the set $P^{\ell}(\frg,\gamma)$ and $$\nu_{\lambda_1,\lambda_2}^{\lambda_3^*}=\dim_{\mathbb{C}}\mathcal{V}_{\lambda_1,\lambda_2,\lambda_3,\Gamma}(\widetilde{C},\mathbb{P}^1, \widetilde{\bf p},{\bf p},\widetilde{z}),$$ where $\lambda_i$ is a simple object of $\Ccal_{\gamma_i}$,  $\gamma_1.\gamma_2.\gamma_3=1$ and $\widetilde{\bf p}$ lies in the same connected component of $\widetilde{C}$.
\end{theorem}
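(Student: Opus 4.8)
The plan is to recognize the bundles of twisted conformal blocks as a $\Gamma$-crossed modular functor, transport this structure across the equivalence with braided $\Gamma$-crossed fusion categories, and then upgrade to rigidity using Theorem \ref{thm:catrigid}; the fusion-coefficient formula is then a formal consequence. First I would invoke the Hong-Kumar construction \cite{KH}, which under the standing hypothesis that $\Gamma$ preserves a Borel produces locally free sheaves on $\GMod$ satisfying factorization and propagation of vacua, equipped with a flat projective connection over the smooth locus $\mathcal{M}^{\Gamma}_{g,n}$. Computing the Atiyah algebra governing this connection and matching the sewing data across the boundary strata promotes these sheaves to a genuine $\Gamma$-crossed modular functor. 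The $\Gamma$-grading is read off from the monodromy data $(m_1,\dots,m_n)$ around the marked points, and the module attached at a point of monodromy $\gamma$ ranges exactly over the integrable highest-weight representations indexed by $P^{\ell}(\frg,\gamma)$, so the simple objects of each graded piece $\Ccal_\gamma$ are parameterized by $P^{\ell}(\frg,\gamma)$.

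Next I would apply the equivalence between a $\Gamma$-crossed modular functor and a braided $\Gamma$-crossed weakly fusion category. Transporting structure across it produces the category $\Ccal(\frg,\Gamma,\ell)=\bigoplus_{\gamma\in\Gamma}\Ccal_\gamma$ together with all its operations: the tensor product (from genus-zero sewing), the $\Gamma$-action by the autoequivalences $\Ccal_h\to\Ccal_{\gamma h\gamma^{-1}}$ (from the relevant mapping classes of $\Gamma$-surfaces), the crossed braiding (from braiding the marked points), and the nondegeneracy condition (from the genus-one data, i.e.\ the crossed $S$-matrices). At this stage $\Ccal(\frg,\Gamma,\ell)$ is a nondegenerate braided $\Gamma$-crossed weakly fusion category.

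To conclude that $\Ccal$ is a \emph{$\Gamma$-crossed modular fusion} category it remains to establish rigidity. The identity component $\Ccal_1$ is precisely the familiar untwisted WZW modular fusion category of $\frg$ at level $\ell$, which is rigid. Theorem \ref{thm:catrigid} then forces the whole $\Gamma$-graded category $\Ccal=\bigoplus_\gamma\Ccal_\gamma$ to be rigid, so $\Ccal(\frg,\Gamma,\ell)$ is indeed a $\Gamma$-crossed modular fusion category.

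Finally, in any rigid braided $\Gamma$-crossed fusion category one has
$$\nu_{\lambda_1,\lambda_2}^{\lambda_3^*}=\dim_{\mathbb C}\Hom(\lambda_1\otimes\lambda_2,\lambda_3^*)=\dim_{\mathbb C}\Hom(\lambda_1\otimes\lambda_2\otimes\lambda_3,\unit),$$
where the grading forces $\gamma_1\gamma_2\gamma_3=1$. Under the modular-functor realization this last $\Hom$-space is, by construction, the space of twisted covacua on a $3$-pointed genus-zero $\Gamma$-cover with monodromies $(\gamma_1,\gamma_2,\gamma_3)$ whose lifts lie in a single connected component, namely $\mathcal{V}_{\lambda_1,\lambda_2,\lambda_3,\Gamma}(\widetilde{C},\mathbb{P}^1,\widetilde{\mathbf p},\mathbf p,\widetilde z)$, which gives the stated equality. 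The hard part is the first step: verifying the full list of $\Gamma$-crossed modular-functor axioms for the twisted blocks, above all the nondegeneracy of the crossed braiding and the compatibility of the $\Gamma$-action with twisted factorization at the nodes. The Atiyah-algebra computation and the careful bookkeeping of nodal monodromies in the twisted sewing are where the genuine effort lies; once the modular functor is in hand, the passage to a rigid $\Gamma$-crossed modular fusion category and the fusion-coefficient identity are essentially formal.
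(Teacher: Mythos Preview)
Your proposal is correct and follows essentially the same route as the paper: verify that twisted conformal blocks satisfy the axioms of a $\Gamma$-crossed modular functor (this is Theorem \ref{conj:conformalblockismodular}, proved in Section \ref{sec:proofofconfgammafunc} via the Atiyah-algebra computation and the Verdier-specialization/sewing argument), pass to a $\Gamma$-crossed weakly ribbon category via Theorem \ref{thm:modfunmodcat}, and then invoke Huang's rigidity of the untwisted $\Ccal_1$ together with Theorem \ref{thm:catrigid} to upgrade to a $\Gamma$-crossed modular fusion category; the fusion-coefficient identity is then Equation \eqref{eq:threepointhomspaces}. One small correction: the non-degeneracy axiom in the paper's Definition \ref{d:gammacrossedmodfun} is the elementary condition that $\Vcal_{0,3,(\gamma,\gamma^{-1},1)}(X\boxtimes X^*\boxtimes\unit)\neq 0$, not a genus-one or crossed-$S$-matrix condition, and the non-degeneracy of $\Ccal_1$ as a braided category comes from the untwisted theory rather than from the $\Gamma$-crossed modular-functor axioms themselves.
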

We refer the reader to Section \ref{sec:threepointmtc} for a more  precise version (Theorem \ref{thm:oneofmain}) of Theorem \ref{thm:baby}. The proof is given in Part III using the notion of complex analytic $\Gamma$-crossed  modular functors. 

Now given Theorem \ref{thm:baby}, by the categorical Verlinde formula in Theorem \ref{thm:verlindeforbraided}, we get a formula to compute the dimension of the three pointed twisted conformal block with the base curve $\mathbb{P}^1$. However to complete the proof of Theorem \ref{conj:main1}, we still need to determine the crossed S-matrices. \\

\noindent {\it Compute categorical crossed $S$-matrices.}
We consider the twisted fusion ring $\mathcal{R}_{\ell}(\frg,\gamma)$ coming from finite order  automorphisms of the Lie algebra $\frg$. This ring was also studied by J. Hong \cite{Hong,Hong1}. In the set up of diagram automorphism, 
Hong gives a complete description of the character table of $\mathcal{R}_{\ell}(\frg,\sigma)$ in terms 
of the characters of the representations rings of the fixed point algebra ${\frg_{\sigma}}$ (see Fuchs-Schweigert \cite{FS}).

In Section \ref{sec:arbtwistedfusion}, we observe that 
the twisted fusion ring $\mathcal{R}_{\ell}(\frg,\gamma)$ constructed from tracing automorphisms of untwisted conformal blocks only depends on the diagram automorphism class $\sigma$ of $\gamma$. 
We also observe in (see Proposition \ref{cor:corofoneofmain}), that for each $\gamma \in\Gamma$, the twisted Kac-Moody fusion ring $\Rcal_{\ell}(\frg,\gamma)$ and the categorical twisted fusion ring $K_{\mathbb{C}}(\Ccal(\frg,\ell),\gamma)$ agree.   We apply these results (along with  Proposition \ref{prop:twistedviaKMtwisted}) to compute the categorical $\gamma$-crossed S-matrices that appear in the categorical Verlinde formula for the case of twisted conformal blocks in terms of the character of the ring $\Rcal_{\ell}(\frg,\gamma)$.  This completes the overview of the steps in the proof of Theorem \ref{conj:main1}. 
We are now left to discuss the proof of Theorem \ref{thm:baby} and or its more precise version Theorem \ref{thm:oneofmain}. 

\begin{remark}
	Works of G. Faltings and C. Teleman \cite{Faltings:94,Teleman} determine the set of 
	characters of $R_{\ell}(\frg)$ as restrictions of representations of the Grothendieck ring $R(\frg)$ of representations of the lie algebra $\frg$.  This combined with the works of Yi-Zhi Huang (\cite{Huang:08b}, \cite{Huang:08a}) on rigidity determines the S-matrices in the untwisted setting. We refer the reader to the work of Dong-Lin-Ng \cite{Donlinngs} for a related discussion in the setting of Vertex algebras. 
\end{remark}

\begin{remark}
	A. Ginnory \cite{AG} showed that the fusion coefficients of  $\mathcal{R}_{\ell}(\frg,\sigma)$ could be negative. This negated a suggestion of Hong \cite{Hong} relating it to the dimensions of the twisted conformal blocks. 
	It is important to point out that the statement of Theorem \ref{conj:main1} involves both crossed and uncrossed S-matrices for various elements of $\Gamma$ whereas the Verlinde formula (see also Deshpande\cite[Thm. 2.12(iii)]{Desh:17}) for the structure constants of $\mathcal{R}_{\ell}(\frg,\sigma)$ involves only the $\sigma$-crossed S-matrix. In the categorical set-up (\cite{Desh:17}) for twisted fusion rings, the fusion coefficient may not be integers. 
\end{remark}

\noindent {\it Atiyah algebras of a flat projective connection.} The description of the Atiyah algebras will be required in Part  III to define the notions of $\Gamma$-crossed analytic modular functors. 
Following the approach of the Beilinson-Bernstein localization functors in Frenkel-Ben-Zvi\cite{FBz}, Szczesny \cite{szcz}, we extend the construction of the flat projective connections in \cite{KH,szcz} to a twisted logarithmic connection over the boundary $\Delta_{g,n,\Gamma}$ of $\GMod$. 

Let $\mathbb{L}$ be a line bundle on $\GMod$, we will denote by $\mathcal{A}_{\mathbb{L}}$(resp $\mathcal{A}_{\mathbb{L}}(-\log \Delta_{g,n,\Gamma})$) the Atiyah algebra (respectively logarithmic Atiyah algebra) of differential operators (respectively differential operators preserving $\Delta_{g,n,\Gamma}$) acting on $\mathbb{L}$.
 The following theorem determines the Atiyah algebra (Theorem \ref{thm:atiyahalgebra}) of the twisted logarithmic $\mathcal{D}$-module $\mathbb{V}_{\vec{\lambda},\Gamma}(\widetilde{C}, C, \widetilde{\bf p},{\bf p})$ on $\GMod(m_1,\dots, m_n)$, where $\bm m=(m_1,\dots, m_n)$ is the monodromy data and  $\vec{\lambda}=(\lambda_1,\dots, \lambda_n)$ as in the statement of Theorem \ref{conj:main1}.
\begin{theorem}\label{thm:atiyahalg}Let $\Lambda$ be the pullback of the Hodge line bundle to $\GMod$ under the natural forgetful back from $\GMod \rightarrow \overline{\mathcal{M}}_{g,n}$ and ${\tildeL}_i$'s denote the line bundles corresponding to $i$-th Psi-class on $\GMod$. Then the logarithmic Atiyah algebra
	$$\frac{\ell\dim \frg}{2(\ell+h^{\vee}(\frg))}\mathcal{A}_{\Lambda}(-\log {\Delta}_{g,n,\Gamma})+\sum_{i=1}^{n}N_i\Delta_{\lambda_i}\mathcal{A}_{{\tildeL}_i}(-\log {\Delta}_{g,n,\Gamma})$$ acts on the vector bundle $\mathbb{V}_{\vec{\lambda},\Gamma}(\widetilde{C},C, \widetilde{\bf{p}}, {\bf{p}})$ of twisted conformal blocks at level $\ell$. Here $\Delta_{\lambda_i}$ is the eigenvalue of the twisted Virasoro operators $L_{0,{\langle m_i\rangle }}$ at level $\ell$ of the twisted affine Kac-Moody algebra $\widehat{L}(\frg,m_i)$, $N_i$ is the order the element $m_i\in \Gamma$ and $h^{\vee}(\frg)$ is the dual Coxeter number of $\frg$.
\end{theorem}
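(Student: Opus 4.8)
The plan is to follow the Beilinson--Bernstein localization philosophy of \cite{FBz}, extended to the twisted and logarithmic setting of Part~II, and to isolate the central extension obstructing flatness of the projective connection as a Baer sum of two Atiyah-algebra contributions. Recall that for a line bundle $\mathcal{L}$ the Atiyah algebra $\mathcal{A}_{\mathcal{L}}$ sits in an exact sequence $0\to\mathcal{O}\to\mathcal{A}_{\mathcal{L}}\to\Theta\to 0$ of Lie algebroids, that these splice under Baer sum, and that $\mathcal{A}_{\mathcal{L}^{\otimes N}}=N\,\mathcal{A}_{\mathcal{L}}$. The assertion to prove is that the displayed $\mathbb{Q}$-linear combination is precisely the logarithmic Lie algebroid that lifts to first-order operators on $\mathbb{V}_{\vec{\lambda},\Gamma}$, compatibly with the logarithmic connection extending \cite{KH, szcz} across $\Delta_{g,n,\Gamma}$.

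First I would recall, from the construction in Part~II, that a local vector field on $\GMod$ acts on $\mathbb{V}_{\vec{\lambda},\Gamma}$ through the Sugawara action: via the Kodaira--Spencer isomorphism a tangent vector corresponds to a first-order deformation of the pointed cover $(\widetilde{C},\widetilde{\mathbf{p}})$, which one represents $\Gamma$-equivariantly by a meromorphic vector field on $\widetilde{C}$ regular away from $\Gamma\cdot\widetilde{\mathbf{p}}$, and this vector field acts through the twisted Sugawara--Virasoro operators on $\mathcal{H}_{\vec{\lambda}}$. The content of the theorem is then the computation of the central term: the ambiguity in the choice of realizing vector field, combined with the non-additivity of the Sugawara operators, produces a $2$-cocycle whose class is to be matched with the stated Atiyah algebra.

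The two contributions separate as follows. Deformations that move the complex structure of $\widetilde{C}$ contribute through the central charge of the Sugawara construction. The key input is that the twisted Sugawara operators $L_{n,\langle\gamma\rangle}$ satisfy $[L_m,L_n]=(m-n)L_{m+n}+\tfrac{c}{12}(m^3-m)\delta_{m+n,0}$ with the same central charge $c=\tfrac{\ell\dim\frg}{\ell+h^{\vee}(\frg)}$ as in the untwisted case; this holds because the Sugawara tensor is assembled from the full current algebra summed over all $\Gamma$-eigencomponents, so that $\dim\frg$ and $h^{\vee}(\frg)$ are those of the ambient simple Lie algebra. Feeding the cubic term $(m^3-m)$ through the Kodaira--Spencer cocycle on $\GMod$ reproduces the curvature of the Hodge line bundle $\Lambda$, yielding the coefficient $c/2=\tfrac{\ell\dim\frg}{2(\ell+h^{\vee}(\frg))}$ of $\mathcal{A}_{\Lambda}(-\log\Delta_{g,n,\Gamma})$. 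Deformations that instead rescale the formal coordinate $\widetilde{z}_i$ at $\widetilde{p}_i$ act through $L_{0,\langle\gamma_i\rangle}$, whose eigenvalue on $\mathcal{H}_{\lambda_i}(\frg,\gamma_i)$ is by definition $\Delta_{\lambda_i}$. The factor $N_i$ enters because the cotangent line at the downstairs point $p_i$ pulls back to the $N_i$-th tensor power of the cotangent line $\tildeL_i$ at $\widetilde{p}_i$, where $N_i$ is the ramification index (the order of the monodromy $\gamma_i$); thus by $\mathcal{A}_{\mathcal{L}^{\otimes N_i}}=N_i\,\mathcal{A}_{\mathcal{L}}$ the coordinate-rescaling anomaly becomes $N_i\Delta_{\lambda_i}\,\mathcal{A}_{\tildeL_i}(-\log\Delta_{g,n,\Gamma})$.

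The main obstacle will be the bookkeeping at the ramified marked points and along the boundary. Concretely, one must (i) verify by a normal-ordering computation in the $\gamma$-twisted module—where the currents are graded by $\tfrac{1}{N_i}\mathbb{Z}$—that the twisted Sugawara central charge is exactly the untwisted $c$, and identify $\Delta_{\lambda_i}$ as the $L_{0,\langle\gamma_i\rangle}$-eigenvalue of the highest weight vector; and (ii) check that the logarithmic extension across $\Delta_{g,n,\Gamma}$ is compatible, i.e.\ that near a degenerate cover the anomaly cocycle extends with at worst logarithmic poles and matches $\mathcal{A}_{\bullet}(-\log\Delta_{g,n,\Gamma})$. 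The degeneration analysis uses the factorization structure already established for $\mathbb{V}_{\vec{\lambda},\Gamma}$ together with the sewing behaviour of the Sugawara field in the style of \cite{TUY:89}, and the ramification factors $N_i$ must be tracked carefully through the passage from $C$ to its $\Gamma$-cover $\widetilde{C}$.
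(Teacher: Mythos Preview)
Your proposal is broadly correct, but the paper's argument takes a more economical route for the Hodge contribution that avoids redoing the Kodaira--Spencer cocycle identification. The key observation (Proposition~\ref{prop:virasoroandatiyahalgebra}) is that the derivative of the isogeny $\mu:\operatorname{Aut}_N\mathcal{O}\to\operatorname{Aut}\mathcal{O}$ induces an isomorphism $\operatorname{Der}(\widetilde{C}/C,\mathcal{K})\cong\operatorname{Der}(C,\mathcal{K})$ of Lie algebras, and that $vec^{\Gamma}(\widetilde{C}\setminus\Gamma\cdot\widetilde{\mathbf{p}})\cong vec(C\setminus\mathbf{p})$ (since $\Gamma$-invariant vector fields on the \'etale locus descend). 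This identifies the twisted log-Atiyah algebra on $\GModhat$ as the pullback along $\GModhat\to\widehat{\overline{\mathcal{M}}}_{g,n}$ of the untwisted one, so the $\tfrac{c}{2}\mathcal{A}_{\Lambda}$ term follows directly from the classical results of \cite{ACH, BS, Kon, Tsuchimoto, TUY:89} without recomputing the twisted Sugawara central charge or the Kodaira--Spencer cocycle. Your direct verification would work but is more labour; the paper's reduction-by-pullback is the cleaner mechanism.

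Your handling of the $N_i$ factor is slightly miswired. You write that rescaling $\widetilde{z}_i$ acts through $L_{0,\langle\gamma_i\rangle}$ with eigenvalue $\Delta_{\lambda_i}$, and then invoke $\pi^*\mathcal{L}_i\cong\tildeL_i^{N_i}$ to supply the factor $N_i$. But the paper's equation~\eqref{eqn:viractionimp} gives $\widetilde{z}_i\partial_{\widetilde{z}_i}=z^{1/N_i}\partial_{z^{1/N_i}}\mapsto -N_iL_{0,\langle\gamma_i\rangle}$, so rescaling the \emph{upstairs} coordinate already acts with eigenvalue $-N_i\Delta_{\lambda_i}$ on the highest-weight part of $\mathcal{H}_{\lambda_i}$. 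The coordinate-free module is $\mathcal{A}ut_N(D_{R,\widetilde{p}_i})\times_{\operatorname{Aut}_N\mathcal{O}}(\mathcal{H}_{\lambda_i}\otimes\mathbb{C}dz^{\Delta_{\lambda_i}})$, where the twist by the \emph{downstairs} differential $dz^{\Delta_{\lambda_i}}$ carries eigenvalue $+N_i\Delta_{\lambda_i}$ under $\widetilde{z}_i\partial_{\widetilde{z}_i}$; this is what makes the $\operatorname{Aut}_N\mathcal{O}$-action integrable and directly produces $N_i\Delta_{\lambda_i}\mathcal{A}_{\tildeL_i}$. The psi-class pullback relation encodes the same arithmetic, but you have applied it on top of the wrong eigenvalue. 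Either say ``rescaling $z_i$ (downstairs) acts through $L_0$ with eigenvalue $\Delta_{\lambda_i}$, hence $\Delta_{\lambda_i}\mathcal{A}_{\mathcal{L}_i}=N_i\Delta_{\lambda_i}\mathcal{A}_{\tildeL_i}$,'' or say ``rescaling $\widetilde{z}_i$ acts through $N_iL_0$ with eigenvalue $N_i\Delta_{\lambda_i}$,'' but not a mixture.
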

The constants $\Delta_{\lambda_i}$ appearing in the statement of Theorem \ref{thm:atiyahalg} are known as {\em trace anomalies} and are explicitly computed in Lemma 3.6 in Wakimoto \cite{Wakimoto}. The Atiyah algebra in Theorem \ref{thm:atiyahalg} appears in our formulation (Section \ref{def:cextenmodularfunctor}) of the axioms of $\mathcal{C}$-extended $\Gamma$-crossed modular functor.

\noindent{\bf Part III.}
In the untwisted set up,  Bakalov-Kirillov \cite{BK:01} introduce the notion of a $2$-dimensional complex analytic modular functor and uses it to construct a weakly ribbon braided tensor category. We also refer the reader to the works of Andersen-Ueno \cite{Andersen-Ueno1,Andersen-Ueno2}.  In \cite{KP:08}, Kirillov-Prince define the notion of a topological $\Gamma$-crossed modular functor. Analogously, in Section \ref{sec:crossedmodular}, we define the notion of a $\Gamma$-crossed complex analytic modular functor generalizing the notion due to \cite{BK:01}.   
We refer the reader to Sections \ref{def:cextenmodularfunctor} and \ref{sec:crossedneutral} for the definition of the notion of a $\Gamma$-crossed modular functor and for a more precise version of the following: 

\begin{theorem}\label{conj:main2}Let $\Ccal$ be a $\mathbb{C}$-linear finite semisimple $\Gamma$-crossed abelian category (i.e. equipped with a $\Gamma$-grading, $\Gamma$-action and some additional structures satisfying certain compatibilities, see Defn. \ref{def:crossedabeliancategory} for details). A $\mathcal{C}$-extended $\Gamma$-crossed complex analytic modular functor defines the structure of a braided $\Gamma$-crossed weakly ribbon category on the category $\Ccal$.  Further if the neutral component $\mathcal{C}_1$ is rigid, then the corresponding $\Gamma$-crossed weakly ribbon category $\Ccal$ is also rigid. 
\end{theorem}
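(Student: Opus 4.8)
The plan is to adapt the strategy of Bakalov--Kirillov \cite{BK:01} to the $\Gamma$-crossed setting, extracting every categorical structure on $\Ccal$ from the values of the modular functor on standard low-genus surfaces and from the monodromy of its flat projective connection. The $\Gamma$-grading is built in, an object $A\in\Ccal_\gamma$ being a boundary label that carries monodromy $\gamma$. First I would define the tensor product using the genus-zero three-holed sphere (trinion), realized as a three-pointed admissible $\Gamma$-cover with monodromies $(\gamma_1,\gamma_2,\gamma_3)$ satisfying $\gamma_1\gamma_2\gamma_3=1$. Writing $\tau$ for the modular functor, the object $A\otimes B\in\Ccal_{\gamma_1\gamma_2}$ for $A\in\Ccal_{\gamma_1}$, $B\in\Ccal_{\gamma_2}$ is characterized by
\[
\Hom_{\Ccal}(A\otimes B,\,C)\;\cong\;\tau\big(\text{trinion};\,A,B,C^\ast\big),
\]
which lands in the stated graded component because of the compatibility of monodromy under pair-of-pants gluing; here $C^\ast$ is the dual label supplied by the orientation-reversing involution that is part of the analytic modular functor data. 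The unit object is the distinguished simple object of $\Ccal_1$ attached to the vacuum (trivial weight).

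Next I would produce the associativity constraint, the $\Gamma$-action, and the $\Gamma$-crossed braiding. Associativity comes from comparing the two pants decompositions of the four-holed sphere: the two iterated tensor products are identified by the monodromy of the connection along a path in $\tildebarM^\Gamma_{0,4}$ effecting the fusion move, and the factorization axiom guarantees this is a well-defined isomorphism. The $\Gamma$-action functors $\gamma_*\colon\Ccal_{g}\to\Ccal_{\gamma g\gamma^{-1}}$ arise from the natural action of $\Gamma$ on admissible covers (relabelling the deck/monodromy data), using the operations of Appendix \ref{ap:modstackofadmcovers}. The crossed braiding $c_{A,B}\colon A\otimes B\to {}^{\gamma_1}\!B\otimes A$ is then obtained from the half-twist that exchanges the two interior punctures of a trinion; the appearance of the twist ${}^{\gamma_1}\!B$ is exactly the monodromy picked up by the cover as one marked point is dragged around the other. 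Finally, the balancing $\theta_A$ is the monodromy of the Dehn twist about a boundary circle, i.e.\ the action of $L_0$ recorded by the Atiyah algebra of Theorem \ref{thm:atiyahalg}; because the connection is only \emph{projectively} flat with the central charge fixed there, $\theta$ and the candidate duality morphisms are well defined only up to the scalar ambiguity carried by the central extension, which is precisely what ``weakly ribbon'' encodes.

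The coherence axioms---pentagon, the two crossed hexagons, $\Gamma$-equivariance of $\otimes$ and of the braiding, and the balancing identity---are then deduced in the standard way: each is the image under $\tau$ of a defining relation (pentagon, braid, and balancing relations) in the mapping class groupoids of the relevant covers, checked on $\tildebarM^\Gamma_{0,4}$ and $\tildebarM^\Gamma_{0,5}$ together with the compatibilities of the gluing operations on $\barM^\Gamma_{g,n}$. I expect the main obstacle to be precisely the bookkeeping of the $\Gamma$-action throughout the crossed hexagons: one must verify that the half-twist monodromy composes with the deck-transformation functors so as to yield Turaev's \emph{crossed} hexagon identities rather than the ordinary ones, and that the scalar cocycles which occur are consistently those dictated by the fixed central charge. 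This is the genuine departure from the untwisted argument of \cite{BK:01}, and it is where the flatness and equivariance of the connection constructed via the Atiyah algebra must be used most carefully.

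Finally, the rigidity assertion is immediate from the categorical input already in place. The structures above exhibit $\Ccal$ as a $\Gamma$-graded weakly fusion category that is braided and weakly ribbon, and the hypothesis is exactly that its neutral component $\Ccal_1$ is rigid. Theorem \ref{thm:catrigid} then applies verbatim to conclude that $\Ccal$ is rigid, which upgrades the weakly ribbon structure to a genuine $\Gamma$-crossed ribbon structure.
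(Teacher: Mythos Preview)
Your direct-construction approach is a valid route but differs substantially from what the paper does. The paper's proof (Section \ref{sec:proofsofthms}) proceeds in two steps: first it establishes that the complex analytic notion of a $\Ccal$-extended $\Gamma$-crossed modular functor is \emph{equivalent} to the topological notion studied by Kirillov--Prince \cite{KP:08, Prince}, and then it simply cites their main theorem to obtain the $\Gamma$-crossed weakly ribbon structure. The equivalence between the two notions is the content of Theorem \ref{thm:topanalytic} and Proposition \ref{prop:top=analytic}: one shows via a Teichm\"uller argument (Theorem \ref{thm:fundamentalgroupoidmappingclassgp}) that the fundamental groupoids $\pi_1(\tildeM^\Gamma_{g,n}(\mathbf{m}))$ are canonically equivalent to the surface groupoids $\Surf^\Gamma_{g,n}(\mathbf{m})$, compatibly with all gluings along morphisms in $\Xcal^\Gamma$, and then Deligne's Riemann--Hilbert correspondence converts the category $\mathscr{D}_c\Mod(\tildebarM^\Gamma_{X,\mathbf{m},\mathbf{b}})$ into $\Rep_{e^{\sqrt{-1}\pi c}}\wtilde\Surf^\Gamma(X,\mathbf{m},\mathbf{b})$. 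What you propose---extracting tensor, associator, crossed braiding, and twist directly from trinion data and monodromy of the connection on $\tildebarM^\Gamma_{0,n}$---is essentially the content of the Kirillov--Prince argument transported to the analytic side; it would work, but the paper avoids redoing that bookkeeping by passing through the equivalence first. The advantage of the paper's route is that the coherence verification (pentagon, crossed hexagons, $\Gamma$-equivariance) is entirely delegated to \cite{KP:08}; the advantage of yours is that it is more self-contained.

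One conceptual correction: you write that the projective ambiguity in $\theta$ and the duality morphisms ``is precisely what `weakly ribbon' encodes.'' This is not right. The adjective \emph{weakly} in ``weakly ribbon'' refers to the duality being weak in the sense of Section \ref{sec:monrcatweaklyfusion} (monoidal r-category, Definition \ref{d:crossedweaklyribbon}), not to any projective indeterminacy. The twist $\theta$ is genuinely well defined as an automorphism of objects; what may fail a priori is the existence of rigid (as opposed to merely representable) duals. The central charge governs the twisting of the $\mathcal{D}$-module structure, but the categorical structures extracted from the modular functor are honest, not projective.

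For the rigidity assertion your argument is exactly the paper's: once $\Ccal$ is a $\Gamma$-graded weakly fusion category with $\Ccal_1$ rigid, Corollary \ref{cor:rigidity of graded categories} (your Theorem \ref{thm:catrigid}) gives rigidity of $\Ccal$.
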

\begin{remark}
	The last part of Theorem \ref{conj:main2} follows by applying Theorem \ref{thm:catrigid} to the first part of Theorem \ref{conj:main2}.
\end{remark}

\noindent{\it $\Gamma$-crossed modular functor from twisted conformal blocks.}
We now address the question of constructing a $\Gamma$-crossed modular fusion category given the action of $\Gamma$ on $\frg$ and a level $\ell$. This is the final step in completing the proof of Theorem \ref{thm:baby}.

For this firstly we need the underlying finite semisimple $\Gamma$-crossed abelian category $\Ccal(\frg,\Gamma,\ell)=\bigoplus\limits_{\gamma\in \Gamma}\Ccal_\gamma$. For $\gamma\in \Gamma$, we take $\Ccal_\gamma$  to be the finite semisimple category whose simple objects are the irreducible, integrable, highest weight representations of $\widehat{L}(\frg,\gamma)$ of level $\ell$ parameterized by $P^{\ell}(\frg,\gamma)$. This is the underlying $\Gamma$-crossed abelian category (see \S\ref{ss:twistedconfblockcrossedabelian} for details) on which we want to define the structure of a $\Gamma$-crossed modular category. Motivated by results of \cite{BFM,BK:01}, we prove the following (see Theorem \ref{conj:conformalblockismodular}) in Section \ref{sec:proofofconfgammafunc}:

\begin{theorem}\label{conj:main3}Let $\Gamma$ be a finite group acting on the simple Lie algebra $\frg$ and let $\ell\in \ZBbb_{\geq 1}$. Suppose that $\Gamma$ preserves a Borel in $\frg$. Then the corresponding $\Gamma$-twisted conformal blocks define a $\mathcal{C}$-extended $\Gamma$-crossed  modular functor. 
\end{theorem}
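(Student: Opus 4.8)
The plan is to verify, following the Beilinson-Feigin-Mazur and Bakalov-Kirillov strategy \cite{BFM, BK:01}, the axioms of a $\mathcal{C}$-extended $\Gamma$-crossed complex analytic modular functor formulated in Section \ref{def:cextenmodularfunctor}, for the assignment sending a labeled stable admissible $\Gamma$-cover $(\tildeC\to C,\wtilde{\mathbf p},\mathbf p)$ with weights $\vec{\lambda}$ to its space of twisted covacua $\mathcal{V}_{\vec{\lambda},\Gamma}$. The underlying finite semisimple $\Gamma$-crossed abelian category is already fixed to be $\Ccal(\frg,\Gamma,\ell)=\bigoplus_{\gamma\in\Gamma}\Ccal_\gamma$ with the simple objects of $\Ccal_\gamma$ indexed by $P^\ell(\frg,\gamma)$, so what remains is to show that the conformal-block data carry the required local freeness, a flat logarithmic projective connection of the correct central charge, and gluing and normalization isomorphisms compatible with the operations on $\barM^\Gamma_{g,n}$ and its variants recalled in Appendix \ref{ap:modstackofadmcovers}. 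Local freeness over $\GMod$ and the flat projective connection on the smooth locus $\mathcal{M}^\Gamma_{g,n}$ are already supplied by Hong-Kumar \cite{KH}; the work lies in assembling these inputs into the modular-functor formalism and verifying the remaining compatibilities.

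The analytic heart of the argument is Theorem \ref{thm:atiyahalg}: the computation of the logarithmic Atiyah algebra isolates the central charge $c=\tfrac{\ell\dim\frg}{\ell+h^\vee(\frg)}$ together with the conformal weights $\Delta_{\lambda_i}$, which is exactly the data entering the $\mathcal{C}$-extended structure axiom. I would use this to extend the projective connection to a flat logarithmic connection along the boundary $\Delta^\Gamma_{g,n}$ of $\GMod$ realizing the prescribed Atiyah-algebra action, so that the flat structure is controlled across all nodal degenerations by this single central charge.

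The gluing axiom is then supplied by the factorization theorem of \cite{KH}: at a node with local monodromy $\gamma$ the fiber of $\mathcal{V}_{\vec{\lambda},\Gamma}$ decomposes as a direct sum over $\mu\in P^\ell(\frg,\gamma)$ of twisted covacua of the normalization, with the dual modules $\mathcal{H}_\mu(\frg,\gamma)$ and $\mathcal{H}_{\mu^*}(\frg,\gamma^{-1})$ attached across the two preimages of the node. I would check that this reproduces the $\Gamma$-crossed gluing isomorphism demanded by the definition. Propagation of vacua, in the pullback form of Proposition \ref{prop:importantprop}, gives compatibility with forgetting a marked point labeled by the vacuum and hence the genus-zero normalization axiom.

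The step I expect to be the main obstacle is the $\Gamma$-crossed bookkeeping of monodromies under the cutting and gluing operations: one must ensure that the elements $m_i$ transform correctly under the admissible-cover operations of Appendix \ref{ap:modstackofadmcovers} and under the braiding induced by moving marked points, that the ``same connected component'' hypothesis on $\wtilde{\mathbf p}$ is preserved under degeneration, and that the projective ambiguity of the connection is governed uniformly by the single central charge $c$ so that the Atiyah-algebra axiom holds globally over the stacks rather than merely fiberwise.
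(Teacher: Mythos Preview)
Your high-level outline is correct and matches the paper's strategy: verify the axioms of Definition~\ref{d:gammacrossedmodfun} using the Atiyah-algebra computation (Theorem~\ref{thm:atiyahalgebra}) for the central-charge axiom, the factorization theorem of \cite{KH}, and propagation of vacua. However, there is a genuine gap in your treatment of the gluing axiom.

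The gluing axiom (Equation~\eqref{eq:gluingaxiom}) is \emph{not} an isomorphism of vector spaces at a fixed nodal fiber; it is an isomorphism of objects in $\mathscr{D}_c\Mod(\tildebarM^\Gamma_{X,\mathbf{m}_X,\mathbf{b}_X})$, i.e.\ of vector bundles with logarithmic projective connection. The right-hand side involves the Verdier specialization functor $\Sp_{f,\pmb\gamma}$ along the boundary stratum (see Appendix~\ref{ap:specialization} and Section~\ref{sec:specializationdeverdier}), and you must exhibit an isomorphism compatible with the $\mathcal{D}^0_{ND}$-action on the normal bundle. The factorization theorem of \cite{KH} only gives the fiberwise statement (Proposition~\ref{prop:factorizationofhongkumar}); it does not by itself say anything about how the factorization interacts with the connection as one moves in the normal direction to the boundary.

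The paper supplies this missing step via the \emph{twisted sewing construction}: one introduces the sewing element $\gamma_\mu=\sum_{d\geq 0}\gamma_{\mu,d}\tau^d$ built from the bilinear pairing $(\,|\,)_\mu$ on $\mathcal{H}_\mu(\frg,\gamma)\otimes\mathcal{H}_{\mu^*}(\frg,\gamma^{-1})$, uses it to define a map over each infinitesimal neighborhood $D^{(m)}$ of the boundary (Proposition~\ref{prop:importantflateness}), and then checks that this map intertwines the action of $\tau\partial_\tau$ up to the scalar $-N\Delta_\mu$ (Lemma~\ref{lem:proj}). This is precisely what ensures that the associated-graded pieces match as $\mathcal{D}^{(0)}$-modules (Corollary~\ref{cor:importantflatness}) and hence that the sewing map becomes an isomorphism $\widetilde{\operatorname{Sp}}_D(\mathcal{V}_{\vec\lambda,\Gamma})\cong\mathcal{V}_{\vec\lambda,\Gamma}(N^\times D)$ of twisted $\mathcal{D}$-modules (Theorem~\ref{thm:specializationfunctorexplicit}). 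The trace anomaly $\Delta_\mu$ from the twisted Sugawara $L_{0,\langle\gamma\rangle}$-eigenvalues is the key ingredient here, and it is exactly why the Atiyah-algebra computation and the gluing axiom fit together. Your proposal treats the Atiyah algebra and factorization as independent inputs; in fact the former is needed inside the proof of the latter's $\mathscr{D}_c$-module compatibility. The $\Gamma$-crossed monodromy bookkeeping you flag as the main obstacle is, by contrast, largely formal once the specialization is set up.
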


We refer the reader to Section \ref{sec:proofofconfgammafunc} for the details. Now Theorem \ref{conj:main3} and the first part of Theorem \ref{conj:main2}  provides $\Ccal(\frg,\Gamma,\ell)$ with the structure of a $\Gamma$-crossed weakly ribbon category. However to finish the proof of Theorem \ref{thm:baby}, we  still need to answer the following question: 
\begin{question}
	Is the $\Gamma$-crossed weakly ribbon category $\Ccal(\frg,\Gamma,\ell)$ arising from $\Gamma$-twisted conformal blocks rigid? 
\end{question}
For the untwisted case, it is well known that conformal blocks form a weakly rigid braided tensor category \cite{BK:01}. Rigidity for these categories has been proved by Y. Huang \cite{Huang:08b,Huang:08a} and also by Finkelberg \cite{Fink2,Fink1}(with some restriction on Lie algebras and levels). In fact, both the proofs of Huang and Finkelberg use some variant of the Verlinde formula only as an input. Hence in principle, the same problem persists in the twisted setting.   However in the twisted case, we  circumvent this issue by applying the results of Huang \cite{Huang:08b} which implies $\Ccal_1$ in $\Ccal(\frg,\Gamma,\ell)$ is rigid and hence rigidity follows from the second part of  Theorem \ref{conj:main2}.

\paragraph*{\bf Part I: The categorical twisted Verlinde formula}\label{p:catprel}
\addcontentsline{toc}{section}{\bf Part I: The categorical twisted Verlinde formula}
In this first part of the paper we develop some results about monoidal and fusion categories that we need in the paper. However, the results obtained in this part are also of independent interest from the point of view of the theory of fusion and braided crossed fusion categories. 
The main result in Section \ref{sec:drinfeldsquestions} is a criterion for rigidity in weakly fusion categories. 
In Section \ref{sec:catverlinde} we prove the categorical twisted Verlinde formula which computes the fusion coefficients of any $\Gamma$-crossed modular category in terms of the categorical crossed S-matrices. 
\section{Rigidity in weakly fusion categories}\label{sec:drinfeldsquestions}
We begin by reviewing the notions of weak duality and rigidity in monoidal categories and prove a useful criterion for rigidity. We refer the reader to Etingof-Nikshych-Ostrik \cite{ENO:05} for more details on the theory of monoidal categories and fusion categories and to Boyarchenko-Drinfeld \cite{BoDr:13} for more on weak duality and pivotal/ribbon structures in this setting.
\subsection{Monoidal r-categories and weakly fusion categories}\label{sec:monrcatweaklyfusion} We begin by recalling the notion of a monoidal r-category and a weakly fusion category.
\begin{definition}
	A monoidal category $(\Ccal,\otimes,\unit)$ is said to be an r-category if (cf. \cite{BoDr:13}):
	\\ (i) For each $X\in \Ccal$, the functor $\Ccal\ni Y\mapsto \Hom(\unit, X\otimes Y)$ is representable by an object $X^*$, i.e. we have functorial identifications $\Hom(\unit,X\otimes Y)=\Hom(X^*,Y)$.\\
	(ii) The functor $\Ccal\ni X\mapsto X^*\in \Ccal^{\operatorname{op}}$ is an equivalence of categories, with the inverse functor being denoted by $X\mapsto { }^*X$.
\end{definition}

Note that the notion of a monoidal r-category that is defined here is dual to the one considered in \cite{BoDr:13}, namely it corresponds to the weak duality for the ``second tensor product'' constructed in {\it op. cit. \S3.1}. 
\begin{remark}
	Using both (i) and (ii) from the definition, it follows that we have functorial identifications for any pair of objects $X,Y$ in a monoidal r-category $\Ccal$:
	\begin{equation}\label{eq:defrcategory}
	\Hom(\unit,X\otimes Y)=\Hom(X^*,Y)=\Hom({ }^*Y,X).\end{equation}

\end{remark}

\begin{definition}\label{def:weaklyfusion}
	We say that a monoidal category $\Ccal$ is a weakly fusion category  over an algebraically closed field $k$ if $\Ccal$ is a finite semisimple $k$-linear abelian monoidal r-category such that the unit $\unit$ is a simple object.
\end{definition}

\begin{remark}\label{rk:fusioncoeff}
	Let $A,B,C$ be simple objects in a weakly fusion category $(\Ccal,\otimes,\unit)$. As a first step in understanding the tensor product structure, it is important to consider the multiplicity $\nu^C_{A,B}$ of any simple object $C$ in the tensor product $A\otimes B$. These multiplicities are called the fusion coefficients. Since $\Ccal$ is weakly fusion, this multiplicity is given by 
	\begin{equation}\label{eq:fusioncoeffs}\nu^C_{A,B}=\dim\Hom(C,A\otimes B)=\dim\Hom(\unit,A\otimes B\otimes C^*)=\dim\Hom(\unit, {}^*C\otimes A\otimes B).\end{equation} 
	Note that all the $\Hom$-spaces in Equation \eqref{eq:fusioncoeffs} are canonically identified by the weak duality. Hence the information of all the fusion coefficients is contained in the multiplicity spaces $\Hom(\unit, A_1\otimes\cdots\otimes A_n)$ for simple objects $A_i\in \Ccal$. The dimensions of the spaces (\ref{eq:fusioncoeffs}) are known as the fusion coefficients.
\end{remark}

Let $\Ccal$ be any monoidal r-category. By definition, for each object $X\in \Ccal$, we have $\Hom(\unit,X\otimes X^*)=\Hom(X^*,X^*)$.  In particular, we have a canonical coevaluation morphism $\coev_X:\unit\to X\otimes X^*$, which we denote pictorially (where morphisms are read from top to bottom) by
$$\coev_X=\scalebox{0.8}{\begin{tikzpicture}[baseline={([yshift=-.5ex]current bounding box.center)},]
	\node (B1) {$X$};
	\node[circle, inner sep = 0.4pt, draw, above right =.6 of B1] (Cap) {$X$};
	\node[below right =.6  of Cap] (B2) {$X^*$};
	\node[above =.4 of Cap] (unit) {$\unit$};
	\draw (B1) to[out=90,in=-180] (Cap);
	\draw (B2) to[out=90,in=0] (Cap);
	\end{tikzpicture}=\begin{tikzpicture}[baseline={([yshift=-.5ex]current bounding box.center)},scale=0.75]
	\node (B1) {$X$};
	\node[circle, inner sep = 0.4pt, draw, above right =.6 of B1] (Cap) {$X$};
	\node[below right =.6  of Cap] (B2) {$X^*$};
	\draw (B1) to[out=90,in=-180] (Cap);
	\draw (B2) to[out=90,in=0] (Cap);
	\end{tikzpicture},}$$ often dropping the unit $\unit$ from the diagram. Using the identification, $$\Hom(\unit,X\otimes Y)=\Hom(X^*,Y)=\Hom({ }^*Y,X),$$ any morphism $f:\unit \to X\otimes Y$ corresponds to a unique morphism $\wtilde{f}:X^*\to Y$ and ${}^*\wtilde{f}:{ }^*Y\to X$ such that we have the equality of morphisms
$$
\scalebox{0.8}{
	\begin{tikzpicture}[baseline={([yshift=-.5ex]current bounding box.center)}, ]
	\node[draw, text centered, text width=1cm] (f) {$f$};
	\node[below=of f.210] (B1) {$X$};
	\node[below=of f.330] (B2) {$Y$};
	\node[above = .9 of f] (unit) {$\unit$};
	\draw (B1) -- (f.210);
	\draw (B2) -- (f.330);
	\end{tikzpicture}
	=
	\begin{tikzpicture}[baseline={([yshift=-.5ex]current bounding box.center)}]
	\node (B1) {};
	\node[circle,inner sep = 0.4pt, draw, above right =.6 of B1] (Cap) {$X$};
	\node[below right =.2 and .3  of Cap] (B2) {$X^*$};
	\node[below =1.2 of B1] (E1) {$X$};
	\node at (E1-|B2) (E2) {$Y$};
	\node[draw, above = .3 of E2] (f) {$\wtilde{f}$};
	\node[above=.2 of Cap] (unit) {$\unit$};
	\draw (E1) to[out=90,in=-180] (Cap);
	\draw (B2) to[out=90,in=0] (Cap);
	\draw (B2) -- (f) -- (E2);
	\end{tikzpicture}
	=
	\begin{tikzpicture}[baseline={([yshift=-.5ex]current bounding box.center)}]
	\node (B1) {};
	\node[circle,inner sep = 0pt, draw, above left =.6 of B1] (Cap) {${}^*Y$};
	\node[below left =.2 and .3  of Cap] (B2) {${}^*Y$};
	\node[below =1.2 of B1] (E1) {$Y$};
	\node at (E1-|B2) (E2) {$X$};
	\node[draw, above = .3 of E2] (f) {${}^*\wtilde{f}$};
	\node[above=.2 of Cap] (unit) {$\unit$};
	\draw (E1) to[out=90,in=0] (Cap);
	\draw (B2) to[out=90,in=180] (Cap);
	\draw (B2) -- (f) -- (E2);
	\end{tikzpicture}.}$$
Let us further assume that $\Ccal$ is weakly fusion over an algebraically closed field $k$. Now by the semisimplicity of $\Ccal$ and weak duality, it follows that for each simple object $X\in \Ccal$, the tensor product $X\otimes X^*$ contains $\unit$ as a direct summand with multiplicity one. Hence the map $\coev_X$ has a unique splitting $e_X:X\otimes X^*\to \unit$, which we denote pictorially as $e_X=\scalebox{0.8}{\begin{tikzpicture}[baseline={([yshift=-.5ex]current bounding box.center)}]
	\node (B1) {$X$};
	\node[circle, inner sep = 0.4pt, draw, below right =.6 of B1] (Cup) {$X$};
	\node[above right =.6 of Cup] (B2) {$X^*$};
	\draw (B1) to[out=-90,in=180] (Cup);
	\draw (B2) to[out=-90,in=0] (Cup);
	\end{tikzpicture}}$
such that we have
$\scalebox{0.8}{\begin{tikzpicture}[baseline={([yshift=-.5ex]current bounding box.center)}]
	\node (B1) {$X$};
	\node[circle, inner sep = 0.4pt, draw, below right =.6 of B1] (Cup) {$X$};
	\node[above right =.6 of Cup] (B2) {$X^*$};
	\node[circle, inner sep = 0.4pt, draw, above right =.6 of B1] (Cap) {$X$};
	\draw (B1) to[out=-90,in=180] (Cup);
	\draw (B2) to[out=-90,in=0] (Cup);
	\draw (B1) to[out=90,in=-180] (Cap);
	\draw (B2) to[out=90,in=0] (Cap);
	\end{tikzpicture}=1.}$
\subsection{Rigidity in monoidal r-categories}
Let us now consider the notion of rigid duals. Let us begin by recalling the definition (see \cite{ENO:05} for more):
\begin{definition}
	An object $X$ in a monoidal category $\Ccal$ is said to have a left (rigid) dual $X^*$ if and only if the functor $X^*\otimes (\cdot)$ is left adjoint to the functor $X\otimes (\cdot)$. Equivalently, there should exist two morphisms $\coev_X:\unit \to X\otimes X^*$ and $\ev_X:X^*\otimes X\to \unit$ denoted pictorially as $\coev_X=\scalebox{0.8}{\begin{tikzpicture}[baseline={([yshift=-.5ex]current bounding box.center)}]
		\node (B1) {$X$};
		\node[circle, inner sep = 0.4pt, draw, above right =.6 of B1] (Cap) {$X$};
		\node[below right =.6  of Cap] (B2) {$X^*$};
		\draw (B1) to[out=90,in=-180] (Cap);
		\draw (B2) to[out=90,in=0] (Cap);
		\end{tikzpicture}}$ and $\ev_X=\scalebox{0.8}{\begin{tikzpicture}[baseline={([yshift=-.5ex]current bounding box.center)}]
		\node (B1) {$X^*$};
		\node[inner sep = 0.4pt, draw, below right =.6 of B1] (Cup) {$X$};
		\node[above right =.6 of Cup] (B2) {$X$};
		\draw (B1) to[out=-90,in=180] (Cup);
		\draw (B2) to[out=-90,in=0] (Cup);
		\end{tikzpicture}}$ such that we have
	\begin{equation}\label{equation:rigidity diagram}\scalebox{0.8}{
		\begin{tikzpicture}[baseline={([yshift=-.5ex]current bounding box.center)},scale=0.75]
		\node (C) {$X$};
		\node[above=.6of C] (B1) {$X$};
		\node[circle, inner sep = 0.4pt, draw, above right =.3 and .2 of B1] (Cap) {$X$};
		\node[below right =.3 and .2 of Cap] (B2) {$X^*$};
		\draw (B1) to[out=90,in=-180] (Cap);
		\draw (B2) to[out=90,in=0] (Cap);
		\node[inner sep = 0.4pt, draw, below right =.3 and .2 of B2] (Cup) {$X$};
		\node[above right =.3 and .2 of Cup] (B3) {$X$};
		\draw (B2) to[out=-90,in=180] (Cup);
		\draw (B3) to[out=-90,in=0] (Cup);
		\node[above=.6of B3] (A) {$X$};
		\draw (B1)--(C);
		\draw (A)--(B3);
		\end{tikzpicture}=
		\begin{tikzpicture}[baseline={([yshift=-.5ex]current bounding box.center)},scale=0.75]
		\node (A) {$X$};
		\node[below=1.7of A] (B){$X$};
		\draw (A) -- (B);
		\end{tikzpicture}
		\mbox{ and }
		\begin{tikzpicture}[baseline={([yshift=-.5ex]current bounding box.center)}]
		\node (C) {};
		\node[above=.6of C] (B1) {$X^*$};
		\node[above=.6of B1] (A0) {$X^*$};
		\node[inner sep = 0.4pt, draw, below right =.3 and .2 of B1] (Cup) {$X$};
		\node[above right =.3 and .2 of Cup] (B2) {$X$};
		\draw (B1) to[out=-90,in=-180] (Cup);
		\draw (B2) to[out=-90,in=0] (Cup);
		\node[circle,inner sep = 0.4pt, draw, above right =.3 and .2 of B2] (Cap) {$X$};
		\node[below right =.3 and .2 of Cap] (B3) {$X^*$};
		\draw (B2) to[out=90,in=180] (Cap);
		\draw (B3) to[out=90,in=0] (Cap);
		\node[below=.6of B3] (C0) {$X^*$};
		\draw (B3)--(C0);
		\draw (A0)--(B1);
		\end{tikzpicture}=
		\begin{tikzpicture}[baseline={([yshift=-.5ex]current bounding box.center)}]
		\node (A) {$X^*$};
		\node[below=1.76of A] (B){$X^*$};
		\draw (A) -- (B);
		\end{tikzpicture}.}
	\end{equation}
	Similarly, an object $X$ in a monoidal category $\Ccal$ is said to have a right (rigid) dual ${}^*X$ if and only if the functor ${}^*X\otimes (\cdot)$ is right adjoint to the functor $X\otimes (\cdot)$. A monoidal category is said to be \emph{rigid} if each of its objects has both a left dual and a right dual in this sense. A \emph{fusion category} is a weakly fusion category which is also rigid.
\end{definition}
We will now see that in a monoidal r-category, the two equations in (\ref{equation:rigidity diagram})  are in fact equivalent. In other words, to check rigidity of an object in an r-category, it is sufficient to only check one of the  conditions in (\ref{equation:rigidity diagram}).
\begin{lemma}\label{lemma:rigidity in r-categories}
	Let $\Ccal$ be a monoidal r-category. Let $X\in \Ccal$ and let $\coev_X:\unit\to X\otimes X^*$ be the canonical coevaluation map. Let $\ev_X:X^*\otimes X\to \unit$ be any morphism. Then the two equations in (\ref{equation:rigidity diagram}) are equivalent to each other.
\end{lemma}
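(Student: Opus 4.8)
Write $c:=\coev_X\colon \unit\to X\otimes X^*$ for the canonical coevaluation and $e:=\ev_X\colon X^*\otimes X\to\unit$ for the given morphism. Denote by (S1) the first identity in \eqref{equation:rigidity diagram}, namely $(\id_X\otimes e)\circ(c\otimes\id_X)=\id_X$, and by (S2) the second, namely $(e\otimes\id_{X^*})\circ(\id_{X^*}\otimes c)=\id_{X^*}$. The crux of the matter is that, because $c$ is the \emph{canonical} coevaluation, it enjoys \emph{two} universal properties, one coming from each clause of the definition of an r-category, and these make (S1) and (S2) into mirror images of one another. My plan is to reduce each of (S1), (S2) to one and the same auxiliary computation, which can be evaluated using the respective hypothesis.

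First I would record the two universal properties. From clause (i), the assignment $g\mapsto(\id_X\otimes g)\circ c$ is a bijection $\Hom(X^*,Y)\xrightarrow{\sim}\Hom(\unit,X\otimes Y)$; in particular, to prove that a morphism $M\colon X^*\to Y$ equals another it suffices to compare the composites $(\id_X\otimes M)\circ c$, and $\id_{X^*}$ corresponds to $c$. Symmetrically, from clause (ii) together with \eqref{eq:defrcategory} (this is exactly the content of the right-hand diagram displayed just before the lemma), the assignment $g\mapsto(g\otimes\id_Y)\circ\coev_{{}^*Y}$ is a bijection $\Hom({}^*Y,X)\xrightarrow{\sim}\Hom(\unit,X\otimes Y)$. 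Taking $Y=X^*$ and using ${}^*(X^*)=X$ and $\coev_{{}^*(X^*)}=c$, this says that a morphism $N\colon X\to X$ is determined by the composite $(N\otimes\id_{X^*})\circ c$, with $\id_X$ again corresponding to $c$.

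The heart of the argument is a single morphism,
$$\Phi:=(\id_X\otimes e\otimes\id_{X^*})\circ(c\otimes c)\colon\unit\to X\otimes X^*.$$
Using bifunctoriality (the interchange law) I would regroup $c\otimes c$ in two ways. Grouping so that $e$ is applied to the $X^*$-leg of the first copy of $c$ and the $X$-leg of the second exhibits, on the three relevant strands, precisely the left-hand side of (S1), with the remaining $X^*$-strand carried along by an identity; hence (S1) forces $\Phi=c$. Grouping the other way exhibits the left-hand side of (S2) on three strands, with an idle $X$-strand carried by an identity, so (S2) equally forces $\Phi=c$.

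With this in hand both implications follow formally. Assuming (S1), I would set $M:=(e\otimes\id_{X^*})\circ(\id_{X^*}\otimes c)$ and compute $(\id_X\otimes M)\circ c=\Phi=c=(\id_X\otimes\id_{X^*})\circ c$; the universal property of clause (i) then gives $M=\id_{X^*}$, which is (S2). Conversely, assuming (S2), I would set $N:=(\id_X\otimes e)\circ(c\otimes\id_X)$ and compute $(N\otimes\id_{X^*})\circ c=\Phi=c$; the universal property of clause (ii) then gives $N=\id_X$, which is (S1). The only delicate point I anticipate is the bookkeeping in the central computation: one must verify via the interchange law that $\Phi$ genuinely factors through each snake's left-hand side tensored with an \emph{identity} strand, so that (S1) or (S2) may be substituted verbatim. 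Note that neither semisimplicity nor finiteness enters—only the two representability clauses and bifunctoriality—so the argument is valid in an arbitrary monoidal r-category, as the statement requires.
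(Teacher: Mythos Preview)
Your proof is correct and is essentially the paper's argument, just organized a bit more symmetrically. The paper assumes (S1), passes from $\Hom(X^*,X^*)$ to $\Hom(\unit,X\otimes X^*)$ via the universal property of $c$, and then performs exactly your computation of $\Phi$ diagrammatically (their displayed string of three pictures is precisely $(\id_X\otimes M)\circ c=\Phi=c$ after applying (S1)); for the converse they simply say ``similar'', which is what your second universal property handles. One small point: your identification $\coev_{{}^*(X^*)}=c$ uses the natural isomorphism ${}^*(X^*)\cong X$ coming from clause~(ii), which the paper also uses implicitly; it might be worth a word to say this.
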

\begin{proof}
	Let us assume that the first equality holds and deduce the second equality. We need to verify the equality of two morphisms in $\Hom(X^*,X^*)$. Since $\Ccal$ is an r-category, this is equivalent to verifying the equality of the corresponding morphisms in $\Hom(\unit,X\otimes X^*)$. The desired equality follows, since using the first equality from (\ref{equation:rigidity diagram}) we obtain:
	$$\scalebox{0.8}{\begin{tikzpicture}[baseline={([yshift=-.5ex]current bounding box.center)}]
		\node (C) {};
		\node[above=.6of C] (B1) {};
		\node[above=.6of B1] (A0) {$X^*$};
		\node[inner sep = 0.4pt, draw, below right =.3 and .2 of B1] (Cup) {$X$};
		\node[above right =.3 and .2 of Cup] (B2) {};
		\node[circle,inner sep = 0.4pt, draw, above right =.3 and .2 of B2] (Cap) {$X$};
		\node[below right =.3 and .2 of Cap] (B3) {};
		\draw (Cup) to[out=0, in=180] (Cap);
		\node[below=.6of B3] (C0) {$X^*$};
		\draw (C0) to[out=90,in=0] (Cap);
		\draw (A0) to[out=-90,in=-180] (Cup);
		\node[circle,inner sep = 0.4pt, draw, above left = .2 of A0] (Cap') {$X$};
		\node[left = 2.6 of C0] (C1) {$X$};
		\draw (C1) to[out=90,in=180] (Cap');
		\draw (Cap') to[out=0,in=90] (A0);
		\end{tikzpicture}=
		\begin{tikzpicture}[baseline={([yshift=-.5ex]current bounding box.center)}]
		\node (C) {};
		\node[above=.6of C] (B1) {};
		\node[above=.6of B1] (A0) {};
		\node[inner sep = 0.4pt, draw, below right =0.4 and .2 of B1] (Cup) {$X$};
		\node[above right =.3 and .2 of Cup] (B2) {};
		\node[above right =.3 and .2 of B2] (X) {};
		\node[circle,inner sep = 0.4pt, draw, above=.7 of X] (Cap) {$X$};
		\node[below right =.3 and .2 of X] (B3) {};
		\draw (Cup) to[out=0, in=180] (Cap);
		\node[below=.6of B3] (C0) {$X^*$};
		\draw (C0) to[out=90,in=0] (Cap);
		\node[circle,inner sep = 0.4pt, draw, below left = .35 of A0] (Cap') {$X$};
		\node[left = 2.6 of C0] (C1) {$X$};
		\draw (C1) to[out=90,in=180] (Cap');
		\draw (Cap') to[out=0,in=180] (Cup);
		\end{tikzpicture}
		=
		\begin{tikzpicture}[baseline={([yshift=-.5ex]current bounding box.center)}]
		\node[circle, draw,inner sep=.4pt] (Cap) {$X$};
		\node[below right = 1.6 and .4 of Cap] (B2) {$X^*$};
		\node[below left = 1.6 and .4 of Cap] (B1) {$X$};
		\node[above=.4 of Cap] (X) {};
		\draw (B1) to[out=90,in=180] (Cap);
		\draw (Cap)to[out=0,in=90] (B2);
		\end{tikzpicture}.}
	$$
	The equivalence in the other direction follows from a similar argument.
\end{proof}

Now let us get back to our setting of a weakly fusion category $\Ccal$ over an algebraically closed field $k$. For each simple object $X$ we have the coevaluation map $\coev_X:\unit\to X\otimes X^*$, but we do not know whether the desired evaluation map necessarily exists. But instead, we  have the previously defined map $e_{^*X}:{ }^*X\otimes X\to \unit$  which splits the corresponding coevaluation map $\coev_{{}^*X}:\unit\to { }^*X\otimes X$. Hence for each simple object $X\in \Ccal$ we can construct the following two morphisms in $\Ccal$:
\begin{equation}\label{equation:weak snakes}
\begin{tikzpicture}[baseline={([yshift=-.5ex]current bounding box.center)}]
\node (C) {};
\node[above=.6of C] (B1) {${}^*X$};
\node[above=.6of B1] (A0) {${}^*X$};
\node[circle, inner sep = 0pt, draw, below right =.3 and .2 of B1] (Cup) {${}^*X$};
\node[above right =.3 and .2 of Cup] (B2) {$X$};
\draw (B1) to[out=-90,in=-180] (Cup);
\draw (B2) to[out=-90,in=0] (Cup);
\node[circle,inner sep = 0.4pt, draw, above right =.3 and .2 of B2] (Cap) {$X$};
\node[below right =.3 and .2 of Cap] (B3) {$X^{*}$};
\draw (B2) to[out=90,in=180] (Cap);
\draw (B3) to[out=90,in=0] (Cap);
\node[below=.6of B3] (C0) {$X^{*}$};
\draw (B3)--(C0);
\draw (A0)--(B1);
\end{tikzpicture}
\mbox{ and }
\begin{tikzpicture}[baseline={([yshift=-.5ex]current bounding box.center)}]
\node (C) {$X$};
\node[above=.6of C] (B1) {$X$};
\node[circle, inner sep = 0.4pt, draw, above right =.3 and .2 of B1] (Cap) {$X$};
\node[below right =.3 and .2 of Cap] (B2) {$X^*$};
\draw (B1) to[out=90,in=-180] (Cap);
\draw (B2) to[out=90,in=0] (Cap);
\node[circle, inner sep = 0pt, draw, below right =.3 and .2 of B2] (Cup) {$X^*$};
\node[above right =.3 and .2 of Cup] (B3) {$X^{**}$};
\draw (B2) to[out=-90,in=180] (Cup);
\draw (B3) to[out=-90,in=0] (Cup);
\node[above=.6of B3] (A) {$X^{**}$};
\draw (B1)--(C);
\draw (A)--(B3);
\end{tikzpicture}.
\end{equation}
\begin{lemma}\label{lemma:rigidity in weakly fusion categories}
	Let $X$ be a simple object in a weakly fusion category $\Ccal$. Then the following statements are equivalent:\\
	(i) The object $X$ has a rigid left dual.\\
	(ii) The first morphism in (\ref{equation:weak snakes}) is invertible (or equivalently, nonzero).\\
	(iii) The second morphism in (\ref{equation:weak snakes}) is invertible (or equivalently, nonzero).\\
\end{lemma}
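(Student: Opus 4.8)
First I would unwind the string diagrams and record that the two morphisms in (\ref{equation:weak snakes}) are the composites
\[
\phi=(e_{{}^*X}\otimes\id_{X^*})\circ(\id_{{}^*X}\otimes\coev_X)\colon {}^*X\longrightarrow X^*,\qquad
\psi=(\id_X\otimes e_{X^*})\circ(\coev_X\otimes\id_{X^{**}})\colon X^{**}\longrightarrow X,
\]
where $e_{{}^*X},e_{X^*}$ are the canonical splittings built above. Since $X$ is simple and $(\cdot)^*,{}^*(\cdot)$ are equivalences, the objects ${}^*X,X^*,X^{**}$ are all simple, so any nonzero morphism among them is automatically an isomorphism; this justifies the parenthetical remarks in (ii) and (iii). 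The two facts I would keep at hand are the semisimplicity identity $\dim\Hom(A\otimes B,\unit)=\dim\Hom(\unit,A\otimes B)$ combined with the r-category identities (\ref{eq:defrcategory}); concretely these give $\dim\Hom(X^*\otimes X^{**},\unit)=\dim\Hom(X^{**},X^{**})=1$, spanned by $e_{X^*}$, and $\dim\Hom(X^*\otimes X,\unit)=\dim\Hom(X^{**},X)$.

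To prove the implications (ii)$\Rightarrow$(i) and (iii)$\Rightarrow$(i), which are the easy directions, I would manufacture an evaluation out of the given inverse. Assuming (iii), I would set $\ev_X:=e_{X^*}\circ(\id_{X^*}\otimes\psi^{-1})\colon X^*\otimes X\to\unit$; using bifunctoriality of $\otimes$ to slide $\psi^{-1}$ past $\coev_X$, the first snake composite $(\id_X\otimes\ev_X)\circ(\coev_X\otimes\id_X)$ telescopes to $\psi\circ\psi^{-1}=\id_X$, i.e.\ the first equation of (\ref{equation:rigidity diagram}) holds. Lemma \ref{lemma:rigidity in r-categories} then supplies the second equation for free, so $X^*$ is a rigid left dual of $X$. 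The argument for (ii)$\Rightarrow$(i) is entirely parallel: from an inverse of $\phi$ one sets $\ev_X:=e_{{}^*X}\circ(\phi^{-1}\otimes\id_X)$, checks that the \emph{second} snake composite collapses to $\phi\circ\phi^{-1}=\id_{X^*}$, and again invokes Lemma \ref{lemma:rigidity in r-categories}.

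For the converses (i)$\Rightarrow$(iii) and (i)$\Rightarrow$(ii), where the real content lies, I would assume $X$ has a rigid left dual; after rescaling one may take the coevaluation to be the canonical $\coev_X$, with an evaluation $\ev_X\colon X^*\otimes X\to\unit$ satisfying both snakes. Since $\ev_X\neq0$, the Hom-identity above forces $\Hom(X^{**},X)\neq0$, hence $X^{**}\cong X$, and applying ${}^*(\cdot)$ also gives ${}^*X\cong X^*$. To get (iii) I would fix an isomorphism $u\colon X\isorightarrow X^{**}$ and note that $\ev_X\circ(\id_{X^*}\otimes u^{-1})$ is a nonzero vector of the one-dimensional space $\Hom(X^*\otimes X^{**},\unit)$, hence equals $c\,e_{X^*}$ with $c\in k^\times$; substituting $e_{X^*}=c^{-1}\ev_X\circ(\id_{X^*}\otimes u^{-1})$ into $\psi$, sliding $u^{-1}$ past $\coev_X$, and applying the first snake identity yields $\psi=c^{-1}u^{-1}$, which is invertible. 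The implication (i)$\Rightarrow$(ii) runs the same way, using a chosen isomorphism $X^*\cong{}^*X$, the one-dimensionality of $\Hom(X^*\otimes X,\unit)$, and the second snake identity to reduce $\phi$ to a nonzero scalar multiple of an isomorphism. These four implications together give (i)$\Leftrightarrow$(ii)$\Leftrightarrow$(iii).

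The main obstacle is not the construction of $\ev_X$ in the easy directions, which is immediate, but the converse step of converting rigidity into the nonvanishing of the \emph{specific} maps $\phi,\psi$ rather than of some abstract isomorphism ${}^*X\cong X^*$ or $X^{**}\cong X$. The device that makes this work is that the relevant multiplicity spaces $\Hom(X^*\otimes X,\unit)$ and $\Hom(X^*\otimes X^{**},\unit)$ are one-dimensional, which pins the canonical splittings $e_{{}^*X},e_{X^*}$ down as scalar multiples of the rigid evaluation transported by an isomorphism; the only genuinely careful part is verifying these proportionalities and the bifunctoriality slides past $\coev_X$ so that the snake identity can collapse the composite.
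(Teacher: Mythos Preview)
Your proof is correct and follows essentially the same approach as the paper. Both arguments construct $\ev_X$ from the given inverse in the easy directions (ii)$\Rightarrow$(i), (iii)$\Rightarrow$(i) and invoke Lemma~\ref{lemma:rigidity in r-categories}; for the converses, both first extract ${}^*X\cong X^*$ (equivalently $X^{**}\cong X$) from rigidity via semisimplicity, then use the one-dimensionality of the relevant $\Hom$-space to identify $e_{{}^*X}$ (resp.\ $e_{X^*}$) with a scalar multiple of $\ev_X$ transported along the chosen isomorphism, after which the snake identity collapses $\phi$ (resp.\ $\psi$) to a nonzero scalar times that isomorphism. The paper is terser and leaves the final substitution-and-collapse implicit, whereas you write it out; but there is no substantive difference in strategy.
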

\begin{proof}
	Let us first prove (ii)$\Rightarrow$(i). Suppose that the first morphism in (\ref{equation:weak snakes}) is an  isomorphism with inverse $\delta:X^*\to { }^*X$. Define $\ev_X$ to be the composition $$\ev_X:X^*\otimes X\xrightarrow{\delta\otimes \id_X} { }^*X\otimes X\xrightarrow{e_{{}^*X}}\unit.$$ Hence by construction, the second equation in (\ref{equation:rigidity diagram}) is satisfied. By Lemma \ref{lemma:rigidity in r-categories}, the first condition must also hold and we conclude that $X^*$ must in fact be the rigid left dual of $X$.
	
	To prove (i)$\Rightarrow$(ii), suppose that $X^*$ is a rigid left dual of $X$. Hence we have an evaluation map $\ev_X:X^*\otimes X\to \unit$ satisfying (\ref{equation:rigidity diagram}). In particular by semisimplicity, $\unit$ must be a direct summand of $X^*\otimes X$. Hence we must have ${}^*X\cong X^*$. Choosing such an isomorphism we obtain a non-zero morphism ${}^*X\otimes X\to X^*\otimes X\xrightarrow{\ev_X} \unit$ which differs from $e_{{}^*X}:{}^*X\otimes X\to \unit$ by an element of $k^\times$. Statement (ii) now follows.
	
	\noindent The proof of (i)$\Leftrightarrow$(iii) is similar. 
\end{proof}

\noindent We obtain the following well-known result as a corollary:
\begin{corollary}\label{cor:doubledual}
	For any object $X$ in a fusion category $\Ccal$, its left dual $X^*$ and right dual ${}^*X$ are isomorphic, or equivalently $X\cong X^{**}$.
\end{corollary}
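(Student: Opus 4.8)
The plan is to reduce to the case of a simple object and then exploit the weak duality identifications \eqref{eq:defrcategory} together with semisimplicity. Since the assignments $X\mapsto X^*$ and $X\mapsto {}^*X$ are additive equivalences (axiom (ii) of a monoidal r-category) and $\Ccal$ is finite semisimple, both duality functors commute with finite direct sums; hence it suffices to establish ${}^*X\cong X^*$ when $X$ is simple, and then recover the general statement by decomposing an arbitrary object into its simple summands.

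So suppose $X$ is simple. Because $\Ccal$ is a fusion category it is rigid, so $X$ admits a rigid left dual $X^*$ equipped with an evaluation map $\ev_X\colon X^*\otimes X\to\unit$ satisfying \eqref{equation:rigidity diagram}. As already observed in the proof of Lemma \ref{lemma:rigidity in weakly fusion categories}, the snake identities force $\unit$ to be a direct summand of $X^*\otimes X$; in particular $\Hom(\unit,X^*\otimes X)\neq 0$. I would then apply the weak duality \eqref{eq:defrcategory} to the pair $(X^*,X)$, matching $X^*$ to the first tensor slot and $X$ to the second, to rewrite this space as
$$\Hom(\unit,X^*\otimes X)=\Hom((X^*)^*,X)=\Hom({}^*X,X^*),$$
so that $\Hom({}^*X,X^*)\neq 0$.

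Finally, since the duality functors are equivalences they send the simple object $X$ to the simple objects $X^*$ and ${}^*X$; a nonzero morphism between simple objects in a semisimple category is an isomorphism by Schur's lemma, and therefore ${}^*X\cong X^*$. For a general object the conclusion then follows by additivity after decomposing $X$ into simples. To obtain the equivalent formulation $X\cong X^{**}$, I would apply the isomorphism just proved to the object $X^*$ in place of $X$, giving ${}^*(X^*)\cong (X^*)^*=X^{**}$, and then use that ${}^*(\cdot)$ is inverse to $(\cdot)^*$, so that ${}^*(X^*)\cong X$.

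The content here is light once Lemma \ref{lemma:rigidity in weakly fusion categories} is in hand; the only point requiring genuine care is the bookkeeping of the identifications in \eqref{eq:defrcategory}, namely correctly matching tensor slots so that $\Hom(\unit,X^*\otimes X)$ is identified with a Hom-space \emph{between} the two candidate duals ${}^*X$ and $X^*$ rather than with an endomorphism space. This, together with the verification that both ${}^*X$ and $X^*$ are simple so that Schur's lemma applies, is the whole substance of the argument.
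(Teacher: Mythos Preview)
Your proof is correct and follows essentially the same route as the paper: the key step ${}^*X\cong X^*$ for simple $X$ is exactly the implicit claim in the proof of (i)$\Rightarrow$(ii) of Lemma~\ref{lemma:rigidity in weakly fusion categories}, which you have spelled out explicitly via the identification $\Hom(\unit,X^*\otimes X)=\Hom({}^*X,X^*)$ from \eqref{eq:defrcategory} together with Schur's lemma. The only addition you make is the (routine) reduction to simple objects and the passage back to arbitrary $X$ by additivity, which the paper leaves to the reader.
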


\begin{remark}
	It is important to note that the isomorphisms in Corollary \ref{cor:doubledual} are not canonical. More precisely, the two monoidal functors $\id_\Ccal$ and $(\cdot)^{**}:\Ccal\to \Ccal$ are non-canonically isomorphic as functors between abelian categories. Moreover such an isomorphism may not be compatible with the monoidal structure of these two functors.  The choice of an isomorphism which is also compatible with the monoidal structures is known as a \emph{pivotal} structure. We refer to \cite[\S2.4.2, \S2.4.3]{DGNO} for more about this. We will also encounter this notion is Section \ref{sec:catcrosssmat} below. Following \cite[\S5]{BoDr:13} we will also define pivotal structures in monoidal r-categories in \ref{sec:weaklyribbon}.
\end{remark}
\subsection{A criterion for rigidity} We prove the following useful criterion that guarantees the existence of rigid duals in a weakly fusion category.
\begin{proposition}\label{prop:rigidity in weakly fusion categories}
	Let $\Ccal$ be a weakly fusion category. Let $M\in \Ccal$ be a simple object such that $M\otimes { }^*M$ has a rigid left dual. Then $M$ has a rigid left dual.
\end{proposition}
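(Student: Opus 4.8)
The plan is to verify the rigidity criterion of Lemma~\ref{lemma:rigidity in weakly fusion categories}. Since $M$ is simple and $(\cdot)^*,{}^*(\cdot)$ are equivalences, both ${}^*M$ and $M^*$ are simple, so it suffices to exhibit an evaluation $\ev_M\colon M^*\otimes M\to\unit$ for which the canonical coevaluation $\coev_M$ satisfies one (hence, by Lemma~\ref{lemma:rigidity in r-categories}, both) of the snake identities in \eqref{equation:rigidity diagram}; equivalently, to show that the snake morphism $f_M\colon{}^*M\to M^*$ of \eqref{equation:weak snakes} is nonzero. Write $P:=M\otimes{}^*M$ and let $P^*$ be its rigid left dual, with genuine $\coev_P\colon\unit\to P\otimes P^*$ and $\ev_P\colon P^*\otimes P\to\unit$ satisfying the snake identities, so that $\Hom(P^*\otimes A,B)\cong\Hom(A,P\otimes B)$ naturally.

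First I would pin down the duals by a multiplicity count. The weak duality in a monoidal r-category is anti-monoidal, i.e. $(X\otimes Y)^*\cong Y^*\otimes X^*$ functorially (see \cite{BoDr:13}); applying this together with $({}^*M)^*\cong M$ gives $P^*\cong M\otimes M^*$. Combining the rigid adjunction (with $A=B=\unit$), semisimplicity, and the defining property of the weak dual then yields
\[\dim\Hom(\unit,P)=\dim\Hom(P^*,\unit)=\dim\Hom(\unit,P^*)=\dim\Hom(\unit,M\otimes M^*)=\dim\Hom(M^*,M^*)=1.\]
On the other hand $\Hom(\unit,P)=\Hom(\unit,M\otimes{}^*M)\cong\Hom(M^*,{}^*M)$, so this last space is one-dimensional and hence ${}^*M\cong M^*$. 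Moreover $\unit$ occurs in $P$ with multiplicity one, so there is a splitting $\beta\colon\unit\to P$, $\bar\beta\colon P\to\unit$ with $\bar\beta\circ\beta=\id_\unit$.

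Fixing an isomorphism $g\colon M^*\xrightarrow{\sim}{}^*M$, I would take $\ev_M:=e_{{}^*M}\circ(g\otimes\id_M)\colon M^*\otimes M\to\unit$ as the candidate evaluation; the relevant snake composite is then the scalar $s=(\id_M\otimes\ev_M)\circ(\coev_M\otimes\id_M)\in\End(M)=k$, and by Lemma~\ref{lemma:rigidity in weakly fusion categories} it remains only to prove $s\neq0$ (for any fixed nonzero $g$, vanishing of $s$ is equivalent to $f_M=0$). The strategy here is to express the rigid maps $\coev_P$ and $\ev_P$ of $P$ through the weak duality data $\coev_M,\coev_{{}^*M},e_M,e_{{}^*M}$ of the two factors — using that, in a rigid r-category, the rigid (co)evaluation coincides with the Grothendieck--Verdier one and is therefore determined by $\id_{P^*}$ under $\Hom(\unit,P\otimes P^*)\cong\End(P^*)$ — and then to substitute these into the snake identity $(\id_P\otimes\ev_P)\circ(\coev_P\otimes\id_P)=\id_P$. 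After cancelling the unit-summand splittings $\beta,\bar\beta$ and using $e_M\circ\coev_M=\id_\unit$ and $e_{{}^*M}\circ\coev_{{}^*M}=\id_\unit$, the $P$-snake should collapse to an expression proportional to $s$, forcing $s\neq0$: were $s=0$, the left-hand side of the $P$-snake would vanish, contradicting $\id_P\neq0$.

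The main obstacle is precisely this last compatibility step: matching the rigid duality datum of the single object $P=M\otimes{}^*M$ with the a priori unrelated weak duality data of the individual factors $M$ and ${}^*M$. Everything else — the reduction via the two lemmas and the multiplicity-one computation identifying ${}^*M\cong M^*$ — is formal, and the real content is that rigidity of $M\otimes{}^*M$ is exactly what forces the weak coevaluation $\coev_M$ and the weak splitting $e_{{}^*M}$ to be mutually inverse up to a nonzero scalar.
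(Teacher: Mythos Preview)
Your reduction via Lemmas~\ref{lemma:rigidity in r-categories} and~\ref{lemma:rigidity in weakly fusion categories} to showing that the weak snake ${}^*M\to M^*$ of~\eqref{equation:weak snakes} is nonzero is correct and matches the paper's strategy. Your multiplicity computation establishing $\dim\Hom(M^*,{}^*M)=1$ is also correct, but it only yields an abstract isomorphism ${}^*M\cong M^*$; it does not show that the \emph{particular} snake morphism is nonzero, which is what remains.

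The gap lies exactly where you place it, and it is more than bookkeeping. You propose to express both $\coev_P$ and $\ev_P$ through the weak data of the factors and substitute into the $P$-snake. The coevaluation side is fine: anti-monoidality of the weak dual gives $\coev_P$ as the nested $\coev_M,\coev_{{}^*M}$. But there is no a priori formula for the \emph{rigid} evaluation $\ev_P$ in terms of the weak splittings $e_M,e_{{}^*M}$ --- producing one is tantamount to already knowing that the factor-level weak data satisfies rigid-type identities, which is the conclusion you want. So ``the $P$-snake should collapse to an expression proportional to $s$'' is not a routine computation but the heart of the matter, and your outline does not supply it.

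The paper sidesteps this by never attempting to decompose $\ev_P$. It begins with a specific nonzero morphism, namely $e_{{}^*M}\otimes\id_{{}^*M}\colon {}^*M\otimes P\to{}^*M$ (nonzero since $\coev_{{}^*M}\otimes\id_{{}^*M}$ is a right inverse). One insertion of the $P$-snake rewrites the $P$-input through $P^*$; the resulting diagram contains a sub-morphism $f\colon\unit\to M\otimes({}^*M\otimes P^*)$. Applying only the weak-duality identification $\Hom(\unit,M\otimes-)\cong\Hom(M^*,-)$ turns $f$ into $\wtilde f\colon M^*\to{}^*M\otimes P^*$, and a final rearrangement exhibits the original nonzero morphism as factoring through the weak snake ${}^*M\to M^*$. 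Hence the snake is nonzero. The moral: use $P$-rigidity as a black-box adjunction applied to a well-chosen morphism built from the weak data, rather than trying to unpack $\ev_P$ itself.
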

\begin{proof}
	Consider the morphism $e_{{}^*M}\otimes \id_{{}^*M}:{}^*M\otimes M \otimes {}^*M\to {}^*M.$ It is non-zero, since the morphism $\coev_{{}^*M}\otimes \id_{{}^*M}:{}^*M\to {}^*M\otimes M \otimes {}^*M$ is its right inverse. Let us rewrite this non-zero morphism diagrammatically as below:
	\begin{equation}\label{eq:diagramredbox}\scalebox{0.8}{\begin{tikzpicture}[baseline={([yshift=-.5ex]current bounding box.center)}]
		\node (B1) {${ }^*M$};
		\node[circle, inner sep = 0pt, draw, below right =0.8 and .25 of B1] (Cup) {${ }^*M$};
		\node[right =.4 of Cup |- B1] (B2) {$M$};
		\node[right = .3 of B2] (B3) {${ }^*M$};
		\node[below left = 3 and .1 of B2] (E) {${ }^*M$};
		\draw (B1) to[out=-90,in=180] (Cup);
		\draw (B2) to[out=-90,in=0] (Cup);
		\draw (B3) to[out=-90,in=90] (E);
		\end{tikzpicture}=
		\begin{tikzpicture}[baseline={([yshift=-.5ex]current bounding box.center)}]
		\node (A1) {${ }^*M$};
		\node[right=1of A1] (A2) {$M\otimes { }^*M$};
		\node[below=.3of A2, draw, minimum width = 1cm] (Id) {$\operatorname{Id}$};
		\draw[double] (A2) -- (Id);
		\node[below=.3of Id.210] (B1) {$M$};
		\node[below=.3of Id.330] (B2) {${ }^*M$};
		\draw (Id.210)--(B1);
		\draw (Id.330)--(B2);
		\node[below right =3 and .8 of A1] (C) {${ }^*M$};
		\draw (B2) to[out=-90,in=90] (C);
		\node[circle, inner sep =0pt, draw, below left = .25 and .35 of B1] (Cup) {${ }^*M$};
		\draw (A1) to[out=-90, in=180] (Cup);
		\draw (Cup) to[out=0, in=-90] (B1);
		\end{tikzpicture}
		=
		\begin{tikzpicture}[baseline={([yshift=-.5ex]current bounding box.center)}]
		\node (A1) {${ }^*M$};
		\node[right=1of A1] (A2) {};
		\node[right=1of A2] (A3) {$M\otimes { }^*M$};
		\node[below=.8 of A2, draw, minimum width = 1cm] (Id) {$\operatorname{Id}$};
		\node[below right =3 and .8 of A1] (C) {${ }^*M$};
		\draw (Id.330) to[out=-90,in=90] (C);
		\node[circle, inner sep =0pt, draw, below left = 1 and .5 of Id.210] (Cup) {${ }^*M$};
		\draw (A1) to[out=-90, in=180] (Cup);
		\draw (Cup) to[out=0, in=-90] (Id.210);
		\node[above right =.4 and .05 of Id] (CapA) {};
		\node[below right = 1.8 and .6 of CapA] (CupA) {};
		\draw[double] (Id) to[out=90,in=180] (CapA) to[out=0,in=180] (CupA) to[out=0,in=-90] (A3);
		\node[below right =0.1 and -1.4 of Id] (string) {$M$};
		\node at (.5,-.25) (R1) {};
		\node at (2.8,-2) (R3) {};
		\draw[red, dotted, very thick] (R1) rectangle (R3);
		\end{tikzpicture}.}\end{equation}
	In the second equality in Equation \eqref{eq:diagramredbox}, we have used the existence of the rigid left dual of the object $M\otimes {}^*M\in \Ccal$. Let $f:\unit \to M\otimes \left({}^*M\otimes (M\otimes {}^*M)^*\right)$ be the morphism represented by  the red dotted box in (\ref{eq:diagramredbox}). Let $\wtilde{f}: M^*\to {}^*M\otimes (M\otimes {}^*M)^*$ be the morphism corresponding to $f$ using weak duality. Hence the non-zero morphism $e_{{}^*M}\otimes \id_{{}^*M}$ can be expressed as:
	\begin{equation}\label{eqn:blueandredbox}
	\scalebox{0.8}{\begin{tikzpicture}[baseline={([yshift=-.5ex]current bounding box.center)}]
		\node (A1) {${ }^*M$};
		\node[right=2.5 of A1] (A2) {$M\otimes { }^*M$};
		\node[below right =4 and .8 of A1] (C) {${ }^*M$};
		\node[circle, inner sep =0pt, draw, below right = 2 and .2 of A1] (Cup) {${ }^*M$};
		\node[circle, inner sep=0.4pt, draw, below right =.2 and 1.2 of A1] (Cap) {$M$};
		\node[below right=.4 and 0 of Cap, minimum width = 1cm, draw] (f) {$\wtilde{f}$};
		\draw (A1) to[out=-90, in=180] (Cup);
		\draw (Cup) to[out=0, in=180] (Cap);
		\node[below right = 1.8 and 1 of Cap] (CupA) {};
		\draw (Cap) to[out=0,in=90] (f);
		\draw[double] (f.300) to[out=-90,in=180] (CupA) to[out=0,in=-90] (A2);
		\draw (f.230) to[out=-90,in=90] (C);
		\node at (.55,-.25) (R1) {};
		\node at (3.2,-2.1) (R3) {};
		\draw[red, dotted, very thick] (R1) rectangle (R3);
		\node[below right=-0.2 and .4 of Cap] (string) {$M^{*}$};
		\node[below left=0.5 and .2 of Cap] (str) {$M$};
		\end{tikzpicture}
		=\begin{tikzpicture}[baseline={([yshift=-.5ex]current bounding box.center)}]
		\node (A1) {${ }^*M$};
		\node[right=2.5 of A1] (A2) {$M\otimes { }^*M$};
		\node[below right =4 and .8 of A1] (C) {${ }^*M$};
		\node[circle, inner sep =0pt, draw, below right = 1.35 and .2 of A1] (Cup) {${ }^*M$};
		\node[circle, inner sep=.4pt, draw, below right =.55 and 1.3 of A1] (Cap) {$M$};
		\node[below right=1.4 and -.1 of Cap, minimum width = 1cm, draw] (f) {$\wtilde{f}$};
		\draw (A1) to[out=-90, in=180] (Cup);
		\draw (Cup) to[out=0, in=180] (Cap);
		\node[below right = 2.8 and .9 of Cap] (CupA) {};
		\draw (Cap) to[out=0,in=90] (f);
		\draw[double] (f.300) to[out=-90,in=180] (CupA) to[out=0,in=-90] (A2);
		\draw (f.230) to[out=-90,in=90] (C);
		\node at (-0.3,-.6) (R1) {};
		\node at (4.2,-2.1) (R3) {};
		\draw[blue, dotted, very thick] (R1) rectangle (R3);
		\node[below right=0.3 and .4 of Cap] (string) {$M^{*}$};
		\end{tikzpicture}}.
		\end{equation}
	By looking at the blue dotted box in Equation \eqref{eqn:blueandredbox}, we conclude that the morphism $\scalebox{1.0}{\begin{tikzpicture}[baseline={([yshift=-.5ex]current bounding box.center)}]
		\node (C) {};
		\node[above=.6of C] (B1) {};
		\node[above=.6of B1] (A0) {${}^*M$};
		\node[circle, inner sep = -0.1pt, draw, below right =.3 and .2 of B1] (Cup) {\tiny${}^*M$};
		\node[above right =.3 and .2 of Cup] (B2) {};
		\node[circle,inner sep = 0.4pt, draw, above right =.3 and .2 of B2] (Cap) {$M$};
		\node[below right =.3 and .2 of Cap] (B3) {};
		\node[below=.6of B3] (C0) {$M^{*}$};
		\draw (Cup) to[out=0,in=180] (Cap);
		\draw (C0) to[out=90,in=0] (Cap);
		\draw (A0)to[out=-90,in=-180] (Cup);
		\end{tikzpicture}}
	$ must be non-zero and hence an isomorphism. From Lemma \ref{lemma:rigidity in weakly fusion categories} we conclude that $M$ has a rigid left dual.
\end{proof}
Finally we deduce the following corollary of Proposition \ref{prop:rigidity in weakly fusion categories}:
\begin{corollary}\label{cor:rigidity of graded categories}
	Let $\Gamma$ be a finite group and let $\Ccal=\bigoplus\limits_{\gamma\in\Gamma}\Ccal_\gamma$ be a $\Gamma$-graded weakly fusion category such that the identity component $\Ccal_1$ is rigid. Then $\Ccal$ is rigid and hence a fusion category.
\end{corollary}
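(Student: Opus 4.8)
The plan is to reduce the rigidity of $\Ccal$ to Proposition \ref{prop:rigidity in weakly fusion categories} by exploiting the $\Gamma$-grading. First I would record how weak duality interacts with the grading. Since the unit $\unit$ lies in the neutral component $\Ccal_1$ and $\Ccal_\gamma\otimes \Ccal_\delta\subseteq \Ccal_{\gamma\delta}$, for a simple object $M\in \Ccal_\gamma$ and any $Y=\bigoplus_\delta Y_\delta$ we have $\Hom(\unit, M\otimes Y)=\Hom(\unit, M\otimes Y_{\gamma^{-1}})$, because only the component $M\otimes Y_{\gamma^{-1}}\in \Ccal_1$ can contain $\unit$. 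Via the r-category identification $\Hom(M^*,Y)=\Hom(\unit,M\otimes Y)$ this forces the (simple) representing object $M^*$ to be concentrated in $\Ccal_{\gamma^{-1}}$, and the same reasoning applied to $(\cdot)^*$ places ${}^*M$ in $\Ccal_{\gamma^{-1}}$. Consequently $M\otimes {}^*M\in \Ccal_\gamma\otimes \Ccal_{\gamma^{-1}}\subseteq \Ccal_1$.

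Now the key point: $\Ccal_1$ is a full monoidal subcategory of $\Ccal$ containing $\unit$, and it is rigid by hypothesis, so the object $M\otimes {}^*M$ admits a rigid left dual computed inside $\Ccal_1$. The coevaluation and evaluation morphisms witnessing this dual already lie in $\Ccal_1\subseteq \Ccal$, hence they witness a rigid left dual of $M\otimes {}^*M$ in $\Ccal$ as well. Applying Proposition \ref{prop:rigidity in weakly fusion categories} to the simple object $M$ then shows that $M$ has a rigid left dual in $\Ccal$. Since $\gamma$ and $M$ were arbitrary, every simple object of $\Ccal$ has a rigid left dual, and by semisimplicity together with the additivity of (weak and rigid) duals on direct sums, every object of $\Ccal$ has a rigid left dual.

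To upgrade left duals to full rigidity, I would use that in a monoidal r-category the functor $(\cdot)^*\colon \Ccal\to \Ccal^{\op}$ is an equivalence, hence essentially surjective and simple-preserving. The snake identities (\ref{equation:rigidity diagram}) exhibiting $X^*$ as a rigid left dual of a simple $X$ are symmetric in $X$ and $X^*$: the very same $\coev_X$ and $\ev_X$ exhibit $X$ as a rigid right dual of $X^*$. Since every simple object of $\Ccal$ is isomorphic to $X^*$ for some simple $X$, it follows that every simple object also has a rigid right dual, and again by additivity so does every object. Thus $\Ccal$ possesses both left and right rigid duals, i.e. it is rigid; being a rigid weakly fusion category, it is by definition a fusion category.

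I expect the main obstacle here to be bookkeeping rather than conceptual, since the substantive analytic content has already been isolated in Proposition \ref{prop:rigidity in weakly fusion categories}. The two places requiring genuine care are: (a) justifying that the weak duals $M^*$ and ${}^*M$ are honestly homogeneous of degree $\gamma^{-1}$ --- which follows because the grading decomposition of the relevant $\Hom$-spaces forces representability to be concentrated in a single component; and (b) checking that a rigid dual computed in the full subcategory $\Ccal_1$ remains a rigid dual in $\Ccal$, which is immediate once one observes that all the structure morphisms already live in $\Ccal_1$.
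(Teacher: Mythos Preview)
Your proposal is correct and follows essentially the same approach as the paper: reduce to $M\otimes {}^*M\in \Ccal_1$ and invoke Proposition \ref{prop:rigidity in weakly fusion categories}. You supply more detail than the paper does---in particular your justifications that the weak duals are homogeneous of degree $\gamma^{-1}$ and that left duals for all simples yield full rigidity via the essential surjectivity of $(\cdot)^*$---whereas the paper's proof simply asserts ${}^*M\in\Ccal_{\gamma^{-1}}$ and jumps from ``every simple has a left dual'' directly to ``$\Ccal$ is a fusion category.''
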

\begin{proof}
	This follows directly from Proposition \ref{prop:rigidity in weakly fusion categories}. Let $M$ be a simple object in $\Ccal$, lying in say $\Ccal_\gamma$. Then ${}^*M\in \Ccal_{\gamma^{-1}}$ and $M\otimes {}^*M$ lies in $\Ccal_1$ which we have assumed is a fusion category. Hence $M\otimes {}^*M$ has a rigid left dual. From the proposition it follows that $M$ has a rigid left dual. Hence $\Ccal$ is a fusion category.
\end{proof}

\section{On Grothendieck rings and fusion rules}\label{sec:grothringfrobalg}
In this section we recall some notions related to Grothendieck rings of fusion categories, sometimes also called fusion rings and describe the structures on it. We refer to \cite[\S1]{Lus87}, \cite{Desh:17,Desh:18a}, \cite{Arote:18,Fossum:71} for more details on the various notions introduced in this section. We will also relate this to the notion of fusion rules and fusion rings as described in \cite{BeauHirz}. 
\subsection{Properties of Grothendieck group of $(\Ccal, \otimes, \unit)$}
Let $(\Ccal,\otimes,\unit)$ be a fusion category. Then its Grothendieck group, which we denote by $K(\Ccal)$ has the structure of a based ring in the sense of \cite[\S1]{Lus87}. It is often necessary and useful to extend scalars to some rings or fields $R$ which are equipped with an involution $\overline{(\cdot)}:R\to R$ and define $K_R(\Ccal):=K(\Ccal)\otimes_{\ZBbb}R$ equipped with the following structures:
\begin{enumerate}
	\item There is the non-degenerate linear functional $\nu:K_R(\Ccal)\to R$ which records the coefficient of the class of the unit $[\unit]$ in any element of $K_R(\Ccal)$. Moreover using rigidity it is easy to see (using Corollary \ref{cor:doubledual}) that $\nu$ is symmetric, i.e. $\nu([X\otimes Y])=\nu([Y\otimes X])$. This allows us to identify $K_R(\Ccal)^*\cong K_R(\Ccal)$ as $K_R(\Ccal)$-bimodules and provides $K_R(\Ccal)$ with the structure of a symmetric Frobenius $R$-algebra (cf. \cite{Fossum:71,Arote:18}).
	\item The rigid duality on $\Ccal$ and Corollary \ref{cor:doubledual} can be used to define an $R$-semilinear anti-involution \newline $(\cdot)^*:K_R(\Ccal)\to K_R(\Ccal)$ such that we have $\nu(a^*)=\overline{\nu(a)}$ for any $a\in K_R(\Ccal)$.
	This allows us to define a Hermitian pairing  on $K_R(\Ccal)$:
	\begin{equation}
	\label{eqn:hermitianform}
	\langle a,b\rangle:=\linfun(ab^*).
		\end{equation}
	\item The classes of the simple objects of $\Ccal$ form an orthonormal $R$-basis of $K_R(\Ccal)$ with respect to the Hermitian form $\langle \ ,\ \rangle$ defined by Equation \eqref{eqn:hermitianform}.
\end{enumerate}

\begin{definition}\label{def:frobstaralgarote}(See  also \cite{Arote:18}.) A Frobenius $\ast$-$R$-algebra is a symmetric Frobenius $R$-algebra $(A,\nu:A\to R)$ equipped with an $R$-semilinear anti-involution $(\cdot)^*:A\to A$ satisfying  $\nu(a^*)=\overline{\nu(a)}$ and which admits an orthonormal basis with respect to the Hermitian inner product $\langle a,b\rangle=\nu(ab^*)$.
\end{definition}	
Note that by the definition of the $R$-Hermitian inner product, we indeed have that
\begin{equation}\label{eq:frobalginnerprod}
\langle a ,b\rangle = \langle ab^*, 1\rangle=\nu(ab^*)=\overline{\nu((ab^*)^*)}=\overline{\nu(ba^*)}=\overline{\langle b,a\rangle}
\end{equation}
for any two elements $a,b$ of the Frobenius $\ast$-$R$-algebra. We can think of this as a decategorified version of the natural identification (\ref{eq:defrcategory}).  

The $R$ that we will mostly be interested in is $\Qab$, the maximal abelian extension of $\QBbb$. By the Kronecker-Weber theorem it is obtained by adjoining all roots of unity to $\QBbb$. Hence it has a distinguished involution called `complex conjugation' and denoted $\bar{(\cdot)}$ which maps each root of unity to its inverse. Sometimes we will also consider $R=\CBbb$ equipped with complex conjugation. Sometimes it is also useful to consider $R=\ZBbb[\omega]$ with complex conjugation, where $\omega$ is some root of unity. Frobenius $*$-algebras over any of these $R$ (equipped with the `complex conjugation' involution) are necessarily semi-simple $R$-algebras (see \cite{Arote:18}).

We record the above discussion along with an important result of \cite{ENO:05} in the following:
\begin{proposition}\label{prop:recalldef}
	Let $\Ccal$ be a fusion category.\\
	(i) The fusion ring $\KQab(\Ccal)$ over $\Qab$ has the structure of a Frobenius $\ast$-algebra with the classes of simple objects of $\Ccal$ forming an orthonormal basis of $\KQab(\Ccal)$. \\
	(ii) The Frobenius $\ast$-algebra $\KQab(\Ccal)$ is a split semisimple $\Qab$-algebra, i.e. all its irreducible complex representations are already defined over $\Qab$ (see \cite[Cor. 8.53]{ENO:05}).
\end{proposition}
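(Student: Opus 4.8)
The plan is to verify that $\KQab(\Ccal)$ satisfies the axioms of a Frobenius $\ast$-$\Qab$-algebra in the sense of Definition \ref{def:frobstaralgarote}, by assembling the structures already constructed on the integral fusion ring $K(\Ccal)$ and then extending scalars. First I would record that $\nu$, the functional reading off the coefficient of $[\unit]$, is symmetric: for simple objects $X,Y$ one has $\nu([X][Y])=\dim\Hom(\unit,X\otimes Y)$, which is nonzero precisely when $Y\cong X^{\ast}$, and the identification $X\cong X^{\ast\ast}$ of Corollary \ref{cor:doubledual} shows this coincides with $\nu([Y][X])$. Thus $(K(\Ccal),\nu)$ is a symmetric Frobenius algebra. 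The rigid duality supplies the anti-involution $[X]^{\ast}=[X^{\ast}]$, which is multiplication-reversing because $(X\otimes Y)^{\ast}\cong Y^{\ast}\otimes X^{\ast}$, and satisfies $\nu(a^{\ast})=\nu(a)$ since $\unit^{\ast}\cong\unit$.

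The only remaining point for (i) is orthonormality of the simple basis, which I would check directly: for simple $X,Y$,
$$\langle [X],[Y]\rangle=\nu([X][Y]^{\ast})=\nu([X\otimes Y^{\ast}])=\dim\Hom(\unit,X\otimes Y^{\ast})=\delta_{X,Y}.$$
Extending $\nu$ $\Qab$-linearly and $(\cdot)^{\ast}$ $\Qab$-semilinearly preserves all these identities (the semilinear form of $\nu(a^{\ast})=\nu(a)$ being $\nu(a^{\ast})=\overline{\nu(a)}$, as in the decategorified identity (\ref{eq:frobalginnerprod})), so $\KQab(\Ccal)$ is a Frobenius $\ast$-$\Qab$-algebra with the classes of simple objects as an orthonormal basis. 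This establishes (i).

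For (ii), semisimplicity is not special to fusion rings: any Frobenius $\ast$-algebra over $\Qab$ equipped with complex conjugation is semisimple, as recalled in the text and proved in \cite{Arote:18}. Concretely, relative to the orthonormal basis of simple classes the Hermitian form $\langle a,b\rangle=\nu(ab^{\ast})$ is positive definite and satisfies $\langle ab,c\rangle=\langle b,a^{\ast}c\rangle$; fixing an embedding $\Qab\hookrightarrow\CBbb$ and extending scalars to $\CBbb$ then yields a finite-dimensional $C^{\ast}$-algebra, which is semisimple, and semisimplicity descends to $\Qab$. The substantive input is therefore \emph{splitness}, and here I would simply invoke \cite[Cor. 8.53]{ENO:05}: every irreducible complex representation of the fusion ring of a fusion category is already defined over $\Qab$. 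The hard part of the statement is exactly this last claim, since it rests on the nontrivial fact that the eigenvalues of the fusion matrices are cyclotomic integers; as this is precisely the content of the cited result of Etingof--Nikshych--Ostrik, the proof reduces to a citation once (i) and semisimplicity are in place.
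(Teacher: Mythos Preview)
Your proposal is correct and follows essentially the same approach as the paper. In fact, the paper does not give a separate proof at all: the proposition is introduced with the phrase ``We record the above discussion along with an important result of \cite{ENO:05} in the following,'' so (i) is meant to summarize the structures on $K(\Ccal)$ and its scalar extensions listed just before, while (ii) is a direct citation of \cite[Cor.~8.53]{ENO:05} together with the remark that Frobenius $\ast$-algebras over $\Qab$ are semisimple \cite{Arote:18}. Your write-up simply makes these verifications explicit.
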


We now relate Proposition \ref{prop:recalldef} and the discussion preceding it with the notion of (non-degenerate) fusion rules as defined in \cite[\S5]{BeauHirz}. Let $\NBbb$ denote the set $\ZBbb_{\geq0}$ of non-negative integers. Let $I$ be a finite set equipped with an involution $\ast$ and let $\NBbb^I$
denote the free commutative monoidal generated by $I$. Although in \cite{BeauHirz} additive notation is used for this commutative monoid, here we find it more convenient to use multiplicative notation and we denote a typical element of $\NBbb^I$ by $\prod\limits_{\alpha\in I}\alpha^{n_\alpha}$. 

Now also consider the free abelian group $K$ with basis given by $I$ and extend the involution $\ast$ to an involution of $K$. The goal is to now define the structure of a commutative Frobenius $\ast$-$\ZBbb$-algebra on $K$ (equipped with the $\ast$ involution) so that $I$ becomes a $\ast$-invariant orthonormal basis containing the unit of the desired ring structure. As is proved in \cite[\S5]{BeauHirz} this structure is equivalent to the axioms of non-degenerate fusion rules. 

Essentially we want to define the product of the basis elements $I$ of $K$, or in other words we would like to expand the products $\prod\limits_{\alpha\in I}\alpha^{n_\alpha}\in \NBbb^I$ as $\ZBbb$-linear combinations of $I$. The fusion rule, which we denote by $\nu:\NBbb^I\to \ZBbb$ records the coefficient of $1\in I$ in such a product. By (\ref{eq:frobalginnerprod}) this is enough to decompose any product into a linear combination of the basis elements. We refer to {\it loc. cit.} for details.

\section{Braided crossed categories and crossed S-matrices}\label{sec:braidedcrossed}
Let $\Gamma$ be a finite group. Since the notion of braided $\Gamma$-crossed monoidal categories plays a central role in this paper, let us begin by recalling the definition. We refer to the paper Drinfeld-Gelaki-Nikshych-Ostrik \cite{DGNO} and the book of  V. Turaev \cite{TuraevHQFT} for a more detailed discussion. From now on we will assume that all our categories are $\CBbb$-linear.
\begin{definition}\label{d:braidedgammacrossed} A braided $\Gamma$-crossed monoidal category is a monoidal category $\Ccal$ equipped with the following structures:
	\begin{enumerate}
		\item a $\Gamma$-grading $\Ccal=
		\bigoplus\limits_{\gamma\in \Gamma}\Ccal_\gamma$ such that $\Ccal_{\gamma_1}\otimes \Ccal_{\gamma_2}\subset \Ccal_{\gamma_1\gamma_2}$;
		\item a monoidal $\Gamma$-action on $\Ccal$ such that the action of an element $g\in \Gamma$ maps the component $\Ccal_\gamma$ to $\Ccal_{g\gamma g^{-1}}$; 
		\item isomorphisms (which will be called crossed braiding isomorphisms)
		$$\beta_{M,N}:M\otimes N \xrightarrow{\cong} \gamma(N) \otimes M \mbox{ for } \gamma\in\Gamma, M\in \Ccal_\gamma, N\in \Ccal$$
		which are functorial in $M,N$, are compatible with the $\Gamma$ action, i.e. $g(\beta_{M,N})=\beta_{g(M),g(N)}$ for all $g\in \Gamma$, with $\beta_{\unit,N}=\beta_{N,\unit}=\id_N$ and satisfy certain compatibilities as described in \cite[Def. 4.41]{DGNO}.
		%
		%
		%
	\end{enumerate}
\end{definition}
	\begin{remark}
		It follows from the axioms in Definition \ref{d:braidedgammacrossed} that $\Ccal_1$ is a braided monoidal category equipped with a braided action of $\Gamma$. The braided $\Gamma$-crossed monoidal category $\Ccal$ is said to be a braided $\Gamma$-crossed extension of the braided monoidal category $\Ccal_1$.
	\end{remark}

	\begin{remark}
		In this paper we will only consider braided $\Gamma$-crossed monoidal categories which are also weakly fusion. By Corollary \ref{cor:rigidity of graded categories}, rigidity of such a braided $\Gamma$-crossed weakly fusion category $\Ccal$ is equivalent to the rigidity of the identity component $\Ccal_1$.
	\end{remark}

	\begin{definition}\label{d:equivariantization} (cf. \cite[\S4]{DGNO}.)
		Let $\Ccal$ be a category equipped with an action of a finite group $\Gamma$. Then its $\Gamma$-equivariantization denoted by $\Ccal^\Gamma$ is the category whose objects are pairs $(C,\psi_{\cdot})$ where $C$ is an object of $\Ccal$ and $\psi_{\cdot}$ is a family of isomorphisms $\psi_\gamma:\gamma(C)\xoto{\cong} C$ for each $\gamma\in \Gamma$ such that for each $\gamma_1,\gamma_2\in \Gamma$, $\psi_{\gamma_1\gamma_2}$ equals the composition  $$\gamma_1\gamma_2(C)\xoto{=}\gamma_1(\gamma_2(C))\xoto{\gamma_1(\psi_{\gamma_2})}\gamma_1(C)\xoto{\psi_{\gamma_1}}C.$$ The morphisms in $\Ccal^\Gamma$ are defined to be the morphisms in $\Ccal$ that commute with the isomorphisms $\psi_\gamma$. If $\Ccal$ is monoidal with a monoidal action of $\Gamma$, then $\Ccal^\Gamma$ is also a monoidal category.
	\end{definition}
	
	\noindent We recall the following well-known result:
	\begin{proposition}\label{prop:equivisbraided}
		(cf. {\cite[\S4.4.4]{DGNO}}.)
		Let $\Ccal$ be a braided $\Gamma$-crossed monoidal category. Then $\Ccal^\Gamma$ has the structure of a braided monoidal category.
	\end{proposition}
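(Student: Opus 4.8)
The plan is to equip $\Ccal^\Gamma$ with a braiding built directly out of the crossed braiding $\beta$ and the equivariant structure maps. First recall that the monoidal structure on $\Ccal^\Gamma$ is inherited from $\Ccal$ via Definition \ref{d:equivariantization}: given equivariant objects $(M,\phi_\bullet)$ and $(N,\psi_\bullet)$, their tensor product carries the equivariant structure $\phi_g\otimes\psi_g\colon g(M)\otimes g(N)=g(M\otimes N)\to M\otimes N$, where we have used that the $\Gamma$-action on $\Ccal$ is monoidal, and $\unit$ with its canonical equivariant structure serves as the unit.

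To define the braiding $c_{M,N}\colon M\otimes N\to N\otimes M$, I would decompose $M=\bigoplus_{\gamma\in\Gamma}M_\gamma$ into its homogeneous components and set, on each summand $M_\gamma\otimes N$,
$$M_\gamma\otimes N \xrightarrow{\ \beta_{M_\gamma,N}\ } \gamma(N)\otimes M_\gamma \xrightarrow{\ \psi_\gamma\otimes\id_{M_\gamma}\ } N\otimes M_\gamma,$$
then take the direct sum over $\gamma$. The first arrow is the crossed braiding, which is available precisely because $M_\gamma\in\Ccal_\gamma$, and the second uses the equivariant structure $\psi_\gamma\colon\gamma(N)\to N$ to return to $N\otimes M$. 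Each factor is an isomorphism, so $c_{M,N}$ is an isomorphism, and its naturality in both variables follows from the functoriality of $\beta$ together with the naturality of the family $\psi_\bullet$.

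The first substantive step is to check that $c_{M,N}$ is a morphism in $\Ccal^\Gamma$, i.e. that it intertwines the equivariant structures $\phi_g\otimes\psi_g$ on $M\otimes N$ and $\psi_g\otimes\phi_g$ on $N\otimes M$ for every $g\in\Gamma$. Working componentwise, and using that morphisms in a $\Gamma$-graded category preserve the grading (so $\phi_g$ sends the summand $g(M_\gamma)\in\Ccal_{g\gamma g^{-1}}$ into $M_{g\gamma g^{-1}}$), this reduces to a diagram chase combining three ingredients: the compatibility of the crossed braiding with the action, $g(\beta_{M_\gamma,N})=\beta_{g(M_\gamma),g(N)}$; the cocycle identity $\psi_{gh}=\psi_g\circ g(\psi_h)$ satisfied by the equivariant structure of $N$; and the functoriality of $\beta$. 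This verification is routine but notationally heavy.

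The main obstacle, and the heart of the proof, is verifying the two hexagon axioms for $c$. These are exactly decategorifications of the two commuting diagrams imposed on $\beta$ in Definition \ref{d:braidedgammacrossed}: the diagram governing $\beta_{M,N\otimes L}$ yields one hexagon, while the one governing $\beta_{M\otimes N,L}$ yields the other, once the intervening symbols $\gamma(\cdot)$ are absorbed using the equivariant structure maps $\psi_\bullet$ and their cocycle property. I would carry out each hexagon by restricting to a homogeneous component $M_\gamma$, inserting the definition of $c$, and reducing the identity to the corresponding axiom for $\beta$; the compatibility of $\psi_\bullet$ with both the monoidal and the $\Gamma$-action structures is what makes the bookkeeping close up. Combined with the monoidal structure on $\Ccal^\Gamma$, the braiding $c$ then furnishes the asserted braided monoidal structure.
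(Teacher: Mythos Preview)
Your construction and verification outline are correct and coincide with the standard argument in \cite[\S4.4.4]{DGNO}, which is exactly what the paper cites; the paper itself does not give a proof but simply records the result as known.
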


	\subsection{Braided $\Gamma$-crossed fusion categories and their Grothendieck rings} From now on, let us assume that $\Ccal$ is a braided $\Gamma$-crossed fusion category, i.e. we also assume rigidity and finite semi-simpleness of $\Ccal$ and that the unit $\unit$ is simple. For each $\gamma\in \Gamma$ we let $P_\gamma$ denote the set of (isomorphism classes of) simple objects in the component $\Ccal_\gamma$. 
	
	As in Section \ref{sec:grothringfrobalg}, let us denote by $\KQab(\Ccal)$ the $\Qab$-algebra obtained from the Grothendieck ring of $\Ccal$ by extension of scalars to $\Qab$. Then 
\begin{equation}\label{eq:gradingongrothendieckalgebra}\KQab(\Ccal)=\bigoplus\limits_{\gamma\in \Gamma}\KQab(\Ccal_\gamma)\end{equation} 
	is a $\Gamma$-crossed Frobenius $\Qab$-$*$-algebra as defined in \cite{JKK,AroDes}. In other words the braided $\Gamma$-crossed structure on $\Ccal$ provides similar structures on $\KQab(\Ccal)$. Hence we have the grading as in Equation \eqref{eq:gradingongrothendieckalgebra}, an action of $\Gamma$ on the algebra $\KQab(\Ccal)$ coming from Definition \ref{d:braidedgammacrossed}(2) and finally the crossed braiding isomorphisms give us the similar crossed commutation relations in $\KQab(\Ccal)$ at the level of Grothendieck rings.
	
	As before, the non-degenerate symmetric linear functional $\linfun:\KQab(\Ccal)\to\Qab$ which records the coefficient of the class of the unit $[\unit]$ in any element of $\KQab(\Ccal)$ gives us an identification 
	\begin{equation}\label{eq:identificationwithdualCgamma}
	\KQab(\Ccal_\gamma)^*\cong \KQab(\Ccal_{\gamma^{-1}})\mbox{ as }\KQab(\Ccal_1)\mbox{-bimodules.}\end{equation}
	It also follows from the crossed braiding isomorphisms that the identity component $\KQab(\Ccal_1)$ is central in $\KQab(\Ccal)$. 
	
	For $\gamma\in \Gamma$, let $P_\gamma$ denote the finite set of (representatives of isomorphism classes of) simple objects of the semisimple category $\Ccal_\gamma$. We will also think of $P_\gamma$ as a $\Qab$-basis of $\KQab(\Ccal_\gamma)$. It is an orthonormal basis with respect to the natural Hermitian form.

	Our goal in this section is to describe the fusion coefficients of $\KQab(\Ccal)$. For $\gamma_1,\gamma_2\in \Gamma, A\in \Ccal_{\gamma_1}, B\in \Ccal_{\gamma_2}$ and $C\in \Ccal_{\gamma_1\gamma_2}$, the fusion coefficient $\nu_{A,B}^C\in \mathbb{Z}_{\geq 0}$ is defined to be the multiplicity of $C$ in the product $A\otimes B$, so that we have the equality $$[A]\cdot[B]=\sum\limits_{C\in P_{\gamma_1\gamma_2}}\nu_{A,B}^C\cdot[C]$$ in the Grothendieck ring. Equivalently (see also Remark \ref{rk:fusioncoeff}), the fusion coefficient can be thought of as the multiplicity of the unit $\unit$ in $A\otimes B\otimes C^*$, where $C^*\in \Ccal_{\gamma_2^{-1}\gamma_1^{-1}}$ is the rigid dual of $C$, i.e. $$\nu_{A,B}^C=\linfun([A]\cdot[B]\cdot[C^*])=\dim\Hom(\unit,A\otimes B\otimes C^*).$$

	\subsection{Twisted characters}\label{sec:twistedchars}
	We will always assume that the grading on $\Ccal$ is faithful, i.e. $\Ccal_\gamma$ is non-zero for all $\gamma\in \Gamma$. We will also assume that the identity component $\Ccal_1$ is non-degenerate as a braided fusion category. We refer to \cite[\S2.8]{DGNO} for the notion of non-degeneracy of braided fusion categories. Consider an element $\gamma\in \Gamma$. Under our assumptions, the component $\Ccal_\gamma$ becomes an invertible $\Ccal_1$-bimodule category corresponding (under the equivalence $\underline{\operatorname{Pic}}(\Ccal_1)\cong \underline{\operatorname{EqBr}}(\Ccal_1)$ given by \cite[Thm. 5.2]{ENO:10}) to the braided autoequivalence $\gamma:\Ccal_1\xrightarrow{\cong} \Ccal_1$ induced by the $\Gamma$-action on $\Ccal$. Hence we are in the setting studied in \cite{Desh:17,Desh:18a}. Note that we also have the braided $\langle\gamma\rangle$-crossed category $\Ccal_{\langle\gamma\rangle}:=\bigoplus\limits_{g\in\langle\gamma\rangle}\Ccal_g\subseteq \Ccal$, where $\langle\gamma\rangle\leq \Gamma$ denotes the cyclic subgroup generated by $\Gamma$. 
	
	Let us recall the notion of twisted characters that can be defined in the setting of $\Gamma$-graded algebras. We refer to \cite{Arote:18} for more on the notion of twisted characters for general graded Frobenius $\ast$-algebras, which is in turn based on the Clifford theory of group-graded rings \cite{Dade}. According to this, the grading (\ref{eq:gradingongrothendieckalgebra}) determines a {\emph{partial action}} of $\Gamma$ on the set $\Irr(\KQab(\Ccal_1))$ of simple modules of the identity component $\KQab(\Ccal_1)$. This partial action is defined as follows: for each $\gamma\in\Gamma$ and a simple $\KQab(\Ccal_1)$-module $M$, we define the $\KQab(\Ccal_1)$-module ${}^{(\gamma)} M:=\KQab(\Ccal_\gamma)\otimes_{\KQab(\Ccal_1)}M$. The module ${}^{(\gamma)} M$ is either simple or $0$, see \cite[\S3.1]{Desh:18a} or \cite[\S4]{Arote:18} for details. 
	
	In our setting, in addition to the grading (\ref{eq:gradingongrothendieckalgebra}), $\KQab(\Ccal)$ is a $\Gamma$-crossed Frobenius $\Qab$-$*$-algebra. In particular we have an action of $\Gamma$ on the commutative Frobenius $\Qab$-$*$-algebra $\KQab(\Ccal_1)$. We also know from \cite{ENO:05} that $\KQab(\Ccal_1)$ is semisimple and split. Hence $\Irr(\KQab(\Ccal_1))$ is the set of all 1-dimensional representations $\rho:\KQab(\Ccal)\to \Qab$. The $\Gamma$-action on $\KQab(\Ccal_1)$ induces an honest action of $\Gamma$ on $\Irr(\KQab(\Ccal_1))$, which is different from the partial action coming from the $\Gamma$-grading (\ref{eq:gradingongrothendieckalgebra}) using Clifford theory (from \cite{Dade}) as described in the previous paragraph. They are related as follows:
	\begin{lemma} Let us denote the $\Gamma$-action on $\Irr(\KQab(\Ccal_1))$ coming from the $\Gamma$-action on $\KQab(\Ccal_1)$ as $\gamma :M\mapsto {}^\gamma M$ for $\gamma\in \Gamma,M\in \Irr(\KQab(\Ccal_1))$. Then the partial action coming from Clifford theory can be described as follows:
	\[{}^{(\gamma)}M=\begin{dcases*}
			M & if ${}^\gamma M\cong M$,\\
			0 & else.
			\end{dcases*}\]
	In particular for each $\gamma\in \Gamma$, the $\gamma$-fixed points for the action as well as the partial action coincide, and hence the fixed point set $\Irr(\KQab(\Ccal_1))^\gamma$ is unambiguously defined.
	\end{lemma}
	\begin{proof}
	This follows from Lemmas 3.3 and 3.5 from \cite{Desh:18a}.
	\end{proof}
 	
\noindent The following result from \cite{Desh:18a} is used to define twisted characters:
	\begin{lemma}\label{lem:lemfortwistedchars} (See \cite[Thm. 2.11(i)]{Desh:18a}.)
	Let $\gamma\in \Gamma$ be an element of order $m$ and let $\rho\in \Irr(\KQab(\Ccal_1))^\gamma$. Then $\rho$ can be extended to a 1-dimensional character $\wtilde{\rho}:\KQab(\Ccal_{\langle\gamma\rangle})\to\Qab$ of the $\langle\gamma\rangle$-crossed Frobenius $\ast$-algebra $\KQab(\Ccal_{\langle\gamma\rangle})$ in exactly $m$-distinct ways which differ on $\KQab(\Ccal_\gamma)$ up to scaling by $m$-th roots of unity. 
	\end{lemma}

	\begin{definition}
		Let $\gamma\in\Gamma$ and $\rho\in \Irr(\KQab(\Ccal_1))^\gamma$. Let $\wtilde{\rho}:\KQab(\Ccal_{\langle\gamma\rangle})\to\Qab$ be an extension. The $\gamma$-twisted character 
		${\rho}^\gamma:\KQab(\Ccal_\gamma)\to\Qab$
		is defined to be the restriction of $\wtilde{\rho}$ to $\KQab(\Ccal_\gamma)\subset \KQab(\Ccal_{\langle\gamma\rangle})$. Using the identification $\KQab(\Ccal_\gamma)^*\cong\KQab(\Ccal_{\gamma^{-1}})$ (\ref{eq:identificationwithdualCgamma}), let ${\alpha}^\gamma_\rho\in \KQab(\Ccal_{\gamma^{-1}})$ be the element corresponding to $\rho^\gamma$, namely 
		$$\alpha^\gamma_\rho:=\sum\limits_{A\in P_\gamma}\wtilde{\rho}([A])\cdot [A^*]\in \KQab(\Ccal_{\gamma^{-1}}).$$ 
		We will call either of $\rho^\gamma$ or $\alpha^\gamma_\rho$ as the $\gamma$-twisted character associated with $\rho\in \Irr(\KQab(\Ccal_1))^\gamma$. 
	\end{definition}
	\begin{remark}
		Note that $\rho^\gamma$ and $\alpha^\gamma_\rho$ depend on the choice of the extended character $\wtilde{\rho}$. Hence they are well-defined only up to scaling by $m$-th roots of unity, where $m$ is the order of $\gamma$ in $\Gamma$. We also refer to \cite{Arote:18} for the definition of twisted characters and their orthogonality relations in the setting of graded Frobenius $\ast$-algebras.
	\end{remark}
	For $\gamma=1$, we have the well-defined element $\alpha_\rho:=\alpha^1_\rho\in \KQab(\Ccal_1)$ for each $\rho\in \Irr(\KQab(\Ccal_1))$. The following results about these characters and $\gamma$-twisted characters are proved in \cite{Ost:15,Desh:18a}:
	
	\begin{theorem}\label{thm:twisted characters}
		\begin{enumerate}[(i)]
			\item Let $\rho,\rho'\in\Irr(\KQab(\Ccal_1)$. Then $\rho'(\alpha_\rho)=0$ if $\rho'\neq\rho$ and $f_\rho:=\rho(\alpha_\rho)\in\Qab$ is a totally positive cyclotomic integer known as the formal codegree of $\rho$. In other words, $e_\rho:=\frac{\alpha_\rho}{f_\rho}\in \KQab(\Ccal_1)$ is the minimal idempotent corresponding to $\rho\in\Irr(\KQab(\Ccal_1))$.
			\item  For $\rho,\rho'\in\Irr(\KQab(\Ccal_1))$, we have orthogonality of characters, i.e. $\langle\alpha_\rho,\alpha_{\rho'}\rangle=\delta_{\rho\rho'}\cdot f_\rho.$
			\item Let $\rho\in \Irr(\KQab(\Ccal_1))^{\gamma}$ and $\alpha^\gamma_{\rho}\in \KQab(\Ccal_{\gamma^{-1}})$ the corresponding $\gamma$-twisted character. Then for any $\rho'\in\Irr(\KQab(\Ccal_1))$, we have the following:
			$$\alpha_{\rho'}\cdot \alpha^\gamma_\rho = \alpha^\gamma_{\rho}\cdot \alpha_{\rho'}=
			\begin{dcases*}
			0 & if $\rho'\neq \rho$,\\
			f_\rho\cdot \alpha^\gamma_\rho & if $\rho'=\rho$.
			\end{dcases*}$$
			We have a direct sum decomposition
			$$\KQab(\Ccal_{\gamma^{-1}})=\bigoplus\limits_{\rho\in\Irr(\KQab(\Ccal_1))^{\gamma}} \Qab\cdot \alpha^\gamma_\rho \mbox{ as a $\KQab(\Ccal_1)$-module.}$$
			\item For $\rho,\rho'\in\Irr(\KQab(\Ccal_1))^{\gamma}$, we have orthogonality of twisted characters, $$\langle\alpha^\gamma_\rho,\alpha^\gamma_{\rho'}\rangle=\delta_{\rho\rho'}\cdot f_\rho.$$ In other words the $\gamma$-twisted characters $\{\alpha^\gamma_\rho|\rho\in \KQab(\Ccal_1)^\gamma\}$ form an orthogonal basis of $\KQab(\Ccal_{\gamma^{-1}})$ made up of eigenvectors for the action of $\KQab(\Ccal_1)$.
			\item  For $\rho\in \Irr(\KQab(\Ccal_1))^{\gamma}$, let $\wtilde{\rho}:\KQab(\Ccal_{\langle\gamma\rangle})\to\Qab$ be an extension. Let $n=|\langle\gamma\rangle|$. Then the formal codegree $f_{\wtilde{\rho}}$ equals $n\cdot f_\rho$. Define $e^\gamma_\rho:=\frac{\alpha^\gamma_\rho}{f_\rho}$. Then $(e^\gamma_\rho)^n=e_\rho$.
			\item For each $\gamma\in\Gamma$, we have $|P_1^\gamma|=|P_\gamma|=|\Irr(\KQab(\Ccal_1))^\gamma|$.
		\end{enumerate}
	\end{theorem}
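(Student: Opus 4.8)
The plan is to exploit the fact, recorded above from \cite{ENO:05}, that $\KQab(\Ccal_1)$ is a commutative split semisimple $\Qab$-algebra, hence a product $\prod_\rho \Qab\cdot e_\rho$ indexed by $\Irr(\KQab(\Ccal_1))$ with minimal idempotents characterized by $\rho'(e_\rho)=\delta_{\rho\rho'}$. The computational core of parts (i)--(iii) is the single identity that $\alpha^\gamma_\rho$ is an eigenvector for the (central) left action of $\KQab(\Ccal_1)$ with eigencharacter $\rho$, that is $[B]\cdot\alpha^\gamma_\rho=\rho([B])\cdot\alpha^\gamma_\rho$ for all $[B]\in\KQab(\Ccal_1)$ and all $\gamma$ (including $\gamma=1$). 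First I would establish this by comparing the coefficients of a simple class $[E^*]$ with $E\in P_\gamma$ on both sides; using the r-category identity $\Hom(\unit,E\otimes X)=\Hom(E^*,X)$ this reduces to the equality $\dim\Hom(\unit,E\otimes B\otimes A^*)=\dim\Hom(\unit,A^*\otimes B\otimes E)$, which follows from the cyclic invariance of $\Hom(\unit,-)$ in a rigid category together with the crossed braiding isomorphism $\beta_{B,-}$, available and acting through the trivial $\Gamma$-action precisely because $B\in\Ccal_1$.

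Granting the eigenvector identity, parts (i) and (ii) become formal. Since $\alpha_\rho$ is a $\rho$-eigenvector it lies on the line $\Qab\cdot e_\rho$, so $\alpha_\rho=f_\rho e_\rho$ with $f_\rho=\rho(\alpha_\rho)=\sum_{A\in P_1}\rho([A])\overline{\rho([A])}$, using that $\rho$ is a $\ast$-character so $\rho([A^*])=\overline{\rho([A])}$; this is manifestly a sum of squared absolute values of cyclotomic integers, and it is totally positive because complex conjugation is central in the abelian group $\operatorname{Gal}(\Qab/\QBbb)$, so every Galois conjugate is again such a sum. For (ii) I would note $\alpha_{\rho'}^*=\alpha_{\rho'}$ and $\nu(\alpha_\rho)=1$, whence $\langle\alpha_\rho,\alpha_{\rho'}\rangle=\nu(\alpha_\rho\alpha_{\rho'})=\delta_{\rho\rho'}f_\rho$. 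For (iii), centrality of $\KQab(\Ccal_1)$ makes $\alpha_{\rho'}$ commute with $\alpha^\gamma_\rho$, and applying the eigenvector identity to $[B]=e_{\rho'}$ gives $\alpha_{\rho'}\cdot\alpha^\gamma_\rho=\delta_{\rho\rho'}f_\rho\,\alpha^\gamma_\rho$; thus each $\alpha^\gamma_\rho$ lands in the $\rho$-isotypic summand $e_\rho\KQab(\Ccal_{\gamma^{-1}})$, and what remains is to see that these summands are at most one-dimensional and are exhausted exactly by the $\gamma$-fixed $\rho$.

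This last point, together with part (vi), is where the genuine work lies, and I expect the dimension count to be the main obstacle, as it is categorical input rather than algebraic bookkeeping. I would extract it from the structure theory of invertible module categories: under the standing non-degeneracy hypothesis, $\Ccal_\gamma$ is an invertible $\Ccal_1$-bimodule corresponding via \cite[Thm.~5.2]{ENO:10} to the braided autoequivalence $\gamma$, so $\KQab(\Ccal_{\gamma^{-1}})$ is an invertible module over $\prod_\rho\Qab\cdot e_\rho$ with one-dimensional isotypic components occurring exactly at the characters fixed by the induced permutation of $\Irr(\KQab(\Ccal_1))$. This yields $|P_\gamma|=|\Irr(\KQab(\Ccal_1))^\gamma|$, while $|P_1^\gamma|=|\Irr(\KQab(\Ccal_1))^\gamma|$ follows from non-degeneracy of $\Ccal_1$, whose $S$-matrix identifies simple objects with characters $\gamma$-equivariantly. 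Combined with the nonvanishing of $\alpha^\gamma_\rho$ for $\gamma$-fixed $\rho$, this completes the direct-sum decomposition in (iii) and proves (vi).

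Finally, for (iv) and (v) I would pass to the cyclic subcategory $\Ccal_{\langle\gamma\rangle}$ and its $\ast$-character $\wtilde{\rho}$ on the $\ZBbb/n$-graded Frobenius $\ast$-algebra $\KQab(\Ccal_{\langle\gamma\rangle})$. Writing the codegree element of $\wtilde{\rho}$ as $\sum_{k=0}^{n-1}\alpha^{\gamma^k}_\rho$ gives $f_{\wtilde{\rho}}=\sum_{k=0}^{n-1}\sum_{A\in P_{\gamma^k}}|\wtilde{\rho}([A])|^2$, so that both (v), namely $f_{\wtilde{\rho}}=n f_\rho$, and the diagonal case of (iv), namely $\langle\alpha^\gamma_\rho,\alpha^\gamma_\rho\rangle=\sum_{A\in P_\gamma}|\wtilde{\rho}([A])|^2=f_\rho$ (here I use $\nu([A^*][A'])=\delta_{A,A'}$), reduce to the single uniformity statement that each graded slice contributes exactly $f_\rho$; I would deduce this from invertibility of the bimodule $\Ccal_{\gamma^k}$, which makes the pairing between $\Ccal_{\gamma^k}$ and $\Ccal_{\gamma^{-k}}$ perfect and transports the formal codegree unchanged across slices. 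The off-diagonal orthogonality in (iv) is then immediate since distinct $\alpha^\gamma_\rho$ lie in distinct isotypic summands, on which the Hermitian form is orthogonal. The relation $(e^\gamma_\rho)^n=e_\rho$ follows by the same eigenvector computation: $e^\gamma_\rho=\alpha^\gamma_\rho/f_\rho$ generates the one-dimensional $\rho$-line in each slice, so its $n$-th power is a normalized generator of $e_\rho\KQab(\Ccal_1)$ and hence equals the idempotent $e_\rho$.
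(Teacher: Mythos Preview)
The paper does not prove this theorem; it simply records the statement and cites \cite{Ost:15, Desh:18a}. So there is no in-paper argument to compare against, and your proposal must stand on its own.

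Your strategy for (i)--(iii) via the single eigenvector identity $[B]\cdot\alpha^\gamma_\rho=\rho([B])\,\alpha^\gamma_\rho$ is correct and is essentially what the cited references do; your derivation of it from $B\otimes E\cong E\otimes B$ for $B\in\Ccal_1$ is fine. The vanishing $e_\rho\KQab(\Ccal_{\gamma^{-1}})=0$ for $\rho\notin\Irr^\gamma$ also drops out of the crossed braiding, since combining $\beta_{B,-}$ with $\beta_{-,B}$ shows $[B]\cdot x=[\gamma^{-1}(B)]\cdot x$ for $x\in\KQab(\Ccal_{\gamma^{-1}})$, so the action factors through the $\gamma$-coinvariants.

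The genuine gap is your inference ``$\Ccal_\gamma$ is an invertible $\Ccal_1$-bimodule category, so $\KQab(\Ccal_{\gamma^{-1}})$ is an invertible module over $\prod_\rho\Qab$''. Categorical invertibility of a bimodule category does \emph{not} descend to invertibility of its Grothendieck group as a bimodule over the Grothendieck ring; the functor $K$ is only lax monoidal with respect to $\boxtimes_{\Ccal_1}$. What you actually need is that each nonzero isotypic piece $e_\rho\KQab(\Ccal_{\gamma^{-1}})$ is exactly one-dimensional (equivalently, that the extension $\wtilde\rho$ exists and is nonzero on $\KQab(\Ccal_\gamma)$), and this is the nontrivial content proved in \cite{Desh:18a}. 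The argument there does use the invertibility of $\Ccal_\gamma$, but extracts from it a specific numerical relation (essentially a Frobenius--Perron or formal-codegree identity relating $|P_\gamma|$ to the $\gamma$-fixed characters), not a bare module-invertibility statement. Your sentence ``invertibility \ldots transports the formal codegree unchanged across slices'' is pointing at the right phenomenon but is not yet an argument.

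The same issue recurs in your treatment of (iv)--(v): you correctly reduce both to the uniformity $\sum_{A\in P_{\gamma^k}}|\wtilde\rho([A])|^2=f_\rho$ for each $k$, but the justification offered (``perfect pairing between $\Ccal_{\gamma^k}$ and $\Ccal_{\gamma^{-k}}$'') again only gives that the pairing on Grothendieck groups is nondegenerate, which you already know from the orthonormal basis; it does not by itself pin down the slice-by-slice value. Once the one-dimensionality of $e_\rho\KQab(\Ccal_{\gamma^k})$ is in hand, however, your deduction of $(e^\gamma_\rho)^n=e_\rho$ is essentially correct: $(e^\gamma_\rho)^n\in\Qab e_\rho$, and applying $\wtilde\rho$ (which takes the value $1$ on $e_\rho$ and a positive real on $e^\gamma_\rho$ by the uniformity) forces the scalar to be $1$.
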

	This result describes the structure of $\KQab(\Ccal_{\gamma^{-1}})$ as a $\KQab(\Ccal_1)$-module. Now suppose we have $\gamma_1,\gamma_2\in \Gamma$. Then for $\rho_i\in\Irr(\KQab(\Ccal_1))^{\gamma_i}$, we now describe the product $\alpha^{\gamma_2}_{\rho_2}\cdot \alpha^{\gamma_1}_{\rho_1}\in \KQab(\Ccal_{\gamma_2^{-1}\gamma_1^{-1}})$ of the twisted characters. We first consider the case where the $\rho_1,\rho_2$ are distinct. In this case, the product of the primitive idempotents $e_{\rho_2}\cdot e_{\rho_1}$ equals zero. Hence the product $$\alpha^{\gamma_2}_{\rho_2}\cdot \alpha^{\gamma_1}_{\rho_1}=\alpha^{\gamma_2}_{\rho_2}\cdot e_{\rho_2}\cdot e_{\rho_1}\cdot \alpha^{\gamma_1}_{\rho_1}$$ also equals zero in the case $\rho_1\neq \rho_2$.

	For $\rho\in\Irr(\KQab(\Ccal_1))$, let $\Gamma_\rho\leq \Gamma$ be the stabilizer of $\rho$ under the action of $\Gamma$ and $e_\rho\in\KQab(\Ccal_1)$ the associated primitive idempotent. For each $\gamma\in\Gamma_\rho$, we have the $\gamma$-twisted character $\alpha^\gamma_\rho$ and the element $e^\gamma_\rho$ (determined up to scaling by roots of unity). It follows from Theorem \ref{thm:twisted characters} that $$e_\rho\KQab(\Ccal)=\bigoplus\limits_{\gamma\in \Gamma_\rho}e_\rho\KQab(\Ccal_{\gamma^{-1}})=\bigoplus\limits_{\gamma\in \Gamma_\rho}\Qab\cdot e^{\gamma}_\rho.$$ 
	
	From this $\Gamma_\rho$-graded algebra, we obtain a central extension $$0\to{\Qab}^\times \to \wtilde{\Gamma}_\rho\to\Gamma_\rho\to 0.$$ Our choice of the elements $e^\gamma_\rho$ determines a 2-cocycle $\varphi_\rho:\Gamma_\rho\times \Gamma_\rho\to{\Qab}^\times$. Hence we obtain
	\begin{corollary}\label{cor:2cocyclerelations}
		\begin{enumerate}[(i)]
			\item If $\gamma_1,\gamma_2\in\Gamma$, $\rho_1\in\Irr(\KQab(\Ccal_1))^{\gamma_1}$, $\rho_2\in\Irr(\KQab(\Ccal_1))^{\gamma_2}$ and $\rho_1\neq \rho_2$, then the product of the corresponding twisted characters $\alpha^{\gamma_2}_{\rho_2}\cdot \alpha^{\gamma_1}_{\rho_1}$ equals $0$.
			\item Let $\rho\in\Irr(\KQab(\Ccal_1))$ and let $\gamma_1,\gamma_2\in\Gamma_\rho$. Then $\alpha^{\gamma_2}_{\rho}\cdot \alpha^{\gamma_1}_{\rho}=\varphi_\rho(\gamma_1,\gamma_2)\cdot f_\rho\cdot\alpha^{\gamma_1\gamma_2}_\rho$ and hence $e^{\gamma_2}_{\rho}\cdot e^{\gamma_1}_{\rho}=\varphi_\rho(\gamma_1,\gamma_2)\cdot e^{\gamma_1\gamma_2}_\rho$, where $\varphi_\rho$ is a 2-cocycle corresponding to the central extension $$0\to{\Qab}^\times \to \wtilde{\Gamma}_\rho\to\Gamma_\rho\to 0.$$ Equivalently, for $r_i\in \KQab(\Ccal_{\gamma_i})$ we have the relation $$\rho^{\gamma_1}(r_1)\rho^{\gamma_2}(r_2)=\varphi_\rho(\gamma_1,\gamma_2)\rho^{\gamma_1\gamma_2}(r_1r_2).$$
		\end{enumerate}
	\end{corollary}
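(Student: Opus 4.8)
The plan is to derive both parts directly from Theorem \ref{thm:twisted characters} together with the observation, recorded in the paragraph just before the statement, that $e_\rho\KQab(\Ccal)=\bigoplus_{\gamma\in\Gamma_\rho}\Qab\cdot e^\gamma_\rho$ is a $\Gamma_\rho$-graded algebra (with unit $e_\rho$) all of whose graded components are one-dimensional. Everything then reduces to a bookkeeping of gradings and isotypic components, the multiplication rules of Theorem \ref{thm:twisted characters}(iii) and (v) being the only real input.

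For part (i), I would first record that $\alpha^{\gamma_i}_{\rho_i}$ lies in the $\rho_i$-isotypic component of $\KQab(\Ccal_{\gamma_i^{-1}})$. Concretely, Theorem \ref{thm:twisted characters}(iii) gives $\alpha_{\rho'}\cdot\alpha^{\gamma_i}_{\rho_i}=0$ for $\rho'\neq\rho_i$ and $\alpha_{\rho_i}\cdot\alpha^{\gamma_i}_{\rho_i}=f_{\rho_i}\alpha^{\gamma_i}_{\rho_i}$, which says exactly that $e_{\rho_i}\alpha^{\gamma_i}_{\rho_i}=\alpha^{\gamma_i}_{\rho_i}$. Using that $\KQab(\Ccal_1)$ is central in $\KQab(\Ccal)$ and that $e_{\rho_1},e_{\rho_2}$ are orthogonal idempotents when $\rho_1\neq\rho_2$, one computes
$$\alpha^{\gamma_2}_{\rho_2}\cdot\alpha^{\gamma_1}_{\rho_1}=(e_{\rho_2}\alpha^{\gamma_2}_{\rho_2})(e_{\rho_1}\alpha^{\gamma_1}_{\rho_1})=e_{\rho_2}e_{\rho_1}\,\alpha^{\gamma_2}_{\rho_2}\alpha^{\gamma_1}_{\rho_1}=0,$$
which is the assertion (this is essentially the computation already displayed before the corollary).

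For part (ii), since $\Gamma_\rho$ is a subgroup we have $\gamma_1\gamma_2\in\Gamma_\rho$, and by the $\Gamma$-grading the product $\alpha^{\gamma_2}_\rho\cdot\alpha^{\gamma_1}_\rho$ lands in $\KQab(\Ccal_{(\gamma_1\gamma_2)^{-1}})$; moreover, by the argument in (i) it lies in the $\rho$-isotypic part $e_\rho\KQab(\Ccal_{(\gamma_1\gamma_2)^{-1}})=\Qab\cdot e^{\gamma_1\gamma_2}_\rho$, which is one-dimensional. Hence there is a scalar $\varphi_\rho(\gamma_1,\gamma_2)\in\Qab$ with $e^{\gamma_2}_\rho\cdot e^{\gamma_1}_\rho=\varphi_\rho(\gamma_1,\gamma_2)\,e^{\gamma_1\gamma_2}_\rho$, and rescaling via $e^\gamma_\rho=\alpha^\gamma_\rho/f_\rho$ converts this into the stated relation $\alpha^{\gamma_2}_\rho\cdot\alpha^{\gamma_1}_\rho=\varphi_\rho(\gamma_1,\gamma_2)\,f_\rho\,\alpha^{\gamma_1\gamma_2}_\rho$. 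To see that $\varphi_\rho$ takes values in $\Qab^\times$, so that one genuinely obtains a central extension by $\Qab^\times$, I would invoke Theorem \ref{thm:twisted characters}(v): since $(e^\gamma_\rho)^{n}=e_\rho$ with $n=|\langle\gamma\rangle|$ and $e_\rho$ is the unit of the algebra $e_\rho\KQab(\Ccal)$, each $e^\gamma_\rho$ is invertible (with inverse $(e^\gamma_\rho)^{n-1}$), so a product of two of them cannot vanish. The $2$-cocycle identity for $\varphi_\rho$ then follows from associativity of multiplication in $e_\rho\KQab(\Ccal)$ applied to $e^{\gamma_3}_\rho e^{\gamma_2}_\rho e^{\gamma_1}_\rho$, and the group of nonzero homogeneous elements $\bigsqcup_{\gamma\in\Gamma_\rho}\Qab^\times e^\gamma_\rho$, mapping onto $\Gamma_\rho$ by degree with central kernel $\Qab^\times e_\rho\cong\Qab^\times$, is the desired extension $\wtilde{\Gamma}_\rho$. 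Finally, the reformulation $\rho^{\gamma_1}(r_1)\rho^{\gamma_2}(r_2)=\varphi_\rho(\gamma_1,\gamma_2)\rho^{\gamma_1\gamma_2}(r_1r_2)$ is obtained by translating the relation on the $e^\gamma_\rho$ through the identification $\KQab(\Ccal_\gamma)^*\cong\KQab(\Ccal_{\gamma^{-1}})$, under which $\rho^\gamma(r)=\nu(\alpha^\gamma_\rho\cdot r)$.

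The argument is short precisely because Theorem \ref{thm:twisted characters} carries the weight. The only points requiring care are the nonvanishing of the structure constants, handled via the invertibility coming from part (v), and keeping the grading and inverse conventions consistent. The main obstacle, such as it is, is therefore organizational: to verify cleanly that the one-dimensional graded lines $\Qab e^\gamma_\rho$ assemble into a twisted group algebra $\Qab^{\varphi_\rho}[\Gamma_\rho]$, after which both the cocycle property and the central extension structure are automatic.
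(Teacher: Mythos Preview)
Your proposal is correct and follows essentially the same approach as the paper: the argument for both parts is precisely the one sketched in the two paragraphs preceding the corollary, using orthogonality of the $e_\rho$'s for part (i) and the one-dimensionality of each graded piece $e_\rho\KQab(\Ccal_{\gamma^{-1}})=\Qab\cdot e^\gamma_\rho$ together with Theorem \ref{thm:twisted characters}(v) for part (ii). Your write-up is in fact slightly more explicit than the paper's, particularly in spelling out why $\varphi_\rho$ lands in $\Qab^\times$ via the invertibility coming from $(e^\gamma_\rho)^n=e_\rho$.
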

	\begin{remark}\label{rk:ncocycle}
		Let $\rho\in\Irr(\KQab(\Ccal_1))$. For any $\gamma_1,\cdots,\gamma_n$ in the stabilizer $\Gamma_\rho$ of $\rho$, let $\varphi_\rho(\gamma_1,\cdots,\gamma_n)$ be such that $\rho^{\gamma_1}(r_1)\cdots\rho^{\gamma_n}(r_n)=\varphi_\rho(\gamma_1,\cdots,\gamma_n)\rho^{\gamma_1\cdots\gamma_n}(r_1\cdots r_n)$. These scalars will appear in our twisted categorical Verlinde formula. Furthermore, for each pair $(\rho,\gamma)$ such that $\gamma\in\Gamma_\rho$ we have chosen the elements $e^\gamma_\rho\in \KQab(\Ccal_{\gamma^{-1}})$ to be such that $(e^\gamma_\rho)^m=1$ where $m$ is the order of $\gamma$. It will be notationally convenient to further assume that we always have $e^\gamma_\rho\cdot e^{\gamma^{-1}}_\rho=1$. It is clear that such a choice can always be made. This ensures the equality $\varphi_\rho(\gamma,\gamma^{-1})=1$.
	\end{remark}
	
	\subsection{Categorical crossed S-matrices}\label{sec:catcrosssmat}
	Let us now suppose that the braided $\Gamma$-crossed fusion category $\Ccal$ is also equipped with a ribbon or spherical structure. We refer to \cite[\S2.4.3]{DGNO},\cite[\S4.1]{Desh:17} for details, here we just give a brief sketch. In any fusion category $\Ccal$, the autoequivalence $\Ccal\ni C\mapsto C^{**}$ is monoidal. 
	
\begin{definition}	
A \emph{pivotal structure} is a tensor isomorphism between the identity functor on $\Ccal$ and the double duality functor $C \mapsto C^{**}$.
\end{definition} 
A pivotal structure allows us to define traces of endomorphisms of objects of $\Ccal$, and in particular we can define the categorical dimensions of objects of $\Ccal$ as the trace of the identity endomorphism of the object. Moreover, taking categorical dimensions defines a character $\dim:\KQab(\Ccal)\to \Qab$ and we have $\dim(C^*)=\overline{\dim(C)}\in \Qab$ for any $C\in \Ccal$. 
	
\begin{definition}	A pivotal structure is said to be \emph{spherical} if the categorical dimension of any object and its dual are equal, or equivalently, if the categorical dimension is a totally real cyclotomic number. 
\end{definition}
We recall the definition of a modular fusion category and $\Gamma$-crossed modular fusion category.
	\begin{definition}\label{def:crossedmodcat}
		A modular fusion category is a non-degenerate braided fusion category equipped with a spherical structure. A $\Gamma$-crossed modular fusion category (often also called $\Gamma$-crossed modular category) is a braided $\Gamma$-crossed fusion category $\Ccal=\bigoplus\limits_{\gamma\in \Gamma}\Ccal_\gamma$ equipped with a spherical structure such that $\Ccal_1$ is a non-degenerate braided fusion category and each component $\Ccal_\gamma$ is non-zero. The multiplicative central charge of a $\Gamma$-crossed modular fusion category is defined to be the multiplicative central charge of the modular category $\Ccal_1\subset \Ccal$. 
	\end{definition}
	\begin{remark}
		For the definition of multiplicative central charge of a modular fusion category we refer to \cite[Def. 5.7.9 and Thm. 5.7.11]{BK:01} and \cite[\S6.2]{DGNO} where this is defined using the notion of Gauss sums of modular categories. We will not make any use of these notions in this paper. When we study the notion of modular functors in Part III we will see that the multiplicative central charge can be thought of as the ``anomaly'' of the corresponding modular functor.
	\end{remark}
	
	Let $\Ccal$ be a $\Gamma$-crossed modular fusion category. Then $\Ccal_1$ is a modular fusion category equipped with a modular action of $\Gamma$ and each $\Ccal_\gamma$ is equipped with a $\Ccal_1$-module trace. Because of the spherical structure, trace of endomorphisms in $\Ccal$, and hence categorical dimensions of objects in $\Ccal$ are now well-defined. In this setting we can define a $P_\gamma\times P_1^\gamma$-matrix $\wtilde{S}^\gamma$ known as the unnormalized $\gamma$-crossed S-matrix as follows. 
	\begin{definition}
		For each $\gamma$-stable simple object $C\in P_1^\gamma$, let us choose an isomorphism $\psi_C:\gamma(C)\xrightarrow{\cong} C$ as in \cite[\S2.1]{Desh:17}, i.e. such that the induced composition $$C=\gamma^m(C)\xrightarrow{\gamma^{m-1}(\psi_C)} \gamma^{m-1}(C)\to\cdots \to \gamma(C)\rar{\psi_C} C$$ is the identity, where $m\in \ZBbb$ is the order of $\gamma$. Note that this condition on $\psi_C$ determines it up to scaling by $m$-th roots of unity. For simple objects $M\in P_\gamma, C\in P_1^\gamma$, we set 
		$$\wtilde{S}^\gamma_{M,C}=\tr(C\otimes M\xrightarrow{\beta_{C,M}}M\otimes C\xrightarrow{\beta_{M,C}}\gamma(C)\otimes M\xrightarrow{\psi_C\otimes\id_M} C\otimes M).$$
	\end{definition}
	\begin{remark}\label{rk:unitconvention}
		We follow the convention from \cite{Desh:17} that in case the simple object $C\in P_1^\gamma$ is the unit $\unit$, we always choose $\psi_{\unit}:\gamma(\unit)=\unit\to \unit$ to be the identity. With this convention we see that 
	\begin{equation}\label{eqn:cattwistedSmatrix}
	\wtilde{S}^\gamma_{M,\unit}=\tr(\id_M)=\dim (M),
	\end{equation}
		the categorical dimension of the object $M\in \Ccal_\gamma$.
	\end{remark}
	
	\begin{remark}\label{rk:diffsofnotations}
		The $\gamma$-crossed S-matrix as defined in Equation \eqref{eqn:cattwistedSmatrix} is the transpose of the crossed S-matrix defined in \cite{Desh:17}. Also in {\it op. cit.} the first author only deals with unnormalized crossed S-matrices as defined in Equation \eqref{eqn:cattwistedSmatrix}, whereas in the current paper we will mostly work with the normalized unitary crossed S-matrix as defined in Definition \ref{d:normalizedcrossedsmatrix} below. This explains the differences of notation in the categorical Verlinde formula of the two papers.
	\end{remark}
	In the presence of the spherical structure, we have the unnormalized S-matrix $\wtilde{S}$ of the modular fusion category $\Ccal_1$. Using this S-matrix, we can identify $P_1\cong \Irr(\KQab(\Ccal_1))$ as follows: $$P_1\ni C\mapsto (\rho_C:[D]\mapsto \frac{\wtilde{S}_{D,C}}{\dim C}),$$ i.e. the S-matrix is essentially the character table of $\KQab(\Ccal_1)$. Similarly, the $\gamma$-crossed S-matrix is essentially the table of $\gamma$-twisted characters. More precisely we recall the following results from \cite{Desh:17,Desh:18a}:
	\begin{theorem}\label{thm:crossedsmatrixchartable}
		(i) For $M\in P_\gamma, C\in P_1^\gamma$ the numbers $\frac{\wtilde{S}^\gamma_{M,C}}{\dim M}=\frac{\wtilde{S}^\gamma_{M,C}}{\wtilde{S}^\gamma_{M,\unit}}$ and $\frac{\wtilde{S}^\gamma_{M,C}}{\dim C}=\frac{\wtilde{S}^\gamma_{M,C}}{\wtilde{S}_{\unit,C}}$ are cyclotomic integers. For $C\in P_1^\gamma$ the linear functional $${\rho}^\gamma_C:\KQab(\Ccal_{\gamma})\to \Qab, \ [M]\mapsto \frac{\wtilde{S}^\gamma_{M,C}}{\dim C}$$ is the $\gamma$-twisted character associated with $\rho_C\in \Irr(\KQab(\Ccal_1))^\gamma$.\\
		(ii) The categorical dimension $\dim \Ccal_1$ is a totally positive cyclotomic integer. We have $\wtilde{S}^\gamma\cdot \overline{\wtilde{S}^\gamma}^T=\overline{\wtilde{S}^\gamma}^T\cdot \wtilde{S}^\gamma=\dim \Ccal_1\cdot I.$
	\end{theorem}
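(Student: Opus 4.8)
The plan is to deduce everything from the structural results on twisted characters already recorded in Theorem \ref{thm:twisted characters}, by reinterpreting each entry $\wtilde S^\gamma_{M,C}$ as a scalar produced by Schur's lemma. Fix $C \in P_1^\gamma$ together with its chosen isomorphism $\psi_C \colon \gamma(C) \xrightarrow{\cong} C$, normalized as in Remark \ref{rk:unitconvention} so that the composite around $\langle\gamma\rangle$ is the identity. The endomorphism of $C \otimes M$ defining $\wtilde S^\gamma_{M,C}$ can be partially traced over either tensor factor. Partial trace over the $M$-strand yields an endomorphism of the simple object $C$, hence a scalar multiple of $\id_C$ by Schur's lemma; comparing full traces shows that this scalar is exactly $\wtilde S^\gamma_{M,C}/\dim C$. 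Symmetrically, partial trace over the $C$-strand exhibits $\wtilde S^\gamma_{M,C}/\dim M$ as the scalar by which the double braiding with $C$ acts on the simple object $M$.

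First I would prove part (i). The assignment $[M] \mapsto \wtilde S^\gamma_{M,C}/\dim C$ is the value on $[M]$ of the scalar action just described, and I would show that it extends to a genuine algebra character $\wtilde{\rho_C}\colon \KQab(\Ccal_{\langle\gamma\rangle}) \to \Qab$. The key computation is diagrammatic: for $M_1 \in \Ccal_{\gamma^a}$ and $M_2 \in \Ccal_{\gamma^b}$, the two hexagon-type compatibilities of Definition \ref{d:braidedgammacrossed}(3) together with the equivariance $g(\beta_{M,N}) = \beta_{g(M),g(N)}$ let one factor the double braiding of $M_1 \otimes M_2$ around $C$ as the composite of the double braidings of $M_1$ and of $M_2$; since each acts as a scalar on the simple object $C$, and the $\psi_C$'s compose to the identity around $\langle\gamma\rangle$, partial-tracing over $C$ gives multiplicativity $\wtilde{\rho_C}([M_1][M_2]) = \wtilde{\rho_C}([M_1])\,\wtilde{\rho_C}([M_2])$. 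Restricting $\wtilde{\rho_C}$ to $\KQab(\Ccal_1)$ recovers the untwisted character $\rho_C$, so by the very definition of twisted characters in \S\ref{sec:twistedchars} the restriction $\rho^\gamma_C = \wtilde{\rho_C}|_{\KQab(\Ccal_\gamma)}$ is the $\gamma$-twisted character attached to $\rho_C$, with the ambiguity up to $m$-th roots of unity matching exactly the ambiguity in $\psi_C$. Cyclotomic integrality of $\wtilde S^\gamma_{M,C}/\dim C$ and $\wtilde S^\gamma_{M,C}/\dim M$ is then automatic: each is the value of a character of a based ring on a basis element, hence an eigenvalue of a nonnegative-integer fusion matrix (an algebraic integer) lying in $\Qab$ because $\KQab(\Ccal_1)$ is split over $\Qab$ by \cite[Cor.~8.53]{ENO:05}.

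For part (ii) I would read off both orthogonality statements from Theorem \ref{thm:twisted characters}. By part (i), the column of $\wtilde S^\gamma$ indexed by $C$ is, up to the substitution $[A]\mapsto[A^*]$ and the factor $\dim C$, the twisted character element $\alpha^\gamma_{\rho_C}$, and the classes $P_\gamma$ form an orthonormal basis for the Hermitian form. Hence the orthogonality of twisted characters, Theorem \ref{thm:twisted characters}(iv), translates directly into $\sum_{M\in P_\gamma} \wtilde S^\gamma_{M,C}\,\overline{\wtilde S^\gamma_{M,C'}} = \delta_{C,C'}\,|\dim C|^2 f_{\rho_C}$. Since the formal codegree $f_{\rho_C}$ is computed entirely inside the modular category $\Ccal_1$, the standard modular identity $f_{\rho_C} = \dim\Ccal_1/|\dim C|^2$ yields $\overline{\wtilde S^\gamma}^T \wtilde S^\gamma = \dim\Ccal_1 \cdot I$. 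As $\wtilde S^\gamma$ is square by Theorem \ref{thm:twisted characters}(vi), this identity forces invertibility and hence $\wtilde S^\gamma\,\overline{\wtilde S^\gamma}^T = \dim\Ccal_1 \cdot I$ as well. Total positivity of $\dim\Ccal_1$ follows from Theorem \ref{thm:twisted characters}(i), since $\dim\Ccal_1$ is itself a formal codegree, or directly from $\dim \Ccal_1 = \sum_{D\in P_1}|\dim D|^2$ together with the fact that every Galois conjugate has the same positive form.

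The hard part will be the character property in part (i): establishing, purely from the crossed-braiding axioms, that the double braiding with $\psi_C$ is multiplicative across the $\Gamma$-grading. This is where the crossed structure genuinely enters, and where one must track the $\psi_C$-normalization carefully so that the functional extends consistently over the whole cyclic group $\langle\gamma\rangle$ rather than only over $\Ccal_\gamma$. This is precisely the content developed in \cite{Desh:17, Desh:18a}, which I would invoke; everything else is formal once the entries of $\wtilde S^\gamma$ are identified with twisted-character values.
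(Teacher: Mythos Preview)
Your proposal is correct, and in fact goes well beyond what the paper itself does: the paper does not prove this theorem at all but simply recalls it from \cite{Desh:17,Desh:18a} (see the sentence immediately preceding the statement). What you have written is an accurate reconstruction of the argument from those references---identifying $\wtilde S^\gamma_{M,C}/\dim C$ as the scalar by which the $\psi_C$-twisted double braiding acts on the simple object $C$, verifying multiplicativity via the crossed hexagon axioms to obtain a character of $\KQab(\Ccal_{\langle\gamma\rangle})$ extending $\rho_C$, and then reading off the orthogonality relations from Theorem~\ref{thm:twisted characters}(iv) together with the modular formula $f_{\rho_C}=\dim\Ccal_1/|\dim C|^2$. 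Your identification of the ``hard part'' (the multiplicativity of the twisted double braiding across the grading, and the bookkeeping of the $\psi_C$-normalization) is exactly right, and your explicit invocation of \cite{Desh:17,Desh:18a} for that step matches the paper's own citation.
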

	For $C\in P_1$, the formal codegree $f_C=f_{\rho_C}$ of the corresponding character $\rho_C$ equals the totally positive cyclotomic integer $\frac{\dim \Ccal_1}{\dim^2 C}$.
	\begin{definition}\label{d:normalizedcrossedsmatrix}
		We choose a positive square root $\sqrt{\dim \Ccal_1}$ and define the normalized $\gamma$-crossed S-matrix to be $S^\gamma:=\frac{1}{\sqrt{\dim\Ccal_1}}\cdot \wtilde{S}^\gamma$. The matrix $S^\gamma$ is a $P_\gamma\times P_1^\gamma$ unitary matrix with entries in $\Qab$. 
	\end{definition}
	
	\begin{remark}\label{rk:0column}
		By Remark \ref{rk:unitconvention} we see that the $\unit$-th column of the normalized crossed S-matrix is given by 
		$S^\gamma_{M,\unit}=\frac{\dim M}{\sqrt{\dim \Ccal_1}}.$
		Now the categorical dimensions are all real since they are defined using a spherical structure on $\Ccal$. Hence all the entries in the $\unit$-th column are real numbers. 
	\end{remark}
	
	\section{Categorical twisted fusion rings and their character tables}\label{sec:twistedfusion}
	We now recall from \cite[\S2.2,\S2.3]{Desh:17} the definition of categorical twisted fusion rings (also known as twisted Grothendieck rings) associated with a fusion category equipped with an autoequivalence. Let $\Ccal$ be a finite semisimple $\CBbb$-linear abelian category equipped with a $\mathbb{C}$-linear autoequivalence $\gamma:\Ccal\to \Ccal$. Let $P$ denote the finite set of (isomorphism classes of) simple objects of $\Ccal$. Consider the (non-semisimple) abelian category $\Ccal^\gamma$ obtained by $\gamma$-equivariantization whose objects are pairs $(C,\psi)$ where $C$ is an object $\Ccal$ and $\psi:\gamma(C)\xoto\cong C$. 
	\begin{definition}\label{def:categoricaltwistedfusion}
		In the setting above, the twisted complexified Grothendieck group $\KC(\Ccal,\gamma)$ is defined as follows:  Consider the complexified Grothendieck group $\KC(\Ccal^\gamma)$ and then quotient out by the relation $[C,c\psi]=c[C,\psi]$ for each class $[C,\psi]\in \KC(\Ccal^\gamma)$ and $c\in \CBbb^\times$. We continue to use the notation $[C,\psi]$ to denote the corresponding class in the quotient $\KC(\Ccal,\gamma)$.
	\end{definition}
	
	Note that $\KC(\Ccal,\gamma)$ is just a  $\mathbb{C}$-vector space that has been defined as a quotient of the (typically infinite dimensional) $\mathbb{C}$-vector space $\KC(\Ccal^\gamma)$ by a subspace spanned by certain relations. We see now that it is in fact finite dimensional:
	
	\begin{proposition}\label{p:dimoftwistedspace}
		(i) We have $\dim\KC(\Ccal,\gamma)=|P^\gamma|$, the number of simple objects stabilized by $\gamma$. For each $C\in P^\gamma$, choose an isomorphism $\psi_C:\gamma(C)\to C$. Then the set $\{[C,\psi_C]|C\in P^\gamma\}$ is a basis of $\KC(\Ccal,\gamma)$.\newline
		(ii) If $\Ccal$ also has a monoidal structure and if $\gamma$ is a monoidal autoequivalence, then $\KC(\Ccal,\gamma)$ has the structure of a $\CBbb$-algebra, which we call the (complexified) twisted fusion ring.
	\end{proposition}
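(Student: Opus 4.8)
The plan is to reduce everything to the isotypic structure of objects of $\Ccal^\gamma$ and then track what the $\CBbb^\times$-scaling relation does to the Grothendieck group. Since $\Ccal$ is semisimple, I would first decompose any $(D,\phi)\in\Ccal^\gamma$ isotypically as $D\cong\bigoplus_{C\in P}\Hom(C,D)\otimes C$, and observe that the equivariant isomorphism $\phi\colon\gamma(D)\xrightarrow{\cong}D$ permutes the blocks according to the induced $\gamma$-action on $P$, so that $(D,\phi)$ splits, as an object of $\Ccal^\gamma$, into pieces indexed by the $\gamma$-orbits in $P$. Everything then reduces to analyzing a single orbit.

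For part (i) I would treat the two kinds of orbits separately. On a fixed point $C\in P^\gamma$, after choosing $\psi_C\colon\gamma(C)\to C$ once and for all, the datum $\phi$ becomes a linear automorphism $u_C$ of the multiplicity space $\Hom(C,D)$ (using $\Hom(\gamma C,C)=\CBbb\,\psi_C$); diagonalizing $u_C$ and applying the defining relation $[C,c\psi]=c[C,\psi]$ shows that this block contributes $\tr(u_C)\,[C,\psi_C]$ in the quotient. On an orbit of size $m>1$ I would show the class dies: for the corresponding induced object $(O,\phi)$ one has $(O,c\phi)\cong(O,\phi)$ in $\Ccal^\gamma$ whenever $c^m=1$ (absorb the root of unity by a diagonal automorphism rescaling the cyclically-permuted summands by powers of $c$), so $[O,\phi]=c[O,\phi]$ in the quotient forces $[O,\phi]=0$. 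This shows $\{[C,\psi_C]:C\in P^\gamma\}$ spans. For linear independence I would construct, for each $C_0\in P^\gamma$, the ``twisted trace'' functional $t_{C_0}(D,\phi)=\tr\big(g\mapsto\phi\circ\gamma(g)\circ\psi_{C_0}^{-1}\big)$ on the finite-dimensional space $\Hom(C_0,D)$; exactness of $\Hom(C_0,-)$ on the semisimple $\Ccal$ makes $t_{C_0}$ additive on short exact sequences, its linearity in $\phi$ makes it descend to $\KC(\Ccal,\gamma)$, and a direct check gives $t_{C_0}([C,\psi_C])=\delta_{C,C_0}$. Hence the spanning set is a basis and $\dim\KC(\Ccal,\gamma)=|P^\gamma|$.

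For part (ii) I would first equip $\Ccal^\gamma$ with its standard monoidal structure, $(C,\psi)\otimes(D,\phi):=(C\otimes D,\psi\star\phi)$, where $\psi\star\phi$ is built from $\psi\otimes\phi$ together with the monoidal constraint of $\gamma$, and with unit $(\unit,\psi_\unit)$; this makes $\KC(\Ccal^\gamma)$ a ring. The point is then that the relation subspace $J$ spanned by $[C,c\psi]-c[C,\psi]$ is a two-sided ideal: since $(c\psi)\star\phi=c(\psi\star\phi)$, multiplying a generator of $J$ by a class $[D,\phi]$ yields $[C\otimes D,c(\psi\star\phi)]-c[C\otimes D,\psi\star\phi]$, which is again a relation generator, and symmetrically on the other side. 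Therefore the product descends to $\KC(\Ccal,\gamma)=\KC(\Ccal^\gamma)/J$, making it a $\CBbb$-algebra with unit $[\unit,\psi_\unit]$.

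The hard part will be the honest bookkeeping for the non-fixed orbits in part (i): $\Ccal^\gamma$ is genuinely non-semisimple and its simple objects come in continuous $\CBbb^\times$-families, so I must ensure the $\CBbb^\times$-quotient collapses exactly the right amount—retaining one dimension per $\gamma$-fixed simple while annihilating every induced orbit object. The twisted-trace functionals are the device that makes the independence half rigorous despite this non-semisimplicity, and verifying that the $c^m=1$ rescaling genuinely furnishes an isomorphism in $\Ccal^\gamma$ (not merely an isomorphism of underlying objects in $\Ccal$) is the step I would be most careful about.
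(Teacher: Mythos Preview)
Your proposal is correct and follows essentially the same approach as the paper. The paper's proof also reduces to $\gamma$-orbits in $P$, kills the contribution of each orbit $O$ of size $m>1$ by exactly your mechanism (phrased there as ``$1-\zeta\in\CBbb^\times$ must act by $0$ on $\KC(\Ccal_O,\gamma)$ for each $|O|$-th root of unity $\zeta$''), and for (ii) simply notes that the relation subspace is an ideal, just as you check. The only difference is that the paper defers the details to \cite[Prop.~2.4(i)]{Desh:17}, whereas you make the linear-independence step explicit via your twisted-trace functionals $t_{C_0}$; this is a clean way to nail down that the spanning set is honestly a basis, and is worth keeping.
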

	\begin{proof}
		Note that we can consider $\Ccal$ as a module category over the monoidal category $\Vec$ of finite dimensional $\CBbb$-vector spaces. Now the statement (i) follows from the proof of \cite[Prop. 2.4(i)]{Desh:17}. We essentially need to prove that for any $\gamma$-orbit $O\subset P$ of cardinality at least 2, $\KC(\Ccal_O,\gamma)=0$, where $\Ccal_O\subset \Ccal$ is the full subcategory generated by the simple objects in $O$. As in {\it loc. cit.} this can be done by looking at the isomorphism classes of simple objects in $\Ccal_O^\gamma$ and noting that $1-\zeta\in \CBbb^\times$ must act by 0 on $\KC(\Ccal_O,\gamma)$ for each $|O|$-th root of unity $\zeta\in \CBbb^\times$. Now to prove (ii), it is straightforward to check that the subspace of $\KC(\Ccal^\gamma)$ that we are quotienting by is an ideal.
	\end{proof}
	
	Suppose we have an object $(A,\psi)\in \Ccal^\gamma$ for a finite semisimple $\CBbb$-linear abelian category $\Ccal$. For each simple object $C$, define the multiplicity vector space $N_A^C:=\Hom(C,A)$. We obtain the isomorphism of vector spaces
	$$N_A^C=\Hom(C,A)\cong \Hom(\gamma(C),\gamma(A))\xoto{\psi\circ(\cdot)}\Hom(\gamma(C),A)=N_A^{\gamma(C)}.$$
	Now for each simple object in $\Ccal$ stabilized by $\gamma$ let us choose an isomorphism $\psi_C:\gamma(C)\xoto\cong C.$ These choices give us the linear isomorphisms $N_C^{\gamma(C)}\xoto{}N_A^C$ and hence the linear automorphisms $\phi_{A,\psi}^{C,\psi_C}:N_A^C\xoto{\cong}N_A^C.$ Then using Proposition \ref{p:dimoftwistedspace}, we have the following equality in the twisted Grothendieck vector space $\KC(\Ccal,\gamma)$
	\begin{equation}\label{eq:equalityintwistedgrothendieck}
	[A,\psi]=\sum\limits_{C\in P^\gamma}\tr(\phi_{A,\psi}^{C,\psi_C})[C,\psi_C].
	\end{equation}

	\subsection{The twisted fusion ring in terms of traces on multiplicity spaces} Let us now assume that $\Ccal$ is a weakly fusion category equipped with a monoidal action of a finite group $\Gamma$. Let $\gamma\in \Gamma$ be an element of order $m$. In particular we have an equivalence $\gamma^m\cong \id_{\Ccal}$ of monoidal functors. Then the $\langle\gamma\rangle\cong \ZBbb/m\ZBbb$-equivariantization $\Ccal^{\langle\gamma\rangle}$ is also a weakly fusion category. We can consider the twisted fusion ring $\KC(\Ccal,\gamma)$ as a quotient of $\KC(\Ccal^{\langle\gamma\rangle})$ modulo the relations $[C,\zeta\psi]=\zeta[C,\psi]$ for any $m$-th root of unity $\zeta$ (see \cite[\S2.3]{Desh:17} for details). As in {\it loc. cit.} the weak duality on $\Ccal^{\langle\gamma\rangle}$ induces a $\CBbb$-semilinear involution $(\cdot)^*$ on $\KC(\Ccal,\gamma)$ as a vector space.
	
	Consider objects $(C_1,\psi_1),\cdots (C_n,\psi_n)$ in $\Ccal^{\langle\gamma\rangle}$. Then we have the following composition of natural isomorphisms 
	\begin{align*}\phi_{C_1,\psi_1,\cdots,C_n,\psi_n}:&\Hom(\unit,C_1\otimes\cdots \otimes C_n)\xoto{\cong}\Hom(\gamma(\unit),\gamma(C_1\otimes\cdots \otimes C_n))\\
	&\quad \xoto{\cong}\Hom(\unit,\gamma(C_1)\otimes\cdots \otimes \gamma(C_n))\xoto\cong\Hom(\unit,C_1\otimes\cdots \otimes C_n).\end{align*}
	
	Now let $A,B,C$ be simple objects of $\Ccal$. Since we have assumed that $\Ccal$ is weakly fusion, the multiplicity space $N^C_{A\otimes B}=\Hom(C,A\otimes B)$ is canonically identified with $\Hom(\unit,{ }^*C\otimes A\otimes B)$.

	\begin{proposition}\label{p:tracesonhomspaces}
		For each simple object $C\in \Ccal$ stabilized by $\gamma$, let us fix a $\langle\gamma\rangle$-equivariant structure $\psi_C:\gamma(C)\to C$ such that $(C,\psi_C)\in \Ccal^{\langle\gamma\rangle}$. Then the set $\{[C,\psi_C]|C\in P^\gamma\}$ is a basis of the $\gamma$-twisted fusion ring $\KC(\Ccal,\gamma)$ and for each $A,B\in P^\gamma$ we have the following equality in the twisted fusion ring $\KC(\Ccal,\gamma)$
		\begin{equation}\label{eq:equalityintwistedfusionring}
		[A,\psi_A]\cdot[B,\psi_B]=\sum\limits_{C\in P^\gamma}\tr(\phi_{{}^*C,{}^*\psi_C,A,\psi_A,B,\psi_B})[C,\psi_C].
		\end{equation}
		In other words, the fusion coefficients for the twisted fusion ring $\KC(\Ccal,\gamma)$ with respect to the basis $\{[C,\psi_C]|C\in P^\gamma\}$ are obtained by taking the traces of $\gamma$ on the multiplicity spaces  $N^C_{A\otimes B}=\Hom(C,A\otimes B)=\Hom(\unit,{ }^*C\otimes A\otimes B)$.
	\end{proposition}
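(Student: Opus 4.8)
The plan is to deduce everything from the general expansion formula \eqref{eq:equalityintwistedgrothendieck} together with the weak duality \eqref{eq:defrcategory}. First, the basis claim is immediate: it is exactly Proposition \ref{p:dimoftwistedspace}(i), using that each $\psi_C$ has been chosen to be a genuine $\langle\gamma\rangle$-equivariant structure, so that $(C,\psi_C)\in\Ccal^{\langle\gamma\rangle}$ indeed represents a class in $\KC(\Ccal,\gamma)$. For the product formula, I would first observe that, by the definition of the ring structure on $\KC(\Ccal,\gamma)$ coming from the monoidal category $\Ccal^{\langle\gamma\rangle}$, the product $[A,\psi_A]\cdot[B,\psi_B]$ is the class of the tensor product object $(A\otimes B,\psi_{A\otimes B})$, where $\psi_{A\otimes B}$ is the equivariant structure induced on $A\otimes B$ by $\psi_A,\psi_B$ and the monoidal constraints of the $\gamma$-action.

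Next I would apply \eqref{eq:equalityintwistedgrothendieck} to the object $(A\otimes B,\psi_{A\otimes B})$, regarded as an object of $\Ccal^\gamma$, to expand it in the chosen basis:
\[
[A,\psi_A]\cdot[B,\psi_B]=[A\otimes B,\psi_{A\otimes B}]=\sum_{C\in P^\gamma}\tr\bigl(\phi_{A\otimes B,\psi_{A\otimes B}}^{C,\psi_C}\bigr)[C,\psi_C].
\]
As in the discussion preceding \eqref{eq:equalityintwistedgrothendieck}, the automorphism $\phi_{A\otimes B,\psi_{A\otimes B}}^{C,\psi_C}$ of the multiplicity space $N^C_{A\otimes B}=\Hom(C,A\otimes B)$ is given explicitly by $f\mapsto\psi_{A\otimes B}\circ\gamma(f)\circ\psi_C^{-1}$.

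The heart of the matter is then to identify the trace of $\phi_{A\otimes B,\psi_{A\otimes B}}^{C,\psi_C}$ on $\Hom(C,A\otimes B)$ with the trace of $\phi_{{}^*C,{}^*\psi_C,A,\psi_A,B,\psi_B}$ on $\Hom(\unit,{}^*C\otimes A\otimes B)$. By weak duality \eqref{eq:defrcategory}, applied with $X={}^*C$ and $Y=A\otimes B$ and using $({}^*C)^*\cong C$, there is a canonical isomorphism $\Theta\colon\Hom(C,A\otimes B)\xto{\cong}\Hom(\unit,{}^*C\otimes A\otimes B)$. I would claim that $\Theta$ intertwines the two automorphisms, i.e. $\Theta\circ\phi_{A\otimes B,\psi_{A\otimes B}}^{C,\psi_C}=\phi_{{}^*C,{}^*\psi_C,A,\psi_A,B,\psi_B}\circ\Theta$. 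Granting this, the two automorphisms are conjugate, so their traces agree, and \eqref{eq:equalityintwistedfusionring} follows; the closing interpretation of the fusion coefficients as traces of $\gamma$ on $\Hom(\unit,{}^*C\otimes A\otimes B)$ is then just a restatement.

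The hard part, and the main obstacle, is the verification of this intertwining identity for $\Theta$. It comes down to the compatibility of the monoidal autoequivalence $\gamma$ with the weak duality: since $\gamma$ is monoidal it preserves duals, giving a canonical isomorphism $\gamma({}^*C)\cong{}^*(\gamma C)$, and the equivariant structure ${}^*\psi_C\colon\gamma({}^*C)\to{}^*C$ is by construction the one obtained from $\psi_C\colon\gamma C\to C$ through this identification. Using the naturality of \eqref{eq:defrcategory} with respect to morphisms, together with the monoidal constraints $\gamma({}^*C\otimes A\otimes B)\cong\gamma({}^*C)\otimes\gamma(A)\otimes\gamma(B)$, one checks by a diagram chase that transporting $f\mapsto\psi_{A\otimes B}\circ\gamma(f)\circ\psi_C^{-1}$ through $\Theta$ produces exactly the composition defining $\phi_{{}^*C,{}^*\psi_C,A,\psi_A,B,\psi_B}$. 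The bulk of the remaining effort is this careful bookkeeping of the duality and monoidal constraints, and I expect no further essential input is needed beyond it.
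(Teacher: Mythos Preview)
Your proof is correct and follows exactly the same approach as the paper: apply \eqref{eq:equalityintwistedgrothendieck} to the tensor product $(A,\psi_A)\otimes(B,\psi_B)\in\Ccal^{\langle\gamma\rangle}$. The paper's proof is a single line to this effect; the compatibility of the weak-duality isomorphism $\Theta$ with the two $\gamma$-automorphisms, which you carefully spell out, is left implicit in the paper, being taken as a consequence of the canonical identification $N^C_{A\otimes B}\cong\Hom(\unit,{}^*C\otimes A\otimes B)$ recorded just before the proposition.
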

	\begin{proof}
		The result follows by using (\ref{eq:equalityintwistedgrothendieck}) for the tensor product $(A,\psi_A)\otimes (B,\psi_B)\in \Ccal^{\langle\gamma\rangle}$.
	\end{proof}
	\begin{remark}\label{rk:twistedfusionringsagree}
		This means that the twisted fusion ring $\KC(\Ccal,\gamma)$ can also be defined by taking the fusion rules (see Section \ref{section:hongfusion} below) obtained by tracing out automorphisms of multiplicity spaces. This approach was used in \cite{Hong1,Hong} to define twisted fusion rings at level $\ell$ associated to a diagram automorphism of a simple Lie algebra $\frg$.
	\end{remark}
	\subsection{Twisted fusion ring of fusion categories}
	Let $\Ccal$ be a fusion category with a $\Gamma$-action and let $\gamma\in \Gamma$ be an element of order $m$. Then the equivariantization $\Ccal^{\langle\gamma\rangle}$ is also a fusion category and hence by Section \ref{sec:grothringfrobalg} the extension by scalars of the Grothendieck ring, $K_{\ZBbb[\omega]}(\Ccal^{\langle\gamma\rangle})$ is a Frobenius $\ast$-$\ZBbb[\omega]$-algebra where we take $\omega$ to be a primitive $m$-th root of unity. As in \cite[\S2.3]{Desh:17} we can define the $\gamma$-twisted fusion ring $K_{\ZBbb[\omega]}(\Ccal,\gamma)$ to be the quotient of the above Frobenius $\ast$-algebra obtained by identifying $[C,\omega\psi]=\omega[C,\psi]$ for each $(C,\psi)\in \Ccal^{\langle\gamma\rangle}$. As always, for each $\gamma$-stable simple object $C\in P^\gamma$, let us fix an equivariantization $(C,\psi_C)\in \Ccal^{\langle\gamma\rangle}$. Then by \cite{Desh:17} we have
	\begin{proposition}\label{p:twistedfusionfrobalg}
		The $\gamma$-twisted fusion ring $K_{\ZBbb[\omega]}(\Ccal,\gamma)$ is a Frobenius $\ast$-$\ZBbb[\omega]$-algebra with an orthonormal basis given by $\{[C,\psi_C]|C\in P^\gamma\}$ and hence the same is true for the extensions of scalars $\KQab(\Ccal,\gamma)$ and $\KC(\Ccal,\gamma)$.
	\end{proposition}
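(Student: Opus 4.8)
The plan is to build every ingredient of the Frobenius $\ast$-algebra structure on $K_{\ZBbb[\omega]}(\Ccal,\gamma)$ by descent from the genuine fusion category $\Ccal^{\langle\gamma\rangle}$, and then to obtain the statements for $\KQab(\Ccal,\gamma)$ and $\KC(\Ccal,\gamma)$ by base change $-\otimes_{\ZBbb[\omega]}\Qab$ and $-\otimes_{\ZBbb[\omega]}\CBbb$. Since $\Ccal^{\langle\gamma\rangle}$ is a fusion category, Section \ref{sec:grothringfrobalg} already equips $K_{\ZBbb[\omega]}(\Ccal^{\langle\gamma\rangle})$ with the structure of a Frobenius $\ast$-$\ZBbb[\omega]$-algebra whose orthonormal basis is the set of simple objects, grouped according to the $\gamma$-orbits on $P$. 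First I would record that, as in the proof of Proposition \ref{p:dimoftwistedspace}, imposing the relations $[C,\omega\psi]=\omega[C,\psi]$ kills the contributions of the $\gamma$-orbits of length $\geq 2$ and collapses the $m$ distinct structures lying over a fixed $C\in P^\gamma$ onto $[C,\psi_C]$; together with the integrality discussed below this shows that $K_{\ZBbb[\omega]}(\Ccal,\gamma)$ is a free $\ZBbb[\omega]$-module on $\{[C,\psi_C]\mid C\in P^\gamma\}$, which by Proposition \ref{p:dimoftwistedspace}(ii) is moreover a unital algebra with unit $[\unit,\psi_\unit]$.

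Next I would install the remaining data of Definition \ref{def:frobstaralgarote} directly on this based module. For the functional I would take $\nu$ to be the $\ZBbb[\omega]$-linear map reading off the coefficient of the unit basis vector $[\unit,\psi_\unit]$; it is worth noting that $\nu$ is \emph{not} simply the pushforward of the functional on $K_{\ZBbb[\omega]}(\Ccal^{\langle\gamma\rangle})$ (the latter does not descend through the defining relations), so it is cleanest to define it on the quotient itself. For the anti-involution I would transport the rigid duality $\ast$ of $\Ccal^{\langle\gamma\rangle}$: using Corollary \ref{cor:doubledual} I set $[C,\psi_C]^*:=[{}^*C,\psi_{{}^*C}]$ for a compatible dual equivariant structure $\psi_{{}^*C}$, extended $\ZBbb[\omega]$-semilinearly. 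This is well defined on the quotient precisely because $\ast$ is semilinear, so it carries each relation generator $[C,\omega\psi]-\omega[C,\psi]$ back into the submodule such generators span; and since $[\unit,\psi_\unit]^*=[\unit,\psi_\unit]$, the compatibility $\nu(a^*)=\overline{\nu(a)}$ is then immediate.

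Finally I would verify orthonormality, which is the heart of the matter. For simple $A,B\in P^\gamma$ I would compute $\langle[A,\psi_A],[B,\psi_B]\rangle=\nu([A,\psi_A]\cdot[B,\psi_B]^*)$ using the trace formula (\ref{eq:equalityintwistedfusionring}) of Proposition \ref{p:tracesonhomspaces}: the coefficient of $[\unit,\psi_\unit]$ is the trace of the $\gamma$-twist automorphism $\phi$ on the multiplicity space $\Hom(\unit,A\otimes{}^*B)\cong\Hom(B,A)$, where the identification is the weak duality (\ref{eq:defrcategory}). This space vanishes unless $A\cong B$, in which case it is a line; and the symmetry $\nu(ab)=\nu(ba)$ that upgrades the algebra to a \emph{symmetric} Frobenius algebra follows from the same formula together with the isomorphism $\Hom(\unit,A\otimes B)\cong\Hom(\unit,B\otimes A)$ of (\ref{eq:defrcategory}), which intertwines the two $\gamma$-twists and hence equates their traces. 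The hard part will be pinning down that $\phi$ acts by the scalar $1$ on this diagonal line $\Hom(\unit,A\otimes{}^*A)$: a priori $\phi$ is only well defined up to an $m$-th root of unity, and one must use that the chosen $\psi_C$ are honest $\langle\gamma\rangle$-equivariant structures (the $m$-fold composite is the identity) together with a compatible choice of the dual structures $\psi_{{}^*C}$, exactly as in the normalization conventions behind Theorem \ref{thm:twisted characters} and Remark \ref{rk:ncocycle}, in order to force this scalar to be $1$. Granting this, $\langle[A,\psi_A],[B,\psi_B]\rangle=\delta_{A,B}$ and the basis is orthonormal. The integrality needed to remain over $\ZBbb[\omega]$ rather than only over $\CBbb$ is guaranteed because every structure constant is the trace of a finite-order operator and is therefore a $\ZBbb[\omega]$-combination of powers of $\omega$, in accordance with the cyclotomic integrality of Theorem \ref{thm:crossedsmatrixchartable}; flat extension of scalars to $\Qab$ and $\CBbb$ then preserves all of $\nu$, $\ast$, and the orthonormal basis, giving the final two assertions.
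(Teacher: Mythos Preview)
The paper does not give its own proof of this proposition; it simply defers to \cite{Desh:17}. Your reconstruction---descending the Frobenius $\ast$-structure from the genuine fusion category $\Ccal^{\langle\gamma\rangle}$, defining $\nu$ directly on the quotient, transporting $\ast$ semilinearly, and reading off orthonormality from the trace formula of Proposition~\ref{p:tracesonhomspaces}---is exactly the natural argument and is presumably what the cited reference contains.

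There is, however, one genuine gap in your outline. You claim that the relations together with the argument of Proposition~\ref{p:dimoftwistedspace} show $K_{\ZBbb[\omega]}(\Ccal,\gamma)$ is free on $\{[C,\psi_C]:C\in P^\gamma\}$, but that proof relies on $1-\zeta\in\CBbb^\times$ for each nontrivial $|O|$-th root of unity $\zeta$, which can fail in $\ZBbb[\omega]$ when $|O|$ is a prime power. Concretely, take $m=2$ and a $\gamma$-orbit $\{C_0,C_1\}$ of size $2$: the unique simple $Y\in\Ccal^{\langle\gamma\rangle}_O$ satisfies $(Y,-\psi)\cong(Y,\psi)$, so the relation $[Y,-\psi]=-[Y,\psi]$ forces $2[Y]=0$, producing genuine $\ZBbb/2$-torsion in the literal quotient. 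This torsion lies in the radical of the form, so the pairing is degenerate and the quotient is not a Frobenius $\ast$-algebra in the sense of Definition~\ref{def:frobstaralgarote}. The fix is either to verify that the definition in \cite{Desh:17} is really the $\ZBbb[\omega]$-span of $\{[C,\psi_C]\}$ inside $\KC(\Ccal,\gamma)$ (equivalently, the torsion-free quotient), or to explicitly mod out the torsion; your ``integrality discussed below'' does not address this. Once freeness is secured, the rest of your argument goes through, with the minor caveat that your appeal to Remark~\ref{rk:ncocycle} for the diagonal-scalar-equals-$1$ step is misplaced---that remark concerns a different normalization; what you actually need is naturality of $\coev_C$ under the monoidal autoequivalence $\gamma$ together with the compatible choice $\psi_{C^*}:=(\psi_C^*)^{-1}$.
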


	\subsection{Character tables of twisted fusion rings of modular categories}
	Let us now assume that $\Ccal=\bigoplus\limits_{\gamma\in \Gamma}\Ccal_\gamma$ is a $\Gamma$-crossed modular category. In particular, we have a modular action of $\Gamma$ on the identity component $\Ccal_1$ which is a modular category. Hence for each $\gamma\in \Gamma$ we have the twisted fusion ring $\KC(\Ccal_1,\gamma)$ which in this case is a commutative Frobenius $\ast$-algebra that has been studied in \cite{Desh:17}. We recall that for each $\gamma\in \Gamma$, $P_\gamma$ denotes the set of simple objects of $\Ccal_\gamma$. For each simple object $C$ of $\Ccal_1$ stabilized by $\gamma$, we choose an isomorphism $\psi_C:\gamma(C)\to C$ such that $(C,\psi_C)\in \Ccal_1^{\langle\gamma\rangle}$. 
	
	The set $\{[C,\psi_C]|C\in P_1^\gamma\}$ is a basis of $\KC(\Ccal_1,\gamma)$. It was proved in \cite[Thm. 2.12(i)]{Desh:17} that the categorical $\gamma$-crossed S-matrix $S^\gamma$ (it is a $P_\gamma\times P_1^\gamma$ matrix, see Definition \ref{d:normalizedcrossedsmatrix}) is essentially the character table of $\KC(\Ccal_1,\gamma)$:
	\begin{theorem}\label{thm:chartableoftwistedfusion}
		Let $M$ be a simple object of $\Ccal_\gamma$. For each $C\in P_1^\gamma$, define $\chi_M([C,\psi_C])=\frac{S^\gamma_{M,C}}{S^{\gamma}_{M,\unit}}$ and extend linearly. Then $\chi_M:\KC(\Ccal_1,\gamma)\to \CBbb$ is a character and the set $\{\chi_M|M\in P_\gamma\}$ is the set of all irreducible characters of $\KC(\Ccal_1,\gamma)$. 
	\end{theorem}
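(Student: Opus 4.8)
The plan is to reduce everything to the general structure theory of the commutative Frobenius $\ast$-algebra $\KC(\Ccal_1,\gamma)$ and then to extract the two substantive inputs: distinctness of the $\chi_M$ from unitarity, and multiplicativity of each $\chi_M$ from the hexagon axioms. First I would record that, as noted just before the theorem, $\KC(\Ccal_1,\gamma)$ is a commutative Frobenius $\ast$-algebra over $\CBbb$; being such it is split semisimple, and by Proposition \ref{p:dimoftwistedspace}(i) its $\CBbb$-dimension is $|P_1^\gamma|$. A split semisimple commutative $\CBbb$-algebra of dimension $d$ has exactly $d$ distinct algebra homomorphisms to $\CBbb$ (its irreducible characters), and by Theorem \ref{thm:twisted characters}(vi) we have $|P_1^\gamma|=|P_\gamma|$. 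Thus it suffices to prove: (a) for each $M\in P_\gamma$ the functional $\chi_M$ is multiplicative, hence an algebra character (note $\chi_M([\unit,\psi_\unit])=S^\gamma_{M,\unit}/S^\gamma_{M,\unit}=1$, so it is unital); and (b) the assignment $M\mapsto\chi_M$ is injective. The equality $|P_\gamma|=|P_1^\gamma|$ then forces $\{\chi_M\mid M\in P_\gamma\}$ to be the complete set of irreducible characters.

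Claim (b) is the easy half. The values $\chi_M([C,\psi_C])=S^\gamma_{M,C}/S^\gamma_{M,\unit}$ assemble into the matrix obtained from $S^\gamma$ by rescaling the row indexed by $M$ by $1/S^\gamma_{M,\unit}$. By Remark \ref{rk:0column} we have $S^\gamma_{M,\unit}=\dim M/\sqrt{\dim\Ccal_1}\neq 0$, so the rescaling is legitimate; and by Theorem \ref{thm:crossedsmatrixchartable}(ii) together with Definition \ref{d:normalizedcrossedsmatrix} the matrix $S^\gamma$ is unitary, hence invertible, so its rows are linearly independent. Rescaling rows by nonzero scalars preserves linear independence, so the rows, and therefore the functionals $\chi_M$, are pairwise distinct.

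Claim (a) is the crux. Its content is exactly that $S^\gamma$ diagonalizes the fusion rules of $\KC(\Ccal_1,\gamma)$: writing $[C,\psi_C]\cdot[C',\psi_{C'}]=\sum_{C''\in P_1^\gamma} n_{C,C'}^{C''}[C'',\psi_{C''}]$, and using $\wtilde S^\gamma_{M,\unit}=\dim M$ (Remark \ref{rk:unitconvention}), multiplicativity of $\chi_M$ is equivalent to
$$\sum_{C''\in P_1^\gamma} n_{C,C'}^{C''}\,\wtilde S^\gamma_{M,C''}=\frac{\wtilde S^\gamma_{M,C}\cdot\wtilde S^\gamma_{M,C'}}{\dim M}.$$
I would prove this by computing the trace of the crossed monodromy operator $\Phi_{M,C\otimes C'}:=(\psi_{C\otimes C'}\otimes\id_M)\circ\beta_{M,C\otimes C'}\circ\beta_{C\otimes C',M}$ on $(C\otimes C')\otimes M$ in two ways. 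On one hand, decomposing $C\otimes C'$ into simple objects of $\Ccal_1$ and using that the chosen equivariant structures $\psi_{C''}$ encode precisely the $\gamma$-action on the multiplicity spaces, this trace equals $\sum_{C''} n_{C,C'}^{C''}\wtilde S^\gamma_{M,C''}$, where the $n_{C,C'}^{C''}$ are the traces of Proposition \ref{p:tracesonhomspaces}. On the other hand, the two hexagon compatibilities of Definition \ref{d:braidedgammacrossed}(3), together with the fact that the equivariant structure on a tensor product in $\Ccal_1^{\langle\gamma\rangle}$ is induced by those of the factors, factor the monodromy of $C\otimes C'$ around $M$ into the monodromies of $C$ and of $C'$; a sphericity/sliding argument on the common $M$-strand then evaluates this trace as $\wtilde S^\gamma_{M,C}\wtilde S^\gamma_{M,C'}/\dim M$. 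Equating the two expressions yields the displayed identity. This is the transpose counterpart of Theorem \ref{thm:crossedsmatrixchartable}(i) (which reads the columns of $\wtilde S^\gamma$ as the $\gamma$-twisted characters of $\KQab(\Ccal_\gamma)$), and it is the essential content of \cite[Thm.~2.12(i)]{Desh:17}; the hexagon/sphericity factorization is the step I expect to demand the most care, since one must track the chosen roots-of-unity ambiguities in the $\psi_C$ and verify they cancel in the normalized trace.

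Finally, combining (a) and (b): the $\chi_M$ for $M\in P_\gamma$ are $|P_\gamma|=|P_1^\gamma|$ pairwise distinct algebra characters of the split semisimple commutative algebra $\KC(\Ccal_1,\gamma)$, which possesses exactly $|P_1^\gamma|$ irreducible characters in all. Hence $\{\chi_M\mid M\in P_\gamma\}$ exhausts them, which is precisely the assertion of the theorem.
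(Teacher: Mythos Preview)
The paper does not actually prove this theorem; it is stated as a recall from \cite[Thm.~2.12(i)]{Desh:17}, with the sentence preceding the statement reading ``It was proved in \cite[Thm.~2.12(i)]{Desh:17} that the categorical $\gamma$-crossed S-matrix $S^\gamma$ \dots\ is essentially the character table of $\KC(\Ccal_1,\gamma)$.'' Your proposal is a correct reconstruction of the argument behind that citation: the counting via $|P_\gamma|=|P_1^\gamma|$ and semisimplicity, distinctness from unitarity of $S^\gamma$, and multiplicativity from the two-way trace computation using the hexagon axioms and sphericity are exactly the ingredients of the proof in \cite{Desh:17}, and you have correctly flagged that the hexagon/sphericity factorization is where the real work lies.
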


	\begin{remark}
		In fact the $\gamma$-twisted fusion ring can be defined over $\ZBbb[\omega]$ and hence over $\Qab$, where $\omega$ is a primitive $m$-th root of unity (see \cite[\S2.2]{Desh:17} for details). Hence $\KQab(\Ccal_1,\gamma)$ is also well-defined and Theorem \ref{thm:chartableoftwistedfusion} shows that it is split over $\Qab$.
	\end{remark}

	\begin{remark}
		Let us compare Theorems \ref{thm:crossedsmatrixchartable} and  \ref{thm:chartableoftwistedfusion}. On the one hand Theorem \ref{thm:crossedsmatrixchartable} says that the $\gamma$-crossed S-matrix gives the $\gamma$-twisted character table of the untwisted fusion ring $\KQab(\Ccal_1)$. On the other hand Theorem \ref{thm:chartableoftwistedfusion} says that the $\gamma$-crossed S-matrix also gives the usual character table of the $\gamma$-twisted fusion ring $\KQab(\Ccal_1,\gamma)$. Both these properties are going to be important for us.
	\end{remark}

	\section{A categorical  twisted Verlinde formula}\label{sec:catverlinde}
	We will now state and prove a Verlinde formula for braided $\Gamma$-crossed categories. For $\gamma_1,\gamma_2\in \Gamma$, $A\in P_{\gamma_1}, B\in P_{\gamma_2}, C\in P_{\gamma_1\gamma_2}$ we will compute the fusion coefficient $\nu_{A,B}^C$ in terms of the crossed S-matrices, or equivalently, in terms of the twisted character tables. By the rigid duality, it is clear that $$\nu_{A,B}^C=\nu_{B^*,A^*}^{C^*}=\dim\Hom(\unit,A\otimes B\otimes C^*)=\nu([A]\cdot[B]\cdot[C^*]).$$ Note that the fusion product in $\KQab(\Ccal)$ is given by: 
	\begin{equation}
	\label{eqn:basicmultiplication}
		[A]\cdot [B]=\sum\limits_{C\in P_{\gamma_1\gamma_2}}\nu^C_{A,B}[C].
	\end{equation}
	
	\begin{remark}
		The coefficients $\nu_{A,B}^C$ described in Equation \eqref{eqn:basicmultiplication} are multiplicities in the Grothendieck ring $\KQab({\Ccal})$ of the braided $\Gamma$-crossed category and not in the categorical $\gamma$-twisted fusion ring $K_{\Qab}(\Ccal_1, \gamma)$. In particular the $\nu^C_{A,B}$ are non-negative integers. As we will see, it is these numbers which equal the ranks of bundles conformal blocks where the base curve is $\PBbb^1$ with 3 points having ramification given by $\gamma_1,\gamma_2$ and $\gamma_2^{-1}\gamma_1^{-1}\in \Gamma$. On the other hand, by Proposition \ref{p:twistedfusionfrobalg}, the fusion coefficients in the twisted fusion ring $\KQab(\Ccal_1,\gamma)$ lie in the ring $\ZBbb[\omega]$ where $\omega$ is a primitive $|\gamma|$-th root of unity and need not be non-negative integers in general.
	\end{remark}
	More generally, let $\gamma_1,\cdots,\gamma_n\in \Gamma$ with $\gamma_1\cdots\gamma_n=1$. We will in fact prove an $n$-pointed twisted Verlinde formula to compute $\dim\Hom(\unit,A_1\otimes\cdots\otimes A_n)=\nu([A_1]\cdots[A_n])$ for $A_i\in P_{\gamma_i}$. We will use the following:
	\begin{lemma}\label{lem:unfixedchars}
		Let $\gamma_1,\cdots,\gamma_n\in \Gamma$ and let $A_i\in\Ccal_{\gamma_i}$, so that $A_1\otimes\cdots \otimes A_n\in \Ccal_{\gamma_1\cdots\gamma_n}$. Let $\rho\in \Irr(\KQab(\Ccal_1))^{\gamma_1\cdots\gamma_n}$ be such that (the $\gamma_1\cdots\gamma_n$-twisted character) $\rho^{\gamma_1\cdots\gamma_n}([A_1]\cdots[A_n])\neq 0$. Then $\rho\in \Irr(\KQab(\Ccal_1))^{\langle\gamma_1,\cdots,\gamma_n\rangle}$. 
	\end{lemma}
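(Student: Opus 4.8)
The plan is to prove the contrapositive: assuming $\gamma_i\cdot\rho\neq\rho$ for some index $i$ (where $\gamma_i\cdot\rho$ denotes the image of $\rho$ under the induced $\Gamma$-action on $\Irr(\KQab(\Ccal_1))$), I will show that $\rho^{\gamma_1\cdots\gamma_n}([A_1]\cdots[A_n])=0$. The entire argument rests on the two structural facts recorded just before Section \ref{sec:twistedchars}: that the neutral component $\KQab(\Ccal_1)$ is \emph{central} in $\KQab(\Ccal)$, and that the crossed braiding of Definition \ref{d:braidedgammacrossed}(3) descends, at the level of Grothendieck rings, to the crossed commutation relation $[A_i]\cdot b=\gamma_i(b)\cdot[A_i]$ for every $b\in\KQab(\Ccal)$ and every $A_i\in\Ccal_{\gamma_i}$. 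Let $e_\rho\in\KQab(\Ccal_1)$ be the primitive idempotent attached to $\rho$ by Theorem \ref{thm:twisted characters}(i); since the $\Gamma$-action is by algebra automorphisms permuting the $\Irr$-set, it permutes these idempotents, so $\gamma_i(e_\rho)=e_{\gamma_i\cdot\rho}$.

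The key step is a vanishing lemma: if $\gamma_i\cdot\rho\neq\rho$ then $e_\rho[A_i]=0$. To see this I move $e_\rho$ across $[A_i]$ in two different ways. By centrality, $e_\rho[A_i]=[A_i]e_\rho$, while by the crossed commutation relation $[A_i]e_\rho=\gamma_i(e_\rho)[A_i]=e_{\gamma_i\cdot\rho}[A_i]$; hence $e_\rho[A_i]=e_{\gamma_i\cdot\rho}[A_i]$. Multiplying on the left by the central idempotent $e_\rho$ and using $e_\rho^2=e_\rho$ on the left and the orthogonality $e_\rho e_{\gamma_i\cdot\rho}=0$ (valid for distinct primitive idempotents of the commutative split semisimple algebra $\KQab(\Ccal_1)$, which is exactly the hypothesis $\gamma_i\cdot\rho\neq\rho$) on the right gives $e_\rho[A_i]=e_\rho\cdot e_{\gamma_i\cdot\rho}[A_i]=0$.

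To finish I reduce the twisted-character evaluation to one containing this vanishing factor. Writing $\gamma=\gamma_1\cdots\gamma_n$ and letting $\wtilde\rho:\KQab(\Ccal_{\langle\gamma\rangle})\to\Qab$ be the chosen extension of $\rho$, the functional $\rho^{\gamma}$ is the restriction of the \emph{algebra homomorphism} $\wtilde\rho$ to $\KQab(\Ccal_{\gamma})$. Since $e_\rho\in\KQab(\Ccal_1)\subseteq\KQab(\Ccal_{\langle\gamma\rangle})$ and $\wtilde\rho(e_\rho)=\rho(e_\rho)=1$, multiplicativity of $\wtilde\rho$ yields $\rho^{\gamma}(e_\rho x)=\wtilde\rho(e_\rho)\wtilde\rho(x)=\rho^{\gamma}(x)$ for every $x\in\KQab(\Ccal_{\gamma})$ (note $e_\rho x$ again lies in $\KQab(\Ccal_{\gamma})$ because $\Ccal_1\otimes\Ccal_\gamma\subseteq\Ccal_\gamma$). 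Applying this with $x=[A_1]\cdots[A_n]$ and then using centrality of $e_\rho$ to slide it next to the $i$-th factor,
\[
\rho^{\gamma}\big([A_1]\cdots[A_n]\big)=\rho^{\gamma}\big(e_\rho[A_1]\cdots[A_n]\big)=\rho^{\gamma}\big([A_1]\cdots[A_{i-1}]\,(e_\rho[A_i])\,[A_{i+1}]\cdots[A_n]\big)=0.
\]
Taking the contrapositive shows $\gamma_i\cdot\rho=\rho$ for all $i$, hence $\rho\in\Irr(\KQab(\Ccal_1))^{\langle\gamma_1,\dots,\gamma_n\rangle}$. I do not expect a genuine obstacle here, as the content is precisely the compatibility of the central idempotents $e_\rho$ with the crossed braiding; the only point requiring minor care is the $\Gamma$-action convention (whether the braiding produces $e_{\gamma_i\cdot\rho}$ or $e_{\gamma_i^{-1}\cdot\rho}$), which is immaterial since $\gamma_i$ fixes $\rho$ if and only if $\gamma_i^{-1}$ does.
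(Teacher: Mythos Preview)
Your proof is correct and follows essentially the same approach as the paper: both arguments exploit centrality of $\KQab(\Ccal_1)$ together with the crossed commutation relation $[A_i]\cdot b=\gamma_i(b)\cdot[A_i]$ at the Grothendieck ring level to force $\rho$ to be fixed by each $\gamma_i$. The only cosmetic difference is packaging: the paper works with an arbitrary element $[A]\in\KQab(\Ccal_1)$, moves it past the first $i$ factors to obtain $\rho([A])=\rho((\gamma_1\cdots\gamma_i)[A])$ for each $i$, and then deduces $\gamma_i$-invariance from consecutive products; you instead work directly with the primitive idempotent $e_\rho$, slide it to the $i$-th slot by centrality, and kill the product via $e_\rho e_{\gamma_i\cdot\rho}=0$---a slightly slicker but equivalent maneuver.
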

	\begin{proof}
		Consider any $A\in \Ccal_1$.. Then by the crossed braiding isomorphisms, for each $i$ we have $$A\otimes (A_1\otimes\cdots\otimes A_i)\cong (A_1\otimes\cdots\otimes A_i)\otimes A\cong (\gamma_1\cdots\gamma_i)(A)\otimes (A_1\otimes\cdots\otimes A_i).$$ Hence for each $i$
		$$A\otimes A_1\otimes\cdots\otimes A_n\cong (\gamma_1\cdots\gamma_i)(A)\otimes A_1\otimes\cdots\otimes A_n\mbox{ as objects of }\Ccal_{\gamma_1\cdots\gamma_n}.$$ 
		Hence $$\rho^{\gamma_1\cdots\gamma_n}([A]\cdot[A_1]\cdots[A_n])=\rho^{\gamma_1\cdots\gamma_n}((\gamma_1\cdots\gamma_n)[A]\cdot[A_1]\cdots[A_n])$$.  Thus $$\rho([A])\cdot\rho^{\gamma_1\cdots\gamma_n}([A_1]\cdots[A_n])=\rho(\gamma_1\cdots\gamma_i[A])\cdot\rho^{\gamma_1\cdots\gamma_n}([A_1]\cdots[A_n]).$$ Since we have assumed that $\rho^{\gamma_1\cdots\gamma_n}([A_1]\cdots[A_n])\neq 0$, we conclude that $\rho([A])=\rho(\gamma_1\cdots\gamma_i[A])$ for any $A\in\Ccal_1$ and $1\leq i\leq n$. Hence $\rho\in \Irr(\KQab(\Ccal_1))^{\gamma_1\cdots\gamma_i}$ for each $i$ and we conclude that $\rho$ is fixed by each $\gamma_i$ as desired.
	\end{proof}
	\begin{theorem}\label{thm:twisted verlinde} (Categorical twisted Verlinde formula in genus 0.)
		Let $\gamma_1,\cdots,\gamma_n\in \Gamma$ with the condition $\gamma_1\cdots\gamma_n=1$. Let $A_i\in \Ccal_{\gamma_i}$ for $1\leq i \leq n$. Then
		\begin{enumerate}[I]
			\item $$\dim\Hom(\unit,A_1\otimes \cdots \otimes A_n)=\sum\limits_{{{\rho\in\Irr(\KQab(\Ccal_1))^{\langle\gamma_1,\cdots,\gamma_n\rangle}}}}\frac{{\rho^{\gamma_1}([A_1])}\cdots\rho^{\gamma_n}([A_n])}{f_\rho \cdot\varphi_\rho(\gamma_1,\cdots,\gamma_n)},$$
			where $f_\rho$ is the formal codegree and $\varphi_\rho(\gamma_1,\cdots,\gamma_n)$ are the scalars defined in Remark \ref{rk:ncocycle}.
			\item Now let $A_i\in P_{\gamma_i}$ be simple objects. If $\Ccal$ is equipped with a spherical structure, then in terms of the crossed S-matrices we have
			$$\dim\Hom(\unit,A_1\otimes\cdots\otimes A_n)=\sum\limits_{D\in P_1^{\langle\gamma_1,\cdots,\gamma_n\rangle}}\frac{{S^{\gamma_1}_{A_1,D}}\cdots{S^{\gamma_n}_{A_n,D}}}{{S_{\unit,D}}^{(n-2)}\cdot \varphi_D(\gamma_1,\cdots,\gamma_n)}.$$
		\end{enumerate}
	\end{theorem}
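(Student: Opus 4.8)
The plan is to reduce everything to a computation of the functional $\linfun$ on the central subalgebra $\KQab(\Ccal_1)$. Since $\gamma_1\cdots\gamma_n=1$, the product $a:=[A_1]\cdots[A_n]$ lies in $\KQab(\Ccal_{\gamma_1\cdots\gamma_n})=\KQab(\Ccal_1)$, and by Remark \ref{rk:fusioncoeff} together with the definition of $\linfun$ we have $\dim\Hom(\unit,A_1\otimes\cdots\otimes A_n)=\linfun(a)$. First I would recall that $\KQab(\Ccal_1)$ is a commutative, split semisimple $\Qab$-algebra whose minimal idempotents are $e_\rho=\alpha_\rho/f_\rho$, indexed by $\rho\in\Irr(\KQab(\Ccal_1))$ (Theorem \ref{thm:twisted characters}(i)), and which satisfy $\sum_\rho e_\rho=1$ and $a\,e_\rho=\rho(a)e_\rho$. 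Writing $a=\sum_\rho\rho(a)e_\rho$ gives $\linfun(a)=\sum_\rho\rho(a)\linfun(e_\rho)$. The value $\linfun(e_\rho)=1/f_\rho$ I would obtain from $\alpha_\rho=\sum_{A\in P_1}\rho([A])[A^*]$ together with $\linfun([A^*])=\delta_{A,\unit}$ (the unit being simple), which yields $\linfun(\alpha_\rho)=\rho([\unit])=1$. Hence $\linfun(a)=\sum_{\rho\in\Irr(\KQab(\Ccal_1))}\rho(a)/f_\rho$.

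The crucial step is then to rewrite each $\rho(a)=\rho([A_1]\cdots[A_n])$ in terms of the twisted characters $\rho^{\gamma_i}$. Since $\gamma_1\cdots\gamma_n=1$ we have $\rho^{\gamma_1\cdots\gamma_n}=\rho$, so Lemma \ref{lem:unfixedchars} tells us that whenever $\rho(a)\neq 0$ the character $\rho$ is fixed by every $\gamma_i$; equivalently, every $\rho\notin\Irr(\KQab(\Ccal_1))^{\langle\gamma_1,\cdots,\gamma_n\rangle}$ contributes $0$ and may be discarded. For the surviving $\rho$ all of the twisted characters $\rho^{\gamma_i}$ are defined, and the $n$-fold cocycle relation of Remark \ref{rk:ncocycle} (applied with $r_i=[A_i]$, using $\rho^{\gamma_1\cdots\gamma_n}=\rho$) gives
$$\rho(a)=\frac{\rho^{\gamma_1}([A_1])\cdots\rho^{\gamma_n}([A_n])}{\varphi_\rho(\gamma_1,\cdots,\gamma_n)}.$$
Substituting this into the expression for $\linfun(a)$ produces exactly the formula in part (i). I expect the main subtlety to be precisely this restriction argument: the individual twisted characters are only defined on the common fixed subset, so one cannot factor $\rho(a)$ without first knowing that the unfixed terms vanish, and this is exactly what Lemma \ref{lem:unfixedchars} provides.

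Finally, to deduce part (ii) I would pass through the identification $P_1\cong\Irr(\KQab(\Ccal_1))$, $D\mapsto\rho_D$, supplied by the modular S-matrix. By Theorem \ref{thm:crossedsmatrixchartable}(i) we have $\rho_D^{\gamma}([M])=\wtilde{S}^\gamma_{M,D}/\dim D$, the formal codegree is $f_{\rho_D}=\dim\Ccal_1/\dim^2 D$, and the normalizations $\wtilde{S}^\gamma=\sqrt{\dim\Ccal_1}\,S^\gamma$ and $S_{\unit,D}=\dim D/\sqrt{\dim\Ccal_1}$ (Remark \ref{rk:0column} and sphericity) hold. Plugging these into the formula from part (i), the numerator $\prod_i\rho_D^{\gamma_i}([A_i])$ contributes a factor $(\dim\Ccal_1)^{n/2}/(\dim D)^n$ times $\prod_i S^{\gamma_i}_{A_i,D}$; dividing by $f_{\rho_D}$ turns this into $(\dim\Ccal_1)^{n/2-1}/(\dim D)^{n-2}$ times the same product. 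Since $S_{\unit,D}^{-(n-2)}=(\dim\Ccal_1)^{n/2-1}/(\dim D)^{n-2}$, the powers of $\dim\Ccal_1$ and $\dim D$ match exactly and the expression collapses to the claimed formula. This last step is routine normalization bookkeeping once the identification of crossed S-matrix entries with twisted characters is in place.
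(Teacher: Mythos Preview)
Your proof is correct and follows essentially the same route as the paper: express $\nu$ on $\KQab(\Ccal_1)$ as $\sum_\rho \rho/f_\rho$ via the idempotent decomposition, use Lemma~\ref{lem:unfixedchars} to discard the characters not fixed by $\langle\gamma_1,\ldots,\gamma_n\rangle$, and apply the cocycle relation from Remark~\ref{rk:ncocycle} to factor $\rho([A_1]\cdots[A_n])$; part~(ii) is then the same substitution of $\rho_D^{\gamma_i}([A_i])=S^{\gamma_i}_{A_i,D}/S_{\unit,D}$ and $f_{\rho_D}=1/S_{\unit,D}^2$. The only cosmetic difference is that you spell out $\nu(\alpha_\rho)=1$ explicitly and do the normalization bookkeeping in (ii) via $\wtilde{S}^\gamma$ and $\dim D$ rather than directly via $S^\gamma$ and $S_{\unit,D}$, but these are the same computation.
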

	\begin{proof}
		To prove (i), observe that in the Frobenius $*$-algebra $\KQab(\Ccal_1)$ the unit $$1=\sum\limits_{\rho\in \Irr(\KQab(\Ccal_1))}\frac{\alpha_\rho}{f_\rho}$$ is expressed as a sum of the minimal idempotents. Using the fact $\KQab(\Ccal_1)\cong \KQab(\Ccal_1)^*$, this translates to the equality $\nu=\sum\limits_{\rho\in \Irr(\KQab(\Ccal_1))}\frac{\rho}{f_\rho}$ in $\KQab(\Ccal_1)^*$. Hence
		\begin{eqnarray*}
			\dim\Hom(\unit,A_1\otimes\cdots\otimes A_n)&=&\nu([A_1]\cdots[A_n]),\\
			&=&\sum\limits_{\rho\in \Irr(\KQab(\Ccal_1))}\frac{\rho([A_1]\cdots[A_n])}{f_\rho},\\
			&=&\sum\limits_{\rho\in \Irr(\KQab(\Ccal_1))^{\langle\gamma_1,\cdots,\gamma_n\rangle}}\frac{\rho([A_1]\cdots[A_n])}{f_\rho}\mbox{ }\mbox{ }\mbox{(by Lemma \ref{lem:unfixedchars})},\\
			&=&\sum\limits_{\rho\in \Irr(\KQab(\Ccal_1))^{\langle\gamma_1,\cdots,\gamma_n\rangle}}\frac{\rho^{\gamma_1}([A_1])\cdots\rho^{\gamma_n}([A_n])}{f_\rho\cdot \varphi_{\rho}(\gamma_1,\cdots,\gamma_n)}.\\
		\end{eqnarray*}
		The last line follows from Remark \ref{rk:ncocycle}. Now 
		to prove (ii), observe that in the spherical setting we have a $\Gamma$-equivariant bijection $P_1\cong\Irr(\KQab(\Ccal_1))$ denoted by $D\leftrightarrow \rho_D$. By Theorem \ref{thm:crossedsmatrixchartable} and Definition \ref{d:normalizedcrossedsmatrix} for each $D\in P_1^{\langle\gamma_1,\cdots,\gamma_n\rangle}$ we have $\rho_D^{\gamma_i}([A_i])=\frac{S^{\gamma_i}_{A_i,D}}{S_{\unit,D}}$ and the formal codegree $f_D=f_{\rho_D}=\frac{\dim \Ccal_1}{\dim^2 D}=\frac{1}{(S_{\unit,D})^2}$. Statement (ii) now follows from (i).
	\end{proof}
	\subsection{A higher genus twisted Verlinde formula}\label{sss:highergenus}
	In order to motivate the higher genus Verlinde formula for twisted conformal blocks, let us derive a categorical version of such a formula. Let $a,b\in \Gamma$. For a simple object $A\in P_a$, $b(A^*)$ is a simple object of $\Ccal_{ba^{-1}b^{-1}}$. We define the object 
	\begin{equation}\label{eq:defomegaab}
	\Omega_{a,b}:=\bigoplus\limits_{A\in P_a}A\otimes b(A^*)\in \Ccal_{[a,b]}, \mbox{ where }[a,b]=aba^{-1}b^{-1}.
	\end{equation}
	\begin{lemma}\label{lem:fixedbycommutator}
		Let $a,b\in \Gamma$ and let $\rho\in\Irr(\KQab(\Ccal_1))^{[a,b]}$ be a character fixed by the commutator $[a,b]$. Then 
		$$\rho^{[a,b]}([\Omega_{a,b}])=\begin{dcases*}
		\frac{f_\rho}{\varphi_\rho(a,b,a^{-1},b^{-1})} & if $\rho\in \Irr(\KQab(\Ccal_1))^{\langle a,b\rangle}$ (see Remark \ref{rk:ncocycle})\\
		0 & else.
		\end{dcases*}$$
	\end{lemma}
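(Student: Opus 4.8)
The plan is to expand $\rho^{[a,b]}([\Omega_{a,b}])=\sum_{A\in P_a}\rho^{[a,b]}([A]\cdot[b(A^*)])$ and to evaluate each term through the multiplicativity of twisted characters (Remark \ref{rk:ncocycle}, Corollary \ref{cor:2cocyclerelations}) together with the orthogonality relations of Theorem \ref{thm:twisted characters}. Throughout I would use the identification $\KQab(\Ccal_\gamma)^*\cong\KQab(\Ccal_{\gamma^{-1}})$, so that $\rho^\gamma(x)=\nu(x\,\alpha^\gamma_\rho)=f_\rho\,\nu(x\,e^\gamma_\rho)$, and the key structural input that comes directly from the crossed braiding: since $M\otimes N\cong b(N)\otimes M$ for $M\in\Ccal_b$, the element $e^{b^{-1}}_\rho\in\KQab(\Ccal_b)$ is invertible inside $e_\rho\KQab(\Ccal)$ with inverse $e^b_\rho$, and conjugation by it realizes the $\Gamma$-action on the nose: $b_*(x)=e^{b^{-1}}_\rho\,x\,e^b_\rho$ on $e_\rho\KQab(\Ccal)$. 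Because $[b(A^*)]=b_*([A^*])$, this identity is precisely what connects the categorical $b$-action appearing in $\Omega_{a,b}$ to the algebraic twisted-character formalism.

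First I would dispose of the vanishing (``else'') case. If $\rho$ is not fixed by $a$, then by the module decomposition in Theorem \ref{thm:twisted characters}(iii) one has $e_\rho\KQab(\Ccal_a)=0$, so $e_\rho[A]=0$ for every $A\in P_a$; using the centrality of $e_\rho\in\KQab(\Ccal_1)$ this gives $e_\rho[\Omega_{a,b}]=0$ and hence $\rho^{[a,b]}([\Omega_{a,b}])=0$. The same argument applied to the factor $[b(A^*)]\in\KQab(\Ccal_{ba^{-1}b^{-1}})$ settles the case $\rho\notin\Irr(\KQab(\Ccal_1))^{ba^{-1}b^{-1}}$. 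In the remaining situation $\rho$ is fixed by $a$ and by $ba^{-1}b^{-1}$ (so that $b^{-1}\cdot\rho$ is $a$-fixed); here I would split $\rho^{[a,b]}([A][b(A^*)])$ by the $2$-cocycle relation and rewrite $\rho^{ba^{-1}b^{-1}}([b(A^*)])=(b^{-1}\cdot\rho)^{a^{-1}}([A^*])$ using $\Gamma$-equivariance of twisted characters. Summing over $A$ turns the result into a multiple of $\nu(\alpha^a_\rho\cdot\alpha^{a^{-1}}_{b^{-1}\cdot\rho})$, which vanishes by the orthogonality Corollary \ref{cor:2cocyclerelations}(i) whenever $b^{-1}\cdot\rho\neq\rho$, i.e. whenever $\rho$ is not $b$-fixed. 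Thus the expression is $0$ unless $\rho$ is fixed by both $a$ and $b$, equivalently by $\langle a,b\rangle$.

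For the nonzero case $\rho\in\Irr(\KQab(\Ccal_1))^{\langle a,b\rangle}$ the same split, now with $b^{-1}\cdot\rho=\rho$, yields
\[
\rho^{[a,b]}([\Omega_{a,b}])=\frac{1}{\varphi}\sum_{A\in P_a}\rho^a([A])\,\rho^{a^{-1}}([A^*])=\frac{\rho^{a^{-1}}(\alpha^a_\rho)}{\varphi},
\]
where $\varphi$ is defined by $e^{ba^{-1}b^{-1}}_\rho\,e^a_\rho=\varphi\,e^{[a,b]}_\rho$ with $e^{ba^{-1}b^{-1}}_\rho:=b_*(e^{a^{-1}}_\rho)$. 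Corollary \ref{cor:2cocyclerelations}(ii) together with the normalizations $\nu(\alpha_\rho)=1$ and $\varphi_\rho(a,a^{-1})=1$ evaluates $\rho^{a^{-1}}(\alpha^a_\rho)=f_\rho$. It then remains to identify $\varphi$ with $\varphi_\rho(a,b,a^{-1},b^{-1})$, and this is where I expect the main bookkeeping to lie: substituting the inner-automorphism identity $b_*(e^{a^{-1}}_\rho)=e^{b^{-1}}_\rho e^{a^{-1}}_\rho e^b_\rho$ gives $\varphi\,e^{[a,b]}_\rho=e^{b^{-1}}_\rho e^{a^{-1}}_\rho e^b_\rho e^a_\rho$, and comparison with the defining relation $e^{b^{-1}}_\rho e^{a^{-1}}_\rho e^b_\rho e^a_\rho=\varphi_\rho(a,b,a^{-1},b^{-1})\,e^{[a,b]}_\rho$ of the $4$-fold cocycle from Remark \ref{rk:ncocycle} forces $\varphi=\varphi_\rho(a,b,a^{-1},b^{-1})$. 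Assembling the pieces then gives $\rho^{[a,b]}([\Omega_{a,b}])=f_\rho/\varphi_\rho(a,b,a^{-1},b^{-1})$. The delicate point throughout is to keep the choices of the $e^\gamma_\rho$ (defined only up to roots of unity) mutually consistent so that both the equivariance and the cocycle identities hold exactly rather than merely projectively; the crossed-braiding realization of $b_*$ as an honest inner automorphism is exactly what guarantees that these choices match and that the reduced $2$-fold cocycle collapses to the commutator cocycle $\varphi_\rho(a,b,a^{-1},b^{-1})$.
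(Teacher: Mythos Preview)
Your argument is correct and follows essentially the same route as the paper's proof: expand $\rho^{[a,b]}([\Omega_{a,b}])$, split each term via the $2$-cocycle relation together with the $\Gamma$-equivariance of twisted characters, invoke orthogonality (Theorem~\ref{thm:twisted characters}(iv)) to handle the vanishing cases, and then identify the surviving constant with $\varphi_\rho(a,b,a^{-1},b^{-1})$. The paper carries out the last step by writing $\rho^{ba^{-1}b^{-1}}\circ b=\rho^{a^{-1}}/\varphi_\rho(b,a^{-1},b^{-1})$ (a direct consequence of Remark~\ref{rk:ncocycle}) and then using $\varphi_\rho(a,ba^{-1}b^{-1})\varphi_\rho(b,a^{-1},b^{-1})=\varphi_\rho(a,b,a^{-1},b^{-1})$; your alternative, computing $b_*(e^{a^{-1}}_\rho)e^a_\rho=e^{b^{-1}}_\rho e^{a^{-1}}_\rho e^b_\rho e^a_\rho$ directly via the inner-automorphism realization of $b_*$, is an equivalent repackaging of the same identity. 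One small point of presentation: when you write ``$e^{ba^{-1}b^{-1}}_\rho:=b_*(e^{a^{-1}}_\rho)$'' you are silently replacing the a~priori chosen $e^{ba^{-1}b^{-1}}_\rho$ by a particular representative, so the corresponding $2$-cocycle value $\varphi_\rho(a,ba^{-1}b^{-1})$ changes too; your computation is consistent with this replacement, but it would be clearer to say so explicitly rather than to suggest that the crossed braiding forces the two choices to coincide.
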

	\begin{proof}
		We have the following equality $$\rho^{[a,b]}([\Omega_{a,b}])=\rho^{[a,b]}\left(\sum\limits_{A\in P_a}[A]\cdot b[A^*]\right)=\sum\limits_{A\in P_a}\rho^{[a,b]}\left([A]\cdot b[A^*]\right).$$ By Lemma \ref{lem:unfixedchars}, if $\rho^{[a,b]}\left([A]\cdot b[A^*]\right)\neq 0$ for some $A\in P_a$, then we must have $\rho$ as an element in $\Irr(\KQab(\Ccal_1))^{\langle a, ba^{-1}b^{-1}\rangle}$. Otherwise each individual term in the summation and hence  $\rho^{[a,b]}([\Omega_{a,b}])$ must be zero. Hence let us assume that $\rho\in \Irr(\KQab(\Ccal_1))^{\langle a, ba^{-1}b^{-1}\rangle}$. In this case, 
		\begin{equation}\label{eq:omegaab}
		\rho^{[a,b]}([\Omega_{a,b}])=\sum\limits_{A\in P_a}\frac{\rho^{a}([A])\cdot \rho^{ba^{-1}b^{-1}}(b[A^*])}{\varphi_\rho(a,ba^{-1}b^{-1})}=\sum\limits_{A\in P_a}\frac{\rho^{a}([A])\cdot (\rho^{ba^{-1}b^{-1}}\circ b)([A^*])}{\varphi_\rho(a,ba^{-1}b^{-1})}.
		\end{equation}
		Recall that here $\rho^{ba^{-1}b^{-1}}:\KQab(\Ccal_{ba^{-1}b^{-1}})\to \Qab$ is the restriction of some choice of character 
		$$\wtilde{\rho^{ba^{-1}b^{-1}}}:\KQab(\Ccal_{\langle ba^{-1}b^{-1}\rangle})\to \Qab\mbox{ extending }\rho:\KQab(\Ccal_1)\to \Qab.$$ 
		Hence $\wtilde{\rho^{ba^{-1}b^{-1}}}\circ b:\KQab(\Ccal_{\langle a^{-1}\rangle})\to \Qab$ is a character extending $\rho\circ b:\KQab(\Ccal_1)\to \Qab.$ Hence the composition ${\rho^{ba^{-1}b^{-1}}}\circ b:\KQab(\Ccal_{a^{-1}})\to \Qab$ differs from the chosen twisted character $(\rho\circ b)^{a^{-1}}$ by some $|\langle{a}\rangle|$-th root of unity. Hence by the orthogonality of twisted characters (Theorem \ref{thm:twisted characters}(iv)) and (\ref{eq:omegaab}), we get that $\rho^{[a,b]}([\Omega_{a,b}])=0$ if $\rho\neq \rho\circ b$.
		
		In other words, we have proved that if $\rho^{[a,b]}([\Omega_{a,b}])\neq 0$, then we must have $\rho\in \Irr(\KQab(\Ccal_1))^{\langle a,b\rangle}$.
		
		Hence now suppose that $a,b\in \Gamma_\rho$. Again by the twisted orthogonality relations we have $$\sum\limits_{A\in P_a}\rho^a([A])\cdot \overline{\rho^{a}([A])}=\sum\limits_{A\in P_a}\rho^a([A])\cdot {\rho^{a^{-1}}([A^*])}=f_\rho.$$ Also using Remark \ref{rk:ncocycle}, we have $\rho^{ba^{-1}b^{-1}}\circ b=\frac{\rho^{a^{-1}}}{\varphi_{\rho}(b,a^{-1},b)}$. Combining with (\ref{eq:omegaab}) we complete the proof of the lemma.
	\end{proof}
	Finally using Theorem \ref{thm:twisted verlinde} and Lemma \ref{lem:fixedbycommutator} we obtain the following:
	\begin{corollary}\label{cor:highergenus} (Categorical twisted Verlinde formula for any genus.)
		Let $g,n$ be non-negative integers and let $a_1,\cdots,a_g, b_1,\cdots, b_g, m_1,\cdots , m_n$ be elements in $ \Gamma$ that  satisfy the relation
		$[a_1,b_1]\cdots[a_g,b_g]\cdot m_1\cdots m_n=1.$ Let $\Gamma^\circ\leq \Gamma$ be the subgroup generated by $a_j,b_j,m_i$. Let $M_i\in \Ccal_{m_i}$. Then the dimension of $\Hom(\unit,\Omega_{a_1,b_1}\otimes\cdots\otimes\Omega_{a_g,b_g}\otimes M_1\otimes\cdots\otimes M_n)$ is  $$\sum\limits_{\rho\in\Irr(\KQab(\Ccal_1))^{\Gamma^\circ}}\frac{({f_\rho)}^{g-1}\cdot\rho^{m_1}([M_1])\cdots\rho^{m_n}([M_n])}{\varphi_\rho(a_1,b_1,a_1^{-1},b_1^{-1},\cdots,m_1,\cdots,m_n)}.$$If $\Ccal$ is equipped with a spherical structure and if $M_i\in P_{m_i}$ are simple then the dimension of $\Hom(\unit,\Omega_{a_1,b_1}\otimes\cdots\otimes\Omega_{a_g,b_g}\otimes M_1\otimes\cdots\otimes M_n)$ is 
		$$\sum\limits_{D\in P_1^{\Gamma^\circ}}\frac{\left(\frac{1}{S_{\unit,D}}\right)^{n+2g-2}\cdot S^{m_1}_{M_1,D}\cdots S^{m_n}_{M_n,D}}{\varphi_D(a_1,b_1,a_1^{-1},b_1^{-1},\cdots,m_1,\cdots,m_n)}.$$
	\end{corollary}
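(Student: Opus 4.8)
The plan is to derive Corollary \ref{cor:highergenus} as a direct combination of the genus-zero Verlinde formula (Theorem \ref{thm:twisted verlinde}) and the commutator computation (Lemma \ref{lem:fixedbycommutator}), reducing the higher-genus case to the many-point genus-zero case by gluing in the handle objects $\Omega_{a_j,b_j}$. The key observation is that the object $\Omega_{a_1,b_1}\otimes\cdots\otimes\Omega_{a_g,b_g}\otimes M_1\otimes\cdots\otimes M_n$ lives in $\Ccal_1$, since $[a_1,b_1]\cdots[a_g,b_g]\cdot m_1\cdots m_n=1$, so $\dim\Hom(\unit,-)$ of this object is exactly the kind of multiplicity that part (i) of Theorem \ref{thm:twisted verlinde} computes. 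First I would apply Theorem \ref{thm:twisted verlinde}(i) directly to the $(2g+n)$-fold tensor product, treating each $\Omega_{a_j,b_j}\in\Ccal_{[a_j,b_j]}$ and each $M_i\in\Ccal_{m_i}$ as one of the tensor factors (with the single ``group element'' for the factor $\Omega_{a_j,b_j}$ being the commutator $[a_j,b_j]$).

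After this first step I obtain a sum over $\rho\in\Irr(\KQab(\Ccal_1))^{\langle[a_1,b_1],\cdots,[a_g,b_g],m_1,\cdots,m_n\rangle}$ of the product
\[
\frac{\rho^{[a_1,b_1]}([\Omega_{a_1,b_1}])\cdots\rho^{[a_g,b_g]}([\Omega_{a_g,b_g}])\cdot\rho^{m_1}([M_1])\cdots\rho^{m_n}([M_n])}{f_\rho\cdot\varphi_\rho([a_1,b_1],\cdots,[a_g,b_g],m_1,\cdots,m_n)}.
\]
The next step is to substitute the value of each factor $\rho^{[a_j,b_j]}([\Omega_{a_j,b_j}])$ supplied by Lemma \ref{lem:fixedbycommutator}. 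That lemma tells us this factor vanishes unless $\rho$ is fixed by the \emph{entire} subgroup $\langle a_j,b_j\rangle$ (not merely by the commutator), and equals $f_\rho/\varphi_\rho(a_j,b_j,a_j^{-1},b_j^{-1})$ when $\rho\in\Irr(\KQab(\Ccal_1))^{\langle a_j,b_j\rangle}$. Consequently the summation collapses: only those $\rho$ fixed by every $a_j,b_j$ and every $m_i$ survive, i.e. precisely those $\rho\in\Irr(\KQab(\Ccal_1))^{\Gamma^\circ}$ where $\Gamma^\circ=\langle a_j,b_j,m_i\rangle$. Each surviving $\rho$ contributes $g$ factors of $f_\rho$ from the handles, which combine with the single $f_\rho$ in the denominator to give the $(f_\rho)^{g-1}$ in the stated formula. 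This proves part (i).

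For the spherical (S-matrix) version, I would invoke the $\Gamma$-equivariant bijection $P_1\cong\Irr(\KQab(\Ccal_1))$, $D\leftrightarrow\rho_D$, exactly as in the proof of Theorem \ref{thm:twisted verlinde}(ii). Using Theorem \ref{thm:crossedsmatrixchartable} and Definition \ref{d:normalizedcrossedsmatrix} one has $\rho_D^{m_i}([M_i])=S^{m_i}_{M_i,D}/S_{\unit,D}$ and the formal codegree $f_D=1/(S_{\unit,D})^2$. Substituting these into part (i) and bookkeeping the powers of $S_{\unit,D}$ yields the exponent $n+2g-2$: each of the $n$ point-insertions contributes one factor of $1/S_{\unit,D}$, and the power $(f_\rho)^{g-1}=(S_{\unit,D})^{-2(g-1)}=(S_{\unit,D})^{2-2g}$ accounts for the remaining exponent, so together $(1/S_{\unit,D})^{n}\cdot(1/S_{\unit,D})^{2g-2}=(1/S_{\unit,D})^{n+2g-2}$.

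\emph{The main obstacle} I anticipate is careful bookkeeping of the cocycle scalars $\varphi_\rho$. The $\varphi_\rho$ in part (i) of Theorem \ref{thm:twisted verlinde} is associated to the sequence $([a_1,b_1],\cdots,[a_g,b_g],m_1,\cdots,m_n)$ of grading elements, whereas the $\varphi_\rho$ in the target formula is associated to the \emph{expanded} sequence $(a_1,b_1,a_1^{-1},b_1^{-1},\cdots,m_1,\cdots,m_n)$; reconciling these requires the multiplicativity property of the scalars from Remark \ref{rk:ncocycle} together with the extra factors $\varphi_\rho(a_j,b_j,a_j^{-1},b_j^{-1})$ coming out of Lemma \ref{lem:fixedbycommutator}, and the normalizing conventions $\varphi_\rho(\gamma,\gamma^{-1})=1$ fixed in Remark \ref{rk:ncocycle}. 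I would check that once all surviving $\rho$ lie in $\Irr(\KQab(\Ccal_1))^{\Gamma^\circ}$, the relation $\rho^{\gamma_1}(r_1)\cdots\rho^{\gamma_k}(r_k)=\varphi_\rho(\gamma_1,\cdots,\gamma_k)\,\rho^{\gamma_1\cdots\gamma_k}(r_1\cdots r_k)$ lets one rewrite the product of per-handle cocycles and per-point twisted characters as a single cocycle evaluated on the expanded sequence, giving exactly the denominator in the statement.
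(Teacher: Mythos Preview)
Your proposal is correct and follows essentially the same route as the paper, which simply states that the corollary follows by combining Theorem \ref{thm:twisted verlinde} and Lemma \ref{lem:fixedbycommutator}. You have in fact supplied more detail than the paper does, including the cocycle bookkeeping and the power-counting for $f_\rho$ and $S_{\unit,D}$, all of which are accurate.
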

	\begin{remark}
		The fundamental group of a smooth complex genus $g$ curve with $n$ punctures has a presentation $\langle\alpha_1,\beta_1,\cdots,\alpha_g,\beta_g,\gamma_1,\cdots,\gamma_n|[\alpha_1,\beta_1]\cdots[\alpha_g,\beta_g]\gamma_1\cdots\gamma_n=1\rangle$. In other words, the choice of the elements $a_j,b_j,m_i\in \Gamma$ in Corollary \ref{cor:highergenus} is equivalent to the choice of a group homomorphism from the fundamental group to $\Gamma$ and the subgroup $\Gamma^\circ$ is just the image of this homomorphism.
	\end{remark}

	\paragraph*{\bf Part II:  Conformal blocks for twisted affine Kac-Moody Lie algebras}
\addcontentsline{toc}{section}{\bf Part II: Conformal blocks for twisted affine Kac-Moody Lie algebras}
	We give a coordinate free description of twisted conformal blocks following \cite{Tsuchimoto} over the moduli space of $n$-pointed admissible $\Gamma$-covers $\GMod$. 
	For admissible covers of $\mathbb{P}^1\rightarrow \mathbb{P}^1$  ramified at two points, we embed twisted conformal blocks into the space of invariants of  tensor product of representations  and get natural bounds on the dimensions of these conformal blocks. 
	
	In Section \ref{sec:threepointmtc}, we state one of the main theorems (See Theorem \ref{thm:oneofmain}) of the paper of contructing a $\Gamma$-crossed modular fusion category from twisted conformal blocks. We also relate the categorical crossed S-matrices discussed in Part I with characters of the twisted fusion ring associated with twisted affine Kac-Moody algebras. 
	
	Finally we end Part II, by reconstructing twisted conformal blocks using the Beilinson-Bernstein localization and given a flat projective connection with logarithmic singularities. We also derive a formula for the log Atiyah algebra of the associated projective connection in terms of natural line bundle on $\GMod$.

	\section{Twisted Kac-Moody Lie algebras and their representations}\label{sec:repofrep} Our goal is to apply the Verlinde formula for braided crossed categories to conformal blocks for twisted affine Lie algebras. We first recall following Kac \cite{Kac} some notations and facts about twisted Kac-Moody Lie algebras and their representations. Our notations and conventions will be similar to \cite{Kac,KH}, with 
	key differences at some places.

	\subsection{Twisted affine Lie algebras}\label{sec:affineLiealgtwisted}
	Let $\gamma$ be an automorphism of a finite dimensional simple Lie algebra $\frg$ of 
	order $|\gamma|$. We
	fix a $|\gamma|$-th root of unity $\epsilon:=e^{\frac{2\pi \sqrt{-1}}{|\gamma|}}$ of unity and then we have an eigen-decomposition $\frg=\oplus_{i=0}^{|\gamma|-1}\frg_i$, where $\frg_i:=\{X\in \frg| \gamma(X)=\epsilon^{i}X\}.$
	
	In particular $\frg_{0}$ (also denoted by $\frg^{\gamma}$) is the Lie subalgebra of $\frg$ invariant
	under $\gamma$. 
	 We let $\gamma$ act on $\mathbb{C}((t))$ by the formula $\gamma.t:=\epsilon^{-1}t$.
	Then the automorphism $\gamma$ induces a natural automorphism (also denoted by) $\gamma$ of the affine Lie algebra 
	$
	\frg\otimes \mathbb{C}((t))\oplus \mathbb{C}c$, where $c$ is a central element and the Lie bracket is defined by the formula 
	$$[X\otimes f, Y\otimes g]=[X,Y]\otimes fg+ (X,Y)_{\frg}\operatorname{Res}_{t=0}gdf\cdot c,$$ 
	where 
	$(,)_{\frg}$ is the normalized Killing form on $\frg$ such that $(\theta,\theta)=2$ for any long root $\theta$ of $\frg$. 
	
	We define the {\em twisted affine Lie algebra} $\widehat{L}(\frg,\gamma):=(\frg\otimes \mathbb{C}((t))\oplus \mathbb{C}c)^{\gamma}$.
	Using the eigen-decomposition of $\frg$, we get 
	$$\widehat{L}(\frg,\gamma)=\oplus_{i=0}^{|\gamma|-1}\frg_i\otimes \mathcal{A}_i\oplus \mathbb{C}c,$$ 
	where $\mathcal{A}_i=\{f(t)\in \mathbb{C}((t))|\gamma(f(t))=\epsilon^{-i}f(t)\}$. The Lie bracket is defined by the formula:
	$$[X\otimes f, Y\otimes g]=[X,Y]\otimes fg+ \frac{1}{|\gamma|}(X,Y)_{\frg}\operatorname{Res}_{t=0}gdf\cdot c,$$ 
	where $X\otimes f$ and $Y\otimes g$ are elements of $(\frg\otimes \mathbb{C}((t)))^{\gamma}$. 
	The classification \cite[Proposition 8.1]{Kac} of finite order automorphisms  of $\frg$ tells us: 
	\begin{proposition}\label{prop:classdia}Let $\gamma$ be an automorphism of $\frg$ of order $|\gamma|$.
		Then there exists
		a Borel subalgebra 
		$\mathfrak{b}$ of $\frg$ containing a Cartan subalgebra $\mathfrak{h}$ such
		that $\gamma=\sigma\exp(\operatorname{ad}\frac{2\pi{\sqrt{-1}}}{|\gamma|}h),$ where $\sigma$ is 
		a diagram automorphism of $\frg$, such that we have the following:
		\begin{itemize}
			\item both $\gamma$ and $\sigma$ preserve $\mathfrak{h}$.
			\item $h$ is a element of the subalgebra of $\mathfrak{h}$ fixed
			by $\sigma$. 
			\item $\sigma$ preserves a set of simple roots $\Pi'=\{\alpha'_1,\dots,\alpha'_{\rank{\frg}}\}$ and $\alpha'_i(h)\in \mathbb{Z}$ for
			each $1\leq i\leq \rank{\frg}$. 
			\item $\sigma$ and $\exp(\operatorname{ad}\frac{{2\pi}{\sqrt{-1}}}{|\gamma|}h)$ 
			commute.
		\end{itemize}
	\end{proposition}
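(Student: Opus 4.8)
The plan is to first find a $\gamma$-stable flag, then split off a diagram automorphism, and finally use the finite-order hypothesis to normalize the remaining ``torus part''. First I would invoke Steinberg's theorem: any automorphism of the adjoint group $\Aut(\frg)$ stabilizes a Borel subgroup together with a maximal torus inside it, so differentiating produces a Borel subalgebra $\mathfrak{b}$ and a Cartan subalgebra $\mathfrak{h}\subset\mathfrak{b}$ with $\gamma(\mathfrak{h})=\mathfrak{h}$ and $\gamma(\mathfrak{b})=\mathfrak{b}$. (Alternatively, one may pass to a $\gamma$-stable compact real form.) Then $\gamma$ maps the set $\Pi'=\{\alpha'_1,\dots,\alpha'_{\rank\frg}\}$ of simple roots of $(\mathfrak{b},\mathfrak{h})$ to itself and, preserving the Cartan pairing, induces an automorphism of the Dynkin diagram. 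Fixing Chevalley generators attached to $\Pi'$, I let $\sigma$ be the unique pinning-preserving automorphism realizing this same permutation; it preserves $\mathfrak{h},\mathfrak{b},\Pi'$.

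Next I would analyze $\tau:=\sigma^{-1}\gamma$. It preserves $(\mathfrak{b},\mathfrak{h})$ and fixes each $\alpha'_i$ as a functional on $\mathfrak{h}$; since $\Pi'$ spans $\mathfrak{h}^*$, the automorphism $\tau$ acts as the identity on $\mathfrak{h}$, hence preserves every root space $\frg_\alpha$ and scales it by a character $\chi\colon Q\to\CBbb^\times$ of the root lattice $Q$. Every such character has the form $\chi(\alpha)=\exp(2\pi\sqrt{-1}\,\alpha(t_0))$ for some $t_0\in\mathfrak{h}$, so $\tau=\exp(2\pi\sqrt{-1}\,\ad t_0)$ and $\gamma=\sigma\exp(2\pi\sqrt{-1}\,\ad t_0)$.

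The crux is to make the torus part $\sigma$-invariant, so that $\sigma$ and it commute. Here it suffices to bring $\gamma$ into the asserted form after conjugation, because conjugating $\gamma$ by any $g\in\Aut(\frg)$ and transporting $(\mathfrak{b},\mathfrak{h},\sigma,h)$ by $g^{-1}$ exhibits $\gamma$ itself in standard form relative to $(g^{-1}\mathfrak{b},g^{-1}\mathfrak{h})$, which is exactly the freedom of choosing the Borel in the statement. Conjugating by $\exp(\ad x)$ with $x\in\mathfrak{h}$ fixes $\tau$ and replaces $t_0$ by $t_0+\tfrac{1}{2\pi\sqrt{-1}}(\sigma^{-1}-1)x$. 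Since $\sigma$ is semisimple of finite order I may write $\mathfrak{h}=\mathfrak{h}^{\sigma}\oplus\operatorname{im}(\sigma-1)$, and as $\sigma^{-1}-1$ is invertible on $\operatorname{im}(\sigma-1)$ I can choose $x$ cancelling the component of $t_0$ in $\operatorname{im}(\sigma-1)$. After this conjugation the torus parameter $t$ lies in $\mathfrak{h}^{\sigma}$, so $\sigma(t)=t$ and $\sigma$ commutes with $\exp(2\pi\sqrt{-1}\,\ad t)$.

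Finally I would read off integrality from $\gamma^{|\gamma|}=1$. Projecting to the outer automorphism group $\Aut(\frg)/\operatorname{Inn}(\frg)$, in which $\gamma$ and $\sigma$ have the same image, gives $\sigma^{|\gamma|}=1$; since $\sigma$ now commutes with the torus part, $\gamma^{|\gamma|}=\exp(2\pi\sqrt{-1}\,\ad(|\gamma|\,t))=1$, whence $\alpha(|\gamma|\,t)\in\ZBbb$ for every root $\alpha$. Setting $h:=|\gamma|\,t\in\mathfrak{h}^{\sigma}$ then yields $\alpha'_i(h)\in\ZBbb$ and the asserted identity $\gamma=\sigma\exp(\ad\tfrac{2\pi\sqrt{-1}}{|\gamma|}h)$. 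I expect the main obstacle to be the opening step — producing a $\gamma$-stable Borel and Cartan, for which Steinberg's fixed-point theorem (or a compact-form argument) is essential — together with the bookkeeping in the third paragraph that forces the diagram automorphism and the torus part to commute; once $(\mathfrak{b},\mathfrak{h})$ is pinned down, everything else is linear algebra on $\mathfrak{h}$ governed by the order of $\gamma$.
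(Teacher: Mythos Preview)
The paper does not prove this proposition at all: it is quoted verbatim as a known classification result, with the attribution ``The classification \cite[Proposition 8.1]{Kac} of finite order automorphisms of $\frg$ tells us\ldots''. Your write-up is therefore not a comparison case but an independent proof of a cited fact, and it is correct. The outline---Steinberg's theorem (or an invariant compact form) to produce a $\gamma$-stable pair $(\mathfrak b,\mathfrak h)$, splitting off the outer part $\sigma$, identifying $\sigma^{-1}\gamma$ with a torus element via a character of the root lattice, conjugating by a torus element to push the parameter into $\mathfrak h^\sigma$, and reading off integrality from $\gamma^{|\gamma|}=1$---is exactly the standard route to Kac's result.

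One point of phrasing to tighten: you write that conjugating by $\exp(\ad x)$ with $x\in\mathfrak h$ ``fixes $\tau$ and replaces $t_0$ by $t_0+\tfrac{1}{2\pi\sqrt{-1}}(\sigma^{-1}-1)x$'', which reads as self-contradictory. What is actually happening is that $g\tau g^{-1}=\tau$, but $g\sigma g^{-1}\neq\sigma$; when you re-express $g\gamma g^{-1}$ as $\sigma$ times a torus element (keeping the \emph{same} $\sigma$), the torus parameter shifts as you state. After transporting back by $g^{-1}$ you obtain $\gamma=(g^{-1}\sigma g)\exp(2\pi\sqrt{-1}\,\ad t)$ with $t\in\mathfrak h^\sigma=\mathfrak h^{g^{-1}\sigma g}$; here $g^{-1}\sigma g$ is the diagram automorphism for the $g^{-1}$-transported pinning, which is legitimate since the proposition allows the choice of pinning. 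This is implicit in your ``transporting by $g^{-1}$'' remark but would benefit from being made explicit.
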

	We know that diagram automorphisms of finite dimensional simple 
	Lie algebras have 
	been classified and the order $m$ of $\sigma$ is in the set $\{1,2,3\}$. Now suppose $\gamma$ and $\sigma$ are 
	related by Proposition \ref{prop:classdia}, then $m$ divides $|\gamma|$. Consider the following 
	natural map 
	\begin{equation}
	\label{eqn:basicisomor}
	\phi_{\sigma,\gamma}:\widehat{L}(\frg,\sigma)\rightarrow 
	\widehat{L}(\frg,\gamma), \ X[t^{j}]\rightarrow X[t^{\frac{|\gamma|}{m}j+k}],
	\ c\rightarrow c,
	\end{equation} where $X$ is an $\epsilon^{\frac{|\gamma|}{m}j}$-eigenvector of $\frg$ and a $k$-eigenvector for $\exp(\operatorname{ad}\frac{{2\pi}{\sqrt{-1}}}{|\gamma|}h)$.  
	By Proposition 8.6 in \cite{Kac}, we 
	get that the map in Equation \eqref{eqn:basicisomor} is an isomorphism of Lie algebras. Thus 
	we are reduced to the case of studying twisted affine Lie algebras when $\gamma$ is a diagram automorphism of $\frg$. 

	\subsubsection{Horizontal subalgebra and weight lattice} Let $X_N$ denote a type
	of finite dimensional complex Lie algebra of $\rank{N}$ (as in the classification in \cite{Kac})  
	and the 
	associated Lie algebra is $\frg(X_N)$. Similarly, let $X_N^{(m)}$ be the type of the  affine Kac-Moody
	Lie algebra associated to a diagram automorphism $\gamma$ of $\frg(X_N)$ of order $m$. 
	The corresponding Lie algebra of type $X_{N}^{(m)}$ 
	will be denoted by $\frg(X_N^{(m)})$. 
	
	Denote the set of simple roots  $\prod=\{ \alpha_0,\alpha_1,\dots,\alpha_N\}$ and
	the simple coroots $\prod^{\vee}=\{\alpha_0^{\vee},\dots, \alpha_N^{\vee}\}$. Similarly consider  $\theta:=\sum_{i=1}^{N}a_i\alpha_i$, where $a_i$ (respectively  $a_i^{\vee}$) are the Coxeter (respectively dual Coxeter labels). Our {\em numbering} of the vertices of the Dynkin 
	diagram of $\frg(X_N)$ and $\frg(X_N^{(m)})$ is
	the same as in Kac \cite{Kac}.

	We denote the finite dimensional Lie algebra 
	obtained by deleting the $0$-th 
	vertex of the Dynkin diagram of $\frg(X_N^{(m)})$ by $\GO$. Let $\mathring{\mathfrak{h}}$ denote
	the Cartan subalgebra of $\GO$. Clearly $\mathring{\mathfrak{h}}$ is generated by
	the coroots $\alpha_{1}^{\vee},\dots, \alpha_N^{\vee}$. Let  $Q(\GO)$ (resp. ${Q}^{\vee}(\GO)$) 
	denote the root lattice (resp. coroot lattice) of $\GO$ and $\mathring{\Delta}$
	denote the positive simple roots of $\GO$. 
	
	For $1\leq i \leq N$, we define the affine
	fundamental weights $\Lambda_i$ that satisfies the equation $\Lambda_i(\alpha_j^{\vee})=\delta_{ij}.$ Then we can write $\Lambda_i$ 
	in terms of its horizontal projection as follows:
	\begin{equation}
	\label{eqn:affinefunda}
	\Lambda_i:=\overline{\Lambda}_i+a_i^{\vee}\Lambda_0, 
	\end{equation}
	where $\Lambda_0$ is the zero-th affine weight . 
	Then $\overline{\Lambda}_1,\dots, \overline{\Lambda}_N$ are the fundamental weights of the horizontal subalgebra $\GO$. 
	\begin{remark}
		The numbering of the vertices of the Dynkin diagram of $\GO$ as in Kac \cite{Kac} may not extend to a numbering scheme of 
		the Dynkin diagram of $\GA$. Hence we might need to reorder the set $\overline{\Lambda}_1,\dots, \overline{\Lambda}_N$ to match
		up with the usual conventions for $\GO$.
	\end{remark}
	\subsubsection{Level $\ell$-weights}
	The weight lattice $P$ is the $\mathbb{Z}$-lattice 
	generated by $\Lambda_0,\dots,\Lambda_N$.  Let $\alpha_0^{\vee}$ be the zero-th coroot of $\frg(X_N^{(m)})$.
	The set $P^{\ell}(\GA)$ of dominant
	integral weights of level $\ell$  is defined as follows:
	$$P^{\ell}(\GA)=\{\lambda \in P| \lambda(\alpha^{\vee}_i)\geq 0 \ \mbox{for all $0\leq i \leq N$ and $\lambda(K)=\ell$} \},$$ where $K$ is the generator of the center of $\frg(X_N^{(m)})$ and is given by the formula $K=\sum_{i=0}^Na_{i}^{\vee}\alpha_i^{\vee}$. 
	
	The set of irreducible, integrable, highest weight representations at level $\ell$ of the affine 
	Kac-Moody Lie algebra $\frg(X_N^{(m)})$ is 
	in bijection with the set $P^{\ell}(\frg(X_N^{(m)}))$. If $m=1$, then the set $P^{\ell}(\frg(X_N^{(1)})$
	will often be denoted by $P_{\ell}(\frg)$. The following 
	lemma can be checked directly:
	\begin{lemma}\label{lem:levelell}
		Let ${P}_{+}(\GO)$ denote the set of dominant integral weights of $\GO$, then 
		$$P^{\ell}(\GA)=\{\lambda \in {P}_+(\GO) | \kappa_{\mathfrak{g}(X_N^{(m)})}(\lambda,\theta)\leq \ell\},$$ where $\kappa_{\frg(X_N^{(m)})}$ is the normalized Killing form on $\frg(X_N^{(m)})$.
	\end{lemma}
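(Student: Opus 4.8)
The plan is to render the identification $\lambda\leftrightarrow\bar\lambda$ between a level-$\ell$ affine weight and its horizontal projection completely explicit in coordinates, and then to compute the pairing $\kappa_{\GA}(\bar\lambda,\theta)$ directly against the two defining conditions of $P^\ell(\GA)$. First I would parametrize: since the weight lattice $P$ is freely generated by $\Lambda_0,\dots,\Lambda_N$ and $\Lambda_i(\alpha_j^\vee)=\delta_{ij}$, every $\lambda\in P$ is $\lambda=\sum_{i=0}^N m_i\Lambda_i$ with $m_i=\lambda(\alpha_i^\vee)\in\ZBbb$. In these coordinates the defining conditions of $P^\ell(\GA)$ read $m_i\ge 0$ for all $0\le i\le N$ together with the level equation $\ell=\lambda(K)=\sum_{i=0}^N a_i^\vee m_i$.

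Next I would pass to the horizontal subalgebra $\GO$. By \eqref{eqn:affinefunda} we have $\overline{\Lambda}_i=\Lambda_i-a_i^\vee\Lambda_0$ for $1\le i\le N$, while $\Lambda_0$ restricts to $0$ on $\mathring{\mathfrak h}$ (as $\Lambda_0(\alpha_j^\vee)=\delta_{j,0}=0$ for $1\le j\le N$); hence $\bar\lambda=\sum_{i=1}^N m_i\overline{\Lambda}_i$, and $\bar\lambda\in P_+(\GO)$ is exactly the condition $m_1,\dots,m_N\ge 0$. Using $a_0^\vee=1$ (the dual mark of the $0$-node, from Kac's tables), the level equation solves to $m_0=\ell-\sum_{i=1}^N a_i^\vee m_i$, which is automatically an integer, so $\lambda\mapsto\bar\lambda$ is a genuine bijection from $P^\ell(\GA)$ onto its image (not merely injective), and the single remaining inequality $m_0\ge 0$ becomes precisely $\sum_{i=1}^N a_i^\vee m_i\le\ell$. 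It therefore remains to identify this left-hand side with $\kappa_{\GA}(\bar\lambda,\theta)$.

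This last identification is the crux. Writing $\theta=\sum_{j=1}^N a_j\alpha_j$ and invoking the standard relation $\langle\mu,\alpha_j^\vee\rangle=2\kappa_{\GA}(\mu,\alpha_j)/\kappa_{\GA}(\alpha_j,\alpha_j)$ between the normalized form $\nu_{\GA}\colon\mathfrak H^*\to\mathfrak H$ and the coroots, together with the diagonal case $\kappa_{\GA}(\alpha_j,\alpha_j)=a_{jj}\tfrac{a_j^\vee}{a_j}=2\tfrac{a_j^\vee}{a_j}$ of Proposition \ref{prop:formulaforkg}, I would compute $\kappa_{\GA}(\overline{\Lambda}_i,\alpha_j)=\tfrac{a_j^\vee}{a_j}\langle\overline{\Lambda}_i,\alpha_j^\vee\rangle=\tfrac{a_j^\vee}{a_j}\delta_{ij}$ for $1\le i,j\le N$. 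Summing gives $\kappa_{\GA}(\bar\lambda,\theta)=\sum_{i=1}^N m_i\sum_{j=1}^N a_j\cdot\tfrac{a_j^\vee}{a_j}\delta_{ij}=\sum_{i=1}^N a_i^\vee m_i$, which is exactly the level defect, closing the argument. The main obstacle is purely the bookkeeping in this computation: one must keep the affine form and its restriction to the horizontal weights straight, correctly normalize the coroots, and confirm $a_0^\vee=1$ so that $m_0$ is recovered integrally; everything else is formal.
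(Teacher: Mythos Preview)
Your proof is correct and follows essentially the same approach as the paper's: parametrize $\lambda=\sum_{i=0}^N b_i\Lambda_i$, use $a_0^\vee=1$ to convert the level condition $\lambda(K)=\ell$ and the inequality $b_0\ge 0$ into $\sum_{i=1}^N a_i^\vee b_i\le\ell$, and then identify this sum with $\kappa_{\GA}(\bar\lambda,\theta)$. You spell out the last identification in more detail via $\kappa_{\GA}(\overline{\Lambda}_i,\alpha_j)=\tfrac{a_j^\vee}{a_j}\delta_{ij}$, whereas the paper simply asserts it ``by the definition of $\theta$ and $\kappa_{\GA}$,'' but the argument is the same.
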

	We refer the reader to Appendix \ref{section:crossedS} for an explicit description of the set $P^{\ell}(\GA)$ in the various cases.

	\subsection{Modules for twisted affine Lie algebras}\label{sec:arbtodia}For any finite order automorphism $\gamma$, let $P^{\ell}(\frg,\gamma)$ be a subset of $P_{+}(\frg^{\gamma})$ parameterizing the set of integrable, irreducible highest weight representations of $\widehat{L}(\frg,\gamma)$. By the isomorphism of $\widehat{L}(\frg,\gamma)\simeq \widehat{L}(\frg,\sigma)$ and the realization of $\widehat{L}(\frg,\sigma)$ as  a twisted affine Kac-Moody algebra \cite{Kac} $\frg(X_N^{(m)})$, we get a natural bijection 
	\begin{equation}\label{eqn:bijfromarbtodia}
	P^{\ell}(\frg,\gamma)\leftrightarrow P^{\ell}(\GA). 
	\end{equation}
	We also refer the reader to \cite[Section 2]{KH} for an alternate description and construction. 
	For $\lambda\in P^{\ell}(\frg,\gamma)$, let $V_{\lambda}$ denote the highest weight irreducible module of $\frg^{\gamma}$ 
	of highest weight $\lambda$. Similarly, we denote the highest weight irreducible
	integrable modules by $\mathcal{H}_{\lambda}(\frg,\gamma)$ (also denoted by $\mathcal{H}_{\lambda}$ if there is no confusion). 
	They are characterized by the property:
	\begin{enumerate}
		\item $\mathcal{H}_{\vec{\lambda}}(\frg,\gamma)$ are infinite dimensional. 
		\item $V_{\lambda} \subset \mathcal{H}_{\lambda}(\frg,\gamma)$.
		\item The central element $c$ acts on $\mathcal{H}_{\lambda}(\frg,\gamma)$ by 
		multiplication by $\ell$.

	\end{enumerate}
	In Section \ref{sec:atiyahalg}, we give a coordinate free construction of the modules $\mathcal{H}_{\lambda}(\frg,\gamma)$.

	\section{Sheaf of twisted conformal blocks}\label{sec:deftwistconf}In this section, we give a brief {\em coordinate 
		free construction} of the sheaf of twisted conformal blocks (\cite{Fakhruddin:12,FS,KH,TUY:89,Tsuchimoto}).	
	\subsection{Family of pointed $\Gamma$-curves}\label{sec:pointedGammacurves}
	Let $T$ be a smooth variety and consider a proper, flat family  $\pi:\widetilde{C}\rightarrow T$  of curves with at most nodal singularities. 
	Let $\Gamma$ be a finite group that acts on $\widetilde{C}$ such that the map $\pi$ is $\Gamma$ equivariant. 
	Let the quotient $\overline{\pi}:C=\widetilde{C}/\Gamma\rightarrow T$ be the induced family of curves over $T$. 
	The genus of the fibers of $\pi$ and $\overline{\pi}$ are related by the Hurwitz formula. We further choose mutually disjoint sections ${\bf{p}}=(p_1,\dots,p_n)$ (respectively $\widetilde{\bf{p}}=(\widetilde{p}_1,\dots,\widetilde{p}_n)$)  of $\overline{\pi}$ 
	(respectively $\pi$) such that
	\begin{enumerate}\label{enu:pointedcurves}
		\item We have $pr(\widetilde{\bf{p}}(t))={\bf{p}}(t)$ for all points $t\in T$,  where $pr: \widetilde{C}\rightarrow C$
		is the canonical quotient map. 
		\item The points ${\bf{p}}(t)$ are all smooth.		
		\item For any $t\in T$, the smooth branching points of the $\Gamma$-cover $\tildeC_t\to C_t$ are contained in $\cup_{i=1}^np_i(t)$. 
		\item For any point $t\in T$, the data $(\widetilde{C}_t,{C}_t,\widetilde{{\bf p}}(t),{\bf{p}}(t))$ is a $n$-pointed 
		admissible $\Gamma$-cover in the sense of Jarvis-Kimura-Kaufmann (See Definition \ref{def:admissiblecovers}) \cite{JKK}.
		\item We assume that $\widetilde{C}\setminus \Gamma\cdot\widetilde{\bf {p}}(T)$ is affine over $T$.
	\end{enumerate}
	Let ${\bf m}=(m_1,\dots,m_n)\in \Gamma^n$ be the monodromies around the sections $\widetilde{\bf{p}}$, in other words for all $1\leq i\leq n$, $m_i$ is the generator of the stabilizer $\Gamma_i\leq \Gamma$ of $\widetilde{p}_i$  determined by the orientation of the curve.
	
	\subsection{Coordinate free highest weight integrable modules}We start with a family of pointed $\Gamma$-curves satisfying the conditions in Section \ref{sec:pointedGammacurves}. Let $\mathcal{I}_{\widetilde{\bf {p}}}$ 
	(respectively $\mathcal{I}_{\widetilde{p}_i})$ be the ideal of the image of $\widetilde{\bf{p}}$ 
	(respectively $\widetilde{p}_i$) in $\widetilde{C}$. Let $\widehat{\mathcal{O}}_{\widetilde{C}, \widetilde{p}_i}$ 
	be the formal completion of $\mathcal{O}_{\widetilde{C}}$ along the image of $\widetilde{p}_i$ and $\mathcal{K}_{\widetilde{C}, \widetilde{p}_i}$
	be the sheaf of formal meromorphic functions along $\widetilde{p}_i$. Observe that if $\Gamma_{i}=\langle m_i\rangle$
	denotes the stabilizer in $\Gamma$ of $\widetilde{p}_i$, then $\Gamma_i$ acts on $\mathcal{K}_{\widetilde{C}, \widetilde{p}_i}$.  We define the {\em coordinate free twisted affine $\Ocal_T$-Lie algebra } as 
	\begin{equation}
	\wfrg_{\widetilde{p}_i}:=\big(\frg\otimes \Kor \big)^{\Gamma_i}\oplus \mathcal{O}_{{T}}.c, 
	\end{equation}Suppose 
	$\widehat{\mathfrak{p}}_{\widetilde{p}_i}:=\big(\frg\otimes \widehat{\mathcal{O}}_{\widetilde{C}, \widetilde{p}_i}\big)^{\Gamma_i}\oplus \mathcal{O}_Tc$. For $\lambda_i \in P^{\ell}(\frg,\Gamma_i):=P^{\ell}(\frg,m_i)$, let $V_{\lambda}$ denote the $\frg^{\Gamma_i}$ 
	module of highest weight $\lambda$.  
	We let $\big(\frg\otimes \widehat{\mathcal{O}}_{\widetilde{C}, \widetilde{p}_i}\big)^{\Gamma_i}$ 
	act on $V_{\lambda}$ via evaluation at $\widetilde{p}_i$ and $c$ acts on 
	$V_{\lambda}$ by multiplication by $\ell$. 
	Thus $\widehat{\mathfrak{p}}_{\widetilde{p}_i}$ act on $V_{\lambda}$. We denote by $M_{\lambda_i,\widetilde{p}_i}:=
	\operatorname{Ind}^{\wfrg_{\widetilde{p}_i}}_{\widehat{\mathfrak{p}}_{\widetilde{p}_i}}V_{\lambda}$. It follows that $M_{\lambda_i,\widetilde{p}_i}$ admits a unique
	irreducible quotient which we denote by $\mathbb{H}_{\lambda_i,\widetilde{p}_i}$. 
	\begin{remark}
		If $T$
		is a point and we choose formal coordinates around $\widetilde{p}_i$, 
		we get an isomorphism of $\wfrg_{\widetilde{p}_i}$ with $\widehat{L}(\frg,\Gamma_i)=\widehat{L}(\frg,m_i)$. 
		Under this isomorphism $\mathbb{H}_{{\lambda}_i,\widetilde{p}_i}$ gets identified with $\mathcal{H}_{\lambda}(\frg,\Gamma_i)$. 
	\end{remark}
	
	Consider the sheaf of Lie algebras $$\wfrg_{{ p_i}}:=\left(\frg\otimes \big(\bigoplus\limits_{\wtilde{p}_i'\in pr^{-1}(p_i)}\mathcal{K}_{\widetilde{C}, \widetilde{p}_i'}\big)\right)^{\Gamma} \oplus \Ocal_T.c.$$ This is canonically identified with $\wfrg_{\widetilde{p}_i}$. Similarly consider the sheaf of Lie algebras $\wfrg_{\widetilde{C},{\bf{p}}}:=(\oplus_{i=1}^n\widehat{\frg}_{p_i})/\mathcal{Z}$, where $\mathcal{Z}$ is a subsheaf of $ \oplus_{i=1}^n \mathcal{O}_T.c$ consisting of tuples $(f_1,\dots, f_n)$ such that $f_1+\dots+f_n=0$. 
	\subsubsection{Sheaf of twisted  covacua}Let $\vec{\lambda}=(\lambda_1,\dots,\lambda_n)$
	be an $n$-tuple 
	of weights such 
	that each $\lambda_i \in P^{\ell}(\frg,\Gamma_i)$. 
	The sheaf $\mathbb{H}_{\vec{\lambda}}:=\mathbb{H}_{\lambda_1,\widetilde{p}_1}\otimes \dots \otimes \mathbb{H}_{\lambda_n,\widetilde{p}_n}$ of $\mathcal{O}_T$-modules is also a representation of the sheaf of Lie algebras $\wfrg_{\widetilde{C},{\bf p}}$. By the residue formula, we have a homomorphism  of sheaves of Lie algebras $(\frg\otimes\Ocal_{\tildeC}(*\Gamma\cdot\widetilde{\bf{p}}))^\Gamma\to \wfrg_{\widetilde{C},{\bf p}}$. 
	
	We define the {\em sheaf of twisted covacua} to be
	$$\mathbb{V}_{\vec{\lambda},\Gamma}(\widetilde{C},C, \widetilde{\bf{p}}, {\bf{p}}):=\mathbb{H}_{\vec{\lambda}}/(\frg\otimes \mathcal{O}_{\widetilde{C}}(*\Gamma\cdot\widetilde{\bf{p}}))^{\Gamma}\mathbb{H}_{\vec{\lambda}},$$ where $\Gamma\cdot\widetilde{\bf{p}}$ denotes the union of the $\Gamma$ orbits of $\widetilde{p}_i$ for $1\leq i\leq N$. Similarly we define the the {sheaf of vacua} as 
	$$\mathbb{V}^{\dagger}_{\vec{\lambda},\Gamma}(\widetilde{C},C, \widetilde{\bf{p}}, {\bf{p}}):=\{\langle \Psi| \in \mathbb{H}_{\vec{\lambda}}^{*}| \langle \Psi|X[f]=0, \ \mbox{for all} \ X \otimes f \in (\frg\otimes \mathcal{O}_{\widetilde{C}}(*\Gamma\cdot\widetilde{\bf{p}}))^{\Gamma}\}.$$
	We now recall some basic properties of the sheaf $\mathbb{V}_{\vec{\lambda},\Gamma}(\widetilde{C},C,\widetilde{\bf{p}}, {\bf p})$. 
	\subsubsection{Gauge Symmetry}
	If $(\widetilde{C}, C,\widetilde{\bf p}, {\bf p})$ is an admissible curve, the equation $$\langle \Psi| X[f]=\sum_{j=1}\langle \Psi | \rho_j(X\otimes f)=0,$$ where $\rho_j$ is the $j$-th component of the map $(\frg\otimes H^0(\widetilde{C},\mathcal{O}_{\widetilde{C}}(*\Gamma \cdot \widetilde{\bf p}))^{\Gamma}$ to $\widehat{\frg}_{\widetilde{C},\widetilde{\bf p}}$ will be referred to as the {\em Gauge Symmetry} or the {\em Gauge condition}.
	\subsection{Properties of twisted vacua}\label{sec:propertiesofvacua}We record some important properties of twisted Vacua that we will use. 
	
	\begin{proposition}
		The sheaves  $\mathbb{V}_{\vec{\lambda},\Gamma}(\widetilde{C},C,\widetilde{\bf{p}},{\bf{p}})$ and $\VOC$ are coherent $\mathcal{O}_T$-modules \cite{KH}, (\cite[Lemma 2.5.2]{Sorger} in the untwisted set up) which are compatible with base change.
		Moreover, like
		in the untwisted case, Hong-Kumar \cite{KH} (under the assumption that $\Gamma$ preserves a Borel subalgebra of $\frg$) show
		that the sheaf $\VOC$ is locally free and $\VOC$ and $\COV$ are dual to each 
		other. 
	\end{proposition}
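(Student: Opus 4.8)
The plan is to treat the three assertions separately, since coherence and duality are essentially formal consequences of the construction, while local freeness carries the analytic content and is where the Borel hypothesis is used.

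For coherence and base-change compatibility, I would first show that the sheaf of covacua is generated over $\Ocal_T$ by the image of the finite-dimensional subsheaf $V_{\vec{\lambda}}:=V_{\lambda_1}\otimes\cdots\otimes V_{\lambda_n}\subset\mathbb{H}_{\vec{\lambda}}$. The key geometric input is the assumption that $\tildeC\setminus\Gamma\cdot\widetilde{\bf p}$ is affine over $T$: by Riemann--Roch one can produce $\Gamma$-invariant sections of $\frg\otimes\Ocal_{\tildeC}(*\Gamma\cdot\widetilde{\bf p})$ with prescribed principal parts at the $\widetilde p_i$. Equipping each $\mathbb{H}_{\lambda_i}$ with its natural degree filtration, one uses the gauge symmetry to rewrite any vector of positive degree, modulo the gauge submodule, as a combination of strictly lower-degree vectors; a descending induction on degree then yields
$$\mathbb{H}_{\vec{\lambda}}=V_{\vec{\lambda}}+\big(\frg\otimes\Ocal_{\tildeC}(*\Gamma\cdot\widetilde{\bf p})\big)^{\Gamma}\mathbb{H}_{\vec{\lambda}}.$$
Hence $\COV$ is a quotient of a finite-rank free $\Ocal_T$-module and is therefore coherent. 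Since the defining quotient is right-exact and each constituent ($V_{\vec{\lambda}}$, the current algebra, and its action) is compatible with pullback, the formation of $\COV$ commutes with base change.

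For the duality, observe that by definition $\VOC$ is the subsheaf of $\mathbb{H}_{\vec{\lambda}}^{*}$ annihilated by $\big(\frg\otimes\Ocal_{\tildeC}(*\Gamma\cdot\widetilde{\bf p})\big)^{\Gamma}$, which is precisely $\Hom_{\Ocal_T}(\COV,\Ocal_T)$. Thus $\VOC=\COV^{\vee}$ tautologically, and once local freeness is established the induced pairing is perfect, so the two sheaves are mutually dual. For local freeness itself I would invoke the two structural theorems of Hong--Kumar \cite{KH}, valid under the hypothesis that $\Gamma$ preserves a Borel subalgebra: \emph{propagation of vacua} and \emph{factorization}. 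On the open locus of $\Gamma$-covers of smooth curves, the Sugawara construction furnishes a projectively flat (twisted Knizhnik--Zamolodchikov) connection, which locally trivializes the coherent sheaf $\COV$ and forces its rank to be locally constant there; factorization then identifies the fiber over a nodal cover with a direct sum of fibers over the normalization, propagating local constancy of rank across the boundary divisor. A coherent sheaf of locally constant rank over a reduced base is locally free, so $\COV$, and hence its dual $\VOC$, is locally free.

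The main obstacle is the local freeness. Coherence and duality follow formally from the construction, but local freeness genuinely requires both the projective connection and the factorization theorem as inputs, and it is exactly at this step that the hypothesis ``$\Gamma$ preserves a Borel subalgebra of $\frg$'' enters --- as flagged in Remark~\ref{rem:gammapreservesborel}, since Hong--Kumar's proofs of factorization and propagation of vacua rely on it. Accordingly, I would present the first two assertions with complete (if routine) arguments in the twisted setting, and for local freeness reduce to the cited results of \cite{KH}, noting that Sorger's argument \cite[Lemma 2.5.2]{Sorger} is the untwisted prototype.
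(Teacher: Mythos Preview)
The paper does not provide its own proof of this proposition; it is stated as a citation to Hong--Kumar \cite{KH} (and to \cite[Lemma 2.5.2]{Sorger} for the untwisted prototype), with no accompanying argument. Your sketch is therefore strictly more than what the paper offers, and it accurately reconstructs the standard line of reasoning one finds in those references: finite generation of covacua via Riemann--Roch on the affine complement plus a filtration argument, tautological duality, and local freeness from the projective connection on the smooth locus combined with factorization at the boundary. Your identification of where the Borel hypothesis enters (through the factorization and propagation results of \cite{KH}) is also consistent with the paper's own Remark on that assumption.
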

	
	We also refer the reader to \cite{szcz}, for a more general statement on the interior $\mathcal{M}_{g,n}^{\Gamma}$ of $\GMod$ in the setting of orbifold vertex algebras. 
	
	Let ${\bf{q}}=(q_1,\dots, q_m)$ be $m$ disjoint sections of 
	$\overline{\pi}: C\rightarrow T$ (also disjoint from $\bf{p}$) marking \'etale points of the $\Gamma$-cover $\tildeC\to C$ and let  $\widetilde{\bf{q}}$ be a choice of lifts to $\widetilde{C}$. This data endows the family $\widetilde{C}\rightarrow T$ as a $n+m$-pointed $\Gamma$-cover with monodromy data ${\bf{m}'}=({\bf{m}},1,\dots,1)$. 
	Then we have the following \cite{Dam17,KH,TUY:89} which is often referred to as {\em Propagation of Vacua}:
	\begin{proposition}\label{prop:propavacua}
		Let $\vec{0}=(0,\dots ,0)\in P_{\ell}(\frg)^n$ and assume that $\Gamma$ preserve a Borel subalgebra of $\frg$. Then there is a natural isomorphism between $\mathcal{O}_T$-modules $\COV\simeq \mathbb{V}_{\vec{\lambda}\sqcup\vec{0},\Gamma}(\widetilde{C},C,\widetilde{\bf{p}}\sqcup\widetilde{\bf{q}},{\bf{p}}\sqcup{\bf{q}})$. Moreover, these isomorphisms are compatible with each other for different choices of the \'etale points chosen. 
	\end{proposition}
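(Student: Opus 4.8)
The plan is to reduce immediately to the case $m=1$, i.e. the addition of a single étale point $q=q_1$ with chosen lift $\widetilde q$; the general statement then follows by attaching the vacua one point at a time and composing the resulting isomorphisms. Because $q$ is an \'etale point, its stabilizer in $\Gamma$ is trivial and $pr^{-1}(q)$ is a free $\Gamma$-orbit, so projecting onto the representative $\widetilde q$ identifies the local Lie algebra $\wfrg_{q}$ with the untwisted $\frg\otimes\mathcal{K}_{\widetilde C,\widetilde q}$ and identifies the module attached at $q$ with the ordinary level-$\ell$ vacuum module $\mathbb{H}_0:=\mathbb{H}_{0,\widetilde q}$; its cyclic vector $|0\rangle$ is annihilated by all of $(\frg\otimes\widehat{\mathcal O}_{\widetilde C,\widetilde q})$ since the attached fibre $V_0=\mathbb{C}$ is the trivial $\frg^{\langle 1\rangle}$-module. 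I would first define the $\mathcal O_T$-linear map $\iota\colon \mathbb{H}_{\vec\lambda}\to \mathbb{H}_{\vec\lambda}\otimes\mathbb{H}_0$, $v\mapsto v\otimes|0\rangle$, and check it descends to covacua: for $X\otimes f$ with $f\in H^0(\widetilde C,\mathcal O_{\widetilde C}(\ast\Gamma\cdot\widetilde{\bf p}))^{\Gamma}$ the function $f$ is regular at $\widetilde q$, so its component there acts by $0$ on $|0\rangle$ and $(X\otimes f)\cdot(v\otimes|0\rangle)=((X\otimes f)\cdot v)\otimes|0\rangle$. Hence $\iota$ carries the gauge subspace into the gauge subspace and induces $\overline\iota\colon\COV\to \mathbb{V}_{\vec\lambda\sqcup 0,\Gamma}(\widetilde C,C,\widetilde{\bf p}\sqcup\widetilde q,{\bf p}\sqcup q)$.

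For surjectivity, the key technical input is a principal-part lemma: since $\widetilde q\in \widetilde C\setminus\Gamma\cdot\widetilde{\bf p}$, each component meets $\widetilde{\bf p}$, and $\widetilde C\setminus\Gamma\cdot\widetilde{\bf p}$ is affine, the map sending a $\Gamma$-invariant section in $(\frg\otimes H^0(\widetilde C,\mathcal O_{\widetilde C}(\ast\Gamma\cdot(\widetilde{\bf p}\sqcup\widetilde q))))^{\Gamma}$ to its Laurent principal part at $\widetilde q$ (in the identification above) is surjective onto principal parts of any bounded order. Given $X[-n]$ I would choose such an invariant $f$ with principal part $t^{-n}$ at $\widetilde q$, say with regular part $\sum_{j\ge 0}a_j t^{j}$; the gauge relation then gives, modulo the gauge subspace,
$$v\otimes X[-n]w \equiv -\,\big((X\otimes f)\cdot v\big)\otimes w-\sum_{j\ge 0}a_j\, v\otimes X[j]w,$$
where every term $X[j]w$ with $j\ge 0$ involves strictly fewer negative modes than $X[-n]w$. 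Since $\mathbb{H}_0$ is generated from $|0\rangle$ by negative modes, induction on the number of negative modes shows every class in $\mathbb{V}_{\vec\lambda\sqcup 0,\Gamma}$ is represented by an element of $\mathbb{H}_{\vec\lambda}\otimes|0\rangle$, so $\overline\iota$ is surjective.

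Injectivity is the \emph{hard part}. Dualizing, $\overline\iota$ is the transpose of the restriction map $\overline\iota^{\,*}\colon\mathbb{V}^{\dagger}_{\vec\lambda\sqcup 0,\Gamma}\to \mathbb{V}^{\dagger}_{\vec\lambda,\Gamma}$ on sheaves of vacua, $\langle\Psi|\mapsto\langle\Psi|\circ\iota$; surjectivity of $\overline\iota$ already gives injectivity of $\overline\iota^{\,*}$, and what remains is to show $\overline\iota^{\,*}$ is surjective, i.e. that every conformal block $\langle\psi|$ on $\mathbb{H}_{\vec\lambda}$ extends to a gauge-invariant functional $\langle\Psi|$ on $\mathbb{H}_{\vec\lambda}\otimes\mathbb{H}_0$ with $\langle\Psi|(v\otimes|0\rangle)=\langle\psi|(v)$. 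I would define $\langle\Psi|$ by the recursion forced by the gauge conditions (the displayed identity now read as determining $\langle\Psi|$ on $v\otimes X[-n]w$ from its values on fewer negative modes), and the main obstacle is precisely the well-definedness of this recursion: one must verify that the result is independent of the chosen invariant functions $f$ and of the order in which negative modes are removed, and that the extension is annihilated by the full gauge algebra of the enlarged cover. Independence of $f$ reduces to the fact that two invariant sections with equal principal part at $\widetilde q$ differ by a section regular at $\widetilde q$, which acts as a gauge symmetry on the $\widetilde{\bf p}$-factors; consistency of the removals reduces to the twisted affine commutation relations (with the residue/cocycle terms cancelling because the total residue of a global differential vanishes). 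Here the hypothesis that $\Gamma$ preserves a Borel enters, through the Hong--Kumar vanishing and local-freeness results already invoked in the previous Proposition, to supply the surjectivity and coherence needed for the recursion to close up.

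Finally, since the modules $\mathbb{H}_{\vec\lambda}$, $\mathbb{H}_0$ and the gauge sheaves are $\mathcal O_T$-modules compatible with base change, $\overline\iota$ is a morphism of coherent $\mathcal O_T$-modules; combined with the local freeness of the previous Proposition it then suffices to check that $\overline\iota$ is an isomorphism fibrewise, which is the content of the preceding two paragraphs. The asserted compatibility for different choices of \'etale points follows because each $\overline\iota$ is induced by the single canonical assignment $v\mapsto v\otimes|0\rangle^{\otimes m}$ with $|0\rangle$ canonical, so comparing the maps attached to $\mathbf q$ and $\mathbf q'$ (or to a common enlargement) yields the required commuting triangles.
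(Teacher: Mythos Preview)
The paper does not supply its own proof of this proposition; it is stated with a citation to \cite{Dam17,KH,TUY:89} and a remark that only the \'etale version is needed. Your argument is precisely the standard TUY-style proof carried over to the twisted setting, and is correct in outline: the insertion-of-vacuum map $v\mapsto v\otimes|0\rangle$, the affineness-based principal-part lemma for surjectivity, and the dual extension argument for injectivity are exactly what the cited references do.

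Two small points are worth tightening. First, your invocation of the Borel-preserving hypothesis is misplaced: at an \'etale point the local picture is the untwisted affine algebra and the untwisted vacuum module, so the entire argument you wrote is literally the TUY argument and needs no twisted input beyond the $\Gamma$-invariance of the gauge algebra. The hypothesis is carried in the statement because the paper imports the result from Hong--Kumar, who prove it in greater generality (allowing ramified insertion points), but for the \'etale case you are treating it plays no role in the recursion; the authors in fact drafted a remark to this effect (visible in the source as a commented-out passage). Second, the final paragraph's appeal to local freeness plus fibrewise isomorphism is unnecessary and, depending on how one reads the logical dependencies in \cite{KH}, potentially circular: in the TUY architecture propagation of vacua precedes local freeness. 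Your surjectivity and injectivity arguments already work uniformly over $T$ (the principal-part lemma holds relatively because the complement is affine over $T$), so you can and should conclude the sheaf isomorphism directly, without invoking local freeness.
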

	\begin{remark}
		We only need ``Propagation of Vacua" along the \'etale points, since by assumption all the ramifications points are already marked. In Hong-Kumar\cite{KH}, Proposition \ref{prop:propavacua} is proved when the extra points ${\bf q}$ are not necessarily \'etale which is a stronger version than the one stated in Proposition \ref{prop:propavacua}. However it must be pointed out that $0$ may not be a weight in $P^{\ell}(\frg,\gamma)$ for all $\ell$. 
	\end{remark}
	\subsubsection{Descent data}\label{sec:descentdata} Following the discussion in \cite{Fakhruddin:12}, we use Proposition \ref{prop:propavacua} to  drop the condition that $\widetilde{C}\setminus \Gamma\cdot\widetilde{\bf{p}}(T)$ is affine. Let $\widetilde{C}\rightarrow T$ be a family satisfying conditions 1-4 in Section \ref{enu:pointedcurves}. 
	We can find an \'etale cover $T'$  of $T$ and $m$-sections marking \'etale 
	points $\widetilde{\bf{q}}$ (respectively $\bf{q}$) of the induced family $\widetilde{C}'$ such that it 
	satisfies all the conditions in Section \ref{enu:pointedcurves}. Thus we can associate a sheaf
	$\mathbb{V}_{\vec{\lambda}\sqcup\vec{0},\Gamma}(\widetilde{C},C,\widetilde{\bf{p}}\sqcup\widetilde{\bf{q}},{\bf{p}}\sqcup{\bf{q}})$ on $T'$. 
	We can define $\COV$ to be the natural descent of $\mathbb{V}_{\vec{\lambda}\sqcup\vec{0},\Gamma}(\widetilde{C},C,\widetilde{\bf{p}}\sqcup\widetilde{\bf{q}},{\bf{p}}\sqcup{\bf{q}})$ given by Proposition \ref{prop:propavacua}. The same discussion (see Proposition 2.1) in \cite{Fakhruddin:12}, tells us that the $\COV$ is independent of the choice of the \'etale cover of $T'$ of $T$. Thus, we get a well defined sheaf of covacua $\COVx(\widetilde{C},C, \widetilde{\bf p}, {\bf p})$ on the moduli stack $\overline{\mathcal{M}}{}^{\Gamma}_{g,n}({\bf{m}})$ of $n$-pointed admissible covers defined in \cite{JKK} (see Appendix \ref{ap:modstackofadmcovers} for the definition of $\GMod({\bf m})$). 
	\subsubsection{Factorization}
	
	Let $\widetilde{C}\rightarrow C\rightarrow T$ be a family of stable $n+2$ pointed stable covers with group $\Gamma$. Let $\widetilde{\bf{p}}'=(\widetilde{\bf{p}},\tildeq_1,\tildeq_2)$ be  the $n+2$ sections such that the monodromies around the sections $\tildeq_1$ and $\tildeq_2$ are inverse to each other, say $\gamma$ and $\gamma^{-1}$ respectively. Then identifying the family $\widetilde{C}$ along the sections $\tildeq_1$ and $\tildeq_2$, we get a new family  of stable $n$-pointed cover of curves $\widetilde{D}\rightarrow T$ with group $\Gamma$ along with  $n$-sections $\widetilde{\bf{p}}$. Assume that ``$\Gamma$ preserves a Borel subalgebra of $\frg$". Then by the factorization theorem in \cite{KH}, we have the following isomorphisms of locally free sheaves on $T$.
	\begin{proposition}\label{prop:factorizationofhongkumar}
		$$\mathbb{V}_{\vec{\lambda},\Gamma}(\widetilde{D},D, \widetilde{\bf{p}},{\bf{p}})\simeq \bigoplus_{\mu \in P^{\ell}(\frg,\gamma)}\mathbb{V}_{\vec{\lambda}\sqcup\{{\mu},{\mu^{*}}\},\Gamma}(\widetilde{C},C,\widetilde{\bf{p}}',{\bf{p}'}).$$
		
	\end{proposition}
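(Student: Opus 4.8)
The plan is to prove this fiberwise and then spread the isomorphism over $T$. By the duality of $\mathbb{V}_{\vec{\lambda},\Gamma}$ and $\mathbb{V}^{\dagger}_{\vec{\lambda},\Gamma}$ and the coherence/base-change compatibility recorded just above, and since under the hypothesis ``$\Gamma$ preserves a Borel'' both sides are locally free, it suffices to produce a canonical morphism that is an isomorphism on each geometric fiber. So first I would reduce to the case where $T$ is a point and $\widetilde{D}$ is a single nodal admissible $\Gamma$-cover with one node $q$ whose normalization is $\widetilde{C}$, the two preimages of $q$ being $\tildeq_1,\tildeq_2$ carrying the inverse monodromies $\gamma,\gamma^{-1}$.

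Next I would construct the comparison map at the level of the dual spaces of vacua $\mathbb{V}^{\dagger}_{\vec{\lambda},\Gamma}$ (and then transpose). The normalization $\widetilde{C}\to\widetilde{D}$ identifies $(\frg\otimes H^0(\widetilde{D},\Ocal(*\Gamma\cdot\widetilde{\bf{p}})))^{\Gamma}$ with the $\Gamma$-invariant $\frg$-valued functions on $\widetilde{C}$, regular away from $\widetilde{\bf{p}},\tildeq_1,\tildeq_2$, whose values at $\tildeq_1$ and $\tildeq_2$ match across the node. The representation-theoretic input at $q$ is the canonical pairing: for $\mu\in P^{\ell}(\frg,\gamma)$ the module $\mathcal{H}_{\mu^*}(\frg,\gamma^{-1})$ is the restricted dual of $\mathcal{H}_\mu(\frg,\gamma)$, and the sewing element $\sum_i v_i\otimes v^i\in\mathcal{H}_\mu\otimes\mathcal{H}_{\mu^*}$, organized by $L_0$-weight, is invariant under the diagonal action of the twisted loop algebra attached to $q$. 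Inserting this element produces, for each $\mu$, a map of vacua, and summing over $\mu$ gives the candidate isomorphism; the matching-values condition together with the gauge relation guarantees compatibility with the gauge actions.

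Then I would prove the map is an isomorphism by the two standard steps. Surjectivity: filter $\mathcal{H}_{\vec{\lambda}}$ by the order of pole at the node and use the gauge relations to show every covacuum on $\widetilde{D}$ is accounted for, the leading terms being governed by the horizontal $\frg^\gamma$-representations $V_\mu\subset\mathcal{H}_\mu$; here the Borel assumption is what makes the twisted current-algebra arguments of Hong--Kumar (the twisted analogue of \cite[\S6]{TUY:89}) go through. For injectivity and for the directness of the sum I would use the sewing construction: pass to the versal smoothing of the node with parameter $t$, build vacua on the nearby smooth fibers by the $\mu$-graded contraction $\sum_d t^d(\text{contraction on the $L_0$-weight-$d$ piece})$, and use the $L_0$-eigenvalue grading to separate the contributions of distinct $\mu$, invoking propagation of vacua (Proposition \ref{prop:propavacua}) to organize the limit $t\to 0$.

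The main obstacle is the twisting at the node. Unlike the untwisted TUY situation, the two branches at $q$ carry $\gamma$ and $\gamma^{-1}$, so the local model is the twisted affine algebra $\widehat{L}(\frg,\gamma)$ rather than an untwisted loop algebra; I would need to verify that the canonical pairing $\mathcal{H}_\mu\otimes\mathcal{H}_{\mu^*}\to\CBbb$ is equivariant for the \emph{twisted} currents and that the sewing is compatible with the eigen-decomposition $\frg=\oplus_i\frg_i$ and the fractional powers of the local parameter. This is precisely the content of Hong--Kumar's factorization theorem under the hypothesis that $\Gamma$ preserves a Borel subalgebra, which also supplies the local freeness needed to pass from the fiberwise isomorphism to the asserted isomorphism of sheaves on $T$; accordingly I would ultimately invoke \cite{KH} for this technical heart.
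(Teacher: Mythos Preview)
The paper does not give its own proof of this proposition: it is stated as a direct quotation of the factorization theorem of Hong--Kumar \cite{KH} under the hypothesis that $\Gamma$ preserves a Borel, with no argument beyond the citation. Your proposal is therefore more than what the paper does, not less: you sketch the TUY-style argument (sewing element, gauge symmetry via the normalization, $L_0$-grading to separate the $\mu$-summands) adapted to the twisted local model $\widehat{L}(\frg,\gamma)$ at the node, and then, appropriately, defer the technical heart to \cite{KH}. That is exactly the right shape; indeed the paper itself later revisits precisely this sewing construction in Section~\ref{sec:proofofconfgammafunc} (see the discussion around Proposition~\ref{prop:importantflateness} and Theorem~\ref{thm:specializationfunctorexplicit}) when upgrading factorization to an isomorphism of twisted $\mathcal{D}$-modules. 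One small comment: you phrase the map as ``inserting the sewing element'' and then proving surjectivity/injectivity, while in Hong--Kumar and in the paper's later use the map goes the other way on covacua (contracting with the bilinear form $(\ |\ )_\mu$), but this is just a matter of which duality direction you write down and does not affect correctness.
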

	\subsubsection{Global Properties} We continue to assume that the group $\Gamma$ preserves a Borel subalgebra of $\frg$. The bundles of twisted covacua are compatible with natural morphisms on $\overline{\mathcal{M}}{}^{\Gamma}_{g,n}({\bf{m}})$ which we state below. 
	
	\begin{proposition}\label{prop:importantprop}Let $\vec{\lambda}$ be an $n$-tuple
		of weights corresponding to $\mathbf{m}$ and let $\COVx$ denote the sheaf of twisted covacua on $\overline{\mathcal{M}}{}^{\Gamma}_{g,n}({\bf{m}})$, where ${\bf{m}}=(m_1,\dots,m_n)\in \Gamma^n$. Then the following holds: 
		\begin{enumerate}
			\item Assume that for some $i$, we have  $m_i=1$, $\lambda_i=0$ and that $(g,n-1)$ is a stable pair, i.e. $2g-2+(n-1)>0$.
			Consider the natural forgetful stabilization map $f_i: \overline{\mathcal{M}}{}^{\Gamma}_{g,n}({\bf{m}})\rightarrow \overline{\mathcal{M}}{}^{\Gamma}_{g,n-1}({\bf{m}'})$
			obtained by
			forgetting the point $\widetilde{p}_i$ (and contracting any unstable component), then
			there is a natural isomorphism
			$$\COVx\simeq f_i^*\mathbb{V}_{\vec{\lambda}',\Gamma},$$ where $\vec{\lambda}'$ (respectively ${\bf{m}'}$) 
			is obtained by 
			deleting $\lambda_i=0$ 
			(respectively $m_i=1$) from $\vec{\lambda}$ (respectively ${\bf{m}}$).
			\item Let $\xi_{1,2,\gamma}: \overline{\mathcal{M}}{}^{\Gamma}_{g_1,n_1+1}({\bf{m}}_1,\gamma) \times \overline{\mathcal{M}}{}^{\Gamma}_{g_2,n_2+1}({\bf{m}}_2,\gamma^{-1})\rightarrow  \overline{\mathcal{M}}{}^{\Gamma}_{g_1+g_2,n_1+n_2}({\bf{m_1}},{\bf{m}_2})$ be 
			the morphism obtained by gluing two curves of genus $g_1$ and $g_2$ along the last marked point with monodromy $\gamma$ and $\gamma^{-1}$, then there is a natural isomorphism
			$$\xi_{1,2,\gamma}^*\COVx\simeq \bigoplus_{\mu\in P^{\ell}(\frg,\gamma)}
			\mathbb{V}_{\vec{\lambda}',\Gamma}
			\boxtimes \mathbb{V}_{\vec{\lambda}'',\Gamma}
			,$$
			where $\vec{\lambda}'=(\lambda_1,\dots,\lambda_{n_1},\mu)$ and $\vec{\lambda}''=(\lambda_{n_1+1},\dots,\lambda_{n_2},\mu^{*})$.
			
			\item Let $\xi_{\gamma}: \overline{\mathcal{M}}{}^{\Gamma}_{g-1,n+2}({\bf{m}},\gamma,\gamma^{-1})
			\rightarrow\overline{\mathcal{M}}{}^{\Gamma}_{g,n}({\bf{m}})$ be the morphism obtained by gluing a curve along two points with opposite monodromies. Then there is a canonical isomorphism
			$$\xi_{\gamma}^*\COVx\simeq \bigoplus_{\mu\in P^{\ell}(\frg,\gamma)}	\mathbb{V}_{\vec{\lambda}\sqcup \{\mu,\mu^*\},\Gamma}.$$ 
		\end{enumerate}
		Moreover, these isomorphisms induced by $\xi_{1,2,\gamma}$, $f_i$ and $\xi_{\gamma}$ are compatible with each other.
	\end{proposition}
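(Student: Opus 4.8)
The plan is to reduce all three statements to the corresponding assertions for the \emph{universal} family of admissible $\Gamma$-covers over the relevant moduli stacks, and then to invoke propagation of vacua (Proposition \ref{prop:propavacua}) for part (1) and the factorization theorem (Proposition \ref{prop:factorizationofhongkumar}) for parts (2) and (3). The essential mechanism throughout is that the sheaf of covacua is compatible with base change (as recorded just above), so that the formation of $\COVx$ commutes with pulling back a family of pointed $\Gamma$-curves along a morphism of base stacks. Thus in each case it suffices to identify the family obtained by pulling back the universal curve along the morphism in question with a family to which the relevant local statement applies, and then to transport the local isomorphism back via base change.

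For part (1), pulling back the universal curve over $\overline{\mathcal{M}}{}^{\Gamma}_{g,n-1}({\bf{m}}')$ along $f_i$ yields, after adjoining the $i$-th section, exactly the universal curve over $\overline{\mathcal{M}}{}^{\Gamma}_{g,n}({\bf{m}})$; here the extra section $\widetilde{p}_i$ carries trivial monodromy $m_i=1$ and hence marks an \'etale point of the cover, while its attached weight is $\lambda_i=0$. Propagation of vacua (Proposition \ref{prop:propavacua}) then supplies a natural isomorphism $\COVx\simeq f_i^{*}\mathbb{V}_{\vec{\lambda}',\Gamma}$, the stabilization being harmless since $0$ is propagated along an \'etale point and the descent construction of Section \ref{sec:descentdata} is unaffected by contracting an unstable rational component carrying only the forgotten point.

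For parts (2) and (3), the key geometric input is that the boundary morphisms $\xi_{1,2,\gamma}$ and $\xi_{\gamma}$ are precisely the clutching maps for the stack of admissible covers: pulling back the universal curve along $\xi_{1,2,\gamma}$ (respectively $\xi_{\gamma}$) produces the nodal family obtained by identifying the two distinguished sections, whose monodromies are the opposite elements $\gamma$ and $\gamma^{-1}$. Applying the factorization theorem (Proposition \ref{prop:factorizationofhongkumar}) to this universal nodal family yields the direct-sum decomposition indexed by $\mu\in P^{\ell}(\frg,\gamma)$, with the dual weight $\mu^{*}$ attached to the branch of monodromy $\gamma^{-1}$. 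In the separating case (2) the glued family has two connected components, so the local factor splits as an external tensor product, producing the $\boxtimes$; in the non-separating case (3) the same computation on a single self-glued family produces the stated sum over $\mu$.

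The remaining assertion --- mutual compatibility of the isomorphisms induced by $f_i$, $\xi_{1,2,\gamma}$ and $\xi_{\gamma}$ --- is where the real work lies, and I expect it to be the main obstacle. Because each isomorphism is ultimately induced by a canonical map on the underlying modules $\mathbb{H}_{\vec{\lambda}}$ (propagation via insertion of the vacuum vector, factorization via the dual pairing between $\mathcal{H}_{\mu}(\frg,\gamma)$ and $\mathcal{H}_{\mu^{*}}(\frg,\gamma^{-1})$), the compatibilities reduce, by base change and the local nature of the constructions, to commutativity of explicit diagrams of such maps. When the operations act on disjoint parts of the data --- a forgotten \'etale point versus a node, or two nodes supported on disjoint loci --- commutativity is immediate. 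The genuinely delicate point is the interaction of two factorization isomorphisms, which requires associativity of the factorization pairing together with careful bookkeeping of the monodromy matching $\gamma\leftrightarrow\gamma^{-1}$ and of the choices of lifts and formal parameters at the nodes; one checks that the canonical bases $\{\mu,\mu^{*}\}$ used at the two nodes may be chosen coherently so that the two resulting decompositions agree, exactly as in the untwisted setting.
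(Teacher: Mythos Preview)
Your treatment of parts (2) and (3) matches the paper: both follow directly from the factorization theorem (Proposition \ref{prop:factorizationofhongkumar}) applied to the universal nodal family, together with base change.

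The gap is in part (1). Your claim that ``pulling back the universal curve over $\overline{\mathcal{M}}{}^{\Gamma}_{g,n-1}({\bf{m}}')$ along $f_i$ yields, after adjoining the $i$-th section, exactly the universal curve over $\overline{\mathcal{M}}{}^{\Gamma}_{g,n}({\bf{m}})$'' is false over the locus where stabilization actually contracts a component. At a point where the forgotten $i$-th marking lies on an unstable rational component, the fiber of the pulled-back family is the \emph{contracted} curve, not the original one; the two families of curves are genuinely different there. Propagation of vacua (Proposition \ref{prop:propavacua}) only compares covacua for the \emph{same} underlying curve with an extra \'etale marked point, so it does not by itself bridge this discrepancy. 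Calling the stabilization ``harmless'' and appealing to the descent construction of Section \ref{sec:descentdata} does not address this: that section concerns gluing covacua over an \'etale cover of the base, not comparing covacua attached to different curves.

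The paper handles this (following Fakhruddin) by analyzing the two possible contraction cases explicitly. In each case the original curve has an extra rational component meeting the rest at one or two nodes; applying factorization (Proposition \ref{prop:factorizationofhongkumar}) across those nodes splits off the contribution of this component, and Lemma \ref{lem:linebundlecase} shows that the resulting $3$-pointed genus-zero block $\mathcal{V}_{\lambda,\lambda^*,0,\Gamma}(\widetilde{C},\mathbb{P}^1,\widetilde{{\bf p}},{\bf p})$ is one-dimensional. This one-dimensionality is what collapses the factorization sum to a single term and produces the desired isomorphism over the contraction locus. So the missing ingredient in your argument is precisely this $3$-point computation, and without it the boundary case of part (1) is not established.
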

	\begin{proof}The second and third part of the proposition follows directly from Proposition \ref{prop:factorizationofhongkumar}. We now discuss the proof of the first part which is similar to the discussion in Section 2.2 of \cite{Fakhruddin:12}. 
		
		Let $\widetilde{C}\rightarrow T$ be a family of stable $n+1$-pointed covers with group $\Gamma$ and $\widetilde{\bf{p}}$ are the sections. Assume that the points in the fibers of $\widetilde{C}\rightarrow T$ marked by the  $n+1$-th section in $\widetilde{\bf{p}}$ are always \'etale.  Let $\vec{\lambda}=(\lambda_1,\dots,\lambda_{n+1})$ be such that $\lambda_{n+1}=0$ in $P_{\ell}(\frg)$. Then forgetting the $n+1$-th section, gives a family of $n$-pointed $\Gamma$ covers which may not be stable. To make the new family stable, we have to contract unstable components to a point. Hence two situations can occur. 
		
		First we can contract a rational curve that has two marked points  and $p_{n}$ and $p_{n+1}$ and meets the other component $E$ at a nodal point $q$. In the case after stabilization we obtain the curve $E$ is smooth at $q$ and we declare $q$ to be the $n$-th marked point $p_n$. 
		Secondly, we can contract a rational component which has one marked point $p_{n+1}$ and meets the other components $E_1$ and $E_2$ at two distinct nodal points $q_1$ and $q_2$. In this case, after stabilization we get a curve obtained by joining $E_1$ and $E_2$ by identifying $q_1$  with $q_2$. We refer the reader to Section 2 in \cite{JKK} for more details on these stabilization morphisms. Now in both these cases, part (1) of Proposition \ref{prop:importantprop} follows from Proposition \ref{prop:factorizationofhongkumar} and Lemma \ref{lem:linebundlecase}.
	\end{proof}
	
	\begin{lemma}\label{lem:linebundlecase}Let ${\bf m}=(\gamma,\gamma^{-1},1)\in \Gamma^{3}$ 
		and $\vec{\lambda}=(\lambda,\lambda^*,0)$, where $\lambda \in P^{\ell}(\frg,\gamma)$. Assume that the marked points $\widetilde{\bf{p}}$ are in the same connected component of $\widetilde{C}$. Then the fibers of the vector bundle $\mathbb{V}_{\vec{\lambda},\Gamma}(\widetilde{C},\mathbb{P}^1,\widetilde{{\bf p}},{\bf p})$ restricted to such points are one dimensional. 
	\end{lemma}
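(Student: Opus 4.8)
The plan is to reduce the three-pointed statement to a two-pointed computation on $\mathbb{P}^1$ and then to carry out, in the twisted setting, the classical Tsuchiya--Ueno--Yamada identification of a two-point block with a space of $\frg^{\gamma}$-(co)invariants. First I would use that the third marked point is étale (its monodromy is $1$) and carries the trivial weight $0\in P_{\ell}(\frg)$, so that Proposition~\ref{prop:propavacua} (propagation of vacua) furnishes a canonical isomorphism $\mathbb{V}_{(\lambda,\lambda^*,0),\Gamma}(\widetilde{C},\mathbb{P}^1,\widetilde{\mathbf{p}},\mathbf{p})\simeq \mathbb{V}_{(\lambda,\lambda^*),\Gamma}(\widetilde{C},\mathbb{P}^1,\widetilde{\mathbf{p}}',\mathbf{p}')$, where $\widetilde{\mathbf{p}}'$ retains only the two ramified points with monodromies $\gamma,\gamma^{-1}$. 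Although the underlying two-pointed genus-zero curve is unstable, the coordinate-free space of covacua of Section~\ref{sec:deftwistconf} is still defined for this affine configuration (as $\mathbb{P}^1$ minus the two branch images is affine), so it suffices to prove that this two-point space is one-dimensional at each such point.

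For the two-point computation I would exploit the hypothesis that the points $\widetilde{\mathbf{p}}$ lie in the same connected component $\widetilde{C}_0$ of $\widetilde{C}$. On $\widetilde{C}_0$ the cover is totally ramified of degree $|\gamma|$ over the two branch points, so, choosing a coordinate $w$ on $\widetilde{C}_0$ with $w^{|\gamma|}$ pulled back from $\mathbb{P}^1$, the $\Gamma$-invariant meromorphic functions regular off $\Gamma\cdot\widetilde{\mathbf{p}}$ realize precisely the global form $(\frg\otimes\mathbb{C}[w,w^{-1}])^{\langle\gamma\rangle}$ of the twisted loop algebra, acting on $\mathcal{H}_{\lambda}(\frg,\gamma)\otimes\mathcal{H}_{\lambda^*}(\frg,\gamma^{-1})$ through the two insertions. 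The key step is the \emph{reduction to degree zero}: using the gauge symmetry of Section~\ref{sec:deftwistconf} with currents $X\otimes f$, $X\in\frg_j$, whose expansions have a pole at one branch point and a zero at the other, I would show that the composite $V_{\lambda}\otimes V_{\lambda^*}\hookrightarrow\mathcal{H}_{\lambda}(\frg,\gamma)\otimes\mathcal{H}_{\lambda^*}(\frg,\gamma^{-1})\twoheadrightarrow\mathcal{V}_{(\lambda,\lambda^*),\Gamma}$ of the finite-dimensional top pieces is surjective, exactly as in the untwisted case but organized according to the eigenspace grading $\frg=\bigoplus_{j}\frg_{j}$ for $\langle\gamma\rangle$.

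Granting this surjectivity, the only gauge relations preserving the top piece come from the constant invariant function $1$, that is, from the diagonal action of $\frg^{\gamma}=\frg_{0}$ (every nonconstant invariant function produces a negative mode at one insertion and so maps $V_{\lambda}\otimes V_{\lambda^*}$ out of the top piece). Hence $\mathcal{V}_{(\lambda,\lambda^*),\Gamma}$ is a quotient of the coinvariants $(V_{\lambda}\otimes V_{\lambda^*})_{\frg^{\gamma}}$, whose dimension equals $\dim\Hom_{\frg^{\gamma}}(V_{\lambda}\otimes V_{\lambda^*},\mathbb{C})$. Since $\lambda^*$ is by definition the dual weight, $V_{\lambda^*}\cong V_{\lambda}^{*}$ as $\frg^{\gamma}$-modules, so by Schur's lemma $\Hom_{\frg^{\gamma}}(V_{\lambda}\otimes V_{\lambda^*},\mathbb{C})\cong\Hom_{\frg^{\gamma}}(V_{\lambda},V_{\lambda})$ is one-dimensional, giving $\dim\mathcal{V}_{(\lambda,\lambda^*),\Gamma}\le 1$. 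For the reverse inequality I would produce a nonzero element of the dual space $\mathbb{V}^{\dagger}_{(\lambda,\lambda^*),\Gamma}$ from the contragredient (Shapovalov-type) pairing $\mathcal{H}_{\lambda}(\frg,\gamma)\otimes\mathcal{H}_{\lambda^*}(\frg,\gamma^{-1})\to\mathbb{C}$, whose invariance under the global twisted loop algebra is exactly the gauge condition; nondegeneracy of this pairing on the top pieces shows it is nonzero, so $\dim=1$.

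The main obstacle is the reduction-to-degree-zero step in the twisted setting: one must produce, for each homogeneous component $\frg_{j}$, enough $\langle\gamma\rangle$-invariant meromorphic functions on $\widetilde{C}_{0}$ with prescribed principal part at one branch point and a zero of sufficiently high order at the other, so as to express every lowering operator of $\widehat{L}(\frg,\gamma)$ modulo the gauge subspace. This is where the Kac--Moody realization of Proposition~\ref{prop:kmreal} and the ``same connected component'' hypothesis enter: the latter guarantees that a single global invariant function couples both insertions, without which the two-point pairing — and hence the argument for both the surjectivity and the nonvanishing — would degenerate. Once the lemma is in place, it feeds back into the proof of the first part of Proposition~\ref{prop:importantprop}.
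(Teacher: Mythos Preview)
Your proposal is correct and follows essentially the same approach as the paper. The only organizational difference is that you first remove the third (étale, trivially weighted) point via propagation of vacua and then carry out the reduction-to-degree-zero argument for the resulting two-point block, whereas the paper keeps all three points and packages the embedding into $\frg^{\gamma}$-invariants as the separate $n$-point statement Proposition~\ref{prop:embeddingintopone} (whose proof is precisely the gauge-symmetry argument you sketch, with the same explicit $\langle\gamma\rangle$-invariant functions $z^k/(z^{|\gamma|}-\widetilde{q}^{|\gamma|})^M$). Your lower bound via the contragredient pairing is exactly the paper's appeal to the bilinear form $(\ |\ )_\lambda$ of Equations~\eqref{eqn:bilform}--\eqref{eqn:bilo}, and your reduction to the connected component $\widetilde{C}_0$ with the cyclic action of $\langle\gamma\rangle$ is the paper's opening observation that one may assume $\widetilde{C}=\mathbb{P}^1$ and $\Gamma=\langle\gamma\rangle$.
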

	\begin{proof}From the assumption it suffices to assume  that $\widetilde{C}=\mathbb{P}^1$ and $\Gamma=\langle \gamma \rangle$. Let ${\mathbb{P}^1}\rightarrow \mathbb{P}^1$ be a $\Gamma$-cover with $n$ marked points. Assume that $n-2$ of the marked points are \'etale. In this set up, by Proposition \ref{prop:embeddingintopone} , the fibers of $\mathbb{V}^{\dagger}_{\vec{\lambda},\Gamma}(\mathbb{P}^1, \mathbb{P}^1, \widetilde{{\bf p}}, {\bf p})$ of the twisted conformal bundle embeds in  $\operatorname{Hom}_{\frg^{\Gamma}}(V_{\lambda_1}\otimes\dots \otimes  V_{\lambda_n}, \mathbb{C})$ where $V_{\lambda}$ is the irreducible $\frg^{m_i}$-module  of highest weight $\lambda$ and  $m_i$ is the monodromy around the point $\widetilde{p}_i$. 
		
		Since $\operatorname{Hom}_{\frg^{\gamma}}(V_{\lambda}\otimes V_{\lambda^*})$ is one dimensional,  this applied to the situation of the lemma give us that the rank of $\mathbb{V}_{\vec{\lambda},\Gamma}(\widetilde{C},\mathbb{P}^1,\widetilde{{\bf p}},{\bf p})$ is at most one dimensional. 
		Now the result follows from the fact the one dimensional space of $\frg^{\gamma}$-invariants of $V_{\lambda}\otimes V_{\lambda^*}$ extends (Equation \eqref{eqn:bilform}) the bilinear form $(\ |\ )_{\lambda}$ on $\mathcal{H}_{\lambda}(\frg,\gamma)\otimes \mathcal{H}_{\lambda^*}(\frg,\gamma^{-1})$ as an element (Equation \eqref{eqn:bilo}) of the twisted conformal block
	\end{proof}
	Let $z$ be a global coordinate on $\mathbb{C}$ and assume that $\mathbb{P}^1=\mathbb{C}\cup \{\infty\}$. Observe that $w=1/z$ is a local coordinate at the point $\infty$ and $\xi(\widetilde{q})=z-\widetilde{q}$ is a local coordinate at any other point $\widetilde{q}\in \mathbb{C}\{0\}$. The proof of the following proposition is analogous to Proposition 6.1 in \cite{U}. We include a proof for completeness 
	\begin{proposition}\label{prop:embeddingintopone}Let $\Gamma=\langle \gamma\rangle$ be a cyclic group of order $N$ and $\mathbb{P}^1\rightarrow \mathbb{P}^1$ be an admissible cover with Galois group $\Gamma$ and two ramification points $0$ and $\infty$ of order $N$ and monodromy $\gamma$ and $\gamma^{-1}$ respectively. Moreover assume that $\widetilde{q}_1,\dots, \widetilde{q}_n$ are distinct \'etale points on $\mathbb{C}^{\times}$ with non-intersecting $\Gamma$-orbits, then the twisted conformal blocks at any level injects into the space of $\frg^{\gamma}$ invariants:
		$$\mathcal{V}_{\vec\lambda,\mu_1,\mu_2,\Gamma}(\mathbb{P}^1,\mathbb{P}^1, \widetilde{\bf p}, {\bf p})\hookrightarrow \operatorname{Hom}_{\frg^{\gamma}}(V_{\vec{\lambda}}\otimes V_{\mu_1}\otimes V_{\mu_2},\mathbb{C}),$$
		where $\widetilde{\bf p}=(\widetilde{q}_1,\dots, \widetilde{q}_n,0,\infty)$ and $V_{\vec{\lambda}}=V_{\lambda_1}\otimes \dots \otimes V_{\lambda_n}$. 
	\end{proposition}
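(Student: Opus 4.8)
The plan is to imitate the classical embedding argument for conformal blocks on $\mathbb{P}^1$ (as in Ueno \cite{U}), working with the dual picture of the space of covacua and exploiting that all but two of the marked points are \'etale. First I would unwind the definition of $\mathcal{V}_{\vec\lambda,\mu_1,\mu_2,\Gamma}(\mathbb{P}^1,\mathbb{P}^1,\widetilde{\bf p},{\bf p})$ as the quotient $\mathbb{H}_{\vec\lambda,\mu_1,\mu_2}/(\frg\otimes H^0(\mathbb{P}^1,\mathcal{O}(\ast\Gamma\cdot\widetilde{\bf p})))^\Gamma\mathbb{H}_{\vec\lambda,\mu_1,\mu_2}$, and observe that since $\Gamma=\langle\gamma\rangle$ acts on $\widetilde{\mathbb{P}}^1=\mathbb{P}^1$ with ramification only at $0$ and $\infty$ (of order $N$) and acts freely on the $\Gamma$-orbits of $\widetilde q_1,\dots,\widetilde q_n$, the invariant functions are readily described via the coordinate $z$ with $\gamma\cdot z=\epsilon^{-1}z$. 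The key structural fact is that at each \'etale point $\widetilde q_j$ the completed local ring is canonically identified with $\mathbb{C}[[z-\widetilde q_j]]$ and the module $\mathbb{H}_{\lambda_j,\widetilde q_j}$ is just the ordinary (untwisted) integrable highest-weight module $\mathcal{H}_{\lambda_j}(\frg,\mathrm{id})$, whereas at $0,\infty$ the modules are genuinely twisted.

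The central step is to produce, for every weight $X\otimes f\in(\frg\otimes H^0(\mathbb{P}^1,\mathcal{O}(\ast\Gamma\cdot\widetilde{\bf p})))^\Gamma$, enough gauge relations to collapse each factor $\mathbb{H}_{\lambda_j,\widetilde q_j}$ down to its minimal piece $V_{\lambda_j}$ and, at $0$ and $\infty$, to collapse $\mathbb{H}_{\mu_i,\cdot}$ down to $V_{\mu_i}$. Concretely, I would show that modulo the gauge subspace, any vector in $\mathbb{H}_{\vec\lambda,\mu_1,\mu_2}$ is congruent to one lying in the finite-dimensional subspace $V_{\lambda_1}\otimes\cdots\otimes V_{\lambda_n}\otimes V_{\mu_1}\otimes V_{\mu_2}$. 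The mechanism is standard: for an \'etale point, given $X\in\frg$ one chooses $f\in H^0(\mathbb{P}^1,\mathcal{O}(\ast\Gamma\cdot\widetilde{\bf p}))^\Gamma$ (after averaging over $\Gamma$) that vanishes to high order at all marked points except a chosen one, where it has a prescribed polar part; applying the resulting gauge relation lets one trade a ``lowering'' mode $X\otimes(z-\widetilde q_j)^{-k}$ acting on the $j$-th factor for operators acting with lower energy elsewhere, and a downward induction on total degree terminates in $V_{\lambda_1}\otimes\cdots\otimes V_{\mu_2}$. At the ramified points one must instead use $\Gamma$-invariant rational functions whose expansions in the uniformizer $z$ realize the twisted current algebra $\widehat L(\frg,\gamma)$ and its negative modes; averaging a rational function over $\Gamma$ produces exactly the invariants needed. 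This gives a surjection $V_{\lambda_1}\otimes\cdots\otimes V_{\mu_2}\twoheadrightarrow\mathcal{V}_{\vec\lambda,\mu_1,\mu_2,\Gamma}$, and dualizing yields an injection of the space of covacua — equivalently the twisted conformal block — into $(V_{\lambda_1}\otimes\cdots\otimes V_{\mu_2})^\ast$.

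Finally I would identify the image with the space of $\frg^\gamma$-invariants. The point is that the global constant functions $\frg^\Gamma=\frg^\gamma\subset(\frg\otimes\mathcal{O}(\ast\Gamma\cdot\widetilde{\bf p}))^\Gamma$ act diagonally on the tensor product $V_{\lambda_1}\otimes\cdots\otimes V_{\mu_2}$ of $\frg^\gamma$-modules (here each $V_{\lambda_j}$ for an \'etale point is an $\frg=\frg^{\mathrm{id}}$-module, hence in particular an $\frg^\gamma$-module by restriction, while $V_{\mu_1},V_{\mu_2}$ are the $\frg^\gamma$-modules built into the definition), and the corresponding gauge relations force any element of the block to be $\frg^\gamma$-invariant. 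Thus the dual of the surjection factors through $\operatorname{Hom}_{\frg^\gamma}(V_{\vec\lambda}\otimes V_{\mu_1}\otimes V_{\mu_2},\mathbb{C})$, giving the desired embedding
$$\mathcal{V}_{\vec\lambda,\mu_1,\mu_2,\Gamma}(\mathbb{P}^1,\mathbb{P}^1,\widetilde{\bf p},{\bf p})\hookrightarrow\operatorname{Hom}_{\frg^\gamma}(V_{\vec\lambda}\otimes V_{\mu_1}\otimes V_{\mu_2},\mathbb{C}).$$
I expect the main obstacle to be the bookkeeping at the two ramified points: one must check carefully that the $\Gamma$-averaged rational functions genuinely generate the negative modes of the twisted loop algebra at $0$ and $\infty$ (so that the reduction to $V_{\mu_1},V_{\mu_2}$ really goes through), and that the eigenvalue bookkeeping under $\gamma\cdot z=\epsilon^{-1}z$ matches the grading of $\widehat L(\frg,\gamma)$ used in Section \ref{sec:affineLiealgtwisted}; the \'etale points, by contrast, are handled exactly as in the untwisted case \cite{U,TUY:89}.
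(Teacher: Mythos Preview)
Your approach is essentially the dual of the paper's: the paper works directly on the conformal block side, restricting a functional $\langle\Psi|$ to $V_{\vec\lambda}\otimes V_{\mu_1}\otimes V_{\mu_2}$ and proving injectivity of this restriction by an upward induction on the filtration $F_p\mathcal{H}_{\vec\lambda,\mu_1,\mu_2}$, whereas you reduce vectors modulo gauge to the bottom piece and then dualize. The gauge input is the same in both, and the paper makes your ``averaging'' step explicit by writing down $\gamma$-eigenfunctions rather than averaged invariants: at the ramified point $0$ it simply uses $f=z^{-M}$, and at an \'etale point $\widetilde q_1$ it uses $f_k(z)=z^k/(z^{N}-\widetilde q_1^{\,N})^M$ paired with $X_k\in\frg_k$, so that $X_k\otimes f_k$ is $\Gamma$-invariant with poles only along $\Gamma\cdot\widetilde q_1$. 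One terminological slip to fix: the dual of the covacua is the conformal block, not the covacua itself, so after dualizing your surjection it is the conformal block that embeds in $\operatorname{Hom}_{\frg^\gamma}(V_{\vec\lambda}\otimes V_{\mu_1}\otimes V_{\mu_2},\mathbb{C})$.
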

	\begin{proof}Let $\langle \Psi|$ be an element of the twisted conformal blocks i.e. in-particular it is a function on the tensor product of integrable representations 
		$$\mathcal{H}_{\vec{\lambda},\mu_1,\mu_2}:=\mathcal{H}_{\lambda_1}(\frg)\otimes \mathcal{H}_{\lambda_n}(\frg)\otimes \mathcal{H}_{\mu_1}(\frg,\gamma)\otimes \mathcal{H}_{\mu_2}(\frg,\gamma^{-1}).$$
		The tensor product $V_{\vec{\lambda}}\otimes V_{\mu_1}\otimes V_{\mu_2}$ naturally embeds into $\mathcal{H}_{\vec{\lambda},\mu_1,\mu_2}$,  hence by restriction of $\langle \Psi|$ we get a map
		$$\iota: \mathcal{V}_{\vec\lambda,\mu_1,\mu_2,\Gamma}(\mathbb{P}^1,\mathbb{P}^1, \widetilde{\bf p}, {\bf p})\rightarrow \operatorname{Hom}(V_{\vec{\lambda}}\otimes V_{\mu_1}\otimes V_{\mu_2},\mathbb{C}),$$
		Now, let $X\in \frg^{\gamma}$ and $1$ be the identity function on $\mathbb{P}^1$. By the ``Gauge Condition", we get for any vector $\vec{v} \in V_{\vec{\lambda}}\otimes V_{\mu_1}\otimes V_{\mu_2}$:
		$$\langle \Psi\cdot (X\otimes 1)|\vec{v}\rangle =\sum_{j=1}^{n+2}\langle \Psi|\rho_j(X\otimes 1) \vec{v}\rangle=0,$$ where $\rho_j$ denotes the action of $X\otimes 1$ on the $j$-th component of $\vec{v}$. 
		Thus, we conclude that the image of $\iota$ in indeed in the space $\operatorname{Hom}_{\frg^{\gamma}}(V_{\vec{\lambda}}\otimes V_{\mu_1}\otimes V_{\mu_2},\mathbb{C})$.
		
		We now show that the map $\iota$ is indeed injective. 	Assume $\iota(\langle \Psi|)=0$. By induction on the natural filtration $F_{p} \mathcal{H}_{\vec{\lambda},\mu_1,\mu_2}$, we prove that $\langle \Psi|$ restricted to $F_{p}\mathcal{H}_{\vec{\lambda},\mu_1,\mu_2}$ is zero. For $p=0$, this is true by the assumption. Assume that $\langle \Psi|_{|F_p\mathcal{H}_{\vec{\lambda},\mu_1,\mu_2}}=0$.
		
		Now any element of $F_{p+1}\mathcal{H}_{\lambda,\mu_1,\mu_2}$ is of the form $\rho_j(X(-M))|\Phi\rangle $, for some $1\leq j\leq n+2$ and  a positive integer $M$, where  $|\Phi\rangle \in F_{p}\mathcal{H}_{\lambda,\mu_1,\mu_2}$. Moreover  either $X(-M)$ is an element of $\widehat{L}(\frg)$ for $1\leq j \leq n$ or $X(-M)\in \widehat{L}(\frg,\gamma)$ and $\widehat{L}(\frg,\gamma^{-1})$ for $j=n+1$, $j= n+2$ respectively. We split up the rest into two cases
		
		First we consider the case $j$ is $n+1$. The case $j$ is $n+2$ is similar. Further we can assume that $X(-M)$ is in the space $\frg_i\otimes \mathcal{A}_i$ in $\widehat{L}(\frg,\gamma)$, where $-i=M\mod |\gamma|$. 
		
		The  function $f(z)=z^{-M}$ has a pole of order $M$ only at zero and is holomorphic else where. Moreover $X\otimes f$ is an element of $\Gamma$-invariant $\frg$ valued functions on $\mathbb{P}^1\backslash \Gamma \cdot\widetilde{\bf p}$. Thus again by ``Gauge Symmetry", we get 
		\begin{eqnarray*}
			\langle \Psi| \rho_{n+1}(X(-M))\Phi\rangle &=& \langle \Psi| \rho_{n+1}(X\otimes f)\Phi\rangle, \\
			&=&-\sum_{j\neq n+1}\langle \Psi| \rho_j(X\otimes f)\Phi \rangle =0.
		\end{eqnarray*}
		Since $f$ is holomorphic outside of zero, it follows that $\rho_j(X\otimes f)|\Phi \rangle \in  F_{p}\mathcal{H}_{\lambda,\mu_1,\mu_2}$. 
		
		Next consider the case $1\leq j\leq n$. Without loss of generality assume that $j=1$. Since the action of $\gamma$ fixes $0$ and $\infty$, it follows that the action of $\gamma$ on $z$ by via multiplication of $\epsilon^{-1}$, where $\epsilon$ is a chosen $|\gamma|$-th root of unity, for every $0\leq k< |\gamma|$, we can produce a  meromorphic function $f_k$ on $\mathbb{P}^1$ with the following property:
		\begin{enumerate}
			\item $f_k$	 has poles only along the $\Gamma$-orbit of $\widetilde{q}_1$ of order  exactly $M$ and holomorphic elsewhere.
			\item $\gamma. f_k(z)=\epsilon^{-k}f_k(z)$, where $z$ is the global coordinate on $\mathbb{C}$. 
		\end{enumerate}
		For example the function $f_k(z)=\frac{z^{k}}{(z^{|\gamma|}-\widetilde{q}_1^{|\gamma|})^M}$ satisfies the conditions (1) and (2) since $k<|\gamma|$.  
		Now by projecting into the eigenspaces $\frg_k$ of the $\gamma$ action on $\frg$, we can assume without loss of generality that $X$ is of the form $X_k \in \frg_k$. Now by construction $X_k\otimes f_k$ is an element of $\Gamma$ invariant $\frg$-valued functions on $\mathbb{P}^1\backslash \Gamma\cdot \widetilde{q}_1$.  We conclude via ``Gauge Symmetry" as follows:
		\begin{eqnarray*}
			\langle \Psi| \rho_{1}(X_k(-M)\Phi\rangle &=& \langle \Psi| \rho_{1}(X_k\otimes f_k)\Phi\rangle, \\
			&=&-\sum_{j\neq 1}\langle \Psi| \rho_j(X_k\otimes f_k)\Phi \rangle
			=0  \mbox{ (again by induction hypothesis)}.	
		\end{eqnarray*}
	\end{proof}
	\begin{remark}In the untwisted case, Fakhruddin \cite{Fakhruddin:12} showed  that the bundles of sheaf of covacua is globally generated by generalizing Proposition 6.1 in \cite{U} to the nodal case. Similar results also holds for twisted conformal blocks of arithmetic genus zero. We will study this further in a future paper where we compute Chern classes of $\Gamma$-twisted conformal blocks.
	\end{remark}
	\subsubsection{Equivariance with respect to permutations and conjugation}\label{subsubsec:equivariance}
	As before let $\mathbf{m}=(m_1,\cdots,m_n)\in \Gamma^n$ and consider the moduli stack $\barM^\Gamma_{g,n}(\mathbf{m})$. Let $\sigma\in S_n$ be a permutation. Then permutation of the marked points induces an isomorphism $\xi_\sigma:\barM^\Gamma_{g,n}(\mathbf{m})\rar{\cong} \barM^\Gamma_{g,n}(\mathbf{m}_\sigma)$, where $\mathbf{m}_\sigma:=(m_{\sigma(1)},\cdots,m_{\sigma(n)})$. 
	
	If $\vec{\lambda}=(\lambda_1,\cdots,\lambda_n)$ is an $n$-tuple of weights with $\lambda_i\in P^{\ell}(\frg,m_i)$, we set $\vec{\lambda}_\sigma$ to be the permuted weights. Then we have a natural isomorphism between the sheaves of covacua 
	\begin{equation}\label{eqn:pullbackandvacua}
	\COVx\simeq \xi_\sigma^*\mathbb{V}_{\vec{\lambda}_\sigma,\Gamma}.
	\end{equation}
	
	Now let $\pmb\gamma=(\gamma_1,\cdots,\gamma_n)\in \Gamma^n$. Let ${}^{\pmb{\gamma}}\mathbf{m}=({}^{\gamma_1}m_1,\cdots,{}^{\gamma_n}m_n)$ be the conjugated $n$-tuple, where ${}^{\gamma_{i}}m_i:=\gamma_im_i\gamma_i^{-1}$. Then acting on the marked points $\wtilde{\mathbf{p}}$ in $\tildeC$ by $\pmb\gamma$ induces an isomorphism $\xi_{\pmb\gamma}:\barM^\Gamma_{g,n}(\mathbf{m})\rar{\cong}\barM^\Gamma_{g,n}({}^{\pmb\gamma}\mathbf{m})$. 
	
	If $\vec{\lambda}=(\lambda_1,\cdots,\lambda_n)$ is an $n$-tuple of weights as before, then we obtain the weights $\pmb\gamma\cdot \vec{\lambda}$ with $\gamma_i\cdot \lambda_i\in P^{\ell}(\frg,{}^{\gamma_i}m_i)$ (see also Section \ref{subsubsec:gammaaction}). Then we have a natural isomorphism between the sheaves of covacua
	\begin{equation}\label{eqn:conjugation}
	\COVx\simeq \xi_{\pmb\gamma}^*\mathbb{V}_{\pmb\gamma\cdot\vec{\lambda},\Gamma}.
	\end{equation}
	Combining Equations \eqref{eqn:pullbackandvacua} and \eqref{eqn:conjugation}, we see that the sheaves of covacua are equivariant for the action of the wreath product $S_n\ltimes \Gamma^n$ on the moduli stacks of pointed admissible $\Gamma$-covers.

	\section{Twisted fusion rings associated to automorphisms of $\frg$}\label{sec:fusionforaut}
	Let $\mathfrak{g}$ be a 
	finite dimensional Lie algebra which is simply laced. 
	Let $D(\frg)$ be the Dynkin 
	diagram of $\frg$. A diagram automorphism 
	$\sigma$ of $\frg$ is a graph automorphism 
	of the Dynkin diagram $D(\frg)$. Let $N$ be
	the order of $\sigma$. It is well known that $N\in \{1,2,3\}$. We
	can extend $\sigma$ to an automorphism of the Lie algebra $\frg$. 
	To this data, one can (see \cite{FSS})
	attach a new Lie algebra 
	$\frg_{\sigma}$ known as the orbit Lie algebra. 
	We enumerate the vertices $I$ of $D(\frg)$ as
	in \cite{Kac} by integers. 
	Further  denote by $\frg^{\sigma}$ 
	the Lie algebra of $\frg$ fixed by $\sigma$.  The Lie algebras 
	$\frg_{\sigma}$ 
	and 
	$\frg^{\sigma}$ are Langlands dual to each other. 

	We fix an enumeration the orbit representatives such
	that it is the smallest in the orbit. More precisely 
	$$\check{I}=\{i \in I | i \leq \sigma^{a}(i), \ \mbox{and}\  0\leq a \leq N-1\}.$$Let $N_i$ 
	denote the
	order of the orbit at $i\in I$. 
	Let $P(\frg)^{\sigma}$ denote the set of integral weights which are invariant 
	under $\sigma$ and consider the fundamental weights $\omega_1,\dots, \omega_{\operatorname{rank}{\frg}}$ of $\frg$.  The following is an easy observation:
	\begin{lemma}\label{lem:bijecofweights}
		There is a natural bijection $\iota:P(\frg)^{\sigma}\rightarrow P(\frg_{\sigma})$ which 
		has 
		the following properties:
		\begin{enumerate}
			\item For $i\in \check{I}$, we get $\sum_{a=0}^{N_i}\iota(\omega_{\sigma^{a}i})=\omega_{i}$.
			\item $\iota(\overline{\rho})=\overline{\rho}_{\sigma}$, where $\overline{\rho}$ and $\overline{\rho}_{\sigma}$ are 
			sums of the fundamental weights of $\frg$ and $\frg_{\sigma}$ respectively. 
		\end{enumerate}
	\end{lemma}

	\subsection{Fusion rules associated to automorphisms}\label{section:hongfusion}
	%
	%
	
	Let $\frg$ be any simple  Lie algebra. For any $n$-tuple $\vec{\lambda}=(\lambda_1,\dots,\lambda_n)$, consider the dual conformal block 
	$\confv$ at level $\ell$. 
	Now given any diagram automorphism $\sigma: \mathfrak{g} \rightarrow \mathfrak{g}$, we get an 
	automorphism of $\sigma^*: P_{\ell}(\frg)\rightarrow P_{\ell}(\frg)$. 
	This induces a map 
	of the dual conformal blocks 
	$\sigma: \mathcal{V}_{\vec{\lambda}}(\mathbb{P}^1,\vec{z})\rightarrow \mathcal{V}_{\sigma^*({\vec{\lambda}})}(\mathbb{P}^1,\vec{z}),$
	where $\sigma^*({\vec{\lambda}})=(\sigma^*\lambda_1,\dots, \sigma^*\lambda_n)$.  
	
	Let $P_{\ell}(\mathfrak{g})^{\sigma}$ 
	denote the set of level $\ell$ weights fixed by $\sigma$. Thus, 
	if $\vec{\lambda} \in (P_{\ell}(\mathfrak{g})^{\sigma})^n$, then we get 
	an element $\sigma$ in $\operatorname{End}(\mathcal{V}_{\vec{\lambda}}(\mathbb{P}^1,\vec{z}))$. 
	In \cite{Hong}, Jiuzu Hong defines a fusion rule associated to $\sigma$. 
	\subsubsection{Twisted fusion rules for diagram automorphisms}\label{{section:hongfusion}}Now we restrict to 
	the case when $\mathfrak{g}$ is simply laced and $\sigma$ is diagram automorphism 
	of the Lie algebra $\mathfrak{g}$.  The following proposition can be found in  \cite{Hong}:
	\begin{proposition} \label{pro:twistedfusionrule}
		The map $N_{\sigma}: \mathbb{N}^{P_{\ell}^{\sigma}(\mathfrak{g})}\rightarrow \mathbb{C}$ 
		given by 
		$N_{\sigma}(\sum_i \lambda_i ):=\operatorname{Tr}(\sigma| \mathcal{V}_{\vec{\lambda}}( \mathbb{P}^1, \vec{z}))$ satisfies 
		the hypothesis of  fusion rules (see \cite{BeauHirz}).
	\end{proposition}
	\begin{definition}
		The  ring given by Proposition \ref{pro:twistedfusionrule} will be denoted by $\mathcal{R}_{\ell}(\frg,\sigma)$ and will be called the twisted (Kac-Moody) fusion ring.  
	\end{definition}
	
	The set of characters of the fusion ring $\mathcal{R}_{{\ell}}(\frg,\sigma)$ has been described in \cite{Hong}. We recall the details below. 
	Let $G_{\sigma}$ be the simply connected group associated to the
	Lie algebra $\mathfrak{g}_{\sigma}$ and $T_{\sigma}$ be a maximal
	torus in $G_{\sigma}$. Let $Q(\frg)$ be the root lattice of $\frg$ and let $Q(\frg)^{\sigma}$ be elements in the
	root lattice of $\mathfrak{g}$ which are fixed by the automorphism
	$\sigma$. Then under the transformation $\iota:P(\frg)^{\sigma}\rightarrow P(\frg_{\sigma})$, we get 
	\begin{equation}\label{lemma:iotamagic}
	\iota (Q(\mathfrak{g})^{\sigma})= \left\{
	\begin{array}{ll}
	Q(\mathfrak{g}_{\sigma}) & \mathfrak{g}\neq A_{2n}, \\
	P(\mathfrak{g}_{\sigma})=\frac{1}{2}Q(\frg_{\sigma}) & \mathfrak{g}=A_{2n}. \\
	\end{array} 
	\right.
	\end{equation}
	
	Consider the following subset $T_{\sigma,\ell}$ of $T_{\sigma}$ given by 
	$$T_{\sigma,\ell}:=\{t \in T_{\sigma}|e^{\alpha}(t)=1 \  \mbox{for} \ \alpha \in (\ell+h^{\vee})\iota(Q^{\sigma}(\mathfrak{g}))\}.$$ An element $t$ of $T_{\sigma,\ell}$ is called {\em regular} if the Weyl group $W_{\sigma}$ of $\frg_{\sigma}$ acts freely on $t$ and let $T^{\operatorname{reg}}_{\sigma,\ell}$ is set of regular elements in $T_{\sigma,\ell}$.  
	Now the main result in \cite{Hong} is the following:
	\begin{proposition}\label{prop:mainhong1}For $t \in T^{\reg}_{\sigma,\ell}/W_{\sigma}$,  
		the set $\operatorname{Tr}_{*}(t)$ gives the characters of 
		the fusion ring $\mathcal{R}_{{\ell}}(\frg,\sigma)$.  
	\end{proposition}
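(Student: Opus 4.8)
The plan is to adapt the Beauville--Hirzebruch--Faltings--Teleman description of characters of the untwisted fusion ring to the diagram-automorphism setting by means of twining characters. First I would record, from the theory of non-degenerate fusion rules \cite{BeauHirz} underlying Proposition \ref{pro:twistedfusionrule}, that $\Rcal_\ell(\frg,\sigma)\otimes_\ZBbb\CBbb$ is a commutative split semisimple $\CBbb$-algebra, so that its characters form a basis of the dual space and their number equals $\dim_\CBbb\Rcal_\ell(\frg,\sigma)=|P_\ell(\frg)^\sigma|$. It therefore suffices to exhibit exactly this many distinct ring homomorphisms $\Rcal_\ell(\frg,\sigma)\to\CBbb$ and to identify them with the evaluations $\Tr_*(t)\colon[\lambda]\mapsto\Tr_{V_{\iota(\lambda)}}(t)$ for $t\in T^{\reg}_{\sigma,\ell}/W_\sigma$.

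The decisive input is the twining character formula of Kac--Peterson and Fuchs--Schellekens--Schweigert. For a $\sigma$-fixed dominant weight $\lambda$, normalize the $\sigma$-action on $V_\lambda$ to be the identity on the highest weight line; then the $\sigma$-twisted formal character $h\mapsto\Tr(\sigma\circ e^h\mid V_\lambda)$, for $h$ in the fixed Cartan $\frh^\sigma$, equals the ordinary character $\operatorname{ch} V_{\iota(\lambda)}$ of the orbit Lie algebra $\frg_\sigma$, read through the Langlands-type identification of $\frh^\sigma$ with the Cartan subalgebra of $\frg_\sigma$ furnished by $\iota$ (Lemma \ref{lem:bijecofweights}). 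This turns $\sigma$-twisted traces on $\frg$-modules into genuine $\frg_\sigma$-characters and, together with \eqref{lemma:iotamagic}, exhibits $\Rcal_\ell(\frg,\sigma)$ as a quotient of the representation ring $R(\frg_\sigma)$.

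Next I would compute the structure constants $N_\sigma$. Using factorization \cite{TUY:89} and the compatibility of $\sigma$ with the gluing and forgetful maps, every fusion coefficient reduces to a genus-zero trace, and the untwisted Verlinde formula carried out $\sigma$-equivariantly expresses $\Tr(\sigma\mid\mathcal{V}_{\vec\lambda})$ as a finite sum over those Verlinde points lying in the $\sigma$-fixed locus. After substituting the twining character formula, each such point becomes an element $t=\exp\big(\tfrac{2\pi\sqrt{-1}}{\ell+h^\vee}\,\xi\big)\in T_\sigma$ with $\xi\in\frh^\sigma$ the image of $\mu+\overline\rho$; the defining congruence $e^\alpha(t)=1$ for $\alpha\in(\ell+h^\vee)\iota(Q(\frg)^\sigma)$ is exactly the requirement that evaluation at $t$ kill the level-$\ell$ Verlinde ideal, and regularity of $t$ records that $\xi$ avoids the affine walls. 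Hence for each class $t\in T^{\reg}_{\sigma,\ell}/W_\sigma$ the map $\Tr_*(t)$ descends to a well-defined character of $\Rcal_\ell(\frg,\sigma)$, and distinct $W_\sigma$-orbits yield distinct characters by the orthogonality of the $\frg_\sigma$-characters at regular points.

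Finally I would match cardinalities: the number of regular $W_\sigma$-orbits in $T_{\sigma,\ell}$ coincides with $|P_\ell(\frg)^\sigma|$ through the standard bijection between regular points of the level-$\ell$ torus modulo $W_\sigma$ and the dominant $\frg_\sigma$-weights in the fundamental alcove, shifted by $\overline\rho_\sigma$ and truncated via \eqref{lemma:iotamagic}. Having produced $|P_\ell(\frg)^\sigma|$ pairwise distinct characters of an algebra possessing exactly that many, they must be all of them. The principal obstacle lies in the third step: proving the $\sigma$-equivariant Verlinde formula, so that the trace of $\sigma$ on the spaces of conformal blocks is genuinely computed by evaluating twining characters at the fixed Verlinde points. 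Controlling the interaction of the $\sigma$-action with the factorization isomorphisms and the modular data is the technical heart of the argument, the twining character identity and the final counting being comparatively formal.
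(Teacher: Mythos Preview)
The paper does not supply its own proof of this proposition: it is quoted as ``the main result in \cite{Hong}'' and used as a black box. What you have written is a reasonable outline of Hong's argument, and indeed the paper itself remarks in the introduction that Hong proceeds ``using the same strategy as in \cite{BeauHirz, Faltings:94, Teleman}'', which is exactly the twining-character adaptation of the Beauville--Faltings--Teleman method that you sketch. So there is nothing to compare against in the present paper; your proposal is essentially a summary of the proof in the cited reference \cite{Hong}.

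One small comment on your sketch: in step three you frame the computation as ``the untwisted Verlinde formula carried out $\sigma$-equivariantly''. Hong's actual argument is closer to the Faltings--Teleman route, namely realizing $\Rcal_\ell(\frg,\sigma)$ as a quotient of $R(\frg_\sigma)$ via the twining character identity and then directly identifying the characters that survive in the quotient with the regular points of $T_{\sigma,\ell}$, rather than first proving a $\sigma$-equivariant Verlinde formula and reading off characters from it. Your version would also work but is slightly more roundabout; you correctly flag this step as the technical heart in either formulation.
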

	Thus to study the character table of the fusion ring $\mathcal{R}_{{\ell}}(\frg,\sigma)$, we need to give an  explicit description of the set $T_{\sigma,\ell}^{\reg}/W_{\sigma}$ in terms of weights of the level $\ell$-representations of the twisted affine Lie algebra $\GA$ associated to $\sigma$. 
	\begin{remark}
		In the case of fusion ring $\mathcal{F}_N$ associated to untwisted conformal blocks, there is a natural bijection \cite{BeauHirz} between $P_{\ell}(\frg)$ and $T^{\operatorname{reg}}_{\ell}/W$, where $W$ is the Weyl group of $\frg$. However, in the case of twisted fusion ring $\mathcal{R}_{{\ell}}(\frg,\sigma)$, there
		is no such natural bijection. 
	\end{remark}
	
	The following proposition should be considered as a generalization of Proposition 6.3 in \cite{BeauHirz} to the case of all affine Kac-Moody Lie algebras. 
	\begin{proposition}\label{prop:vitalbijectionthathongsshouldhavewrittenproperly}
		There is a natural bijection between $T_{\sigma,\ell}^{\operatorname{reg}}/W_{\sigma}$ with $P^{\ell}(\GA)$.
	\end{proposition}
	\begin{proof}We divide our proof in the two major cases. First, we consider the case when the twisted affine Kac-Moody Lie algebra is of type $A_{2n-1}^{(2)}, D_{n+1}^{(2)}, E_{6}^{(2)}$ or $D_{4}^{(3)}$. In \cite{Hong}, Hong introduces the following subset set of coweights $\check{P}(\frg_{\sigma})$:
		$$\check{P}_{\sigma,\ell}:=\{\check{\mu}\in \check{P}(\frg_{\sigma})|\theta_{\sigma}(\check{\mu})\leq \ell\}.$$ Now Lemma 5.20 in \cite{Hong}, shows that there is a natural bijection between $\check{P}_{\sigma,\ell}$ and $T_{\sigma,\ell}^{\operatorname{reg}}/W_{\sigma}$. Thus we will be done if we can show that $P^{\ell}(\frg,\sigma)$ is in a natural bijection with $\check{P}_{\sigma,\ell}$. This set can be precisely described as 
		\begin{enumerate}\label{enu:hongpain1}
			\item If $\frg=A_{2n-1}$, then $\check{P}_{\sigma,\ell}=\{\sum_{i=1}^ne_i\check{\omega}_i| e_1+2(e_2+\dots+e_n)\leq \ell\}$.
			\item If $\frg=D_{n+1}$, then $\check{P}_{\sigma,\ell}:=\{\sum_{i=1}^ne_i\check{\omega}_i | 2(e_1+\dots+e_{n-1})+e_n\leq \ell\}$.
			\item If $\frg=D_{4}$, then $\check{P}_{\sigma,\ell}:=\{e_1\check{\omega}_1+e_2\check{\omega}_2|3e_1+2e_2\leq \ell\}$.
			\item If $\frg=E_{6}$, then $\check{P}_{\sigma,\ell}:=\{\sum_{i=1}^4e_i\check{\omega}_i| 2e_1+4b_2+3b_3+2e_4\leq \ell\}$.
		\end{enumerate}Here $\check{\omega}_i$'s are the fundamental coweights of the Lie algebra $\frg_{\sigma}$ and $e_i$'s are non-negative integers. 	By the explicit description of the set $P^{\ell}(\frg(X_N^{(m)}))$ by equations \eqref{eqn:levellweightforA2n-12}, \eqref{eqn:levellweightforDn+12},\eqref{eqn:levellweigtforD43} and \eqref{eqn:levellweigtforE62}, it follows that there is a natural bijection between $P^{\ell}(\frg,\sigma)$ and $\check{P}_{\sigma,\ell}$. 
		
		Now we consider the remaining case when $A=A_{2n}^{(2)}$. In this case, the horizontal subalgebra $\GO$ is same as $\frg_{\sigma}$. We define the map $\beta:P^{\ell}(\frg(A))\rightarrow T_{\sigma,\ell}$ given by the formula $\beta(\lambda)=\exp\big(\frac{2\pi i}{\ell+h^{\vee}}\nu_{\frg(A)}(\lambda+\overline{\rho})\big)$, where $\nu_{\frg(A)}$ is the isomorphism between $\mathfrak{H}^{*}(A) \rightarrow \mathfrak{H}(A)$ induced by the normalized Cartan-Killing form $\kappa_{\frg(A)}$ on $\frg(A)$ and $\overline{\rho}$ is the sum of the fundamental weights of the horizontal subalgebra $\GO$. Now the rest of the proof follows as in Proposition 9.3 in \cite{BeauHirz}.
		
	\end{proof}

	\subsubsection{Fusion rings associated to arbitrary automorphisms}\label{sec:arbtwistedfusion} In the previous section, we discussed how one can associate fusion rings to diagram automorphism. 
	Let $\gamma:\frg \rightarrow \frg$ be an arbitrary finite order automorphism and $\sigma$ be the diagram automorphism of $\frg$ related to $\gamma$ by Proposition \ref{prop:classdia}. Then $\gamma $ 
	and $\sigma$ differ by an inner 
	automorphism of $\frg$ and hence we get  a bijection between $P(\frg)^{\gamma}$ and $P(\frg)^{\sigma}$. Now since $\gamma$ and $\sigma$ both preserve a normalized Cartan Killing 
	form it follows that there exists a bijection between $P_{\ell}(\frg)^{\gamma}$ and $P_{\ell}(\frg)^{\sigma}$. 
	
	If $\vec{\lambda}=(\lambda_1,\dots,\lambda_n)\in (P_{\ell}(\frg)^{\sigma})^n$, then as in Section \ref{section:hongfusion}, we get an element in $\operatorname{End}(\mathcal{V}_{\vec{\lambda}}(\mathbb{P}^1,\vec{z}))$. Since the untwisted conformal block $\mathcal{V}_{\vec{\lambda}}(\mathbb{P}^1,\vec{z})$ is constructed as coinvariants of representation of the Lie algebra $\frg\otimes \mathcal{O}_{\mathbb{P}^1}(*\vec{z})$, it follows \cite[Section 5.3]{FS} that inner automorphism of $\frg$ acts trivially on the untwisted conformal block $\mathcal{V}_{\vec{\lambda}}(\mathbb{P}^1,\vec{z})$. The discussion can be summarized as follows:
	\begin{proposition}
		Let $\vec{\lambda}\in (P_{\ell}(\frg)^{\sigma})^n$, we get $\operatorname{Tr}(\gamma|\mathcal{V}_{\vec{\lambda}}(\mathbb{P}^1,\vec{z}))=\operatorname{Tr}(\sigma|\mathcal{V}_{\vec{\lambda}}(\mathbb{P}^1,\vec{z}))$.
	\end{proposition}
	Hence the twisted Kac-Moody fusion ring $\Rcal_{\ell}(\frg,\gamma)$ associated to an arbitrary automorphism $\gamma$ just depends on the class of the diagram automorphism $\sigma$. 
	
	\section{The $\Gamma$-crossed modular fusion category associated with twisted conformal blocks} \label{sec:threepointmtc}We now state one of the main theorems in this paper. We prove this theorem in the later sections (Section \ref{sec:proofofoneofmain}) using the formalism of $\Gamma$-crossed modular functors that will be developed in Part III of the paper. We use this theorem to connect the fusion ring studied by J. Hong \cite{Hong} as a categorical twisted fusion ring as in Section \ref{sec:twistedfusion}. We have the following set up:
	
	Let $m_1,m_2,m_3$ be elements of $\Gamma$ such that $m_1\cdot m_2\cdot m_3=1$. Consider the $3$-pointed marked curve $(\mathbb{P}^1, \mu_3)$, where the three marked points are the 3-rd roots of unity $\mu_{3}\subset \mathbb{C}\subset \mathbb{P}^1$, the associated tangent vector is again $\omega$ considered as a tangent vector at $\omega$. Consider $0$ as the basepoint on $\PBbb^1\setminus \mu_3$. Consider paths $C_j$ in $\PBbb^1\setminus \mu_3$ based at 0 and obtained by going in a straight line from $p_0$ to $p_j$ and encircling the point $e^{\frac{2\pi\sqrt{-1}j}{3}}$ counterclockwise. Then we obtain a presentation of the fundamental group  $\pi_1(\PBbb^1\setminus \mu_3,0)=\langle C_1,C_2,C_3 | C_1\cdot C_2\cdot C_3=1\rangle.$ Hence if $m_1m_2 m_3=1$, then $C_j\mapsto m_j$ defines a group homomorphism $\phi:\pi_1(\PBbb^1\setminus \mu_3,0)\to \Gamma$. Then by \cite[\S2.3]{JKK} this determines an $3$-marked admissible $\Gamma$-cover $(\tildeC\to \PBbb^1,\wtilde{\mathbf{p}},\mu_n,\wtilde{\mathbf{v}})$ in the distinguished component $\xi_{0,3}(m_1,m_2,m_3)$ of $\overline{\mathcal{M}}{}^{\Gamma}_{0,3}(m_1,m_2,m_3)$ (see \cite[\S2.3]{JKK})). 
	\begin{theorem}\label{thm:oneofmain}
		Assume that $\Gamma$ preserves a Borel subgroup of $\frg$.	The $\Gamma$-twisted conformal blocks for a simple Lie algebra $\frg$ at level $\ell$ define a $\Gamma$-crossed modular fusion category
		$\mathcal{C}(\frg,\Gamma,\ell)=\bigoplus_{\gamma \in\Gamma}\mathcal{C}_{\gamma},$ with fusion product denoted by $\dotimes$ such that 
		\begin{enumerate}
			\item The identity component $\Ccal_1$ is the modular fusion category defined by untwisted conformal blocks for $\frg$ at level $\ell$.
			\item The simple objects of $\mathcal{C}_{\gamma}$ are parameterized by the set $P^{\ell}(\frg,\gamma)$.
			\item The monoidal structure $\dotimes$ is defined by the following $\operatorname{Hom}$-spaces:\\
			For simple objects $\lambda_i \in \mathcal{C}_{m_i}$
			\begin{equation}\label{eq:threepointhomspaces}
			\Hom(\unit,\lambda_1\dotimes\lambda_2\dotimes \lambda_3)=
			\begin{cases}
			\Vcal_{\vec{\lambda},\Gamma}(\tildeC\to \PBbb^1,\wtilde{\mathbf{p}},\mu_3,\wtilde{\mathbf{v}}), & \text{if $m_1\cdot m_2\cdot m_3=1$}\\
			0 & \text{otherwise}.
			\end{cases}
			\end{equation}
		\end{enumerate}
	\end{theorem}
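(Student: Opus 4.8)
The plan is to prove Theorem \ref{thm:oneofmain} by assembling the machinery developed across all three parts of the paper, rather than by a direct representation-theoretic construction. The underlying $\Gamma$-crossed abelian category is already prescribed: for each $\gamma\in\Gamma$ we take $\Ccal_\gamma$ to be the finite semisimple category whose simple objects are the integrable highest weight modules in $P^\ell(\frg,\gamma)$, establishing (2) by definition and making (1) the statement that $\Ccal_1$ is the usual untwisted modular fusion category of conformal blocks (which is known, e.g. via \cite{BK:01} together with Huang's rigidity \cite{Huang:08b}). The real content is to equip $\Ccal=\bigoplus_{\gamma}\Ccal_\gamma$ with the structure of a $\Gamma$-crossed \emph{modular fusion} category whose three-point genus-zero $\Hom$-spaces are the twisted conformal blocks of (\ref{eq:threepointhomspaces}).

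First I would invoke Theorem \ref{conj:main3}: under the hypothesis that $\Gamma$ preserves a Borel of $\frg$, the twisted conformal blocks define a $\Ccal$-extended $\Gamma$-crossed complex analytic modular functor. This is where the analytic input lives --- the Verdier specialization functor, factorization (Proposition \ref{prop:factorizationofhongkumar}), propagation of vacua (Proposition \ref{prop:propavacua}), and the logarithmic Atiyah algebra computation of Theorem \ref{thm:atiyahalg} all feed into verifying the axioms of Section \ref{def:cextenmodularfunctor}. Next I would apply the first part of Theorem \ref{conj:main2} to convert this modular functor into a braided $\Gamma$-crossed weakly ribbon structure on $\Ccal$, with the tensor product determined precisely by the prescription (\ref{eq:threepointhomspaces}): the defining $\Hom(\unit,\lambda_1\dotimes\lambda_2\dotimes\lambda_3)$ is read off from the gluing/factorization data of the modular functor evaluated on the distinguished component $\xi_{0,3}(m_1,m_2,m_3)$ of $\GMod(m_1,m_2,m_3)$. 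The grading compatibility $\Ccal_{\gamma_1}\otimes\Ccal_{\gamma_2}\subset\Ccal_{\gamma_1\gamma_2}$ is forced by the monodromy constraint $m_1m_2m_3=1$ in (\ref{eq:threepointhomspaces}); the crossed braiding comes from the permutation/conjugation equivariance of Section \ref{subsubsec:equivariance}, in particular (\ref{eqn:conjugation}).

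To upgrade ``weakly ribbon'' to ``modular fusion'' I must establish rigidity and spherical non-degeneracy. Here I would use the rigidity criterion of Part I: by the second part of Theorem \ref{conj:main2} (equivalently Corollary \ref{cor:rigidity of graded categories}), since the neutral component $\Ccal_1$ is rigid --- which is exactly Huang's theorem \cite{Huang:08b} for the untwisted category --- the whole $\Gamma$-crossed weakly ribbon category $\Ccal$ is automatically rigid and hence fusion. Non-degeneracy of $\Ccal_1$ as a braided fusion category and the spherical structure are inherited from the known modular structure on untwisted conformal blocks, and each component $\Ccal_\gamma$ is nonzero because $P^\ell(\frg,\gamma)$ is nonempty (it contains at least one weight for every $\gamma$). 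Assembling these gives all the conditions in Definition \ref{def:crossedmodcat}, completing the proof.

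\textbf{Main obstacle.} The genuinely hard step is Theorem \ref{conj:main3} --- the verification that the analytic twisted conformal blocks satisfy the gluing axioms of a $\Gamma$-crossed modular functor, especially constructing the gluing isomorphisms via Verdier specialization and checking their compatibility with the Atiyah-algebra action of the prescribed central charge $\frac{\ell\dim\frg}{2(\ell+h^\vee(\frg))}$ along the boundary $\Delta^\Gamma_{g,n}$. The assumption ``$\Gamma$ preserves a Borel'' is essential precisely because factorization and propagation of vacua (the two axioms built on Hong--Kumar's results \cite{KH}) require it; without it one expects nontrivial 2-cocycles to intervene, as flagged in Remark \ref{rem:gammapreservesborel}. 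Everything else --- rigidity, the explicit $\Hom$-space identification, the grading --- is comparatively formal once the modular functor is in hand.
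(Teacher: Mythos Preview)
Your proposal is correct and follows essentially the same route as the paper's own proof in Section~\ref{sec:proofofoneofmain}: invoke Theorem~\ref{conj:twistedaffineconfblocks} (the paper's internal label for Theorem~\ref{conj:main3}) to obtain the $\Gamma$-crossed modular functor, apply Theorem~\ref{thm:modfunmodcat} (the precise form of Theorem~\ref{conj:main2}) to produce the weakly ribbon structure with the prescribed $\Hom$-spaces, and then use Huang's rigidity for $\Ccal_1$ together with Corollary~\ref{cor:rigidity of graded categories} to upgrade to a genuine fusion category. Your explicit mention of non-degeneracy and the spherical structure is a welcome elaboration that the paper leaves implicit (it is packaged into Corollary~\ref{cor:crossedmodfunmodcat}), and your identification of Theorem~\ref{conj:main3} as the substantive analytic input is exactly right.
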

	\subsection{The relationship with categorical twisted fusion rings}
	Let us go back to the untwisted situation, i.e. when the group $\Gamma$ is trivial. In this setting Huang \cite{Huang:08a}, \cite{Huang:08b} has proved Theorem \ref{thm:oneofmain}, namely that untwisted conformal blocks (see Theorem \ref{thm:oldconf}) define a modular fusion category $\Ccal(\frg,\ell)$. If $\gamma$ is a finite order automorphism of $\frg$, then we have the induced autoequivalence of the modular fusion category $\Ccal(\frg,\ell)$ which comes from the action of $\gamma$ on the spaces of conformal blocks described in Section \ref{sec:arbtwistedfusion}. Note that the classical untwisted version of Equation (\ref{eq:threepointhomspaces}) says that
	$\Hom(\unit,\lambda_1\dotimes\lambda_2\dotimes \lambda_3)=\Vcal_{{\lambda_1,\lambda_2,\lambda_3}}(\PBbb^1,\vec{z})$ and hence tracing out automorphisms on the $\Hom$-spaces that appear on the left is same as tracing out automorphisms on conformal blocks that appear on the right. Recall that the categorical twisted fusion ring $\KC(\Ccal(\frg,\ell),\gamma)$ (see Definition \ref{def:categoricaltwistedfusion}) is described using the former operation (by Proposition \ref{p:tracesonhomspaces}), whereas the twisted Kac-Moody fusion ring $\Rcal_{\ell}(\frg,\gamma)$ has been defined using the latter operation. Hence we obtain:
	\begin{proposition}\label{cor:corofoneofmain}For any finite order automorphism $\gamma$ of $\frg$, the categorical $\gamma$-twisted fusion ring $\KC(\Ccal(\frg,\ell),\gamma)$ is isomorphic to $R_{\ell}(\frg,\gamma)$. 
	\end{proposition}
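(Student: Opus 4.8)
The plan is to identify the two rings by exhibiting explicit bases that are matched by their construction and checking that the structure constants agree. Recall that both rings are, as $\CBbb$-vector spaces, spanned by the $\gamma$-fixed simple objects, and both have structure constants obtained by tracing an automorphism induced by $\gamma$ on certain multiplicity spaces. The categorical ring $\KC(\Ccal(\frg,\ell),\gamma)$ is, by Proposition \ref{p:tracesonhomspaces}, the ring whose fusion coefficients are the traces of $\gamma$ acting on $\Hom(\unit,{}^*C\otimes A\otimes B)$ for $\gamma$-stable simple objects $A,B,C$ of $\Ccal(\frg,\ell)$. The twisted Kac-Moody fusion ring $\Rcal_\ell(\frg,\gamma)$ is, by Proposition \ref{pro:twistedfusionrule} and Definition following it, the ring whose fusion rule is $N_\sigma(\sum_i\lambda_i)=\operatorname{Tr}(\gamma\mid\mathcal{V}_{\vec\lambda}(\PBbb^1,\vec z))$ on the untwisted conformal blocks.

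First I would invoke Huang's theorem (as stated in the discussion preceding the proposition) that untwisted conformal blocks define the modular fusion category $\Ccal(\frg,\ell)$, whose simple objects are parameterized by $P_\ell(\frg)$ and whose three-pointed genus-zero $\Hom$-spaces are canonically identified with the conformal blocks:
$$\Hom(\unit,\lambda_1\dotimes\lambda_2\dotimes\lambda_3)=\mathcal{V}_{\lambda_1,\lambda_2,\lambda_3}(\PBbb^1,\vec z).$$
Next I would observe that the autoequivalence of $\Ccal(\frg,\ell)$ induced by $\gamma$ is precisely the one described in Section \ref{sec:arbtwistedfusion}, coming from the action of $\gamma$ on spaces of conformal blocks; in particular it fixes exactly the simple objects indexed by $P_\ell(\frg)^\gamma=P_\ell(\frg)^\sigma$. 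Thus both rings have a basis indexed by the same finite set, and under these matched bases the automorphism $\phi^{C,\psi_C}_{A,\psi_A,B,\psi_B}$ of Proposition \ref{p:tracesonhomspaces} acting on $\Hom(\unit,{}^*C\otimes A\otimes B)$ is identified with the action of $\gamma$ on the conformal block $\mathcal{V}_{C^*,A,B}(\PBbb^1,\vec z)$. Taking traces therefore yields the same numbers on both sides.

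The key point to make precise is that the equivariant structure $\psi_C$ chosen in the categorical construction is compatible with the $\gamma$-action used by Hong, so that the two traces genuinely coincide rather than differing by a root of unity. Here I would use that for the \emph{untwisted} ambient category (i.e. $\gamma=1$ at the level of the grading, with $\gamma$ acting only as an autoequivalence of $\Ccal_1=\Ccal(\frg,\ell)$) the normalization of the equivariant structure on each $\gamma$-fixed simple object is the canonical one determined by requiring the induced composite $C=\gamma^m(C)\to\cdots\to C$ to be the identity, and that this matches the natural action of the diagram automorphism on the highest weight spaces implicit in Hong's trace. Since inner automorphisms act trivially on untwisted conformal blocks (Proposition of Section \ref{sec:arbtwistedfusion}), both constructions depend only on the diagram class $\sigma$ of $\gamma$, so it suffices to verify the normalization for $\sigma$ itself.

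I expect the main obstacle to be exactly this normalization/compatibility check: matching the canonical categorical equivariant structures $\psi_C$ with the specific automorphism of multiplicity spaces that Hong's trace $\operatorname{Tr}(\sigma\mid\mathcal{V}_{\vec\lambda})$ implicitly uses, and confirming there is no residual scalar ambiguity. Once this is settled, the identity of structure constants follows formally from Proposition \ref{p:tracesonhomspaces} together with the identification $\Hom(\unit,\lambda_1\dotimes\lambda_2\dotimes\lambda_3)=\mathcal{V}_{\lambda_1,\lambda_2,\lambda_3}(\PBbb^1,\vec z)$, and the resulting bijective linear map preserving the unit, the $*$-involution, and the products is the desired ring isomorphism.
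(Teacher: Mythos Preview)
Your proposal is correct and follows essentially the same approach as the paper: invoke Huang's result to identify $\Hom(\unit,\lambda_1\dotimes\lambda_2\dotimes\lambda_3)$ with the untwisted conformal block $\mathcal{V}_{\lambda_1,\lambda_2,\lambda_3}(\PBbb^1,\vec z)$, observe that the $\gamma$-autoequivalence of $\Ccal(\frg,\ell)$ is the one induced by the $\gamma$-action on conformal blocks, and conclude via Proposition~\ref{p:tracesonhomspaces} that the structure constants on both sides are the same traces. The paper's own argument is exactly this, stated more tersely: since the $\Hom$-space \emph{is} the conformal block and the $\gamma$-action is by construction the same automorphism on both, tracing on the left equals tracing on the right.

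Your extra care about the normalization of the equivariant structures $\psi_C$ is not wrong, but the paper treats it as a non-issue, and for good reason: the $\gamma$-action on the category is \emph{defined} via the $\gamma$-action on the modules $\mathcal{H}_\lambda$ (and hence on conformal blocks), so the isomorphisms $\psi_C:\gamma(C)\to C$ used in Proposition~\ref{p:tracesonhomspaces} are precisely those coming from the $\gamma$-action on $\mathcal{H}_\lambda$ that Hong uses. There is no independent choice to reconcile; the identification is $\gamma$-equivariant by construction. So while your caution is reasonable in general twisted-fusion-ring settings, here the compatibility is tautological rather than a separate verification.
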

	
	Note that if $\gamma$ is an automorphism of order $m$ we get an action of a finite cyclic group $\langle\gamma\rangle$ on $\frg$ which preserves a Borel subalgebra. Hence by Theorem \ref{thm:oneofmain} we get a $\langle\gamma\rangle$-crossed modular fusion category $$\Ccal(\frg,\langle\gamma\rangle,\ell)=\Ccal_1\oplus\Ccal_{\gamma}\oplus\cdots\oplus\Ccal_{\gamma^{m-1}}.$$ By \cite{ENO:10} we know that we have a correspondence between modular autoequivalences of $\Ccal_1$ and invertible $\Ccal_1$-module categories. By the same work we also know that the invertible $\Ccal_1$-module category $\Ccal_\gamma$ is precisely the one corresponding to the modular autoequivalence $\gamma$ of $\Ccal_1=\Ccal(\frg,\ell)$.
	
	\subsection{Twisted conformal blocks and categorical crossed S-matrices}\label{sec:crossedmatrix} 
	In this section, we describe how to  relate  the normalized character table  for the fusion ring $\mathcal{R}_{\ell}(\frg,\sigma)\cong \Rcal_{\ell}(\frg,\gamma)$ associated to diagram automorphisms $\sigma$ (resp. automorphism $\gamma$ in the diagram automorphism class of $\sigma$) at level $\ell$ as discussed in Section \ref{section:hongfusion} to compute the categorical crossed S-matrix using Theorem \ref{thm:oneofmain}.

	Let $\mathcal{F}$ be a fusion ring coming from fusion rules with basis $I$ and let $S$ be the set of characters. Consider a matrix $\Sigma$ whose rows are parameterized by $S$ and columns are parameterized by $I$. For $(\chi,\lambda)\in S\times I$, set: $\Sigma_{\chi,\lambda}:=\chi(\lambda).$
	
	Consider the element $\omega=\sum_{\lambda \in I}\lambda\lambda^*$ 
	and the diagonal
	matrix $D_{\omega}$ whose entries are $\sum_{I}|\chi(\lambda)|^{2}$ for $\chi \in S$.  Now we get $\Sigma \cdot \overline{\Sigma}^{t}=D_{\omega}.$ 
	\begin{definition}\label{def:normalizedcharactertablefortwistedfusionofconf}We define the normalized character table $\Sigma'$  of the ring $\mathcal{R}_{\ell}(\frg,\sigma)$  to be the matrix $D_{\omega}^{-\frac{1}{2}}\Sigma$. 
	\end{definition}
	
	A formula for $D_{\omega}$ will be discussed later as a Weyl denominator. Recall by Proposition \ref{prop:mainhong1}, the rows of the matrix $\Sigma'$ are parameterized by the set $T^{reg}_{\sigma,\ell}/W_{\sigma}$, where 
	$$T_{\sigma,\ell}:=\{t \in T_{\sigma}|e^{\alpha}(t)=1 \  \mbox{for} \ \alpha \in (\ell+h^{\vee})\iota(Q^{\sigma}(\mathfrak{g}))\},$$ where $T_{\sigma}$ is a maximal torus of the orbit Lie algebra $\mathfrak{g}_{\sigma}$ and $\iota(Q^{\sigma})$ be as in equation \ref{lemma:iotamagic}. 
	
	Let $\beta :P^\ell(\frg,\gamma)\cong P^{\ell}(\mathfrak{g}(A))\rightarrow T^{reg}_{\sigma,\ell}/W_{\sigma}$ be the natural bijection given by Proposition \ref{prop:vitalbijectionthathongsshouldhavewrittenproperly} and Equation \ref{eqn:bijfromarbtodia} in Section \ref{sec:arbtodia}, where $A$ is the Cartan matrix type associated to the Lie algebra $\frg$ and the diagram automorphism $\sigma$. The columns of the matrix $\Sigma'$ are parameterized by the set $P_{\ell}(\frg)^{\sigma}$. 
	\begin{remark}
		Note that by proposition \ref{prop:mainhong1}, the column of $\Sigma'$ corresponding to the unit in $I$ is real positive.
	\end{remark}
	
	The following Proposition relates the categorical crossed S-matrices with the character table. 
	\begin{proposition}\label{prop:twistedviaKMtwisted}For any $\gamma \in \Gamma$, the categorical $\gamma$-crossed $S$-matrix $S^{\gamma}$ of the $\Gamma$-crossed modular fusion category given by Theorem \ref{thm:oneofmain} and the normalized character table of the ring $\Rcal_{\ell}(\frg,\sigma)$ (where $\sigma$ is the diagram automorphism associated to $\gamma$) are related by the formula 
		\begin{equation}\label{eq:categoricalsmatrixequalskacmoody}
		S^\gamma_{\lambda,\mu}=\pm\Sigma'_{\beta(\lambda),\mu},
		\end{equation} where the sign $\pm$ only depends on $\gamma$  as in Lemma \ref{lem:sign}. Here we have used the natural bijection (see Section \ref{sec:arbtwistedfusion}) between $P_{\ell}(\frg)^{\gamma}\cong P_{\ell}(\frg)^{\sigma}$ to identify the columns of both matrices.
	\end{proposition}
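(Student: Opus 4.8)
The plan is to recognize both $S^{\gamma}$ and $\Sigma'$ as unitary character tables of one and the same ring, and then to match them row by row. First I would invoke Proposition \ref{cor:corofoneofmain}, which identifies the categorical twisted fusion ring $\KC(\Ccal(\frg,\ell),\gamma)=\KC(\Ccal_1,\gamma)$ with the twisted Kac--Moody fusion ring $\Rcal_{\ell}(\frg,\gamma)$; since $\Rcal_{\ell}(\frg,\gamma)$ depends only on the diagram automorphism class $\sigma$ of $\gamma$ (Section \ref{sec:arbtwistedfusion}), this ring is $\Rcal_{\ell}(\frg,\sigma)$. The identifying isomorphism is the canonical one matching the two bases, both indexed by the $\gamma$-fixed (equivalently $\sigma$-fixed) level-$\ell$ weights: on the categorical side the basis $\{[C,\psi_C]\mid C\in P_1^{\gamma}\}$ with $P_1^{\gamma}\cong P_{\ell}(\frg)^{\gamma}\cong P_{\ell}(\frg)^{\sigma}$, and on the Kac--Moody side the basis $P_{\ell}(\frg)^{\sigma}$. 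Thus the columns of the two matrices are canonically identified.

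Next I would record that both matrices are, up to a rescaling of each row, the raw character table of this common ring. By Theorem \ref{thm:chartableoftwistedfusion} the categorical crossed $S$-matrix satisfies $\frac{S^{\gamma}_{M,C}}{S^{\gamma}_{M,\unit}}=\chi_M([C,\psi_C])$, so its rows are the irreducible characters $\{\chi_M\mid M\in P_{\gamma}\}$, each normalized to take the value $1$ on the unit and then rescaled by the real scalar $S^{\gamma}_{M,\unit}=\dim M/\sqrt{\dim\Ccal_1}$ of Remark \ref{rk:0column}. On the other side $\Sigma'=D_{\omega}^{-1/2}\Sigma$ is Hong's table $\Sigma_{\chi,\lambda}=\chi(\lambda)$ (Proposition \ref{prop:mainhong1}), whose characters are likewise normalized to $1$ on the unit weight $0$ and rescaled by the real positive diagonal $D_{\omega}^{-1/2}$. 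The identification of the $M$-th categorical character with Hong's character indexed by $\beta(M)$ is precisely the bijection $\beta$ of Proposition \ref{prop:vitalbijectionthathongsshouldhavewrittenproperly} composed with $P_{\gamma}=P^{\ell}(\frg,\gamma)\cong P^{\ell}(\GA)$; I would confirm it by comparing the explicit Weyl-sum (Kac--Peterson) expressions underlying the two character tables, so that the unnormalized rows coincide after the identifications. Consequently $S^{\gamma}_{M,\cdot}=c_M\,\Sigma'_{\beta(M),\cdot}$ for a scalar $c_M$ depending only on the row $M$.

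Finally I would pin down $c_M$. Both $S^{\gamma}$ (Theorem \ref{thm:crossedsmatrixchartable}(ii), Definition \ref{d:normalizedcrossedsmatrix}) and $\Sigma'$ (from $\Sigma\overline{\Sigma}^{t}=D_{\omega}$, which gives $\Sigma'\,\overline{\Sigma'}^{\,t}=I$) are unitary, so every row has unit length and hence $|c_M|=1$. Comparing the unit columns, $S^{\gamma}_{M,\unit}=\dim M/\sqrt{\dim\Ccal_1}$ is real while $\Sigma'_{\beta(M),0}=(D_{\omega}^{-1/2})_{\beta(M)}$ is real and positive; therefore $c_M$ is real, forcing $c_M=\pm1$. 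The main obstacle is the last point: showing that this sign is constant over the whole component, i.e.\ depends only on $\gamma$ and not on $M$. This reduces to controlling the sign of the twisted-sector dimensions $\dim M$ against Hong's normalization $D_{\omega}^{-1/2}$, equivalently to comparing the chosen positive square root $\sqrt{\dim\Ccal_1}$ with the Weyl-denominator normalization (including the factor $i^{|\mathring{\Delta}_{+}|}$ occurring in the Kac $S$-matrices). I would carry out this comparison, and compute the resulting sign, in Lemma \ref{lem:sign}, which then yields the uniform sign asserted in \eqref{eq:categoricalsmatrixequalskacmoody}.
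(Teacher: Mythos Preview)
Your overall strategy matches the paper's: identify $\KC(\Ccal_1,\gamma)\cong\Rcal_\ell(\frg,\gamma)\cong\Rcal_\ell(\frg,\sigma)$ via Proposition~\ref{cor:corofoneofmain}, recognize both $S^\gamma$ and $\Sigma'$ as unitary normalized character tables of this ring, and compare the $\unit$-columns to reduce to a sign per row. Up to this point you are fine.

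The gap is in the final step. You correctly isolate the obstacle---showing the sign $c_M\in\{\pm1\}$ is independent of $M$---but your proposed resolution does not work. Comparing $\sqrt{\dim\Ccal_1}$ with the Weyl-denominator normalization $D_\omega^{-1/2}$ (or the factor $i^{|\mathring\Delta_+|}$) only controls the global normalization; it says nothing about how the categorical dimension $\dim M$ varies with $M\in P_\gamma$. Since $c_M=S^\gamma_{M,\unit}/\Sigma'_{\beta(M),0}$ and $\Sigma'_{\beta(M),0}>0$, the constancy of $c_M$ is exactly the statement that all $\dim M$ for $M\in\Ccal_\gamma$ share the same sign. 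This is a genuine property of the spherical structure on $\Ccal$ and cannot be read off from any comparison of normalizations of character tables.

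The paper supplies this missing ingredient inside the proof of Proposition~\ref{prop:twistedviaKMtwisted}, not in Lemma~\ref{lem:sign}. The argument is categorical: since $\Ccal_1$ has positive categorical dimensions (Huang), the spherical structure on $\Ccal$ induces a compatible normalized $\Ccal_1$-module trace on $\Ccal_\gamma$, and by \cite[\S1.3]{Desh:17} there are exactly two such traces---the Frobenius--Perron trace and its negative. Hence either all $\dim M>0$ or all $\dim M<0$, giving a sign depending only on $\gamma$. Lemma~\ref{lem:sign} then uses this (already established) constancy to show the resulting map $\gamma\mapsto\sign(\gamma)$ is a group homomorphism; it does not itself prove the constancy. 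You have the logical dependence between the proposition and Lemma~\ref{lem:sign} reversed.
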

	\begin{proof}By Proposition  \ref{cor:corofoneofmain}, for $\gamma\in \Gamma$ the categorical crossed fusion ring of $\mathcal{C}(\frg,\Gamma,\ell)$ is isomorphic to $R_{\ell}(\frg,\gamma)$. Now by construction  the categorical $\gamma$-crossed S-matrix $S^\gamma$ at level $\ell$ is the normalized character table of the categorical twisted fusion ring (Theorem \ref{thm:chartableoftwistedfusion}) constructed for each $\gamma \in \Gamma$ for the category $\mathcal{C}(\frg,\Gamma,\ell)$. 
		
		On the other hand the ring $\mathcal{R}_{\ell}(\frg,\gamma)$ and $\mathcal{R}_{\ell}(\frg,\sigma)$ are isomorphic and hence $\Sigma'$ is also the normalized character table of $\Rcal_{\ell}(\frg,\gamma)$. By Remark \ref{rk:0column}, the 0-th column of the categorical crossed S-matrix is real since the entries are the categorical dimensions of the simple objects (divided by the positive real constant $\sqrt{\dim \Ccal_1}$) of $\Ccal_\gamma$. Moreover, by Huang's \cite[Proof of Theorem 4.5]{Huang:08b} computations of the pivotal trace along with Equation 13.8.10 and Remark 13.8 in \cite{Kac}, we know that in the untwisted part $\Ccal_1$ all categorical dimensions are positive, i.e. the spherical structure on the untwisted modular fusion category $\Ccal_1$ is positive. 
		
		Now the spherical structure on $\Ccal_\gamma$ induces a compatible and normalized $\Ccal_1$-module trace on $\Ccal_\gamma$ (see \cite[\S1.3]{Desh:17}). As observed in \cite[\S1.3]{Desh:17} there are exactly two such possible module traces and these two traces differ by a sign. Since $\Ccal_1$ is positive spherical, $\Ccal_\gamma$ has the canonical positive Frobenius-Perron trace. And hence the $\Ccal_1$-module trace on $\Ccal_\gamma$ coming from the spherical structure must be either this one, or its negative. Hence in $\Ccal_\gamma$ it follows that all dimensions must be either all positive or all negative. 
		
		Now both $S^{\gamma}$ and $\Sigma'$ are normalized character table of the same fusion ring with the same basis and set of characters, the matrix $\Sigma'$ has the column corresponding to $0\in P_\ell(\frg)^\gamma$ positive real, while in the matrix $S^\gamma$ the 0-th column is either all positive or all negative. Hence $S^{\gamma}$ and $\Sigma'$ must agree up to a sign which only depends on $\gamma$.
	\end{proof}
	\begin{lemma}\label{lem:sign}
		The sign in Proposition \ref{prop:twistedviaKMtwisted} defines a group homomorphism $\sign:\Gamma\to \{\pm 1\}$.
	\end{lemma}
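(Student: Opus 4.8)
The plan is to reinterpret the sign attached to $\gamma$ purely in terms of categorical dimensions and then exploit their multiplicativity. By Remark \ref{rk:0column}, the $\unit$-th column of the normalized crossed S-matrix $S^\gamma$ is $S^\gamma_{M,\unit}=\dim M/\sqrt{\dim\Ccal_1}$, and in the proof of Proposition \ref{prop:twistedviaKMtwisted} it was established that all simple objects of a single component $\Ccal_\gamma$ have categorical dimensions of one common sign. Since the comparison matrix $\Sigma'$ has its $0$-th column real and positive, the sign in \eqref{eq:categoricalsmatrixequalskacmoody} is exactly this common sign of $\{\dim M\mid M\in P^\ell(\frg,\gamma)\}$; that is, $\sign(\gamma)=+1$ precisely when the spherical categorical dimensions on $\Ccal_\gamma$ are positive. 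In particular, since $\Ccal_1$ is positive spherical (as recalled in that proof from Huang), we have $\sign(1)=+1$.

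First I would record that $\dim\colon\KQab(\Ccal)\to\Qab$ is a character (Section \ref{sec:catcrosssmat}), hence multiplicative on the Grothendieck ring. Given $\gamma_1,\gamma_2\in\Gamma$, I would use the faithfulness of the grading to choose simple objects $A\in\Ccal_{\gamma_1}$ and $B\in\Ccal_{\gamma_2}$, so that $A\otimes B\in\Ccal_{\gamma_1\gamma_2}$. Writing $[A]\cdot[B]=\sum_{C\in P_{\gamma_1\gamma_2}}\nu_{A,B}^C[C]$, multiplicativity of $\dim$ gives
\[
\dim(A)\,\dim(B)=\sum_{C\in P_{\gamma_1\gamma_2}}\nu_{A,B}^C\,\dim(C).
\]

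Each $\dim(C)$ is a nonzero real number of sign $\sign(\gamma_1\gamma_2)$, the coefficients $\nu_{A,B}^C$ are non-negative integers, and $A\otimes B\neq 0$ (rigidity forces the tensor product of nonzero objects to be nonzero), so at least one coefficient is strictly positive. Hence the right-hand side equals $\sign(\gamma_1\gamma_2)$ times a positive real, while the left-hand side has sign $\sign(\gamma_1)\sign(\gamma_2)$. Comparing signs yields $\sign(\gamma_1\gamma_2)=\sign(\gamma_1)\sign(\gamma_2)$, which together with $\sign(1)=+1$ shows that $\sign$ is a group homomorphism. The only point requiring care — and it is already settled in the proof of Proposition \ref{prop:twistedviaKMtwisted} — is that the categorical dimensions within a fixed graded component share a common sign; once this is in hand the argument is immediate, so I anticipate no serious obstacle.
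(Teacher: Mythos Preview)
Your proof is correct and follows essentially the same approach as the paper: both identify $\sign(\gamma)$ with the common sign of categorical dimensions in $\Ccal_\gamma$ and then use multiplicativity of $\dim$ under tensor product to conclude. You simply spell out in more detail the step the paper compresses into the single equation $\dim(A_1)\cdot\dim(A_2)=\dim(A_1\dotimes A_2)$, including the observation that $A\otimes B\neq 0$ so some fusion coefficient is positive.
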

	\begin{proof}
		Since the categorical dimensions of all objects in the untwisted part $\Ccal_1$ are positive, the dimensions of objects in $\Ccal_\gamma$ have the same sign which we denote $\sign{(\gamma)}$. Now given an object $A_1\in \Ccal_{\gamma_1}, A_2\in \Ccal_{\gamma_2}$ we have $\dim(A_1)\cdot\dim(A_2)=\dim(A_1\dotimes A_2)$. Hence we conclude that $\sign:\Gamma\to {\pm 1}$ is a group homomorphism.
	\end{proof}
	
	\begin{corollary}\label{cor:signinverlinde}
		In the Verlinde formula Theorem \ref{conj:main1} we could take the matrices $\Sigma'$ (from Definition \ref{def:normalizedcharactertablefortwistedfusionofconf}) in place of the categorical crossed S-matrices. In particular, for our purposes, we can ignore the sign in Proposition \ref{prop:twistedviaKMtwisted}.
	\end{corollary}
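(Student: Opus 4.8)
The final statement to prove is Corollary~\ref{cor:signinverlinde}, which asserts that in the Verlinde formula of Theorem~\ref{conj:main1} one may replace the categorical crossed $S$-matrices $S^{\gamma}$ by the normalized character tables $\Sigma'$ of the Kac-Moody twisted fusion rings $\mathcal{R}_{\ell}(\frg,\sigma)$, and that the sign discrepancy from Proposition~\ref{prop:twistedviaKMtwisted} may be ignored. The plan is to trace how the signs $\sign(m_i)$ enter each factor of the Verlinde product and verify that they cancel globally.

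\textbf{Setup and first step.} By Proposition~\ref{prop:twistedviaKMtwisted} we have $S^{m_i}_{\lambda_i,\mu}=\sign(m_i)\,\Sigma'_{\beta(\lambda_i),\mu}$ for each marked point, where by Lemma~\ref{lem:sign} the map $\sign:\Gamma\to\{\pm1\}$ is a group homomorphism. The denominator involves $S_{0,\mu}=S^{1}_{0,\mu}$, the untwisted $S$-matrix entry, for which $\sign(1)=+1$ since $\Ccal_1$ is positive spherical; thus replacing $S^1$ by its $\Sigma'$-analogue introduces no sign. Hence the first step is simply to substitute these relations into the right-hand side of Theorem~\ref{conj:main1} and collect the accumulated sign.

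\textbf{Sign bookkeeping.} After substitution, the product of numerators $S^{m_1}_{\lambda_1,\mu}\cdots S^{m_n}_{\lambda_n,\mu}$ acquires the total factor $\sign(m_1)\cdots\sign(m_n)=\sign(m_1\cdots m_n)$, using that $\sign$ is a homomorphism. The crucial arithmetic input is the monodromy constraint built into the geometry: in the three-pointed case (Theorem~\ref{thm:mainminusone}) one has $m_1\cdot m_2\cdot m_3=1$, and in the general genus-$g$, $n$-pointed case (Theorem~\ref{conj:main1}), the defining relation of $\pi_1(C\setminus\mathbf{p},\star)$ forces $[\alpha_1,\beta_1]\cdots[\alpha_g,\beta_g]\,m_1\cdots m_n=1$. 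Since each commutator maps under the homomorphism $\sign$ (valued in the abelian group $\{\pm1\}$) to $\sign([\alpha_j,\beta_j])=+1$, the product of monodromies satisfies $\sign(m_1\cdots m_n)=\sign\big([\alpha_1,\beta_1]\cdots[\alpha_g,\beta_g]\,m_1\cdots m_n\big)=\sign(1)=+1$. Therefore the total sign collected from all the crossed $S$-matrix factors in the numerator is $+1$, and the denominator contributes none, so the formula is unchanged under the replacement $S^{\gamma}\rightsquigarrow\Sigma'$.

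\textbf{Main obstacle and conclusion.} The only subtle point—hence what I expect to be the main obstacle—is confirming that every element of $\Gamma$ whose $S$-matrix appears actually occurs as a monodromy $m_i$ (or is a product thereof) and that the summation index $\mu\in P_\ell(\frg)^{\Gamma^\circ}$ is unaffected by the row-rescaling ambiguity of the crossed $S$-matrices; but this is precisely what the homomorphism property in Lemma~\ref{lem:sign} and the compatibility of choices guarantee, since both $S^\gamma$ and $\Sigma'$ are normalized character tables of the \emph{same} fusion ring with the \emph{same} basis and character set. Because the $\unit$-column of $\Sigma'$ is real positive (Remark following Proposition~\ref{prop:mainhong1}) while the $\unit$-column of $S^\gamma$ is uniformly signed, the two matrices differ only by the global factor $\sign(\gamma)$, with no per-entry ambiguity once the compatible choice of Corollary~\ref{cor:2cocyclerelations} and Remark~\ref{rk:ncocycle} is fixed. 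Combining these observations, the Verlinde sums computed with $S^\gamma$ and with $\Sigma'$ coincide term by term, which is exactly the assertion of the corollary.
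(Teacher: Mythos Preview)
Your proof is correct and follows essentially the same approach as the paper: apply the homomorphism $\sign:\Gamma\to\{\pm1\}$ to the fundamental-group relation $[a_1,b_1]\cdots[a_g,b_g]\,m_1\cdots m_n=1$, note that commutators die in the abelian target, and conclude $\prod_i\sign(m_i)=1$ so all signs cancel. The paper's version is more terse, but the argument is the same.
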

	\begin{proof}
		Note that the elements $m_1,\cdots ,m_n\in \Gamma$ that appear in the Verlinde formula satisfy a relationship of the form
		$[a_1,b_1]\cdots[a_g,b_g]m_1\cdots m_n=1.$
		
		Applying the $\sign$ homomorphism to the relationship $[a_1,b_1]\cdots[a_g,b_g]m_1\cdots m_n=1$ we obtain that $$\sign(m_1)\cdots \sign(m_n)=1.$$ In other words if we change all the crossed S-matrices $S^{m_i}$ by a sign, all the signs will cancel and we will get the same answer.
	\end{proof}

	\section{Explicit description of $\Sigma'$}Recall that $\Sigma'$ is the normalized character table of the fusion ring $\mathcal{R}_{\ell}(\frg,\sigma)$ whose rows are parameterized (Proposition \ref{prop:mainhong1}) by $T^{reg}_{\sigma,\ell}/W_{\sigma}$ and columns by $P_{\ell}(\frg)^{\sigma}$, where $\sigma$ is a diagram automorphism of $\frg$ and $\frg(A)$ is the twisted affine Kac-Moody Lie algebra associated to $(\frg,\sigma)$.

	The Weyl character formula and the proof of Lemma 9.7 in \cite{BeauHirz} and \cite{Hong}, we get
	\begin{equation}
	\label{eqn:J}
	D_{\omega}(t)={|T_{\sigma,\ell}|}\bigg(\prod_{\alpha \in \Delta_{\sigma,+}}(e^{\frac{\alpha}{2}}(t)-e^{-\frac{\alpha}{2}}(t))\bigg)^{-2}, 
	\end{equation} where $D_{\omega}$ as in Section \ref{sec:crossedmatrix}.
	
	By the definition of $\Sigma'$ and  the Weyl character formula, we get the following: 
	\begin{equation}
	\label{eqn:normalizedcharacter}
	\Sigma'_{t, \mu}={|T_{\sigma,\ell}|}^{-\frac{1}{2}}\frac{\prod_{\alpha \in \Delta_{\sigma,+}}|e^{\frac{\alpha}{2}}(t)-e^{-\frac{\alpha}{2}}(t)|}{\prod_{\alpha \in \Delta_{\sigma,+}}(e^{\frac{\alpha}{2}}(t)-e^{-\frac{\alpha}{2}}(t))}\bigg(\sum_{w\in W_{\sigma}}\epsilon(w)e^{(w(\iota(\mu)+\overline{\rho}_{\sigma}))}(t)\bigg),
	\end{equation}
	where $\overline{\rho}_{\sigma}$ is the sum of the fundamental weights of $\frg_{\sigma}$ and $W_{\sigma}$ is the Weyl group of $\frg_{\sigma}$. 
	Recall that by Proposition \ref{prop:vitalbijectionthathongsshouldhavewrittenproperly} there is a natural bijection $\beta :P^{\ell}(\mathfrak{g}(A))\rightarrow T^{reg}_{\sigma,\ell}/W_{\sigma}$, where $\frg(A)$ is a twisted affine Kac-Moody Lie algebra. 
	\begin{proposition}
		\label{prop:charactertableviaSmatrix}Let $\gamma$ be a finite order automorphism of $\frg$ and $\sigma$ be the associated diagram automorphism. Let  $\Delta_{\sigma}$ denote the set of roots of the Lie algebra $\frg_{\sigma}$. Then the relations between the matrix $\Sigma'$ and the twisted Kac-Moody $S$-matrices are given by the following:
		$$\sign(\gamma)S^{\gamma}_{\lambda,\mu}=\Sigma'_{\beta(\lambda),\mu}=i^{|\mathring{\Delta}_{+}|}\bigg(\frac{\prod_{\alpha \in \Delta_{\sigma,+}}|e^{\frac{\alpha}{2}}(\beta(\lambda))-e^{-\frac{\alpha}{2}}(\beta(\lambda))|}{\prod_{\alpha \in \Delta_{\sigma,+}}(e^{\frac{\alpha}{2}}(\beta(\lambda))-e^{-\frac{\alpha}{2}}(\beta(\lambda)))}\bigg)\overline{{\mathscr{S}}_{\lambda,\iota(\mu)}^{(\ell)}},$$ where $\iota$ is the natural bijection between $P_\ell(\frg)^{\sigma}  $ and $P_\ell(\frg_{\sigma})$ and ${\mathscr{S}}^{(\ell)}_{\lambda,\iota(\mu)}$ denote the twisted Kac-Moody $S$-matrices given in Equations  \ref{{eqn:SmatrixforA2n-12}}, \ref{{eqn:SmatrixforDn+12}},\ref{{eqn:SmatrixforD43}},\ref{{eqn:SmatrixforE62}} and \ref{{eqn:SmatrixforA2n2}}, and $\sign(\gamma)$ is the sign from Lemma \ref{lem:sign}. 
	\end{proposition}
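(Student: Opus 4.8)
The proposition bundles two assertions, and I would establish them separately. \textbf{The first equality} $\sign(\gamma)S^{\gamma}_{\lambda,\mu}=\Sigma'_{\beta(\lambda),\mu}$ is a bookkeeping consequence of results already in hand. Proposition \ref{prop:twistedviaKMtwisted} gives $S^{\gamma}_{\lambda,\mu}=\pm\Sigma'_{\beta(\lambda),\mu}$ with a sign depending only on $\gamma$, and Lemma \ref{lem:sign} identifies that sign with $\sign(\gamma)$. Since $\sign(\gamma)\in\{\pm1\}$, multiplying through by $\sign(\gamma)$ gives the claim. Moreover, by the observation in Section \ref{sec:arbtwistedfusion} that $\mathcal{R}_\ell(\frg,\gamma)\cong\mathcal{R}_\ell(\frg,\sigma)$, the normalized character table $\Sigma'$ for $\gamma$ agrees with that for the associated diagram automorphism $\sigma$, so for the remaining (genuinely computational) equality it suffices to work with $\sigma$ and to use the identification $P^{\ell}(\frg,\gamma)\leftrightarrow P^{\ell}(\GA)$ of \eqref{eqn:bijfromarbtodia}.

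\textbf{For the second equality} the plan is to start from the closed form \eqref{eqn:normalizedcharacter} of $\Sigma'_{t,\mu}$ and evaluate it at the torus element $t=\beta(\lambda)$ produced by the bijection $\beta\colon P^{\ell}(\GA)\xrightarrow{\sim}T^{\reg}_{\sigma,\ell}/W_\sigma$ of Proposition \ref{prop:vitalbijectionthathongsshouldhavewrittenproperly}. Feeding in the explicit exponential form of $\beta$ (namely $\beta(\lambda)=\exp\!\big(\tfrac{2\pi i}{\ell+h^{\vee}}\nu_{\frg(A)}(\lambda+\overline{\rho})\big)$ in type $A_{2n}^{(2)}$, and the analogous coweight exponential through $\check P_{\sigma,\ell}$ in the remaining four types), each summand $e^{w(\iota(\mu)+\overline{\rho}_\sigma)}(\beta(\lambda))$ becomes $\exp\!\big(\tfrac{2\pi i}{\ell+h^{\vee}}\kappa_{\frg}(w(\iota(\mu)+\overline{\rho}_\sigma),\lambda+\overline{\rho})\big)$, where the pairing of a $\frg_\sigma$-weight with a $\GO$-weight is exactly the Langlands-duality map $\tau_\kappa$ relating the orbit algebra $\frg_\sigma$ and the horizontal algebra $\GO$ (these coincide only for $A_{2n}^{(2)}$). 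Using $W_\sigma=\mathring W$-invariance of $\kappa_{\frg}$ and relabelling $w\mapsto w^{-1}$, I would move the Weyl element onto the second argument to obtain $\sum_{w}\ep(w)\exp\!\big(\tfrac{2\pi i}{\ell+h^{\vee}}\kappa_{\frg}(\iota(\mu)+\overline{\rho}_\sigma,\,w(\lambda+\overline{\rho}))\big)$, which is precisely the Weyl-orbit sum defining the twisted Kac-Moody $S$-matrices of Section \ref{section:crossedS}, but with the \emph{opposite} sign in the exponent. Since $\ep(w)$ and $\kappa_{\frg}$ are real, this sign reversal is exactly what converts the sum into the complex conjugate, producing $\overline{\mathscr{S}^{(\ell)}_{\lambda,\iota(\mu)}}$.

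It then remains to reconcile the scalar prefactors. The Weyl-denominator ratio $\tfrac{\prod_{\alpha\in\Delta_{\sigma,+}}|e^{\alpha/2}(\beta(\lambda))-e^{-\alpha/2}(\beta(\lambda))|}{\prod_{\alpha\in\Delta_{\sigma,+}}(e^{\alpha/2}(\beta(\lambda))-e^{-\alpha/2}(\beta(\lambda)))}$ is a unimodular phase, and I would observe that on the right-hand side $\overline{\mathscr{S}^{(\ell)}_{\lambda,\iota(\mu)}}$ carries a conjugated factor $\overline{i^{|\mathring\Delta_+|}}=(-i)^{|\mathring\Delta_+|}$, so that the explicit $i^{|\mathring\Delta_+|}$ in the statement cancels it; what survives is the normalization constant of the $S$-matrix formula, which must be matched against the $|T_{\sigma,\ell}|^{-1/2}$ coming from \eqref{eqn:normalizedcharacter} and \eqref{eqn:J}. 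This reduces to the finite-group order identity $|T_{\sigma,\ell}|=|M^*/(\ell+h^{\vee})\mathring Q|$ (respectively $\ell+2n+1$ for $A_{2n}^{(2)}$), which follows because $T_{\sigma,\ell}$ is by definition the kernel lattice $\{t: e^\alpha(t)=1,\ \alpha\in(\ell+h^{\vee})\iota(Q^\sigma)\}$, together with the description of $\iota(Q^\sigma)$ in \eqref{lemma:iotamagic}.

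I expect the principal difficulty to be this last step of constant-and-phase bookkeeping rather than any conceptual point: one must verify that, after the $\tau_\kappa$-dualization between the Langlands-dual pair $(\frg_\sigma,\GO)$, the exponent reversal, and the lattice-index normalizations, the leftover scalar is exactly the quoted Weyl-denominator phase times $i^{|\mathring\Delta_+|}$ with no stray root of unity. The delicate case is $A_{2n}$, where \eqref{lemma:iotamagic} replaces $Q(\frg_\sigma)$ by $P(\frg_\sigma)=\tfrac12 Q(\frg_\sigma)$, potentially introducing an index factor of the kind $|\nu_\frg(\mathring Q^\vee)/\mathring Q|^{1/2}$ recorded in Proposition \ref{prop:crossedvsuncrossed}; the cleanest route is therefore to carry out the comparison type-by-type against the five explicit formulas of Section \ref{section:crossedS}, since the leading constants differ slightly across the types $A_{2n-1}^{(2)}, D_{n+1}^{(2)}, E_6^{(2)}, D_4^{(3)}, A_{2n}^{(2)}$.
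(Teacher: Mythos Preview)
Your proposal is correct and follows essentially the same route as the paper: the first equality is dispatched via Proposition \ref{prop:twistedviaKMtwisted} and Lemma \ref{lem:sign}, and the second by evaluating the closed form \eqref{eqn:normalizedcharacter} at $t=\beta(\lambda)$, recognizing the resulting Weyl-orbit exponential sum as the complex conjugate of the twisted Kac-Moody $S$-matrix sum, and then matching the scalar prefactors type by type. One small inversion worth noting: you flag $A_{2n}^{(2)}$ as the delicate case, but in fact it is the simpler one since $\GO=\frg_\sigma=C_n$ there (so no Langlands-dual $\tau_\kappa$ is needed and $\beta$ has the direct exponential form you quoted); it is the remaining four types $A_{2n-1}^{(2)},D_{n+1}^{(2)},D_4^{(3)},E_6^{(2)}$ that require routing through the coweight set $\check P_{\sigma,\ell}$ and the Kac bijection $\eta'\circ\eta=\iota$ to another twisted type $A'$.
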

	\begin{proof}The first equality is Proposition \ref{prop:twistedviaKMtwisted} and Lemma \ref{lem:sign}. We split up the rest of the proof of the proposition to case by case for the untwisted affine Lie algebras.
		\subsection{The case $A=A_{2n}^{(2)}$}Since $\mathring{\frg}=\mathfrak{g}_{\sigma}$, we get $W_{\sigma}=\mathring{W}$ and $\overline{\rho}_{\sigma}=\overline{\rho}$. Let $J_{\iota(\mu)}(\beta(\lambda))=\sum_{w\in W_{\sigma}}\epsilon(w)e^{(w(\iota(\mu)+\overline{\rho}_{\sigma}))}(\beta(\lambda))$. In this case, the bijection $\beta$ is given by the following $\beta(\lambda)=\exp(\frac{2\pi i} {\ell+h^{\vee}}(\nu_{\frg(A)}(\lambda+\overline{\rho})))$, where $h^{\vee}$ is the dual Coxeter number. Hence we get 
		\begin{equation}
		\label{eqnarray:mainfuldeduction1}
		J_{\iota(\mu)}(\beta(\lambda))=\sum_{w\in \mathring{W}}\epsilon(w)\exp\bigg(\kappa_{\mathfrak{g}(A)}(w(\iota(\mu)+\overline{\rho}),\lambda+\overline{\rho})\bigg)
		=i^{|\mathring{\Delta}_{+}|}(\ell+2n)^{\frac{1}{2}}{\overline{{\mathscr{S}}^{(\ell)}_{\lambda,\iota({\mu})}}(A_{2n}^{(2)})}.
		\end{equation}Hence we get the following 
		\begin{equation}
		\label{equation:characterfora2n2n}
		\Sigma'_{\beta(\lambda),\mu}(\ell)=i^{|\mathring{\Delta}_{+}|}\bigg(\frac{\prod_{\alpha \in \Delta_{\sigma,+}}|e^{\frac{\alpha}{2}}(\beta(\lambda))-e^{-\frac{\alpha}{2}}(\beta(\lambda))|}{\prod_{\alpha \in \Delta_{\sigma,+}}(e^{\frac{\alpha}{2}}(\beta(\lambda))-e^{-\frac{\alpha}{2}}(\beta(\lambda)))}\bigg)\overline{{\mathscr{S}}^{(\ell)}_{\lambda,\iota({\mu})}}(A_{2n}^{(2)}).
		\end{equation}
		This completes the proof in the case $A=A_{2n}^{(2)}$. 
	\end{proof}

	\subsection {The case $A$ is either $A_{2n-1}^{(2)}$, $D_{n+1}^{(2)}$, $D_{4}^{(3)}$, $E_6^{(2)}$}In these cases the Lie algebras $\mathring{\frg}$ and the $\mathfrak{g}_{\sigma}$ are Langlands dual. In particular, $\mathring{W}=W_{\sigma}$ and $\mathring{Q}^{\vee}$ is equal to $Q(\mathfrak{g}_{\sigma})$ and vice versa. As before, let $\beta: P^{\ell}(\frg(A))\rightarrow T^{reg}_{\sigma,\ell}$ be the natural bijection. Then define a function $J_{A}$ from the set $T^{reg}_{\sigma,\ell}\times P^{\ell}(\mathfrak{g})^{\sigma}$ given by the formula 
	$$ J_{A}(\beta(\lambda), \mu)=\sum_{w\in \mathring{W}}\epsilon(w)\exp\bigg((w(\iota(\mu)+\overline{\rho}))(\beta(\lambda))\bigg).$$
	Let $\check{P}_{\ell}(A)=\{\check{\lambda}\in \check{P}(\mathring{\frg})|\theta(\check{\lambda})\leq \ell\}$, where $\theta$ is the highest root of $\mathring{\frg}$. There is a natural bijection $\eta: P^{\ell}(\frg)^{\sigma}\rightarrow \check{P}_{\ell}(A)$. Similarly define another function $J'_A$ on $P^{\ell}(\frg(A))\times \check{P}_{\ell}(A)$ by the formula:
	$$ J'_{A}(\tilde{\lambda}, \tilde{\mu})=\sum_{w\in {W}_{\sigma}}\epsilon(w)\exp\frac{2\pi i }{\ell+h^{\vee}}\bigg((w(\lambda+\overline{\rho}))(\tilde{\mu}+\check{\overline{\rho}})\bigg),$$ where $\overline{\rho}$ and $\check{\rho}$ are the sum of the fundamental weights (respectively coweights) of the Lie algebra $\mathring{\frg}$. By a direct calculation, we can check that 
	$J_A(\beta(\lambda),\mu)=J'_A(\lambda,\eta(\mu)).$
	
	To a Cartan matrix $A$ of types $A_{2n-1}^{(2)}$, $D_{n+1}^{(2)}$, $E_6^{(2)}$, V. Kac \cite{Kac} associates another Cartan matrix $A'$ of type $D_{n+1}^{(2)},A_{2n-1}^{(2)}$, $E_6^{(2)}$ respectively. Since $\frg_{\sigma}$ and $\mathring{\frg}$ are Langlands dual, we observe by the calculations in Appendix \ref{section:crossedS} that there is a natural  bijection $\eta': \check{P}_{\ell}(A)\rightarrow P^{\ell}(\frg(A'))$. In particular:
	\begin{equation}
	\label{eqn:complicatedcalculationthattookwednesday}J_{A}(\beta(\lambda),\mu)=i^{|\mathring{\Delta}_{+}|}|T_{\sigma,\ell}|^{\frac{1}{2}}\overline{{\mathscr{S}}^{(\ell)}_{\lambda,\eta'\circ\eta({\mu})}}(A).
	\end{equation}Substituting Equation \eqref{eqn:complicatedcalculationthattookwednesday} in Equation \eqref{eqn:normalizedcharacter},  we get the required result by observing that $\iota=\eta'\circ\eta$.

	\section{Atiyah algebra of the twisted WZW connection on $\GModhat$}\label{sec:atiyahmitcoordinates}
	In this section, we extend the twisted WZW connection on $\mathbb{V}_{\vec{\lambda},\Gamma}$ on the compact moduli $\GMod$ to a projective connection with logarithmic singularities. We also compute the Atiyah algebra \cite{BS,Tsuchimoto} of the connection. The corresponding result in the untwisted case is due to Tsuchimoto \cite{Tsuchimoto}.
	
	Let $\pi: \widetilde{C}\rightarrow C $ be a $\Gamma$-cover of
	nodal curves and let ${\bf{p}}=(p_1,\dots,p_n)$ be a sequence of $n$-distinct points of $C$ and $\widetilde{\bf{p}}$ be a lift of $\bf{p}$ to
	$\widetilde{C}$.  Let $\widetilde{p}_i$ be a lift of $p_i$ with stabilizer $\Gamma_i=\langle m_i\rangle$ of order $N_i$. We have the following definition:
	\begin{definition}\label{def:specialformal}
		A formal coordinate of $\widetilde{p}_i$ is called { special} if $m_i. \widetilde{z}_i=\exp(2\pi{\sqrt{-1}/N_i})\widetilde{z}_i$. 
	\end{definition}
	
	Let ${\widetilde{\bf{z}}}=(\widetilde{z}_1,\dots,\widetilde{z}_n)$ be an $n$-tuple of formal special coordinates of $\widetilde{\bf{p}}$ and ${\bf{z}}=(z_1,\dots,z_n)$ denote an $n$-tuple of formal coordinates of ${\bf{p}}$. The local coordinates $\wtilde{\mathbf{z}}$ and ${\bf{z}}$ are related by
	the formula $z_i=\widetilde{z}_i^{N_i},$ where $N_i$ is the order of the stabilizer of $\Gamma_i$ at the points $\widetilde{p}_i$.
	
	The functor  $\GMod$ along with a choice of formal special coordinates will be denoted by $\GModhat$. Similarly, we denote 
	the functor $\GMod$ along with a choice of one-jets along the marked points by $\GModtilde$. We refer the reader to Appendix \ref{sec:localcoord} for the definitions and the associated morphisms.

	Let ${\bf m}=(m_1,\dots,m_n)$ be an $n$-tuple of elements of $\Gamma^n$. The coherent sheaf of twisted conformal blocks on $\GModhat({\bf m})$ along with choice of formal coordinates will 
	be denoted by $\mathcal{V}^{\dagger}_{\vec{\lambda},\Gamma}(\widetilde{C},C, \widetilde{\bf{p}}, {\bf{ p}}, \widetilde{{\bf{z}}})$. We refer the reader to \cite{KH,szcz,WangShen} for more details. In this section, we give another construction of $\mathcal{V}^{\dagger}_{\vec{\lambda},\Gamma}\widetilde{C},C, \widetilde{\bf{p}}, {\bf{ p}}, \widetilde{{\bf{z}}})$ via the Beilinson-Bernstein localization principal following \cite{szcz}. 
	By taking invariant push-forward, we also show that the coherent sheaf $\mathcal{V}_{\vec{\lambda},\Gamma}^{\dagger}(\widetilde{C},C,\widetilde{\bf{p}},\mathbf{p}, \widetilde{{\bf z}})$ descends to a coherent sheaf on $\GModtilde({\bf m})$ which 
	we denote by $\mathcal{V}^{\dagger}_{\vec{\lambda},\Gamma}(\widetilde{C},C,\widetilde{\bf{p}}, {\bf{p}},\widetilde{\bf{v}})$. 

		\subsection{Localization of twisted $\mathcal{D}$-modules} In this section, we briefly recall the definition of Harish-Chandra
		pairs and the Beilinson-Bernstein localization functors \cite{BD3} to produce twisted $\mathcal{D}$-modules. We mostly follow the book of Frenkel and Ben-Zvi \cite{FBz}, however we discuss the adaptation to the logarithmic settings. We refer the reader to \cite[Sections 17.1-17.2]{FBz} or \cite[Definition 6.1]{szcz} for  the definition of a {\em Harish-Chandra pair} $(\mathfrak{s},K)$. 

		Let $Z$ be a smooth variety and let $\Theta_Z$ be the tangent sheaf of $Z$. Let $\Delta_Z$ be a normal crossing divisor in $Z$ and let $\Theta_{Z,\Delta_Z}$ be the sheaf of tangent vector fields preserving the divisor $\Delta_Z$. 
		\begin{definition}	\label{def:harishchandrapari}
			A {\em log $(\mathfrak{s},K)$-action} on  $Z$ is  an action of $K$ on $Z$ that preserves $\Delta_{Z}$  along with a  Lie algebra map $\alpha : \mathfrak{s}\otimes \mathcal{O}_Z\rightarrow \Theta_{Z,\Delta_Z}$, that satisfies the following:
			\begin{enumerate}
				\item The differential of the map $K\times Z\rightarrow Z$ is the restriction of $\alpha$ to $\mathfrak{k}$, where $\mathfrak{k}$ is the Lie algebra of $K$. 
				\item The action intertwines the representation of $K$ on $\mathfrak{s}$ with the action of $K$ on $\Theta_{Z,\Delta_Z}$. 
			\end{enumerate} Motivated by the setting in \cite{FBz}, we say that the $(\mathfrak{s},K)$-action on $(Z,\Delta_Z)$ is {\em log-transitive} if
			$\alpha$ is surjective.  
		\end{definition}
%
		%
		%
		%
		%

		\subsubsection{Assumption of the central extension}Let $\widehat{\mathfrak{s}}$ be a central extension of $\mathfrak{s}$.
		We assume the following: 
		\begin{enumerate}\label{enu:hc1}
			\item\label{assum1} The exact sequence 
			$0\rightarrow \mathcal{O}_Z\rightarrow \widehat{\mathfrak{s}}\otimes \mathcal{O}_Z\rightarrow \mathfrak{s}\otimes \mathcal{O}_Z \rightarrow 0$,
			splits over $\operatorname{Ker}\alpha$. 
			\item\label{assum2} The central extension defining $\widehat{\mathfrak{s}}$ splits over $\mathfrak{k}$ making the pair $(\widehat{\mathfrak{s}},K)$ a new Harish-Chandra pair. 
		\end{enumerate}
		These assumptions on the central extension $\widehat{\mathfrak{s}}$ imply that the quotient of $\widehat{\mathfrak{s}}\otimes \mathcal{O}_Z$ by $\operatorname{Ker}\alpha$ is a logarithmic Atiyah algebra $\mathcal{A}(-\log \Delta_{Z})$ on $Z$ i.e an exact sequence of sheaves of Lie algebras
		\begin{equation}\label{eqn:fundexact}
		0\rightarrow \mathcal{O}_Z \rightarrow \mathcal{A}(-\log \Delta_{Z}) \rightarrow^{\alpha} \Theta_{Z,\Delta_Z}\rightarrow 0.\end{equation}
		The map $\alpha$ is called the anchor map and the exact sequence \eqref{eqn:fundexact} is known as {the fundamental exact sequence} of the Atiyah algebra.
		\subsubsection{Twisted log-localization functor}\label{sec:twistedloc}
		
		Let $\widehat{\mathfrak{l}}$ be any Lie algebra containing $\widehat{\mathfrak{s}}$ and carrying a compatible adjoint $K$ action and suppose $\widetilde{\mathfrak{l}}$ is a subsheaf of $\widehat{\mathfrak{l}}\otimes \mathcal{O}_Z$ that satisfies the following:
		\begin{enumerate}
			\item it is preserved by $\widehat{\mathfrak{s}}\otimes \mathcal{O}_Z$;
			\item it is preserved by the action of $K$;
			\item it contains $\operatorname{Ker}\alpha$, where $\alpha: \mathfrak{s}\otimes \Ocal_Z\rightarrow \Theta_{Z,\Delta_Z}$. 
		\end{enumerate}
		The Beilinson-Bernstein formalism  first takes a $\widehat{\mathfrak{l}}$-module $V$ which further has the structure of a $(\widehat{\mathfrak{s}},K)$-module and considers the following sheaf of $\mathcal{O}_Z$-modules $V\otimes \mathcal{O}_Z/\widetilde{\mathfrak{l}}.(V\otimes \mathcal{O}_Z).$ 
		Then the assumptions guarantee, this is also a $\mathcal{A}(-\log \Delta_{Z})$-module. 
		
		We recall following Section 17.2.5 in \cite{FBz} how to descend the twisted $\mathcal{A}(-\log \Delta_{Z})$ module to a free group quotient $T$ of $Z$ by $K$. 
		Let $Z$ be a principal $K$-bundle over a variety $T$ and let $\Delta_{T}$ be a normal crossing divisor whose inverse image in $Z$ is $\Delta_{Z}$.  We say a pair $(T, \Delta_T)$ has a log-$(\mathfrak{s},K)$ structure if the Harish-Chandra pair $(\mathfrak{s},K)$ acts log-transitively on $Z$ extending the action of $K$ on the fibers of the map $\pi:Z\rightarrow T$. The anchor map $\alpha$ gives a surjective map 
		$$ \overline{\alpha}:\overline{\mathfrak{s}}\rightarrow \Theta_{T,\Delta_T}, \ \ \mbox{where $\overline{\mathfrak{s}}=(\mathfrak{s}/\mathfrak{k}\otimes \mathcal{O}_Z)^{K}$}.$$ We denote the corresponding log-Atiyah algebra by $\mathcal{A}(-\log \Delta_T)$. 
		Further let $\widehat{\mathfrak{s}}$, $\widehat{\mathfrak{l}}$, $\widetilde{\mathfrak{l}}$ and $V$ be as in the beginning of Section \ref{sec:twistedloc} satisfying conditions (1)-(3). 
		Now since $Z$ is a $K$-torsor over $T$, it follows that $(V\otimes \mathcal{O}_Z)/\widetilde{\mathfrak{l}}\cdot(V\otimes \mathcal{O}_Z)$ descends as a $\mathcal{O}_T$-module on $T$ which we denote by $\widehat{BB}(V)$. Moreover, $\widehat{BB}(V)$ is also a $\mathcal{A}(-\log\Delta_T)$-module. 
		\subsubsection{Fibers of $\widehat{BB}(V)$} 
		For a point $t\in T$, consider the set $Z_t=\pi^{-1}(t)$. The group $K$ acts freely transitively on $Z_t$ and we consider the vector space $\mathcal{V}_t:=Z_{t}\times_{K} V$. Similarly for every $t\in T$, set $\widehat{\mathfrak{l}}_t:=Z_{t}\times_{K}\widehat{\mathfrak{l}}$. Since $\widetilde{\mathfrak{l}}$ is preserved by the $K$-action, we also a get a Lie subalgebra $\widetilde{\mathfrak{l}}_t$ of $\widehat{\mathfrak{l}}_t$. The fibers of $\widehat{BB}(V)$ at a point $t\in T$ are the coinvariants 
		$\mathcal{V}_t/\widetilde{\mathfrak{l}}_t\cdot \mathcal{V}_t$.
		
		We will apply the Beilinson-Bernstein localization formalism in the situation $Z=\GModhat({\bf m })\rightarrow \GModtilde({\bf{m}})=T.$ as torsors for a natural group scheme. 
		We now describe these torsors. 
		\subsection{Coordinate Torsors}In this section, we recall (following \cite{FBz}, \cite{FrenkelSzcz}) natural coordinate torsors arising out of 
		ramified cover of disks.  First, 
		we discuss the absolute situation without covers. 
		\subsubsection{Formal coordinate on disks with ramification} Let $R$ be a ring, then following Section 4.1 in Szczesny \cite{szcz}, we let $\mathcal{O}_R$
		(respectively $\mathcal{K}_R$) denote the rings $R[[z]]$ (respectively $R((z))$). 
		Consider the automorphism of the ramified
		covering ${R}[[z^{\frac{1}{N}}]]\rightarrow {R}[[z^{\frac{1}{N}}]]$ given
		by sending $z^{\frac{1}{N}} \rightarrow \varepsilon z^{\frac{1}{N}}$, where $\varepsilon$ is
		an $N$-th root of unity.  Let as before $\operatorname{Aut}R[[z^{\frac{1}{N}}]]$ denote the 
		automorphisms of $R[[z^{\frac{1}{N}}]]$ preserving the ideal $(z^{\frac{1}{N}})$. Let $\operatorname{Aut}_N\mathcal{O}_R$ denote 
		the subgroup of $\operatorname{Aut}(R[[z^{\frac{1}{N}}]])$ preserving the subalgebra $R[[z]]$. Then there is an exact sequence 
		\begin{equation}
		\label{eqn:exactseqence}
		0\rightarrow \mathbb{Z}/N\mathbb{Z}\rightarrow \operatorname{Aut}_N \mathcal{O}_R \rightarrow^{\mu} \operatorname{Aut}\mathcal{O}_R \rightarrow 0.
		\end{equation}
		As in Szczesny \cite{szcz},  we consider the group scheme (ind group-scheme) $\operatorname{Aut}_N \mathcal{O}$ (respectively $\operatorname{Aut}_N\mathcal{K}$) representing $R\rightarrow \operatorname{Aut}_N(\mathcal{O}_R)$ (respectively $R\rightarrow \operatorname{Aut}_N(\mathcal{K}_R)$)
		and its Lie algebra $\operatorname{Der}^{(0)}_N \mathcal{O}$ (respectively $\operatorname{Der}_N\mathcal{K}$). We have the following explicit descriptions:
		\begin{equation}\label{eqn:ecplicitderivations}
		\operatorname{Der}_N^{(0)}\mathbb{C}[[z^{\frac{1}{N}}]]=z^{\frac{1}{N}}\mathbb{C}[[z]]\partial_{z^{\frac{1}{N}}}, \ 
		\operatorname{Der}_N\mathbb{C}((z^{\frac{1}{N}}))=z^{\frac{1}{N}}\mathbb{C}((z))\partial_{z^{\frac{1}{N}}}.
		\end{equation}

		The isogeny $\mu$ 
		whose derivative is $z^{\frac{1}{N}+k}\partial_{z^{\frac{1}{N}}}\rightarrow Nz^{k+1}\partial_z$
		gives an isomorphism of the following Lie algebras:
		\begin{equation}
		\label{eqn:liealgebraiso}
		d\mu: \operatorname{Der}_N^{(0)}\mathcal{O} \rightarrow \operatorname{Der}^{(0)}\mathcal{O}, \ d\mu: \operatorname{Der}_N\mathcal{K} \rightarrow \operatorname{Der} \mathcal{K}.
		\end{equation}

		Similarly we define $\operatorname{Aut}_{N,+}\mathcal{O}$ to be the subgroup of $\operatorname{Aut}_{N}\mathcal{O}$ such that for any ring $R$, $\operatorname{Aut}_{N,+}(\mathcal{O}_R)$ consists of $R$-power series whose coefficient of $z$ is one. Let  $\operatorname{Der}_{N,+}\mathcal{O}$ be the Lie algebra of $\operatorname{Aut}_{N,+}\mathcal{O}$. 
		Moreover, there is a natural isomorphism between $\operatorname{Der}_{N,+}\mathcal{O}$ and $\operatorname{Der}_{+}\mathcal{O}$.
		\subsubsection{Global Situation }Let $R$ be a $\mathbb{C}$-algebra and consider a family $(\widetilde{C} \rightarrow C,\wtilde{\mathbf{p}},\mathbf{p})$ of $n$-pointed admissible $\Gamma$-covers 
		over $\operatorname{Spec}R$. 
		The monodromies around the marked sections being $\mathbf{m}=(m_1,\cdots,m_n)$. The stabilizer $\Gamma_{\widetilde{p}_i}$ of $\widetilde{p}_i$  is the cyclic subgroup of $\Gamma$ of order say $N_i$ generated by $m_i$.

		We define $$\operatorname{Aut}(\widetilde{C}/C, \mathcal{O}_R):=\operatorname{Aut}_{N_1}(\mathcal{O}_R)\times \dots \times \operatorname{Aut}_{N_n}(\mathcal{O}_R)$$ and analogously  $\operatorname{Aut}_+(\widetilde{C}/C, \mathcal{O}_R)$ and $\operatorname{Aut}_(\widetilde{C}/C, \mathcal{K}_R)$. The corresponding group schemes are denoted by  $\operatorname{Aut}(\widetilde{C}/C,\mathcal{O})$, $\operatorname{Aut}_{+}(\widetilde{C}/C,\mathcal{O})$ 
		and $\operatorname{Aut}(\widetilde{C}/C, \mathcal{K})$ respectively. We further denote their Lie algebras by $\operatorname{Der}(\widetilde{C}/C,\mathcal{O})$, $\operatorname{Der}_{+}(\widetilde{C}/C,\mathcal{O})$ and $\operatorname{Der}(\widetilde{C}/C,\mathcal{K})$.  The various  versions of the moduli stacks of pointed admissible $\Gamma$-covers are related by the following proposition:
		\begin{proposition}\label{prop:torsorrealization}
			The moduli stacks $\GMod({\bf m})$, $\GModhat({\bf m})$ and $\GModtilde({\bf m})$ are related as follows:\
			\begin{enumerate}
				\item $\GModhat({\bf m})$ is a $\operatorname{Aut}(\widetilde{C}/C,\mathcal{O})$-torsor over $\GMod({\bf m})$. 
				\item $\GModhat({\bf m})$ is a $\operatorname{Aut}_{+}(\widetilde{C}/C,\mathcal{O})$-torsor over $\GModtilde({\bf m})$.
				\item $\GModtilde({\bf m})$ is a $\Gm^{n}$-torsor over $\GMod({\bf m})$. 
			\end{enumerate}
		\end{proposition}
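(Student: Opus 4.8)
The plan is to treat all three assertions as instances of a single principle: equipping a family of pointed admissible $\Gamma$-covers with local coordinate data of a prescribed type produces a torsor under the corresponding group (ind-)scheme of coordinate changes. I would verify the torsor structure fibrewise and then establish local triviality. Throughout I would fix a $\CBbb$-algebra $R$ and a family $(\widetilde C\to C,\wtilde{\mathbf p},\mathbf p)\in\GMod(\mathbf m)(R)$, work one marked section $\wtilde p_i$ at a time, and obtain the global statements by taking products over $i=1,\dots,n$ as in \eqref{eqn:torsor}. Recall (Appendix \ref{sec:localcoord}) that a point of $\GModhat(\mathbf m)$ is such a family together with a choice $\wtilde{\mathbf z}$ of \emph{special} formal coordinates (Definition \ref{def:specialformal}) at the $\wtilde p_i$, a point of $\GModtilde(\mathbf m)$ is the family together with a choice of one-jets, and a point of $\GMod(\mathbf m)$ carries no coordinate data.

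For (1), I would observe that the forgetful morphism $\GModhat(\mathbf m)\to\GMod(\mathbf m)$ has fibre the set $\mathcal{A}ut_{N_i}(D_{R,\wtilde p_i})$ of special formal coordinates over each section. Fixing one coordinate $\wtilde z_i$, compatibility with the stabilizer $\Gamma_{\wtilde p_i}=\langle m_i\rangle$ forces any other special coordinate to have the form $\wtilde z_i\mapsto c_0\wtilde z_i+c_1\wtilde z_i^{\,1+N_i}+\cdots$ with $c_0$ a unit, i.e. to be the image of $\wtilde z_i$ under a unique element of $\operatorname{Aut}_{N_i}\mathcal O_R$; this exhibits the fibre as an $\operatorname{Aut}_{N_i}\mathcal O$-torsor, whence $\GModhat(\mathbf m)$ is an $\operatorname{Aut}(\widetilde C/C,\mathcal O)$-torsor over $\GMod(\mathbf m)$. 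For (3), the one-jet at the smooth point $\wtilde p_i$ is exactly the leading coefficient $c_0$ of a special coordinate, equivalently a trivialization of the associated cotangent line, and the set of these is a $\Gm$-torsor; so $\GModtilde(\mathbf m)$ is a $\Gm^n$-torsor over $\GMod(\mathbf m)$.

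For (2), I would note that the morphism $\GModhat(\mathbf m)\to\GModtilde(\mathbf m)$ retains only the one-jet of each $\wtilde z_i$, and that the subgroup acting trivially on one-jets while preserving the special-coordinate condition is precisely $\operatorname{Aut}_{N_i,+}\mathcal O$ (the elements with $c_0=1$); hence the fibre is an $\operatorname{Aut}_{N_i,+}\mathcal O$-torsor and $\GModhat(\mathbf m)$ is an $\operatorname{Aut}_+(\widetilde C/C,\mathcal O)$-torsor over $\GModtilde(\mathbf m)$. I would then record the internal consistency of the three statements: the splitting $c_0\mapsto(\wtilde z_i\mapsto c_0\wtilde z_i)$ realizes $\operatorname{Aut}_{N_i}\mathcal O$ as the semidirect product $\Gm\ltimes\operatorname{Aut}_{N_i,+}\mathcal O$, so that the composite torsor of (1) is the extension of the $\Gm^n$-torsor of (3) by the $\operatorname{Aut}_+(\widetilde C/C,\mathcal O)$-torsor of (2).

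The hard part will be to upgrade these fibrewise bijections to genuine torsors in the fppf (indeed smooth) topology, and here the ramification is the only real subtlety. I would argue that special formal coordinates exist \'etale-locally on $\GMod(\mathbf m)$ — this is exactly where Definition \ref{def:specialformal} and the exact sequence \eqref{eqn:exactseqence} enter, guaranteeing the structure group is $\operatorname{Aut}_{N_i}\mathcal O$ rather than $\operatorname{Aut}\mathcal O$ — which yields local triviality of (1) and shows the forgetful maps are faithfully flat; the map in (2) is moreover an affine fibration because $\operatorname{Aut}_{N_i,+}\mathcal O$ is prounipotent and proalgebraic, the exponential $\operatorname{Der}_{N_i,+}\mathcal O\to\operatorname{Aut}_{N_i,+}\mathcal O$ being an isomorphism as in the untwisted setting. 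Finally I would check that the $\operatorname{Aut}_{N_i}\mathcal O$-action is algebraic, i.e. that the group ind-scheme of \eqref{eqn:torsor} prorepresents the functor of special coordinate changes in families allowing nodal degenerations; this is routine and parallels the untwisted constructions of \cite{FBz} and their twisted analogues in \cite{szcz, FrenkelSzcz}.
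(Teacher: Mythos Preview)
The paper states this proposition without proof: it is placed immediately after the definitions of $\operatorname{Aut}_{N_i}\mathcal O$, $\operatorname{Aut}_{N_i,+}\mathcal O$, the exact sequence \eqref{eqn:exactseqence}, and the product groups \eqref{eqn:torsor}, and is treated as a direct consequence of these definitions together with the description of the stacks in Appendix~\ref{sec:localcoord}. Your proposal is correct and simply spells out what the paper leaves implicit; the fibrewise torsor identification, the semidirect product decomposition $\operatorname{Aut}_{N_i}\mathcal O\cong\Gm\ltimes\operatorname{Aut}_{N_i,+}\mathcal O$, and the \'etale-local existence of special coordinates are exactly the ingredients the paper has set up in the preceding subsections for this purpose.
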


		Let us define $\operatorname{Der}(\widetilde{C}/C, \mathcal{K})$ 
		to be the Lie algebra of 
		the ind-scheme $\operatorname{Aut}(\widetilde{C}/C, \mathcal{K})$. If $\Gamma$ is trivial, then the Lie algebra is just denoted by $\operatorname{Der}(C. \mathcal{K})$. 
		The isogeny $\mu$ in \eqref{eqn:liealgebraiso} gives 
		the following isomorphism between $\operatorname{Der}(\widetilde{C}/C,\mathcal{K})\simeq \operatorname{Der}(C,\mathcal{K}).$
		
		\subsection{Equivariant Virasoro uniformization}
		Let us recall the following result on  $\Gamma$-equivariant Virasoro uniformization due to M. Szczesny \cite[Theorem 7.1]{szcz}. 
		\begin{theorem}\label{thm:virasorouniformization}
			The stack $\GModhat({\bf m})$ carries a log-transitive action of   $\operatorname{Der}(\widetilde{C}/C,\mathcal{K})$ extending the action of $\operatorname{Aut}(\widetilde{C}/C,\mathcal{O})$ 
			along the fibers of the natural map $\widehat{\pi}: \GModhat({\bf m})\rightarrow \GMod({\bf m})$ and preserving the boundary divisor $\widehat{\Delta}_{g,n,\Gamma}=\GModhat(\mathbf{m})\backslash {\hatM}^\Gamma_{g,n}({\bf m})$.
		\end{theorem}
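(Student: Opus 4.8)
The plan is to adapt the proof of the classical (untwisted) Virasoro uniformization of $\overline{\mathcal{M}}_{g,n}$ (see \cite[Ch. 17]{FBz}) to the $\Gamma$-equivariant setting, exploiting the torsor structures of Proposition \ref{prop:torsorrealization} and the Lie algebra isomorphisms $d\mu$ of \eqref{eqn:liealgebraiso}. The action of $\operatorname{Aut}(\widetilde{C}/C,\mathcal{O})$ along the fibers of $\widehat{\pi}$ is already built into the torsor structure of $\GModhat(\mathbf{m})$ over $\GMod(\mathbf{m})$, namely it is change of special formal coordinates; what must be produced is the extension of the corresponding infinitesimal action of $\operatorname{Der}^{(0)}(\widetilde{C}/C,\mathcal{O})$ to the larger Lie algebra $\operatorname{Der}(\widetilde{C}/C,\mathcal{K})=\prod_i\operatorname{Der}_{N_i}\mathcal{K}$ of $\Gamma$-equivariant meromorphic vector fields on the formal punctured disks, together with its log-transitivity and compatibility with the boundary.

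First I would set up the gluing description of the universal family. Over a point of $\GModhat(\mathbf{m})$ the cover $\widetilde{C}$ is recovered as the $\Gamma$-equivariant pushout of $\widetilde{C}\setminus\Gamma\cdot\widetilde{\mathbf{p}}$ and the formal disks $\operatorname{Spec}\widehat{\mathcal{O}}_{\widetilde{C},\widetilde{p}_i}$ along the punctured disks $\operatorname{Spec}\mathcal{K}_{\widetilde{C},\widetilde{p}_i}$. An element of $\operatorname{Der}(\widetilde{C}/C,\mathcal{K})$ is then interpreted, as in the Kodaira--Spencer/\v{C}ech picture, as an infinitesimal change of these gluing data; because we use special coordinates and $\Gamma$-equivariant vector fields, the resulting deformation is automatically $\Gamma$-equivariant and hence defines a tangent vector to $\GModhat(\mathbf{m})$. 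This assignment is the anchor map $\alpha$, and its restriction to $\operatorname{Der}^{(0)}(\widetilde{C}/C,\mathcal{O})$ recovers the infinitesimal $\operatorname{Aut}(\widetilde{C}/C,\mathcal{O})$-action. The identification $d\mu$ of \eqref{eqn:liealgebraiso} lets me transport the construction between the upstairs disks, where $\Gamma_i$ acts by $\varepsilon$-scaling, and the base disks, matching it with the ordinary uniformization of the quotient family $C$ with its branch points.

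Next I would prove log-transitivity, i.e. surjectivity of $\alpha$ onto the logarithmic tangent sheaf $\Theta_{\GModhat(\mathbf{m}),\widehat{\Delta}_{g,n,\Gamma}}$. This is cohomological: the relative tangent space at a fiber is the $\Gamma$-invariant part of $H^1$ of the tangent sheaf of $\widetilde{C}$ twisted to allow the appropriate poles at $\Gamma\cdot\widetilde{\mathbf{p}}$ and log-poles at the nodes, and the Mayer--Vietoris sequence attached to the gluing decomposition expresses this $H^1$ as a quotient of $\operatorname{Der}(\widetilde{C}/C,\mathcal{K})$ by the vector fields extending holomorphically over $\widetilde{C}\setminus\Gamma\cdot\widetilde{\mathbf{p}}$ and over the disks. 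Taking $\Gamma$-invariants, which is exact since $\Gamma$ is finite in characteristic zero, shows that every first-order deformation of the pointed cover with its special coordinates arises from an equivariant meromorphic vector field, which is exactly surjectivity of $\alpha$. Preservation of the boundary divisor follows because the deformations produced near the nodes are the standard log (smoothing-parameter) deformations, so the image of $\alpha$ lands in $\Theta_{\GModhat(\mathbf{m}),\widehat{\Delta}_{g,n,\Gamma}}$.

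The main obstacle will be the bookkeeping at the ramified marked points and at the nodes, where the $\Gamma$-action and the special-coordinate condition $m_i\cdot\widetilde{z}_i=\varepsilon\,\widetilde{z}_i$ must be maintained throughout the gluing deformation. Concretely, one must verify that the isogeny $\mu$ in \eqref{eqn:exactseqence} intertwines the equivariant deformation upstairs with the ordinary deformation of the quotient disk, so that $\operatorname{Der}(\widetilde{C}/C,\mathcal{K})\cong\operatorname{Der}(C,\mathcal{K})$ genuinely acts through the known uniformization of the base. Tameness of the cover is essential here: it guarantees that the $\Gamma_i$-action on each formal disk is linearizable, so that special coordinates exist and the identification of the relevant Lie algebras of derivations is clean. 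Granting this, the statement reduces to Szczesny's equivariant uniformization \cite[Thm. 7.1]{szcz}, whose argument is precisely the equivariant refinement of the one just outlined.
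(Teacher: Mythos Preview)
Your proposal is correct and follows essentially the same approach as the paper: both invoke Szczesny's equivariant uniformization \cite{szcz} on the interior, reduce to the untwisted Frenkel--Ben-Zvi argument via the Lie algebra isomorphisms \eqref{eqn:liealgebraiso}, and extend to the boundary using the deformation theory of nodal admissible $\Gamma$-covers. Your cohomological/Mayer--Vietoris description of log-transitivity is exactly the mechanism underlying the argument the paper sketches, and your remark that tameness ensures linearizability of the local $\Gamma_i$-action is the key point making the reduction to the quotient disk work.
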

		\paragraph{Comments on the proof:}
		Strictly speaking Szczesny's result \cite{szcz} is on the interior $\hatM^\Gamma_{g,n}(\mathbf{m})$. Also his strategy is to reduce to the untwisted case.  However the arguments can be easily extended to the boundary following the untwisted case as in Section 4.2 in \cite{U}. Namely the action of $\operatorname{Aut}(\widetilde{C}/C,\mathcal{O})$ extends to the boundary since we are only modifying the smooth points. It particular it also preserves the singularities. 
		
		Now log-transitivity follows from extending the argument of transitivity in Frenkel and Ben-Zvi \cite{FBz} in the untwisted case along with the result  (see \cite{ACV},\cite{ACG:11}) on deformations of nodal admissible $\Gamma$ covers $\widetilde{C}\rightarrow C$ preserving the singularities. These are exactly controlled by deformations of the pointed curve obtained by normalization of $C$ and the new marked points coming from the nodes of $C$.  Using these facts, we can argue as in \cite{FBz}. 
		If $\Gamma$ is trivial, the result of the extension to the boundary  can be also found in \cite{Tsuchimoto}.

		The following is an obvious variant of Theorem \ref{thm:virasorouniformization} where we replace $\GMod({\bf m})$ by $\GModtilde({\bf m})$. 
		\begin{corollary}\label{cor:virasorojetspaces}The same results in Theorem \ref{thm:virasorouniformization} hold if 
			we consider the natural action of the group $\operatorname{Aut}_{+}(\widetilde{C}/C,\mathcal{O})$ and along the fibers of the 
			natural map $\GModhat({\bf m}) \rightarrow \GModtilde({\bf m})$.
		\end{corollary}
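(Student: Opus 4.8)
The plan is to deduce Corollary~\ref{cor:virasorojetspaces} directly from Theorem~\ref{thm:virasorouniformization} by restricting the structure group along the intermediate fibration $\GModhat({\bf m})\to\GModtilde({\bf m})\to\GMod({\bf m})$, with essentially no new geometric input required. First I would record that $\big(\operatorname{Der}(\widetilde{C}/C,\mathcal{K}),\operatorname{Aut}_{+}(\widetilde{C}/C,\mathcal{O})\big)$ is again a Harish-Chandra pair in the sense of Definition~\ref{def:harishchandrapari}: the group $\operatorname{Aut}_{+}(\widetilde{C}/C,\mathcal{O})$ is a closed subgroup of $\operatorname{Aut}(\widetilde{C}/C,\mathcal{K})$ whose Lie algebra $\operatorname{Der}_{+}(\widetilde{C}/C,\mathcal{O})$ sits inside $\operatorname{Der}^{(0)}(\widetilde{C}/C,\mathcal{O})\subset\operatorname{Der}(\widetilde{C}/C,\mathcal{K})$, and the adjoint action restricts compatibly. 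The Lie algebra $\mathfrak{s}=\operatorname{Der}(\widetilde{C}/C,\mathcal{K})$ and its action on $\GModhat({\bf m})$ are kept exactly as produced by Theorem~\ref{thm:virasorouniformization}; only the subgroup $K$ and the base $T$ change.

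Next I would invoke Proposition~\ref{prop:torsorrealization}, by which the projection $\widehat\pi$ factors as $\GModhat({\bf m})\to\GModtilde({\bf m})\to\GMod({\bf m})$, the first map being an $\operatorname{Aut}_{+}(\widetilde{C}/C,\mathcal{O})$-torsor and the second a $\Gm^{n}$-torsor; this reflects the semidirect decomposition $\operatorname{Aut}(\widetilde{C}/C,\mathcal{O})\cong\Gm^{n}\ltimes\operatorname{Aut}_{+}(\widetilde{C}/C,\mathcal{O})$. The key observation is that the $\operatorname{Aut}_{+}$-action, obtained by restricting the $\operatorname{Aut}(\widetilde{C}/C,\mathcal{O})$-action of Theorem~\ref{thm:virasorouniformization}, moves points within the fibers of $\GModhat({\bf m})\to\GModtilde({\bf m})$: a coordinate change with trivial leading coefficient preserves the one-jet of the special formal parameter, hence preserves the image in $\GModtilde({\bf m})$. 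Consequently the same $\operatorname{Der}(\widetilde{C}/C,\mathcal{K})$-action on $\GModhat({\bf m})$ extends the $\operatorname{Aut}_{+}(\widetilde{C}/C,\mathcal{O})$-action along the fibers of $\GModhat({\bf m})\to\GModtilde({\bf m})$, the infinitesimal compatibility being forced by the inclusion $\operatorname{Der}_{+}(\widetilde{C}/C,\mathcal{O})\subset\operatorname{Der}^{(0)}(\widetilde{C}/C,\mathcal{O})$ and the explicit description \eqref{eqn:ecplicitderivations} of these Lie algebras.

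Finally, the two remaining assertions are properties of the action on the single stack $Z=\GModhat({\bf m})$ and are insensitive to the choice of base and structure group. Log-transitivity is precisely the surjectivity of the anchor map $\alpha\colon\operatorname{Der}(\widetilde{C}/C,\mathcal{K})\otimes\mathcal{O}\to\Theta_{Z,\widehat{\Delta}_{g,n,\Gamma}}$, and preservation of the boundary divisor $\widehat{\Delta}_{g,n,\Gamma}$ is the statement that the action lands in $\Theta_{Z,\widehat{\Delta}_{g,n,\Gamma}}$; both are verbatim the conclusions of Theorem~\ref{thm:virasorouniformization} and carry over unchanged. The only point demanding any care — and hence the \emph{main obstacle}, such as it is — is the verification that the restricted $\operatorname{Aut}_{+}$-action is genuinely along the finer fibration and that $\big(\operatorname{Der}(\widetilde{C}/C,\mathcal{K}),\operatorname{Aut}_{+}(\widetilde{C}/C,\mathcal{O})\big)$ satisfies the Harish-Chandra axioms. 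Both reduce to the semidirect product decomposition above together with \eqref{eqn:ecplicitderivations}, so no genuinely new argument beyond Theorem~\ref{thm:virasorouniformization} is needed, which is exactly why the statement is labelled an obvious variant.
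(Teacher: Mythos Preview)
Your proposal is correct and is precisely what the paper has in mind: the paper offers no proof at all, merely stating that the corollary is ``an obvious variant of the above theorem where we replace $\GMod({\bf m})$ by $\GModtilde({\bf m})$.'' You have simply unpacked what ``obvious variant'' means---restricting to the subgroup $\operatorname{Aut}_{+}$, using the factorization from Proposition~\ref{prop:torsorrealization}, and noting that log-transitivity and boundary preservation are properties of the action on $\GModhat({\bf m})$ itself---which is exactly the intended argument.
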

		
		\subsubsection{Virasoro Uniformization and the Hodge bundle}
		
		Let $\widetilde{C}\rightarrow Z$ be a versal family in $\GModhat({\bf m})$ parameterized by a smooth scheme $Z$ (or one may work in the setting of Looijenga \cite{Looijenga}). 
		We let $\widetilde{\bf{z}}$ (respectively ${\bf{z}}$) be the corresponding choice of special local formal coordinates along the sections $\widetilde{\bf p}$ (respectively ${\bf{p}}$). 
		
		Let $\Delta_Z$ be the normal crossing divisor of nodal curves and as before $\Theta_{Z,\Delta_Z}$ be the
		subsheaf of tangent vector fields on $Z$ preserving the divisor $\Delta_Z$. Finally, let $\Sigma$ be the critical locus of $\widetilde{C}\rightarrow Z$ and the image of $\Sigma$ is $\Delta_{Z}$. The codimension of $\Sigma$ in $\widetilde{C}$ is at least two.

		By Theorem \ref{thm:virasorouniformization}, the 
		Lie algebra $\operatorname{Der}(\widetilde{C}/C,\mathcal{K})$ acts log-transitively (see Definition \ref{def:harishchandrapari} and the discussion afterward) on $Z$. In particular, we have a surjective 
		Lie algebra map: 
		\begin{equation}
		\label{eqn:surjectivityofanchormap}
		\alpha :\operatorname{Der}(\widetilde{C}/C,\mathcal{K})\otimes \mathcal{O}_Z\twoheadrightarrow \Theta_{Z,\Delta_Z}.
		\end{equation}

		Following the notation in \cite{szcz}, let $vec^{\Gamma}(\widetilde{C}\backslash \Gamma.\widetilde{\bf{p}})$ be 
		the kernel of the map $\alpha$. This is the locally free sheaf given by the $\Gamma$-invariant part of the push-forward of the locally free sheaf $\mathcal{H}om(\omega_{\widetilde{C}/Z},\mathcal{O}_{\widetilde{C}})$ to $Z$, where $\omega_{\widetilde{C}/Z}$ is the relative dualizing sheaf. We refer the reader to Section 4.2 in \cite{U} for the case when $\Gamma$ is trivial. 
		
		Alternatively $vec^{\Gamma}(\widetilde{C}\backslash \widetilde{\bf p})$ can be realized as follows:  Restrict the versal family to $Z\backslash \Delta_Z$. By \cite{szcz}, we get that 
		the fiber of $vec^{\Gamma}(\widetilde{C}\backslash \Gamma.\widetilde{\bf {p}})$ 
		at a point $(\pi: \widetilde{C}_0\rightarrow C_0, \widetilde{\bf{p}},{\bf p}, \widetilde{{\bf z}})$ 
		is given by the Lie algebra of $\Gamma$-invariant vector fields
		on $\widetilde{C}_0\backslash \cup_{i=1}^{n}\Gamma \cdot \widetilde{p}_i$. 
		
		Since 
		by assumption, $\widetilde{C}_0$ is smooth, the cover $\widetilde{C}_0\rightarrow C_0$ is \'etale 
		restricted to $\widetilde{C}_0\backslash \Gamma.\widetilde{\bf{p}}$. Hence, it follows that a $\Gamma$-invariant vector field on  $\widetilde{C}_0\backslash \Gamma.\widetilde{\bf{p}}$ 
		descends to a 
		vector field on $C_0\backslash {\bf p}$. Thus we get that $vec^{\Gamma}(\widetilde{C}\backslash \Gamma.\widetilde{\bf p})$ is canonically isomorphic to the push forward of the locally free sheaf $vec(C\backslash {\bf p}):=\operatorname{\mathcal{H}om}(\omega_{C/Z},\mathcal{O}_C)$ to $Z$.
		\subsubsection{Uniformization and the associated Atiyah algebra} Recall the natural forgetful maps 
		$\widehat{\pi}: \GModhat \rightarrow \GMod \ \mbox{and}  \ \widetilde{\pi}: \GModtilde \rightarrow \GMod.$
		The action of 
		$\operatorname{Aut}(\widetilde{C}/C,\mathcal{K})$ on $\GModhat({\bf m})$ \cite[Theorem 17.3.2]{FBz}, \cite{szcz}
		preserves the divisor $\widehat{\Delta}_{g,n,\Gamma}$. The twisted Sugawara construction \cite{Wakimoto} gives an 
		action of $\operatorname{Der}(\widetilde{C}/C,\mathcal{K})$ on $\mathcal{H}_{\vec{\lambda}}$.
		
		Let $\widehat{\operatorname{Der}}(\widetilde{C}/C,\mathcal{K})$ 
		be the central 
		extension of $\operatorname{Der}(\widetilde{C}/C,\mathcal{K})$ 
		obtained as Beaer sum of the Virasoro cocycles  for the individual factors. 
		We get a short exact sequence 
		\begin{equation}
		\label{eqn:centralexten}
		0\rightarrow \mathcal{O}_{\GModhat({\bf m})}. c 
		\rightarrow \widehat{\operatorname{Der}}(\widetilde{C}/C,\mathcal{K})\rightarrow 
		\operatorname{Der}(\widetilde{C}/C,\mathcal{K})\rightarrow 0.
		\end{equation}
		
		Moreover, the short exact sequence splits when restricted to $vec^{\Gamma}(\widetilde{C}\backslash \Gamma\cdot\widetilde{\bf{p}})$ and the Lie algebra $\operatorname{Der}_{+}(\widetilde{C}/C,\mathcal{O})$. Taking 
		quotients and using surjectivity of the short exact sequence in Equation \eqref{eqn:surjectivityofanchormap}, we  get a logarithmic Atiyah algebra 
		with the following 
		fundamental exact sequence. 
		\begin{equation}\label{eqn:fundaty}
		0\rightarrow \mathcal{O}_{\GModhat({\bf m})}\cdot c\rightarrow 
		\widehat{\operatorname{Der}}(\widetilde{C}/C,\mathcal{K}) /vec{^{\Gamma}}(\widetilde{C}\backslash \Gamma\cdot (\widetilde{\bf{p}}))\rightarrow \Theta_{\GModhat({\bf m}),\Delta_{g,n,\Gamma}}\rightarrow 0.
		\end{equation}

		We denote the log-Atiyah algebra 
		$$\hat{\mathcal{A}}(\widetilde{C}/C)(-\log \hat{\pi}^{-1}(\Delta_{g,n,\Gamma})):=
		\widehat{\operatorname{Der}}(\widetilde{C}/C,\mathcal{K}) /vec{^{\Gamma}}(\widetilde{C}\backslash \Gamma \cdot (\widetilde{\bf{p}})).$$
		Since $\GModhat({\bf m})$ is an $\operatorname{Aut}_{+}(\widetilde{C}/C,\mathcal{O})$-torsor
		over $\GModtilde({\bf m})$, the sheaf $\hat{\mathcal{A}}(\widetilde{C}/C)(-\log \widehat{\pi}^{-1}(\Delta_{g,n,\Gamma}))$ 
		descends via invariant push-forward to a log-Atiyah algebra on the stack $\GModtilde({\bf m})$ which we denote by $\tilde{\mathcal{A}}(\widetilde{C}/C)(-\log \tilde{\pi}^{-1}(\Delta_{g,n,\Gamma}))$. 
		
		\subsubsection{Virasoro Uniformization of the Hurwitz-Hodge bundle}In this 
		section, we give a more explicit description of the Atiyah algebra $\tilde{\mathcal{A}}(\widetilde{C}/C)(-\log\tilde{\pi}^{-1}(\Delta_{g,n,\Gamma}))$.  
		If $\Gamma$ is trivial, the corresponding results can be found in  \cite{ACH,BS,Kon,Tsuchimoto,TUY:89}. 
		
		Consider 
		the natural map $q:\GMod({\bf m})\rightarrow \overline{\mathcal{M}}_{g,n}$
		and let $\Lambda$ be the pullback of the Hodge line bundle on $\overline{\mathcal{M}}_{g,n}$  to $\GMod$ via $q$. 
		Then we have  the following proposition:
		\begin{proposition}Let $\mathcal{A}_{\Lambda}(-\log \Delta_{g,n,\Gamma})$ denote the log-Atiyah algebra
			associated 
			to the line bundle $\Lambda$ on $\GMod({\bf m})$, then there is a natural identification  $\hat{\mathcal{A}}(\widetilde{C}/C)(-\log \hat{\pi}^{-1}(\Delta_{g,n,\Gamma}))$ with $\frac{1}{2}\mathcal{A}_{\hat{\pi}^{*}\Lambda}(-\log \Delta_{g,n,\Gamma})$. 
			\label{prop:virasoroandatiyahalgebra}

		\end{proposition}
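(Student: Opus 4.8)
The plan is to identify the Atiyah algebra $\hat{\mathcal{A}}(\widetilde{C}/C)(-\log \hat{\pi}^{-1}(\Delta_{g,n,\Gamma}))$ built from the Beaer sum of the twisted Virasoro cocycles (Equation \eqref{eqn:fundaty}) with a multiple of the Atiyah algebra of the pulled-back Hodge line bundle, and the central idea is that the $\Gamma$-equivariant Virasoro uniformization (Theorem \ref{thm:virasorouniformization}) reduces everything to the uniformization of the base curve $C$ via the isogeny $\mu$ of Equation \eqref{eqn:liealgebraiso}. First I would exploit the canonical identifications $\operatorname{Der}(\widetilde{C}/C,\mathcal{K})\simeq \operatorname{Der}(C,\mathcal{K})$ and $vec^\Gamma(\widetilde{C}\setminus \Gamma\cdot \widetilde{\mathbf{p}})\simeq vec(C\setminus \mathbf{p})$ established in the excerpt, which let me transport the whole construction from the twisted tower $\GModhat(\mathbf{m})$ down to the classical coordinate torsor over $\overline{\mathcal{M}}_{g,n}$ via the forgetful map $q$. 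Under these isomorphisms the logarithmic Atiyah algebra on the twisted side is the pullback of the corresponding logarithmic Atiyah algebra on $\overline{\mathcal{M}}_{g,n}$.

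The essential computation is then the untwisted one, originally due to Tsuchimoto \cite{Tsuchimoto} (see also \cite{BS,TUY:89}): the Atiyah algebra $\widehat{\operatorname{Der}}(C,\mathcal{K})/vec(C\setminus \mathbf{p})$ assembled from the Virasoro cocycle with central charge $1$ at each marked point coincides with $\tfrac{1}{2}\mathcal{A}_\Lambda(-\log\Delta)$, where $\Lambda$ is the Hodge line bundle and the factor $\tfrac{1}{2}$ records that the Virasoro central charge of the chiral boson determinant equals $\tfrac{1}{2}$ of the first Chern class of $\Lambda$. Concretely I would verify this by computing the residue of the Virasoro cocycle against $vec(C\setminus\mathbf{p})$ and comparing it to the Atiyah cocycle of $\det R\pi_* \omega_{C/Z}$, using that the Gauss--Manin / Kodaira--Spencer description of $\Theta_{\overline{\mathcal{M}}_{g,n}}(-\log\Delta)$ agrees with the quotient appearing in \eqref{eqn:fundaty}.

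The main obstacle I anticipate is bookkeeping of the \emph{normalization factors} introduced by the ramified covers. The isogeny derivative $z^{1/N+k}\partial_{z^{1/N}}\mapsto N z^{k+1}\partial_z$ rescales the uniformizing vector fields, and the normalized invariant form on $\widehat{L}(\frg,\gamma)$ carries the $\tfrac{1}{|\gamma|}$ factor visible in the twisted bracket of Section \ref{sec:affineLiealgtwisted}; I must check that these two rescalings are compatible so that the Beaer sum of the individual twisted Virasoro cocycles descends to exactly $\tfrac{1}{2}\mathcal{A}_{\hat\pi^*\Lambda}$ with no spurious factor of $N_i$ appearing. The key point is that the special coordinate condition (Definition \ref{def:specialformal}) is precisely what makes the residue pairing match the untwisted one after applying $d\mu$, since $z=\widetilde{z}^{N}$ transforms the residue $\operatorname{Res}_{t=0}$ in the twisted bracket into $\tfrac{1}{N}$ times the untwisted residue, canceling the $N$ from the rescaled vector field.

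Once this identification at the level of $\GModhat(\mathbf{m})$ is in place, the statement for $\GModtilde(\mathbf{m})$ follows formally by invariant push-forward along the $\operatorname{Aut}_+(\widetilde{C}/C,\mathcal{O})$-torsor structure of Proposition \ref{prop:torsorrealization}, since $\operatorname{Der}_+(\widetilde{C}/C,\mathcal{O})\simeq \operatorname{Der}_+(C,\mathcal{O})$ acts trivially on the Hodge bundle and the splitting of \eqref{eqn:centralexten} over $\operatorname{Der}_+$ guarantees the descent is clean. I expect the proof to be short modulo the residue/normalization verification, which is where all the genuine content lies.
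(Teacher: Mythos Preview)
Your proposal is essentially the paper's proof: cite the untwisted identification $\widehat{\operatorname{Der}}(C,\mathcal{K})/vec(C\setminus\mathbf{p})\cong\tfrac{1}{2}\mathcal{A}_{\Lambda}(-\log\Delta_{g,n})$ from \cite{Tsuchimoto,BS,Kon,ACH,TUY:89} as a black box, then transport it to $\GModhat(\mathbf{m})$ via the isomorphisms $\operatorname{Der}(\widetilde{C}/C,\mathcal{K})\simeq\operatorname{Der}(C,\mathcal{K})$ and $vec^\Gamma(\widetilde{C}\setminus\Gamma\cdot\widetilde{\mathbf{p}})\simeq vec(C\setminus\mathbf{p})$ coming from the isogeny $d\mu$. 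The paper packages the transport slightly differently, pulling back along the flat forgetful map $\pi:\GModhat(\mathbf{m})\to\widehat{\overline{\mathcal{M}}}_{g,n}$ and matching kernels in a commutative diagram of sheaves, but the content is identical.

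Two small corrections to your anticipated obstacle. First, the $\tfrac{1}{|\gamma|}$ in the twisted Kac--Moody bracket is not what is at stake here: the Atiyah algebra $\hat{\mathcal{A}}(\widetilde{C}/C)$ is built from the \emph{Virasoro} central extension of $\operatorname{Der}(\widetilde{C}/C,\mathcal{K})$, which is the intrinsic (universal) central extension of the Witt algebra and is carried to itself under any Lie-algebra isomorphism such as $d\mu$. The paper simply invokes \eqref{eqn:liealgebraiso} without any residue bookkeeping. The Kac--Moody normalization only enters later, in Corollary~\ref{cor:Atiyahalg}, when the Sugawara construction produces the specific central charge $\ell\dim\frg/(\ell+h^\vee)$. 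Second, your final paragraph on descent to $\GModtilde$ via $\operatorname{Aut}_+$-invariance is not part of this proposition; it is the content of the subsequent corollary.
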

		\begin{proof}
			First, we consider the case when $\Gamma$ is trivial. 
			In this case results in \cite{ACH,BS,Kon,Tsuchimoto,TUY:89} show that there is a natural isomorphism of $\widehat{\operatorname{Der}}(C,\mathcal{K})/vec(C\backslash {\bf p})\cong\frac{1}{2}\mathcal{A}_{\pi^{*}\Lambda}(-\log \Delta_{g,n}),$
			where $\Delta_{g,n}$ is the boundary divisor of $\overline{\mathcal{M}}_{g,n}$ 
			and $\hat{\underline{\pi}}: \widehat{\overline{\mathcal{M}}}_{g,n}\rightarrow \overline{\mathcal{M}}_{g,n}$ is the 
			forgetful map. Now consider the following commutative diagram: 
			$$\xymatrix{
				\GModhat({\bf m})\ar[r]^{\hat{\pi}}\ar[d]^{\pi}&\GMod({\bf m}) \ar[d]^{q}\\
				\widehat{\overline{\mathcal{M}}}_{g,n}\ar[r]^{\hat{\underline{\pi}}}&\overline{\mathcal{M}}_{g,n}.}
			$$
			Since the map $\pi$ is flat \cite{JKK}, we get an exact sequence  $$0\rightarrow \pi^*(vec(C\backslash {\bf p})) \rightarrow \pi^{*}\operatorname{Der}(C,\mathcal{K})\rightarrow \pi^*\Theta_{\widehat{\overline{\mathcal{M}}}_{g,n},\Delta_{g,n}}\rightarrow 0.$$
			We  now have the following commutative diagram of sheaves of Lie algebras.
			$$\xymatrix{
				{\operatorname{Der}}(\widetilde{C}/C,\mathcal{K}) \ar[r]^{\alpha}\ar[d] &\Theta_{\GModhat({\bf{m}}),\Delta_{g,n,\Gamma}}\ar[d]\ar[r] &0\\
				\pi^*({\operatorname{Der}}(C,\mathcal{K})) \ar[r] & \pi^*\Theta_{\widehat{\overline{\mathcal{M}}}_{g,n},\Delta_{g,n}}\ar[r] &0.
			}
			$$ Since by the discussion following Equation \eqref{eqn:surjectivityofanchormap}, we get the kernel of the horizontal maps are isomorphic. Hence it suffices to show that  the vertical map on the left is an isomorphism between $\pi^{*}( {\operatorname{Der}}(C,\mathcal{K}))$ is
			${\operatorname{Der}}(\widetilde{C}/C,\mathcal{K})$. 
			This follows from the isomorphism \eqref{eqn:liealgebraiso} of $\operatorname{Der}_{N}\mathcal{K}$
			with $\operatorname{Der}\mathcal{K}$.
		\end{proof}
		\subsection{Twisted log-D module}\label{sec:twistedconnectiontwistedKZ}In this section, we recall the 
		construction of the twisted WZW-connection following \cite{szcz} using the 
		formalism of the {\em twisted localization functor } recalled in Section \ref{sec:twistedloc}. 
		\begin{remark}\label{rem:diffwithsz}
			We want to address a small but important difference from \cite{szcz}. The 
			twisted vertex algebra arising from Kac-Moody algebras are not 
			conformal 
			and hence the coherent sheaf $\mathcal{V}^{\dagger}_{\vec{\lambda},\Gamma}(\widetilde{C},C,\widetilde{\bf{p}},\bf{p},\widetilde{{\bf{z}}})$
			do not descend to a coherent logarithmic $\mathcal{D}$ module  on $\GMod({\bf m})$ under the natural $\mathbb{G}_m^{n}$-action. 
		\end{remark}
		Let ${\bf m}=(m_1,\dots,m_n)\in \Gamma^n$ be a monodromy vector. 
		For each $1 \leq i \leq n$,
		let $N_i$ be the order the $m_i$ and $\mathcal{H}_{\lambda_i}$ is an irreducible 
		highest weight integrable module for $m_i$-twisted Kac-Moody 
		Lie algebra $L(\widehat{\frg}, m_i)$ of 
		highest weight $\lambda_i$ at level $\ell$. As in Sections \ref{sec:deftwistconf}, \ref{sec:descentdata} and \cite[Remark 3.4.6]{BFM}, without loss of generality assume that we have a family of $(\widetilde{C}\rightarrow Z, {\widetilde{\bf{p}}},{\bf p},\widetilde{\bf{z}})$ in $\GModhat({\bf m})$ such that $\widetilde{C}\backslash \widetilde{\bf{p}}(Z)$ is affine and let $\Delta_{Z}$ be the divisor in $Z$ corresponding to singular curves. Following Looijenga \cite[Section 5]{Looijenga}, we put the assumption that vector fields on $Z$ tangent to $\Delta_Z$ are locally liftable to $\widetilde{C}$ (We could have also worked with the set-up of versal family as before).
		
		Similarly, we denote the corresponding family $(\widetilde{C}\rightarrow T, {\widetilde{\bf{p}}},{\bf p},\widetilde{\bf{v}})$ in $\GModtilde({\bf m})$. The smooth scheme $Z$ is a $\operatorname{Aut}_{+}(\widetilde{C}/C,\mathcal{O})$-torsor over $T$.
		For
		a fixed $n$-tuple of monodromies ${\bf{m}}$ and integrable highest weights $\vec{\lambda}=(\lambda_1,\dots,\lambda_n)$, we 
		consider the vector space $\mathcal{H}_{\vec{\lambda}}=\mathcal{H}_{\lambda_1}\otimes \dots\otimes \mathcal{H}_{\lambda_n}$ 
		and the quasi-coherent sheaf on $Z$.
		$$\underline{\mathcal{H}}_{\vec{\lambda}}:=\mathcal{H}_{\vec{\lambda}}\otimes \mathcal{O}_{Z}.$$ 
		
		The Lie algebra $\operatorname{Der}_N(\mathcal{O}_R)$ acts on $\mathcal{H}_{\lambda}$ by 
		the twisted Sugawara action under the 
		homomorphism 
		\begin{equation}\label{eqn:viractionimp}\operatorname{Der}_{N}(\mathcal{O}_R)\rightarrow \operatorname{Vir}_{\Gamma_{\widetilde{p}}}, \ \  z^{k+\frac{1}{N}}\partial_{z^{\frac{1}{N}}}\rightarrow -NL_{k,{\Gamma_{\widetilde{p}}}},
		\end{equation} where $\operatorname{Vir}_{\Gamma_{\widetilde{p}}}$ is the Virasoro algebra with generated by $L_{k,\Gamma_{\widetilde{p}}}$ which acts on the $L(\widehat{\frg}, \Gamma_{\widetilde{p}})$-module $\mathcal{H}_{\lambda}$ via the twisted Sugawara action \cite[Section 3]{Wakimoto}. The Sugawara construction  gives an action 
		of $\widehat{\operatorname{Der}}(\widetilde{C}/C,\mathcal{K})$ 
		on $\underline{\mathcal{H}}_{\vec{\lambda}}$ and one naturally 
		gets an action of the log-Atiyah algebra $\hat{\mathcal{A}}(\widetilde{C}/C)(-\log \Delta_{Z})$ on the sheaf of conformal 
		blocks $\mathcal{V}^{\dagger}_{\vec{\lambda},\Gamma}(\widetilde{C},C,\widetilde{\bf p}, {\bf p}, \widetilde{\bf z})$ on $Z$.

		We apply the formalism (\cite[Section 7.2]{szcz}) of {\em twisted log-localization} as described 
		with Section \ref{sec:twistedloc} with the following:
		\begin{itemize} \item  $\widehat{\mathfrak{s}}=\widehat{\operatorname{Der}}(\widetilde{C}/C,\mathcal{K})$,
			\item $K=\operatorname{Aut}_{+}\mathcal{O}$, 
			\item $V=\mathcal{H}_{\lambda_1}\otimes \dots\otimes \mathcal{H}_{\lambda_n}$and \item $\widetilde{\mathfrak{l}}$ be the $\operatorname{Aut}(\widetilde{C}/C,\mathcal{O})$-equivariant sheaf of Lie algebras whose fibers 
			at a point $(\widetilde{C}, C, {\widetilde{\bf{p}}}, {\bf p},{\widetilde{\bf v}})$ in
			$T$ is given by $\big(\frg\otimes H^0(\widetilde{C},\mathcal{O}_{\widetilde{C}}(\ast \Gamma.\widetilde{\bf{p}})\big)^{\Gamma}$.
		\end{itemize}
		Hence the twisted log-localization gives the sheaf of conformal
		blocks $\mathcal{V}^{\dagger}_{\vec{\lambda},\Gamma}(\widetilde{C},{C}, \widetilde{\bf{p}}, {\bf{p}},\widetilde{{\bf{v}}})$ on $T$ along with 
		an action of the log-Atiyah algebra $\tilde{\mathcal{A}}(\widetilde{C}/C)(-\log \Delta_T)$. A direct corollary of the Proposition \ref{prop:virasoroandatiyahalgebra} is the 
		following:
		\begin{corollary}
			\label{cor:Atiyahalg}
			The log-Atiyah algebra $\frac{\ell\dim\frg}{2(\ell+h^{\vee})}\mathcal{A}_{\tilde{\pi}^{*}\Lambda}(-\log\widetilde{\Delta}_{g,n,\Gamma})$ 
			acts on the dual twisted conformal block bundle or the sheaf of twisted covacua $\mathcal{V}_{\vec{\lambda},\Gamma}(\widetilde{C},C,\widetilde{\bf p},{\bf p}, \widetilde{\bf v})$.
		\end{corollary}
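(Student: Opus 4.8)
The plan is to obtain this statement as a direct consequence of Proposition~\ref{prop:virasoroandatiyahalgebra}, by carefully tracking how the central charge of the twisted Sugawara construction enters the localization procedure of Section~\ref{sec:twistedconnectiontwistedKZ}. First I would recall that, via the homomorphism \eqref{eqn:viractionimp}, the twisted Sugawara action \cite{Wakimoto} produces an action of the central extension $\widehat{\operatorname{Der}}(\widetilde{C}/C,\mathcal{K})$ from \eqref{eqn:centralexten} on $\underline{\mathcal{H}}_{\vec{\lambda}}=\mathcal{H}_{\vec{\lambda}}\otimes\mathcal{O}_Z$. The essential numerical input is that, in this action, the central element $c$ acts as multiplication by the Virasoro central charge $\frac{\ell\dim\frg}{\ell+h^{\vee}}$ of the level-$\ell$ affine vertex algebra. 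I would emphasize that although the $L_0$-eigenvalues (the trace anomalies $\Delta_{\lambda_i}$) depend on the twisting, the total central charge obtained as the Baer sum of the individual twisted Virasoro cocycles is independent of the twisting and equals the usual value.

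Next I would pass to the quotient fundamental sequence \eqref{eqn:fundaty} and apply the twisted log-localization functor of Section~\ref{sec:twistedloc}. This yields an action of the log-Atiyah algebra $\hat{\mathcal{A}}(\widetilde{C}/C)(-\log\hat{\pi}^{-1}(\Delta_{g,n,\Gamma}))$ on $\mathcal{V}^{\dagger}_{\vec{\lambda},\Gamma}(\widetilde{C},C,\widetilde{\bf p},{\bf p},\widetilde{\bf z})$ over $Z$, with the crucial subtlety that the central $\mathcal{O}_Z$ acts not by the identity, but by the scalar $\frac{\ell\dim\frg}{\ell+h^{\vee}}$. To reinterpret this as a genuine Atiyah-algebra action in the normalized sense, I would rescale by this scalar, using the scalar-multiplication operation on Atiyah algebras \cite{BS}. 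Invoking Proposition~\ref{prop:virasoroandatiyahalgebra} to identify $\hat{\mathcal{A}}(\widetilde{C}/C)(-\log\hat{\pi}^{-1}(\Delta_{g,n,\Gamma}))$ with $\tfrac{1}{2}\mathcal{A}_{\hat{\pi}^{*}\Lambda}(-\log\Delta_{g,n,\Gamma})$ and composing the two scalings, I would conclude that
\[
\frac{\ell\dim\frg}{2(\ell+h^{\vee})}\mathcal{A}_{\hat{\pi}^{*}\Lambda}(-\log\Delta_{g,n,\Gamma})
\]
acts on the conformal block sheaf over $\GModhat(\mathbf{m})$.

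Finally I would descend this action from $\GModhat(\mathbf{m})$ to $T=\GModtilde(\mathbf{m})$. By Proposition~\ref{prop:torsorrealization}(2), $\GModhat(\mathbf{m})$ is an $\operatorname{Aut}_{+}(\widetilde{C}/C,\mathcal{O})$-torsor over $\GModtilde(\mathbf{m})$, and the localization construction already produces $\mathcal{V}_{\vec{\lambda},\Gamma}(\widetilde{C},C,\widetilde{\bf p},{\bf p},\widetilde{\bf v})$ together with the invariant push-forward of the above Atiyah algebra; since $\Lambda$ is pulled back from $\GMod(\mathbf{m})$ its Atiyah algebra is compatible with this descent, so $\hat{\pi}^{*}\Lambda$ is simply replaced by $\tilde{\pi}^{*}\Lambda$, yielding the stated action of $\frac{\ell\dim\frg}{2(\ell+h^{\vee})}\mathcal{A}_{\tilde{\pi}^{*}\Lambda}(-\log\widetilde{\Delta}_{g,n,\Gamma})$ on the sheaf of twisted covacua, with the dual statement following by dualizing. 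I expect the main obstacle to be the bookkeeping at the two points where the construction is genuinely twisted: verifying that the central charge scalar is exactly $\frac{\ell\dim\frg}{\ell+h^{\vee}}$ for the Baer-summed twisted Virasoro action, and confirming that the descent terminates precisely at $\GModtilde(\mathbf{m})$ rather than at $\GMod(\mathbf{m})$. The latter is forced by the nonvanishing $L_0$-eigenvalues recorded in Remark~\ref{rem:diffwithsz}, which is also why no Psi-class contribution appears in this corollary, in contrast to the full Atiyah algebra of Theorem~\ref{thm:atiyahalg}.
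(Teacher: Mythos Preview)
Your proposal is correct and follows essentially the same route as the paper: the paper states this as a ``direct corollary'' of Proposition~\ref{prop:virasoroandatiyahalgebra} without further proof, and your argument simply fills in the details the paper leaves implicit --- namely, that the twisted Sugawara central charge is $\frac{\ell\dim\frg}{\ell+h^{\vee}}$, that this combines with the factor $\tfrac{1}{2}$ from Proposition~\ref{prop:virasoroandatiyahalgebra}, and that the $\operatorname{Aut}_{+}(\widetilde{C}/C,\mathcal{O})$-equivariant push-forward descends the action to $\GModtilde(\mathbf{m})$. Your observation about why descent stops at $\GModtilde$ rather than $\GMod$ is exactly the point the paper makes in Remark~\ref{rem:diffwithsz} and in the discussion preceding Theorem~\ref{thm:atiyahalgebra}.
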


		\subsection{Atiyah algebra for the twisted WZW connection on $\GMod({\bf m})$}\label{sec:atiyahalg}Let ${\bf m} \in \Gamma^n$ be a monodromy vector and for an $n$-tuple of integral weights $\vec{\lambda}=(\lambda_1,\dots,\lambda_n)$ at level $\ell$, we denote the vector bundle of conformal blocks on $\GMod({\bf m})$ by $\mathbb{V}^{\dagger}_{\vec{\lambda},\Gamma}(\widetilde{C},C,\widetilde{{\bf p}}, {\bf p})$. Let $\Gamma$ be a finite group acting on $\frg$. For each $\gamma\in\Gamma$ let $L(\widehat{\frg},\gamma)$ be
		the $\gamma$-twisted affine Kac-Moody Lie algebra. 
		
		As before, let $\widetilde{C}\rightarrow C$ be a ramified Galois cover of nodal curves and let $\widetilde{p}$ be a smooth ramification point of $\widetilde{C}$ 
		whose image in $C$ is denoted by $p$. Let  $\Gamma_{\widetilde{p}}$ be the stabilizer at the point $\widetilde{p}$ and
		we further, assume that $N$ be the order of $\Gamma_{\widetilde{p}}$.  
		Let  $z^{\frac{1}{N}}$ is a special coordinate at the point $\widetilde{p}$, then there is an
		isomorphism of the residue fields $\mathcal{K}_{\widetilde{p}}\simeq \mathbb{C}((z^{\frac{1}{N}}))$ 
		(respectively $\mathcal{K}_{p}\simeq \mathbb{C}((z))$). 

		Let $R=\mathbb{C}[[z^{\frac{1}{N}}]]$ and let $\mathcal{A}ut_{N}(D_{R,\widetilde{p}})$ (respectively $\mathcal{A}ut_{N,+}(D_{R,\widetilde{p}})$) denote the set of special coordinates (respectively one-jets)
		of the point $\widetilde{p}$. This
		is an  $\operatorname{Aut}_N(\mathcal{O}_{R})$ (respectively $\operatorname{Aut}_{N,+}(\mathcal{O}_R)$)-torsor. 

		Since the $L_{0,\Gamma_{\widetilde{p}}}$-eigen values of the module $\mathcal{H}_{\lambda}$ are bounded from below. Hence it follows (from the construction of the Sugawara action) that the action of $\operatorname{Der}_{N,+}\mathcal{O}$ is locally nilpotent. 
		
		Thus by the discussion in Section 6.3 of \cite{FBz}, we get that  the following vector space is independent of the formal jets at the point $\widetilde{p}$:
		$$\widetilde{\mathcal{H}}_{\lambda,\widetilde{p}}:=\mathcal{A}ut_{N,+}(D_{R,\widetilde{p}})\times_{\operatorname{Aut}_{N,+}(\mathcal{O}_R)}\mathcal{H}_{\lambda}.$$  
		However, it turns out that the eigenvalues (\cite[Lemma 3.6]{Wakimoto}) of the {\em zero-th twisted Virasoro operators}
		$L_{0,\Gamma_{\widetilde{p}}}$ on $\mathcal{H}_{\lambda}$ are not in $\frac{1}{N}\mathbb{Z}$. In particular by Equation \eqref{eqn:viractionimp}, the eigenvalues of $z^{\frac{1}{N}}\partial_{z^{\frac{1}{N}}}$
		are non integers. We refer the reader to \cite{mukho4} for similar issue when $\Gamma$ is trivial.

		As in the untwisted case, let $\Delta_{\lambda}$ denote the eigenvalues of $L_{0,\Gamma_{\widetilde{p}}} $ on 
		the degree zero part of $\mathcal{H}_{\lambda}$. We consider the vector space $\mathcal{H}_{\lambda}\otimes \mathbb{C}{dz}^{\Delta_{\lambda}}$. 
		
		By Equation \eqref{eqn:viractionimp} of the Sugawara 
		action, the eigenvalues of $z^{\frac{1}{{N}}}\partial_{z^{\frac{1}{{N}}}}$ on $\mathcal{H}_{\lambda}$ are of the form $-N\Delta_{\lambda}+ \mathbb{Z}$, where $\Delta_{\lambda}$ is 
		a rational number (Lemma 3.6 in \cite{Wakimoto}). The Lie algebra $\operatorname{Der}_N(\mathcal{O}_R)$ acts on $dz^{\Delta_{\lambda}}$ via Lie derivatives and  in particular $z^{\frac{1}{N}}\partial_{z^{\frac{1}{N}}}$ acts with eigenvalue $N\Delta_\lambda$. 
		
		This again by Section 6.3 in \cite{FBz} implies that the action of $\operatorname{Der}_N(\mathcal{O}_R)$ on $\mathcal{H}_{\lambda}\otimes \mathbb{C}dz^{\Delta_{\lambda}}$ can be exponentiated to an action of  the group $\operatorname{Aut}_N(\mathcal{O}_R)$. 
		Thus the coordinate free 
		highest weight irreducible integrable $\widehat{\frg}_{\widetilde{p}}$-module $\mathbb{H}_{\lambda}$ (see Section \ref{sec:deftwistconf} for notation)of highest weight $\lambda$ can be redefined as 
		\begin{equation}\label{eqn:coordinatefreemodule}
		\mathcal{H}_{\lambda,\widetilde{p}}:=\mathcal{A}ut_N(D_{R,\widetilde{p}})\times_{\operatorname{Aut}_N(\mathcal{O}_R)}(\mathcal{H}_{\lambda}\otimes \mathbb{C}dz^{\Delta_{\lambda}}).
		\end{equation}
		We have the following result which summarizes the discussions in this section:
		
		\begin{theorem}
			\label{thm:atiyahalgebra}
			Let $\Delta_{\lambda_i}$ be the eigenvalues of the zeroth
			Virasoro operator $L_{0,\Gamma_{\widetilde{p}_i}}$ on the degree zero part of the highest weight $L(\widehat{\frg}, \Gamma_{p_i})$-module  
			with highest weight $\lambda_i$ via 
			the twisted Sugawara action. Then the Atiyah algebra 
			$$\frac{\ell\dim \frg}{2(h^{\vee}(\frg)+\ell)}\mathcal{A}_{\Lambda}(-\log {\Delta}_{g,n,\Gamma})+\sum_{i=1}^{n}N_i\Delta_{\lambda_i}\mathcal{A}_{{\tildeL}_i}(-\log {\Delta}_{g,n,\Gamma})$$ acts on $\mathbb{V}_{\vec{\lambda},\Gamma}(\widetilde{C},C, \widetilde{\bf{p}}, {\bf{p}})$, where ${\tildeL}_i $;s are 
			tautological line bundles corresponding to the $i$-th psi classes and $N_i$'s are the orders of the cyclic groups $\Gamma_{p_i}$. 
		\end{theorem}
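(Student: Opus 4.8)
The plan is to obtain the stated action by descending the action already constructed over $\GModtilde(\mathbf{m})$ in Corollary \ref{cor:Atiyahalg} down to $\GMod(\mathbf{m})$ along the $\Gm^n$-torsor of Proposition \ref{prop:torsorrealization}(3), while carefully recording the correction produced by the psi-class bundles. Corollary \ref{cor:Atiyahalg} already supplies an action of $\frac{\ell\dim\frg}{2(\ell+h^\vee(\frg))}\mathcal{A}_{\tilde\pi^*\Lambda}(-\log\widetilde\Delta_{g,n,\Gamma})$ on the sheaf of twisted covacua over $\GModtilde(\mathbf{m})$; this is the first summand, pulled back from $\GMod(\mathbf{m})$, and it was produced by combining the twisted Sugawara action with the identification of $\hat{\mathcal{A}}(\widetilde{C}/C)$ with $\tfrac12\mathcal{A}_{\hat\pi^*\Lambda}$ from Proposition \ref{prop:virasoroandatiyahalgebra}. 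What remains is to pass from the one-jet moduli $\GModtilde(\mathbf{m})$ to $\GMod(\mathbf{m})$, i.e. to quotient by the residual $\Gm^n$ rescaling the chosen one-jet of a special coordinate at each marked point.

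First I would confront the obstruction recorded in Remark \ref{rem:diffwithsz}: since the twisted affine Kac--Moody vertex algebras are not conformal, the sheaf does not descend directly, the precise defect being that the generator $z^{1/N_i}\partial_{z^{1/N_i}}$ of the $i$-th $\Gm$ acts on $\mathcal{H}_{\lambda_i}$ through $-N_iL_{0,\Gamma_{\widetilde{p}_i}}$ (by the Sugawara normalization \eqref{eqn:viractionimp}), whose eigenvalues lie in $-N_i\Delta_{\lambda_i}+\ZBbb$ rather than in $\ZBbb$. The cure is the coordinate-free module \eqref{eqn:coordinatefreemodule}: tensoring $\mathcal{H}_{\lambda_i}$ with the line $\CBbb\,dz^{\Delta_{\lambda_i}}$, on which the Lie derivative along $z^{1/N_i}\partial_{z^{1/N_i}}$ acts as $N_i\Delta_{\lambda_i}$ because the ramified relation $z=(z^{1/N_i})^{N_i}$ forces $\mathcal{L}_{z^{1/N_i}\partial_{z^{1/N_i}}}(dz)=N_i\,dz$, shifts these eigenvalues back into $\ZBbb$ and thereby turns the $\Gm$-action into a genuine one, as required by Lemma \ref{lemma:coordinatefree}. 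The single genuinely computational point is thus the identity $\mathcal{L}_{z^{1/N_i}\partial_{z^{1/N_i}}}(dz)=N_i\,dz$ together with $z^{1/N_i}\partial_{z^{1/N_i}}=N_i\,z\partial_z$ from the isogeny \eqref{eqn:liealgebraiso}, which pins the weight of the $i$-th $\Gm$ on the twisting line $\CBbb\,dz^{\Delta_{\lambda_i}}$ to be exactly $N_i\Delta_{\lambda_i}$, and identifies that line, over $\GModtilde(\mathbf{m})$, with a power of the tautological psi-class bundle $\tildeL_i$.

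Granting the weight, the theorem then follows from the behaviour of logarithmic Atiyah algebras under descent along a $\Gm$-torsor (see \cite{BS,Tsuchimoto}): descending a sheaf that carries an Atiyah-algebra action through a $\Gm$-torsor on which it has weight $a$ replaces the acting algebra by its sum with $a\,\mathcal{A}_{\mathcal{L}}$, where $\mathcal{L}$ is the line bundle associated with the torsor. Applying this once per marked point, with $\mathcal{L}=\tildeL_i$ and $a=N_i\Delta_{\lambda_i}$, and noting that the torsor $\GModtilde(\mathbf{m})\to\GMod(\mathbf{m})$ respects the boundary so that everything stays compatible with the logarithmic structure along $\Delta_{g,n,\Gamma}$, converts the algebra of Corollary \ref{cor:Atiyahalg} into
$$\frac{\ell\dim\frg}{2(\ell+h^\vee(\frg))}\mathcal{A}_{\Lambda}(-\log\Delta_{g,n,\Gamma})+\sum_{i=1}^{n}N_i\Delta_{\lambda_i}\,\mathcal{A}_{\tildeL_i}(-\log\Delta_{g,n,\Gamma})$$
acting on $\mathbb{V}_{\vec\lambda,\Gamma}(\widetilde{C},C,\widetilde{\mathbf{p}},\mathbf{p})$ over $\GMod(\mathbf{m})$. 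The main obstacle is precisely the weight bookkeeping: correctly threading the ramification index $N_i$ through the isogeny $\mu$ of \eqref{eqn:liealgebraiso}, the Sugawara normalization, and the Lie derivative on $dz^{\Delta_{\lambda_i}}$, so that the coefficient emerges as $N_i\Delta_{\lambda_i}$ rather than $\Delta_{\lambda_i}$; once this weight is fixed the descent of Atiyah algebras is formal.
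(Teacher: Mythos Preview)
Your proposal is correct and follows essentially the same approach as the paper: both start from the action of Corollary \ref{cor:Atiyahalg} on $\GModtilde(\mathbf{m})$, invoke the coordinate-free module construction \eqref{eqn:coordinatefreemodule} to render the sheaf $\Gm^n$-equivariant with the correct weights $N_i\Delta_{\lambda_i}$, and then descend along the $\Gm^n$-torsor $\tilde\pi:\GModtilde(\mathbf{m})\to\GMod(\mathbf{m})$ to pick up the psi-class summands. The paper's write-up is terser (it packages the descent as the observation that the pullbacks $\tilde\pi^*\tildeL_i$ are trivial, so the sum of Atiyah algebras over $\GModtilde$ is still that of Corollary \ref{cor:Atiyahalg}, and then appeals to $\Gm^n$-equivariance), but your more explicit weight bookkeeping through \eqref{eqn:viractionimp} and \eqref{eqn:liealgebraiso} is exactly what underlies it.
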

		\begin{proof}First of all we observe that the $\GModtilde$ is a substack of the total stack of the $\mathbb{G}_m^{n}$-torsor on $\GMod$ given by the line bundles ${\tildeL}_1,\dots,{\tildeL}_n$. 
			Hence the pullback of each ${\tildeL}_i$ to $\GModtilde$ is trivial. Let $\tilde{\pi}:\GModtilde\rightarrow \GMod$ be the forgetful map and consider  the Atiyah algebra
			\begin{equation}\label{eqn:almostrightanswer}
			\mathcal{A}:=\frac{\ell\dim{\frg}}{2(\ell+h^{\vee}(\frg))}\tilde{\pi}^{*}\mathcal{A}_{\Lambda}(-\log\tilde{\pi}^{-1}\Delta_{g,n,\Gamma})+\sum_{i=1}^n N_i\Delta_{\lambda_i}\tilde{\pi}^{*}\mathcal{A}_{{\tildeL}_i}(-\log\tilde{\pi}^{-1}\Delta_{g,n,\Gamma}),
			\end{equation} 
			Then by Proposition \ref{prop:virasoroandatiyahalgebra}, Corollary \ref{cor:Atiyahalg} and the coordinate free construction (see \eqref{eqn:coordinatefreemodule}) tell us that the log-Atiyah algebra given by Equation \eqref{eqn:almostrightanswer} acts on $\mathbb{V}:=\tilde{\pi}^{*}(\mathbb{V}^{\dagger}_{\vec{\lambda},\Gamma}(\widetilde{C},C,\widetilde{\bf{p}}, {\bf p})).$ i.e a Lie algebra homomorphism $\nabla: \mathcal{A}\rightarrow \mathcal{A}_{\mathbb{V}}$ 
			which is also equivariant under the $\mathbb{G}_m^n$-torsor. Hence the result follows.
			
		\end{proof}
		
		\paragraph*{\bf Part III: Crossed modular functors} 
		\addcontentsline{toc}{section}{\bf Part III: Crossed modular functors}
		In this part of the paper we develop the formalism of a $\Gamma$-crossed modular functor that will allow us to use these bundles of twisted conformal blocks to construct $\Gamma$-crossed modular fusion categories. 
		
		We state and prove Theorem \ref{thm:modfunmodcat} which allows us to construct $\Gamma$-crossed weakly ribbon categories from a $\Gamma$-crossed modular functor in genus 0. We then derive some important consequences of this, for example the twisted Verlinde formula (Corollary \ref{cor:verlindeforcrossedmodfun}) in the set-up of $\Gamma$-crossed modular functors. 
		The rest of the paper is dedicated to the proof that twisted conformal blocks defines a complex analytic $\Gamma$-crossed modular functor (Theorem \ref{conj:conformalblockismodular}). We also complete the proof of Theorem \ref{conj:main1}. 

		\section{$\Gamma$-crossed modular functors} \label{sec:crossedmodular}
		Let $\Gamma$ be a finite group. In this section, we define the notion of a complex analytic $\Gamma$-crossed modular functor. Topological $\Gamma$-crossed modular functors have been defined and studied in \cite{KP:08,Prince}. We prove  that the complex analytic and topological notion are both equivalent to the notion of a weakly rigid $\Gamma$-crossed modular category. 
		In this section, all abelian categories that we consider are supposed to be finite semisimple $\CBbb$-linear, even though we may not mention this explicitly and all additive functors considered are supposed to be $\CBbb$-linear.
		\subsection{$\Gamma$-crossed abelian categories}\label{sec:crossedabelian}
		Let $\Ccal$ be a finite semisimple $\CBbb$-linear abelian category equipped with a linear action of a finite group $\Gamma$. For an object $M\in \Ccal$ let $\wtilde{\Gamma}_M:=\{(g,\psi)|g\in \Gamma, \psi:g(M)\xoto{\cong} M\}$. We can define a group structure on $\wtilde{\Gamma}_M$ in the evident way and we obtain a central extension
		\begin{equation}\label{eq:centralextstabilier}
		1\to \Aut(M)\to \wtilde{\Gamma}_M\to \Gamma_M\to 1,
		\end{equation}
		where $\Gamma_M\leq \Gamma$ is the stabilizer of the isomorphism class of the object $M$. 
		\begin{remark}\label{rk:invobj}
			We say that an object $M\in \Ccal$ is $\Gamma$-invariant if $\Gamma_M=\Gamma$ and if the central extension  (\ref{eq:centralextstabilier}) is split. Any $\Gamma$-invariant object in $\Ccal$ can be naturally lifted to an object of the $\Gamma$-equivariantization $\Ccal^\Gamma$, namely the object in the equivariantization corresponding to the $\Gamma$-invariant object $M$ and the trivial representation of $\Gamma$.
		\end{remark}
		
		Now for each $n\in \ZBbb_{\geq 0}$, the tensor power $\Ccal^{\boxtimes n}$ is also a finite semisimple $\CBbb$-linear abelian category equipped with an action of the wreath product $S_n\ltimes \Gamma^n$, where $S_n$ is the symmetric group on $n$-letters acting on $\Gamma^n$ by permutation of factors. Note that we have the diagonal subgroup $\Delta\Gamma\subset \Gamma^n$ or in other words, the fixed point subgroup in $\Gamma^n$ for the $S_n$-action. The direct product $S_n\times \Delta\Gamma$ is a subgroup of the wreath product $S_n\ltimes \Gamma^n$ and hence it acts linearly on the abelian category $\Ccal^{\boxtimes n}$.
		\begin{remark}\label{rk:syminvobj}
			By a symmetric $\Gamma$-invariant object in $\Ccal^{\boxtimes n}$, we simply mean an $S_n\times \Delta\Gamma$-invariant object of $\Ccal^{\boxtimes n}$.
		\end{remark}
		
		\begin{definition}
			An involutive duality on a finite semisimple $\CBbb$-linear category is a $\CBbb$-linear equivalence $(\cdot)^*:\Ccal\xoto{\cong}\Ccal^{\operatorname{op}}$ along with a natural isomorphism between the functors $\id_\Ccal\xoto{\cong}(\cdot)^{**}$.
		\end{definition}
		
		\begin{definition}\label{def:crossedabeliancategory}
			By a  finite semisimple $\CBbb$-linear $\Gamma$-crossed abelian category we mean a finite semisimple $\CBbb$-linear abelian category $\Ccal$ along with 
			\begin{itemize}
				\item a $\Gamma$-grading of abelian categories $\Ccal=\bigoplus\limits_{m\in\Gamma}{\Ccal_m}$, 
				\item an involutive duality $(\cdot)^*:\Ccal\to \Ccal^{\op}$, which takes each component $\Ccal_m$ to $\Ccal^{\op}_{m^{-1}}$, 
				\item a linear action of $\Gamma$ on $\Ccal$ such that $\gamma(\Ccal_m)\subset \Ccal_{\gamma m \gamma^{-1}}$ for all $\gamma, m \in \Gamma$, which is compatible with the involutive duality and
				\item a simple $\Gamma$-invariant object (see Remark \ref{rk:invobj}) $\unit\in \Ccal_1$ which is self-dual, $\unit\cong\unit^*$.  
			\end{itemize}
		\end{definition}

		\begin{remark}
			If $\Ccal$ is a $\Gamma$-crossed abelian category as in Definition \ref{def:crossedabeliancategory}, then we have a $\Gamma^n$-grading of $\Ccal^{\boxtimes n}$ 
			$$\Ccal^{\boxtimes n}=\bigoplus\limits_{\mathbf{m}\in \Gamma^n}\Ccal^{\boxtimes n}_{\mathbf{m}}, \mbox{ where } \Ccal^{\boxtimes n}_{\mathbf{m}}:=\Ccal_{m_1}\boxtimes\cdots \boxtimes \Ccal_{m_n} \subset \Ccal^{\boxtimes n}\mbox{ for } \mathbf{m}=(m_1,\cdots,m_n).$$
		\end{remark}

		\begin{definition}
			For each $m\in \Gamma$, let $P_m$ denote the set of simple objects of $\Ccal_m$. Define $R_m:=\bigoplus\limits_{M\in P_m}M\boxtimes M^*\in \Ccal^{\boxtimes 2}_{m,m^{-1}}$ and define $R=\bigoplus\limits_{m\in \Gamma} R_m\in \Ccal^{\boxtimes 2}$. Note that $R$ is a symmetric $\Gamma$-invariant self dual object of $\Ccal^{\boxtimes 2}$.
		\end{definition}
		
		\begin{remark}
			By a slight abuse of notation, we will often denote a $\Gamma$-crossed abelian category by the triple $(\Ccal,\unit,R)$, but it should be remembered that there are other structures present which are not reflected in this notation.
			
		\end{remark}
		
		\begin{remark}
			A grading $\Ccal=\bigoplus\limits_{m\in\Gamma}{\Ccal_m}$ is said to be faithful if each component $\Ccal_m$ is non-zero. We will always assume that our $\Gamma$-crossed abelian categories are faithfully graded.
		\end{remark}

		\subsection{Motivation for the definition of crossed modular functors}\label{sec:motivationcomplan}
		Let $(\Ccal,\unit,R)$ be a finite semisimple faithfully graded $\Gamma$-crossed abelian category. Let us describe the motivation behind the notion of a $\Ccal$-extended $\Gamma$-crossed modular functor. The main goal is to define a tensor product structure on $\Ccal$ which equips it with the structure of a braided $\Gamma$-crossed weakly fusion category  (see Definitions \ref{d:braidedgammacrossed} and \ref{def:weaklyfusion}) with unit $\unit$ such that the $\Gamma$-action, grading, and weak duality agree with the structures that are already present on $\Ccal$.

		The idea is to describe the desired tensor product of objects $M_1,\cdots,M_n\in \Ccal$ by describing instead the $\Hom$-spaces $$\Hom(M,M_1\otimes\cdots\otimes M_n)=\Hom(\unit,M_1\otimes\cdots\otimes M_n\otimes M^*)$$ and the relations that these spaces need to satisfy in order to define the desired tensor product structure.
		
		Let us consider objects $M_i\in \Ccal_{m_i}\subset \Ccal$ for $1\leq i\leq n$. Consider this $n$-tuple of objects as an object $$\mathbf{M}:=M_1\boxtimes\cdots\boxtimes M_n\in \Ccal_{m_1}\boxtimes\cdots \boxtimes \Ccal_{m_n}=\Ccal^{\boxtimes n}_{\mathbf{m}} \subset \Ccal^{\boxtimes n},\mbox{ where } \mathbf{m}=(m_1,\cdots,m_n).$$
		
		Let us also consider an $n$-marked admissible $\Gamma$-cover $(\tildeC\to \PBbb^1,\wtilde{\mathbf{p}},\mathbf{p},\wtilde{\mathbf{v}},\mathbf{v})$ with ramification data given by $\mathbf{m}\in \Gamma^n$ as in Definition \ref{def:nmarkedcovers}. 
		
		Informally, a $\Ccal$-extended $\Gamma$-crossed modular functor $\Vcal$ (in genus 0) is a gadget which takes in the data of an object $\mathbf{M}\in \Ccal_{\mathbf{m}}^{\boxtimes n}$ and an $n$-marked  $\Gamma$-cover $(\tildeC\to \PBbb^1,\wtilde{\mathbf{p}},\mathbf{p},\wtilde{\mathbf{v}},\mathbf{v})$, and spits out a finite dimensional vector space $\Vcal_{\mathbf{M}}(\tildeC\to \PBbb^1,\wtilde{\mathbf{p}},\mathbf{p},\wtilde{\mathbf{v}},\mathbf{v})$. These finite dimensional vector spaces are supposed to be identified with the $\Hom$-spaces that we need to define the desired tensor product structure on $\Ccal$. A modular functor $\Vcal$ in arbitrary genus can accept as an input any $n$-marked admissible $\Gamma$-cover $(\tildeC\to C,\wtilde{\mathbf{p}},\mathbf{p},\wtilde{\mathbf{v}},\mathbf{v})$ without any restriction on the genus of $C$.
		
		This assignment needs to satisfy certain axioms in order to get the desired properties for the tensor product. These axioms are stated precisely in Section \ref{def:cextenmodularfunctor}. Here we give the informal idea.
		
		Firstly, as the cover $(\tildeC\to C,\wtilde{\mathbf{p}},\mathbf{p},\wtilde{\mathbf{v}},\mathbf{v})$ varies in its moduli space $\tildeM^\Gamma_{g,n}(\mathbf{m})$ (see Definition \ref{def:nmarkedcovers}) the corresponding vector spaces should define a  vector bundle with a flat projective connection on this moduli space. Secondly, these moduli spaces $\tildeM^\Gamma_{g,n}(\mathbf{m})$ and their closures $\tildebarM^\Gamma_{g,n}(\mathbf{m})$ have various gluing maps, as well as forgetting of marked-points maps between them. The vector bundles that are given by the $\Gamma$-crossed modular functor $\Vcal$ need to be compatible with all such maps. This is what is usually called `factorization rules' and `propagation of vacua'. In addition we also need certain other functoriality conditions.
		
		In order to package all the axioms we need in a compact way, we use the category  $\Xcal^\Gamma$ of stable $\Gamma$-graphs that is described in Appendix \ref{ap:groupmarkedgraphs}. This category keeps track of the combinatorial structure underlying the gluing and forgetting marked points maps between the moduli stacks of $n$-marked admissible $\Gamma$-covers. 
		We refer to Appendix \ref{ap:groupmarkedgraphs} (see also \cite{GK:98}, \cite[\S3.3]{P:13}) for the definitions and examples. 
		
		Informally we think of a graph as a `usual graph' with vertices and edges, but we allow some legs (which we think of as one half of an edge) attached to vertices. We think of an edge as being made up of two half-edges joined together. Moreover each vertex of our graph is assigned a weight, that will be related to the genus of the base curve in a $\Gamma$-cover $\tildeC\to C$. It is then possible to define the genus and the notion of stability of a weighted-legged-graph. In addition, a stable $\Gamma$-graph has the following data:
		\begin{itemize}
			\item An element $\mathbf{m}(h)\in \Gamma$ attached to each half-edge including the legs. (This corresponds to the monodromy data of the marked cover.)
			\item An element $\mathbf{b}(h)$ (called edge conjugation data) attached to the half-edges which are part of an edge $\{h,h'\}$ such that ${ }^{\mathbf{b}(h)}\mathbf{m}(h)\cdot\mathbf{m}(h')=1,\mbox{ and} \ \mathbf{b}(h)\cdot\mathbf{b}(h')=1$.
		\end{itemize}

		\subsection{Factorization and propagation of vacua functors}
		Let $(\Ccal,\unit,R)$ be a faithfully graded $\Gamma$-crossed abelian category. We refer to the informal discussion in Section \ref{sec:motivationcomplan} and Appendix \ref{ap:groupmarkedgraphs} for the definition of the category $\Xcal^\Gamma$ of stable $\Gamma$-graphs. Let $(X,\mathbf{m},\mathbf{b})\in \Xcal^\Gamma$ be a stable $\Gamma$-graph (see Definition \ref{d:XGammagA}). With such a graph we associate the abelian category $\Ccal^{\boxtimes H(X)}_{\mathbf{m}}$, where $H(X)$ denotes the set of half-edges of the graph. 
		
		Now consider a morphism (see Definition \ref{d:XGammagA}) $(X,\mathbf{m}_X,\mathbf{b}_X)\xoto{(f,\pmb{\gamma})} (Y,\mathbf{m}_Y,\mathbf{b}_Y)$ in $\Xcal^\Gamma$ which only involves contracting edges. We have the associated functor 
		\begin{equation}\label{eq:crossedabeliangraphmaps}
		\Rcal_{f,\pmb\gamma}:\Ccal^{\boxtimes H(Y)}_{\mathbf{m}_Y}\rar{}\Ccal^{\boxtimes H(X)}_{\mathbf{m}_X} \mbox{ defined as} 
		\end{equation}
		$$\left(\bigboxtimes\limits_{h\in H(Y)}M_{h}\right)\mapsto \left(\bigboxtimes\limits_{f^*h\in f^*H(Y)}\pmb\gamma(h)^{-1}(M_h)\right)\boxtimes\left(\bigboxtimes\limits_{\{h_1,h_2\}\in E(X)\setminus f^*E(Y)}(1,\mathbf{b}_x(h_1))\cdot R_{\mathbf{m}_X({h_1})}\right).$$
		Here we are just inserting the suitable translate of a graded component (to be more precise, the object $(1,\mathbf{b}_x(h_1))\cdot R_{\mathbf{m}_X({h_1})}\in \Ccal_{\mathbf{m}_X(h_1)}\boxtimes \Ccal_{\mathbf{m}_X(h_2)}$) of the symmetric $\Gamma$-invariant object $R\in \Ccal^{\boxtimes 2}$ at the edges of $X$ which have been contracted by $f:X\to Y$. The reader can have a look at Example \ref{ex:contractingloop} below where a simple prototypical example is considered.
		
		Now suppose that the morphism $(f,\pmb\gamma):(X,\mathbf{m}_X,\mathbf{b}_X)\xoto{} (Y,\mathbf{m}_Y,\mathbf{b}_Y)$ only deletes some 1-marked legs. We define the functor 
		\begin{equation}
		\Rcal_{f,\pmb\gamma}: \Ccal^{\boxtimes H(Y)}_{\mathbf{m}_Y}\rar{} \Ccal^{\boxtimes H(X)}_{\mathbf{m}_X}
		\end{equation}
		$$\left(\bigboxtimes\limits_{h\in H(Y)}M_h\right)\mapsto \left(\bigboxtimes\limits_{f^*h\in f^*H(Y)}\pmb\gamma(h)^{-1}(M_h)\right)\boxtimes \left(\bigboxtimes\limits_{H(X)\setminus f^*H(Y)}\unit\right),$$
		i.e. we insert the $\Gamma$-invariant object $\unit$ at all the 1-marked legs which have been deleted.
		
		Hence we can define the functor $\Rcal_{f,\pmb\gamma}$  for any morphism in $\Xcal^\Gamma$ (see also discussion in Appendix \ref{ap:clutchingalongmaps}). It is clear that if we have two morphisms
		$$(X,\mathbf{m}_X,\mathbf{b}_X)\xoto{f_1,\pmb{\gamma_1}}(Y,\mathbf{m}_Y,\mathbf{b}_Y)\xoto{f_2,\pmb{\gamma_2}}(Z,\mathbf{m}_Z,\mathbf{b}_Z)$$ then we have a natural isomorphism between the two functors
		\begin{equation}
		\Rcal_{(f_2,\pmb{\gamma_2})\circ (f_1,\pmb{\gamma_1})} \cong \Rcal_{f_1,\pmb{\gamma_1}}\circ \Rcal_{f_2,\pmb{\gamma_2}}:\Ccal^{\boxtimes H(Z)}_{\mathbf{m}_Z}\rar{}\Ccal^{\boxtimes H(X)}_{\mathbf{m}_X}.
		\end{equation}
		
		\subsection{$\Ccal$-extended $\Gamma$-crossed modular functors} \label{def:cextenmodularfunctor}
		Let $\Ccal$ be a faithfully graded $\Gamma$-crossed abelian category. We will now define the notion of a $\Ccal$-extended $\Gamma$-crossed modular functor extending the notion of a modular functor as defined in \cite[Ch. 6]{BK:01}. We refer the reader to Appendix \ref{ap:groupmarkedgraphs}, \ref{ap:clutchingalongmaps}, \ref{ap:specialization} for the definitions of the stacks, the categories of twisted logarithmic $D$-modules on them and the specialization functors that appear in the definition below.
		
		\begin{definition}\label{d:gammacrossedmodfun}
			Let $\Ccal$ be a (faithfully graded) $\Gamma$-crossed abelian category with $\Gamma$-invariant object $\unit\in \Ccal$ and the involutive duality $(\cdot)^*$. Let $c\in \CBbb$. A $\Ccal$-extended modular functor of (additive) central charge $c$ consists of the following data satisfying the following conditions:
			\begin{itemize}
				\item[1.] For each stable pair $(g,A)$ (i.e. $2g-2+|A|>0$) and $\mathbf{m}\in \Gamma^A$ a conformal blocks functor
				\begin{equation}
				\label{eqn:importantdefinitionoffunctors}
			\Vcal_{g,A,\mathbf{m}}:\Ccal^{\boxtimes A}_\mathbf{m}\rar{}\mathscr{D}_c\Mod(\tildebarM^\Gamma_{g,A}(\mathbf{m})).
				\end{equation}
				Once we have functors as in Equation \eqref{eqn:importantdefinitionoffunctors}, we can canonically extend them to obtain functors
				$$\Vcal_{X,\mathbf{m},\mathbf{b}}:\Ccal^{\boxtimes H(X)}_\mathbf{m}\rar{}\mathscr{D}_c\Mod(\tildebarM^\Gamma_{X,\mathbf{m},\mathbf{b}})$$
				for each $(X,\mathbf{m},\mathbf{b})\in \Xcal^\Gamma$. We will often abuse notation and denote the functors $\Vcal_{X,\mathbf{m},\mathbf{b}}$ as just $\Vcal$.
				\item[2.](Gluing Functor) For each morphism $(f,\pmb\gamma):(X,\mathbf{m}_X,\mathbf{b}_X)\to (Y,\mathbf{m}_Y,\mathbf{b}_Y)$ in $\Xcal^\Gamma$ a natural isomorphism $G_{f,\pmb\gamma}$ as below between the two functors from $\Ccal^{\boxtimes H(Y)}_{\mathbf{m}_Y}$ to $\mathscr{D}_c\Mod(\tildebarM^\Gamma_{X,\mathbf{m}_X,\mathbf{b}_X})$:
				\begin{equation}\label{eq:gluingaxiom}
				G_{f,\pmb\gamma}:\Vcal_{X,\mathbf{m}_X,\mathbf{b}_X}\circ\Rcal_{f,\pmb\gamma}\rar{\cong}\Sp_{f,\pmb\gamma}\circ \Vcal_{Y,\mathbf{m}_Y,\mathbf{b}_Y}
				\end{equation}
				compatible with compositions of morphisms in $\Xcal^\Gamma$.
				\item[3.] A normalization $\Vcal_{0,3,\pmb{1}}(\unit\boxtimes\unit\boxtimes\unit)(\PBbb^1\times\Gamma\to \PBbb^1,\wtilde{\mathbf{p}},\mathbf{p},\wtilde{\mathbf{v}})\cong \CBbb$, where the 3 marked points $\wtilde{\mathbf{p}}$ in $\PBbb^1\times \{1\}\subset \PBbb^1\times\Gamma$ are the 3 roots of unity in $\CBbb$ and with tangent vectors being the outward pointing unit vectors.
				\item[4.] (Non-degeneracy.) Let $\gamma\in \Gamma$ and let $X\in \Ccal_{\gamma}$ be a non-zero object. Then the vector bundle $\Vcal_{0,3,(\gamma,\gamma^{-1},1)}(X\boxtimes X^*\boxtimes\unit)$ is non-zero.
			\end{itemize}
			We will often abuse notation and denote all functors $\Vcal_{X,\mathbf{m},\mathbf{b}}$ simply by $\Vcal$. Also if $(\tildeD\to D,\wtilde{\mathbf{q}},\mathbf{q},\wtilde{\mathbf{w}})\in \tildebarM^\Gamma_{X,\mathbf{m},\mathbf{b}}$ and $\mathbf{M}\in \Ccal^{\boxtimes H(X)}_{\mathbf{m}}$, the vector space $\Vcal_{\mathbf{M}}(\tildeD\to D,\wtilde{\mathbf{q}},\mathbf{q},\wtilde{\mathbf{w}})$ is defined as the fiber of the twisted $\mathcal{D}$-module $\Vcal_{X,\mathbf{m},\mathbf{b}}(\mathbf{M})$ at the point $(\tildeD\to D,\wtilde{\mathbf{q}},\mathbf{q},\wtilde{\mathbf{w}})$.
		\end{definition}
		\begin{remark}
			Since morphisms in $\Xcal^\Gamma$ involve contracting edges as well as forgetting $1$-marked legs, the condition 2 in Definition \ref{d:gammacrossedmodfun} gives the factorization isomorphisms, the propagation of vacua isomorphisms as well as all the compatibilities mentioned in \cite[\S6.7]{BK:01}. When $\Gamma$ is the trivial group, it is clear that Definition \ref{d:gammacrossedmodfun} agrees with the one in {\it loc. cit.}, except that we include non-degeneracy as part of the definition. See the example below to derive `factorization rules' from the axioms in Definition \ref{d:gammacrossedmodfun}.
		\end{remark}
		
		\begin{example}\label{ex:contractingloop} (An example of factorization.) Let $(g,n)$ be a stable pair, i.e. $2g-2+n>0$, and let $a,b,m_1,\cdots,m_n\in \Gamma$. Consider the following morphism in $\Xcal^\Gamma$ obtained by contracting the loop below whose two half-edges have monodromy data $a , ba^{-1}b^{-1}$ and edge conjugation data $(b,b^{-1})$:
			\begin{equation}
			\label{eqn:twistedgluing}
			f_g:\begin{tikzpicture}[baseline={([yshift=-.5ex]current bounding box.center)}]
			\node (B1) {$m_1$};
			\node[circle, inner sep = 0.4pt, draw, below right =.6 of B1] (Cup) {$g-1$};
			\node[above right =.6 of Cup] (B2) {$m_n$};
			\node[below right =.3 of Cup] (B) {$ba^{-1}b^{-1}$};
			\node[below left =.3 of Cup] (B') {$a$};
			\node[above =.5 of Cup] (Dots) {$\cdots$};
			\node[below =.9 of Cup] (invisible) {};
			\node[below =1 of Cup] (A) {$(b,b^{-1})$};
			\draw (B1) to[out=-90,in=180] (Cup);
			\draw (B2) to[out=-90,in=0] (Cup);
			\draw (Cup) to[out=-135,in=150] (invisible) to[out=-30,in=-45] (Cup);
			\end{tikzpicture}\longrightarrow \begin{tikzpicture}[baseline={([yshift=-.5ex]current bounding box.center)}]
			\node (B1) {$m_1$};
			\node[circle, inner sep = 4.8pt, draw, below right =.6 of B1] (Cup) {$g$};
			\node[above =.5 of Cup] (Dots) {$\cdots$};
			\node[above right =.6 of Cup] (B2) {$m_n$};
			\draw (B1) to[out=-90,in=180] (Cup);
			\node[below =.7 of Cup] (invisible) {};
			\draw (B2) to[out=-90,in=0] (Cup);
			\end{tikzpicture}.
			\end{equation}
			Then we have the gluing map from Equation (\ref{eq:gluingcovers}) (see also the beginning of Appendix \ref{ap:clutchingalongmaps}):
			$$
			\xi_{f_g}: \barM{}^\Gamma_{g-1,n+2}(a,ba^{-1}b^{-1},\mathbf{m}) \rar{} \barM{}^\Gamma_{g,n}(\mathbf{m}).
			$$ The map $\xi_{f_g}$ naturally  factors through the gluing map $$\barM{}^\Gamma_{g-1,n+2}(bab^{-1},ba^{-1}b^{-1},\mathbf{m}) \rar{} \barM{}^\Gamma_{g,n}(\mathbf{m})$$ as defined in \cite[\S2.2]{JKK} via action by the element $(b,1,\mathbf{1})\in \Gamma^{n+2}$. Note that the morphism $f_g$ in Equation \eqref{eqn:twistedgluing} also factors through the corresponding graph in $\Xcal^\Gamma$.

			Recall the specialization functor from Equation (\ref{eq:specalonggraphmaps})
			$$\Sp_{f_g}:\mathscr{D}_c\Mod(\tildebarM{}^\Gamma_{g,n}(\mathbf{m})) \rar{} \mathscr{D}_c\Mod(\tildebarM{}^\Gamma_{g-1,n+2}(a,ba^{-1}b^{-1},\mathbf{m}))$$
			and the functor from Equation \ref{eq:crossedabeliangraphmaps}:
			$\Rcal_{f_g}: \Ccal^{\boxtimes n}_{\mathbf{m}}\rar{} \Ccal^{\boxtimes n+2}_{a,ba^{-1}b^{-1},\mathbf{m}},$
			defined as $$\Ccal^{\boxtimes n}_\mathbf{m}\ni\mathbf{M}\mapsto (1,b)\cdot(R_a)\boxtimes \mathbf{M}=\bigoplus\limits_{A\in P_a}A\boxtimes b(A^*)\boxtimes \mathbf{M}.$$ 
			In this case the Gluing axiom (Equation \ref{eq:gluingaxiom}) gives a natural isomorphism between two functors
			$$
			G_{f_g}:\Vcal_{g-1,n+2,(a,ba^{-1}b^{-1},\mathbf{m})}\circ\Rcal_{f_g}\rar{\cong}\Sp_{f_g}\circ \Vcal_{g,n,\mathbf{m}}$$
			i.e. for each $\mathbf{M}\in \Ccal^{\boxtimes n}_{m}$ we have an isomorphism of vector bundles with connections
			\begin{equation}\label{eq:inductionstep} 
			\bigoplus\limits_{P_a}\Vcal_{g-1,n+2,(a,ba^{-1}b^{-1},\mathbf{m})}(A\boxtimes b(A^*)\boxtimes \mathbf{M})\rar{\cong}\Sp_{f,\pmb\gamma} (\Vcal_{g,n,\mathbf{m}}(\mathbf{M})).\end{equation} We will use the isomorphism in Equation \eqref{eq:inductionstep} later to prove Theorem \ref{thm:modfunmodcatarbgenus} by induction on the genus.
		\end{example}

		\begin{example}\label{ex:braidinggluing}
			Let us consider another example of a morphism in $\Xcal^\Gamma$, namely the crossed braiding $\beta_{l_1,l_2}:(g;m_1,m_2,\cdots,m_n)\to (g;m_1m_2m_1^{-1},m_1,\cdots,m_n)$ from Example \ref{ex:crossedbraiding}. As a shorthand notation, let us set $\mathbf{m}'=(m_1m_2m_1^{-1},m_1,\cdots,m_n)$ and similarly for the other data where the first two pieces of data are swapped. This braiding isomorphism in $\Xcal^\Gamma$ gives rise to the isomorphism of the stacks
			\begin{equation}\label{eqn:bradingmodfunc}
			 \xi_{\beta_{l_1,l_2}}:\tildebarM{}^\Gamma_{g,n}(\mathbf{m})\to \tildebarM{}^\Gamma_{g,n}(\mathbf{m}'), (\tildeC\to C,\wtilde{\mathbf{p}},\wtilde{\mathbf{v}})\mapsto (\tildeC\to C;m_1(\wtilde{p}_2),\wtilde{p}_1,\wtilde{p}_3,\cdots,\wtilde{p}_n,,\wtilde{\mathbf{v}}')\end{equation}
			In this case  the specialization functor from Equation (\ref{eq:specalonggraphmaps}) is just the pullback along the isomorphism in Equation \eqref{eqn:bradingmodfunc}:
			$$\Sp_{\beta_{l_1,l_2}}=\xi^*_{\beta_{l_1,l_2}}:\mathscr{D}_c\Mod(\tildebarM{}^\Gamma_{g,n}(\mathbf{m}')) \rar{} \mathscr{D}_c\Mod(\tildebarM{}^\Gamma_{g,n}(\mathbf{m})).$$
			Also in this case the inverse of the functor from Equation \ref{eq:crossedabeliangraphmaps}:
			$\Rcal^{-1}_{\beta_{l_1,l_2}}: \Ccal^{\boxtimes n}_{\mathbf{m}}\rar{} \Ccal^{\boxtimes n}_{\mathbf{m}'},$ 
			is defined by $$\Ccal^{\boxtimes n}_\mathbf{m}\ni M_1\boxtimes\cdots\boxtimes M_n \mapsto m_1(M_2)\boxtimes M_1\boxtimes M_3\cdots\boxtimes M_n.$$ 
			Applying Gluing axiom (Equation \ref{eq:gluingaxiom}) gives a natural isomorphism between two functors
			$$
			\Vcal_{g;m_1,m_2,\cdots,m_n}\rar{\cong}\xi^*_{\beta_{l_1,l_2}}\circ \Vcal_{g;m_1m_2m_1^{-1},m_1,\cdots,m_n}\circ\Rcal_{\beta_{l_1,l_2}}^{-1}$$
			i.e. natural isomorphisms between the vector spaces
			$$\Vcal_{M_1\boxtimes\cdots\boxtimes M_n}(\tildeC\to C,\wtilde{\mathbf{p}},\wtilde{\mathbf{v}})\xoto{\cong} \Vcal_{ m_1(M_2)\boxtimes M_1\cdots\boxtimes M_n}(\tildeC\to C;m_1(\wtilde{p}_2),\wtilde{p}_1,\cdots,\wtilde{p}_n,,\wtilde{\mathbf{v}'}).$$
		\end{example}

		\noindent Next let us define a $\Ccal$-extended $\Gamma$-crossed modular functor in genus 0.
		\begin{definition}\label{d:gammacrossedmodfun0}
			Let $\Xcal^\Gamma_0\subset \Xcal^\Gamma$ denote the full subcategory formed by graphs of genus 0, i.e. those graphs which are forests and all of whose vertices have weight 0. The notion of a $\Ccal$-extended $\Gamma$-crossed modular functor in genus 0 is obtained by replacing the category $\Xcal^\Gamma$ with the full subcategory $\Xcal^\Gamma_0$ in Definition \ref{d:gammacrossedmodfun}.
		\end{definition}
		\begin{remark}
			Note that in genus 0, the Hodge line bundles on $\overline{\mathcal{M}}_{0,A}$ are trivial and hence their pull back to $\tildebarM{}^\Gamma_{0,A}$ are all trivial and hence we do not need to consider the central charge and each $\Vcal(\mathbf{M})$ is a $D$-module on $\tildebarM{}^\Gamma_{0,A}$ with log-singularities at the boundary.
		\end{remark}

		\subsection{Pivotal and ribbon structures on weakly fusion categories}\label{sec:weaklyribbon}
		Before proceeding further, let us recall the notions of pivotal and ribbon structures in the setting of monoidal r-categories and weakly fusion categories. We refer back to Section \ref{sec:monrcatweaklyfusion} where these notions were defined.  Unlike rigid categories, in monoidal r-categories, the weak duality functor $X\mapsto X^*$ is not necessarily monoidal. However, by \cite{BoDr:13} the double duality functor $(\cdot)^{**}$ is indeed monoidal and also braided in case the original category is braided. 
		\begin{definition} (cf. \cite[\S5]{BoDr:13}.)
			A pivotal structure on a monoidal r-category $\Ccal$ is a monoidal isomorphism between the monoidal functors $\id_\Ccal$ and $(\cdot)^{**}$ on $\Ccal$.
		\end{definition}
		In \cite{BoDr:13}, a ribbon r-category is defined to be a braided monoidal r-category equipped with a monoidal isomorphism between the identity functor and the double duality functor, $X\cong X^{**}$, satisfying a certain balancing property. We refer to \cite[\S7,8]{BoDr:13} for details.
		
		Now let $\Ccal$ be a braided $\Gamma$-crossed monoidal r-category. A pivotal structure on $\Ccal$ naturally defines a pivotal structure on the equivariantization $\Ccal^\Gamma$ which is a braided monoidal r-category (see Proposition \ref{prop:equivisbraided}).
		\begin{definition}\label{def:ribboning}
			A ribbon structure on a braided $\Gamma$-crossed monoidal r-category $\Ccal$ is a pivotal structure such that the induced pivotal structure on the braided monoidal r-category $\Ccal^\Gamma$ is ribbon in the sense of \cite{BoDr:13}.
		\end{definition}

		\begin{definition}\label{d:crossedweaklyribbon}
			(i) We define a $\Gamma$-crossed weakly ribbon fusion category, or in short a $\Gamma$-crossed weakly ribbon category, to be a braided $\Gamma$-crossed weakly fusion category equipped with a ribbon structure as in Definition \ref{def:ribboning}.\\
			(ii) By a weakly ribbon category we mean a braided weakly fusion category equipped with a ribbon structure.
		\end{definition}

		We will prove that a $\Ccal$-extended $\Gamma$-crossed modular functor in genus 0 defines the structure of a $\Gamma$-crossed weakly ribbon category on $\Ccal$.

		\subsection{The neutral modular functor}\label{sec:crossedneutral}
		Let $(\Ccal=\bigoplus\limits_{m\in \Gamma}\Ccal_m,\unit,\bigoplus\limits_{m\in \Gamma}R_m)$ be a $\Gamma$-crossed abelian category and suppose we are given a $\Ccal$-extended $\Gamma$-crossed modular functor $\Vcal$ of additive central charge $c\in \CBbb$. We will now define a $\Ccal_1$-extended modular functor $\Vcal_1$ in the sense of \cite[Ch. 6]{BK:01} with the same central charge $c$. 
		
		Let $X\in \Xcal$ be a stable graph and let $\mathbf{M}\in \Ccal_1^{\boxtimes H(X)}$. Then we want to define $\Vcal_1(\mathbf{M})\in \mathscr{D}_c\Mod (\tildebarM_{X})$. We have a functor $\Xcal\to \Xcal^\Gamma$ defined on objects by $X\mapsto (X,\mathbf1,\mathbf1_{H(X)\setminus L(X)})$ (mark all half-edges by $1\in\Gamma$) and on morphisms by $f\mapsto (f,\mathbf1)$. 
		
		We also have a morphism of stacks $i_X:\tildebarM_{X}\to \tildebarM{}^\Gamma_{X,\mathbf1,\mathbf1}$ defined by $(D,\mathbf{q},\mathbf{w})\mapsto (D\times \Gamma\to D,\mathbf{q}\times 1,\mathbf{q},\mathbf{w}\times 1)$. Since $\Vcal$ is a $\Ccal$-extended $\Gamma$-crossed modular functor, we have the twisted $\mathcal{D}$-module $\Vcal(\mathbf{M})$ on $\tildebarM{}^\Gamma_{X,\mathbf1,\mathbf1}$. 
		\begin{definition}\label{def:neutral}
			We define  $\Vcal_1(\mathbf{M})$ to be the pullback $i_X^*\Vcal(\mathbf{M})\in \mathscr{D}_c\Mod(\tildebarM_{X})$.
		\end{definition} 
		We can perform the same construction for the genus 0 case. Hence in this setting we obtain:
		\begin{proposition}
			\begin{enumerate}[(i)]
				\item  Given a $\Ccal$-extended $\Gamma$-crossed modular functor $\Vcal$ in genus 0, the functor $\Vcal_1$ as in Definition \ref{def:neutral} is a $\Ccal_1$-extended modular functor in genus 0. 
				In particular, this defines the structure of a weakly ribbon category on $\Ccal_1$ and the action of $\Gamma$ on $\Ccal_1$ respects this structure.
				\item  Given a $\Ccal$-extended $\Gamma$-crossed modular functor $\Vcal$ with central charge $c$, the functor $\Vcal_1$ (Definition \ref{def:neutral}) is a $\Ccal_1$-extended modular functor with central charge $c$.
			\end{enumerate}
		\end{proposition}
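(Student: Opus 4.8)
The plan is to verify directly that $\Vcal_1$, defined by $\Vcal_1(\mathbf{M}):=i_X^*\Vcal(\mathbf{M})$, satisfies the axioms of Definition \ref{d:gammacrossedmodfun} specialized to the trivial group, which are exactly the axioms of a $\Ccal_1$-extended modular functor in the sense of \cite[Ch. 6]{BK:01}. Since each axiom for $\Vcal_1$ is obtained from the corresponding axiom for $\Vcal$ by restricting along the functor $\Xcal\to\Xcal^\Gamma$, $X\mapsto(X,\mathbf{1},\mathbf{1})$, and pulling back along the embeddings $i_X:\tildebarM_X\to\tildebarM^\Gamma_{X,\mathbf{1},\mathbf{1}}$, the work reduces to establishing two compatibilities: a combinatorial one for the functors $\Rcal$ and a geometric one for the specialization functors $\Sp$. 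First I would record that the conformal blocks functor for $\Vcal_1$ is simply the composite of the inclusion $\Ccal_1^{\boxtimes A}\hookrightarrow\Ccal^{\boxtimes A}_{\mathbf{1}}$ with $\Vcal_{g,A,\mathbf{1}}$ followed by $i_{(g,A)}^*$, landing in $\mathscr{D}_c\Mod(\tildebarM_{g,A})$ as required.

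For the combinatorial compatibility I would observe that, for a morphism $f$ in $\Xcal$, the functor $\Rcal_{f,\mathbf{1}}$ of the $\Gamma$-crossed setting, restricted to the neutral components, inserts the object $(1,1)\cdot R_1=\bigoplus_{M\in P_1}M\boxtimes M^*\in\Ccal_1^{\boxtimes2}$ at each contracted edge and the unit $\unit$ at each deleted $1$-marked leg, with all $\pmb\gamma$-twists trivial since $\pmb\gamma=\mathbf{1}$. This is precisely the factorization-and-propagation functor $\Rcal_f$ attached to the neutral modular functor, so $\Rcal_{f,\mathbf{1}}|_{\Ccal_1}=\Rcal_f$; here $R_1$ is the canonical regular object of $\Ccal_1$ governing the factorization axiom of an ordinary modular functor.

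The geometric heart of the argument is the compatibility $i_X^*\circ\Sp_{f,\mathbf{1}}\cong\Sp_f\circ i_Y^*$ for every morphism $f:X\to Y$ in $\Xcal$. I would deduce this from the $2$-commutativity of the square of stacks formed by the gluing maps $\xi_f,\xi_{f,\mathbf{1}}$ and the trivial-cover embeddings $i_X,i_Y$: gluing two trivial covers $D_1\times\Gamma$ and $D_2\times\Gamma$ along marked points of monodromy $1$ with edge-conjugation datum $\mathbf{b}=1$ returns the trivial cover of the glued base curve, and deleting a $1$-marked leg is harmless, so $\xi_{f,\mathbf{1}}\circ i_X=i_Y\circ\xi_f$. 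Since the embeddings $i_X$ meet the boundary divisors along which one specializes transversally in the relevant sense, Verdier specialization commutes with the pullbacks $i^*$, yielding the displayed isomorphism. I expect this step to be the main obstacle, because it requires matching the boundary geometry, the log structures, and the Atiyah-algebra actions of fixed central charge $c$ across the two moduli towers; the central charge is preserved precisely because $\Lambda$, the pulled-back Hodge bundle, is insensitive to the cover and satisfies $i_X^*\Lambda=\Lambda_{\tildebarM_X}$ (the base curve being unchanged by $i_X$), so pulling back preserves the additive central charge, giving (ii).

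Combining the two compatibilities, the gluing isomorphism $G_{f,\mathbf{1}}$ of $\Vcal$ pulls back along $i$ to a gluing isomorphism $G_f$ for $\Vcal_1$, compatible with composition because the corresponding data for $\Vcal$ is. The normalization and non-degeneracy axioms for $\Vcal_1$ follow by pulling back the $\gamma=1$ cases of the corresponding axioms for $\Vcal$ along $i_{0,3}$, using that $i_{0,3}$ sends $(\PBbb^1,\mu_3)$ to the trivial cover $(\PBbb^1\times\Gamma\to\PBbb^1,\dots)$. This establishes (ii) and the genus-$0$ version in (i). Finally, applying Theorem \ref{thm:modfunmodcat} to the genus-$0$ neutral modular functor $\Vcal_1$ equips $\Ccal_1$ with a weakly ribbon structure; to see that the $\Gamma$-action respects it, I would use the group-action morphisms of $\Xcal^\Gamma$ (Example \ref{ex:groupaction}), which carry $\mathbf{1}$-marked graphs to themselves since $\gamma 1\gamma^{-1}=1$, and whose gluing isomorphisms intertwine the braiding (Example \ref{ex:braidinggluing}) and ribbon twist of $\Ccal_1$ with their $\gamma$-conjugates, so that each $\gamma$ acts as a braided ribbon autoequivalence.
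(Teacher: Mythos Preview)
Your proposal is correct and follows the same approach as the paper, just with far more detail. The paper's proof is essentially a two-line sketch: the axioms for $\Vcal_1$ follow ``from the construction of $\Vcal_1$ and the axioms satisfied by $\Vcal$'', and the weakly ribbon structure on $\Ccal_1$ is attributed directly to Bakalov--Kirillov \cite{BK:01}. You have unpacked precisely what ``follows from the construction'' means, namely the two compatibilities $\Rcal_{f,\mathbf{1}}|_{\Ccal_1}=\Rcal_f$ and $i_X^*\circ\Sp_{f,\mathbf{1}}\cong\Sp_f\circ i_Y^*$, and your justification for the latter (the image of $i_X$ is a union of connected components of $\tildebarM^\Gamma_{X,\mathbf{1},\mathbf{1}}$, so pullback is restriction and specialization is local) is sound. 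One minor stylistic point: where you invoke Theorem~\ref{thm:modfunmodcat} for the weakly ribbon structure, the paper instead cites \cite{BK:01} directly; these are equivalent since the $\Gamma=1$ case of Theorem~\ref{thm:modfunmodcat} is exactly the Bakalov--Kirillov result, but the latter avoids a forward reference.
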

		\begin{proof}The fact that $\mathcal{V}_1$ is $\mathcal{C}_1$-extended modular functor with central charge $c$ follows from the construction of $\Vcal_1$ and the axioms satisfied by $\mathcal{V}$. 
			The part about $\mathcal{C}_1$-extended modular functor in genus $0$ giving a weakly ribbon category on $\mathcal{C}_1$ is due to Bakalov-Kirillov \cite{BK:01}.
		\end{proof}

		\subsection{Generalizations to the twisted setting}\label{ss:someconj}
		In this section, we state the main result  which relates the notions of $\Gamma$-crossed modular functors in genus 0 and $\Gamma$-crossed weakly ribbon categories. This result will be proved in Section \ref{sec:CATGamma} after showing that the notion of complex analytic $\Gamma$-crossed modular functors as defined in this section is equivalent to the topological version studied in \cite{KP:08}. We then derive some important consequences from the main theorem.
		
		\subsubsection{When the base curve is genus 0.}
		Suppose that $(\Ccal,\unit,R)$ is a finite semisimple $\Gamma$-crossed abelian category such that $\unit\in \Ccal_1$ is a simple object. In this case, we prove that the notion of a $\Ccal$-extended $\Gamma$-crossed modular functor in genus 0 is equivalent to giving $\Ccal$ the additional structure of a $\Gamma$-crossed weakly fusion ribbon category. In a weakly rigid category we have natural isomorphisms $\Hom(M,M')\cong \Hom(\unit, { }^*M\otimes M')$. 
		
		Now given a $\Ccal$-extended $\Gamma$-crossed modular functor we want to define a tensor product, i.e. given $\mathbf{m}\in \Gamma^n, \mathbf{M}=M_1\boxtimes\cdots \boxtimes M_n\in \Ccal^{\boxtimes n}_\mathbf{m}$, we want to describe the tensor product $$M_1\otimes \cdots\otimes M_n\in \Ccal_{m_1m_2\cdots m_n}\subset \Ccal.$$
		
		Given a $\Ccal$-extended $\Gamma$-crossed modular functor, let us describe the desired morphism space $\Hom(\unit,M_1\otimes\cdots \otimes M_n)$. If the product $m_1\cdots m_n\neq 1$, then this space is 0. Hence suppose that $m_1\cdots m_n=1.$ Consider the $n$-marked curve $(\PBbb^1,\mu_n,\mu_n)$ where the $n$ marked points are the $n$-th roots of unity $\mu_n\subset \CBbb\subset \PBbb^1$ and for each marked point $\omega\in\mu_n$, the associated tangent vector is again $\omega$ considered as a tangent vector at $\omega$. 
		
		Consider $0$ as the basepoint on $\PBbb^1\setminus \mu_n$. Consider loops $\gamma_j$ in $\PBbb^1\setminus \mu_n$ starting at $0$ and going in a straight line to $e^{\frac{2\pi\sqrt{-1}j}{n}}$ and encircling the point $e^{\frac{2\pi\sqrt{-1}j}{n}}$ counterclockwise. Then we obtain a presentation of the fundamental group  
		\begin{equation}
		\label{eqn:constructcoverviamonodromy}
		\pi_1(\PBbb^1\setminus \mu_n,0)=\langle\gamma_1,\cdots,\gamma_n|\gamma_1\cdots\gamma_n=1\rangle.\end{equation}
		
		Hence if $m_1\cdots m_n=1$, then $\gamma_j\mapsto m_j$ defines a group homomorphism 
		\begin{equation}\label{eqn:monodromycoverp1}
			\phi:\pi_1(\PBbb^1\setminus \mu_n,0)\to \Gamma.
			\end{equation} Then as described in \cite[\S2.3]{JKK}, this determines an $n$-marked admissible cover $(\tildeC\to \PBbb^1,\wtilde{\mathbf{p}},\mu_n,\wtilde{\mathbf{v}})$.
		
		Then we want to define a tensor product on $\Ccal$ for which the multiplicity spaces $\Hom(\unit,M_1\otimes\cdots \otimes M_n)$ (see also Remark \ref{rk:fusioncoeff}) are fibers of the bundles $\Vcal_{\mathbf{M}}$ at the special covers described given by Equation \eqref{eqn:monodromycoverp1}. More precisely we want: 
		\begin{equation}\label{eq:homspaces}
		\Hom(\unit,M_1\otimes\cdots \otimes M_n)=\Vcal_{\mathbf{M}}(\tildeC\to \PBbb^1,\wtilde{\mathbf{p}},\mu_n,\wtilde{\mathbf{v}}).\end{equation}

		
		We state the following result which will be proved in Section \ref{sec:proofsofthms}.
		\begin{theorem}\label{thm:modfunmodcat}
			Let $(\Ccal,\unit,R)$ be a finite semisimple faithfully graded $\Gamma$-crossed abelian category with $\unit$ being a simple object. We also continue to use all the preceding notation. Then 
			the notion of a $\Ccal$-extended $\Gamma$-crossed modular functor in genus 0 is equivalent to equipping $\Ccal$ with the structure of a $\Gamma$-crossed weakly ribbon category satisfying Equation (\ref{eq:homspaces}):
			$$\Hom(\unit,M_1\otimes\cdots \otimes M_n)=\Vcal_{\mathbf{M}}(\tildeC\to \PBbb^1,\wtilde{\mathbf{p}},\mu_n,\wtilde{\mathbf{v}}).$$
		\end{theorem}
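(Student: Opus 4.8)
The plan is to prove the equivalence by constructing functors in both directions and checking they are mutually inverse, building crucially on the untwisted case treated in Bakalov-Kirillov \cite{BK:01} together with the category-theoretic results of Part I. First I would extract from any given $\Ccal$-extended $\Gamma$-crossed modular functor $\Vcal$ in genus $0$ the underlying neutral modular functor $\Vcal_1$ of Definition \ref{def:neutral}. By the results of \cite{BK:01}, $\Vcal_1$ endows $\Ccal_1$ with the structure of a weakly ribbon category, and the $\Gamma$-equivariance axiom (encoded in the group-action morphisms of $\Xcal^\Gamma$, Example \ref{ex:groupaction}) makes $\Gamma$ act on $\Ccal_1$ by ribbon autoequivalences. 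I would then define the tensor product on all of $\Ccal$ via Equation (\ref{eq:homspaces}): for simple objects $M_i \in \Ccal_{m_i}$ the multiplicity spaces $\Hom(\unit, M_1 \otimes \cdots \otimes M_n)$ are declared to be the fibers $\Vcal_{\mathbf{M}}(\tildeC\to \PBbb^1,\wtilde{\mathbf{p}},\mu_n,\wtilde{\mathbf{v}})$ over the distinguished covers. That this defines an associative tensor product compatible with the grading $\Ccal_{m_1}\otimes \Ccal_{m_2}\subset \Ccal_{m_1m_2}$ follows from the Gluing axiom (Equation \ref{eq:gluingaxiom}) applied to the edge-contraction morphisms of Example \ref{ex:edgecontraction}, which is precisely the factorization isomorphism that reduces an $n$-pointed genus-$0$ configuration to trees of $3$-pointed ones.

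The next step is to produce the structural isomorphisms of a braided $\Gamma$-crossed category (Definition \ref{d:braidedgammacrossed}). The crossed braiding $\beta_{M,N}:M\otimes N \xrightarrow{\cong} \gamma(N)\otimes M$ for $M\in \Ccal_\gamma$ comes from the crossed braiding morphism in $\Xcal^\Gamma$ (Example \ref{ex:crossedbraiding}) together with its Gluing isomorphism spelled out in Example \ref{ex:braidinggluing}: the $\xi_{\beta_{l_1,l_2}}^*$-pullback realizes precisely the swap-and-conjugate that the crossed braiding requires. The two hexagon-type coherence diagrams in Definition \ref{d:braidedgammacrossed} are then verified by comparing the two paths of moves in $\Xcal^\Gamma$, each of which corresponds to a sequence of braidings and re-bracketings of the same $4$-pointed sphere, so both sides are computed by the same factorization of $\Vcal$ over a common degeneration; this is the twisted analogue of the standard genus-$0$ coherence argument in \cite{BK:01}. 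The weak duality $(\cdot)^*$ is supplied by the involutive duality already present on the $\Gamma$-crossed abelian category, and the non-degeneracy axiom (Definition \ref{d:gammacrossedmodfun}(4)) guarantees that $M\otimes {}^*M$ contains $\unit$ with multiplicity one, which is exactly what Section \ref{sec:monrcatweaklyfusion} needs to make $\Ccal$ a weakly fusion r-category. The ribbon structure descends from the ribbon structure on $\Ccal_1$ via the equivariantization formalism of Section \ref{sec:weaklyribbon}, using that the induced pivotal structure on $\Ccal^\Gamma$ is the one produced by $\Vcal_1$.

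For the reverse direction, given a $\Gamma$-crossed weakly ribbon structure on $\Ccal$, I would reconstruct the modular functor by first defining the genus-$0$ conformal blocks functors on the distinguished components via Equation (\ref{eq:homspaces}) and on the $3$-pointed corollas via the fusion coefficients $\nu^C_{A,B}$ of Section \ref{sec:braidedcrossed}, then propagating to all of $\tildebarM^\Gamma_{0,A}(\mathbf{m})$ using the mapping-class-group (monodromy) action determined by the braiding and ribbon twist, exactly as in the untwisted reconstruction of \cite{BK:01}. The flat projective connection and its logarithmic extension to the boundary are obtained from this monodromy representation together with the Atiyah-algebra formalism of Part II; in genus $0$ the Hodge bundle is trivial so no central charge enters. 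Checking that the two constructions are mutually inverse reduces, via the genus-$0$ factorization, to the $3$-pointed case where both directions literally record the same multiplicity spaces.

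\textbf{Main obstacle.} The hardest part will be the coherence verification and the well-definedness of the tensor product independent of choices: I must show that the identification (\ref{eq:homspaces}) over the \emph{distinguished} component $\xi_{0,n}(\mathbf{m})$ is compatible with the braiding and with all the re-bracketings, i.e. that the monodromy/flatness data of $\Vcal$ along $\tildebarM^\Gamma_{0,A}(\mathbf{m})$ assembles into genuine natural isomorphisms satisfying the pentagon and the two crossed hexagons rather than merely isomorphisms of fibers. This requires tracking the conjugation (edge-conjugation) data $\mathbf{b}$ carefully through compositions of morphisms in $\Xcal^\Gamma$ and showing the resulting cocycle ambiguities are trivialized by the normalization and non-degeneracy axioms. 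Once this is settled, the translation between the complex-analytic connection language here and the purely topological coherence data is handled by the equivalence with the topological notion of \cite{KP:08}, which is the bridge invoked in Section \ref{sec:CATGamma}.
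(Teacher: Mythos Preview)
Your approach is broadly sound but takes a considerably more laborious route than the paper. You propose to build the $\Gamma$-crossed weakly ribbon structure directly from the analytic modular functor: defining tensor products via fibers at distinguished covers, extracting the crossed braiding from the morphisms in $\Xcal^\Gamma$, and then verifying pentagon and crossed hexagon coherences by hand; this is essentially a re-derivation of the main theorem of Kirillov--Prince \cite{KP:08} transplanted into the complex analytic setting. The paper instead proceeds entirely by reduction: it first proves (Proposition \ref{prop:top=analytic}, resting on Theorems \ref{thm:fundamentalgroupoidmappingclassgp} and \ref{thm:topanalytic}) that the analytic and topological notions of $\Gamma$-crossed modular functor in genus $0$ coincide, the key ingredient being the Teichm\"uller-theoretic identification $\pi_1(\tildeM^\Gamma_{g,n}(\mathbf{m}))\cong \Surf^\Gamma_{g,n}(\mathbf{m})$ together with Deligne's Riemann--Hilbert correspondence to translate twisted $\mathcal{D}$-modules into groupoid representations. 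Once that bridge is in place, the paper simply invokes \cite{KP:08} as a black box for the equivalence between topological $\Gamma$-crossed modular functors and $\Gamma$-crossed weakly ribbon categories; no coherence diagram is ever checked directly. You do mention this bridge in your final paragraph, but only as a tool for the hardest step, whereas the paper uses it as the \emph{entire} argument. What your route would buy is self-containment (no appeal to the topological theory); what the paper's route buys is economy, since all the delicate categorical coherence work is outsourced to \cite{KP:08} and the paper's own contribution reduces to the analytic-to-topological comparison.
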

		
		\subsubsection{When the base genus is arbitrary}\label{sec:arbgenusdescription}
		Using the genus zero result (Theorem \ref{thm:modfunmodcat}), we now derive its higher genus analog. Consider a stable pair $(g,n)$ and $\mathbf{m}\in \Gamma^n$. Then  consider the stack $\tildebarM{}^\Gamma_{g,n}(\mathbf{m})$ of $n$-marked admissible $\Gamma$-covers with monodromy data $\mathbf{m}$ with genus $g$ base curves. As before consider an object $\mathbf{M}=M_1\boxtimes\cdots \boxtimes M_n\in \Ccal^{\boxtimes n}_\mathbf{m}$. 
		
		Now given a $\Ccal$-extended $\Gamma$-crossed modular functor in arbitrary genus, we firstly get the structure of a $\Gamma$-crossed weakly ribbon category on $\Ccal$ using the previous theorem. Just as in the case of genus 0 base curves, we would like to compute the dimensions of the fibers of the vector bundle $\Vcal_{\mathbf{M}}$ (which comes as a part of the data defining a $\Gamma$-crossed modular functor) on $\tildebarM{}^\Gamma_{g,n}(\mathbf{m})$ at various points in $\tildeM^\Gamma_{g,n}(\mathbf{m})$ as the dimensions of certain $\Hom$-spaces in the category $\Ccal$.
		
		\begin{remark}\label{rk:componentranksdiffer}
			Note that the stacks $\tildebarM{}^\Gamma_{g,n}(\mathbf{m})$ are disconnected in general and as we will see below, the rank of $\Vcal_{\mathbf{M}}$ may differ on the different connected components. We will describe below the ranks of $\Vcal_{\mathbf{M}}$ at all the possible components of this stack. Note that to describe the ranks over all of $\tildebarM{}^\Gamma_{g,n}(\mathbf{m})$, it is sufficient describe the ranks at $n$-marked admissible $\Gamma$-covers in the open part $\tildeM^\Gamma_{g,n}(\mathbf{m})$ where the base curve of the $\Gamma$-cover is smooth of genus $g$.
		\end{remark}
		In view of the remark consider any $(\tildeC\to C,\wtilde{\mathbf{p}},\mathbf{p},\wtilde{\mathbf{v}},\mathbf{v})\in \tildeM^\Gamma_{g,n}(\mathbf{m})$. Let us choose a basepoint $p_0\in C_{gen}=C\setminus\mathbf{p}$ and a lift $\wtilde{p}_0\in \tildeC_{gen}$. Since $C_{gen}$ is a smooth genus $g$ curve with $n$ punctures, we can choose closed curves $\alpha_1,\beta_1,\cdots,\alpha_g,\beta_g$ in $C_{gen}$ based at $p_0$ along with smooth paths $\delta_i$ (for $1\leq i\leq n$) beginning at $p_0$ and ending at $p_i$ such that:
		\begin{itemize}
			\item For each $i$, the smooth path $\delta_i$ represents the tangent vector $v_i$ in $T_{p_i}C$ that is part of the marking data on $C$ and does not pass through any marked point $p_j$ except at the endpoint.
			\item Let $\sigma_i$ denote the loops in $C_{gen}$ based at $p_0$ and which encircle $p_i$ once counterclockwise defined using the path $\delta_i$ as constructed in \cite[\S2.3]{JKK}.
			\item These $2g+n$ loops give a presentation 
			\begin{equation}
			\pi_1(C_{gen},p_0)=\langle\alpha_1,\beta_1,\cdots,\alpha_g,\beta_g,\sigma_1,\cdots,\sigma_n|[\alpha_1,\beta_1]\cdots[\alpha_g,\beta_g]\sigma_1\cdots\sigma_n=1\rangle.
			\end{equation}
		\end{itemize}
		Note that $\tildeC_{gen}\to C_{gen}$ is an \'etale $\Gamma$-cover. We can now uniquely lift each path $\delta_i$ in $C$ to a smooth path $\wtilde\delta_i$ in $\tildeC$ beginning at $\wtilde{p}_0$ and which lies in $\tildeC_{gen}$ except for the ending point. Let $\wtilde{p}'_i$ denote the endpoint of $\wtilde\delta_i$; it will be some point of $\tildeC$ lying above $p_i$. Let $\tilde{v}'_i$ denote the tangent vector in $T_{\wtilde{p}'_i}(\tildeC)$ represented by $\wtilde\delta_i$. Then there exists a unique $\gamma_i\in \Gamma$ such that $\gamma_i$ takes the original marking data $(\wtilde{p}_i, \wtilde{v}_i)$ on $\tildeC$ to the newly constructed data $(\wtilde{p}'_i, \wtilde{v}'_i)$.
		
		In other words, the choices of the paths made determine a unique $\pmb\gamma\in \Gamma^n$ such that $$\pmb\gamma\cdot(\tildeC\to C,\wtilde{\mathbf{p}},\mathbf{p},\wtilde{\mathbf{v}},\mathbf{v})=(\tildeC\to C,\wtilde{\mathbf{p}}',\mathbf{p},\wtilde{\mathbf{v}}',\mathbf{v})\in \tildeM^\Gamma_{g,n}({}^{\pmb\gamma}\mathbf{m}).$$
		
		Now note that $\pmb\gamma$ determines a morphism in the category $\Xcal^\Gamma$ from the genus $g$ $\Gamma$-corolla with $n$ legs marked by $\mathbf{m}$ to the genus $g$ $\Gamma$-corolla with $n$ legs marked by ${}^{\pmb\gamma}\mathbf{m}$. (This is precisely the diagram drawn in Example \ref{ex:groupaction} of Appendix \ref{ap:groupmarkedgraphs}.) Hence by condition (2.) from Definition \ref{d:gammacrossedmodfun} of $\Gamma$-crossed modular functor
		\begin{equation}\label{eq:dimofspecialcovers}
		\dim \Vcal_{\mathbf{M}}(\tildeC\to C,\wtilde{\mathbf{p}},\mathbf{p},\wtilde{\mathbf{v}},\mathbf{v})=\dim \Vcal_{{}^{\pmb\gamma}\mathbf{M}}(\tildeC\to C,\wtilde{\mathbf{p}}',\mathbf{p},\wtilde{\mathbf{v}}',\mathbf{v}).
		\end{equation}
		\begin{remark}\label{rk:specialcovers}
			The equality (\ref{eq:dimofspecialcovers}) shows that to compute the rank of any line bundle $\Vcal_{\mathbf{M}}$ at any $(\tildeC\to C,\wtilde{\mathbf{p}},\mathbf{p},\wtilde{\mathbf{v}},\mathbf{v})\in \tildeM^\Gamma_{g,n}(\mathbf{m})$, it is enough to only consider the special covers of the type $(\tildeC\to C,\wtilde{\mathbf{p}}',\mathbf{p},\wtilde{\mathbf{v}}',\mathbf{v})$ that we have constructed. Hence from now on, we will restrict our attention to such special covers, and from now on, we assume that $(\tildeC\to C,\wtilde{\mathbf{p}},\wtilde{\mathbf{v}})$ is one such special covering.
		\end{remark}
		The choice of lift $\wtilde{p}_0$ determines a group homomorphism $\chi_{\wtilde{p}_0}:\pi_1(C_{gen},p_0)\to \Gamma$. Let $\chi_{\wtilde{p}_0}(\alpha_j)=a_j, \chi_{\wtilde{p}_0}(\beta_j)=b_j\in \Gamma$. Hence we must have the relationship:
		$$[a_1,b_1]\cdots[a_g,b_g]\cdot m_1\cdots m_n=1.$$
		
		Recall from Equation (\ref{eq:defomegaab}) that in any $\Gamma$-crossed weakly fusion category, given $a,b\in \Gamma$, we can define the objects
		$\Omega_{a,b}:=\bigoplus\limits_{A\in P_a}A\otimes b(A^*)\in \Ccal_{[a,b]}.$
		We can now state the following result which relates the ranks of the vector bundles defining a modular functor at special $\Gamma$-covers to certain multiplicity spaces:
		\begin{theorem}\label{thm:modfunmodcatarbgenus}
			Let us continue to use the set-up and all the notation from the discussion following Remark \ref{rk:specialcovers}. Suppose that we have a $\Ccal$-extended $\Gamma$-crossed modular functor (in arbitrary genus) of additive central charge $c$. Then the structure of $\Gamma$-crossed weakly ribbon category  on $\Ccal$ is such that we have the following relationship between the vector bundles coming from the data of the $\Gamma$-crossed modular functor $\Vcal$ and certain multiplicity spaces in $\Ccal$:
			\begin{equation}\label{eq:highergenushomspaces}
			\dim \Vcal_{\mathbf{M}}(\tildeC\to C,\wtilde{\mathbf{p}},\wtilde{\mathbf{v}})=\dim\Hom(\unit,\Omega_{a_1,b_1}\otimes\cdots\otimes\Omega_{a_g,b_g}\otimes M_1\otimes\cdots\otimes M_n).
			\end{equation}
		\end{theorem}
		\begin{proof}
			We will use induction on the genus $g$. Theorem \ref{thm:modfunmodcat} (Equation \ref{eq:homspaces}) covers the genus 0 case. Let us now assume the result up to genus $g-1$. To complete the proof we use (Equation \ref{eq:inductionstep} where we take $(a,b)$ to be our current $(a_g,b_g)$). Recall first of all that the specialization $\Sp_{f_g}$ preserves the ranks of vector bundles. Hence from Equation (\ref{eq:inductionstep}) we conclude that 
			$$\dim \Vcal_{\mathbf{M}}(\tildeC\to C,\wtilde{\mathbf{p}},\wtilde{\mathbf{v}})=\bigoplus\limits_{A\in P_{a_g}}\dim\Vcal_{A\boxtimes b_g(A^*)\boxtimes\mathbf{M}}(\tildeD\to D,(\wtilde{q}',\wtilde{q}'',\wtilde{\mathbf{p}}'),(\wtilde{w}',\wtilde{w}'',\wtilde{\mathbf{v}}'))$$
			where the special $\Gamma$-covering $(\tildeD\to D,(\wtilde{q}',\wtilde{q}'',\wtilde{\mathbf{p}}'),(\wtilde{w}',\wtilde{w}'',\wtilde{\mathbf{v}}'))$ with $D$ being smooth of genus $g-1$ is a normalization of a degeneration of $(\tildeC\to C,\wtilde{\mathbf{p}},\wtilde{\mathbf{v}})$ to the boundary of $\tildebarM{}^\Gamma_{g,n}(\mathbf{m})$ corresponding to the following morphism of stable $\Gamma$-graphs from Example \ref{ex:contractingloop}:
			$$f_g:\begin{tikzpicture}[baseline={([yshift=-.5ex]current bounding box.center)}]
			\node (B1) {$m_1$};
			\node[circle, inner sep = 0.4pt, draw, below right =.6 of B1] (Cup) {$g-1$};
			\node[above right =.6 of Cup] (B2) {$m_n$};
			\node[below right =.3 of Cup] (B) {$b_ga_g^{-1}b_g^{-1}$};
			\node[below left =.3 of Cup] (B') {$a_g$};
			\node[above =.5 of Cup] (Dots) {$\cdots$};
			\node[below =.9 of Cup] (invisible) {};
			\node[below =1 of Cup] (A) {$(b_g,b_g^{-1})$};
			\draw (B1) to[out=-90,in=180] (Cup);
			\draw (B2) to[out=-90,in=0] (Cup);
			\draw (Cup) to[out=-135,in=150] (invisible) to[out=-30,in=-45] (Cup);
			\end{tikzpicture}\longrightarrow \begin{tikzpicture}[baseline={([yshift=-.5ex]current bounding box.center)}]
			\node (B1) {$m_1$};
			\node[circle, inner sep = 4.8pt, draw, below right =.6 of B1] (Cup) {$g$};
			\node[above =.5 of Cup] (Dots) {$\cdots$};
			\node[above right =.6 of Cup] (B2) {$m_n$};
			\draw (B1) to[out=-90,in=180] (Cup);
			\node[below =.7 of Cup] (invisible) {};
			\draw (B2) to[out=-90,in=0] (Cup);
			\end{tikzpicture}.$$
			It is constructed as follows: Let us consider the loop $\alpha_g$ based at $p_0$ as being composed of a path $\delta$ from $p_0$ to a point $q\in C$ and then followed by a smooth simple closed curve $\alpha'_g$ based at $q$ and following $\delta^{-1}$ from $q$ back to $p$.  
			
			We first degenerate $(\tildeC\to C,\wtilde{\mathbf{p}},\wtilde{\mathbf{v}})$ to a nodal $(\tildeC'\to C',\wtilde{\mathbf{p}'},\wtilde{\mathbf{v}}')$ where the base $C'$ is a nodal curve obtained by degenerating the loop $\alpha'_g$ on $C$ to a node $q'$ on $C'$. We then get a path $\delta'$ on $C'$ from $p'_0$ to $q'$.  Lift this path to $\tildeC'$ from $\wtilde{p}'_0$ to the node $\wtilde{q}'\in \tildeC'$. Now normalize $C'$ at $q'$ to get a cover $(\tildeD\to D,(\wtilde{q}',\wtilde{z}'',\wtilde{\mathbf{p}}'),(\wtilde{w}',\wtilde{v}'',\wtilde{\mathbf{v}}'))$. 
			
			The monodromy data at the newly created marked points $\wtilde{q}',\wtilde{z}''$ will now be $(a_g,a_{g}^{-1})$. However this will not be a special cover, and we act on $(\wtilde{z}'',\wtilde{v}'')$ by $b_g\in \Gamma$ to obtain the desired special cover $(\tildeD\to D,(\wtilde{q}',\wtilde{q}'',\wtilde{\mathbf{p}}'),(\wtilde{w}',\wtilde{w}'',\wtilde{\mathbf{v}}'))$.

			By the induction hypothesis for the genus $g-1$ base curve $D$ we get
			\begin{align*}
			&\dim\Vcal_{A\boxtimes b_g(A^*)\boxtimes\mathbf{M}}(\tildeD\to D,(\wtilde{q}',\wtilde{q}'',\wtilde{\mathbf{p}}'),(\wtilde{w}',\wtilde{w}'',\wtilde{\mathbf{v}}'))\\
			& \quad =\dim\Hom(\unit,\Omega_{a_1,b_1}\otimes\cdots\otimes\Omega_{a_{g-1},b_{g-1}}\otimes A\otimes b_g(A^*)\otimes M_1\otimes\cdots\otimes M_n).
			\end{align*}
			for each simple object $A\in P_{a_g}$. Since by definition $\Omega_{a_g,b_g}=\bigoplus\limits_{A\in P_{a_g}}A\otimes b_g(A^*),$ we complete the proof by taking the direct sum of the previous equality over all $A\in P_{a_g}$.
		\end{proof}
		
		\noindent We now state some direct corollaries.
		\begin{corollary}\label{cor:crossedmodfunmodcat}
			Along with the assumptions of Theorem \ref{thm:modfunmodcatarbgenus}, also suppose that the corresponding neutral weakly ribbon category $\Ccal_1\subset \Ccal$ is rigid. Then in fact $\Ccal$ is a $\Gamma$-crossed modular fusion category of multiplicative central charge $e^{\sqrt{-1}\pi c}$.
		\end{corollary}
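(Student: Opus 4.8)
The plan is to verify one by one the conditions in Definition \ref{def:crossedmodcat} that distinguish a $\Gamma$-crossed modular fusion category from the $\Gamma$-crossed weakly ribbon category structure that Theorem \ref{thm:modfunmodcatarbgenus} already supplies on $\Ccal$. First I would upgrade weak rigidity to rigidity: by hypothesis the neutral component $\Ccal_1$ is rigid, so Corollary \ref{cor:rigidity of graded categories} (equivalently Theorem \ref{thm:catrigid}), applied to the $\Gamma$-graded weakly fusion category $\Ccal$, shows that $\Ccal$ is rigid and hence a braided $\Gamma$-crossed fusion category. Once $\Ccal$ is rigid, the weak duality $(\cdot)^*$ coincides with the genuine rigid duality, so the ribbon structure furnished by the modular functor (Definition \ref{d:crossedweaklyribbon} and Section \ref{sec:weaklyribbon}) becomes an honest ribbon structure; as is standard, a ribbon structure on a rigid braided fusion category is the same as a spherical structure together with a compatible balancing, and I would record this to equip $\Ccal$ with the spherical structure required by Definition \ref{def:crossedmodcat}.

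The substantive point is the non-degeneracy of $\Ccal_1$ as a braided fusion category, and this is exactly where the full arbitrary-genus data of the modular functor is used; the genus-$0$ data alone only produces the braided and ribbon structures. I would pass to the neutral modular functor $\Vcal_1$ constructed in Section \ref{sec:crossedneutral} (Definition \ref{def:neutral}), which is a $\Ccal_1$-extended modular functor in arbitrary genus of additive central charge $c$. Since $\Ccal_1$ is now a rigid ribbon fusion category, I would invoke the Bakalov--Kirillov correspondence \cite{BK:01} between modular functors in arbitrary genus and modular tensor categories: the consistency of the genus-one part of $\Vcal_1$ with the gluing axioms forces the categorical $S$-matrix of $\Ccal_1$ to be invertible, which is precisely non-degeneracy of the braided fusion category $\Ccal_1$ (see also \cite[\S2.8]{DGNO}). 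Hence $\Ccal_1$ is a modular fusion category.

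Combining these steps, $\Ccal$ is a braided $\Gamma$-crossed fusion category equipped with a spherical structure, its neutral component $\Ccal_1$ is non-degenerate, and each component $\Ccal_\gamma$ is non-zero by the standing faithful-grading assumption; by Definition \ref{def:crossedmodcat} this is precisely the data of a $\Gamma$-crossed modular fusion category. For the central charge I would then use that, again by Definition \ref{def:crossedmodcat}, the multiplicative central charge of $\Ccal$ equals that of the modular category $\Ccal_1$, and that the multiplicative central charge (the anomaly) of the modular functor $\Vcal_1$ of additive central charge $c$ is $e^{\sqrt{-1}\pi c}$ in the normalization of \cite[Def. 5.7.9, Thm. 5.7.11]{BK:01} and \cite[\S6.2]{DGNO}.

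The main obstacle is the non-degeneracy step: extracting invertibility of the $S$-matrix requires the genus-one (torus) part of the modular functor together with the precise compatibility of the gluing and specialization functors established in Section \ref{sec:CATGamma}, rather than any genus-zero computation. The identification of the central charge is a secondary but still delicate point, since it depends on matching the normalization of the additive central charge $c$ entering the twisted $\mathcal{D}$-modules with the Gauss-sum normalization of the multiplicative central charge of $\Ccal_1$.
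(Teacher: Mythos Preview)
Your proposal is correct and follows essentially the same route as the paper's proof: both use Corollary~\ref{cor:rigidity of graded categories} to upgrade to rigidity, pass to the neutral modular functor $\Vcal_1$ of Section~\ref{sec:crossedneutral}, and invoke \cite[\S6.7]{BK:01} to conclude that $\Ccal_1$ is modular of multiplicative central charge $e^{\sqrt{-1}\pi c}$, before assembling the pieces via Definition~\ref{def:crossedmodcat}. Your discussion of the non-degeneracy step is a bit more explicit about the role of the genus-one data, but the underlying argument is the same.
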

		\begin{proof}
			By Theorem \ref{thm:modfunmodcatarbgenus} the $\Gamma$-crossed modular functor $\Vcal$ defines the structure of a $\Gamma$-crossed weakly ribbon category on $\Ccal$, in particular $\Ccal$ is a $\Gamma$-graded weakly fusion category. We have assumed that the component $\Ccal_1$ is rigid. Hence by Corollary \ref{cor:rigidity of graded categories} $\Ccal$ must be rigid and hence a braided $\Gamma$-crossed fusion category. 
			
			Moreover $\Ccal_1$ corresponds to the neutral $\Ccal_1$-extended (untwisted) modular functor $\Vcal_1$ in arbitrary genus of additive central charge $c$ as described in Section \ref{sec:crossedneutral}. Since we have assumed $\Ccal_1$ is rigid, by \cite[\S6.7]{BK:01} we conclude $\Ccal_1$ is a modular fusion category of multiplicative central charge $e^{\sqrt{-1}\pi c}$. Moreover by assumption $\Ccal$ is faithfully graded. Hence we see that $\Ccal$ is a faithfully graded braided $\Gamma$-crossed fusion category equipped with a ribbon (i.e. spherical structure) and that $\Ccal_1$ is a modular category of multiplicative central charge $e^{\sqrt{-1}\pi c}$. Hence by Definition \ref{def:crossedmodcat} $\Ccal$ is a $\Gamma$-crossed modular category of multiplicative central charge $e^{\sqrt{-1}\pi c}$ as desired.
		\end{proof}
		
		Let $\Gamma^\circ\leq \Gamma$ be the image of the holonomy homomorphism $$\chi_{\wtilde{p}_0}:\pi_1(C_{gen},p_0)\onto\Gamma^\circ\subset \Gamma.$$ Recall that by Equation (\ref{eq:dimofspecialcovers}), to describe the ranks of all the bundles $\Vcal_{\mathbf{M}}$ at arbitrary $n$-marked admissible $\Gamma$-covers, it is enough to compute the ranks $\dim \Vcal_{\mathbf{M}}(\tildeC\to C,\wtilde{\mathbf{p}},\mathbf{p},\wtilde{\mathbf{v}},\mathbf{v})$ assuming that the cover is special. The following corollary does this:
		\begin{corollary}\label{cor:verlindeforcrossedmodfun}
			(Verlinde formula for \emph{rigid} $\Gamma$-crossed modular functors.) We use all the notation described in this section. Let $\Vcal$ be a $\Ccal$-extended $\Gamma$-crossed modular functor (in arbitrary genus) of additive central charge $c$. This equips $\Ccal$ with the structure of a $\Gamma$-crossed weakly ribbon category. Suppose in addition that the neutral component $\Ccal_1\subset \Ccal$ is rigid, or equivalently, that $\Ccal$ is a $\Gamma$-crossed modular fusion category. Then the ranks of the vector bundles $\Vcal_{\mathbf{M}}$ at the special $\Gamma$-covers are described by the following Verlinde formula:
			\begin{equation}\label{eq:verlindeforcrossedmodfun}
			\dim \Vcal_{\mathbf{M}}(\tildeC\to C,\wtilde{\mathbf{p}},\wtilde{\mathbf{v}})=\sum\limits_{D\in P_1^{\Gamma^\circ}}\frac{\left(\frac{1}{S_{\unit,D}}\right)^{n+2g-2}\cdot S^{m_1}_{M_1,D}\cdots S^{m_n}_{M_n,D}}{\varphi_D(a_1,b_1,a_1^{-1},b_1^{-1},\cdots,m_1,\cdots,m_n)}
			\end{equation}
			where for $m\in \Gamma$, $S^m$ are the categorical $m$-crossed S-matrices of the $\Gamma$-crossed modular fusion category $\Ccal$ and where the denominator is given by the cocycles described in Section \ref{sec:twistedchars}, Remark \ref{rk:ncocycle}.
		\end{corollary}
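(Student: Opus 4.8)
The plan is to assemble three results already established in the paper: Corollary \ref{cor:crossedmodfunmodcat}, Theorem \ref{thm:modfunmodcatarbgenus}, and the categorical twisted Verlinde formula of Corollary \ref{cor:highergenus}. First I would invoke Corollary \ref{cor:crossedmodfunmodcat}: since $\Ccal_1$ is assumed rigid, the $\Ccal$-extended $\Gamma$-crossed modular functor $\Vcal$ endows $\Ccal$ with the structure of a \emph{$\Gamma$-crossed modular fusion category} of multiplicative central charge $e^{\sqrt{-1}\pi c}$. In particular $\Ccal$ acquires a spherical structure, so that the categorical crossed S-matrices $S^m$ (for $m\in\Gamma$) and the scalars $\varphi_D$ of Section \ref{sec:twistedchars} and Remark \ref{rk:ncocycle} are all well-defined; these are precisely the quantities appearing on the right-hand side of the desired formula. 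This step is where the rigidity hypothesis is used, as it is the only way to guarantee the spherical structure needed to pass to the S-matrix form of the Verlinde formula.

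Next I would apply Theorem \ref{thm:modfunmodcatarbgenus} (Equation \ref{eq:highergenushomspaces}), which identifies the rank of the modular functor bundle at the chosen special $\Gamma$-cover with a multiplicity space in $\Ccal$:
$$\dim \Vcal_{\mathbf{M}}(\tildeC\to C,\wtilde{\mathbf{p}},\wtilde{\mathbf{v}})=\dim\Hom(\unit,\Omega_{a_1,b_1}\otimes\cdots\otimes\Omega_{a_g,b_g}\otimes M_1\otimes\cdots\otimes M_n),$$
where $a_j=\chi_{\wtilde{p}_0}(\alpha_j)$ and $b_j=\chi_{\wtilde{p}_0}(\beta_j)$ are the holonomies along the chosen generators of $\pi_1(C_{gen},p_0)$. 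This reduces the computation of the rank to a purely categorical question about the $\Gamma$-crossed modular fusion category $\Ccal$, namely the evaluation of a higher-genus fusion multiplicity.

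To finish, I would feed this $\Hom$-space into the spherical version of the categorical twisted Verlinde formula, Corollary \ref{cor:highergenus}, applied with the same data $a_j,b_j$ and with the simple objects $M_i\in P_{m_i}$. This yields exactly
$$\dim\Hom(\unit,\Omega_{a_1,b_1}\otimes\cdots\otimes M_n)=\sum\limits_{D\in P_1^{\Gamma^\circ}}\frac{\left(\frac{1}{S_{\unit,D}}\right)^{n+2g-2}\cdot S^{m_1}_{M_1,D}\cdots S^{m_n}_{M_n,D}}{\varphi_D(a_1,b_1,a_1^{-1},b_1^{-1},\cdots,m_1,\cdots,m_n)},$$
which is the claimed formula once combined with the identity of the previous paragraph. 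The one point requiring genuine care, and the only step beyond citing earlier results, is to check that the group $\Gamma^\circ$ indexing the sum is the same on both sides: in the present corollary $\Gamma^\circ$ is the image of the holonomy homomorphism $\chi_{\wtilde{p}_0}:\pi_1(C_{gen},p_0)\onto\Gamma^\circ$, whereas Corollary \ref{cor:highergenus} defines $\Gamma^\circ$ as the subgroup generated by $a_j,b_j,m_i$. These agree because $\alpha_j,\beta_j,\sigma_i$ generate $\pi_1(C_{gen},p_0)$ and, by the construction of the special cover in Remark \ref{rk:specialcovers}, $\chi_{\wtilde{p}_0}(\alpha_j)=a_j$, $\chi_{\wtilde{p}_0}(\beta_j)=b_j$, and $\chi_{\wtilde{p}_0}(\sigma_i)=m_i$ is the monodromy around $\wtilde{p}_i$. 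I do not anticipate a serious obstacle here; the substance of the corollary is the synthesis of the modular-functor rank identity with the categorical Verlinde formula, so the argument is essentially a matter of matching notation and verifying that the hypotheses of the cited results are met.
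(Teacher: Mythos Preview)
Your proposal is correct and follows essentially the same approach as the paper: invoke Corollary \ref{cor:crossedmodfunmodcat} to obtain the $\Gamma$-crossed modular fusion category structure, apply Theorem \ref{thm:modfunmodcatarbgenus} to identify the rank with the multiplicity space $\dim\Hom(\unit,\Omega_{a_1,b_1}\otimes\cdots\otimes\Omega_{a_g,b_g}\otimes M_1\otimes\cdots\otimes M_n)$, and then apply the categorical Verlinde formula of Corollary \ref{cor:highergenus}. Your additional observation that the two definitions of $\Gamma^\circ$ agree is a helpful clarification that the paper leaves implicit.
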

		\begin{proof}
			By Corollary \ref{cor:crossedmodfunmodcat} and the assumption that $\Ccal_1$ is rigid, the $\Gamma$-crossed modular functor $\Vcal$ induces the structure of a $\Gamma$-crossed modular fusion category on $\Ccal$. Hence we have available to us the categorical arbitrary genus twisted Verlinde formula proved in Corollary \ref{cor:highergenus}. Now Theorem \ref{thm:modfunmodcatarbgenus} implies that
			$$\dim \Vcal_{\mathbf{M}}(\tildeC\to C,\wtilde{\mathbf{p}},\wtilde{\mathbf{v}})=\dim\Hom(\unit,\Omega_{a_1,b_1}\otimes\cdots\otimes\Omega_{a_g,b_g}\otimes M_1\otimes\cdots\otimes M_n).$$
			By Corollary \ref{cor:highergenus} we have:
			\begin{align}
			\dim\Hom(\unit,\Omega_{a_1,b_1}\otimes\cdots\otimes\Omega_{a_g,b_g}\otimes M_1\otimes\cdots\otimes M_n)\\=\sum\limits_{D\in P_1^{\Gamma^\circ}}\frac{\left(\frac{1}{S_{\unit,D}}\right)^{n+2g-2}\cdot S^{m_1}_{M_1,D}\cdots S^{m_n}_{M_n,D}}{\varphi_D(a_1,b_1,a_1^{-1},b_1^{-1},\cdots,m_1,\cdots,m_n)}.
			\end{align}
			This completes the proof.
		\end{proof}

		\section{Complex analytic and topological $\Gamma$-crossed modular functors}\label{sec:CATGamma}
		In this section, we compare the complex analytic notion of $\Gamma$-crossed modular functors which we defined in the previous section with the topological version of $\Gamma$-crossed functors studied by Kirillov and Prince in \cite{KP:08,Prince}. Let $(\Ccal,\unit,R)$ be a finite semisimple $\Gamma$-crossed abelian category. It is proved in \cite{KP:08} that the structure of a topological $\Ccal$-extended $\Gamma$-crossed modular functor in genus zero is equivalent to the structure of a $\Gamma$-crossed weakly ribbon category on $\Ccal$. We will now prove that the notion of a $\Ccal$-extended complex analytic $\Gamma$-crossed modular functor in genus zero is equivalent to the topological notion. This will complete the proof of Theorem \ref{thm:modfunmodcat}.

		\subsection{Motivation for topological $\Gamma$-crossed modular functors}\label{sec:topmotivation} As in the previous section, let $\Ccal$ be a finite semisimple faithfully graded $\CBbb$-linear $\Gamma$-crossed abelian category. In order to define the structure of a $\Gamma$-crossed weakly ribbon category on $\Ccal$, we considered marked admissible covers  $(\tildeC\to C,\wtilde{\mathbf{p}},\wtilde{\mathbf{v}})\in \tildeM^\Gamma_{g,n}(\mathbf{m})$ in Section \ref{sec:crossedmodular}.  For the topological version, we consider the topological analogs of such covers. 
		
		Namely, given a $\Gamma$-cover of smooth projective curves $(\tildeC\to C,\wtilde{\mathbf{p}},\wtilde{\mathbf{v}})$ we may consider it as a branched covering of smooth compact oriented surfaces. Now we also have the marked points $\wtilde{\mathbf{p}}$ and the marked tangent vectors $\wtilde{\mathbf{v}}$ on the top curve/surface $\tildeC$ and their images $\mathbf{p}$, $\mathbf{v}$ on the base $C$. We then delete tiny disks around the points $\mathbf{p}$ to get an oriented surface $\Sigma$ with boundary. Let $\wtilde\Sigma\subset \tildeC$ be the preimage of $\Sigma$. Note that since $\Sigma\subset  C_{gen}$ and $\wtilde\Sigma\subset \tildeC_{gen}$, the projection $\wtilde\Sigma\to \Sigma$ is a principal left $\Gamma$-bundle. We also have the marked tangent vectors $\wtilde{\mathbf{v}}$. We start from a marked point $\wtilde{p}_i\in \tildeC$ and follow the marked tangent vectors $\wtilde{v}_i$ till we hit the corresponding boundary circle in $\wtilde\Sigma$ at some point $\tilde{q}_i$.
		
		In the topological version, instead of covers $(\tildeC\to C,\wtilde{\mathbf{p}},\wtilde{\mathbf{v}})$ we work with $(\wtilde\Sigma\to \Sigma,\wtilde{\mathbf{q}})$. Consider any object $\mathbf{M}\in \Ccal^{\boxtimes n}_{\mathbf{m}}$. Informally, a topological $\Ccal$-extended $\Gamma$-crossed modular functor $\tau$ takes as an input a pair $(\mathbf{M}, (\wtilde\Sigma\to \Sigma,\wtilde{\mathbf{q}}))$ and spits out a finite dimensional vector space $\tau_{\mathbf{M}}(\wtilde\Sigma\to \Sigma,\wtilde{\mathbf{q}})$.

		Hence let us recall the notion of $\Gamma$-covers of extended surfaces from \cite[\S3]{KP:08} and \cite{Prince}. We refer to these sources for more details and the proofs.
		\begin{definition}
			(i) An extended surface $(\Sigma,\{p_h\}_{h\in \H(\Sigma)})$ is a (possibly disconnected) smooth compact oriented surface $\Sigma$ with set of boundary components $\H(\Sigma)$ with a choice of a marked point $p_h$ on each boundary component $h\in \H(\Sigma)$. \\
			\noindent (ii) Let $\pi_0(\Sigma)$ denote the set of connected components of $\Sigma$. We have a natural map $s:\H(\Sigma)\to \pi_0(\Sigma)$ and the weight map $w:\pi_0(\Sigma)\to \ZBbb_{\geq 0}$ which assigns to a connected component of $\Sigma$ the genus of its closure. Hence given an extended surface, we can assign to it a weighted graph with vertices $\pi_0(\Sigma)$ and half-edges $\H(\Sigma)$ which is a disjoint union of corollas (a graph with one vertex, no edges and possibly having some legs, see also Remark \ref{rk:corolla}), with each weighted corolla  corresponding to a connected component of $\Sigma$.\\
			\noindent (iii) We say that an extended surface as in (i) and (ii) is \emph{stable} if each connected component is stable, i.e. if each corolla as in (ii) is a stable weighted graph.
		\end{definition}
		
		\noindent We now define $\Gamma$-coverings of extended surfaces:
		\begin{definition}\label{def:topgammacover}
			(i) A $\Gamma$-cover $(\wtilde\Sigma,\{\tilde{p}_h\}_{h\in \H(\Sigma)})$ of an extended surface $(\Sigma,\{p_h\}_{h\in \H(\Sigma)})$ is a principal left $\Gamma$-bundle $\wtilde{\Sigma}\to \Sigma$ along with a choice of lift $\tilde{p}_h\in \partial{\wtilde{\Sigma}}$ above each $p_h$.\\
			\noindent (ii) Using the choice of the marked points in $\partial\wtilde\Sigma$, for each $h\in \H(\Sigma)$, we can define a monodromy element of $\Gamma$ (see Prince \cite{Prince} for details) defining a function $\mathbf{m}:\H(\Sigma)\to \Gamma$. Hence with each $\Gamma$-cover we have an associated weighted $\Gamma$-graph which is again just a union of $\Gamma$-corollas (see also Remark \ref{rk:corolla}).\\
			\noindent (iii) We say that the $\Gamma$-cover is stable if the base surface $\Sigma$ is stable. 
		\end{definition}

		Recall that in the complex analytic situation we demanded that the $\Vcal_{\mathbf{M}}$ should form vector bundles with flat (projective) connections on $\tildeM^{\Gamma}_{g,n}(\mathbf{m})$. Note that such a connection is equivalent (via Riemann-Hilbert Correspondence) to a (projective) representation of the orbifold fundamental groupoid $\pi_1(\tildeM^\Gamma_{g,n}(\mathbf{m}))$ of the moduli stack $\tildeM^\Gamma_{g,n}(\mathbf{m})$. Hence it would have been equivalent to work with these fundamental groupoids and their representations instead. There are natural analogs of these groupoids on the topological side:
		\begin{definition}\label{def:surfcorolla}
			Let $(g,n)$ be a stable pair, i.e. $2g-2+n>0$. We define $\Surf^\Gamma_{g,n}(\mathbf{m})$ to be the groupoid whose objects are $\Gamma$-coverings of extended surfaces $(\wtilde\Sigma\to\Sigma,\mathbf{q})$ (with base being a smooth oriented surface of genus $g$ with $n$ boundary components) with monodromy data (see also Definition \ref{def:topgammacover} part (ii)) $\mathbf{m}\in \Gamma^n$. The morphisms in this groupoid are isotopy classes of orientation and marked points preserving diffeomorphisms of $\Gamma$-covers.
		\end{definition}
		
		By using the ideas from \cite[\S4.2.3]{BFM}, \cite{HainL} we now prove that the two groupoids are canonically equivalent. 
		\begin{theorem}\label{thm:fundamentalgroupoidmappingclassgp}
			For any $(g,n)$ such that $2g-2+n>0$ and for any $\mathbf{m}\in \Gamma^n$ we have a canonical equivalence of groupoids 
			$\pi_1(\tildeM^\Gamma_{g,n}(\mathbf{m}))\cong \Surf^\Gamma_{g,n}(\mathbf{m}).$
		\end{theorem}
		\begin{proof}
			We use a modification of the standard argument in the special case when $\Gamma$ is trivial using the identification of each connected component of the moduli stack $\tildeM^\Gamma_{g,n}(\mathbf{m})$ as a quotient of a certain Teichm\"uller space. Let us choose any point $\mathbf{c}:=(\tildeC\to C,\wtilde{\mathbf{p}},\wtilde{\mathbf{v}})\in \tildeM^\Gamma_{g,n}(\mathbf{m})$. 
			
			Let us strip $\tildeC\to C$ of the complex structure and consider it as a ramified covering of smooth compact oriented surfaces with markings. Consider the Teichm\"uller groupoid $\Tscr_{\mathbf{c}}$ (see also \cite[Chapter XV]{ACG:11} for details) whose objects are tuples $(\tildeD\to D,\wtilde{\mathbf{q}},\wtilde{\mathbf{w}},[\wtilde{f}])$ where $(\tildeD\to D,\wtilde{\mathbf{q}},\wtilde{\mathbf{w}})\in \tildeM^\Gamma_{g,n}(\mathbf{m})$ and $[\wtilde{f}]$ is the isotopy class of a smooth orientation and marking data preserving diffeomorphism $\tilde{f}:(\tildeD\to D,\wtilde{\mathbf{q}},\wtilde{\mathbf{w}})\to (\tildeC\to C,\wtilde{\mathbf{p}},\wtilde{\mathbf{v}})$ between ramified marked $\Gamma$-covers. 
			
			An isomorphism in $\Tscr_{\mathbf{c}}$ between two objects:
			\begin{equation}
			\label{eqn:randomuseless1}
			\wtilde\phi:(\tildeD\to D,\wtilde{\mathbf{q}},\wtilde{\mathbf{w}},[\wtilde{f}])\to (\tildeD'\to D',\wtilde{\mathbf{q}'},\wtilde{\mathbf{w}}',[\wtilde{f}'])
				\end{equation}  is a complex analytic isomorphism $\wtilde{\phi}$ between the $\Gamma$-covers as in Equation \eqref{eqn:randomuseless1} such that $[\wtilde{f}'\circ\wtilde{\phi}]=[\wtilde{f}]$.
			
			The Teichm\"uller space $\Tcal_{\mathbf{c}}$ is defined to be the space of isomorphism classes of objects of $\Tscr_{\mathbf{c}}$. We can think of the space $\Tcal_{\mathbf{c}}$ as the space of isotopy classes of complex analytic structures on the marked ramified $\Gamma$-covering of smooth oriented surfaces $(\tildeC\to C,\wtilde{\mathbf{p}},\wtilde{\mathbf{v}})$. 
			
			Note that $\tildeC\to C$ is a ramified $\Gamma$-cover of smooth oriented surfaces and hence given a complex analytic structure on $\tildeC$ we get a unique complex structure on $C$. Hence we may have defined the Teichm\"uller space $\Tcal_{\mathbf{c}}$ entirely using the top surface $\tildeC$ which may be disconnected. Nevertheless we conclude that $\Tcal_{\mathbf{c}}$ is contractible.
			
			Define the Teichm\"uller modular group $\varGamma_\mathbf{c}:=\{[\wtilde{\gamma}]|(\tildeC\to C,\wtilde{\mathbf{p}},\wtilde{\mathbf{v}},[\wtilde{\gamma}])\in \Tscr_{\mathbf{c}}\}$ which is a group under composition. Moreover we have an action of $\varGamma_{\mathbf{c}}$ on $\Tcal_{\mathbf{c}}$ which takes the isomorphism class of an object $(\tildeD\to D,\wtilde{\mathbf{q}},\wtilde{\mathbf{w}},[\wtilde{f}])$ to $(\tildeD\to D,\wtilde{\mathbf{q}},\wtilde{\mathbf{w}},[\wtilde{\gamma}\circ\wtilde{f}])$. Then the connected component of the moduli stack $\tildeM^\Gamma_{g,n}(\mathbf{m})$ containing the point $\mathbf{c}$ is precisely the orbifold quotient $\varGamma_{\mathbf{c}}\backslash\Tcal_{\mathbf{c}}$ and hence its orbifold fundamental group is precisely $\varGamma_{\mathbf{c}}$.
			
			It is also clear that $\varGamma_{\mathbf{c}}$ is canonically isomorphic to the automorphism group in $\Surf^\Gamma_{g,n}(\mathbf{m})$ of the $\Gamma$-cover of extended surfaces $(\wtilde\Sigma\to \Sigma,\wtilde{\mathbf{q}})$ associated with $(\tildeC\to C,\wtilde{\mathbf{p}},\wtilde{\mathbf{v}})$. To complete the proof, we apply the same argument for each connected component of $\tildeM^\Gamma_{g,n}(\mathbf{m})$ and use the observation that any $(\wtilde\Sigma\to \Sigma,\wtilde{\mathbf{q}})\in \Surf^\Gamma_{g,n}(\mathbf{m})$ can be closed up to a ramified marked $\Gamma$-cover of surfaces and can be equipped with a complex analytic structure.
		\end{proof}

		Just as in the complex analytic situation, we have various gluing operations for stable $\Gamma$-covers of extended surfaces along boundary components. Once again the category $\Xcal^\Gamma$ of stable $\Gamma$-graphs will help us to organize these different types of gluings. 
		
		\subsection{Categories cofibered in groupoids over $\Xcal^\Gamma$}
		In this section, we reformulate the notion of a topological $\Gamma$-crossed modular functor (as defined in \cite{KP:08}) in the spirit of \cite{BK:01} and \cite{BFM}. As described in Appendix \ref{ap:groupmarkedgraphs}, $\Xcal^\Gamma$ is a symmetric monoidal category under disjoint union of graphs denoted as $\sqcup$. Now for each object $(X,\mathbf{m},\mathbf{b})\in \Xcal^\Gamma$, we will assign two groupoids, one in the topological setting of surfaces and the other in the setting of complex algebraic curves. We will use the notion of categories cofibered in groupoids over the category $\Xcal^\Gamma$.

		So far in Section \ref{sec:topmotivation} we have only encountered stable $\Gamma$-corollas associated with stable $\Gamma$-covers. Note that we can think of any  stable $\Gamma$-graph as being obtained by gluing together stable  $\Gamma$-corollas (see also Remark \ref{rk:corolla}). If $(X,\mathbf{m},\mathbf{b})\in\Xcal^\Gamma$ then we can cut up the graph at the midpoints of all edges to obtain a collection of $\Gamma$-corollas parameterized by $V(X)$. For each vertex $v\in V(X)$, $L_v$ denotes the set of half-edges of $X$ whose source is $v$. These form the legs of the corolla corresponding to that vertex.
		
		We think of the groupoid $\Surf^\Gamma_{g,n}(\mathbf{m})$ as being associated with the corolla whose vertex has weight $g$, and which has $n$ legs marked by $\mathbf{m}\in \Gamma^n$. We will now associate a groupoid with any stable $\Gamma$-graph:
		\begin{definition}
			Let $(X,\mathbf{m},\mathbf{b})\in \Xcal^\Gamma$ and we continue to use the same notation. We define $\Surf^\Gamma(X,\mathbf{m},\mathbf{b})$ to be the product of the groupoids defined in Definition \ref{def:surfcorolla} attached to each vertex $v\in V(X)$:
			$$\Surf^\Gamma(X,\mathbf{m},\mathbf{b}):=\prod\limits_{v\in V(X)}\Surf^\Gamma_{w(v),L_v}(\mathbf{m}|_{L_v}).$$  
		\end{definition}
		
		\begin{remark}\label{rem:usefulremark}
			Equivalently $\Surf^\Gamma(X,\mathbf{m},\mathbf{b})$ is the groupoid whose objects are $\Gamma$-covers of extended surfaces $(\wtilde\Sigma\to \Sigma,\{\tilde{p}_h\}_{h\in \H(\Sigma)})$ along with identifications $V(X)\cong \pi_0(\Sigma)$ and $H(X)\cong \H(\Sigma)$ which are compatible with the weight maps, the maps `$s$' from half-edges to vertices, as well as the monodromies at the half-edges. The morphisms in this groupoid are isotopy classes of orientation preserving diffeomorphisms of $\Gamma$-covers preserving the marked points. 
		\end{remark}

		\subsubsection*{Idea behind gluing along morphisms in $\Xcal^\Gamma$}\label{sec:ideaofgluing} Using the identification $H(X)\cong \H(\Sigma)$ as in Remark \ref{rem:usefulremark}, we obtain an involution $\iota$ of the boundary components $\H(\Sigma)$. Now suppose that $\{h_1,h_2\}\in \H(\Sigma)$ form an edge, i.e. an $\iota$ orbit of size two. Then by definition we have ${}^{\mathbf{b}(h_1)}\mathbf{m}(h_1)=\mathbf{m}(h_2)^{-1}$. Now $\mathbf{m}(h_1)$ is the monodromy element defined using the marked point $\tilde{p}_{h_1}$ lying over $p_{h_1}$ and hence the monodromy determined by the point $\mathbf{b}(h_1)\cdot \tilde{p}_{h_1}$ is ${}^{\mathbf{b}(h_1)}\mathbf{m}(h_1)$. Hence we can glue the $\Gamma$-cover by identifying the point $\mathbf{b}(h_1)\cdot \tilde{p}_{h_1}$ with the point $\tilde{p}_{h_2}$ and gluing the surface $\Sigma$ along $p_{h_1}, p_{h_2}$. 
		
		Similarly, if $h\in \H(\Sigma)$ is a leg with $\mathbf{m}(h)=1$, then it means that the $\Gamma$-cover when restricted to the corresponding boundary component is trivial. Hence we can glue the trivial $\Gamma$-cover of a standard disk to our given $\Gamma$-cover in $\Surf^\Gamma(X,\mathbf{m},\mathbf{b})$.
		
		We refer to \cite[\S2.6]{Prince} for more details about gluing constructions and uniqueness of gluing. Recall that morphisms in $\Xcal^\Gamma$ are compositions of isomorphisms, edge contractions and deletion of 1-marked legs. Hence we obtain:
		\begin{proposition}\label{prop:surfacegluing}
			Let $(f,\pmb\gamma):(X,\mathbf{m}_X,\mathbf{b}_X)\to (Y,\mathbf{m}_Y,\mathbf{b}_Y)$ be a morphism in $\Xcal^\Gamma$. Then the gluing constructions described above define a gluing functor between groupoids
			\[
			\Gcal_{f,\pmb\gamma}:\Surf^\Gamma(X,\mathbf{m}_X,\mathbf{b}_X)\to \Surf^\Gamma(Y,\mathbf{m}_Y,\mathbf{b}_Y).
			\]
			The assignment $(X,\mathbf{m}_X,\mathbf{b}_X)\mapsto \Surf^\Gamma(X,\mathbf{m}_X,\mathbf{b}_X)$ defines a symmetric monoidal pseudo-functor from the category $\Xcal^\Gamma$ to the (2,1)-category of groupoids. We let $\Surf^\Gamma\to \Xcal^\Gamma$ be the corresponding symmetric monoidal category cofibered in groupoids over $\Xcal^\Gamma$. 
		\end{proposition}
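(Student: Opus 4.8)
The plan is to construct the gluing functor $\Gcal_{f,\pmb\gamma}$ one generating morphism at a time. Recall (from the discussion preceding Definition \ref{d:XGammagA} and just before the statement) that every morphism of $\Xcal^\Gamma$ factors as a composite of an isomorphism — including the group-action morphisms of Example \ref{ex:groupaction} and the crossed braidings of Example \ref{ex:crossedbraiding} — followed by edge contractions (Example \ref{ex:edgecontraction}) and deletions of $1$-marked legs (Example \ref{ex:legdeletion}). First I would define $\Gcal_{f,\pmb\gamma}$ on objects of $\Surf^\Gamma(X,\mathbf{m}_X,\mathbf{b}_X)$, i.e. on $\Gamma$-covers of extended surfaces $(\wtilde\Sigma\to\Sigma,\{\tilde p_h\})$ equipped with identifications $V(X)\cong\pi_0(\Sigma)$ and $H(X)\cong\H(\Sigma)$. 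For an isomorphism I transport these identifications along $f$ and replace each chosen lift $\tilde p_h$ by $\pmb\gamma(h)\cdot\tilde p_h$; since $\mathbf{m}_Y={}^{\pmb\gamma}\mathbf{m}_X$, the monodromies match and the result lies in $\Surf^\Gamma(Y,\mathbf{m}_Y,\mathbf{b}_Y)$. For an edge contraction along $\{h_1,h_2\}$ I glue the boundary components $p_{h_1}$ and $p_{h_2}$ of $\Sigma$ and identify the lift $\mathbf{b}_X(h_1)\cdot\tilde p_{h_1}$ with $\tilde p_{h_2}$, exactly as in the gluing recipe stated before the Proposition; the conditions ${}^{\mathbf{b}_X(h_1)}\mathbf{m}_X(h_1)\cdot\mathbf{m}_X(h_2)=1$ and $\mathbf{b}_X(h_1)\mathbf{b}_X(h_2)=1$ guarantee that the two principal $\Gamma$-bundles agree over the glued circle, so they descend to a principal $\Gamma$-bundle over the glued surface. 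For a $1$-marked leg deletion I cap off the corresponding boundary circle, over which the cover is trivial because $\mathbf{m}(h)=1$, with the trivial $\Gamma$-cover of a standard disk. In each case two (or one) of the factors in the product $\Surf^\Gamma(X,\mathbf{m}_X,\mathbf{b}_X)=\prod_{v}\Surf^\Gamma_{w(v),L_v}(\mathbf{m}|_{L_v})$ are merged or trimmed accordingly.

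Next I would extend $\Gcal_{f,\pmb\gamma}$ to morphisms of the groupoids: given the isotopy class of an orientation- and marking-preserving diffeomorphism of covers, I restrict it near the boundary circles being glued (respectively capped) and glue (respectively extend by the identity on the disk) to obtain a diffeomorphism of the glued cover; functoriality on each groupoid is then immediate. The crucial point is pseudo-functoriality: for composable morphisms $(f_1,\pmb\gamma_1)$ and $(f_2,\pmb\gamma_2)$ I must produce a natural isomorphism $\Gcal_{f_2,\pmb\gamma_2}\circ\Gcal_{f_1,\pmb\gamma_1}\cong\Gcal_{(f_2,\pmb\gamma_2)\circ(f_1,\pmb\gamma_1)}$ together with $\Gcal_{\id}\cong\id$, satisfying the associativity coherence. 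This reduces to the uniqueness of gluing of $\Gamma$-covers of extended surfaces along boundary circles: gluing two pairs of boundary components in either order, or transporting the gluing data by $\pmb\gamma$, yields canonically isomorphic $\Gamma$-covers. This uniqueness, together with the cocycle identities satisfied by $\mathbf{b}$ and $\pmb\gamma$ under composition in $\Xcal^\Gamma$ (Definition \ref{d:XGammagA}), is precisely what is established in \cite[\S2.6]{Prince}, and I would invoke it to construct the coherence isomorphisms and verify their compatibility.

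Finally, for the symmetric monoidal structure: disjoint union of stable $\Gamma$-graphs corresponds to disjoint union of the associated surfaces, yielding canonical equivalences $\Surf^\Gamma((X,\mathbf{m}_X,\mathbf{b}_X)\sqcup(X',\mathbf{m}_{X'},\mathbf{b}_{X'}))\simeq\Surf^\Gamma(X,\mathbf{m}_X,\mathbf{b}_X)\times\Surf^\Gamma(X',\mathbf{m}_{X'},\mathbf{b}_{X'})$ that are compatible with the gluing functors and with the symmetry of $(\Xcal^\Gamma,\sqcup)$; this makes $\Surf^\Gamma(-)$ a symmetric monoidal pseudo-functor into the $(2,1)$-category of groupoids. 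Applying the Grothendieck construction to this pseudo-functor then produces the category $\Surf^\Gamma$ cofibered in groupoids over $\Xcal^\Gamma$, and the monoidal structure lifts to make it symmetric monoidal over $(\Xcal^\Gamma,\sqcup)$. I expect the main obstacle to be the coherence verification of the second paragraph: one must bookkeep the conjugation data $\mathbf{b}$ and the action data $\pmb\gamma$ carefully enough that regluing in different orders produces the same $\Gamma$-cover up to a \emph{canonical} (not merely abstract) isomorphism, which is exactly where the cocycle conditions and the uniqueness-of-gluing results of \cite{Prince} do the essential work.
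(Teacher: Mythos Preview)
Your proposal is correct and follows the same approach as the paper: the paper does not give a formal proof of this proposition but simply notes, immediately before stating it, that morphisms in $\Xcal^\Gamma$ are compositions of isomorphisms, edge contractions and deletions of $1$-marked legs, and refers to \cite[\S2.6]{Prince} for the gluing constructions and their uniqueness. Your write-up is a careful expansion of exactly this outline, with the same reliance on Prince for the coherence of gluing.
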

		
		On the complex analytic side, we define the tower of groupoids as follows: For any stable $\Gamma$-graph $(X,\mathbf{m},\mathbf{b})\in \Xcal^\Gamma$, we have the smooth Deligne-Mumford stack $$\tildeM^\Gamma_{X,\mathbf{m},\mathbf{b}}:=\prod\limits_{v\in V(X)}\tildeM^\Gamma_{w(v),L_v}(\mathbf{m}|_{L_v})$$ as in Appendix \ref{ap:specialization}. Following \cite[\S4]{BFM}, \cite[\S6.1]{BK:01}, to $(X,\mathbf{m},\mathbf{b})$ we attach the Poincare fundamental groupoid $\pi_1(\tildeM^\Gamma_{X,\mathbf{m},\mathbf{b}})$ of the moduli stack $\tildeM^\Gamma_{X,\mathbf{m},\mathbf{b}}$. The objects are pointed admissible $\Gamma$-covers of (possibly disconnected) smooth projective curves with marked tangent vectors corresponding to $(X,\mathbf{m},\mathbf{b})$. The morphisms are 1-parameter $C^\infty$-families (considered up to homotopy) of such objects. By following the arguments from \cite{BK:01}, \cite[\S4.3.1]{BFM}, and Appendix \ref{ss:specalongclutch} we can define gluing functors for the following groupoids:
		\begin{proposition}
			Let $(f,\pmb\gamma):(X,\mathbf{m}_X,\mathbf{b}_X)\to (Y,\mathbf{m}_Y,\mathbf{b}_Y)$ be a morphism in $\Xcal^\Gamma$. Then we have a gluing functor between groupoids
			$
			\Gcal_{f,\pmb\gamma}:\pi_1(\tildeM^\Gamma_{X,\mathbf{m}_X,\mathbf{b}_X})\to \pi_1(\tildeM^\Gamma_{Y,\mathbf{m}_Y,\mathbf{b}_Y}).
			$
			
			The assignment $(X,\mathbf{m}_X,\mathbf{b}_X)\mapsto \pi_1(\tildeM^\Gamma_{X,\mathbf{m}_X,\mathbf{b}_X})$ defines a  symmetric monoidal pseudo-functor from the category $\Xcal^\Gamma$ to the (2,1)-category of groupoids. We let $\pi_1\tildeM^\Gamma
			\to \Xcal^\Gamma$ be the corresponding symmetric monoidal category cofibered in groupoids over $\Xcal^\Gamma$.
		\end{proposition}

		In order to define $\Gamma$-crossed modular functors with central charge, we will need to consider certain central extensions of some towers of groupoids associated to $\tildeM^\Gamma_{X,\mathbf{m},\mathbf{b}}$. Firstly, for a real symplectic vector space $V\neq 0$ we define $\Tcal_V$ to be the Poincare fundamental groupoid of the space of all Lagrangian subspaces of $V$. It will be convenient to set $\Tcal_0$ to be the groupoid with only one object $0$, with $\Hom_{\Tcal_0}(0,0)=\ZBbb$. For a smooth oriented surface $\Sigma$ with boundary, let $cl(\Sigma)$ be the closed oriented surface obtained by gluing disks to all boundary components $\pi_0\partial(\Sigma)$. Then we have the intersection form on $H_1(cl(\Sigma),\RBbb)$ making it a real symplectic vector space. 
		
		We set $\Tcal_{\Sigma}$ to be the groupoid $\Tcal_{H_1(cl(\Sigma),\RBbb)}.$ We refer to \cite[\S5.7]{BK:01}, \cite[\S4.1]{BFM} for more properties and details.
		\begin{definition}
			For $({X,\mathbf{m}_X,\mathbf{b}_X})\in \Xcal^\Gamma$, let $\wtilde{\Surf}^\Gamma({X,\mathbf{m}_X,\mathbf{b}_X})$ be the groupoid whose
			\begin{enumerate}
				\item  {\em objects} are tuples $$(\wtilde\Sigma\to \Sigma,\{\tilde{p}_h\}_{h\in \H(\Sigma)}, V(X)\cong \pi_0(\Sigma),H(X)\cong \pi_0\partial(\Sigma),\Ycal)$$ with $\Ycal\in \Tcal_{\Sigma}$ and the remaining data being that of an object of  ${\Surf}^\Gamma({X,\mathbf{m}_X,\mathbf{b}_X})$. 
				
				\item A {\em morphism} in this groupoid between two objects 
				\begin{center}
					\xymatrix{
						&(\wtilde\Sigma\to \Sigma,\{\tilde{p}_h\}_{h\in \H(\Sigma)}, V(X)\cong \pi_0(\Sigma),H(X)\cong \pi_0\partial(\Sigma),\Ycal)\ar[d]^{(\phi,\gamma)}\\
						& \  (\wtilde\Sigma'\to \Sigma',\{\tilde{p}'_h\}_{h\in \H(\Sigma)}, V(X)\cong \pi_0(\Sigma'),H(X)\cong \pi_0\partial(\Sigma'),\Ycal')
					}
				\end{center}
				is a pair $(\phi,\gamma)$, where $\phi$ is a morphism in the groupoid ${\Surf}^\Gamma({X,\mathbf{m}_X,\mathbf{b}_X})$ and $\gamma:\phi_*(\Ycal)\to \Ycal'$ is a morphism in the groupoid $\Tcal_{\Sigma'}$. 
			\end{enumerate}
			The forgetful functor of groupoids $\wtilde{\Surf}^\Gamma({X,\mathbf{m}_X,\mathbf{b}_X})\to {\Surf}^\Gamma({X,\mathbf{m}_X,\mathbf{b}_X})$  is a central extension by $\ZBbb$ (cf. \cite[\S4.2]{BFM}).
		\end{definition}
		
		\begin{proposition}
			Let $(f,\pmb\gamma):(X,\mathbf{m}_X,\mathbf{b}_X)\to (Y,\mathbf{m}_Y,\mathbf{b}_Y)$ be a morphism in $\Xcal^\Gamma$. Then we have a gluing functor between groupoids
			$
			\wtilde{\Gcal}_{f,\pmb\gamma}:\wtilde{\Surf}^\Gamma(X,\mathbf{m}_X,\mathbf{b}_X)\to \wtilde{\Surf}^\Gamma(Y,\mathbf{m}_Y,\mathbf{b}_Y).
			$
			The assignment $(X,\mathbf{m}_X,\mathbf{b}_X)\mapsto \wtilde{\Surf}^\Gamma(X,\mathbf{m}_X,\mathbf{b}_X)$ defines a symmetric monoidal pseudo-functor from the category $\Xcal^\Gamma$ to the (2,1)-category of groupoids. We let $\wtilde{\Surf}^\Gamma\to \Xcal^\Gamma$ be the corresponding symmetric monoidal category cofibered in groupoids over $\Xcal^\Gamma$. 
		\end{proposition}
		\begin{proof}
			By definition, objects of $\wtilde{\Surf}^\Gamma(X,\mathbf{m}_X,\mathbf{b}_X)$ consist of an object $$(\wtilde\Sigma\to \Sigma,\{\tilde{p}_h\}_{h\in \H(\Sigma)}, V(X)\cong \pi_0(\Sigma),H(X)\cong \pi_0\partial(\Sigma))\in {\Surf}^\Gamma(X,\mathbf{m}_X,\mathbf{b}_X)$$ along with $\Ycal\in \Tcal_{\Sigma}.$ By Proposition \ref{prop:surfacegluing} we can glue the first part of the data along the morphism $(f,\pmb\gamma)$ in $\Xcal^\Gamma$. The other part of the data i.e. $\Ycal\in \Tcal_{\Sigma}$ only depends on the base surface $\Sigma$ (which corresponds to the untwisted case) and hence can be glued using \cite[\S5.7]{BK:01}, \cite[\S4.3]{BFM}. 
		\end{proof}
		
		On the complex analytic side, recall that we have the Hodge line bundles, which we denoted by $\Lambda$, on the moduli stacks $\tildeM^\Gamma_{X,\mathbf{m},\mathbf{b}}$. By definition, the fiber of $\Lambda$ over a point $(\tildeC\to C,\wtilde{\mathbf{p}},\mathbf{p},\mathbf{v})\in \tildeM^\Gamma_{X,\mathbf{m},\mathbf{b}}$ is $\det(H^1(C,\Ocal_C))^*$. Observe that the bundle $\Lambda$ is the pull back from the moduli space parameterizing the base curve $C$.

		The smooth projective complex curve $C$ has the structure of a smooth oriented compact closed surface, and hence we have its associated groupoid $\Tcal_C$ of Lagrangians in the real symplectic vector space $H_1(C,\RBbb)\cong H^1(C,\RBbb)^*$ which can be identified with $H^1(C,\Ocal_C)^*$ as a real vector space. By \cite[\S4.1]{BFM}, \cite[Thm. 6.7.7]{BK:01} for any smooth complex curve we have a canonical equivalence of groupoids
		\begin{equation}\label{eq:equivofgroupoids}
		\Tcal_{C}\cong \pi_1({\det{}}^{\otimes 2}(H^1(C,\Ocal_C))^*\setminus \{0\}).
		\end{equation}
		Let us denote the total space minus the zero section of the line bundle $\Lambda^{\otimes 2}$ on $\tildeM^\Gamma_{X,\mathbf{m},\mathbf{b}}$  by $\Lambda^{\otimes 2}_{\times}\tildeM^\Gamma_{X,\mathbf{m},\mathbf{b}}\to \tildeM^\Gamma_{X,\mathbf{m},\mathbf{b}}.$ It is a $\CBbb^\times$-bundle over the base, hence the natural functor of groupoids  $\pi_1(\Lambda^{\otimes 2}_\times\tildeM^\Gamma_{X,\mathbf{m},\mathbf{b}})\to \pi_1(\tildeM^\Gamma_{X,\mathbf{m},\mathbf{b}})$ is a central extension by $\ZBbb$. Using this we can define a central extension of the complex analytic tower of groupoids by $\ZBbb$. Using the arguments of \cite[\S4.3]{BFM}, \cite[\S6.7]{BK:01} and Appendix \ref{ss:specalongclutch} we get:
		\begin{proposition}
			Let $(f,\pmb\gamma):(X,\mathbf{m}_X,\mathbf{b}_X)\to (Y,\mathbf{m}_Y,\mathbf{b}_Y)$ be a morphism in $\Xcal^\Gamma$. Then we have a gluing functor between groupoids
			$$
			\wtilde\Gcal_{f,\pmb\gamma}:\pi_1(\Lambda^{\otimes2}_{\times}\tildeM^\Gamma_{X,\mathbf{m}_X,\mathbf{b}_X})\to \pi_1(\Lambda^{\otimes2}_{\times}\tildeM^\Gamma_{Y,\mathbf{m}_Y,\mathbf{b}_Y}).
			$$
			The assignment $(X,\mathbf{m}_X,\mathbf{b}_X)\mapsto \pi_1(\Lambda^{\otimes2}_\times\tildeM^\Gamma_{X,\mathbf{m}_X,\mathbf{b}_X})$ defines a  symmetric monoidal pseudo-functor from the category $\Xcal^\Gamma$ to the (2,1)-category of groupoids. We let $\pi_1\Lambda^{\otimes2}_{\times}\tildeM^\Gamma
			\to \Xcal^\Gamma$ be the corresponding symmetric monoidal category cofibered in groupoids over $\Xcal^\Gamma$.
		\end{proposition}
		
		Finally we have:
		\begin{theorem}\label{thm:topanalytic}
			For each $(X,\mathbf{m}_X,\mathbf{b}_X)\in\Xcal^\Gamma$, we have canonical equivalences of groupoids
			$$\Surf^\Gamma(X,\mathbf{m}_X,\mathbf{b}_X)\cong \pi_1(\tildeM^\Gamma_{X,\mathbf{m}_X,\mathbf{b}_X}) \ \mbox{ and } \ \wtilde\Surf^\Gamma(X,\mathbf{m}_X,\mathbf{b}_X)\cong \pi_1(\Lambda^{\otimes2}_{\times}\tildeM^\Gamma_{X,\mathbf{m}_X,\mathbf{b}_X})
			$$ which induce symmetric monoidal equivalences of categories cofibered in groupoids over $\Xcal^\Gamma$
			$\Surf^\Gamma\cong \pi_1\tildeM^\Gamma$  and $\wtilde\Surf^\Gamma\cong \pi_1\Lambda^{\otimes2}_{\times}\tildeM^\Gamma.$
		\end{theorem}
		\begin{proof}
			The equivalence $\Surf^\Gamma(X,\mathbf{m}_X,\mathbf{b}_X)\cong \pi_1(\tildeM^\Gamma_{X,\mathbf{m}_X,\mathbf{b}_X})$ has been established for $\Gamma$-corollas in Theorem \ref{thm:fundamentalgroupoidmappingclassgp}. In general both groupoids are defined as products of groupoids coming from $\Gamma$-corollas corresponding to the vertices of $(X,\mathbf{m}_X,\mathbf{b}_X)$. Moreover these canonical equivalences are compatible with gluings along the morphisms of $\Xcal^\Gamma$. In order to prove that $$\wtilde\Surf^\Gamma(X,\mathbf{m}_X,\mathbf{b}_X)\cong \pi_1(\Lambda^{\otimes2}_{\times}\tildeM^\Gamma_{X,\mathbf{m}_X,\mathbf{b}_X})$$ it suffices to note that the additional data on both the sides comes from the base surface/curve which is the untwisted case. Hence using the same result from \cite[\S6.7]{BK:01}, \cite[\S4.2.3]{BFM} we complete the proof. 
		\end{proof}
		
		\subsection{Representations of the groupoids and their central extensions}
		Let $\Acal$ be a groupoid. We denote the category of finite dimensional representations of $\Acal$ by $\Rep\Acal$. By definition a finite dimensional representation of $\Acal$ is a functor from the group $\Acal$ to the category $\Vec$ of finite dimensional complex vector spaces. Now suppose that $\wtilde{\Acal}\to \Acal$ is a central extension of groupoids by $\ZBbb$, i.e.  for each object $\wtilde{A}\in \wtilde\Acal$ lying above $A\in \Acal$, $\Aut(\wtilde{A})$ is a central extension of $\Aut(A)$ by $\ZBbb$, in particular the object $\wtilde{A}$ should have a canonical automorphism $\gamma_{\wtilde{A}}$ which generates the central copy of $\ZBbb$ inside $\Aut(\wtilde{A})$. For $a\in \CBbb^\times$, a representation $\rho:\wtilde{\Acal}\to \Vec$ is said to be of multiplicative central charge $a$ if for every $\wtilde{A}\in \wtilde\Acal$, the automorphism $\gamma_{\wtilde{A}}$ acts on $\rho(\wtilde{A})$ by the scalar $a$. We define $\Rep_a\wtilde\Acal$ to be the full subcategory of $\Rep\wtilde\Acal$ of representations with multiplicative central charge $a$.
		
		Now for $(X,\mathbf{m},\mathbf{b})\in \Xcal^\Gamma$ we have seen that we have the two central extensions of groupoids by $\ZBbb$, namely $\wtilde\Surf^\Gamma(X,\mathbf{m},\mathbf{b})\to \Surf^\Gamma(X,\mathbf{m},\mathbf{b})$ and the complex analytic version $\pi_1(\Lambda^{\otimes 2}_\times\tildeM^\Gamma_{X,\mathbf{m},\mathbf{b}})\to \pi_1(\tildeM^\Gamma_{X,\mathbf{m},\mathbf{b}})$. Moreover, by Theorem \ref{thm:topanalytic}, these two central extensions of groupoids are canonically equivalent. This fact combined with Deligne's \cite{Deligne1} Riemann-Hilbert correspondence (see Remark \ref{rk:delignerhc}) we obtain the equivalence of the following categories for every $c\in \CBbb$:
		\begin{equation}\label{eq:topanalyticequivalence}
		\Rep_{e^{\sqrt{-1}\pi c}}\wtilde\Surf^\Gamma(X,\mathbf{m},\mathbf{b})\cong \Rep_{e^{\sqrt{-1}\pi c}}(\pi_1(\Lambda^{\otimes 2}_\times\tildeM^\Gamma_{X,\mathbf{m},\mathbf{b}}))\cong \mathscr{D}_c\Mod(\tildebarM{}^\Gamma_{X,\mathbf{m},\mathbf{b}}).
		\end{equation}
		Moreover, by Theorem \ref{thm:topanalytic} and the definition of the specialization functors of twisted $D$-modules along clutchings (in Appendix \ref{ss:specalongclutch}), we see that the equivalences in Equation \eqref{eq:topanalyticequivalence} are compatible with the gluing/clutching functors corresponding to morphisms in $\Xcal^\Gamma$.
		\subsection{Crossed modular functors}
		We will now formulate the definition of $\Gamma$-crossed modular functors in terms of the towers of groupoids defined in the previous subsection. Let $(\Ccal,\unit,R)$ be a $\Gamma$-crossed abelian category with the involutive duality denoted as $(\cdot)^*$.
		\begin{definition}\label{def:topmodfun}
			A $\Ccal$-extended topological $\Gamma$-crossed modular functor of multiplicative central charge $a\in \CBbb^\times$ consists of the following data:
			\begin{itemize}
				\item[1.] For each stable pair $(g,A)$ and $\mathbf{m}\in\Gamma^A$, (which we think of as corolla of weight $g$ and $A$-legs marked by $\mathbf{m}$) a topological conformal blocks functor
				\begin{equation}
				\label{eqn:criticlaone}
				\tau_{g,A,\mathbf{m}}:\Ccal^{\boxtimes A}_\mathbf{m}\rar{}\Rep_a\wtilde\Surf^\Gamma(g,A,\mathbf{m}).	\end{equation}
				Once we have functors as in Equation \eqref{eqn:criticlaone}, we can canonically extend them to obtain functors
				$$\tau_{X,\mathbf{m},\mathbf{b}}:\Ccal^{\boxtimes H(X)}_\mathbf{m}\rar{}\Rep_a\wtilde\Surf^\Gamma(X,\mathbf{m},\mathbf{b})$$
				for each $(X,\mathbf{m},\mathbf{b})\in \Xcal^\Gamma$. 
				\item[2.] For each morphism $(f,\pmb\gamma):(X,\mathbf{m}_X,\mathbf{b}_X)\to (Y,\mathbf{m}_Y,\mathbf{b}_Y)$ in $\Xcal^\Gamma$ a natural isomorphism $G_{f,\pmb\gamma}$ between the two functors from $\Ccal^{\boxtimes H(Y)}_{\mathbf{m}_Y}$ to $\Rep_a\wtilde\Surf^\Gamma(X,\mathbf{m}_X,\mathbf{b}_X)$:
				$$G_{f,\pmb\gamma}:\tau_{X,\mathbf{m}_X,\mathbf{b}_X}\circ\Rcal_{f,\pmb\gamma}\rar{}{\wtilde{\Gcal}}^*_{f,\pmb\gamma}\circ \tau_{Y,\mathbf{m}_Y,\mathbf{b}_Y}$$
				compatible with compositions of morphisms in $\Xcal^\Gamma$.
				\item[3.] A normalization $$\tau_{0,3,\pmb{1}}(\unit\boxtimes\unit\boxtimes\unit)(S^2_3\times \Gamma\to S^2_3
				, (p_1,1),(p_2,1),(p_3,1))\cong \CBbb,$$ where $(S^2_3,p_1,p_2,p_3)$ is some standard Riemann sphere with 3 disks removed and three boundary points $p_1,p_2,p_3$, say as in \cite[Def. 3.4]{Prince} and where we have equipped it with the trivial $\Gamma$-cover. Note that $\Tcal_{S^2}=\Tcal_0$.
				\item[4.] (Non-degeneracy.) Given any $\gamma\in \Gamma$ and non-zero $X\in \Ccal_{\gamma}$, $\tau_{0,3,(\gamma,\gamma^{-1},1)}(X\boxtimes X^*\boxtimes \unit)\neq 0.$
			\end{itemize}
		\end{definition}
		
		\begin{definition}\label{def:gen0topmodfun}
			Recall that we have the full subcategory $\Xcal^\Gamma_{0}\subset \Xcal^\Gamma$ be the full subcategory formed by forests with all vertices having weight 0. We define a $\Ccal$-extended topological $\Gamma$-crossed modular functor in genus 0 by replacing $\Xcal^\Gamma$ with just the genus 0 part $\Xcal^\Gamma_{0}$ and by replacing the tower of groupoids $\wtilde{\Surf}^\Gamma$ with $\Surf^\Gamma$ in Definition \ref{def:topmodfun}.
		\end{definition}
		\begin{remark}
			In \cite{KP:08}, a slightly different definition of a $\Gamma$-crossed modular functor in genus zero is given, namely, all $\Gamma$-covers of surfaces are considered instead of considering only stable ones. Instead, we are considering the operation of forgetting 1-marked legs, i.e. attaching the trivial $\Gamma$-cover of the disk to $\Gamma$-covers of surfaces, which plays a similar role. Hence we see that the notion of $\Ccal$-extended topological $\Gamma$-crossed modular functor in genus 0 as defined by Definition \ref{def:gen0topmodfun} agrees with the notion defined in \cite{KP:08}. Note that we have included non-degeneracy condition already in the definition of a $\Gamma$-crossed modular functor.
		\end{remark}
		\begin{proposition}\label{prop:top=analytic}
			Let $\Ccal$ be a $\Gamma$-crossed abelian category. Then the notion of a $\Ccal$-extended complex analytic $\Gamma$-crossed modular functor of additive central charge $c$ is equivalent to the notion of a $\Ccal$-extended topological $\Gamma$-crossed modular functor of multiplicative central charge $e^{\sqrt{-1}\pi c}$. Similarly, the notion of a $\Ccal$-extended complex analytic $\Gamma$-crossed modular functor in genus 0 is equivalent to the notion of a $\Ccal$-extended topological $\Gamma$-crossed modular functor in genus 0.
		\end{proposition}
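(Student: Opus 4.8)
The plan is to deduce the equivalence formally from the machinery assembled above, the two definitions being term-by-term parallel. Comparing Definition \ref{d:gammacrossedmodfun} with Definition \ref{def:topmodfun}, a complex analytic $\Gamma$-crossed modular functor and a topological one consist of exactly the same four pieces of data: (1) conformal blocks functors out of $\Ccal^{\boxtimes A}_{\mathbf{m}}$, valued in $\mathscr{D}_c\Mod(\tildebarM^\Gamma_{g,A}(\mathbf{m}))$ in the analytic case and in $\Rep_a\wtilde\Surf^\Gamma(g,A,\mathbf{m})$ in the topological case; (2) gluing natural isomorphisms $G_{f,\pmb\gamma}$ indexed by morphisms of $\Xcal^\Gamma$, compatible with composition; (3) a normalization; and (4) non-degeneracy. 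The only difference lies in the target category and the transition functor appearing in the gluing axiom, namely the specialization functor $\Sp_{f,\pmb\gamma}$ on the analytic side versus the pullback $\wtilde\Gcal^*_{f,\pmb\gamma}$ on the topological side. So the whole proof reduces to transporting the data across an equivalence of targets that intertwines these two transition functors.

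Setting $a=e^{\sqrt{-1}\pi c}$, I would invoke the chain of equivalences (\ref{eq:topanalyticequivalence}), which for each $(X,\mathbf{m},\mathbf{b})\in \Xcal^\Gamma$ provides a canonical equivalence $\Phi_{X,\mathbf{m},\mathbf{b}}:\Rep_a\wtilde\Surf^\Gamma(X,\mathbf{m},\mathbf{b})\xto{\cong}\mathscr{D}_c\Mod(\tildebarM^\Gamma_{X,\mathbf{m},\mathbf{b}})$, resting on Theorem \ref{thm:topanalytic} and Deligne's Riemann--Hilbert correspondence. Given a topological functor $\tau$, define the analytic conformal blocks functors by $\Vcal_{X,\mathbf{m},\mathbf{b}}:=\Phi_{X,\mathbf{m},\mathbf{b}}\circ \tau_{X,\mathbf{m},\mathbf{b}}$, and conversely via the inverse equivalences; since the functors $\Rcal_{f,\pmb\gamma}$ on the $\Gamma$-crossed abelian side are common to both definitions, this is a bijection on the data of type (1).

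The heart of the argument is (2). I would show that under $\Phi$ the topological pullback $\wtilde\Gcal^*_{f,\pmb\gamma}$ corresponds to the analytic specialization $\Sp_{f,\pmb\gamma}$, i.e.\ that for each morphism $(f,\pmb\gamma)$ of $\Xcal^\Gamma$ there is a canonical isomorphism
\begin{equation}
\Phi\circ \wtilde\Gcal^*_{f,\pmb\gamma}\cong \Sp_{f,\pmb\gamma}\circ\Phi.
\end{equation}
This is exactly the compatibility asserted at the end of the previous subsection: by Theorem \ref{thm:topanalytic} the central extensions $\wtilde\Surf^\Gamma\to\Surf^\Gamma$ and $\pi_1\Lambda^{\otimes2}_{\times}\tildeM^\Gamma\to\pi_1\tildeM^\Gamma$ agree as symmetric monoidal categories cofibered in groupoids over $\Xcal^\Gamma$, while the defining property of the specialization functors along clutchings (Appendix \ref{ss:specalongclutch}) is precisely that, under Riemann--Hilbert, specialization to the boundary divisor is computed by restriction of the monodromy representation along the corresponding groupoid gluing functor. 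Granting this, the topological gluing isomorphism $\tau\circ\Rcal_{f,\pmb\gamma}\Rightarrow \wtilde\Gcal^*_{f,\pmb\gamma}\circ\tau$ transports under $\Phi$ to an analytic gluing isomorphism $\Vcal\circ\Rcal_{f,\pmb\gamma}\Rightarrow \Sp_{f,\pmb\gamma}\circ\Vcal$, and coherence with composition is preserved because $\Phi$ is an equivalence of cofibered categories. Conditions (3) and (4) transport directly: the normalization data correspond because the trivially $\Gamma$-covered three-holed sphere with $\Tcal_{S^2}=\Tcal_0$ matches the trivially $\Gamma$-covered $(\PBbb^1,\mu_3)$ under the equivalence of groupoids, and with $a=e^{\sqrt{-1}\pi c}$ the multiplicative and additive central charges agree; non-degeneracy is the nonvanishing of a specific object, visibly preserved by $\Phi$.

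The genus-0 statement follows by running the identical argument over the full subcategory $\Xcal^\Gamma_0\subset\Xcal^\Gamma$ (Definitions \ref{d:gammacrossedmodfun0} and \ref{def:gen0topmodfun}), where the Hodge bundles are trivial, so one uses the uncentrally-extended towers $\Surf^\Gamma\cong\pi_1\tildeM^\Gamma$ in place of their central extensions and there is no central charge to track. I expect the main obstacle to be the intertwining isomorphism of (2): although largely discharged in the appendix, it is the one genuinely nonformal input, requiring one to identify the Verdier specialization (nearby cycles) of a twisted logarithmic $D$-module along a clutching map with the purely topological operation of restricting a monodromy representation along the gluing functor of $\Gamma$-covers of extended surfaces, coherently in $(f,\pmb\gamma)$. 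Everything else is bookkeeping transport across the equivalences of Theorem \ref{thm:topanalytic}.
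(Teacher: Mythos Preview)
Your proposal is correct and follows essentially the same approach as the paper. The paper's proof is considerably terser—it simply invokes the equivalence (\ref{eq:topanalyticequivalence}) together with the compatibility with morphisms of $\Xcal^\Gamma$ noted immediately after that equation, and concludes that one may replace one target category with the other in the definition—but you have unpacked exactly the ingredients the paper is implicitly using, including the key point that $\Sp_{f,\pmb\gamma}$ corresponds to $\wtilde\Gcal^*_{f,\pmb\gamma}$ under the Riemann--Hilbert equivalence, which the paper records as a one-line remark referring to Theorem \ref{thm:topanalytic} and Appendix \ref{ss:specalongclutch}.
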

		\begin{proof}
			By (\ref{eq:topanalyticequivalence}), for each $(X,\mathbf{m},\mathbf{b})\in \Xcal^\Gamma$ we have an equivalence 
			$$\Rep_{e^{\sqrt{-1}\pi c}}\wtilde\Surf^\Gamma(X,\mathbf{m},\mathbf{b})\cong \mathscr{D}_c\Mod(\tildebarM{}^\Gamma_{X,\mathbf{m},\mathbf{b}})$$ which is compatible with the morphisms in the category $\Xcal^\Gamma$. In other words we can replace $\Rep_{e^{\sqrt{-1}\pi c}}\wtilde\Surf^\Gamma(X,\mathbf{m},\mathbf{b})$ with $\mathscr{D}_c\Mod(\tildebarM{}^\Gamma_{X,\mathbf{m},\mathbf{b}})$ in the definition of topological $\Gamma$-crossed modular functor. Then we exactly get the notion of the complex analytic $\Gamma$-crossed modular functor we defined in the previous section. Hence the two notions are equivalent. The same applies to the genus 0 case. 
		\end{proof}

		\subsection{Proof of Theorem \ref{thm:modfunmodcat}}\label{sec:proofsofthms}
		Using the previous result we now complete the proof of Theorem \ref{thm:modfunmodcat}.  By the main theorem  in Section 5 of \cite{KP:08}, the notion of a $\Ccal$-extended topological $\Gamma$-crossed modular functor in genus 0 is equivalent to equipping $\Ccal$ with the structure of a $\Gamma$-crossed weakly ribbon category. 
		
		On the other hand by Proposition \ref{prop:top=analytic} the notion of a $\Ccal$-extended complex analytic $\Gamma$-crossed modular functor in genus 0 as defined in Section \ref{sec:crossedmodular} is equivalent to the notion of a $\Ccal$-extended topological $\Gamma$-crossed modular functor in genus 0. Equation (\ref{eq:homspaces}) follows from the property (2) that holds for the equivalence established in \cite[\S5]{KP:08}. This completes the proof of Theorem \ref{thm:modfunmodcat}.

		\section{$\Gamma$-crossed modular functors from twisted conformal blocks}\label{sec:modfuntwistedconformal}
		Let us now return to the setting of a finite group $\Gamma$ acting on a simple Lie algebra $\frg$. In this section, we discuss  how the twisted conformal blocks  associated to Galois covers of curves give a $\mathcal{C}$-extended $\Gamma$-crossed modular functor. We follow the notations of Sections \ref{sec:crossedabelian} and Section \ref{sec:deftwistconf}. In this section, we have the assumptions that ``$\Gamma$ preserves a Borel subalgebra of $\frg$" (See Remark \ref{rem:gammapreservesborel}). 
		
		\subsection{$\Gamma$-crossed abelian category}\label{ss:twistedconfblockcrossedabelian}
		Recall that for each $\gamma \in \Gamma$, we have the set $P^{\ell}(\frg,\gamma)$ that parameterizes representations of the twisted affine Kac-Moody Lie algebra $\widehat{L}(\frg,\gamma)$. We set $\mathcal{C}_{\gamma}$ to be the $\mathbb{C}$-linear semisimple abelian category whose simple objects are 
		parameterized by  $P^{\ell}(\frg,\gamma)$. We 
		define $\mathcal{C}:=\oplus_{\gamma\in \Gamma}\mathcal{C}_{\gamma}$ which is $\Gamma$-graded. 
		\subsubsection{$\Gamma$-action on $\mathcal{C}$}\label{subsubsec:gammaaction}Let $\sigma$ be an automorphism on $\frg$ of order $|\sigma|$. Recall from Section \ref{sec:affineLiealgtwisted}, the eigenspaces $\frg_{\sigma, j}$ of $\frg$ with 
		eigenvalue $\exp{\frac{2\pi{\sqrt{-1}}j}{|\sigma|}}$, where $0\leq j<|\sigma|$. Now if $\Gamma$ is a group acting on $\frg$ and let 
		$\gamma$ and $\sigma$ be any two elements of $\Gamma$. Consider the map $\frg_{\sigma,j}\rightarrow 
		\frg_{\gamma\sigma\gamma^{-1},j}$ obtained by sending $X$ to $\gamma X$ induces an 
		isomorphism of the affine Lie algebras 
		$$\psi_{\gamma}: \widehat{L}(\frg,\sigma)\rightarrow \widehat{L}(\frg,\gamma\sigma\gamma^{-1}).$$
		
		Since the group $\Gamma$ acts on $\frg$, it follows that $\psi_{\gamma_1\gamma_2}=\psi_{\gamma_1}\circ \psi_{\gamma_2}$, where $\gamma_1$ and $\gamma_2$ are arbitrary elements of $\Gamma$.
		This in turns induces an action on the set of the simple objects  $\oplus_{\gamma \in \Gamma}P^{\ell}(\frg, \gamma)$ of $\mathcal{C}$. Observe that $\psi_{\sigma}$ induces the identity morphism on $\frg^{\sigma}$ and hence it acts by identity on $P^{\ell}(\frg,\sigma)$.
		Extending the action $\mathbb{C}$-linearly, we get an action on $\mathcal{C}$. 
		\subsubsection{Invariant Object}Now observe that the simple objects of $\mathcal{C}_{1}$ 
		are just elements in $P_{\ell}(\frg)$. This has a special object corresponding to the vacuum representation of the untwisted affine Kac-Moody Lie algebra which we  declare to be  the {\em $\Gamma$-invariant object $\unit$}. 
		\subsubsection{Symmetric Tensor}Let $\frg^{\gamma}$ be the Lie subalgebra of $\frg$ fixed by $\gamma$. 	For every weight $\mu \in P_{+}(\frg^{\gamma})$, let $\mu^{*}$ be the dominant 
		integral weight of $\frg^{\gamma}$ which is the highest weight of the dual representation $V_{\mu}$. Now it is easy to see (Lemma 5.3 in \cite{KH}) that $\mu \in P^{\ell}(\frg,\gamma)$ if and only if	$\mu^* \in P^{\ell}(\frg,\gamma^{-1})$. We define the following:
		\begin{equation}\label{eqn:symmetricobject}
		R_\gamma:=\sum_{\mu \in P^{\ell}(\frg,\gamma)}\mu \boxtimes \mu^* \in \Ccal_{\gamma}\boxtimes \Ccal_{\gamma^{-1}},\  \mbox{and} \ R:=\oplus_{\gamma\in \Gamma}R_{\gamma}.
		\end{equation}
		By the definition of the action of $\Gamma$ on $\mathcal{C}$, the object $R$ is 
		clearly $\Gamma$-invariant. 
		Thus we have checked that the $\Gamma$-graded abelian category satisfies all the conditions of Definition \ref{def:crossedabeliancategory}.  
		\subsection{Twisted conformal blocks}For each stable pair $(g,A)$ and ${\bf m}\in \Gamma^{A}$ (see Definition \ref{def:cextenmodularfunctor} for notation) and $\vec{\lambda}\in \mathcal{C}_{{\bf m}}^{\boxtimes A}$, we assign the vector bundle of twisted covacua $\mathcal{V}_{\vec{\lambda},\Gamma}(\widetilde{C},C, \widetilde{\bf{p}}, \bf{p},\widetilde{\bf{v}})$ on $\widetilde{\overline{\mathcal{M}}}{}^{\Gamma}_{g,A}({\bf m})$. 
		By Theorem \ref{thm:atiyahalgebra}, we know that that the log Atiyah algebra $\mathcal{A}_{\Lambda}(-\log\widetilde{\Delta}^\Gamma_{g,A}(\mathbf{m}))$ acts on $\mathcal{V}_{\vec{\lambda},\Gamma}(\widetilde{C},C, \widetilde{\bf{p}}, \bf{p},\widetilde{\bf{v}})$ with central charge $\frac{\ell\dim\frg}{2(\ell+h^{\vee}(\frg))}$. Here $\Lambda$ denote the pullback of the Hodge bundle of $\widetilde{\overline{\mathcal{M}}}_{g,A}$ 
		to $\widetilde{\overline{\mathcal{M}}}{}^{\Gamma}_{g,A}({\bf m})$ and $\widetilde{\Delta}^\Gamma_{g,A}(\mathbf{m})$ denotes the boundary divisor of  $\widetilde{\overline{\mathcal{M}}}{}^{\Gamma}_{g,A}({\bf m})$. Thus this assignment defines 
		a functor from $\mathcal{C}_{\bf{m}}^{\boxtimes{A}}$ to $\mathscr{D}_{c}\operatorname{Mod}\big(\widetilde{\overline{\mathcal{M}}}{}^{\Gamma}_{g,A}({\bf{m}})\big)$, where $c=\frac{\ell\dim\frg}{2(\ell+h^{\vee}(\frg))}$. It 
		is clear that this assignment, satisfies Condition (3) of Definition \ref{def:cextenmodularfunctor}. Motivated by the results of \cite{BK:01}, we have the following theorem.
		\begin{theorem}\label{conj:twistedaffineconfblocks}
			\label{conj:conformalblockismodular}The assignment $\vec{\lambda} \rightarrow \mathcal{V}_{\vec{\lambda},\Gamma}(\widetilde{C},C, \widetilde{\bf{p}}, \bf{p},\widetilde{\bf{v}})$  realizes sheaf of twisted covacua as a  $\mathcal{C}$-extended $\Gamma$-crossed modular functor with central charge $\frac{\ell\dim\frg}{2(\ell+h^{\vee}(\frg))}$, i.e. it satisfies the axioms of Definition \ref{def:cextenmodularfunctor}.
		\end{theorem}We give a proof of Theorem \ref{conj:twistedaffineconfblocks} in Section \ref{sec:proofofconfgammafunc}.
		\subsection{Some consequences of the Theorem \ref{conj:twistedaffineconfblocks}}
		Let us first state the following well known result about untwisted conformal blocks and modular functors (\cite{BK:01,Huang:08a,Huang:08b}). We continue to use the same notation as before, in particular, we consider the category $\Ccal_1$ of level $\ell$ representations of the untwisted affine lie algebra.
		\begin{theorem}\label{thm:oldconf}
			The conformal blocks for the untwisted affine Lie algebra define a $\Ccal_1$-extended modular functor of central charge $\frac{\ell\dim\frg}{2(\ell+h^{\vee}(\frg))}$. The associated weak ribbon category structure endowed on $\Ccal_1$ is, in fact rigid, and hence $\Ccal_1$ is a modular fusion category with additive central charge $\frac{\ell\dim\frg}{2(\ell+h^{\vee}(\frg))}$.
		\end{theorem}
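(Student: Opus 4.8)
The plan is to deduce both assertions from the $\Gamma=\{1\}$ specialization of the machinery already developed, together with one deep external input. When $\Gamma$ is trivial the moduli stacks $\barM^\Gamma_{g,n}$, $\tildebarM^\Gamma_{g,n}$ collapse to the ordinary Deligne--Mumford--Knudsen stacks, every admissible $\Gamma$-cover is just the base curve itself, and the twisted conformal blocks $\mathbb{V}_{\vec{\lambda},\Gamma}$ reduce to the Tsuchiya--Ueno--Yamada blocks $\mathbb{V}_{\vec{\lambda}}(\frg,\ell)$. First I would assemble the data required by Definition \ref{d:gammacrossedmodfun}: the conformal blocks functor from the TUY construction \cite{TUY:89}, the flat projective connection with logarithmic singularities whose log-Atiyah algebra is computed by the $\Gamma$-trivial case of Theorem \ref{thm:atiyahalgebra} — this yields exactly the additive central charge $c=\frac{\ell\dim\frg}{2(\ell+h^{\vee}(\frg))}$ — and the gluing isomorphisms supplied by the factorization theorem and propagation of vacua. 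That these satisfy all the modular functor axioms is precisely the content of Bakalov--Kirillov \cite{BK:01}; the non-degeneracy axiom is the classical fact that the three-pointed genus-zero blocks $\mathbb{V}_{\lambda,\lambda^*,0}(\frg,\ell)$ are one-dimensional.

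Next, applying the genus-zero part of this functor together with Theorem \ref{thm:modfunmodcat} specialized to $\Gamma=\{1\}$ (where a $\Gamma$-crossed weakly ribbon category is simply a weakly ribbon category), I would equip $\Ccal_1$ with the structure of a braided weakly fusion category carrying a ribbon structure, whose tensor product is determined by the three-pointed genus-zero conformal blocks. This recovers the weak rigidity of \cite{BK:01}.

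The rigidity assertion is the deep step and does not follow formally from weak rigidity. Here I would invoke the theorem of Huang \cite{Huang:08a, Huang:08b} (and Finkelberg \cite{Fink1, Fink2} under restrictions on $\frg$ and $\ell$) that the braided tensor category of level-$\ell$ integrable highest-weight modules over the affine Lie algebra attached to $\frg$ is rigid. Granting rigidity, $\Ccal_1$ is a fusion category; combining this with the non-degeneracy of the braiding established above and with the spherical structure whose categorical dimensions are positive — the positivity being read off from Huang's computation of the pivotal trace via Equation 13.8.10 of \cite{Kac} — Corollary \ref{cor:crossedmodfunmodcat} (with $\Gamma$ trivial) lets me conclude that $\Ccal_1$ is a modular fusion category. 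The additive central charge of the modular functor then matches the multiplicative central charge $e^{\sqrt{-1}\pi c}$ of the resulting modular fusion category.

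The main obstacle is precisely the rigidity of $\Ccal_1$, which is why the argument cites Huang rather than attempting a proof: as emphasized in the introduction, rigidity for the untwisted Kac--Moody case is a very strong property established by vertex-algebraic methods, and every other ingredient here is the $\Gamma$-trivial shadow of constructions already carried out in Parts \ref{part:2} and \ref{part:gmodfun}.
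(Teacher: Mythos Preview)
Your proposal is correct and aligns with the paper's own treatment: the paper states Theorem \ref{thm:oldconf} as a well-known result, citing \cite{BK:01} for the modular functor structure and \cite{Huang:08a,Huang:08b} for rigidity, with the accompanying remark making explicit that ``the rigidity statement above follows from the works \cite{Huang:08a,Huang:08b} of Huang.'' You add the helpful framing that this is the $\Gamma=\{1\}$ specialization of the paper's machinery, but the substance is the same.

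One small comment: when you write ``combining this with the non-degeneracy of the braiding established above,'' be careful to distinguish the non-degeneracy axiom in Definition \ref{d:gammacrossedmodfun}(4) (nonvanishing of certain conformal blocks) from the non-degeneracy of the braiding (invertibility of the S-matrix) required in Definition \ref{def:crossedmodcat}. The latter is what makes $\Ccal_1$ modular, and it is supplied by the arbitrary-genus modular functor via \cite[\S6.7]{BK:01} as invoked in the proof of Corollary \ref{cor:crossedmodfunmodcat}, not by the genus-zero non-degeneracy axiom alone.
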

		\begin{remark}
			We remark that the rigidity statement in Theorem \ref{thm:oldconf} follows from the works \cite{Huang:08a,Huang:08b} of Huang. 
		\end{remark}
		\subsubsection{Proof of Theorem \ref{thm:oneofmain}}\label{sec:proofofoneofmain}
		We can now deduce Theorem \ref{thm:oneofmain} as the consequences of Theorems \ref{thm:modfunmodcat} and \ref{conj:twistedaffineconfblocks}. Firstly, using Theorem \ref{conj:twistedaffineconfblocks}, we see that twisted conformal blocks give a $\mathcal{C}$-extended $\Gamma$-crossed modular functor. Here $\mathcal{C}=\mathcal{C}(\frg,\Gamma,\ell)$ is a $\Gamma$-graded abelian category such that the simple objects of each $\gamma$-component $\Ccal_{\gamma}$ consists of $P^{\ell}(\frg,\gamma)$. 
		
		By Theorem \ref{thm:modfunmodcat}, the $\Gamma$ graded category $\Ccal(\frg,\Gamma,\ell)$ has a structure of a $\Gamma$-crossed weakly ribbon category satisfying Equation 
		\begin{equation}
		\operatorname{Hom}(\unit, \lambda_1\dotimes \lambda_2 \dotimes \lambda_3)=\begin{cases}
		\Vcal_{\vec{\lambda},\Gamma}(\tildeC\to \PBbb^1,\wtilde{\mathbf{p}},\mu_3,\wtilde{\mathbf{v}}), & \text{if $m_1\cdot m_2\cdot m_3=1$}\\
		0 & \text{otherwise}.
		\end{cases}
		\end{equation}

		Moreover, by Theorem \ref{thm:oldconf}, $\Ccal_1$ is rigid. Hence by Corollary \ref{cor:rigidity of graded categories} the whole of $\Ccal(\frg,\Gamma,\ell)$ must be rigid. This completes the proof of Theorem \ref{thm:oneofmain}. 
		
		\subsection{Proof of Theorem \ref{conj:main1}}
		We know that $\Ccal(\frg,\Gamma,\ell)$ is a $\Gamma$-crossed modular fusion category. In particular, we have the associated categorical crossed S-matrices. In the next section we will give a proof of Theorem  \ref{conj:twistedaffineconfblocks}. An explicit description of the associated categorical crossed S-matrices has already been given by Proposition \ref{prop:charactertableviaSmatrix}. 
		\begin{remark}\label{rem:cocyleszero}
			Let $\gamma$ be an automorphism of $\frg$. Then $\gamma$ induces an automorphism of the abstract Cartan algebra $\frh$ as well as of the {\em abstract root system} preserving positive roots. Using Lemma 2.1 in \cite{AHR:18}, we have ${ }^\gamma V_{\lambda}{\cong} V_{\gamma\cdot\lambda}$, where $V_\lambda$ is a finite dimensional irreducible $\frg$-representation of highest weight $\lambda$, and moreover if we choose a Borel $\mathfrak{b}$ fixed by $\gamma$, then we get a canonical isomorphism. If a group $\Gamma$ acts on $\frg$, then we have an induced $\Gamma$-action on the {\em abstract root system} preserving the positive roots and let $\Gamma_\lambda$ be the stabilizer of $\lambda$. If the action of $\Gamma$ fixes a Borel $\mathfrak{b}$, then for each $\gamma\in \Gamma_\lambda$, we have a canonical isomorphism $T_\gamma:{ }^\gamma V_\lambda\xoto{\cong} V_\lambda$.
			Using this we can ensure that the cocycles $\varphi_{\rho}$ or $\varphi_D$ in the statements of Theorem \ref{thm:twisted verlinde}, Corollary \ref{cor:verlindeforcrossedmodfun} are trivial for twisted conformal blocks (see also \cite[Rem. 2.17(2)]{AHR:18}) if $\Gamma$ preserves a Borel subalgebra. Hence we obtain:
		\end{remark}
		\begin{corollary}\label{cor:main1}
			The Verlinde formula (Theorem \ref{conj:main1}) holds for twisted conformal blocks associated with $\Gamma$-twisted affine Lie algebras with the crossed S-matrices given by Proposition \ref{prop:charactertableviaSmatrix}.
		\end{corollary}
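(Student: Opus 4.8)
The plan is to assemble the corollary from the structural results established earlier, the essential input being that twisted conformal blocks organize into a \emph{rigid} $\Gamma$-crossed modular functor. First I would invoke Theorem \ref{conj:twistedaffineconfblocks}, which realizes the sheaves of twisted covacua $\mathbb{V}_{\vec{\lambda},\Gamma}$ as a $\Ccal$-extended $\Gamma$-crossed complex analytic modular functor $\Vcal$ of additive central charge $c=\frac{\ell\dim\frg}{2(\ell+h^{\vee}(\frg))}$, where $\Ccal=\Ccal(\frg,\Gamma,\ell)$ is the $\Gamma$-crossed abelian category whose $\gamma$-component has simple objects parameterized by $P^{\ell}(\frg,\gamma)$. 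By Theorem \ref{thm:oldconf} (i.e.\ the rigidity established by Huang) the neutral component $\Ccal_1$, namely untwisted level-$\ell$ conformal blocks, is a modular fusion category; hence the hypothesis of Corollary \ref{cor:verlindeforcrossedmodfun} is met and, through Corollary \ref{cor:crossedmodfunmodcat}, the whole of $\Ccal$ is a $\Gamma$-crossed modular fusion category.

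Next I would match the geometric data. A homomorphism $\chi:\pi_1(C\setminus\mathbf{p},\star)\onto\Gamma^\circ$ with $\chi(\alpha_j)=a_j$, $\chi(\beta_j)=b_j$, $\chi(\gamma_i)=m_i$ determines, as in Sections \ref{sec:arbgenusdescription} and \ref{sec:ret}, an admissible $\Gamma$-cover whose monodromy data is precisely the $(a_j,b_j,m_i)$ appearing in the Verlinde formula, with image $\Gamma^\circ$ the subgroup they generate. The rank of $\mathbb{V}_{\vec{\lambda},\Gamma}$ at such a cover equals $\dim\Vcal_{\mathbf{M}}(\tildeC\to C,\wtilde{\mathbf{p}},\wtilde{\mathbf{v}})$, and applying the Verlinde formula for rigid $\Gamma$-crossed modular functors (Corollary \ref{cor:verlindeforcrossedmodfun}, which in turn rests on Theorem \ref{thm:modfunmodcatarbgenus} and the categorical formula Corollary \ref{cor:highergenus}) gives
\[
\operatorname{rank}\mathbb{V}_{\vec{\lambda},\Gamma}=\sum_{D\in P_1^{\Gamma^\circ}}\frac{\big(1/S_{\unit,D}\big)^{n+2g-2}\,S^{m_1}_{\lambda_1,D}\cdots S^{m_n}_{\lambda_n,D}}{\varphi_D(a_1,b_1,a_1^{-1},b_1^{-1},\dots,m_1,\dots,m_n)},
\]
where the $S^{m_i}$ are the categorical crossed S-matrices of $\Ccal$ and we identify $P_1^{\Gamma^\circ}=P_\ell(\frg)^{\Gamma^\circ}$, so that $D=\mu$ and $S_{\unit,D}=S_{0,\mu}$.

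To finish I would remove the two discrepancies between this expression and the statement of Theorem \ref{conj:main1}. First, the cocycles $\varphi_D$ are trivial under the standing hypothesis: as explained in Remark \ref{rem:cocyleszero}, when $\Gamma$ preserves a Borel subalgebra the canonical isomorphisms $T_\gamma:{}^{\gamma}V_\lambda\xrightarrow{\cong}V_\lambda$ for $\gamma\in\Gamma_\lambda$ furnish a compatible system of choices for the twisted characters, forcing $\varphi_D\equiv 1$; this collapses the denominator to $(S_{0,\mu})^{n+2g-2}$ and accounts for the dependence only on $\Gamma^\circ$. Second, I would replace the abstractly defined categorical crossed S-matrices by their explicit Kac-Moody incarnations: Proposition \ref{prop:charactertableviaSmatrix} identifies $S^{m_i}$ with the normalized character table $\Sigma'$ of the twisted fusion ring $\Rcal_\ell(\frg,\sigma_i)$ up to the sign $\sign(m_i)$, and by Corollary \ref{cor:signinverlinde} these signs cancel in the product since $\sign(m_1)\cdots\sign(m_n)=1$. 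After these substitutions the formula is exactly that of Theorem \ref{conj:main1}.

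The main obstacle lies not in this final assembly but upstream in Theorem \ref{conj:twistedaffineconfblocks}, where one must verify that twisted conformal blocks genuinely satisfy all the axioms of a $\Gamma$-crossed modular functor, in particular constructing the gluing functors compatibly with Verdier specialization and factorization over the boundary of $\GMod$. Within the corollary itself the most delicate point is the vanishing of the cocycles $\varphi_D$, since it is precisely this that yields the clean denominator and the $\Gamma^\circ$-dependence, and it relies essentially on the assumption that $\Gamma$ preserves a Borel subalgebra.
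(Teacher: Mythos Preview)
Your proof is correct and follows essentially the same route as the paper: invoke Theorem \ref{conj:twistedaffineconfblocks} to get the $\Gamma$-crossed modular functor, use Huang's rigidity (Theorem \ref{thm:oldconf}) for $\Ccal_1$, apply Corollary \ref{cor:verlindeforcrossedmodfun}, kill the cocycles via Remark \ref{rem:cocyleszero}, and identify the crossed S-matrices through Proposition \ref{prop:charactertableviaSmatrix}. Your account is in fact more explicit than the paper's in two respects --- you spell out the geometric matching of the holonomy data with the cover and you invoke Corollary \ref{cor:signinverlinde} to handle the sign ambiguity in Proposition \ref{prop:charactertableviaSmatrix} --- both of which the paper's short proof leaves implicit.
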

		\begin{proof}
			By Theorem \ref{conj:twistedaffineconfblocks} the twisted conformal blocks associated with $\Gamma$-twisted affine Lie algebras define a $\Ccal$-extended $\Gamma$-crossed modular functor in arbitrary genus. Moreover by the  untwisted result Theorem \ref{thm:oldconf}, the neutral component $\Ccal_1$ is rigid. Hence the hypothesis of Corollary \ref{cor:verlindeforcrossedmodfun} hold for this $\Gamma$-crossed modular functor and hence $\Ccal$ is a $\Gamma$-crossed modular fusion category. Moreover we have seen in the Remark \ref{rem:cocyleszero} that the cocycles that appear in Corollary \ref{cor:verlindeforcrossedmodfun} in this case are all trivial. Hence Corollary \ref{cor:verlindeforcrossedmodfun} Equation (\ref{eq:verlindeforcrossedmodfun}) applied to this case gives us the Verlinde formula Theorem \ref{conj:main1}. The associated categorical crossed S-matrices have already been computed in Proposition \ref{prop:charactertableviaSmatrix}.
			%
			%
		\end{proof}

		\section{Proof of Theorem \ref{conj:twistedaffineconfblocks}}\label{sec:proofofconfgammafunc} In this section, we prove that the twisted conformal blocks, under the assumption that ``$\Gamma$ preserves a Borel subalgebra of $\frg$" satisfy the axioms of a $\mathcal{C}$-extended $\Gamma$-crossed modular functors. Our proof of Theorem \ref{conj:twistedaffineconfblocks} follows the 
		same line of proofs as in Chapter 7 of \cite{BK:01}. We construct the morphism of 
		functors $G_{f,\gamma}$ (see Definition \ref{d:gammacrossedmodfun}) in the case of twisted conformal blocks. 
		\subsection{Verdier Specialization}\label{sec:specializationdeverdier} Let $S$ be a complex manifold and $D$ be a smooth divisor in $S$ with ideal sheaf $\mathcal{I}_D$. Let $\operatorname{Conn}^{\reg}(S,D)$ be the category of integrable connections on $S\backslash D$ with logarithmic singularities along $D$. The category $\operatorname{Conn}^{\reg}(S,D)$ is a subcategory of $\operatorname{Hol}(S)$ of holonomic $\mathcal{D}_S$-modules on $S$.

		Let $j: S\backslash D \hookrightarrow S$ be the inclusion, then any object of $\operatorname{Conn}^{\reg}(S,D)$ is given by a locally free $\mathcal{O}_S$-module $\mathcal{F}$ of finite rank along with an action of $\mathcal{D}_S^0$, where $\mathcal{D}_S^0$ is the ring of differential operators on $S$ that preserve the ideal $\mathcal{I}_D$. The corresponding holonomic $\mathcal{D}_S$-module is $j_*\mathcal{O}_{S\backslash D}\otimes_{\mathcal{O}_S}\mathcal{F}$. Moreover, by Deligne's Riemann-Hilbert correspondence \cite{Deligne1}, we get an equivalence between $\operatorname{Conn}^{\reg}(S,D)$ and the category $\operatorname{Loc}(S\setminus D)$ of local systems on $S\setminus D$.

		Further, let $ND$ be the normal bundle of the smooth divisor $D$ in $S$, the Verdier specialization \cite{Ver1,Ver2} functor $\widetilde{Sp}_D: \operatorname{Hol}(S)\rightarrow \operatorname{Hol}(ND)$, restricts to a functor 
		\begin{equation}\label{eqn:special}
			\widetilde{\operatorname{Sp}}_D:\operatorname{Conn}^{\reg}(S,D)\rightarrow \operatorname{Conn}^{\reg}(ND,D).\end{equation}
		
		We have the following diagram: 
		$$
		\xymatrix{
			\operatorname{Conn}^{\reg}(S,D)\ar[r]^{\widetilde{Sp}_D} &\operatorname{Conn}^{\reg}(ND,D)
			\\
			\operatorname{Loc}(S\backslash D) \ar[r]\ar[u] &  \operatorname{Loc}(ND\backslash D),\ar[u]\\}
		$$ where the vertical arrows  are equivalence of categories given by Deligne's Riemann-Hilbert correspondence. The horizontal arrow on the bottom is given by restrictions of representations of the fundamental group obtained by applying the tubular neighborhood theorem.  The functor $\widetilde{\operatorname{Sp}}_D$ defined in Equation \eqref{eqn:special} can be described as follows. 
		
		The structure sheaf of the normal bundle $\mathcal{O}_{ND}$ is isomorphic to ${\bigoplus}_{m\geq 0}\mathcal{I}^m/\mathcal{I}^{m+1}$ and $\mathcal{D}_{ND}^{0}={\bigoplus}_{m\geq 0}\mathcal{I}^m\mathcal{D}_{S}^{0}/\mathcal{I}^{m+1}\mathcal{D}^0_S$. Now given a $\mathcal{D}_S^0$-module $\mathcal{F}$, the module 
		\begin{equation}\label{eqn:specializationformula}
		\widetilde{\operatorname{Sp}}_D(\mathcal{F}):={\bigoplus}_{m\geq 0}\mathcal{I}^m\mathcal{F}/\mathcal{I}^{m+1}\mathcal{F}.
		\end{equation}
		is naturally a $\mathcal{O}_{ND}$-module as well as a $\mathcal{D}^0_{ND}$-module. This (Equation \eqref{eqn:specializationformula}) definition of specialization functor is just the graded sum of the $V$-filtration on the $\mathcal{D}_S^0$-module $\mathcal{F}$. The notion of $V$-filtration is due to Kashiwara \cite{Kashiwara} and Malgrange \cite{Malgrange} and the fact that Equation \eqref{eqn:specializationformula} gives the Verdier specialization follows generalizations of the definition of the nearby cycles functor using the $V$-filtration. We refer the reader to  \cite[Theorem 4.7.8.5]{Ginzburg} and \cite[Section 3]{Saito}.

		Let ${\Lambda}$ be a line bundle on $S$ as before and let $D$ be a smooth divisor. Let $c$ be a rational number and consider the category $\mathscr{D}_{\Lambda,c}\operatorname{Mod}(S)$ of locally free sheaves of $\mathcal{O}_S$-modules with an action of $\mathcal{A}_{c\Lambda}(-\log D)$. Formula \eqref{eqn:specializationformula} gives a functor:
		$$\widetilde{\operatorname{Sp}}_D: \mathscr{D}_{\Lambda,c}\operatorname{Mod}(S)\rightarrow \mathscr{D}_{p^*\Lambda,c}\operatorname{Mod}(ND),$$ where $p: ND\rightarrow S$ factors through the natural projection.

		\subsection{Gluing functor along a smooth divisor}\label{sec:gluingfunckirillov}
		Let $(\widetilde{C},C, \widetilde{\bf {p}},{\bf p},\widetilde{\bf{v}} )$ be a family of $n$-pointed $\Gamma$-covers with chosen non-zero tangent vectors 
		at the 
		marked points in $\GModtilde(\bf{m})$ parameterized by a smooth variety $S$.  Consider a smooth divisor $D$ in $S$ such that $C\rightarrow S$ 
		restricted to $D$ is a family of stable $n$-pointed curves with exactly one node. We consider the variety 
		$$\widetilde{N}D=\{(d,\widetilde{v}',\widetilde{v}'') | \ d \in D, \widetilde{v}'\in T'_a\widetilde{C}_d, \widetilde{v}'' \in T''_a \widetilde{C}_d\},$$ where $\widetilde{C}_d$ is 
		the fiber of $\pi: \widetilde{C} \rightarrow S$ at a point $d \in D$, $a$ is any chosen point of $\widetilde{C}_d$ which lifts the  node in $C_d$ and $T'_a\widetilde{C}_d$ (respectively $T''_a\widetilde{C}$) is the 
		tangent space to $\widetilde{C}_d$ at $a$ along the two components. By Equation \eqref{eqn:decompositionofnormalbundle}, the normal bundle to the divisor $ND$ has fiber $T'_a \widetilde{C}_d\otimes T''_a\widetilde{C}_d$. Hence there is a natural map $ \widetilde{N}D\rightarrow ND$ that sends the tuple $(d,\widetilde{v}',\widetilde{v}'')\rightarrow (d,\widetilde{v}'\otimes \widetilde{v}'')$. Now let $FND$ be 
		the frame bundle of $ND$ that preserves the decomposition as in Equation \eqref{eqn:decompositionofnormalbundle} and let $\widetilde{N}^{\times}D$ be the variety 
		$$\widetilde{N}^{\times}D:=\{(d,\widetilde{v}',\widetilde{v}'')\in \widetilde{N}D| \ \widetilde{v}'\neq 0 \ \mbox{and} \ \widetilde{v}''\neq 0\}.$$  The natural map $\widetilde{N}D \rightarrow ND$ restricted to $ \widetilde{N}^{\times}D\rightarrow FND$ is a $\mathbb{G}_m$-torsor. From Diagram \eqref{eq:clutching for normal bundle}, we get the following:
		$$
		\xymatrix{
			\widetilde{N}^{\times}D\ar[r]& FND \ar[d]^{\mathbb{G}_m-torsor} & \\
			&D\ar[r]&S
		}
		$$
		Given the family $\widetilde{C}\rightarrow S$ restricted to $D$, we associate a natural family $\widetilde{C}_N\rightarrow \widetilde{N}^{\times}D$ of $n+1$-pointed 
		$\Gamma$ covers in $\GModtilde(\bf{m},\gamma, \gamma^{-1})$ parameterized by $\widetilde{N}^{\times}D$. The fiber over a point $(d,\widetilde{v}',\widetilde{v}'')\in \widetilde{N}^{\times}D$, 
		is the $(n+2)$-pointed curve $\widetilde{C}_{N,d}$ obtained by normalizing the curve $\widetilde{C}_d$, two more marked points $\widetilde{q}_1$ and 
		$\widetilde{q}_2$ (with image $q_1$ and $q_2$ in $C_N=\widetilde{C}_N/\Gamma$) which is in the preimage of the nodal point $a$ and $\widetilde{v}'$ 
		(respectively $\widetilde{v}''$) are non-zero tangent vectors at $q_1$ (respectively $q_2)$ to the curve $\widetilde{C}_{N,d}$.
		To the data $(\widetilde{C}_N,C_N, {\widetilde{\bf{p}}},\widetilde{q}_1,\widetilde{q}_2, {\bf{p}}, q_1,q_2, \widetilde{{\bf{v}}},\widetilde{v}',\widetilde{v}'')$, 
		we associate the vector bundle of twisted covacua:
		\begin{equation}
		\label{eqn:vectorbundleonnormalization}
		\mathcal{V}_{\vec{\lambda},\Gamma}(\widetilde{N}^{\times}D):=\bigoplus_{\mu \in P^{\ell}(\frg,\gamma)}\mathcal{V}_{\vec{\lambda}\cup\{\mu,\mu^*\},\Gamma}(\widetilde{C}_N,C_N, {\widetilde{\bf{p}}},\widetilde{q}_1,\widetilde{q}_2, {\bf{p}}, q_1,q_2, \widetilde{{\bf{v}}},\widetilde{v}',\widetilde{v}'').
		\end{equation} By the constructions in Section \ref{sec:twistedconnectiontwistedKZ}, the vector bundle in Equation \eqref{eqn:vectorbundleonnormalization} is also a twisted $\mathcal{D}$-module on $\widetilde{N}^{\times}D$. 
		Now to define the {\em gluing functor} (see Definition \ref{d:gammacrossedmodfun} ), it is enough to construct an isomorphism of twisted $\mathcal{D}_{\widetilde{N}^{\times}D}$-modules 
		$G_{\gamma}:\operatorname{Sp}_{D}(\mathcal{V}_{\vec{\lambda},\Gamma}(\widetilde{C},C,\widetilde{{\bf p}},{\bf{p}},\widetilde{\bf{v}})\simeq \mathcal{V}_{\vec{\lambda},\Gamma}(\widetilde{N}^{\times}D), $
		where $\operatorname{Sp}_{D}$ is the functor defined in Section \ref{ss:specalongclutch} and Equation \eqref{eqn:specializationdeverdier}.
		
		By the same argument as in Chapter 7.8 in \cite{BK:01}, we get that $\mathcal{V}_{\vec{\lambda},\Gamma}(\widetilde{N}^{\times }D)$ is $\mathbb{C}^{\times}$-equivariant and monodromic and hence descends to a vector bundle (also denoted by $\mathcal{V}_{\vec{\lambda},\Gamma}(N^{\times }D))$ on $ND$ with a projective action of the Lie algebra $\mathcal{D}^{0}_{ND}$-differential operators on $ND$ that preserve the zero section. Also by construction and discussion in Section \ref{sec:specializationdeverdier}, we get   $\operatorname{Sp}_D$ is the pullback from $N^{\times}D$, of the functor $\widetilde{\operatorname{Sp}}_D$. Hence it is enough to show that there exists isomorphisms. 
		$\widetilde{G}_{\gamma}: \widetilde{\operatorname{Sp}}_{D}(\mathcal{V}_{\vec{\lambda},\Gamma}(\widetilde{C},C,\widetilde{{\bf p}},{\bf{p}},\widetilde{\bf{v}})\simeq \mathcal{V}_{\vec{\lambda},\Gamma}({N}^{\times}D).$
		
		\subsection{Families over a formal base}As before let $D$ be a smooth divisor in a smooth scheme $S$ and let $\tau$ be a local equation of $D$ in $S$. 
		The $m$-th infinitesimal neighborhood $D^{(m)}$ has structure sheaf $\mathcal{O}_S/\tau^{m+1}\mathcal{O}_S$.  
		
		Given a family of nodal $\Gamma$ covers as in Section \ref{sec:gluingfunckirillov}, we choose formal parameters at the marked points and get a family  
		$(\widetilde{C},C, \widetilde{\bf p},{\bf{p}},\widetilde{\bf z})$ of $\Gamma$-covers  in $\GModhat({\bf m})$ parameterized by a smooth scheme $S$. 
		We consider the family $(\widetilde{C}_N, C_N, \widetilde{\bf{p}}', {\bf p}', \widetilde{{\bf z}}')$ 
		parameterized by $D$ obtained by equivariant  normalization of $\widetilde{C}\rightarrow S$ restricted to $D$. Let $\eta_1$ and $\eta_2$ be two chosen coordinates on the two 
		components of the family $\widetilde{C}\rightarrow S$ in a neighborhood of a double point $a(d)$ so that $\eta_1\eta_2=\tau$ and moreover, the double point is given by the equation 
		$\eta_1=0$ and $\eta_2=0$.  Since the group $\Gamma$ acts transitively on $\widetilde{C}$ and by assumption $C$ has only one node, the other double points of $\widetilde{C}$ are in 
		the orbit $\Gamma. a$. 
		
		On the normalization $\widetilde{C}_N$, both $\eta_1$ and $\eta_2$ are formal coordinates of the inverse image $\widetilde{q}_1(d)$ and $\widetilde{q}_2(d)$ of $a(d)$. By the $\Gamma$-action 
		we get formal coordinates in the full inverse image of the $\Gamma$ orbit of $a(d)$. Hence we get a family of $\Gamma$-covers in $\GModhat({\bf m}, \gamma,\gamma^{-1})$. In this section, 
		following the proof of Theorem  7.8.5 in \cite{BK:01}, we extend the family $(\widetilde{C},C, \widetilde{\bf p},{\bf{p}},\widetilde{\bf z})$ to a family  $\widetilde{C}_{N,D^{(m)}}\rightarrow D^{(m)}$ 
		of $(n+2)$-pointed $\Gamma$-covers in $\GModhat({\bf{m}, \gamma,\gamma^{-1}})$ parameterized by the $m$-th infinitesimal neighborhood $D^{(m)}$ of $D$.
		\subsubsection{Family of $\Gamma$-covers over $m$-th infinitesimal neighborhood} We need to first define a sheaf of algebras $\mathcal{O}_{\widetilde{C}_{N,D}}^{(m)}$ over $\widetilde{C}_N$ 
		with the structure of a flat $\mathcal{O}_{D^{(m)}}$-module. Let $U=\widetilde{C}_N\backslash \Gamma.\widetilde{q}_1(D)\cup \Gamma.\widetilde{q}_2(D)$. Since $\widetilde{C}_N$ is the normalization 
		of $\widetilde{C}$, we get that $U$ is also equal to $\widetilde{C}\backslash \Gamma.a(D)$. We define  ${\mathcal{O}_{\widetilde{C}_{N,D}}^{(m)}}_{|U}:={\mathcal{O}_{\widetilde{C}_D}^{(m)}}_{|U}$,
		where $\widetilde{C}_D$ is the restriction of the family $C\rightarrow S $ to $D$ and $\mathcal{O}_{\widetilde{C}_D}^{(m)}$ is the flat $\mathcal{O}_{D^{(m)}}$-module given 
		by $\mathcal{O}_{\widetilde{C}}/\tau^{m+1}\mathcal{O}_{\widetilde{C}}$. We extend the sheaf of $\mathcal{O}_{\widetilde{C}_{N,D}}^{(m)}$ across $\Gamma. \widetilde{q}_1$ and $\Gamma.\widetilde{q}_2$ 
		by defining the stalks at $\widetilde{q}_1$ and $\widetilde{q}_2$ and 
		extending it to the 
		orbits 
		of $\widetilde{q}_1$ and $\widetilde{q}_2$  via the $\Gamma$ action. 
		
		As in \cite{BK:01}, let $\mathcal{O}_0(\eta_1)$ (respectively $\eta_2$) be the ring of germs of analytic functions in $\eta_1$ 
		(respectively $\eta_2$) in  a 
		neighborhood of $\eta_1=0$ (respectively $\eta_2=0$). We define the stalks at $\widetilde{q}_1$ and $\widetilde{q}_2$ to be $\mathcal{O}_{\widetilde{C}_{N,D},\widetilde{q}_i}^{(m)}:=\mathcal{O}_0(\eta_i)\otimes \mathcal{O}_D^{(m)}$ for $1\leq i\leq 2$. For each $\widetilde{q}_i$ in the $\Gamma$-orbits of the points given by normalization, the map $\eta_i \rightarrow \eta_i$ and $\tau\rightarrow \eta_1\eta_2$ 
		gives a 
		map from $\mathcal{O}_{\widetilde{C}_N,D,\widetilde{q}_i}$ to $\mathcal{O}_{\widetilde{C}_N,D}^{(m)}(V_i)$ where $V_i\subset U$ is a punctured neighborhood  of $\widetilde{q}_i$. Hence we 
		define the sheaf $\mathcal{O}_{\widetilde{C}_{N,D}}^{(m)}$ as the gluing given by the morphism $\tau \rightarrow \eta_1\eta_2$. The action of $\Gamma$ is stable around the nodes and the local picture at a node $a$ 
		is given by the following:
		$$\operatorname{Spec}(A[x,y]/(xy=\tau))\xrightarrow{f} \operatorname{Spec}(A[x',y']/(x'y'=\tau^N))\rightarrow \operatorname{Spec}A,$$
		where $f^*x'=x$, $f^*y'=y$ and $\tau\in A$. 
		
		Moreover, if $\gamma$ is a generator of $\Gamma_a$, then $\gamma$ acts on $x$ (respectively $y$ ) by $\zeta$ (respectively $\zeta^{-1}$),
		where $\zeta$ is a chosen $N$-th root of unity and the stabilizer of the nodes is cyclic of order $N$. From this it directly follows that the gluing morphism is $\Gamma$-equivariant and hence $\mathcal{O}_{\widetilde{C}_{N,D}}^{(m)}$ defines 
		a family of curves with $\Gamma$-action. The family $\widetilde{C}_{N, D^{(m)}}$ defines a family of $(n+2)$-pointed $\Gamma$ covers where the section $\widetilde{{\bf p}}$ extend to sections of $\widetilde{C}_{N,D^{(m)}}$ trivially as $\widetilde{C}_{N,D^{(m)}}$ coincides with $\mathcal{O}^{(m)}_{\widetilde{C}_D}$ away from the locus of $\Gamma. \widetilde{q}_1 \cup \Gamma.\widetilde{q}_2$ along with formal parameters $\widetilde{\eta}$. 
		Moreover, $\widetilde{q}_1$ and $\widetilde{q}_2$ provide two additional sections with formal 
		parameters $\eta_1$ and $\eta_2$. 
		\subsection{Sheaf of covacua over a formal base}Let $\mathcal{F}$ be a $\mathcal{O}_S$ quasicoherent sheaf on $S$, then $\mathcal{F}^{(m)}:=\mathcal{F}/\tau^{m+1}\mathcal{F}$ gives 
		a sheaf over $D^{(m)}$. If $\mathcal{F}$ is coherent, then $\mathcal{F}^{(m)}$ is also coherent (Lemma 7.6.2 in \cite{BK:01}).  Let $(\widetilde{C}, C, \widetilde{{\bf p}}, {\bf p}, \widetilde{{\bf z}})$ be a family of $\Gamma$ covers, we 
		can associate the following locally free sheaf of covacua on $S$ of finite rank 
		$$\mathcal{V}_{\vec{\lambda},\Gamma}(\widetilde{C}, C, \widetilde{{\bf p}}, {\bf p}, \widetilde{{\bf z}}):=\mathcal{H}_{\vec{\lambda}}\otimes \mathcal{O}_S/
		(\frg\otimes \mathcal{O}_{\widetilde{C}}(\ast \Gamma.\widetilde{\bf{p}})^{\Gamma})\cdot\mathcal{H}_{\vec{\lambda}}\otimes \mathcal{O}_S.$$ 
		We denote by $\mathcal{V}^{(m)}_{\vec{\lambda},\Gamma}(\widetilde{C}, C, \widetilde{{\bf p}}, {\bf p}, \widetilde{{\bf z}})$, the coherent sheaf over $D^{(m)}$. By Lemma 7.6.2 in \cite{BK:01}, 
		we get the following lemma:
		\begin{lemma}\label{lemma:conformalformal}The coherent sheaf
			$\mathcal{V}^{(m)}_{\vec{\lambda},\Gamma}(\widetilde{C}, C, \widetilde{{\bf p}}, {\bf p}, \widetilde{{\bf z}})$ is isomorphic to 
			the quotient 
			of $\mathcal{H}_{\vec{\lambda}}\otimes \mathcal{O}_{D^{(m)}}$ by the sheaf $(\frg\otimes \mathcal{O}_{\widetilde{C}_{D}}^{(m)}(\ast \Gamma .\widetilde{\bf p})^{\Gamma}).\mathcal{H}_{\vec{\lambda}}\otimes \mathcal{O}_{D^{(m)}}$. \end{lemma}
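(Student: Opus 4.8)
The plan is to prove Lemma \ref{lemma:conformalformal} by reducing the statement over the $m$-th infinitesimal neighborhood $D^{(m)}$ to the corresponding statement over $S$, exactly mirroring the proof strategy of Lemma 7.6.2 in \cite{BK:01}, while carefully accounting for the $\Gamma$-invariance conditions that appear throughout our twisted setting. Recall that the sheaf of covacua over $S$ is defined as the quotient of $\mathcal{H}_{\vec{\lambda}}\otimes \mathcal{O}_S$ by the action of $(\frg\otimes \mathcal{O}_{\widetilde{C}}(\ast \Gamma\cdot\widetilde{\bf{p}}))^{\Gamma}$, and that by definition $\mathcal{V}^{(m)}_{\vec{\lambda},\Gamma}$ is obtained by tensoring this quotient with $\mathcal{O}_{D^{(m)}}=\mathcal{O}_S/\tau^{m+1}\mathcal{O}_S$. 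The key point is that tensoring with $\mathcal{O}_{D^{(m)}}$ is right exact, so it commutes with taking the quotient by the gauge action.

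First I would write out the defining right-exact sequence of $\mathcal{O}_S$-modules
$$\frg\otimes \big(\mathcal{O}_{\widetilde{C}}(\ast \Gamma\cdot\widetilde{\bf{p}})\big)^{\Gamma}\otimes \mathcal{H}_{\vec{\lambda}}\otimes \mathcal{O}_S \lra \mathcal{H}_{\vec{\lambda}}\otimes \mathcal{O}_S \lra \mathcal{V}_{\vec{\lambda},\Gamma} \lra 0,$$
and then apply the right-exact functor $(-)\otimes_{\mathcal{O}_S}\mathcal{O}_{D^{(m)}}$, which simply amounts to reducing modulo $\tau^{m+1}$. This immediately identifies $\mathcal{V}^{(m)}_{\vec{\lambda},\Gamma}$ with the quotient of $\mathcal{H}_{\vec{\lambda}}\otimes \mathcal{O}_{D^{(m)}}$ by the image of the reduced gauge action. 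The remaining task is then to identify this reduced image with the subsheaf generated by $\big(\frg\otimes \mathcal{O}^{(m)}_{\widetilde{C}_{D}}(\ast \Gamma\cdot\widetilde{\bf{p}})\big)^{\Gamma}$. This requires comparing $\big(\mathcal{O}_{\widetilde{C}}(\ast\Gamma\cdot\widetilde{\bf{p}})\big)^{\Gamma}\otimes_{\mathcal{O}_S}\mathcal{O}_{D^{(m)}}$ with $\big(\mathcal{O}^{(m)}_{\widetilde{C}_D}(\ast\Gamma\cdot\widetilde{\bf{p}})\big)^{\Gamma}$, where $\mathcal{O}^{(m)}_{\widetilde{C}_D}$ is the structure sheaf we constructed over $D^{(m)}$ by gluing in the formal coordinates $\eta_1,\eta_2$ at the (orbit of the) node.

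The main obstacle, and the step requiring genuine care, is this last comparison: showing that reduction modulo $\tau^{m+1}$ commutes with taking $\Gamma$-invariants of the relevant function sheaves, and that the glued structure sheaf $\mathcal{O}^{(m)}_{\widetilde{C}_D}$ indeed recovers $\mathcal{O}_{\widetilde{C}}/\tau^{m+1}\mathcal{O}_{\widetilde{C}}$ away from the node orbit while also providing the correct flat $\mathcal{O}_{D^{(m)}}$-module structure at the node. Here I would use the local description of the $\Gamma$-action around the node, namely $\operatorname{Spec}(A[x,y]/(xy=\tau))$ with the generator of the cyclic stabilizer $\Gamma_a$ acting by $x\mapsto \zeta x$, $y\mapsto \zeta^{-1}y$. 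Since the $\Gamma$-action on the local rings is by a finite group and we are working over a field of characteristic zero, taking $\Gamma$-invariants is exact and hence commutes with the right-exact reduction modulo $\tau^{m+1}$. Combined with the flatness of $\mathcal{O}^{(m)}_{\widetilde{C}_D}$ over $\mathcal{O}_{D^{(m)}}$ established in the construction, this yields the desired identification and completes the proof.

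Finally I would note that the $\Gamma$-equivariance of the gluing morphism, already verified in the construction of $\mathcal{O}^{(m)}_{\widetilde{C}_{N,D}}$ via the local picture at the node, ensures that the gauge subsheaf built from $\Gamma$-invariant meromorphic functions on the formal family matches the reduction of the gauge subsheaf on $S$. Since all the constructions are natural in the family, the resulting isomorphism is canonical, which is precisely what is needed for it to be compatible with the specialization functor $\widetilde{\operatorname{Sp}}_D$ in the subsequent construction of the gluing functor $\widetilde{G}_\gamma$.
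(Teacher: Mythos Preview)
Your core argument --- applying the right-exact functor $(-)\otimes_{\mathcal{O}_S}\mathcal{O}_{D^{(m)}}$ to the defining presentation of covacua, together with exactness of $\Gamma$-invariants in characteristic zero --- is correct and is exactly what the paper intends by citing Lemma~7.6.2 of \cite{BK:01}.

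However, you have conflated two different sheaves. The sheaf $\mathcal{O}^{(m)}_{\widetilde{C}_D}$ appearing in the statement is \emph{by definition} just $\mathcal{O}_{\widetilde{C}}/\tau^{m+1}\mathcal{O}_{\widetilde{C}}$, the reduction of the nodal family over $S$. The glued sheaf ``constructed over $D^{(m)}$ by gluing in the formal coordinates $\eta_1,\eta_2$'' is a \emph{different} object, namely $\mathcal{O}^{(m)}_{\widetilde{C}_{N,D}}$, which lives on the \emph{normalization} $\widetilde{C}_N$. That construction, the local picture at the node, and the $\Gamma$-equivariance of the gluing morphism are all relevant only for the subsequent comparison with covacua on the normalized family (Proposition~\ref{prop:importantflateness}), not for this lemma. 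So your ``main obstacle'' paragraph and your final paragraph are addressing the wrong sheaf and can be dropped entirely: once you know $\mathcal{O}^{(m)}_{\widetilde{C}_D}$ is literally the reduction, the identification of gauge subsheaves reduces to the surjectivity of
\[
\big(\frg\otimes \mathcal{O}_{\widetilde{C}}(\ast\Gamma\cdot\widetilde{\bf{p}})\big)^{\Gamma}\otimes_{\mathcal{O}_S}\mathcal{O}_{D^{(m)}}\longrightarrow \big(\frg\otimes \mathcal{O}^{(m)}_{\widetilde{C}_D}(\ast\Gamma\cdot\widetilde{\bf{p}})\big)^{\Gamma},
\]
which follows immediately from exactness of $(-)^\Gamma$, as you already noted.
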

		
		Thus given an $n$-tuple $\vec{\lambda}$ of level $\ell$ weights, and a family of curves over $D^{(m)}$, we can use Lemma \ref{lemma:conformalformal} to define 
		(see also Equation 7.6.3 in \cite{BK:01}) the sheaf of covacua $\mathcal{V}_{\vec{\lambda},\Gamma}(\widetilde{C}_{N,D^{(m)}}, \widetilde{{\bf p}}', {\bf p}', \widetilde{\bf z}')$ on a $m$-th 
		infinitesimal 
		neighborhood of a divisor $D$. 
		\subsubsection{Twisted $\mathcal{D}$-module structure over a formal base}Let as before $S$ be a smooth scheme and $D$ be a smooth divisor in $S$ and let $\mathcal{I}$ 
		be the ideal sheaf of $D$ in $S$. Let $\mathcal{D}_S$ be the sheaf of first order differential operators on $S$ and let $\mathcal{D}_S^0$. For each positive integer $m$ 
		consider 
		the sheaf $\mathcal{D}^0_{D^{(m)}}:=\mathcal{D}_S^0/\mathcal{I}^{m+1}\mathcal{D}^0_S$. Similarly consider the sheaf $\mathcal{D}^{(0)}_S:=\mathcal{D}^0_S/\mathcal{I}\mathcal{D}^0_S$ of $\mathcal{O}_D$-modules. By Proposition 7.8.4 (iii) in \cite{BK:01}, the sheaf $\mathcal{D}^{0}_S$ is canonically isomorphic to the sheaf $\mathcal{D}_{ND}^{(0)}:=\mathcal{D}^0_{ND}/\mathcal{I}\mathcal{D}^0_{ND}$, where $\mathcal{D}^0_{ND}$ is the sheaf of differential operators on $ND$ that preserve the ideal $\mathcal{I}$ and $\mathcal{I}$ is the ideal sheaf of the divisor $D$ in $ND$.

		Let $\mathcal{F}$ be a sheaf of $\mathcal{O}_S$-module and let $\operatorname{Gr}^m(\mathcal{F}):=\mathcal{I}^{m}\mathcal{F}/\mathcal{I}^{m+1}\mathcal{F}$, then $\operatorname{Gr}^m(\mathcal{F})$ are $\mathcal{D}^{(0)}_S$-modules. Moreover, we can define 
		\begin{equation}
		\widetilde{\operatorname{F}}^m(\widetilde{\operatorname{Sp}}_D(\mathcal{F})):= \mathcal{I}^m\widetilde{\operatorname{Sp}}_D(\mathcal{F})/\mathcal{I}^{m+1}\widetilde{\operatorname{Sp}}_D(\mathcal{F})=\operatorname{G}^m(\mathcal{F}).
		\end{equation}

	\subsection{Twisted sewing functor}Let $\mu \in P^{\ell}(\frg,\gamma)$ and $\mu^* \in P^{\ell}(\frg,\gamma^{-1})$ and let $\mathcal{H}_{\mu}(\frg,\gamma)$ and
	$\mathcal{H}_{\mu^*}(\frg,\gamma^{-1})$ be the corresponding highest weights integrable 
	modules for the twisted affine Lie algebra $\widehat{L}(\frg,\gamma)$ and $\widehat{L}(\frg,\gamma^{-1})$ respectively. Let $L_{k,{\langle \gamma\rangle }}$ be the $k$-th $\gamma$-twisted Virasoro operator as constructed in \cite{KacWakimoto:88,Wakimoto}. There is a natural grading on $\mathcal{H}_{\mu}(\frg,\gamma)$ by $\mathbb{Z}_{+}$. They are defined as follows: 
	\begin{equation}\label{eqn:eigen1}
	\mathcal{H}_{\mu}(\frg,\gamma)(d):=\{ |\Phi\rangle| L_{0,{\langle \gamma\rangle }}(|\Phi\rangle )=(\Delta_{\mu}+\frac{d}{N})|\Phi\rangle \},
	\end{equation}
	where $\Delta_{\mu}$ is eigen value of the twisted $L_{0,{\langle \gamma \rangle }}$ acting on the finite dimensional highest weight $\frg^{\gamma}$ module with highest weights $\mu$ and $N$ is the order of $\gamma$. From the explicit description of $\mathcal{H}_{\mu}(\frg,\gamma)$ as a quotient of a Verma module and the fact (\cite{Wakimoto},\cite{KacWakimoto:88}) that $[L_{0,{\langle \gamma \rangle }},X(m)]=-\frac{m}{N}X(m),$ it follows that 
	$\mathcal{H}_{\mu}(\frg,\gamma)=\bigoplus_{i=0}^{\infty}\mathcal{H}_{\mu}(\frg,\gamma)(d).$
	
	By Lemma 8.4 in \cite{KH}(see 
	Lemma 4.14 in \cite{U} for the untwisted case), there exists a unique up to constant 
	non-degenerate bilinear form 
	\begin{equation}\label{eqn:bilform}
	( \ | \ )_{\mu}: \mathcal{H}_{\mu}(\frg,\gamma )\otimes \mathcal{H}_{\mu^*}(\frg,\gamma^{-1})\rightarrow \mathbb{C}.
	\end{equation}
	such that
	for any  $|\Phi_1 \rangle $, $|\Phi_2\rangle $ in $\mathcal{H}_{\mu}(\frg,\gamma)$ and $\mathcal{H}_{\mu^*}(\frg,\gamma^{-1})$, we have the following equation:
	\begin{equation}\label{eqn:bilo}
	(X({m}).|\Phi_1 \rangle||\Phi_2\rangle  )_{\mu}+ (|\Phi_1\rangle | X({-m}).|\Phi_2\rangle ))_{\mu}=0,
	\end{equation}
	where $m$ is an integer and $X(m)=X\otimes t^{m}\in \widehat{L}(\frg,\gamma)$. 
	
	Moreover, the form $( \ | \ )_{\mu}$ has the following properties: 
	\begin{itemize}
		\item $( \ | \ )_{\mu}$ restricted to  $\mathcal{H}_{\mu}(\frg,\gamma)(d)\otimes \mathcal{H}_{\mu^*}(\frg,\gamma^{-1})(d')$ is zero unless $d=d'$. 
		\item $( \ | \ )_{\mu}$ restricted to $V_{\lambda}\otimes V_{\lambda^*}$ is the unique (up to scalars) $\frg^{\gamma}$-invariant bilinear form.
	\end{itemize}
	Here $\mathcal{H}_{\mu}(\frg,\gamma)(d)$ denote the degree $d$ part of $\mathcal{H}_{\mu}(\frg,\gamma)$. 
	
	Let $m_d$ be the dimension of $\mathcal{H}_{\mu}(\frg,\gamma)(d)$ and 
	we choose a basis $\{v_{\mu,1}(d),\dots, v_{\mu,m_d}(d)\}$ of $\mathcal{H}_{\mu}(\frg,\gamma)$ and let $\{v^{\mu,1}(d),\dots, v^{\mu,m_d}(d)\}$ be the basis
	of $\mathcal{H}_{\mu^*}(\frg,\gamma^{-1})$ dual 
	with respect to the bilinear form $( \ | \ )_{\mu}$. 
	
	Consider the following element (see Section 8 in \cite{KH}, Chapter 4 in \cite{U}):
	$$\gamma_{\mu,d}=\sum_{i=1}^{m_d}v_{\mu,i}(d)\otimes v^{\mu,i}(d)\in \mathcal{H}_{\mu}(\frg,\gamma)(d)\otimes \mathcal{H}_{\mu^*}(\frg,\gamma^{-1})(d)$$ and the {\em twisted
		sewing element} $\gamma_\mu:=\sum_{d\geq 0}\gamma_{\mu,d}\tau^d \in \mathcal{H}_{\mu}(\frg,\gamma)\otimes \mathcal{H}_{\mu^*}(\frg,\gamma^{-1})[[\tau]].$
	It
	is easy to check that both $\gamma_{\mu,d}$ and $\gamma_{\mu}$ are independent of the chosen basis of $\mathcal{H}_{\mu}(\frg,\gamma)(d)$. We further, observe that for each $\mu \in \mathcal{H})_{\mu}(\frg,\gamma)$, we have the following equality  which follows directly from the formula in Lemma 2.3 in \cite{Wakimoto}:
	\begin{equation}
	\label{eqn:equalityoftraceanomaly}
	\Delta_{\mu}=\Delta_{\mu^*}.
	\end{equation}
	Using the 
	twisted sewing element $\gamma_{\mu}$ one defines a map: 
	\begin{align*}
	\quad \iota_{\mu}:\mathcal{H}_{\vec{\lambda}}\otimes \mathcal{O}_{D}[[\tau]]\rightarrow \mathcal{H}_{\vec{\lambda}}\otimes \mathcal{H}_{\mu}(\frg,\gamma)\otimes \mathcal{H}_{\mu^*}(\frg,\gamma^{-1})[[\tau]], \quad
	\sum_{i} |\Phi \rangle \tau^{i}\rightarrow \sum_{i,d}|\Phi\rangle \otimes \gamma_{\mu}.
	\end{align*}
	We have the following proposition:
	\begin{proposition}\label{prop:importantflateness} The map
		$\bigoplus_{\mu \in P^{\ell}(\frg,\gamma)}\iota_{\mu}$ induces an isomorphism of $\mathcal{O}_{D^{(m)}}$-modules $\mathcal{V}_{\vec{\lambda},\Gamma}^{(m)}(\widetilde{C},C, \widetilde{\bf{p}}, {\bf p}, \widetilde{\bf{z}})$ and $\mathcal{V}_{\vec{\lambda},\Gamma}(\widetilde{C}_{N,D^{(m)}},\widetilde{\bf{p}}', {\bf p}', \widetilde{\bf{z}}')$ for each $m\geq 0$. Moreover, the projective action of $\mathcal{D}^0_{D^{(m)}}$ preserve each component $\iota_{\mu}$ and the projective action of $\tau\partial_{\tau}$ commutes projectively (i.e up to a scalar multiplication) with $\iota_{\mu}$. 
	\end{proposition}
	\begin{proof}
		The proof of the first part of the proposition can be found in \cite{KH} and for the untwisted case we refer the reader to \cite[Section 6]{TUY:89}, \cite{BK:01} and \cite{Looijenga}. We now discuss the second part of the proof. We can lift the vector field $\theta=\tau\partial_{\tau}$ to an $\Gamma$ invariant vector field  $\widehat{\theta}$ over $\widetilde{C}_{N,D^{(m)}}$ such that around $\widetilde{q}_1$ and $\widetilde{q}_2$, the local expansion is of the form $\alpha.\eta_1 \partial_{ \eta_1}$ and $\beta. \eta_2 \partial_{\eta_2}$ and $\alpha+\beta=1$. Here $\eta_1$ and $\eta_2$ are special formal coordinates. Thus we need to show that the projective action of $\tau\partial_{\tau}$ commutes up to a fixed scalar with the map $\iota_{\mu}$ as a map of conformal blocks. This follows directly from Lemma \ref{lem:proj}.
	\end{proof}
	\begin{lemma}\label{lem:proj}The following equality holds as operators on $\mathcal{H}_{\mu}(\frg,\gamma)\otimes \mathcal{H}_{\mu^*}(\frg,\gamma^{-1})[[\tau]]$:
		$$\tau\partial_{\tau}(f|\Phi\rangle \otimes \gamma_{\mu,d}\tau^{d})-(\tau\partial_{\tau}(f|\Phi\rangle ))\otimes \gamma_{\mu,d}\tau^{d}=-N.\Delta_{\mu}(f|\Phi\rangle \otimes \gamma_{\mu,d}\tau^d),$$ where $N$ is the order of $\gamma$.
	\end{lemma}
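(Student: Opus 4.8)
The statement to be proved is an identity of operators acting on the graded tensor product $\mathcal{H}_{\mu}(\frg,\gamma)\otimes \mathcal{H}_{\mu^*}(\frg,\gamma^{-1})[[\tau]]$, and the key point is to understand how the Euler-type operator $\tau\partial_\tau$ interacts with the twisted sewing element $\gamma_{\mu,d}$ sitting in degree $d$. My plan is to compute the left-hand side directly by the Leibniz rule and then isolate the ``extra'' contribution coming from the $\tau$-dependence of $\gamma_{\mu,d}\tau^{d}$. Since $\gamma_{\mu,d}\in \mathcal{H}_{\mu}(\frg,\gamma)(d)\otimes \mathcal{H}_{\mu^*}(\frg,\gamma^{-1})(d)$ lives in bidegree $(d,d)$ and is multiplied by $\tau^d$, applying $\tau\partial_\tau$ to the product $f|\Phi\rangle\otimes\gamma_{\mu,d}\tau^d$ and subtracting the term where $\tau\partial_\tau$ hits only the factor $f|\Phi\rangle$ should leave precisely $d\cdot (f|\Phi\rangle\otimes\gamma_{\mu,d}\tau^d)$, coming from $\tau\partial_\tau(\tau^d)=d\,\tau^d$.

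\textbf{Key steps.} First I would recall, from Equation \eqref{eqn:eigen1} and the definition of the twisted Sugawara/Virasoro action via \eqref{eqn:viractionimp}, that the zeroth twisted Virasoro operator $L_{0,\langle\gamma\rangle}$ acts on the degree-$d$ piece $\mathcal{H}_{\mu}(\frg,\gamma)(d)$ by the scalar $\Delta_{\mu}+\tfrac{d}{N}$, and correspondingly on $\mathcal{H}_{\mu^*}(\frg,\gamma^{-1})(d)$ by $\Delta_{\mu^*}+\tfrac{d}{N}$. Using \eqref{eqn:equalityoftraceanomaly}, namely $\Delta_{\mu}=\Delta_{\mu^*}$, the lift $\widehat\theta$ of $\theta=\tau\partial_\tau$ chosen in the proof of Proposition \ref{prop:importantflateness} acts near $\widetilde{q}_1,\widetilde{q}_2$ through $-N(L_{0,\langle\gamma\rangle})$ on each factor, with local coefficients $\alpha,\beta$ summing to $1$. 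The second step is to track these two local contributions: on the first factor $\widehat\theta$ contributes $-N\alpha(\Delta_\mu+\tfrac d N)$ and on the second $-N\beta(\Delta_{\mu^*}+\tfrac d N)$, so that the total internal contribution to $\tau\partial_\tau$ acting on $\gamma_{\mu,d}$ is $-N(\alpha+\beta)\Delta_\mu - (\alpha+\beta)d = -N\Delta_\mu - d$. The third step is to observe that the naive grading term $\tau\partial_\tau(\tau^d)=d\,\tau^d$ exactly cancels the $-d$ contribution above, leaving only $-N\Delta_\mu$. Combining, the difference on the left-hand side equals $-N\Delta_\mu\,(f|\Phi\rangle\otimes\gamma_{\mu,d}\tau^d)$, which is the asserted identity.

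\textbf{The main obstacle.} The genuinely delicate point is the bookkeeping of the two local expansions of the lifted vector field $\widehat\theta$ near the two preimages $\widetilde{q}_1,\widetilde{q}_2$ of the node, and verifying that the $\alpha$-dependence drops out. The individual actions on the two tensor factors depend on the splitting $\alpha+\beta=1$, but because $\gamma_{\mu,d}$ is defined by a \emph{dual} basis with respect to the non-degenerate pairing $(\ |\ )_\mu$ of \eqref{eqn:bilform}, the operator $L_{0,\langle\gamma\rangle}\otimes\id + \id\otimes L_{0,\langle\gamma\rangle}$ preserves $\gamma_{\mu,d}$ in a controlled way, and the invariance relation \eqref{eqn:bilo} guarantees that only the symmetric combination survives. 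I expect that the cleanest way to handle this is to show first that $\gamma_{\mu,d}$ is an eigenvector for the total operator $(L_{0,\langle\gamma\rangle}\otimes\id+\id\otimes L_{0,\langle\gamma\rangle})$ with eigenvalue $2\Delta_\mu+\tfrac{2d}{N}$, using the compatibility of the pairing with the Virasoro action, and then to feed this into the local computation so that the dependence on the individual coefficients $\alpha,\beta$ disappears upon using $\alpha+\beta=1$. Once this eigenvector property is established, the remaining manipulations are the routine Leibniz-rule calculation sketched above.
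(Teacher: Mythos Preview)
Your proposal is correct and follows essentially the same approach as the paper: apply the Leibniz rule to isolate $f|\Phi\rangle\otimes(\tau\partial_\tau\cdot(\gamma_{\mu,d}\tau^d))$, use that the lifted vector field $\widehat\theta$ acts through $\alpha\,\eta_1\partial_{\eta_1}$ and $\beta\,\eta_2\partial_{\eta_2}$ on the two factors, translate via $\eta_i\partial_{\eta_i}=-NL_{0,\langle\gamma\rangle}$, and then use $\alpha+\beta=1$ together with $\Delta_\mu=\Delta_{\mu^*}$ to see that the $+d$ from $\tau\partial_\tau(\tau^d)$ cancels against the $-d$ from the Virasoro eigenvalues, leaving $-N\Delta_\mu$. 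One small remark: your ``main obstacle'' paragraph overstates the difficulty, since the eigenvector property of $\gamma_{\mu,d}$ under $L_{0,\langle\gamma\rangle}\otimes\id+\id\otimes L_{0,\langle\gamma\rangle}$ is immediate from the fact that each $v_{\mu,i}$ and $v^{\mu,i}$ lies in degree $d$ by construction, and does not require the pairing relation \eqref{eqn:bilo}.
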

	\begin{proof}
		By the definition of the action of $\tau\partial_{\tau}$ and applying the Liebnitz rule, we get that the left hand side of the equation in the statement of the lemma is equivalent to 
		\begin{eqnarray*}
			&&f|\Phi\rangle\otimes (\tau\partial_{\tau}.(\gamma_{\mu,d}\tau^d))\\ 
			&&\quad =d.(f|\Phi\rangle \otimes \gamma_{\mu,d}\tau^{d})+f|\Phi\rangle \otimes (\tau\partial_{\tau}.\gamma_{\mu,d})\tau^d\\
			&&\quad = d.(f|\Phi\rangle \otimes \gamma_{\mu,d}\tau^{d})+\sum_{i=1}^{m_d}f|\Phi\rangle (\tau\partial_\tau.(v_{\mu,i}\otimes v^{\mu,i}))\tau^d\\
			&&\quad =d.(f|\Phi\rangle \otimes \gamma_{\mu,d}\tau^{d}) +\sum_{i=1}^{m_d}f|\Phi\rangle \bigg(\alpha.(\eta_1\partial_{\eta_1}.v_{\mu,i})\otimes v^{\mu,i}
			+\beta.v_{\mu,i}\otimes (\eta_2\partial_{\eta_2}.v^{\mu,i})\bigg)\tau^d\\
			&&\quad =d.(f|\Phi\rangle \otimes \gamma_{\mu,d}\tau^{d})\\
			&&\quad \quad + \sum_{i=1}^{m_d}f|\Phi\rangle\bigg(\alpha.(-N\Delta_{\mu}-d)v_{\mu,i}\otimes v^{\mu,i})
			+\beta.v_{\mu,i}(-N\Delta_{\mu}-d)v^{\mu,i}\bigg)\tau^d.
		\end{eqnarray*}
		Here we have used the fact that $\eta_i\partial_{\eta_i}$ acts (see Equation \eqref{eqn:viractionimp}) by $-NL_{0,{\langle \gamma \rangle}}$ and Equation \eqref{eqn:eigen1} to compute the action of $L_{0,\langle \gamma\rangle }$. Now since by the choice of the lift $\alpha+\beta=1$, the lemma follows.
	\end{proof}
	We have the following corollary of Proposition \ref{prop:importantflateness}:
	\begin{corollary}\label{cor:importantflatness}The maps $\bigoplus_{\mu}\iota_{\mu}$ induce an isomorphism between $ \operatorname{G}^m(\mathcal{V}_{\vec{\lambda},\Gamma}(\widetilde{C},C,\widetilde{\bf p}, {\bf p}, \widetilde{\bf z}))$ and $\operatorname{G}^m(\mathcal{V}_{\vec{\lambda},\Gamma}(N^{\times }D))$ that preserves the projective action of $\mathcal{D}_S^{(0)}$ on the left and $\mathcal{D}_{ND}^{(0)}$-action on the right for all $m\geq0$.
	\end{corollary}
	\begin{proof}The proof of the corollary follows from Proposition \ref{prop:importantflateness} along with the observation that $\operatorname{G}^m(\mathcal{F})$ is the kernel of the natural surjective map from $\mathcal{F}^{(m)}\twoheadrightarrow \mathcal{F}^{(m-1)}$.
	\end{proof}
	Since the vector bundles of twisted covacua descend as $\mathcal{D}^0$ modules on $\GModhat({\bf m})$, without loss of generality, we can assume that we have chosen formal neighborhood around the marked points. Now the twisted sewing construction and the Corollary \ref{cor:importantflatness} imply the following: 
	\begin{theorem}\label{thm:specializationfunctorexplicit}The sewing construction induces an isomorphism of locally free sheaves 
		$$\widetilde{G}_{\gamma}: \widetilde{\operatorname{Sp}}_D(\mathcal{V}_{\vec{\lambda},\Gamma}(\widetilde{C},{C},\widetilde{\bf p}, {\bf p}, \widetilde{{\bf z}}))\simeq \mathcal{V}_{\vec{\lambda},\Gamma}(N^{\times} D)$$ which preserves the projective action of $\mathcal{D}^0_{ND}$. Here  $\mathcal{V}_{\vec{\lambda},\Gamma}(N^{\times} D)$ is as in Equation \eqref{eqn:vectorbundleonnormalization}.
	\end{theorem}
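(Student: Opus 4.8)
The plan is to prove Theorem \ref{thm:specializationfunctorexplicit} by combining the algebraic sewing isomorphism over each infinitesimal neighborhood with the description of Verdier specialization via the $V$-filtration recalled in Section \ref{sec:specializationdeverdier}. The essential point is that both sides of the desired isomorphism are reconstructed from their associated graded pieces with respect to the filtration by powers of the ideal sheaf $\mathcal{I}$ of $D$, so it suffices to produce a compatible family of isomorphisms on the graded pieces and then assemble them.

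First I would recall from Equation \eqref{eqn:specializationformula} that $\widetilde{\operatorname{Sp}}_D(\mathcal{V}_{\vec{\lambda},\Gamma}(\widetilde{C},C,\widetilde{\bf p},{\bf p},\widetilde{\bf z}))=\bigoplus_{m\geq 0}\operatorname{G}^m(\mathcal{V}_{\vec{\lambda},\Gamma})$, where $\operatorname{G}^m(\mathcal{F})=\mathcal{I}^m\mathcal{F}/\mathcal{I}^{m+1}\mathcal{F}$, and that this graded object carries its canonical $\mathcal{O}_{ND}$- and $\mathcal{D}^0_{ND}$-module structures. On the other side, by definition the vector bundle $\mathcal{V}_{\vec{\lambda},\Gamma}(N^\times D)$ from Equation \eqref{eqn:vectorbundleonnormalization} is also monodromic and $\mathbb{C}^\times$-equivariant, hence it too is determined by its graded decomposition under the $\mathbb{G}_m$-action, which corresponds precisely to the $\tau$-adic grading coming from the sewing variable. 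The content of Corollary \ref{cor:importantflatness} is exactly that $\bigoplus_\mu \iota_\mu$ induces isomorphisms $\operatorname{G}^m(\mathcal{V}_{\vec{\lambda},\Gamma}(\widetilde{C},C,\widetilde{\bf p},{\bf p},\widetilde{\bf z}))\simeq \operatorname{G}^m(\mathcal{V}_{\vec{\lambda},\Gamma}(N^\times D))$ for every $m\geq 0$, compatibly with the residual $\mathcal{D}^{(0)}$-actions. Therefore the plan is: take the direct sum over $m$ of these graded isomorphisms, identify the left-hand direct sum with $\widetilde{\operatorname{Sp}}_D(\mathcal{V}_{\vec{\lambda},\Gamma})$ via Equation \eqref{eqn:specializationformula}, and identify the right-hand direct sum with $\mathcal{V}_{\vec{\lambda},\Gamma}(N^\times D)$ via its monodromic grading.

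The main work is to check that the assembled map $\widetilde{G}_\gamma$ respects the full $\mathcal{D}^0_{ND}$-module structure and not merely the graded $\mathcal{D}^{(0)}_{ND}=\mathcal{D}^0_{ND}/\mathcal{I}\mathcal{D}^0_{ND}$-action on each piece. For this I would invoke Proposition \ref{prop:importantflateness}, which already records that the projective action of $\mathcal{D}^0_{D^{(m)}}$ preserves each component $\iota_\mu$ and, crucially, that the action of $\tau\partial_\tau$ commutes projectively with $\iota_\mu$ — this last fact being the computation carried out in Lemma \ref{lem:proj} using the trace-anomaly identity $\Delta_\mu=\Delta_{\mu^*}$ from Equation \eqref{eqn:equalityoftraceanomaly}. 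The operator $\tau\partial_\tau$ is precisely the piece of $\mathcal{D}^0_{ND}$ that is not seen at the associated-graded level (it is the Euler vector field in the normal direction), so compatibility with it is what upgrades the graded isomorphism to an isomorphism of $\mathcal{D}^0_{ND}$-modules. I expect this verification — reconciling the projective nature of the $\tau\partial_\tau$-action with the normal bundle derivation and checking the scalar bookkeeping works out uniformly in $m$ — to be the main obstacle, though Lemma \ref{lem:proj} has done the decisive local computation.

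Finally, to complete the identification with the statement as phrased, I would note that since the bundles of twisted covacua descend as $\mathcal{D}^0$-modules on $\GModhat(\mathbf{m})$ (as used just before the theorem statement), we may work after choosing formal coordinates around the marked points, so that the sewing maps $\iota_\mu$ and the module $\mathcal{V}_{\vec{\lambda},\Gamma}(N^\times D)$ are defined as above; the descent along the $\mathbb{G}_m^n$-torsor then transports the isomorphism to the desired global form. The resulting $\widetilde{G}_\gamma$ is the sought isomorphism, and by the discussion in Section \ref{sec:gluingfunckirillov} its pullback to $N^\times D$ together with the $\mathbb{C}^\times$-equivariance yields the gluing isomorphism $G_\gamma$ needed to verify the gluing axiom of Definition \ref{d:gammacrossedmodfun}.
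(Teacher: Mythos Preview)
Your proposal is correct and uses essentially the same ingredients as the paper: the sewing maps $\iota_\mu$, Corollary \ref{cor:importantflatness} for the graded pieces, and Proposition \ref{prop:importantflateness}/Lemma \ref{lem:proj} for compatibility with $\tau\partial_\tau$. The organizational difference is that the paper first defines $\widetilde{G}_\gamma$ globally via the infinite sewing element $\sum_{d\geq 0}\gamma_{\mu,d}$ (noting it is well-defined on $\widetilde{\operatorname{Sp}}_D$ since the tail lies in $\mathcal{I}^M$), checks that it preserves the projective connection, invokes a constant-rank argument (\cite[Lemma A.1]{Belkale:09}) to reduce to the formal neighborhood of $D$, and then applies Corollary \ref{cor:importantflatness}; by contrast you assemble the map directly as a direct sum over graded pieces and verify the $\mathcal{D}^0_{ND}$-compatibility afterwards. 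Your route sidesteps the constant-rank lemma, while the paper's route makes it more transparent that the map being shown to be an isomorphism is literally the one induced by the sewing element, which is what the phrase ``the sewing construction induces'' in the theorem statement refers to.
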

	\begin{proof}The non-degenerate bilinear form $( \ | \ )_{\mu}$ given by Equation \eqref{eqn:bilo} induces a canonical element $\gamma_{\mu,d} \in \mathcal{H}_{\mu}(\frg,\gamma)\otimes \mathcal{H}_{\mu^*}(\frg,\gamma^{-1})$. Now consider the element $\sum_{d\geq 0}\gamma_{\mu,d}$ and	the following map 
		$$\widetilde{G}_{\gamma}:\mathcal{H}_{\vec{\lambda}}\rightarrow \mathcal{H}_{\vec{\lambda}}\otimes \mathcal{H}_{\mu}\widehat{\otimes}\mathcal{H}_{\mu^{*}}, \ |\varphi\rangle \rightarrow |\varphi \otimes \sum_{d\geq 0}\gamma_{\mu,d}\rangle. $$ The structure sheaf of $ND$ is $\bigoplus_{m\geq 0}\mathcal{I}^m/\mathcal{I}^{m+1}$ and hence all but finitely many elements in the sum $\sum_{d\geq 0}\gamma_{\mu,d}$ lies in $\mathcal{I}^M\mathcal{H}_{\vec{\lambda},\mu,\mu^*}$.
		
		Hence  the map
		$\widetilde{G}_{\gamma}$ induces a map between the sheaves  $\widetilde{\operatorname{Sp}}_{D}(\mathcal{V}_{\vec{\lambda},\Gamma}(\widetilde{C},{C},\widetilde{\bf p}, {\bf p}, \widetilde{{\bf z}}))$ and $\mathcal{V}_{\vec{\lambda},\Gamma}(N^{\times} D)$.  Since the twisted zero-th Virasoro operator $L_{0,\langle \gamma \rangle}$ preserve $\mathcal{H}_{\mu}(\frg,\gamma)(d)$ and acts diagonally, it follows that  $\widetilde{G}_{\gamma}$ preserves the projective connections. Thus, the rank of $\widetilde{G}_{\gamma}$ is constant (\cite[Lemma A.1]{Belkale:09}). We are now reduced to show that the map $\widetilde{G}_{\gamma}$ is an isomorphism in a formal neighborhood of $D$ in $ND$. 

		We will be done if we can show that for any $m\geq0$, the 	induced map $$\widetilde{G}^{(m)}_{\gamma}: \big(\widetilde{\operatorname{Sp}}_{D}\mathcal{F}\big)^{(m)}\rightarrow \mathcal{V}^{(m)}_{\vec{\lambda},\Gamma}({N}^{\times} D).$$ 
		Now the proof of the theorem follows from  Corollary \ref{cor:importantflatness}.
	\end{proof}

	\appendix
	\section{Moduli stacks of covers of curves}\label{ap:modstackofadmcovers}
	\subsection{Moduli stacks of admissible $\Gamma$-covers}\label{ap:modstackofadmcovers1}
	Let $\Gamma$ be a finite group. In this appendix we recall the definitions, basic properties and operations on the stacks $\GMod$ from Jarvis-Kimura-Kaufmann \cite{JKK} as well as the related stacks $\GModhat$ and $\GModtilde$ for non-negative integers $g,n$. These are moduli stacks of (balanced) admissible $\Gamma$-covers of stable $n$-pointed curves of genus $g$ with additional marking data as described in this section. 
	
	Let $\barM_{g,n}$ denote the Deligne-Mumford stack of stable $n$-pointed curves of genus $g$. Let $(C\rightarrow Z, p_1,\dots, p_n)$ be a stable $n$-pointed curve of genus $g$ over a scheme $Z$ with marked sections $p_1,\dots, p_n:Z\to C$. For a finite group $\Gamma$, the notion of a balanced admissible $\Gamma$-cover $\pi:\tildeC\rar{}C$ of such a stable pointed curve is defined in Abramovich-Corti-Vistoli \cite{ACV} which we recall below:
	
	\begin{definition}
		A finite morphism $\pi: \widetilde{C} \rightarrow C$ to a stable $n$-pointed curve $(C\rightarrow Z; p_1,\dots,p_n)$ of genus $g$ is called admissible $\Gamma$-cover if the following holds:
		\begin{itemize}
			\item $\widetilde{C}/Z$ is itself a nodal curve. However $
			\widetilde{C}$ may not be necessarily connected.
			\item There is a left action of the  finite group $\Gamma$ preserving $\pi$ such that restriction of $\pi$ to $C_{gen}$ is a principal $\Gamma$-bundle.  Here $C_{gen}$ are the points of $C$ which are neither marked points nor nodes.
			\item Points of $\widetilde{C}$ lying over marked points of $C$ the map $\widetilde{C}\rightarrow C\rightarrow Z$ locally is of the form:
			$$\operatorname{Spec}A[\widetilde{z}]\rightarrow \operatorname{Spec}A[z]\rightarrow \operatorname{Spec}A, $$ where $z=\widetilde{z}^N$ for some $N>0$.
			\item All nodes of $\widetilde{C}$ map to nodes of $C$. 
			\item All points of $\widetilde{C}$ lying over the nodes of $C$ are nodes and the structure maps of $\widetilde{C}\rightarrow C\rightarrow Z$ are locally same with that of 
			$$\operatorname{Spec}A[\widetilde{z},\widetilde{w}](\widetilde{z}\widetilde{w}-t)\rightarrow \Spec A[z,w](zw-t^r)\rightarrow \Spec A,$$ where $z=\widetilde{z}^r$, $w=\widetilde{w}^r$ for some integer $r>0$ and  $t\in A$ 
			\item The action of the stabilizer $\Gamma_{\widetilde{q}}$ at a node $\widetilde{q} \in \widetilde{C}$ is {\em balanced}: The previous conditions imply that $\Gamma_{\widetilde{q}}$ leaves  the two branches invariant and that $\Gamma_{\widetilde{q}}$ is cyclic. We now also demand that the eigenvalues of the action on the two branches of the tangent space at $\widetilde{q}$ are multiplicative inverses of each other.
		\end{itemize}
	\end{definition}
	
	
	As in \cite{ACV}, we denote the stack of such admissible covers $(\pi:\tildeC\to C;\mathbf{p})$ by $\barM_{g,n}(\Bcal \Gamma)$. Following \cite{JKK}, we now consider a certain variant of the stack of admissible $\Gamma$-covers. For any admissible $\Gamma$-cover $(\tildeC\rightarrow C, \mathbf{p})$, where $\mathbf{p}=(p_1,\cdots,p_n)$, let $\tilde{p}_i\in \pi^{-1}(p_i)$ be a choice of a point in the fiber over $p_i$ for all $1\leq i\leq n$. We denote $\widetilde{\mathbf{p}}=(\tilde{p}_1,\cdots,\tilde{p}_n)$.
	\begin{definition}\label{def:admissiblecovers}
		Let $\GMod$ denote the stack of $n$-pointed admissible $\Gamma$-covers $\pi: \tildeC\rightarrow C,\wtilde{\mathbf{p}},{\mathbf{p}}$ of $n$-pointed, genus-$g$ stable curves, where $\wtilde{\mathbf{p}}=(\tilde{p}_1,\cdots,\tilde{p}_n)$ are a choice of marked points of $\tildeC$ lying above the marked points $\mathbf{p}=(p_1,\cdots,p_n)$ in $C$. 
	\end{definition}
	The orientation of the curve $C$ and the fact that $\tildeC$ is a principal $\Gamma$-bundle over $C_{\gen}$ give rise to an $n$-tuple $\mathbf{m}=(m_1,\dots,m_n)\in \Gamma^n$ keeping track of the monodromies around the points $\tilde{p}_i$'s. For each $1\leq i\leq n$, the orientations give a small loop in $C-\{p_1,\dots, p_n\}$ around each $p_i$ and  a lift to a path in a small neighborhood around $\widetilde{p}_i$ is $\widetilde{C}-\{\widetilde{p}_1,\dots, \widetilde{p}_n\}$. Since the lift is not uniquely determined, the element $m_i$ give the difference between the starting and the ending sheets of the lifted path. 
	
	More precisely, the isotropy subgroup $\Gamma_{\tilde{p}_i}$ of the point $\tilde{p}_i$ is a cyclic subgroup, of order say $N_i$. The cyclic group $\Gamma_{\tilde{p}_i}$ acts on the tangent space $T_{\widetilde{p}_i}\tildeC$ faithfully. Then $m_i\in \Gamma$ is defined as the generator of $\Gamma_{\tilde{p}_i}$ which acts as multiplication by $\exp\frac{2\pi\sqrt{-1}}{N_i}$. 
	
	Hence we have the evaluation morphism 
	$\ev: \GMod \rightarrow \Gamma^n$ and let $\GMod(\mathbf{m}):=\ev^{-1}(\mathbf{m})$. We also have the morphism of stacks $\GMod \to \barM_{g,n}$ defined by forgetting the admissible cover. The following theorem is due to Jarvis-Kimura-Kaufmann  \cite{JKK}:
	\begin{theorem}
		The stack $\GMod$ and the open and closed substacks $\GMod(\mathbf{m})$ are smooth Deligne-Mumford stacks, flat, proper, and quasi finite over $\overline{\Mcal}_{g,n}$. Moreover, the $\GMod(\mathbf{m})$ are a finite disjoint union of connected components of $\GMod$. 
	\end{theorem}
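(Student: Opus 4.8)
The plan is to establish the three assertions of the theorem --- smoothness/Deligne-Mumford property and properness, quasi-finiteness over $\overline{\mathcal{M}}_{g,n}$, and the decomposition of $\GMod(\mathbf{m})$ as a union of connected components --- by reducing everything to the already-established theory of the stack $\overline{\mathcal{M}}_{g,n}(\mathcal{B}\Gamma)$ of balanced twisted stable maps (admissible $\Gamma$-covers) of Abramovich-Corti-Vistoli \cite{ACV}, together with the observation that $\GMod$ is obtained from it by a simple rigidification of the marked points. First I would make precise the relationship between $\GMod$ and $\overline{\mathcal{M}}_{g,n}(\mathcal{B}\Gamma)$: a point of $\GMod$ is a point of $\overline{\mathcal{M}}_{g,n}(\mathcal{B}\Gamma)$ equipped with the additional data of a choice of lift $\tilde{p}_i$ in each fiber $\pi^{-1}(p_i)$. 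Over the locus where $p_i$ is a ramification point of the $\Gamma$-cover, the fiber $\pi^{-1}(p_i)$ is a $\Gamma$-torsor modulo the stabilizer, so choosing such lifts amounts to passing to a $\Gamma^n$-torsor-like cover; more precisely, the evaluation gerbe structure at each marked point (the band being $\Gamma$) is trivialized by choosing a lift, and this is a representable, finite, \'etale-type operation. Since $\overline{\mathcal{M}}_{g,n}(\mathcal{B}\Gamma)$ is a smooth proper Deligne-Mumford stack by \cite{ACV}, and $\GMod \to \overline{\mathcal{M}}_{g,n}(\mathcal{B}\Gamma)$ is finite and representable, the stack $\GMod$ inherits the smooth Deligne-Mumford and proper properties.

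Next I would address quasi-finiteness and flatness over $\overline{\mathcal{M}}_{g,n}$. The composite $\GMod \to \overline{\mathcal{M}}_{g,n}(\mathcal{B}\Gamma)\to \overline{\mathcal{M}}_{g,n}$ forgets both the lifts and the cover; the second map is known from \cite{ACV} to be proper and quasi-finite (the set of admissible $\Gamma$-covers of a fixed stable pointed curve, up to isomorphism, is finite, being governed by the finite set of conjugacy-class-valued monodromy representations of the orbifold fundamental groupoid into $\Gamma$). Since the first map is finite, the composite is again proper and quasi-finite. For flatness I would invoke the standard fact that a morphism from a smooth (hence Cohen-Macaulay) stack to a smooth stack which is quasi-finite is automatically flat when the fibers are equidimensional of the expected dimension --- here the fibers are finite, i.e.\ zero-dimensional, and miracle flatness (the local criterion: a quasi-finite dominant morphism between smooth equidimensional stacks of the same dimension is flat) applies once one checks $\dim \GMod = \dim \overline{\mathcal{M}}_{g,n}=3g-3+n$. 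This dimension count follows because $\GMod\to\overline{\mathcal{M}}_{g,n}$ is quasi-finite and both are smooth of the same dimension (the admissible cover data and the lift data are discrete).

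Finally, for the decomposition into connected components, I would argue that the evaluation morphism $\ev:\GMod \to \Gamma^n$ is locally constant, hence each $\GMod(\mathbf{m})=\ev^{-1}(\mathbf{m})$ is open and closed. The key point is that the monodromy invariants $m_i\in\Gamma$ cannot jump in a connected family: over a connected base, the monodromy of the $\Gamma$-cover around each marked section is a deformation invariant because the local model near $\tilde p_i$ (the ramified chart $z=\tilde z^{N_i}$ with the prescribed $\Gamma_{\tilde p_i}$-action) varies continuously and the generator $m_i$ acting by the primitive root of unity $\exp(2\pi\sqrt{-1}/N_i)$ is pinned down by the chosen lift and the orientation --- it takes values in the discrete set $\Gamma$ and so is constant. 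This shows $\ev$ is locally constant; since $\Gamma^n$ is finite and discrete, the fibers $\GMod(\mathbf{m})$ are finite unions of connected components of $\GMod$, and in particular are themselves smooth Deligne-Mumford stacks inheriting all the above properties. The main obstacle I anticipate is not any single step but carefully verifying that the operation of choosing marked lifts is representable and finite in the stacky (gerby) setting at the ramification points --- that is, correctly accounting for the automorphisms of the twisted cover that fix versus move the chosen lifts $\tilde p_i$, so that the comparison map $\GMod\to\overline{\mathcal{M}}_{g,n}(\mathcal{B}\Gamma)$ has the asserted finite representable structure rather than introducing unexpected stabilizers; this is exactly the content worked out in \cite[\S2.3]{JKK} and is where I would spend the most care.
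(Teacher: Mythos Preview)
The paper does not provide its own proof of this theorem; it is stated in the appendix as a result due to Jarvis--Kimura--Kaufmann \cite{JKK} and is simply cited without argument. Your proof sketch is a reasonable outline of the argument that is carried out in \cite{JKK} (reduction to the Abramovich--Corti--Vistoli stack $\overline{\mathcal{M}}_{g,n}(\mathcal{B}\Gamma)$, finiteness of the lift-choosing map, miracle flatness, and local constancy of the monodromy data), and you have correctly identified the delicate point about representability of the rigidification as the place where care is needed, which is indeed what \cite[\S2]{JKK} addresses.
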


	\subsection{Moduli stacks of pointed admissible covers with local coordinates}\label{sec:localcoord}
	We now introduce the stack of $n$-pointed admissible $\Gamma$-covers of stable $n$-pointed curves of genus $g$ with local coordinates.
	\begin{definition}
		Let $\hatbarM{}^\Gamma_{g,n}$ denote the stack of $n$-marked admissible $\Gamma$-covers 
		$(\pi:\tildeC\rightarrow C, \wtilde{\mathbf{p}},{\mathbf{p}},\wtilde{\mathbf{z}},{\mathbf{z}}),$ where $(\pi:\tildeC\rightarrow C,\wtilde{\mathbf{p}}, \mathbf{p})$ is an $n$-pointed admissible $\Gamma$-cover and $\wtilde{\mathbf{z}}=(\tilde{z}_1,\cdots,\tilde{z}_n)$ are special formal local coordinates at the points $\widetilde{\mathbf{p}}$ in $\tildeC$ such that $\mathbf{z}=(\tilde{z}_1^{N_1},\cdots,\tilde{z}_n^{N_n})$ are formal local coordinates at the points $\mathbf{p}$ in $C$, where $N_i$ is the order of the cyclic group $\Gamma_{\tilde{p}_i}$. As before, associated with such a data we have the monodromy $\mathbf{m}\in \Gamma^n$ around the points $\wtilde{\mathbf{p}}$. For notational convenience, we will often drop ${\bf z}$ from the notation of a family of $n$-pointed admissible $\Gamma$-covers with chosen coordinates. 
		
		For a finite set $A$, let $\hatbarM{}^\Gamma_{g,A}$ denote the stack of $A$-marked admissible covers, where instead of numbering the marking data, we have bijections of the marked points and formal coordinates with $A$. In this setting the associated monodromy of an $A$-marked admissible $\Gamma$-cover is a function $\mathbf{m}:A\to \Gamma$. Given a function $\mathbf{m}:A\to \Gamma$ we let $\hatbarM{}^\Gamma_{g,A}(\mathbf{m})\subseteq \hatbarM{}^\Gamma_{g,A}$ be the substack of those $A$-marked admissible $\Gamma$-covers with monodromy given by $\mathbf{m}$.
	\end{definition}

	\begin{definition}\label{def:nmarkedcovers}
		Let $A$ be a finite set and $\mathbf{m}:A\to \Gamma$. Let $\tildebarM{}^\Gamma_{g,A}(\mathbf{m})$ denote the stack of $A$-marked admissible $\Gamma$-covers with monodromy data $\mathbf{m}$ of the form
		$(\pi:\tildeC\rightarrow C, \wtilde{\mathbf{p}},{\mathbf{p}},\wtilde{\mathbf{v}},{\mathbf{v}}),$ where $(\pi:\tildeC\rightarrow C,\wtilde{\mathbf{p}}, \mathbf{p})$ is an $A$-pointed admissible $\Gamma$-cover, ${\mathbf{v}}$ is a choice of non-zero tangent vectors to $C$ at the points ${\mathbf{p}}$  and $\wtilde{\mathbf{v}}$ is a choice of tangent vectors to $\tildeC$ at the points $\wtilde{\mathbf{p}}$ compatible with $\mathbf{v}$. 
	\end{definition}We may henceforth drop  ${\mathbf{v}},\mathbf{p}$  from the notation of a family of curves in $\widetilde{\overline{\mathcal{M}}}{}^{\Gamma}_{g,n}$ since they are determined by the other data that appear in the notation.
	It is clear that we have the following commutative diagram:
	\begin{equation}
	\xymatrix{
		\hatbarM{}^\Gamma_{g,A}(\mathbf{m}) \ar[r]\ar[d] & \tildebarM{}^\Gamma_{g,A}(\mathbf{m})\ar[r]\ar[d] & \barM{}^\Gamma_{g,A}(\mathbf{m})\ar[d]\\
		\hatbarM_{g,A} \ar[r] & \tildebarM_{g,A}\ar[r] & \barM_{g,A}.
	}
	\end{equation}

	\subsection{The category of stable group-marked graphs and associated stacks.}\label{ap:groupmarkedgraphs}
	We will work with a certain category $\Xcal^\Gamma$ of stable $\Gamma$-graphs (see also \cite{GK:98}, \cite[\S3.3]{P:13}) in order to organize various gluing and forgetting marked points constructions for pointed covers. For us, a (weighted) graph is a tuple $(V,H,\iota,s,w)$, where $V, H$ are finite sets (of vertices and half-edges respectively), $\iota:H\to H$ is an involution, $s:H\to V$ is called the source map, and $w:V\to \ZBbb_{\geq 0}$ is a weight function. In other words, graphs have vertices, edges and some legs attached to the vertices. Moreover, there is a weight (or genus) attached to each vertex. The fixed points $L:=H^\iota$ are called the legs of the graph and $\iota$-orbits of size two are called the edges $E$ of the graph. The degree $\deg(v)$ of a vertex is the total number of half-edges sourcing from the vertex and the edge degree $\operatorname{edeg}(v)$ of a vertex is $\deg(v)$ minus the number of legs sourcing from the vertex. The genus of such a graph $X$ is defined to be (in particular, it does not depend on the legs):$$g(X):=|E|-|V|+\sum\limits_{v\in V}w(v)+\pi_0,$$ where $\pi_0$ is the number of connected components of the graph. We say that a (weighted) graph is stable if for every $v\in V$, we have $2w(v)-2+\deg(v)>0$. 
	\begin{definition}
		The objects in the category $\Xcal$ are stable (weighted) graphs. If $X,X'\in \Xcal$, then a morphism $f:X\to X'$ is a pair $(f^*,f_*)$, where   $f^*:H'\hookrightarrow H$ is an inclusion respecting the involutions, $H\setminus f^*H'\substack{{\stackrel{s}{\longrightarrow}}\\{\underset{s\iota}{\longrightarrow}}}V\xoto{f_*}V'$ is a coequalizer such that we have $s'=f_*\circ s\circ f^*$ and
		\begin{equation}\label{eq:weightcondition}
		2w'(v')-2+\operatorname{edeg}(v')=\sum\limits_{v\in f_*^{-1}(v')}{\left(2w(v)-2+\operatorname{edeg}(v)\right)}\mbox{ for each }v'\in V'.
		\end{equation} 
	\end{definition}
	In other words a morphism corresponds to contracting some of the edges and deleting some of the legs. Note that if there exists a morphism $f:X\to X'$ in $\Xcal$, then we must have $g(X)=g(X')$. We now define the category $\Xcal^\Gamma$ of stable $\Gamma$-graphs.
	\begin{definition}\label{d:XGammagA}
		An object of $\Xcal^\Gamma$ is a tuple $(X,\mathbf{m}:H(X)\to \Gamma,\mathbf{b}:H(X)\setminus L(X)\to \Gamma),$ where $X\in \Xcal$, such that whenever $\{h_1,h_2\}$ is an edge of $X$, we have $${ }^{\mathbf{b}(h_1)}\mathbf{m}(h_1)\cdot\mathbf{m}(h_2)=1, \ \mbox{ and} \ \mathbf{b}(h_1)\cdot\mathbf{b}(h_2)=1, $$ where ${}^{\mathbf{b}(h_1)}\mathbf{m}(h_1):=\mathbf{b}(h_1)\mathbf{m}(h_1)\mathbf{b}(h_1)^{-1}$. 
		\\A morphism $(f,\pmb\gamma):(X,\mathbf{m}_X, \mathbf{b}_X)\to (Y,\mathbf{m}_Y,\mathbf{b}_Y)$ in $\Xcal^\Gamma$ consists of a morphism $f:X\to Y$ in $\Xcal$ such that $\mathbf{m}_X(l)=1\  \forall l\in L(X)\setminus f^*L(Y)$ and $\pmb{\gamma}:H(Y)\to \Gamma$ such that 
		$$\pmb{\gamma}(h)\cdot\mathbf{m}_X(f^*h)\cdot\pmb{\gamma}(h)^{-1}=\mathbf{m}_Y(h) \mbox{ for all } h\in H(Y) \mbox{ and }$$
		$$\mathbf{b}_Y(h_1)\pmb\gamma(h_1)=\pmb\gamma(h_2)\mathbf{b}_X(f^*h_2) \mbox{ whenever } \{h_1,h_2\}\in E(Y).$$ The category $\Xcal^\Gamma$ is a monoidal category under disjoint unions of stable $\Gamma$-graphs.
	\end{definition}
	\begin{remark}\label{rk:corolla}
		A stable $\Gamma$-corolla is a stable $\Gamma$-graph with one vertex and no edges. If a corolla has weight $g$ and $n$ legs, then by definition, it is stable if and only if $2g-2+n>1$. We may sometimes denote a $\Gamma$-corolla in shorthand notation $(g,\mathbf{m})$ with $\mathbf{m}\in\Gamma^n$. By this we mean a corolla of genus $g$ with $n$ legs labeled by $\mathbf{m}$. If $(X,\mathbf{m},\mathbf{b})\in\Xcal^\Gamma$ then we can cut up the graph at the midpoints of all edges to obtain a collection of $\Gamma$-corollas parameterized by $V(X)$. For each vertex $v\in V(X)$ we let $L_v$ denote the set of half-edges of $X$ whose source is $v$. These form the legs of the corolla corresponding to the vertex. Hence  we may think of any stable $\Gamma$-graph as being built up from stable $\Gamma$-corollas.
	\end{remark}
	
	\noindent  Below we see 4 important examples of morphisms in the category $\Xcal^\Gamma$ along with some diagrams for special cases:
	\begin{example}\label{ex:groupaction} (Group action.)
		Let $(X,\mathbf{m},\mathbf{b})\in \Xcal^\Gamma$ with set of half-edges (including legs) denoted by $H$. Let $\pmb\gamma:H\to \Gamma$. Then we get a new stable $\Gamma$-graph $(X,{}^{\pmb\gamma}\mathbf{m},{}^{\pmb\gamma}\mathbf{b})$ where we have just conjugated all the group elements marking the half-edges. This gives us an isomorphism $(\id_X,\pmb\gamma):(X,\mathbf{m},\mathbf{b})\xoto{\cong}(X,{}^{\pmb\gamma}\mathbf{m},{}^{\pmb\gamma}\mathbf{b})$. This is related to the axiom of $\Gamma$-equivariance of the modular functors. 
		$$
		\begin{tikzpicture}[baseline={([yshift=-1ex]current bounding box.center)}]
		\node (B1) {$m_1$};
		\node[circle, inner sep = 4.8pt, draw, below right =.6 of B1] (Cup) {$g$};
		\node[above right =0.6 of Cup] (B2) {$m_n$};
		\node[above =.5 of Cup] (Dots) {$\cdots$};
		\draw (B1) to[out=-90,in=180] (Cup);
		\draw (B2) to[out=-90,in=0] (Cup);
		\end{tikzpicture}\xoto{(\id,\pmb\gamma)} 
		\begin{tikzpicture}[baseline={([yshift=-1ex]current bounding box.center)}]
		\node (B1) {$\gamma_1 m_1\gamma_1^{-1}$};
		\node[circle, inner sep = 4.8pt, draw, below right =.6 of B1] (Cup) {$g$};
		\node[above =.5 of Cup] (Dots) {$\cdots$};
		\node[above right =.6 of Cup] (B2) {$\gamma_n m_n\gamma_n^{-1}$};
		\draw (B1) to[out=-90,in=180] (Cup);
		\draw (B2) to[out=-90,in=0] (Cup);
		\end{tikzpicture}.
		$$
	\end{example}
	\begin{example}\label{ex:edgecontraction} (Edge contraction.)
		Let $(X,\mathbf{m},\mathbf{b})\in \Xcal^\Gamma$ and let $\{h,h'\}$ be an edge in $X$. Note that it could also be a loop. Then let $(X',\mathbf{m}',\mathbf{b}')$ be the stable $\Gamma$-graph obtained after contracting the edge $\{h,h'\}$. (The vertices of the edge get merged into one vertex and a suitable weight is attached to it.) This is the edge contraction morphism $(X,\mathbf{m},\mathbf{b})\to (X',\mathbf{m}',\mathbf{b}')\in \Xcal^\Gamma$. This morphism is related to gluing of $\Gamma$-covers and the axiom of `Factorization' for modular functors. 
		$$
		\begin{tikzpicture}[baseline={([yshift=0ex]current bounding box.center)}]
		\node (B1) {$m_1$};
		\node[circle, inner sep = 4pt, draw, below right =.4 of B1] (Cup) {$g$};
		\node[above =.3 of Cup] (Dots) {$\cdots$};
		\node[above right =.4 of Cup] (B2) {$m_n$};
		\draw (B1) to[out=-90,in=180] (Cup);
		\node[below =.7 of Cup] (invisible) {};
		\draw (B2) to[out=-90,in=0] (Cup);
		\node[circle, inner sep = 3pt, draw, below=.4 of Cup] (Cup1) {$g'$};
		\draw(Cup1)to(Cup);
		\node[below=.3 of Cup1] (Dots) {$\cdots$};
		\node[below  right= -.2  of Cup](B6){$(a^{-1}),am^{-1}a^{-1}$}; 
		\node[above left= -.2  of Cup1](B7){$(a), m$}; 
		\node[below right =.4 of Cup1] (B4) {$m'_n$};
		\node[below left =.4 of Cup1] (B5) {$m'_1$};
		\draw (B5) to[out=90,in=180] (Cup1);
		\draw (B4) to[out=90,in=0] (Cup1);
		\end{tikzpicture}\longrightarrow\begin{tikzpicture}[baseline={([yshift=-.3ex]current bounding box.center)}]
		\node (B1) {$m_1$};
		\node[circle, inner sep = 1pt, draw, below right =.8 of B1] (Cup) {$g+g'$};
		\node[above right =.6 of Cup] (B2) {$m_n$};
		\node[below right =.6 of Cup] (B4) {$m'_n$};
		\node[below left =.6 of Cup] (B5) {$m'_1$};
		\node[above =.5 of Cup] (Dots) {$\cdots$};
		\node[below=.5 of Cup] (Dots) {$\cdots$};
		\draw (B1) to[out=-90,in=160] (Cup);
		\draw (B2) to[out=-90,in=20] (Cup);
		\draw (B5) to[out=90,in=-160] (Cup);
		\draw (B4) to[out=90,in=-20] (Cup);
		\end{tikzpicture} .$$
		
	\end{example}
	\begin{example}\label{ex:legdeletion} (1-marked leg deletion.)
		Let $(X,\mathbf{m},\mathbf{b})\in \Xcal^\Gamma$ and let $l$ be a leg in $X$ such that $\mathbf{m}(l)=1$. Also suppose that the graph obtained by deleting the leg $l$ is stable. Let $(X',\mathbf{m}',\mathbf{b})$ be the stable $\Gamma$-graph obtained by deleting the leg. This is leg deletion morphism $(X,\mathbf{m},\mathbf{b})\to (X',\mathbf{m}',\mathbf{b})\in \Xcal^\Gamma$. In other words we are allowed to forget a leg marked by $1\in \Gamma$, as long as the graph does not become unstable. This morphism is related to the axiom of `Propagation of vacua'.
		
		$$\begin{tikzpicture}[baseline={([yshift=0ex]current bounding box.center)}]
		\node (B1) {$m_1$};
		\node[circle, inner sep = 4.8pt, draw, below right =.6 of B1] (Cup) {$g$};
		\node[above right =.6 of Cup] (B2) {$m_n$};
		\node[above =.5 of Cup] (Dots) {$\cdots$};
		\node[below =.5 of Cup] (leg) {$1$};
		\draw (B1) to[out=-90,in=180] (Cup);
		\draw (B2) to[out=-90,in=0] (Cup);
		\draw(leg)to(Cup);
		\end{tikzpicture}\longrightarrow \begin{tikzpicture}[baseline={([yshift=-1.2ex]current bounding box.center)}]
		\node (B1) {$m_1$};
		\node[circle, inner sep = 4.8pt, draw, below right =.6 of B1] (Cup) {$g$};
		\node[above right =.6 of Cup] (B2) {$m_n$};
		\node[above =.5 of Cup] (Dots) {$\cdots$};
		\draw (B1) to[out=-90,in=180] (Cup);
		\draw (B2) to[out=-90,in=0] (Cup);
		\end{tikzpicture}\hspace{1cm}
		\begin{tikzpicture}[baseline={([yshift=0ex]current bounding box.center)}]
		\node (B1) {$m_1$};
		\node[circle, inner sep = 4pt, draw, below right =.6 of B1] (Cup) {$g$};
		\node[above =.5 of Cup] (Dots) {$\cdots$};
		\node[above right =.6 of Cup] (B2) {$m_n$};
		\draw (B1) to[out=-90,in=180] (Cup);
		\node[below =.7 of Cup] (invisible) {};
		\draw (B2) to[out=-90,in=0] (Cup);
		\node[circle, inner sep = 3pt, draw, below=.4 of Cup] (Cup1) {$0$};
		\draw(Cup1)to(Cup);
		\node[below  right=-0.1  of Cup](B6){$(1), m^{-1}$}; 
		\node[above left=-0.1  of Cup1](B7){$(1),m$}; 
		\node[below right =.6 of Cup1] (B4) {$\gamma$};
		\node[below left =.6 of Cup1] (B5) {$1$};
		\draw (B5) to[out=90,in=180] (Cup1);
		\draw (B4) to[out=90,in=0] (Cup1);
		\end{tikzpicture}\longrightarrow\begin{tikzpicture}[baseline={([yshift=-.3ex]current bounding box.center)}]
		\node (B1) {$m_1$};
		\node[circle, inner sep = 3pt, draw, below right =.6 of B1] (Cup) {$g$};
		\node[above right =.6 of Cup] (B2) {$m_n$};
		\node[below right =.6 of Cup] (B4) {$\gamma$};
		\node[above =.5 of Cup] (Dots) {$\cdots$};
		\draw (B1) to[out=-90,in=160] (Cup);
		\draw (B2) to[out=-90,in=20] (Cup);
		\draw (B4) to[out=90,in=-20] (Cup);
		\end{tikzpicture} .$$
	\end{example}
	
	\begin{example} (Crossed braiding.)\label{ex:crossedbraiding}
		Consider a stable $\Gamma$-corolla with $n$ legs $l_1,l_2,\cdots,l_n$ marked by the group elements $m_1,m_2,\cdots,m_n\in \Gamma$. The crossed braiding isomorphism $(\sigma_{l_1,l_2},(1,m_1,1\cdots,1))$, denoted in short by $\beta_{l_1,l_2}$ in $\Xcal^\Gamma$ is a morphism from the $\Gamma$-corolla $(g;m_1,m_2,\cdots,m_n)$ to the $\Gamma$-corolla $(g;m_1m_2m_1^{-1}, m_1, m_3,\cdots,m_n)$ which flips the two legs $l_1,l_2$ and conjugates the second label $m_2$ by $m_1$. 
		\begin{equation}\label{eqn:prettypicture}
			\begin{tikzpicture}[baseline={([yshift=-.3ex]current bounding box.center)}]
		\node (B1) {$m_1$};
		\node[circle, inner sep = 2pt, draw, below right =.6 of B1] (Cup) {$g$};
		\node[above right =.6 of Cup] (B2) {$m_3$};
		\node[below right =.6 of Cup] (B4) {$m_4$};
		\node[below left =.6 of Cup] (B5) {$m_6$};
		\node[above =.4 of Cup] (Dots) {$m_2$};
		\node[below=.4 of Cup] (Dots1) {$m_5$};
		\draw[blue] (B1) to[out=-90,in=160] (Cup);
		\draw[red] (Dots) to[out=-90,in=90] (Cup);
		\draw (Dots1) to[out=90,in=-90] (Cup);
		\draw (B2) to[out=-90,in=20] (Cup);
		\draw (B5) to[out=90,in=-160] (Cup);
		\draw (B4) to[out=90,in=-20] (Cup);
		\end{tikzpicture}\xoto{\beta_{l_1,l_2}}\begin{tikzpicture}[baseline={([yshift=-.3ex]current bounding box.center)}]
		\node (B1) {$m_1m_2m_1^{-1}$};
		\node[circle, inner sep = 2pt, draw, below right =.6 of B1] (Cup) {$g$};
		\node[above right =.6 of Cup] (B2) {$m_3$};
		\node[below right =.6 of Cup] (B4) {$m_4$};
		\node[below left =.6 of Cup] (B5) {$m_6$};
		\node[above =.4 of Cup] (Dots) {$m_1$};
		\node[below=.4 of Cup] (Dots1) {$m_5$};
		\draw[red] (B1) to[out=-90,in=160] (Cup);
		\draw[blue] (Dots) to[out=-90,in=90] (Cup);
		\draw (Dots1) to[out=90,in=-90] (Cup);
		\draw (B2) to[out=-90,in=20] (Cup);
		\draw (B5) to[out=90,in=-160] (Cup);
		\draw (B4) to[out=90,in=-20] (Cup);
		\end{tikzpicture}
		\end{equation}
		
	\end{example}
	Since the category $\Xcal^\Gamma$ has all the morphisms as in Equation \eqref{eqn:prettypicture}, we do not need to formulate $\Gamma$-equivariance, permutation equivariance, Factorization and Propagation of vacua separately in Definition \ref{d:gammacrossedmodfun} of a $\Gamma$-crossed modular functor.

	For any $(X,\mathbf{m},\mathbf{b})\in \Xcal^\Gamma$ define the smooth Deligne-Mumford stacks (see Appendix \ref{ap:modstackofadmcovers1}, \ref{sec:localcoord})
	\begin{equation}\label{eqn:twistedconfusion1}
	\barM{}^\Gamma_{X,\mathbf{m},\mathbf{b}}:=\prod\limits_{v\in V(X)}\barM{}^\Gamma_{w(v),L_v}(\mathbf{m}|_{L_v}),
	\end{equation}
	\begin{equation}\label{eq:GModtildeXdefn}
	\tildebarM{}^\Gamma_{X,\mathbf{m},\mathbf{b}}:=\prod\limits_{v\in V(X)}\tildebarM{}^\Gamma_{w(v),L_v}(\mathbf{m}|_{L_v}).
	\end{equation}
	The stack $\barM{}^\Gamma_{X,\mathbf{m},\mathbf{b}}$ parameterizes $H(X)$-pointed admissible $\Gamma$-covers $(\tildeD\to D, \wtilde{\mathbf{q}},\mathbf{q})$ of curves $(D,\mathbf{q})$ in $\barM_{X}$ with connected components parameterized by $V(X)$ and with monodromies around the $H(X)$ marked points $\wtilde{\mathbf{q}}$ being given by $\mathbf{m}$.
	
	As in the untwisted case, for any $h\in H(X)$ we have the associated rank 1 point bundle $\tildeL_h$ on $\barM{}^\Gamma_{X,\mathbf{m},\mathbf{b}}$ and its dual line bundle $\tildeL^\vee_h$ whose fiber at $(\tildeD\to D,\wtilde{\mathbf{q}},{\mathbf{q}})\in \barM{}^\Gamma_{X,\mathbf{m},\mathbf{b}}$ is the tangent space $T_{\wtilde{\mathbf{q}}(h)}\tildeD$. 
	We recall the following proposition from \cite{JKK}.
	\begin{proposition}\label{prop:pullbackofpsiclasses}Consider the natural forgetful map $\pi:\overline{\mathcal{M}}{}^\Gamma_{X,{\mathbf{m}},\mathbf{b}}\rightarrow \overline{\mathcal{M}}_{X}$, then $\pi^*\mathcal{L}_h$ is naturally isomorphic to $\widetilde{\mathcal{L}}_h^{N_h}$, where $\mathcal{L}_h$ denotes the point bundle on $\overline{\mathcal{M}}_{X}$ and $N_h$ is the order of $\mathbf{m}(h) \in \Gamma$. 
	\end{proposition}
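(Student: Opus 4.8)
The plan is to reduce the statement to a computation with the relative dualizing sheaf along the marked sections and then to apply the relative Riemann--Hurwitz formula. First I would recall that the point bundles are restrictions of relative dualizing sheaves: writing $\mathcal{C}$ for the universal base curve over $\overline{\mathcal{M}}{}^\Gamma_{X,\mathbf{m},\mathbf{b}}$ (the pullback under $\pi$ of the universal curve over $\overline{\mathcal{M}}_X$), $\widetilde{\mathcal{C}}$ for the universal admissible cover, and $s=\mathbf{q}(h)$, $\widetilde{s}=\widetilde{\mathbf{q}}(h)$ for the $h$-th marked sections, one has $\pi^*\mathcal{L}_h\cong s^*\omega_{\mathcal{C}}$ and $\widetilde{\mathcal{L}}_h\cong \widetilde{s}^*\omega_{\widetilde{\mathcal{C}}}$, since the marked points are smooth points of the fibers and there the relative dualizing sheaf agrees with the relative cotangent line (the psi-class line bundle). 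Because the covering map $\widetilde{\mathcal{C}}\xrightarrow{\pi}\mathcal{C}$ satisfies $\pi\circ\widetilde{s}=s$, restriction along $\widetilde{s}$ gives $\pi^*\mathcal{L}_h\cong\widetilde{s}^*\pi^*\omega_{\mathcal{C}}$, so the whole proposition is reduced to comparing $\pi^*\omega_{\mathcal{C}}$ with $\omega_{\widetilde{\mathcal{C}}}$ in a neighborhood of the section $\widetilde{s}$.

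Next I would invoke the relative Riemann--Hurwitz / different computation for the finite map $\pi$. Since we work over $\mathbb{C}$ the ramification is tame, and one has $\omega_{\widetilde{\mathcal{C}}}\cong \pi^*\omega_{\mathcal{C}}\otimes \mathcal{O}_{\widetilde{\mathcal{C}}}(R)$, where $R$ is the ramification divisor. Along the marked section $\widetilde{s}$ the local model of the cover is $z=\widetilde{z}^{N_h}$ with $N_h=|\mathbf{m}(h)|$ the ramification index (this is exactly the local structure built into the definition of an admissible $\Gamma$-cover, and $N_h$ is locally constant on $\overline{\mathcal{M}}{}^\Gamma_{X,\mathbf{m},\mathbf{b}}$ because $\mathbf{m}$ is fixed), so near $\widetilde{s}$ the divisor $R$ equals $(N_h-1)\widetilde{s}$. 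Pulling the displayed isomorphism back along $\widetilde{s}$ and using the standard self-intersection identity $\widetilde{s}^*\mathcal{O}_{\widetilde{\mathcal{C}}}(\widetilde{s})\cong N_{\widetilde{s}/\widetilde{\mathcal{C}}}\cong\widetilde{\mathcal{L}}_h^\vee$, I obtain
$$\pi^*\mathcal{L}_h\cong\widetilde{s}^*\pi^*\omega_{\mathcal{C}}\cong \widetilde{s}^*\omega_{\widetilde{\mathcal{C}}}\otimes \widetilde{s}^*\mathcal{O}_{\widetilde{\mathcal{C}}}\big(-(N_h-1)\widetilde{s}\big)\cong \widetilde{\mathcal{L}}_h\otimes \widetilde{\mathcal{L}}_h^{N_h-1}\cong \widetilde{\mathcal{L}}_h^{N_h},$$
which is the desired natural isomorphism. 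As a sanity check, this matches the naive fiberwise picture: the cotangent line $T^*_{\mathbf{q}(h)}D=\mathfrak{m}_{p}/\mathfrak{m}_{p}^2$ pulls back isomorphically onto $(T^*_{\widetilde{\mathbf{q}}(h)}\widetilde{D})^{\otimes N_h}$, because $\pi^\#$ sends a uniformizer $z$ at $p$ to an element of $\mathfrak{m}_{\widetilde{p}}^{N_h}$ with leading term a unit times $\widetilde{z}^{N_h}$, inducing $\mathfrak{m}_{p}/\mathfrak{m}_{p}^2\xrightarrow{\cong}\mathfrak{m}_{\widetilde{p}}^{N_h}/\mathfrak{m}_{\widetilde{p}}^{N_h+1}\cong(\mathfrak{m}_{\widetilde{p}}/\mathfrak{m}_{\widetilde{p}}^2)^{\otimes N_h}$, the last step using regularity of $\mathcal{O}_{\widetilde{D},\widetilde{p}}$.

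The main obstacle will be to carry out the Riemann--Hurwitz comparison and the identification of the ramification divisor in families over the Deligne--Mumford stack, rather than over a single geometric fiber. Here the points to verify carefully are that $\pi:\widetilde{\mathcal{C}}\to\mathcal{C}$ is finite and flat with the marked sections lying entirely in the smooth (non-nodal) locus of the fibers, so that only the contribution $(N_h-1)\widetilde{s}$ of the ramification divisor meets $\widetilde{s}$ and the ramification over the nodes of the base does not affect $\widetilde{s}^*R$; that the ramification index along $\widetilde{s}$ is the constant $N_h=|\mathbf{m}(h)|$ on the fixed-monodromy component; and that the isomorphisms used---restriction of the relative dualizing sheaf to a smooth section and the conormal/normal identities for sections---are the standard ones, valid over the stack and compatible with base change. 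Once these relative statements are in place, the chain of isomorphisms above produces the asserted natural isomorphism $\pi^*\mathcal{L}_h\cong\widetilde{\mathcal{L}}_h^{N_h}$ and completes the proof.
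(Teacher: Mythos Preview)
The paper does not prove this proposition; it simply recalls it from \cite{JKK} (Jarvis--Kimura--Kaufmann) without argument. Your proof via the relative Riemann--Hurwitz formula is correct and is the standard argument for this statement; the fiberwise sanity check you include at the end (the map $z\mapsto\widetilde z^{N_h}$ inducing $\mathfrak m_p/\mathfrak m_p^2\cong(\mathfrak m_{\widetilde p}/\mathfrak m_{\widetilde p}^2)^{\otimes N_h}$) is in fact already the heart of the matter once one observes that the local model holds in families over the base stack.
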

	
	\subsection{Clutching with respect to morphisms between stable graphs}\label{ap:clutchingalongmaps}
	Let $(X,\mathbf{m},\mathbf{b})\in \Xcal^\Gamma$. By definition, if $\{h_1,h_2\}\in E(X)$, then ${}^{\mathbf{b}(h_1)}\mathbf{m}(h_1)$ and $\mathbf{m}(h_2)$ are inverse to each other.  The monodromy around the marked point $\wtilde{\mathbf{q}}(h_1)\in \tildeD$ is $\mathbf{m}(h_1)$ and hence the monodromy around the point ${\mathbf{b}(h_1)}\cdot \wtilde{\mathbf{q}}(h_1)$ is ${\mathbf{m}(h_2)^{-1}}.$ In other words, using the construction of \cite{JKK} we can glue the two  points ${\mathbf{b}(h_1)}\cdot \wtilde{\mathbf{q}}(h_1)$ and $\wtilde{\mathbf{q}}(h_2)$ on the admissible cover $\tildeD\to D$ corresponding to an edge of $X$. We get the same glued pointed admissible $\Gamma$-cover if we glue with respect to the two points $\wtilde{\mathbf{q}}(h_1)$ and ${\mathbf{b}(h_2)}\cdot \wtilde{\mathbf{q}}(h_2)$.
	
	Now any morphism in $\Xcal^\Gamma$ is equivalent to one obtained by contracting some edges first (see Remark \ref{rk:legdeletion} below), and then deleting some 1-marked legs. For the moduli stacks, deleting 1-marked legs amounts to forgetting the marking data corresponding to the deleted legs. Since the deleted legs are 1-marked, the cover is unramified over the corresponding marked points, which implies that this is a well-defined operation. Hence, given any  morphism $(f,\pmb{\gamma}):(X,\mathbf{m}_X,\mathbf{b}_X)\to (Y,\mathbf{m}_Y,\mathbf{b}_Y)$ in $\Xcal^\Gamma$ we can define the clutching map
	\begin{equation}\label{eq:gluingcovers}
	\xi_{f,\pmb{\gamma}}: \barM{}^\Gamma_{X,\mathbf{m}_X,\mathbf{b}_X} \rar{} \barM{}^\Gamma_{Y,\mathbf{m}_Y,\mathbf{b}_Y}.
	\end{equation}
	
	\begin{remark}\label{rk:legdeletion}
		In general if we try to delete a 1-marked leg first from a stable $\Gamma$-graph, the graph may become unstable and hence may force us to contract some edges. For the corresponding moduli stacks this corresponds to the ``forgetting tails'' construction in \cite{JKK}.
	\end{remark}
	
	\begin{remark}
		If we have two morphisms $$(X,\mathbf{m}_X,\mathbf{b}_X)\xoto{(f_1,\pmb{\gamma_1})}(Y,\mathbf{m}_Y,\mathbf{b}_Y)\xoto{(f_2,\pmb{\gamma_2})}(Z,\mathbf{m}_Z,\mathbf{b}_Z)$$ in $\Xcal^\Gamma$ then $\xi_{(f_2,\pmb{\gamma_2})\circ (f_1,\pmb{\gamma_1})}=\xi_{f_2,\pmb{\gamma_2}}\circ \xi_{f_1,\pmb{\gamma_1}}$ and the assignment $(X,\mathbf{m}_X,\mathbf{b}_X)\mapsto \barM{}^\Gamma_{X,\mathbf{m}_X,\mathbf{b}_X}$ is functorial.
	\end{remark}
	\subsubsection{Normal bundles for edge contraction morphisms}\label{sec:normaltoedge}
	As before, let $(f,\pmb{\gamma})$ be a morphism in $\Xcal^\Gamma$ which only contracts edges.  By \cite[Ch. XIII-\S3]{ACG:11}, the normal bundle on $\barM{}^\Gamma_{X,\mathbf{m}_X,\mathbf{b}_X}$ to the map $\xi_{f,\pmb{\gamma}}$ decomposes as
	\begin{equation}\label{eqn:decompositionofnormalbundle}
	N\xi_{f,\pmb{\gamma}}=\bigoplus\limits_{\{h_1,h_2\}\in E(X)\setminus f^*E(Y)}\tildeL^\vee_{h_1}\otimes \tildeL^\vee_{h_2}.
	\end{equation}

	Our next goal is to lift the clutching construction to the stacks (see Equation \eqref{eq:GModtildeXdefn} for notation) $\tildebarM{}^\Gamma_{X,\mathbf{m}_X,\mathbf{b}_X}$ and $\tildebarM{}^\Gamma_{Y,\mathbf{m}_Y,\mathbf{b}_Y}$ while also taking into account the normal bundles.  Consider the stack ${\tildebarM{}^\Gamma_{X,\mathbf{m}_X,\mathbf{b}_X,f,\pmb{\gamma}}}$ parameterizing data of the form $(\tildeD\to D,\wtilde{\mathbf{q}},\mathbf{q},\wtilde{\mathbf{v}})$, where $(\tildeD\to D,\wtilde{\mathbf{q}},\mathbf{q})\in \barM{}^\Gamma_{X,\mathbf{m}_X,\mathbf{b}_X}$ is an $H(X)$-pointed admissible $\Gamma$-cover and $\wtilde{\mathbf{v}}$ is a choice of non-zero tangent vectors to $\tildeD$ at the points marked by $f^*H(Y)\subset H(X)$. We have the Cartesian square 
	\begin{equation}
	\xymatrix{
		{\tildebarM{}^\Gamma_{X,\mathbf{m}_X,\mathbf{b}_X,f,\pmb{\gamma}}}\ar[r]^{\wtilde{\xi_{f,\pmb{\gamma}}}}\ar[d]_{\Gm^{f^*H(Y)}-torsor} & \tildebarM{}^\Gamma_{Y,\mathbf{m}_Y,\mathbf{b}_Y}\ar[d]^{\Gm^{H(Y)}-torsor} \\
		\barM{}^\Gamma_{X,\mathbf{m}_X,\mathbf{b}_X}\ar[r]^{\xi_{f,\pmb{\gamma}}} &  \barM{}^\Gamma_{Y,\mathbf{m}_Y,\mathbf{b}_Y}.
	}
	\end{equation}
	It follows that the normal bundle $N\wtilde{\xi_{f,\pmb{\gamma}}}\onto {\tildebarM{}^\Gamma_{X,\mathbf{m}_X,\mathbf{b}_X,f,\pmb{\gamma}}}$ decomposes as
	\begin{equation}\label{eq:normal bundle tilde xi_f,gamma}
	N\wtilde{\xi_{f,\pmb{\gamma}}}=\bigoplus\limits_{\{h_1,h_2\}\in E(X)\setminus f^*E(Y)}\tildeL^\vee_{h_1}\otimes \tildeL^\vee_{h_2},
	\end{equation}
	where we use $\tildeL^\vee_h$ to also denote the line bundle on ${\tildebarM{}^\Gamma_{X,\mathbf{m}_X,\mathbf{b}_X,f,\pmb{\gamma}}}$ obtained by pullback of the corresponding line bundle on $\barM{}^\Gamma_{X,\mathbf{m}_X,\mathbf{b}_X}$.
	
	Let $FN\wtilde{\xi_{f,\pmb{\gamma}}}\to {\tildebarM{}^\Gamma_{X,\mathbf{m}_X,\mathbf{b}_X,f,\pmb{\gamma}}}$ denote the frame bundle (it will be a $\Gm^{E(X)\setminus f^*E(Y)}$-torsor) associated with the vector bundle $N\wtilde{\xi_{f,\pmb{\gamma}}}\to {\tildebarM{}^\Gamma_{X,\mathbf{m}_X,\mathbf{b}_X,f,\pmb{\gamma}}}$ preserving the decomposition into line bundles given in Equation \eqref{eq:normal bundle tilde xi_f,gamma}.  We obtain the diagram
	\begin{equation}\label{eq:clutching for normal bundle}
	\xymatrix{
		\tildebarM{}^\Gamma_{X,\mathbf{m}_X,\mathbf{b}_X}\ar[r] & FN\wtilde{\xi_{f,\pmb{\gamma}}}\ar[d]\\
		& {\tildebarM{}^\Gamma_{X,\mathbf{m}_X,\mathbf{b}_X,f,\pmb{\gamma}}}\ar[r]^-{\wtilde{\xi_{f,\pmb{\gamma}}}} & \tildebarM{}^\Gamma_{Y,\mathbf{m}_Y,\mathbf{b}_Y}
	}
	\end{equation}
	where the top horizontal map is given by 
	$$(\tildeD\to D,\wtilde{\mathbf{q}},\wtilde{\mathbf{w}})\mapsto \left((\tildeD\to D,\wtilde{\mathbf{q}},\wtilde{\mathbf{w}}|_{f^*H(Y)}),(\wtilde{\mathbf{w}}(h_1)\otimes \wtilde{\mathbf{w}}(h_2))_{\{h_1,h_2\}\in E(X)\setminus f^*E(Y)}\right).$$
	We think of $FN\wtilde{\xi_{f,\pmb{\gamma}}}$ as the open part of $N\wtilde{\xi_{f,\pmb{\gamma}}}$ obtained by deleting the hyperplane bundles of zero sections corresponding to the decomposition (\ref{eq:normal bundle tilde xi_f,gamma}) of the normal bundle into line bundles. The hyperplane bundles which are deleted are parameterized by the contracted edges $E(X)\setminus f^*E(Y)$. They give one set of boundary divisors on $N\wtilde{\xi_{f,\pmb\gamma}}$ that we will need to consider and these type of divisors will be referred to as {\em hyperplane bundle divisors}. We will need to consider another set of boundary divisors in \S\ref{ss:specalongclutch} below.
	
	\subsection{Twisted $\mathcal{D}$-modules and specialization along clutching maps}\label{ap:specialization}
	We have defined clutching maps between moduli stacks associated with morphisms in the category $\Xcal^\Gamma$. In this section, we consider twisted $\mathcal{D}$-modules on these moduli stacks and functors between them defined using specialization along the clutching maps.

	\subsubsection{Twisted $\mathcal{D}$-modules on the moduli stacks}
	Let $(Y,\mathbf{m},\mathbf{b})\in \Xcal^\Gamma$ be a stable $\Gamma$-graph and $\barM{}^\Gamma_{Y,\mathbf{m},\mathbf{b}}$, $\tildebarM{}^\Gamma_{Y,\mathbf{m},\mathbf{b}}$ the corresponding smooth Deligne-Mumford stacks defined by (\ref{eqn:twistedconfusion1}),(\ref{eq:GModtildeXdefn}) with their open parts 
	$$
	\Mcal{}^\Gamma_{Y,\mathbf{m},\mathbf{b}}:=\prod\limits_{v\in V(Y)}\Mcal{}^\Gamma_{w(v),L_v}(\mathbf{m}|_{L_v}), \quad
	\tildeM{}^\Gamma_{Y,\mathbf{m},\mathbf{b}}=\prod\limits_{v\in V(Y)}\tildeM{}^\Gamma_{w(v),L_v}(\mathbf{m}|_{L_v}).$$
	
	We have the normal crossing boundary divisors 
	$$\Delta^\Gamma_{Y,\mathbf{m},\mathbf{b}}:=\barM{}^\Gamma_{Y,\mathbf{m},\mathbf{b}}\setminus \Mcal{}^\Gamma_{Y,\mathbf{m},\mathbf{b}} \mbox { and } \ \wtilde\Delta^\Gamma_{Y,\mathbf{m},\mathbf{b}}:=\tildebarM{}^\Gamma_{Y,\mathbf{m},\mathbf{b}}\setminus \tildeM{}^\Gamma_{Y,\mathbf{m},\mathbf{b}}.$$
	
	Recall that we have defined the Hodge line bundles (which we will always denote by $\Lambda$) on the stacks $\GModtildeA$ as pull-backs of the Hodge line bundles on the stacks $\barM_{g,A}$ along the natural forgetful maps. Hence we can define Hodge line bundles (also denoted by $\Lambda$) on the product moduli stacks $\tildebarM{}^\Gamma_{Y,\mathbf{m},\mathbf{b}}$ as the pullback of the Hodge line bundle on the product $\barM_Y$. Consider the logarithmic Atiyah algebra $\Acal_\Lambda(-\log\wtilde\Delta^\Gamma_{Y,\mathbf{m},\mathbf{b}})$ on the smooth Deligne-Mumford stack $\tildebarM{}^\Gamma_{Y,\mathbf{m},\mathbf{b}}$. For any $c\in \CBbb$ consider the logarithmic Atiyah algebra $c\Acal_\Lambda(-\log\wtilde\Delta^\Gamma_{Y,\mathbf{m},\mathbf{b}})$ on $\tildebarM{}^\Gamma_{Y,\mathbf{m},\mathbf{b}}$ of additive central charge $c$.
	\begin{definition}
		Let $(Y,\mathbf{m},\mathbf{b})\in \Xcal^\Gamma$ and $c\in \CBbb$. We let $\mathscr{D}_c\Mod(\tildebarM{}^\Gamma_{Y,\mathbf{m},\mathbf{b}})$ denote the category of vector bundles on the smooth Deligne-Mumford stack $\tildebarM{}^\Gamma_{Y,\mathbf{m},\mathbf{b}}$ equipped with a $c\Acal_\Lambda(-\log\wtilde\Delta^\Gamma_{Y,\mathbf{m},\mathbf{b}})$-module structure. We will call such an object a vector bundle with twisted $\log \wtilde\Delta^\Gamma_{Y,\mathbf{m},\mathbf{b}}$ connection on $\tildebarM{}^\Gamma_{Y,\mathbf{m},\mathbf{b}}$.
	\end{definition}
	\begin{remark}\label{rk:delignerhc}
		By Deligne's Riemann-Hilbert correspondence (see \cite[Thm. 5.2.20]{HTT:08}), we have an equivalence of abelian categories $\mathscr{D}_c\Mod(\tildebarM{}^\Gamma_{Y,\mathbf{m},\mathbf{b}})\cong \mathscr{D}_c\Mod(\tildeM^\Gamma_{Y,\mathbf{m},\mathbf{b}})$, where the latter is the category of coherent $\mathcal{O}_{\tildeM^\Gamma_{Y,\mathbf{m},\mathbf{b}}}$-modules over the Atiyah algebra $c\Acal_\Lambda$ on the open part $\tildeM{}^\Gamma_{Y,\mathbf{m},\mathbf{b}}$.
	\end{remark}
	
	\subsubsection{Specialization along edge contractions}\label{ss:specalongclutch}
	Let $$(f,\pmb{\gamma}): (X,\mathbf{m}_X,\mathbf{b}_X)\to (Y,\mathbf{m}_Y,\mathbf{b}_Y)$$ be a morphism in $\Xcal^\Gamma$ which only contracts edges. We have the following commutative diagram of the associated clutchings
	\begin{equation}\label{eq:specalongclutching}
	\xymatrix{
		\tildebarM{}^\Gamma_{X,\mathbf{m}_X,\mathbf{b}_X}\ar[rr]^-{\Gm^{E(X)\setminus f^*E(Y)}\mbox{-}torsor}\ar[dd]_{\Gm^{H(X)}-torsor} & & FN\wtilde{\xi_{f,\pmb{\gamma}}}\ar[d]_-{\Gm^{E(X)\setminus f^*E(Y)}\mbox{-}torsor}\ar[r] & N\wtilde{\xi_{f,\pmb{\gamma}}}\ar[ld]\\
		& & {\tildebarM{}^\Gamma_{X,\mathbf{m}_X,\mathbf{b}_X,f,\pmb{\gamma}}}\ar[dll]^(.25){\Gm^{f^*H(Y)}-torsor}\ar[r]^-{\wtilde{\xi_{f,\pmb{\gamma}}}} & \tildebarM{}^\Gamma_{Y,\mathbf{m}_Y,\mathbf{b}_Y}\ar[d]^{\Gm^{H(Y)}-torsor}\\
		\barM{}^\Gamma_{X,\mathbf{m}_X,\mathbf{b}_X}\ar[rrr]^{\xi_{f,\pmb{\gamma}}}\ar[d] & & & \barM{}^\Gamma_{Y,\mathbf{m}_Y,\mathbf{b}_Y}\ar[d]\\
		\barM_{X}\ar[rrr]^{\xi_f} & & & \barM_{Y}.
	}
	\end{equation}
	Let ${\tildeM^\Gamma_{X,\mathbf{m}_X,\mathbf{b}_X,f,\pmb{\gamma}}}\subset {\tildebarM{}^\Gamma_{X,\mathbf{m}_X,\mathbf{b}_X,f,\pmb{\gamma}}}$ be the open part and let ${\wtilde{\Delta}^\Gamma_{X,\mathbf{m}_X,\mathbf{b}_X,f,\pmb{\gamma}}}$ be the complementary boundary divisor. Let $N{\wtilde{\Delta}^\Gamma_{X,\mathbf{m}_X,\mathbf{b}_X,f,\pmb{\gamma}}}\subset N\wtilde{\xi_{f,\pmb\gamma}}$ be the corresponding divisor obtained by pullback to the normal bundle. We consider the open part $FN\wtilde{\xi_{f,\pmb\gamma}}^\circ\subset N\wtilde{\xi_{f,\pmb\gamma}}$ obtained by restricting $FN\wtilde{\xi_{f,\pmb\gamma}}$ to the open part ${\tildeM^\Gamma_{X,\mathbf{m}_X,\mathbf{b}_X,f,\pmb{\gamma}}}$. The complement is a normal crossing divisor on $N\wtilde{\xi_{f,\pmb\gamma}}$ which is a union of the divisor $N{\wtilde{\Delta}^\Gamma_{X,\mathbf{m}_X,\mathbf{b}_X,f,\pmb{\gamma}}}$ and the hyperplane bundle divisors defined previously (last paragraph of Section \ref{sec:normaltoedge}).
	
	Let $\tildebarM{}^\Gamma_{Y,\mathbf{m}_Y,\mathbf{b}_Y,f,\pmb\gamma}\subset \tildebarM{}^\Gamma_{Y,\mathbf{m}_Y,\mathbf{b}_Y}$ be the image $\wtilde{\xi_{f,\pmb\gamma}}(\tildebarM{}^\Gamma_{X,\mathbf{m}_X,\mathbf{b}_X})$. It is the closure of a stratum in the natural stratification on $\tildebarM{}^\Gamma_{Y,\mathbf{m}_Y,\mathbf{b}_Y}$. 
	
	On the open part the map $\wtilde{\xi_{f,\pmb\gamma}}:{\tildeM^\Gamma_{X,\mathbf{m}_X,\mathbf{b}_X,f,\pmb{\gamma}}}\rar{}{\tildeM{}^\Gamma_{Y,\mathbf{m}_Y,\mathbf{b}_Y,f,\pmb{\gamma}}}$ is a stack quotient by a finite group. Moreover, we can lift this to a covering map from a tubular neighborhood of ${\tildeM{}^\Gamma_{X,\mathbf{m}_X,\mathbf{b}_X,f,\pmb{\gamma}}}$ in the normal bundle $N\wtilde{\xi_{f,\pmb\gamma}}$ to a tubular neighborhood of the stratum ${\tildeM{}^\Gamma_{Y,\mathbf{m}_Y,\mathbf{b}_Y,f,\pmb{\gamma}}}\subset {\tildebarM{}^\Gamma_{Y,\mathbf{m}_Y,\mathbf{b}_Y}}$ such that the hyperplane bundle divisors on $N\wtilde{\xi_{f,\pmb\gamma}}$ described previously map to the boundary divisor $\wtilde{\Delta}^\Gamma_{Y,\mathbf{m}_Y,\mathbf{b}_Y}$. 
	
	In other words the intersection of $FN\wtilde{\xi_{f,\pmb\gamma}}^\circ$ with the tubular neighborhood of $$\tildeM{}^\Gamma_{X,\mathbf{m}_X,\mathbf{b}_X,f,\pmb\gamma}\subset N\wtilde{\xi_{f,\pmb\gamma}}$$ maps to the intersection of the open part $\tildeM{}^\Gamma_{Y,\mathbf{m}_Y,\mathbf{b}_Y}$ with the tubular neighborhood of the stratum $\tildeM{}^\Gamma_{Y,\mathbf{m}_Y,\mathbf{b}_Y,f,\pmb\gamma}\subset \tildebarM{}^\Gamma_{Y,\mathbf{m}_Y,\mathbf{b}_Y}$. Note that the open part $\tildeM{}^\Gamma_{X,\mathbf{m}_X,\mathbf{b}_X}$ is a $\Gm^{E(X)\setminus f^*E(Y)}$-torsor over the open part $FN\wtilde{\xi_{f,\pmb\gamma}}^\circ$. We obtain homomorphisms of fundamental groups $$\pi_1(\tildeM{}^\Gamma_{X,\mathbf{m}_X,\mathbf{b}_X})\to \pi_1(FN\wtilde{\xi_{f,\pmb\gamma}}^\circ)\to \pi_1(\tildeM{}^\Gamma_{Y,\mathbf{m}_Y,\mathbf{b}_Y}).$$ 
	
	By \cite[Ch. XVII]{ACG:11}, the pullback of the Hodge line bundle on $\barM_Y$ along $\xi_f$ is the Hodge line bundle on $\barM_X$. Hence the Hodge line bundle on $\tildebarM{}^\Gamma_{Y,\mathbf{m}_Y,\mathbf{b}_Y}$ pulls back to the Hodge line bundle on $\tildebarM{}^\Gamma_{X,\mathbf{m}_X,\mathbf{b}_X}$ along the top of (\ref{eq:specalongclutching}). We also denote by $\Lambda$ the pullback of the Hodge line bundle to $N\wtilde{\xi_{f,\pmb{\gamma}}}$. 
	
	For $c\in \CBbb$ we consider the logarithmic Atiyah algebra $c\Acal_\Lambda(-\log N\wtilde{\xi_{f,\pmb\gamma}}\setminus FN\wtilde{\xi_{f,\pmb\gamma}}^\circ)$ on $N\wtilde{\xi_{f,\pmb\gamma}}$ and the corresponding category $\mathscr{D}_c\Mod(N\wtilde{\xi_{f,\pmb\gamma}})$ of twisted logarithmic $\mathcal{D}$-modules. Any object of $$\mathscr{D}_c\Mod(\tildebarM{}^\Gamma_{Y,\mathbf{m}_Y,\mathbf{b}_Y})\cong \mathscr{D}_c\Mod(\tildeM{}^\Gamma_{Y,\mathbf{m}_Y,\mathbf{b}_Y})$$ (see Remark \ref{rk:delignerhc}) after specialization to the normal bundle gives us an object of $$\mathscr{D}_c\Mod(N\wtilde{\xi_{f,\pmb{\gamma}}})\cong \mathscr{D}_c\Mod(FN\wtilde{\xi_{f,\pmb{\gamma}}}^\circ).$$ We can now pullback along the top horizontal arrow of (\ref{eq:specalongclutching}) to obtain an object of $\mathscr{D}_c\Mod(\tildebarM{}^\Gamma_{X,\mathbf{m}_X,\mathbf{b}_X})\cong \mathscr{D}_c\Mod(\tildeM{}^\Gamma_{X,\mathbf{m}_X,\mathbf{b}_X})$.
	This defines a functor which we denote as
	\begin{equation}\label{eqn:specializationdeverdier}
	\Sp_{f,\pmb{\gamma}}:\mathscr{D}_c\Mod(\tildebarM{}^\Gamma_{Y,\mathbf{m}_Y,\mathbf{b}_Y}) \rar{} \mathscr{D}_c\Mod(\tildebarM{}^\Gamma_{X,\mathbf{m}_X,\mathbf{b}_X}). 
	\end{equation}
	
	\subsubsection{Specialization for all clutchings}
	Suppose that $$(f,\pmb{\gamma}):(X,\mathbf{m}_X,\mathbf{b}_X)\to (Y,\mathbf{m}_Y,\mathbf{b}_Y)$$ is a morphism which only deletes some 1-marked legs. In this case the clutching map $$\xi_{f,\pmb\gamma}:\tildebarM{}^\Gamma_X,\mathbf{m}_X,\mathbf{b}_X\to \tildebarM{}^\Gamma_Y,\mathbf{m}_Y,\mathbf{b}_Y$$ merely forgets the marking data corresponding to the deleted 1-marked legs and the Hodge line bundle clearly pulls back to the Hodge line bundle. In this case we define the specialization to be the pullback
	\begin{equation}\label{eq:specalonggraphmaps}
	\Sp_{f,\pmb{\gamma}}:=\xi_{f,\pmb\gamma}^*:\mathscr{D}_c\Mod(\tildebarM{}^\Gamma_{Y,\mathbf{m}_Y,\mathbf{b}_Y}) \rar{} \mathscr{D}_c\Mod(\tildebarM{}^\Gamma_{X,\mathbf{m}_X,\mathbf{b}_X}). 
	\end{equation} 
	
	Since any morphism in $\Xcal^\Gamma$ is a composition of edge contractions followed by leg deletions, we can now define specialization along any morphism in $\Xcal^\Gamma$. If we have two morphisms
	$(X,\mathbf{m}_X,\mathbf{b}_X)\xoto{f_1,\pmb{\gamma_1}}(Y,\mathbf{m}_Y,\mathbf{b}_Y)\xoto{f_2,\pmb{\gamma_2}}(Z,\mathbf{m}_Z,\mathbf{b}_Z)$ then we have a natural isomorphism between the two functors
	\begin{equation}
	\Sp_{(f_2,\pmb{\gamma_2})\circ (f_1,\pmb{\gamma_1})} \cong \Sp_{f_1,\pmb{\gamma_1}}\circ \Sp_{f_2,\pmb{\gamma_2}}:\mathscr{D}_c\Mod(\tildebarM{}^\Gamma_{Z,\mathbf{m}_Z,\mathbf{b}_Z})\rar{} \mathscr{D}_c\Mod(\tildebarM{}^\Gamma_{X,\mathbf{m}_X,\mathbf{b}_X}).
	\end{equation}
	Hence the assignment $(X,\mathbf{m}_X,\mathbf{b}_X)\mapsto \mathscr{D}_c\Mod(\tildebarM{}^\Gamma_{X,\mathbf{m}_X,\mathbf{b}_X})$ is functorial.
	
	\section{Twisted Kac-Moody S-matrices}\label{section:crossedS}
	Following \cite{Kac}, we now recall the notion twisted S-matrices in the setting of twisted affine Lie algebras and connect them to the characters of the fusion ring in Section \ref{section:hongfusion}. We call these matrices twisted Kac-Moody S-matrices to differentiate between the crossed S-matrices discussed earlier. As observed in \cite{Kac}, these twisted Kac-Moody S-matrices (except for $A_{2n}^{(2)}$) are not matrices for the modular transformation of the characters with respect to the group $\operatorname{SL}_2(\mathbb{Z})$. 
	We follow the ordering and labeling of the roots and weights as in \cite{Kac}. 
	
	\begin{notation}Recall that $\GO$ denote the finite dimensional algebra whose Cartan matrix is obtained by deleting the $0$-th row and column of $X_N^{(m)}$.  Throughout this section, $\mathring{Q}$ will denote
		the root lattice $Q(\GO)$; $\Lambda_i$'s  will denote the affine fundamental weights of $\frg(X_N^{(m)})$ and $\overline{\Lambda}_i$ will denote their orthogonal horizontal projects with respect to invariant bilinear form $\frg(X_N^{(m)})$; $\omega_1,\dots, \omega_{\operatorname{rank \GO}}$ will denote the fundamental weights of $\GO$; and the weight $\omega_0$ of the trivial representation of $\GO$ will be considered as the zero-th fundamental weight. Moreover $M^*$ denotes the dual lattice of $\mathring{Q}$ with respect to the normalized Killing form $\kappa_{\mathfrak{g}}$.
	\end{notation}
	\subsection{Integrable highest weight representation of $\frg(X_N^{m})$. }
	\begin{itemize}
		
		\item  The case $A_{2n-1}^{(2)}$: In this case $\GO=C_n$. It turns out that for all $0\leq i\leq n$, the $\overline{\Lambda}_i=\omega_i$. 
		The set of level $\ell$ integrable highest weight representations of the affine Lie algebra $\mathfrak{g}(A_{2n-1}^{(2)})$ can be rewritten as follows:
		\begin{equation}\label{eqn:levellweightforA2n-12}
		P^{\ell}(\mathfrak{g}(A_{2n-1}^{(2)}) )=\{\sum_{i=1}^nb_i\omega_i\in P_+(C_{n})| b_1+2(b_2+\dots+b_n)\leq \ell \}.
		\end{equation}
		\item The case $D_{n+1}^{(2)}$:  In this case ${\GO}=B_n$. As in the previous case, it turns out that for all $0\leq i\leq n$, the $\overline{\Lambda}_i=\omega_i$. The set of level $\ell$ integrable highest weight representations of the affine Lie algebra $\mathfrak{g}(A_{2n-1}^{(2)})$  are  can be rewritten as follows:
		\begin{equation}\label{eqn:levellweightforDn+12}
		P^{\ell}(\mathfrak{g}(D_{n+1}^{(2)}) )=\{\sum_{i=1}^nb_i\omega_i\in P_+(B_n)| 2(b_1+\dots+b_{n-1})+b_n\leq \ell \}.
		\end{equation}
		
		\item The case $D_{4}^{(3)}$: In this case, $\GO=\mathfrak{g}_2$ and the orbit Lie algebra $\mathfrak{g}_{\sigma}$ is also $\mathfrak{g}_2$.  Then $\overline{\Lambda}_1=\omega_2$ and $\overline{\Lambda}_2=\omega_1$, where $\omega_1$ and $\omega_2$ are the fundamental weights of the Lie algebra $\mathfrak{g}_2$. Moreover, $\overline{\Lambda}_0=\omega_0$. With this notation, we have 
		\begin{equation}\label{eqn:levellweigtforD43}
		P^{\ell}(\mathfrak{g}(D_4^{(3)}))=\{b_1\omega_1+b_2\omega_2\in P_{+}(\mathfrak{g}_2)| 3b_1+2b_2\leq \ell\}.
		\end{equation}
		
		\item The case $E_{6}^{(2)}$: In this case $\GO=\mathfrak{f}_4$ Then $\overline{\Lambda}_1=\omega_4$ and $\overline{\Lambda}_2=\omega_3$, $\overline{\Lambda}_3=\omega_2$ and $\overline{\Lambda}_4=\omega_1$, where $\omega_1,\dots,\omega_4$ are the fundamental weights of the Lie algebra $\mathfrak{f}_4$. Moreover, $\overline{\Lambda}_0=\omega_0$. With this notation, we have 
		\begin{equation}\label{eqn:levellweigtforE62}
		P^{\ell}(\mathfrak{g}(E_6^{(2)}))=\{b_1\omega_1+\dots+ b_4\omega_4\in P_{+}(\mathfrak{f}_4)| 2b_1+4b_2+3b_3+2b_4\leq \ell\}.
		\end{equation}
	\end{itemize}
	
	\subsection{Twisted Kac-Moody S-matrix for $A_{2n-1}^{(2)}$}\label{sec:comp6}Let $P^{\ell}(\frg(A_{2n-1}^{(1)}))^{\sigma}$ denote the set of integrable level $\ell$ weights of the Lie algebra $\frg(A_{2n-1}^{(1)})$ which are fixed by the involution $\sigma:i \rightarrow {2n-i}$ of the vertices of the Dynkin diagram of $A_{2n-1}$. It can be described explicitly as. 
	$$P^{\ell}(\frg(A_{2n-1}^{(1)}))^{\sigma}:=\{\sum_{i=1}^{n-1}b_{i}(\omega_i+\omega_{2n-i})+b_n\omega_n\in P_{+}(A_{2n-1})| 2(b_1+\dots+b_{n-1})+b_n\leq \ell\}.$$
	Since the fixed point orbit Lie algebra of $A_{2n-1}$ under the action of the involution $\sigma$ is $B_n$, there is a natural bijection between $\iota:P(A_{2n-1})^{\sigma}\rightarrow P(B_n).$ Thus under the map $\iota$ restricts to a bijection:
	\begin{equation}
	\label{eqn:idf1}
	\iota:P^{\ell}(\frg(A_{2n-1}^{(1)}))^{\sigma}\simeq P^{\ell}(\frg(D_{n+1}^{(2)})) 
	\end{equation}
	and we identify the two via $\iota$. We recall the following map between the Cartan subalgebras of $B_n$ and $C_n$ following Section 13.9 in \cite{Kac}
	$$\tau_{\kappa}:\mathfrak{h}(B_n)\rightarrow \mathfrak{h}(C_n), \  \omega_i \rightarrow \frac{a_i}{a_i^{\vee}}\omega_i, \ \mbox{for $1\leq i\leq n$},$$ and where $a_{1},\dots, a_{n}$ (respectively $a_1^{\vee},\dots, a_n^{\vee}$) denote the Coxeter (respectively dual Coxeter) label of the Lie algebra $\mathfrak{g}(A_{2n-1}^{(2)})$. 
	
	The rows and columns of the twisted Kac-Moody S-matrix are parameterized by the set $P^{\ell}(\frg({A_{2n-1}^{(2)}}))$ and $ P^{\ell}(\frg(A^{(1)}_{2n-1}))^{\sigma}$ respectively. Under the identification $\iota$, the columns will be parameterized by $P^{\ell}(\frg(D_{n+1}^{(2)}))$. We recall the following formula \cite{Kac} for the twisted Kac-Moody $S$-matrix ${\mathscr{S}}^{(\ell)}(A_{2n-1}^{(2)})$ whose $(\lambda,\mu)$-th entry is given by the formula: 
	\begin{align}
	\label{{eqn:SmatrixforA2n-12}}{\mathscr{S}}_{\lambda,\mu}^{(\ell)}(A_{2n-1}^{(2)})=
	i^{|\mathring{\Delta}_{+}|}\frac{\sqrt{2}}{|M^*/(\ell+2n)\mathring{Q}|^{\frac{1}{2}}}  \sum_{w\in \mathring{W}}\epsilon(w)\exp\bigg(-\frac{2\pi i }{\ell+2n}\kappa_{\mathfrak{g}}(w( \lambda+\overline{\rho}), \tau_{\kappa}(\mu+\overline{\rho}'))\bigg), 
	\end{align}
	where $\overline{\rho}$ (respectively $\overline{\rho}'$) denotes the sum of the fundamental weights of $C_n$ (respectively $B_n$).

	\subsection{Twisted Kac-Moody S-matrix for $D_{n+1}^{(2)}$}\label{sec:comp7}Let $P^{\ell}(\frg(D_{n+1}^{(1)}))^{\sigma}$ denote the set of integrable level $\ell$ weights of the Lie algebra $\frg(D_{n+1}^{(1)})$ which are fixed by the involution which exchanges the $n$ and $n+1$-th vertices of the Dynkin diagram of $D_{n+1}$. It can be described explicitly as. 
	$$P^{\ell}(\frg(D_{n+1}^{(1)}))^{\sigma}:=\{\sum_{i=1}^{n}b_{i}\omega_i+b_n(\omega_n+\omega_{n+1})\in P_{+}(D_{n+1})| b_1+2(b_1+\dots+b_{n})\leq \ell\}.$$
	Since the fixed point orbit Lie algebra of $D_{n+1}$ under the action of the involution $\sigma$ is $C_n$, there is a natural isomorphism between $\iota:P(D_{n+1})^{\sigma}\rightarrow P(C_n)$. Moreover, the map $\iota$ restricts to a bijection of
	\begin{equation}\label{eqn:idff4}
	\iota:P^{\ell}(\frg(D_{n+1}^{(1)}))^{\sigma}\simeq P^{\ell}(\frg(A_{2n-1}^{(2)}))
	\end{equation}and we identify the two via $\iota$. We recall the following map between the Cartan subalgebras of $C_n$ and $B_n$ following Section 13.9 in \cite{Kac}
	$$\tau_{\kappa}:\mathfrak{h}(C_n)\rightarrow \mathfrak{h}(B_n), \  \omega_i \rightarrow \frac{a_i}{a_i^{\vee}}\omega_i,$$ where $1\leq i \leq n$ and $a_{1},\dots, a_{n}$ (respectively $a_1^{\vee},\dots, a_n^{\vee}$) denote the Coxeter (respectively dual Coxeter) labels of the Lie algebra $\mathfrak{g}(D_{n+1}^{(2)})$. 
	
	The rows and columns of the twisted Kac-Moody S-matrix are parameterized by the set $P^{\ell}(\frg({D_{n+1}^{(2)}}))$ and $ P^{\ell}(\frg(D^{(1)}_{n+1}))^{\sigma}$ respectively. Under the identification $\iota$, the columns will be parameterized by $P^{\ell}(\frg(A_{2n-1}^{(2)}))$. The $(\lambda,\mu)$-th entry of the twisted Kac-Moody $S$-matrix ${\mathscr{S}}_{\lambda,\mu}^{(\ell)}(D_{n+1}^{(2)})$ is
	\begin{equation}\label{{eqn:SmatrixforDn+12}}
	{\mathscr{S}}^{(\ell)}_{\lambda,\mu}(D_{n+1}^{(2)})=
	i^{|\mathring{\Delta}_{+}|}\frac{\sqrt{2}}{|M^*/(\ell+2n)\mathring{Q}|^\frac{1}{2}}  \sum_{w\in \mathring{W}}\epsilon(w)\exp\bigg(-\frac{2\pi i }{\ell+2n}\kappa_{\mathfrak{g}}(w( \lambda+\overline{\rho}), \tau_{\kappa}(\mu+\overline{\rho}'))\bigg), 
	\end{equation}
	where $\overline{\rho}$ (respectively $\overline{\rho}'$) denotes the sum of the fundamental weights of $C_n$ (respectively $B_n$).

	\subsection{Twisted Kac-Moody S-matrix for $D_{4}^{(3)}$}\label{sec:comp8}Let $P^{\ell}(\frg(D_{4}^{(1)}))^{\sigma}$ denote the set of integrable level $\ell$ weights of the Lie algebra $\frg(D_{4}^{(1)})$ which are fixed by rotation $\sigma$ that rotates the $1\rightarrow 3\rightarrow4\rightarrow 1$-th vertices of the Dynkin diagram of $D_{4}$. It can be described explicitly as. 
	$$P^{\ell}(\frg(D_{4}^{(1)}))^{\sigma}:=\{b_1(\omega_1+\omega_3+\omega_4)+b_2\omega_2\in P_{+}(\mathfrak{g}_2)| 3b_1+2b_2\leq \ell\}.$$
	Since the fixed point orbit Lie algebra of $D_{4}$ under the action of the involution $\sigma$ is $\mathfrak{g}_2$, there is a natural isomorphism between $\iota:P(D_{4})^{\sigma}\rightarrow P(\mathfrak{g}_2)$. Moreover, the map $\iota$ restricts to a bijection of
	\begin{equation}\label{idf2}
	\iota:P^{\ell}(\frg(D_{4}^{(1)}))^{\sigma}\simeq P^{\ell}(\frg(D_4^{(3)}))
	\end{equation}
	and we identify the two via $\iota$. We recall the following map between the Cartan subalgebra of $\mathfrak{g}_2$ following Section 13.9 in \cite{Kac}
	$$\tau_{\kappa}:\mathfrak{h}(\mathfrak{g}_2)\rightarrow \mathfrak{h}(\mathfrak{g}_2), \  \omega_2 \rightarrow \frac{a_2}{a_2^{\vee}}\omega_1, \ \omega_1\rightarrow \frac{a_1}{a_1^{\vee}}\omega_2,$$ where $1\leq i \leq 2$ and $a_{1},a_{2}$ (respectively $a_1^{\vee},a_2^{\vee}$) denote the Coxeter (respectively Dual Coxeter) labels of the Lie algebra $\mathfrak{g}(D_{4}^{(3)})$. 
	
	The rows and columns of the twisted Kac-Moody S-matrix are parameterized by the set $P^{\ell}(\frg({D_{4}^{(3)}}))$ and $ P^{\ell}(\frg(D^{(1)}_{4}))^{\sigma}$ respectively. Under the identification $\iota$, the columns will be parameterized by $P^{\ell}(\frg(D_{4}^{(3)}))$. We recall the following formula \cite{Kac}:
	\begin{equation}	\label{{eqn:SmatrixforD43}}
	{\mathscr{S}}^{(\ell)}_{\lambda,\mu}(D_{4}^{(3)})=
	i^{|\mathring{\Delta}_{+}|}\frac{\sqrt{3}}{|M^*/(\ell+6)\mathring{Q}|^{\frac{1}{2}}}  \sum_{w\in \mathring{W}}\epsilon(w)\exp\bigg(-\frac{2\pi i }{\ell+6}\kappa_{\mathfrak{g}}(w( \lambda+\overline{\rho}), \tau_{\kappa}(\mu+\overline{\rho}))\bigg), 
	\end{equation}
	where $\overline{\rho}$ denotes the sum of the fundamental weights of $\mathfrak{g}_2$.

	\subsection{Twisted Kac-Moody  S-matrix for $E_{6}^{(2)}$}\label{sec:comp4}Let $P^{\ell}(\frg(E_{6}^{(1)}))^{\sigma}$ denote the set of integrable level $\ell$ weights of the Lie algebra $\frg(E_{6}^{(1)})$ which are fixed by rotation $\sigma$ that interchanges $1$-st with $5$-th;$2$-th with $4$-th and fixes the $3rd$ and $6$-th vertices of the Dynkin diagram of $E_{6}$. It can be described explicitly as. 
	$$P^{\ell}(\frg(E_{6}^{(1)}))^{\sigma}:=\{b_1(\omega_1+\omega_5)+b_2(\omega_2+\omega_4)+ b_3\omega_3+ b_4\omega_6\in P_+(\mathfrak{f}_4)| 2b_1+4b_2+3b_3+2b_4\leq \ell\}.$$
	Since the orbit Lie algebra of $E_{6}$ under the action of the involution $\sigma$ is $\mathfrak{f}_4$, there is a natural bijection between $\iota:P(E_{6})^{\sigma}\rightarrow P(\mathfrak{f}_4)$. Moreover, the map $\iota$ restricts to a bijection of
	\begin{equation}
	\label{eqn:idf2}
	\iota:P^{\ell}(\frg(E_{6}^{(1)}))^{\sigma}\simeq P^{\ell}(\frg(E_6^{(2)})) 
	\end{equation}
	and we identify the two via $\iota$. We recall the following map between the Cartan subalgebra of $\mathfrak{f}_4$ following Section 13.9 in \cite{Kac}
	$$\tau_{\kappa}:\mathfrak{h}(\mathfrak{f}_4)\rightarrow \mathfrak{h}(\mathfrak{f}_4), \  \omega_4 \rightarrow \frac{a_4}{a_4^{\vee}}\omega_1, \ \omega_3\rightarrow \frac{a_3}{a_3^{\vee}}\omega_2, \ \omega_2\rightarrow \frac{a_2}{a_2^{\vee}}\omega_3, \omega_1\rightarrow \frac{a_1}{a_1^{\vee}}\omega_4,$$ where  $a_{1},a_{2},a_3, a_4$ (respectively $a_1^{\vee},a_2^{\vee},a_3^{\vee},a_4^{\vee}$) denote the Coxeter (respectively Dual Coxeter) labels of the Lie algebra $\mathfrak{g}(E_{6}^{(2)})$. 
	
	The rows and columns of the twisted Kac-Moody S-matrix are parameterized by the set $P^{\ell}(\frg({E_{6}^{(2)}}))$ and $ P^{\ell}(\frg(E^{(1)}_{6}))^{\sigma}$ respectively. Under the identification $\iota$, the columns will be parameterized by $P^{\ell}(\frg(E_{6}^{(2)}))$. We recall the following formula from \cite{Kac}:
	\begin{align}
	\label{{eqn:SmatrixforE62}}
	{\mathscr{S}}^{(\ell)}_{\lambda,\mu}(E_{6}^{(2)})=
	\sqrt{2}i^{|\mathring{\Delta}_{+}|}|M^*/(\ell+12)\mathring{Q}|^{-\frac{1}{2}}  \sum_{w\in \mathring{W}}\epsilon(w)\exp\bigg(-\frac{2\pi i }{\ell+12}\kappa_{\mathfrak{g}}(w( \lambda+\overline{\rho}), \tau_{\kappa}(\mu+\overline{\rho})\bigg), 
	\end{align}
	where $\overline{\rho}$  denotes the sum of the fundamental weights of $\mathfrak{f}_4$.

	\subsection{Twisted Kac-Moody S-matrix for $A_{2n}^{(2)}$}In this case ${\GO}=C_n$ and the orbit Lie algebra $\mathfrak{g}_{\sigma}=C_n$. Let $\Lambda_0,\dots,\Lambda_n$ denote the affine fundamental weights of $\mathfrak{g}(A_{2n}^{(2)})$ and $\overline{\Lambda}_0,\dots \overline{\Lambda}_n$, denote the projection to $C_n$ under the normalized invariant bilinear form on  denoted by $\kappa_{\mathfrak{g}}$. Now let $\omega_0,\omega_1,\dots,\omega_n$ denote the fundamental weights of $C_n$. It turns out that for all $0\leq i\leq n$, the $\overline{\Lambda}_i=\omega_i$. 
	The set of level $\ell$ integrable highest weight representations of the affine Lie algebra $\mathfrak{g}(A_{2n}^{(2)})$ can be rewritten as follows:
	\begin{equation}\label{eqn:levellweightforA2n2}
	P^{\ell}(\mathfrak{g}(A_{2n}^{(2)}) )=\{\sum_{i=1}^nb_i\omega_i\in P_{+}(C_n)| 2(b_1+b_2+\dots+b_n)\leq \ell \}.
	\end{equation}

	\begin{remark}\label{rem:altdef}
		With the convention of the numbering of the vertices of the affine Dynkin diagram in the case $A_{2n}^{(2)}$, we get the fixed point Lie algebra is of type $B_n$ whereas $\GO=C_n$. However it is sometimes convenient to choose a different ordering of the vertices of the Dynkin diagram such that the new horizontal subalgebra is same as the fixed point algebra $B_n$. This can be done if we renumber the vertices  by reflecting about the $n$-th vertex of the affine Dynkin diagram. Under the new numbering system we get an alternate description
		\begin{equation}\label{eqn:a2n}
		P^{\ell}(\frg(A_{2n}^{(2)}))=\{\sum_{i=1}^n{\widetilde{b}_i}\omega_i\in P_{+}(B_n)| \widetilde{b}_1+\dots+\widetilde{b}_{n-1}+\frac{\widetilde{b}_{n}}{2}\leq \frac{\ell}{2}, \ \ell-\widetilde{b}_n \ \mbox{is even}\},
		\end{equation}
		where $\widetilde{\omega}_1,\dots,\widetilde{\omega}_n$ are fundamental weights of the Lie algebra of type $B_n$. In particular, we observe that $P^{1}(\frg,\sigma)=\{\omega_n\}$. 
		We refer the reader to compare it with the description of $P^{\ell}(\frg,\gamma)$ in \cite{KH}.
	\end{remark}
	
	Let $P^{\ell}(\frg(A_{2n}^{(1)}))^{\sigma}$ denote the set of integrable level $\ell$ weights of the Lie algebra $\frg(A_{2n}^{(1)})$ which are fixed by the involution $\sigma:i \rightarrow {2n+1-i}$. It can be described explicitly as. 
	$$P^{\ell}(\frg(A_{2n}^{(1)}))^{\sigma}:=\{\sum_{i=1}^{n}a_{i}(\omega_i+\omega_{2n-i})\in P_+(A_{2n})| 2(b_1+\dots+b_n)\leq \ell\}.$$ Under the map $\iota: P(A_{2n})^{\sigma}\rightarrow P(C_n)$, there is a natural bijection between $\iota: P^{\ell}(\frg(A^{(1)}_{2n}))^{\sigma}\rightarrow P^{\ell}(\frg({A_{2n}^{(2)})}).$ Hence we will identify the two using the map $\iota$.
	The rows and columns of the twisted Kac-Moody  S-matrix are parameterized by the set $P^{\ell}(\frg({A_{2n}^{(2)}}))$ and $ P^{\ell}(\frg(A^{(1)}_{2n}))^{\sigma}$ respectively. We recall the following Theorem 13.8 \cite{Kac}:
	\begin{equation}
	\label{{eqn:SmatrixforA2n2}}
	{\mathscr{S}}^{(\ell)}_{\lambda,\mu}(A_{2n}^{(2)})=i^{|\mathring{\Delta}_{+}|}(\ell+2n+1)^{-\frac{1}{2}}\sum_{w\in \mathring{W}}\epsilon(w)\exp\bigg(-\frac{2\pi i }{\ell+2n+1}\kappa_{\mathfrak{g}}(w( \lambda+\overline{\rho}), \mu+\overline{\rho})\bigg), 
	\end{equation}where $\overline{\rho}$ denotes the sum of the fundamental weights of $C_n$. Moreover, the matrix ${\mathscr{S}}^{(\ell)}(A_{2n}^{(2)})$ has the following properties \cite{Kac}.
	\begin{proposition}\label{prop:propertiesofSmatrixA2n2}
		\label{prop:unitarityforSmatrixA2n2}The matrix $\big({\mathscr{S}}^{(\ell)}(A_{2n}^{(2)})\big)_{\lambda,\mu}={\mathscr{S}}^{(\ell)}_{\lambda,\mu}({A_{2n}^{(2)}})$  is symmetric and unitary. 
	\end{proposition}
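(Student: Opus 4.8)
The plan is to recognize the formula for $\mathscr{S}^{(\ell)}(A_{2n}^{(2)})$ as an instance of the Kac--Peterson $S$-matrix attached to the twisted affine algebra $\frg(A_{2n}^{(2)})$, whose dual Coxeter number is $h^{\vee}=2n+1$ and whose horizontal subalgebra is $C_n$. Indeed the exponent involves the dilation $\ell+2n+1=\ell+h^{\vee}$, the finite Weyl group $\mathring{W}$ of $C_n$, and the shifted weights $\lambda+\overline{\rho},\mu+\overline{\rho}$, exactly matching the general Kac--Peterson expression in \cite[Ch.~13]{Kac}. Both asserted properties then follow from the standard orthogonality arguments of \emph{op. cit.}, which I would reproduce in this special case. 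Throughout set $m:=\ell+2n+1$ and note that $|i^{|\mathring{\Delta}_{+}|}|=1$, so the scalar prefactor has modulus $m^{-1/2}$ and is common to the entries $(\lambda,\mu)$ and $(\mu,\lambda)$.

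Symmetry is the easy half. Since $\kappa_{\mathfrak{g}}$ is a symmetric, $\mathring{W}$-invariant form, we have $\kappa_{\mathfrak{g}}(w(\lambda+\overline{\rho}),\mu+\overline{\rho})=\kappa_{\mathfrak{g}}(\lambda+\overline{\rho},w^{-1}(\mu+\overline{\rho}))=\kappa_{\mathfrak{g}}(w^{-1}(\mu+\overline{\rho}),\lambda+\overline{\rho})$. Reindexing the defining sum by $w\mapsto w^{-1}$ and using $\epsilon(w^{-1})=\epsilon(w)$ converts the expression for $\mathscr{S}^{(\ell)}_{\lambda,\mu}$ into that for $\mathscr{S}^{(\ell)}_{\mu,\lambda}$, which gives $\mathscr{S}^{(\ell)}_{\lambda,\mu}=\mathscr{S}^{(\ell)}_{\mu,\lambda}$.

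For unitarity I would compute $\sum_{\mu}\mathscr{S}^{(\ell)}_{\lambda,\mu}\overline{\mathscr{S}^{(\ell)}_{\nu,\mu}}$ (the column index $\mu$ ranging over $P^{\ell}(\frg(A_{2n}^{(2)}))$ via the identification $\iota$) and show it equals $\delta_{\lambda,\nu}$. Expanding, this equals $m^{-1}\sum_{\mu}\sum_{w,w'\in\mathring{W}}\epsilon(w)\epsilon(w')\exp\!\big(-\tfrac{2\pi i}{m}\kappa_{\mathfrak{g}}(w(\lambda+\overline{\rho})-w'(\nu+\overline{\rho}),\,\mu+\overline{\rho})\big)$. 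As a function of $\mu+\overline{\rho}$ the summand is anti-invariant under the shifted action of $\mathring{W}$, so the sum over the regular dominant representatives $\{\mu+\overline{\rho}\mid\mu\in P^{\ell}\}$ symmetrizes and extends to a sum over the full finite abelian group $\overline{P}/mL$, where $L$ is the appropriate $C_n$-lattice governing the periodicity of $x\mapsto\exp(-\tfrac{2\pi i}{m}\kappa_{\mathfrak{g}}(\xi,x))$; this symmetrization absorbs the $w'$-sum together with the factor $|\mathring{W}|^{-1}$ implicit in the tiling. Orthogonality of the characters of this finite group then forces $w(\lambda+\overline{\rho})\equiv w'(\nu+\overline{\rho})\pmod{mL}$, and the regularity of $\lambda+\overline{\rho}$ and $\nu+\overline{\rho}$ inside the alcove forces $w=w'$ and $\lambda=\nu$; the surviving diagonal term contributes exactly $1$ once the prefactor $m^{-1}$ and the cardinality of the fundamental domain are matched.

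The main obstacle is the bookkeeping of the lattice and normalization constants in this symmetrization, which is precisely where $A_{2n}^{(2)}$ is delicate: unlike the untwisted case, the periodicity lattice $L$ and the index $|M^{*}/m\mathring{Q}|$ are governed by the twisted data, and one must verify that the bijection between $P^{\ell}(\frg(A_{2n}^{(2)}))$ and $\mathring{W}$-orbits of regular classes in $\overline{P}/mL$ holds with the correct constant, so that the modulus-$m^{-1/2}$ prefactor produces unitarity exactly. Since this is established in full in \cite[Thm.~13.8]{Kac}, I would either invoke that result directly or carry out the orthogonality computation with the $C_n$-lattice data made explicit, the description of $P^{\ell}(\frg(A_{2n}^{(2)}))$ in \eqref{eqn:levellweightforA2n2} and the form $\kappa_{\mathfrak{g}}$ from Proposition \ref{prop:formulaforkg} supplying all the needed ingredients.
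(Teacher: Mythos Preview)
Your proposal is correct and aligns with the paper's treatment: the paper does not give its own proof but simply attributes the result to \cite{Kac} (Theorem~13.8), which is exactly the source you identify and whose argument you sketch. Your outline of the symmetry via the substitution $w\mapsto w^{-1}$ and of unitarity via finite-group orthogonality is the standard Kac--Peterson argument, so there is nothing to add.
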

	\subsection{Relation with untwisted Kac-Moody S-matrices}\label{sec:comparision}This section is motivated by the following two observations. First, the equivariantization of a $\Gamma$-crossed modular functor is a modular functor, hence crossed $S$-matrices must be submatrices of an uncrossed $S$-matrix. Secondly, in the {\tt KAC} software, the Kac-Moody $S$-matrices and the twisted Kac-Moody $S$-matrices of type $A_{2n}^{(2)}$ are available for computational purposes.   One can compute all crossed $S$-matrices in the remaining types using {\tt KAC} as follows:

	Let $A$ be a Cartan matrix of either of the following types $A: A_{2n-1}^{(2)}, D_{n+1}^{(2)}, E_6^{(2)}, D_4^{(3)}.$ 
	We consider the Cartan matrix $A^{t}$ obtained by taking transpose of the matrix $A$. In these cases,  observe that the affine Lie algebra associated to $A^t$ is of untwisted type. They are given as: $A^t: B_n^{(1)}, C_n^{(1)}, F_4^{(1)}, G_2^{(1)}.$ If $\mathring{A}$ and $\mathring{A}^t$ denote the Cartan matrix obtained by deleting the $0$-th row and column, then we get $\frg(\mathring{A})$ and $\frg(\mathring{A}^t)$ are Langlands dual. 
	Define  the map $\tau:\mathfrak{h}(A)\rightarrow {\mathfrak{h}}(A^t)$ by the following:
	\begin{equation}\label{eqn:transposemap}
	\Lambda_i \rightarrow\frac{a_i^{\vee}}{a_i}\Lambda_i^{t}, \ \delta \rightarrow \delta^t, 
	\end{equation}
	where $\Lambda_0,\dots,\Lambda_n$ (respectively $\Lambda_0^{t},\dots,\Lambda_n^{t}$) are affine fundamental weights for the affine Lie algebra associated to $A$ (respectively $A^t$) and $\delta$ (respectively $\delta^t$) is the dual of the affine fundamental weight $\Lambda_0$ (see  Section \ref{sec:repofrep}). 
	
	We now consider the map $\mathring{\tau}: P^{\ell}(\mathfrak{g}(A))\hookrightarrow P^{\ell+h^{\vee}-h}(\mathfrak{g}(A^{t}))$ given by the formula 
	$
	\mathring{\tau}(\lambda):=\tau(\lambda+\overline{\rho})-\overline{\rho}^t,
	$
	where $\overline{\rho}$(respectively $\overline{\rho}^t$) are sum of the fundamental weights of the horizontal subalgebra of $\mathfrak{g}(A)$ and $\mathfrak{g}(A^t)$ respectively. 
	
	The affine Lie algebras of type $A^{t}$ are all untwisted affine Kac-Moody Lie algebra, hence denote by $\mathscr{S}_{\lambda,\mu}^{(\ell)}(A^t)$-the $(\lambda,\mu)$-th entry of the Kac-Moody  $S$-matrix of type $A^t$ at level $\ell$. We refer the reader to Chapter 13 in \cite{Kac} for more details. The following proposition is well known  and can be checked directly. We also refer the reader to Proposition 3.2 in \cite{AG}.
	
	\begin{proposition}\label{prop:crossedvsuncrossed}
		Let $\mathfrak{g}(A)$ be a twisted affine Kac-Moody Lie algebra associated to the Cartan matrix $A$ of type $A_{2n-1}^{(2)}, D_{n+1}^{(2)}, D_4^{(3)}, E_6^{(2)}$, then the twisted Kac-Moody $S$ matrices are related to the $S$ matrices of $\mathfrak{g}(A^t)$ by the following formula
		$${\mathscr{S}}_{\lambda,\mu}^{(\ell)}(A)=|\nu_{\frg}(\mathring{Q}^{\vee})/\mathring{Q}|^{\frac{1}{2}}\mathscr{S}_{\mathring{\tau}(\lambda),{\tau}\circ\tau_{\kappa}(\mu)}^{(\ell+h^{\vee}-h)}(A^t),$$ where $h^{\vee}$ and $h$ are the dual Coxeter numbers of the Lie algebra $\mathfrak{g}(A)$ and $\mathfrak{g}(A^t)$ respectively. Moreover, $|\nu_{\frg}(\mathring{Q}^{\vee})/\mathring{Q}|$ is two when $A$ is either $A^{(2)}_{2n-1}$, $D^{(2)}_{n+1}$ and $E^{(2)}_6$ and is three when $A$ is of type $D_4^{(3)}$.
	\end{proposition}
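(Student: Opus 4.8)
The plan is to establish the identity by directly comparing the closed-form expressions for the two sides, one twisted type at a time. The right-hand side is an entry of the untwisted Kac-Moody $S$-matrix $\mathscr{S}^{(\ell+h^\vee-h)}(A^t)$, which by Kac \cite[Ch.~13, Thm.~13.8]{Kac} has exactly the same shape as the twisted Kac-Moody $S$-matrices recorded above for $A_{2n-1}^{(2)}$, $D_{n+1}^{(2)}$, $D_4^{(3)}$ and $E_6^{(2)}$: in each case it is a signed sum over $\mathring{W}$ of exponentials of a normalized invariant form, times a scalar prefactor. Thus the verification splits into matching three ingredients separately, namely the Weyl-group sums, the exponents, and the scalar prefactors. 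Throughout I would use the explicit linear maps $\tau$ of \eqref{eqn:transposemap}, $\tau_\kappa$, and $\mathring{\tau}$ of \eqref{weirdshift}.

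First I would dispose of the discrete data. Since $\frg(\mathring{A})$ and $\frg(\mathring{A}^t)$ are Langlands dual, they share the same abstract Weyl group $\mathring{W}$ and the same number $|\mathring{\Delta}_+|$ of positive roots; hence the phase factors $i^{|\mathring{\Delta}_+|}$ on the two sides coincide. The key numerical coincidence is that the denominators inside the exponentials agree: on the left the form is divided by $\ell+h^\vee$, while on the right the level has been shifted to $\ell+h^\vee-h$, so that the untwisted denominator becomes $(\ell+h^\vee-h)+h=\ell+h^\vee$, using that $h=h^\vee(\frg(A^t))$ is precisely the dual Coxeter number governing the untwisted $S$-matrix of $\frg(A^t)$. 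This identity is the reason the peculiar shift $\ell+h^\vee-h$ appears in the statement, and it is what makes the exponents comparable.

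Next comes the heart of the argument: matching the arguments of the exponentials. Here I would use that $\tau:\mathfrak{h}(A)\to\mathfrak{h}(A^t)$, sending $\Lambda_i\mapsto (a_i^\vee/a_i)\Lambda_i^t$, intertwines the normalized forms $\kappa_{\frg(A)}$ and $\kappa_{\frg(A^t)}$ up to an overall scalar coming from the long/short root rescaling forced by Langlands duality, together with the fact that by the definition of $\mathring{\tau}$ one has $\mathring{\tau}(\lambda)+\overline{\rho}^t=\tau(\lambda+\overline{\rho})$. Combining these with the $\mathring{W}$-equivariance of $\tau$, I would verify in each type the equality
$$\kappa_{\frg(A)}\big(w(\lambda+\overline{\rho}),\,\tau_\kappa(\mu+\overline{\rho}')\big)=\kappa_{\frg(A^t)}\big(w(\mathring{\tau}(\lambda)+\overline{\rho}^t),\,\tau\circ\tau_\kappa(\mu)+\overline{\rho}^t\big),$$
up to the common scalar $-2\pi i/(\ell+h^\vee)$, so that the two Weyl sums become term-by-term identical. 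The rescaling introduced by $\tau$ on the form is exactly what will be absorbed into the prefactor in the final step.

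Finally I would reconcile the scalar prefactors. The twisted $S$-matrices carry $\sqrt{2}$ (respectively $\sqrt{3}$ in type $D_4^{(3)}$) times $|M^*/(\ell+h^\vee)\mathring{Q}|^{-1/2}$, whereas the untwisted $S$-matrix for $\frg(A^t)$ carries the analogous factor for the transpose-dual lattices. Tracking how $\tau$ carries $\mathring{Q}$ and $M^*$ to their counterparts for $\frg(A^t)$ and computing the resulting ratio of lattice indices should produce exactly the claimed factor $|\nu_{\frg}(\mathring{Q}^\vee)/\mathring{Q}|^{1/2}$, whose value is $2$ in types $A_{2n-1}^{(2)}, D_{n+1}^{(2)}, E_6^{(2)}$ and $3$ in type $D_4^{(3)}$. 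The hard part will be precisely this last bookkeeping: keeping the four relevant lattices (root lattice, coroot lattice, their duals $M^*$, and the transpose-dual versions) and the two competing normalizations of $\kappa$ straight, and confirming in each case that the length rescaling under $\tau$ yields the claimed integer index rather than merely a proportionality. As the statement indicates, a numerical cross-check with the \texttt{KAC} software in each of the four types is the most efficient way to pin down the constant and guard against normalization errors.
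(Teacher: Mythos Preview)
Your proposal is correct and follows exactly the approach the paper indicates: the paper offers no proof beyond the remark that the proposition ``is well known and can be checked directly'' together with a reference to Proposition~3.2 in \cite{AG}, and your term-by-term comparison of the closed-form $S$-matrix expressions is precisely such a direct check, spelled out in more detail than the paper gives.
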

	\subsubsection{Some computations}\label{sec:usefulcomputations}
	We list the following facts which are very useful in computing the twisted Kac-Moody S-matrices. Using proposition \ref{prop:crossedvsuncrossed}, we can use \texttt{KAC} to compute the twisted Kac-Moody $S$ matrices. 
	\begin{itemize}
		\item The case $A_{2n-1}^{(2)}$:
		\begin{itemize}
			\item $\tau\circ\tau_{\kappa}$ is identity.
			\item $\tau(\omega_i)=\omega_i^{t}$ for $1\leq i \leq n-1$ and $\tau(\omega_n)=2\omega_n^t$.
			\item $\mathring{\tau}(\omega_0)=\omega_n^t$.
		\end{itemize}
		\item The case $D_{n+1}^{(2)}$:
		\begin{itemize}
			\item $\tau\circ\tau_{\kappa}$ is identity.
			\item $\tau(\omega_i)=\omega_i^{t}$ for $1\leq i \leq n-1$ and $\tau(\omega_n)=2\omega_n^t$.
			\item $\mathring{\tau}(\omega_0)=\omega_n^t$.
		\end{itemize}
		\item The case $D_4^{(3)}$:
		\begin{itemize}
			\item $\tau(\omega_1)=3\omega_2^t$ and $\tau(\omega_2)=\omega_1^t$.
			\item $\tau\circ\tau_{\kappa}(\omega_i)=\omega_i^{t}$ for $1\leq i\le 2$.
			\item $\mathring{\tau}(\omega_0)=2\omega_2^t$.
		\end{itemize}
		\item The case $E_6^{(2)}$: 
		\begin{itemize}
			\item $\tau(\omega_1)=2\omega_4^t$, $\tau(\omega_2)=2\omega_3^t$, $\tau(\omega_3)=\omega_2^t$, $\tau(\omega_4)=\omega_1^t$. 
			\item $\tau\circ\tau_{\kappa}(\omega_i)=\omega_i$ for $1\leq i\leq 4$.
			\item $\mathring{\tau}(\omega_0)=\omega_3^{t}+\omega_4^t$.
		\end{itemize}
	\end{itemize}
	\subsection{Dimensions using the formula}\label{sec:dimensioncrosscheck1}In this subsection, we use the Verlinde formula \eqref{conj:main1} stated in the introduction to compute dimensions of some twisted conformal blocks.  We also discuss in specific cases, how to cross check our calculations. We start with examples of \'etale cases.
	\begin{example}\label{ex:etale}
		Let $\Gamma=\langle \gamma \rangle$ be a cyclic group of order $m$ and $\frg$ is either of type $A_{n\geq 2}, D_{n\geq 4}$ or $E_6$. Let $\widetilde{C}\rightarrow C$ is an \'etale cover with Galois group $\Gamma$, then the dimension of the twisted conformal block at level one
		\begin{equation}\label{eqn:arbitrarygenusverlindeexampleetale1}
		\dim_{\mathbb{C}}\mathbb{V}_{\Gamma}^{\dagger}(\widetilde{C},C)=|P_1(\frg)^{\gamma}|(S_{0,0})^{2-2g},
		\end{equation} where $S_{0,0}$ is $\frac{1}{\sqrt{n+1}}$, $\frac{1}{2}$,$\frac{1}{\sqrt{3}}$ for $\frg=A_n,D_n, E_6$ respectively. 
		
		Consider an elliptic curve $\widetilde{C}$ with rotation of order $m$. Then $C$ is also an elliptic curve and the map $\widetilde{C}\rightarrow C$ is an \'etale cyclic cover of order $m$. We can degenerate $C$ to a nodal elliptic curve and $\widetilde{C}$ to a cycle of $\mathbb{P}^1$'s with $m$ components. In this case, by factorization, we can cross check our answer by reducing it to the case of trivial \'etale cover $\widetilde{C}$ of $\mathbb{P}^1$ along with two marked points. Since we are in the \'etale case, no crossed $S$-matrices are involved and  we can apply the untwisted Verlinde formula after using the group action to bring all the marked points $\widetilde{\bf{p}}$ in $\widetilde{C}$ in the same component.
		
	\end{example}
	\begin{remark}
		Example \ref{ex:etale} is a priori neither covered by the untwisted Verlinde formula nor by the conjectural Verlinde formula in \cite{BFS}. 
	\end{remark}
	\begin{remark}
		G. Faltings \cite{Faltingsnew}  has shown that in the untwisted case that  conformal blocks for a simply laced Lie $\frg$ algebra has dimension $|Z_{G}|^{g}$, where $Z_{G}$ is the center of the simply connected group with Lie algebra $\frg$.  This can also be seen from the Verlinde formula. The formula in Equation \eqref{eqn:arbitrarygenusverlindeexampleetale1} can be thought of a generalization of the result to the twisted case. 
	\end{remark}
	
	\begin{example}\label{ex:double}Let $\frg=A_{2r-1}^{(2)}$ and consider a double cover $\widetilde{C}\rightarrow C$ ramified at $2n$ points $\widetilde{p}$ and let $g$ be the genus of $C$. The level one weights of $A^{(2)}_{2r-1}$ are $\{0,\dot{\omega}_1\}$. Then by the Verlinde formula  \ref{conj:main1}, we get 
		$\dim_{\mathbb{C}}\mathbb{V}_{\vec{0},\mathbb{Z}/2\mathbb{Z}}^{\dagger}(\widetilde{C}, C, \widetilde{\bf p}, {\bf p})=2^{g}r^{g+n-1}.$
	\end{example}
	The computations in Section \ref{sec:usefulcomputations}, tell us that the crossed $S$-matrix or equivalently the character table is given by the following $2\times 2$ matrix.
	$$\Sigma'=\sqrt{2} \cdot \begin{pmatrix} 
	\frac{1}{{2}}& \frac{1}{{2}} \\
	\frac{1}{{2}} & -\frac{1}{{2}}
	\end{pmatrix}.$$
	
	If $C=\mathbb{P}^1$ and $\widetilde{C}$ is a genus $n-1$  curve which is double cover of $\mathbb{P}^1$ ramified at $2n$ points. We now discuss how to cross check the answer using factorization theorem and invariants of representations.
	
	In this case, we can degenerate $\widetilde{C} \rightarrow {C}$ to a double cover $\widetilde{D}\rightarrow D$, where $\widetilde{D}$ is a chain of $n-1$ elliptic curves where the end components have three ramification points and all the other components have two ramfication points. Moreover the nodes of $\widetilde{D}$ are ramification points. By the factorization theorem, we can reduce to check that the following conformal blocks associated to double covers $\widetilde{C}\rightarrow \mathbb{P}^1$ ramified at four points are one dimensional 
	\begin{enumerate}
		\item $\dim_{\mathbb{C}}\mathbb{V}_{0,0,0,0,\mathbb{Z}/2\mathbb{Z}}^{\dagger}(\widetilde{C}, \mathbb{P}^1, \widetilde{\bf{p}}, {\bf {p}})=1.$
		\item $\dim_{\mathbb{C}}\mathbb{V}_{0,0,\dot{\omega}_1,\dot{\omega}_1,\mathbb{Z}/2\mathbb{Z}}^{\dagger}(\widetilde{C}, \mathbb{P}^1, \widetilde{\bf{p}}, {\bf {p}})=1.$
		\item $\dim_{\mathbb{C}}\mathbb{V}_{0,0,0,\dot{\omega}_1,\mathbb{Z}/2\mathbb{Z}}^{\dagger}(\widetilde{C}, \mathbb{P}^1, \widetilde{\bf{p}}, {\bf {p}})=1.$
	\end{enumerate}
	
	We can now degenerate the double cover  $\widetilde{C}$ of  $\mathbb{P}^1$ with four ramification points by degenerating the $\mathbb{P}^1$ to a reducible $\mathbb{P}^1$ with two components meeting at one point, which is \'etale. Moreover each component has two ramification points. Then normalizing we get two disjoint copies of $\mathbb{P}^1$ with a double cover ramified at two points. Hence by factorization, we are reduced to check the following:
	\begin{equation}\label{eqn:check1}
	\dim_{\mathbb{C}}\mathbb{V}^{\dagger}_{0,0,\omega_i,\mathbb{Z}/2\mathbb{Z}}(\mathbb{P}^1, \mathbb{P}^1, {\widetilde{\bf{p}}}, {\bf p})=\begin{cases}
	1 & \mbox{if $i$ is even }\\
	0 & \mbox{otherwise},
	\end{cases}
		\end{equation}
	where  $\omega_i$ is the $i$-th fundamental weight of $A_{2r-1}$ attached to an \'etale point and also 
		\begin{equation}\label{eqn:check2}
	\dim_{\mathbb{C}}\mathbb{V}^{\dagger}_{0,\dot{\omega}_1,\omega_i,\mathbb{Z}/2\mathbb{Z}}(\mathbb{P}^1, \mathbb{P}^1, {\widetilde{\bf{p}}}, {\bf p})=\begin{cases}
	1 & \mbox{if $i$ is odd}\\
	0 & \mbox{otherwise}.
	\end{cases}
		\end{equation}
	This can be verified directly from the fact that these conformal blocks are isomorphic to the space of invariant $\operatorname{Hom}_{\mathfrak{sp}(2r)}(\Lambda^{i}\mathbb{C}^{2r},\mathbb{C})$ (respectively $\operatorname{Hom}_{\mathfrak{sp}(2r)}(\mathbb{C}^{2r}\otimes \Lambda^{i}\mathbb{C}^{2r},\mathbb{C})$) which has dimensions one or zero depending on the parity of $i$. Further the dimension calculation for the ramified double cover $\mathbb{P}^1\rightarrow \mathbb{P}^1$ obtained in Equations \eqref{eqn:check1} and \eqref{eqn:check2} also agrees with the computations from the twisted Verlinde formula. The general case where genus of $C$ is positive can be approached similarly. 
	\begin{example}\label{ex:a2r}
		Consider the Lie algebra $A^{(2)}_{2r}$. The level one weights of $A^{(2)}_{2r}$ is just the $r$-th fundamental weight $\dot{\omega}_r$ of the Lie algebra of type $B_r$. Consider $\mathbb{P}^1$ as a ramified double cover of $\mathbb{P}^1$. Then by the Verlinde formula  \ref{conj:main1}, we get that the dimension of 
		$\mathbb{V}^{\dagger}_{\dot{\omega}_r,\dot{\omega}_r,\omega_i,\mathbb{Z}/2\mathbb{Z}}(\mathbb{P}^1, \mathbb{P}^1, {\widetilde{\bf{p}}}, {\bf p})$ is one.
			
		Here $\omega_i$ is the $i$-th fundamental weight of $A_{2r}$ attached to an \'etale point. If $i=0$, Example \ref{ex:a2r} can be crossed checked by Lemma \ref{lem:linebundlecase}. For all $i$'s,  this example can be crossed checked similarly as in Example \ref{ex:double} by embedding into the space of invariants.
	\end{example}

	\begin{example}\label{ex:noncheckable}	Consider a connected ramified Galois cover $\widetilde{C}\rightarrow \mathbb{P}^1$ of order three with three marked points $\widetilde{\bf p}$ with ramification given by $1\in \ZBbb/3\ZBbb=\{0,1,2\}$ at each marked point. We consider the triple $\vec{0}=(0,0,0)$ of weights in $P^{\ell}(D_4^{(3)})$ and consider the twisted conformal block $\mathbb{V}^{\dagger}_{\vec{0},\mathbb{Z}/3\mathbb{Z}}(\widetilde{C}, \mathbb{P}^1, {\widetilde{\bf {p}}}, {\bf{p}})$ at level $\ell$. By Theorem \ref{thm:oneofmain}, twisted conformal blocks in this case define a $\ZBbb/3\ZBbb$-crossed modular fusion category $\Ccal(D_4, \mathbb{Z}/3,\ell)=\Ccal_0\oplus\Ccal_1\oplus\Ccal_2.$
		
		Since we are looking at a $\ZBbb/3\ZBbb$-cover with ramification $1\in \ZBbb/3\ZBbb$ at each marked point, the weight $0\in P^{\ell}(D_4^{(3)})$ corresponds to a simple object say $A$ in the component $\Ccal_1$. Note that the object $A\dotimes A\dotimes A$ (where $\dotimes$ denotes the fusion product in the $\ZBbb/3\ZBbb$-crossed modular fusion category $\Ccal(D_4,\mathbb{Z}/3,\ell)$) lies in the identity component $\Ccal_0$. Hence by Theorem \ref{thm:oneofmain}(3), the rank of the bundle of twisted conformal blocks with weights $\vec0$ as equals the fusion coefficient $$\dim\Hom(\unit,A\otimes A\otimes A)=\dim\Hom(A^*, A\otimes A)=\nu_{A,A}^{A^*}$$ 
		in the Grothendieck ring $K(\Ccal(D_4,\ZBbb/3\ZBbb,\ell))$ of the $\ZBbb/3\ZBbb$-crossed modular fusion category and {\em not in the twisted fusion ring $\mathcal{R}_{\ell}(D_4,1)$}.
		Here $\unit\in \Ccal_0$ is the unit object corresponding to the weight 0 of the {\em untwisted $D_4$ at level $\ell$} i.e. an element of $P_{\ell}(D_4)$.  
		%
		%
		The fusion coefficients in the ring $\mathcal{R}_{\ell, \mathbb{Z}/3\mathbb{Z}}(D_4)$ is determined by Theorem \ref{conj:main1} and Proposition \ref{prop:charactertableviaSmatrix}:		
		\begin{itemize}
			\item Let $\ell=1$. The Verlinde formula tells us  $\dim \mathbb{V}^{\dagger}_{\vec{0},\mathbb{Z}/3\mathbb{Z}}(\widetilde{C}, \mathbb{P}^1, {\widetilde{\bf {p}}}, {\bf{p}})=2$. We use the fact that $0$ is the only fixed point of $P^1(D_{4}^{(1)})$ under the diagram automorphism and $S_{0,0}=\frac{1}{2}$. 
			\item Let $\ell=2$. By Proposition \ref{prop:crossedvsuncrossed}, Section \ref{sec:usefulcomputations} and using \texttt{KAC} software, we get the crossed $S$-matrix:
			$$S^{\gamma}=\sqrt{3}\cdot \begin{pmatrix}
			\frac{1}{\sqrt{6}} & \frac{1}{\sqrt{6}}  \\
			\frac{1}{\sqrt{6}} & -\frac{1}{\sqrt{6}} 
			\end{pmatrix}.$$
			The fixed points of $P^2(D_{4}^{(1)})$ under the diagram automorphism are $\{0,\omega_2\}$ and from the uncrossed $S$-matrix we get $S_{0,0}=\frac{1}{4\sqrt{2}}$ and $S_{0,\omega_2}=\frac{1}{2\sqrt{2}}$. Now the Verlinde formula \ref{conj:main1} implies that 
			$\dim_{\mathbb{C}}\mathbb{V}^{\dagger}_{\vec{0},\mathbb{Z}/3\mathbb{Z}}(\widetilde{C}, \mathbb{P}^1, {\widetilde{\bf {p}}}, {\bf{p}})=3.$
		\end{itemize}
		
	\end{example}
	\begin{remark}
		Since the curve $\widetilde{C}$ has genus one, $\mathbb{V}^{\dagger}_{\vec{0},\mathbb{Z}/3\mathbb{Z}}(\widetilde{C}, \mathbb{P}^1, {\widetilde{\bf {p}}}, {\bf{p}})$ do not embed in the space of invariants of tensor product representations. Hence the dimension of invariants do not give any natural upper bounds in these cases. This is in stark contrast with the situation for untwisted conformal blocks. Also since the moduli space of $\Gamma$-covers of $\mathbb{P}^1$ with three marked points have genus zero, we do not have a way to degenerate to simpler cases using factorization. 	Hence unlike the other example considered in this section, we do not have any other way to cross check the calculations in Example \ref{ex:noncheckable}.
	\end{remark}
	
	\begin{example}\label{ex:checkablenontrivial}Consider a connected ramified Galois cover $\widetilde{C}\rightarrow \mathbb{P}^1$ of order three with three marked points $\widetilde{\bf p}$. Here assume that the first two marked points are ramified and the third point is \'etale. Hence $\widetilde{C}$ is again $\mathbb{P}^1$. We consider the triple $\vec{0}=(\dot{0},\dot{0},0)$ of weights in $(P^{\ell}(D_4^{(3)}))^2\times P^{\ell}(D_4^{(1)})$ and consider the twisted conformal block $\mathbb{V}^{\dagger}_{\vec{0},\mathbb{Z}/3\mathbb{Z}}(\mathbb{P}^1, \mathbb{P}^1, {\widetilde{\bf {p}}}, {\bf{p}})$ at level $\ell$. 
		\begin{itemize}
			\item Let $\ell=1$. The Verlinde formula \ref{conj:main1}, tell us that the  dimension of the twisted conformal block $\mathbb{V}^{\dagger}_{\vec{0},\mathbb{Z}/3\mathbb{Z}}(\mathbb{P}^1, \mathbb{P}^1, {\widetilde{\bf {p}}}, {\bf{p}})$ is one.
			\item Let $\ell=2$. The calculation of the crossed $S$-matrix in Example \ref{ex:noncheckable} and the Verlinde formula \ref{conj:main1} implies that 
			$\dim_{\mathbb{C}}\mathbb{V}^{\dagger}_{\vec{0},\mathbb{Z}/3\mathbb{Z}}(\mathbb{P}^1, \mathbb{P}^1, {\widetilde{\bf {p}}}, {\bf{p}})=1.$
		\end{itemize}
		These numbers agree with the dimensions coming from Lemma \ref{lem:linebundlecase} and hence cross checks the Verlinde formula. 
	\end{example}
	\begin{remark}
		Examples \ref{ex:noncheckable} and \ref{ex:checkablenontrivial} provide  non-trivial examples of the Verlinde formula for twisted conformal blocks that is not covered by the conjectural Verlinde formula in \cite{BFS}. 
	\end{remark}

	\section*{Acknowledgements}
We sincerely thank Patrick Brosnan, Najmuddin Fakhruddin, Jochen Heinloth, Shrawan Kumar, Arvind Nair, Christian Pauly, Michael Rapoport and Catharina Stroppel for useful discussions. We acknowledge key communications with Christoph Schweigert regarding \cite{BFS} and with the \texttt{KAC} software. We also thank Vladimir Drinfeld for posing a question to the first named author that lead to Proposition \ref{prop:rigidity in weakly fusion categories}. The second named author thanks the Max-Planck Institute for Mathematics in Bonn for its hospitality.
\bibliographystyle{amsplain}
\bibliography{papers1}
\end{document}